\newtheorem{Thm}{Theorem}[section]
\newtheorem{Lem}[Thm]{Lemma}
\newtheorem{Pro}[Thm]{Proposition}
\newtheorem{Cor}[Thm]{Corollary}
\newtheorem{Con}[Thm]{Conjecture}
\theoremstyle{definition}
\newtheorem{Def}[Thm]{Definition}
\newtheorem{Ex}[Thm]{Example}
\newtheorem{Rem}[Thm]{Remark}
\begin{document}
\begin{titlepage}
	\begin{center}
		
		\LARGE \textbf{About Homological Mirror Symmetry}\\[1.62cm] 
		
		\Large Alessandro Imparato\footnote{Department of Mathematics, ETH Zürich, Summer--Autumn 2022. This work was last updated on June 21, 2023.} \\[3.14cm]

		\begin{abstract}
			The B-side of Kontsevich's Homological Mirror Symmetry Conjecture is discussed. We give first a self-contained study of derived categories and their homological algebra, and later restrict to the bounded derived category of schemes and ultimately Calabi--Yau manifolds, with particular emphasis on the basics of the underlying sheaf theory, and the algebraic features therein. Finally, we loosely discuss the lowest dimensional manifestations of homological mirror symmetry, namely for elliptic curves and $K3$ surfaces.
			
			The present work is a sequel to the author's survey \cite{[Imp21]} on the A-side of homological mirror symmetry. 
		\end{abstract}
		
	\end{center}
	
	\vspace*{6cm}
	
	\noindent This article is the author's Master Thesis, supervised by Prof. Dr. Paul Biran and submitted for the degree \textit{Master of Science ETH in Applied Mathematics}. It should be accessible to graduate students.\\
	
	\noindent\normalsize\textbf{Key words}: Derived category, derived functor, triangulated category, quasi-/coherent sheaf, Calabi--Yau variety, Homological Mirror Symmetry, elliptic curve.
	
\end{titlepage}

\newpage
\pagenumbering{roman}
\thispagestyle{plain}

\tableofcontents

\newpage

\setcounter{page}{1}

\pagenumbering{arabic}

\section*{Introduction}\markboth{INTRODUCTION}{INTRODUCTION}
\addcontentsline{toc}{section}{Introduction}
\thispagestyle{plain}

\vspace*{1cm}

In 1994, the mathematician Maxim Kontsevich formulated his famous conjecture (transcripted in \cite{[Kon94]}) of Homological Mirror Symmetry, a theory based on the classical Mirror Symmetry which arose from the calculations of many theoretical physicists investigating dualities between otherwise unrelated spaces. They particularly focused on \textit{Calabi--Yau manifolds}, which are presumed by superstring theory to be the compact portions of our hypothetical 10-dimensional spacetime --- mathematically speaking, they are specific kind of Kähler manifolds, thus possessing both the structure of a symplectic and a complex manifold. Kontsevich's conjecture brought renewed interest to the field, asserting that the duality of a mirror pair of Calabi--Yau manifolds $(X,\overline{X})$ could be rephrased through the abstract language of category theory as a relationship between their symplectic geometric ``A-side'' and their complex algebraic geometric ``B-side''. 

The leading claim was successively refined by other researchers like Kenji Fukaya, resulting in the following statement: there exist equivalences of triangulated categories      
\[
\mathsf{D}^\pi(\widetilde{\mathscr{F}}(X))\cong\mathsf{D^b}(\overline{X})\qquad\text{and}\qquad \mathsf{D}^\pi(\widetilde{\mathscr{F}}(\overline{X}))\cong\mathsf{D^b}(X)\,,
\] 
where $\mathsf{D}^\pi(\widetilde{\mathscr{F}}(X))$ is the \textit{split-closed derived category} of the Fukaya category of $X$, and $\mathsf{D^b}(X)$ is the \textit{bounded derived category} of coherent sheaves on $X$. The A-side of homological mirror symmetry was thoroughly discussed in \cite{[Imp21]}, where we argued that it boils down to understanding $\mathsf{D}^\pi(\widetilde{\mathscr{F}}(X))$ (respectively $\mathsf{D}^\pi(\widetilde{\mathscr{F}}(\overline{X}))$ if one wishes to focus on the mirror partner): the (\textit{enriched}) \textit{Fukaya category} $\widetilde{\mathscr{F}}(X)$ consists of special Lagrangian submanifolds $L\subset X$ each equipped with a local system, in turn made of a complex vector bundle $E\rightarrow L$ endowed with a flat connection $\nabla$ which induces unitary holonomy groups and makes the curvature a constant multiple of the B-field. We called A\textit{-branes} the resulting triplets $\mathcal{L}=(L,E,\nabla)$ as a nod to D-branes --- in terms of which the conjecture above acquires a more apparent physical significance --- and we chained several A-branes together to form bounded complexes by following the recipe for split-closed derived categories. We also found the associated morphism spaces to be computable through Floer cohomology. Most importantly, we saw that $\mathsf{D}^\pi(\widetilde{\mathscr{F}}(X))$ exclusively captures the symplectic geometry of the underlying Calabi--Yau manifold $X$.

On the other hand, in \cite[section 7.3]{[Imp21]} we only briefly touched upon the definition of $\mathsf{D^b}(X)\equiv\mathsf{D^b}(\mathsf{Coh}(X))$: much like its Fukaya counterpart, we learned that it consists of geometric data, now enclosed in the \textit{category of coherent sheaves} $\mathsf{Coh}(X)$ on $X$, together with a purely homological machinery, that of derived categories (whose analogue for $A_\infty$-categories we blindly applied in the split-closed construction, without any pretense at even grasping the classical theory!). If we forget for the moment these base ingredients and try to cheat, obtaining a homological mirror duality by imposing the B-side to look specular to the A-side, we would be led to assume that the fundamental objects ought to be some sort of holomorphically embedded (even-dimensional) submanifolds of $X$ equipped with holomorphic vector bundles $E_i$, and we would argue that the relevant morphisms of any pair labelled by, say, $(E_i,E_j)$ should belong to the $\text{Hom}(E_i,E_j)$-valued Dolbeault cohomology groups of $X$. This approach (for example mocked in \cite{[Asp04]}) is, of course, naive and wrong! And the reason is simple enough: this way, there are too few objects, thus morphisms, compared to those appearing on the A-side. Put in physics words, we fall short of B-branes!     

Our goal here is to correct this false intuition. In doing so, we will find out (as already anticipated above) that there is an equally rich theory regulating the B-side of things. Namely, we will need to embark on a journey through classical derived category theory (which we can parse without having to worry about as many subtleties as those afflicting $A_\infty$-categories), and through the algebraic geometry of sheaves and schemes, which, not unlike Fukaya categories, present their own challenges and thus unavoidable compromises (but really, the reader will probably find the B-side to be the easier one!). As done in the previous installment, we favour again the ``bigger picture'' approach over a detailed review of each tiny aspect involved, for the topic is quite \textit{vast}.\\    

Accordingly, the work consists of two parts: in the first one, we give a self-contained overview of derived categories and their homological algebra, while in the second we discuss the theory of sheaves on various spaces. Specifically, we begin in Chapter 1 by describing abelian categories, the building blocks for derived categories, whose language we introduce in Chapter 2. Under mild conditions, it is also possible to obtain derived functors between derived categories, as we address in Chapter 3. In Chapter 4 we argue that derived categories are particular instances of triangulated categories, thus restoring some desirable properties which are lost in the derivation process. To the second part, in Chapter 5 we recollect the basic category theoretic facts about sheaves, first working on topological spaces and later on varieties and schemes, whose algebraic geometrical language we promptly refresh. In Chapter 6, we apply the knowledge of Part I to the category of coherent sheaves on schemes and we study its cohomology, ultimately restricting our focus to the bounded derived category of Calabi--Yau varieties. Finally, in Chapter 7 we conclude our exploration of homological mirror symmetry by looking at the easiest known proof of its conjecture, that for elliptic curves, with a last-second cameo of $K3$ surfaces. \\

\noindent\textbf{Acknowledgments.} I would like to thank my supervisor, Paul Biran, for taking some time to read this thesis, always approving the very specific vision I had for it, and giving his support in a period of emotional turmoil. And I would like to thank Prof. Will Merry for presenting me two years ago with the wonderful topic that is Homological Mirror Symmetry. I will always be grateful to you.
\vspace*{1cm}

\hfill\textit{This work is dedicated to the memory of William John Merry}

\newpage

\part{Homological Algebra}
\markboth{I\quad HOMOLOGICAL ALGEBRA}{I\quad HOMOLOGICAL ALGEBRA}
\thispagestyle{plain}

\vspace*{2cm}

\noindent Derived categories occupy a place within the broader theory of homological algebra, which debuted in the 1940's and was firmly rooted in the mathematical landscape by the classical \textit{Homological Algebra} by Cartan and Eilenberg of 1956. A little later, in the early 60's, derived categories were first developed by Alexander Grothendieck and his student Jean-Louis Verdier, particularly in the latter's thesis of 1963 (reported in \cite{[Ver96]}), where their axiomatization led to the concept of triangulated category. Reason for the introduction of a derived formalism was to support Grothendieck's research in algebraic geometry, which perhaps also explains why it spreaded quite slowly and acquired only recently renewed impetus. A fruitful symbiosis that advanced the knowledge, among the many subdisciplines, of mirror symmetry itself.

The key idea in the process of derivation of a category, we will see, lies in a generalization of how in commutative algebra rings or modules can be localized, providing them with a concept of ``fraction''. There, one just needs to pick the ``denominators'' to belong to any multiplicative subset, that is, a subset closed under multiplication and possessing a multiplicative unit. We point out that this method was in turn historically inspired by the definition of local rings of algebraic varieties (more about this in Part II). In a similar fashion, we will initially characterize the derived category of \textit{any} given category by formally inverting an arbitrary class of its morphisms and imposing a universal property; only subsequently will we address a more natural approach to their construction.

Of course, wherever categories appear, functors soon follow. This is also the case in the derived setting, where one talks about derived functors. They actually emerged first (in their ``classical'', cohomological vest) in Cartan and Eilenberg's work, before a notion of derived category was even established. Until then, mathematicians were resorting to more involved tools such as spectral sequences to study them; the need for a simpler procedure was in part what stimulated the development of a derived formalism!      

The scope of Part I is as complete an introduction as possible to the algebraic machinery needed to understand derived categories, gradually building towards the item $\mathsf{D^b}(\mathsf{A})$ appearing on the B-side of the Homological Mirror Symmetry Conjecture; in Part II we will be more specific about our preferred ground category $\mathsf{A}$. The precise plan reads as follows: 

\begin{itemize}[leftmargin=0.5cm]
	\item In Chapter 1, we introduce the key features our to-be-derived category $\mathsf{A}$ must have, namely those of an abelian category, which are neatly articulated in four axioms; on any such $\mathsf{A}$ one can then define some very convenient notions, like cohomology, which effectively allows us to do homological algebra. We also characterize functors between abelian categories, addressing their exactness.  
	
	\item In Chapter 2, we begin for good our journey by refreshing the definitions of cochain complex, chain map and how they interact. Then we introduce derived categories via their universal property, and give a qualitative description of their formal structure as result of a localization procedure. Unfortunately, derived categories are \textit{not} abelian... but we can still analyze them through surrogate tools, such as exact and distinguished triangles that suitably simulate the behaviour of short exact sequences. In fact, we can use these to explicitly construct any derived category as the localization of the underlying homotopy category $\mathsf{H(A)}$ by its class of quasi-isomorphisms. We then see how $\mathsf{A}$ embeds into $\mathsf{D(A)}$. Finally, we define Ext-groups and deduce some of their useful properties which will pop up times and again; also, we explain what is the homological dimension of abelian categories and of individual objects therein.      
	
	\item In Chapter 3, we move our focus to the maps relating derived categories, namely right and left derived functors, originating from left respectively right exact standard functors. We show under which conditions they exist --- in which case they are unique --- and how they are constructed, proving that they are also exact, in the sense that they preserve distinguished triangles. Moreover, we see that derived functors descend in cohomology to classical derived functors between the underlying standard categories, still retaining desirable properties like the long exact sequence axiom and the commutativity with respect to composition. We also look at a few ever-present functors and cast some light on their derived counterparts.   
	
	\item In Chapter 4, we expand upon the idea of triangulated categories, only mentioned on the fly in \cite[section 3.1]{[Imp21]}, reviewing Verdier's axioms. We proceed to show that the homotopy category of any abelian category is in fact triangulated, and we extend the proof to the associated derived category; this makes sense of the seemingly clashing choice of terminology regarding distinguished triangles, cones, cohomological functors and friends, then understood in both derived and triangulated sense. This also furnishes an alternative explanation for why derived categories are still so nice despite not being abelian!    
\end{itemize}   

References for this first part are primarily the wonderful book \cite{[GM03]} by Gelfand and Manin, and more occasionally \cite{[HS97]}, \cite{[KS06]}, \cite{[Mac78]} and \cite{[Wei94]}, just five among the classical treatments on derived categories which can be found throughout the mathematical literature. Unless declared otherwise, the reader may safely assume that we are drawing from \cite{[GM03]}.

\newpage
\pagestyle{fancy}

\section{Abelian categories}
\thispagestyle{plain}

\subsection{Additive categories}\label{ch1.1}

As costumary, we begin our story with some fundamentals in category theory. Additive and abelian categories distinguish themselves from ordinary categories in that they fulfill some additional specific axioms, which we proceed to list in the present chapter. The reader is warned to be on the lookout for equivalent choices of axioms and notation adopted throughout literature. Whenever useful, we point out these alterations, but stick close to the narrative of our main references \cite[sections II.5--II.6]{[GM03]} and \cite[section 4.1]{[ABC+09]}.

\begin{Def}\label{addcat}
	A category $\mathsf{C}$ is a \textbf{preadditive category}\index{preadditive category} if the following axiom is satisfied:
	\begin{itemize}[leftmargin=0.5cm]
		\renewcommand{\labelitemi}{\textendash}
		\item (\textbf{A1}) $\text{Hom}_\mathsf{C}:\mathsf{C}^\text{opp}\times\mathsf{C}\rightarrow\mathsf{Ab}$ is a bifunctor, that is, each set $\text{Hom}_\mathsf{C}(X,Y)$ for any $X,\,Y\in\text{obj}(\mathsf{C})$ is an abelian group and the composition law is biadditive, thus giving group homomorphisms $\circ:\text{Hom}_\mathsf{C}(X,Y)\times \text{Hom}_\mathsf{C}(Y,Z)\rightarrow\text{Hom}_\mathsf{C}(X,Z),\,(f,g)\mapsto g\circ f$ (so that $g\circ (f_1+f_2)=g\circ f_1 + g\circ f_2$ and $(g_1+g_2)\circ f= g_1\circ f + g_2\circ f$ for compatible morphisms).
	\end{itemize}
	A preadditive category $\mathsf{C}$ is an \textbf{additive category}\index{additive category} if additionally hold:
	\begin{itemize}[leftmargin=0.5cm]
		\renewcommand{\labelitemi}{\textendash}
		\item (\textbf{A2}) There exists a zero object $0\in\text{obj}(\mathsf{C})$ (then unique up to isomorphism), which by definition makes $\text{Hom}_\mathsf{C}(0,X)$ and $\text{Hom}_\mathsf{C}(X,0)$ one-element groups for any $X\in\text{obj}(\mathsf{C})$;
		
		\item (\textbf{A3}) For any two $X_1,\,X_2\in\text{obj}(\mathsf{C})$ there exists a $Y\in\text{obj}(\mathsf{C})$ and morphisms as in the diagram
		\begin{equation}\label{directprodsum}
			\begin{tikzcd}
				X_1\arrow[r, yshift=1ex, "i_1"] & Y\arrow[l, yshift=-0.5ex, "p_1"]\arrow[r, yshift=-0.5ex, "p_2"'] & X_2\arrow[l, yshift=1ex, "i_2"']
			\end{tikzcd}
		\end{equation}
		such that $p_1\circ i_1=\text{id}_{X_1},\,p_2\circ i_2=\text{id}_{X_2},\,p_2\circ i_1=0=p_1\circ i_2$ and $i_1\circ p_1 + i_2\circ p_2=\text{id}_Y$.
	\end{itemize}
\end{Def}

Additive categories in which Hom-sets are moreover vector spaces over a field $\mathbb{K}$ and composition of morphisms is $\mathbb{K}$-bilinear, are called \textit{$\mathbb{K}$-linear categories}. It is worth noting that axioms (A2) and (A3) are just a concealed way to say that in additive categories we can form direct sums (coproducts) and products of any two objects and still obtain an element in the class $\text{obj}(\mathsf{C})$:

\begin{Lem}
	For $X_1, X_2, Y\in\textup{obj}(\mathsf{C})$ as in axiom \textup{(A3)}, $Y$ is both the direct product and the direct sum of $X_1$ with $X_2$, unique due to their characterizing universal property, so that we may unambiguously write $Y=X_1\times X_2$ respectively $Y=X_1\oplus X_2$ \textup(we favour the latter notation\textup).  
\end{Lem}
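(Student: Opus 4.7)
The plan is to verify directly from the data in (A3) that $Y$ satisfies the universal property of the product, and by a symmetric argument the universal property of the coproduct; uniqueness of $Y$ (up to canonical isomorphism) is then automatic from these universal properties, making the two notations $X_1\times X_2$ and $X_1\oplus X_2$ refer to the same object.

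First I would show that $(Y,p_1,p_2)$ is the product. Given any $Z\in\mathrm{obj}(\mathsf{C})$ with morphisms $f_k\colon Z\to X_k$ for $k=1,2$, the natural candidate for the induced morphism $f\colon Z\to Y$ is $f:=i_1\circ f_1+i_2\circ f_2$, which makes sense because (A1) guarantees that $\mathrm{Hom}_\mathsf{C}(Z,Y)$ is an abelian group. Using the identities $p_k\circ i_k=\mathrm{id}_{X_k}$ and $p_k\circ i_\ell=0$ for $k\neq\ell$, together with the biadditivity of composition, I would check that $p_k\circ f=f_k$. For uniqueness, given any $g\colon Z\to Y$ with $p_k\circ g=f_k$, the decomposition $i_1\circ p_1+i_2\circ p_2=\mathrm{id}_Y$ yields
\[
g=\mathrm{id}_Y\circ g=(i_1\circ p_1+i_2\circ p_2)\circ g=i_1\circ f_1+i_2\circ f_2=f\,.
\]

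The coproduct statement is completely dual: given $g_k\colon X_k\to Z$, set $g:=g_1\circ p_1+g_2\circ p_2\colon Y\to Z$, verify $g\circ i_k=g_k$ using the same relations among the $i$'s and $p$'s, and establish uniqueness by pre-composing with $\mathrm{id}_Y=i_1\circ p_1+i_2\circ p_2$. Finally, since products and coproducts in any category are unique up to a unique isomorphism compatible with the structural morphisms, this justifies writing $Y=X_1\times X_2=X_1\oplus X_2$ unambiguously.

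I don't expect any serious obstacle: the argument is a textbook manipulation, and the only subtle point is to use biadditivity of $\circ$ (part of axiom (A1)) whenever I expand a composition through a sum. The hardest conceptual step, if any, is recognizing that the relation $i_1\circ p_1+i_2\circ p_2=\mathrm{id}_Y$ is precisely what forces uniqueness of the factorization in both universal properties.
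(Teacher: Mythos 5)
Your proposal is correct and follows essentially the same route as the paper: both construct the induced morphism $i_1\circ f_1+i_2\circ f_2$ (resp.\ $g_1\circ p_1+g_2\circ p_2$) and verify the universal properties using the relations of (A3). Your uniqueness argument via pre-/post-composition with $\mathrm{id}_Y=i_1\circ p_1+i_2\circ p_2$ is a nice explicit rendering of what the paper dismisses as ``unique by construction.''
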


\begin{proof}
	We need only check that, given a further $Y'\in\text{obj}(\mathsf{C})$ with associated diagram
	\[
	\begin{tikzcd}
		X_1\arrow[r, yshift=1ex, "i_1'"] & Y'\arrow[l, yshift=-0.5ex, "p_1'"]\arrow[r, yshift=-0.5ex, "p_2'"'] & X_2\arrow[l, yshift=1ex, "i_2'"']
	\end{tikzcd}
	\]
	as in \eqref{directprodsum}, the universal properties for the categorical direct product and direct sum are satified: for any pairs of morphisms $X_1\xleftarrow{f_1}Y'\xrightarrow{f_2}X_2$ respectively $X_1\xrightarrow{g_1}Y'\xleftarrow{g_2}X_2$ there exist dashed morphisms $\varphi:Y'\rightarrow Y$ respectively $\phi:Y\rightarrow Y'$ in $\mathsf{C}$ such that the diagrams
	\begin{equation}
		\begin{tikzcd}
			Y'\arrow[dr, dashed, "\varphi"']\arrow[drr, out=0, in=150, "f_1"]\arrow[ddr, out=270, in=135, "f_2"'] & & \\ & Y\arrow[r, "p_1"']\arrow[d, "p_2"] & X_1 \\
			& X_2 & 
		\end{tikzcd}
		\qquad\text{and}\qquad
		\begin{tikzcd}
			Y' & & \\ & Y\arrow[ul, dashed, "\phi"] & X_1\arrow[l, "i_1"]\arrow[ull, out=150, in=0, "g_1"'] \\
			& X_2\arrow[u, "i_2"']\arrow[uul, out=145, in=270, "g_2"] & 
		\end{tikzcd}
	\end{equation}
	commute. The former is given by $\varphi\coloneqq i_1\circ f_1+ i_2\circ f_2$ (indeed, by axiom (A3), we have $p_1\circ\varphi = \text{id}_{X_1}\circ f_1+0\circ f_2=f_1$ and $p_2\circ\varphi = 0\circ f_1+\text{id}_{X_2}\circ f_2=f_2$), while the latter by $\phi \coloneqq g_1\circ p_1 + g_2\circ p_2$ (so that $\phi\circ i_1=g_1$ and $\phi\circ i_2=g_2$). They are unique by construction. In particular, taking $f_k=p_k'$ and $g_k=i_k'$ for $k=1,2$ we get $\varphi\circ\phi=\text{id}_Y$ and $\phi\circ\varphi=\text{id}_{Y'}$, which confirms that $Y'\cong Y$, hence the uniqueness of \eqref{directprodsum} up to canonical isomorphism.  
\end{proof}

Therefore, axiom (A3) is just the requisite that any two objects admit a direct sum within the additive category, with the morphisms $p_k, i_k$ taking the obvious roles of projections onto and inclusions into the respective factors. Of course, direct sums of any finite\footnote{If the direct sum of any collection of objects, even \textit{infinitely} many, is itself an object, we call the category \textit{cocomplete}. The analogue property for products makes it \textit{complete}.} collection of objects belong to the additive category as well.
Also, for any $Z\in\text{obj}(\mathsf{C})$, the inclusions $i_k:X_k\rightarrow X_1\oplus X_2$ induce the isomorphism of abelian groups
\[
\text{Hom}_\mathsf{C}(Z,X_1)\times\text{Hom}_\mathsf{C}(Z,X_2)\xrightarrow{\sim}\text{Hom}_\mathsf{C}(Z,X_1\oplus X_2),\,(f,g)\mapsto i_1\circ f + i_2\circ g\;.
\]

Moreover, (A1) and (A2) allow us to give a categorical definition of kernel and cokernel of any given morphism. Namely, a morphism $\varphi:X\rightarrow Y$ in a category $\mathsf{C}$ respecting these two axioms induces two functors $\text{ker}\varphi:\mathsf{C}^\text{opp}\rightarrow\mathsf{Ab}$ and $\text{coker}\varphi=(\text{ker}(\varphi^\text{opp}))^\text{opp}$ defined in terms of $X$ and $Y$, and unique whenever existing (here $\varphi^\text{opp}:Y\rightarrow X$ denotes the morphism dual to $\varphi$). Fortunately, we can avoid these tedious constructions --- outlined in \cite[subsections II.5.6--II.5.8]{[GM03]} --- by noting that in almost all the additive categories we are concerned with, the well-known, set-theoretical definitions apply: $\ker(\varphi)$ is the preimage of the zero object and $\text{coker}(\varphi)$ is the target space quotiented by the image of $\varphi$, themselves identified as objects of the category rather than functors on it. However, this is not evident for the category of sheaves of abelian groups: although the kernel sheaf of a morphism of sheaves is itself a sheaf, the analogue for the cokernel is trickier to prove and requires some adjustments; more about this in Section \ref{ch5.1}.

Therefore, we limit ourselves to the following slightly flavoured redefinitions (and invite the reader to check their compatibility when for example $\mathsf{C}=\mathsf{Ab}$).

\begin{Def}\label{monepicoker}
	Let $\mathsf{C}$ be an additive category, $f\in\text{Hom}_\mathsf{C}(X,Y)$. Then $f$ is:
	\begin{itemize}[leftmargin=0.5cm]
		\item injective, or a \textbf{monomorphism}\index{monomorphism},\footnote{When talking about abstract additive/abelian categories, we favour the terminology mono-/epimorphism over injective/surjective morphism, so to draw a distinction with best known categories such as $\mathsf{Ab}$. Also, we will sometimes resort to arrows $\hookrightarrow$ for monomorphisms and $\twoheadrightarrow$ for epimorphisms.} if the group homomorphism given by post-composition $f_*:\text{Hom}_\mathsf{C}(W,X)\rightarrow\text{Hom}_\mathsf{C}(W,Y),\,\varphi\mapsto f\circ\varphi$ is injective for all $W\in\text{obj}(\mathsf{C})$;
		
		\item surjective, or an \textbf{epimorphism}\index{epimorphism}, if the group homomorphism given by pre-composition $f^*:\text{Hom}_\mathsf{C}(Y,Z)\rightarrow\text{Hom}_\mathsf{C}(X,Z),\,\varphi\mapsto\varphi\circ f$ is injective for all $Z\in\text{obj}(\mathsf{C})$.
	\end{itemize}
	Now, with reference to diagram \eqref{kercokerdiag} below, we say that:
	\begin{itemize}[leftmargin=0.5cm]
		\item A morphism $k\in\text{Hom}_\mathsf{C}(K,X)$ is a \textbf{kernel}\index{kernel} of $f$ if the sequence
		\[
		\{0\}\rightarrow\text{Hom}_\mathsf{C}(W,K)\xrightarrow{k_*}\text{Hom}_\mathsf{C}(W,X)\xrightarrow{f_*}\text{Hom}_\mathsf{C}(W,Y)
		\]
		is exact in $\mathsf{Ab}$ for all $W\in\text{obj}(\mathsf{C})$; equivalently, $k$ is a kernel of $f$ if $f\circ k=0$ and any $k'\in\text{Hom}_\mathsf{C}(K',X)$ such that $f\circ k'=0$ can be factored as $k'=k\circ h$ for a unique $h\in\text{Hom}_\mathsf{C}(K',K)$. Then $k$ is necessarily a monomorphism, and we should rather write $\ker(f)\coloneqq k:K\rightarrow X$, though we directly call $\ker(f)\equiv K \in\text{obj}(\mathsf{C})$ the kernel of $f$.
		
		\item A morphism $c\in\text{Hom}_\mathsf{C}(Y,C)$ is a \textbf{cokernel}\index{cokernel} of $f$ if the sequence
		\[
		\{0\}\rightarrow\text{Hom}_\mathsf{C}(C,Z)\xrightarrow{c^*}\text{Hom}_\mathsf{C}(Y,Z)\xrightarrow{f^*}\text{Hom}_\mathsf{C}(X,Z)
		\]
		is exact in $\mathsf{Ab}$ for all $Z\in\text{obj}(\mathsf{C})$; equivalently, $c$ is a cokernel of $f$ if $c\circ f=0$ and any $c'\in\text{Hom}_\mathsf{C}(Y,C')$ such that $c'\circ f= 0$ factors as $c'=h'\circ c$ for a unique $h'\in\text{Hom}_\mathsf{C}(C,C')$. Then $c$ is necessarily an epimorphism, and we should rather write $\text{coker}(f)\coloneqq c:Y\rightarrow C$, though we directly call $\text{coker}(f)\equiv C\in\text{obj}(\mathsf{C})$ the cokernel of $f$.
	\end{itemize}
	\begin{equation}\label{kercokerdiag}
		\begin{tikzcd}[row sep=small]
			\ker(f)\equiv K\arrow[dr, hook, "k"] & & & \text{coker}(f)\equiv C\arrow[dd, dashed, "h'"] \\
			& X\arrow[r, "f"] & Y\arrow[ur, two heads, "c"]\arrow[dr, "c'"'] & \\
			K'\arrow[uu, dashed, "h"]\arrow[ur, "k'"'] & & & C' 
		\end{tikzcd}
	\end{equation}
	By definition, when existing, kernels and cokernels are unique up to isomorphism, leaving the notation unambiguous.
\end{Def}

\subsection{Abelian categories}\label{ch1.2}

Now, for abelian categories only an additional piece of data is required:

\begin{Def}\label{abcat}
	An additive category $\mathsf{A}$ is an \textbf{abelian category}\index{abelian category} if the following axiom is fulfilled:
	\begin{itemize}[leftmargin=0.5cm]
		\renewcommand{\labelitemi}{\textendash}
		\item (\textbf{A4}) Given $X,Y\in\text{obj}(\mathsf{A})$, any morphism $f\in\text{Hom}_\mathsf{A}(X,Y)$ admits a \textbf{canonical decomposition}\index{canonical decomposition} in $\mathsf{A}$ of the form
		\begin{equation}\label{candecompo}
			\begin{tikzcd}
				\ker(f)\arrow[r, hook, "k"] & X\arrow[r, "q"] & I\arrow[r, "j"] & Y\arrow[r, two heads, "c"] & \text{coker}(f)\;,
			\end{tikzcd}
		\end{equation}
		with $j\circ q=f$ and $\text{coker}(k)=I=\ker(c)\in\text{obj}(\mathsf{A})$ (where, by Definition \ref{monepicoker}, we shorten $k=\ker(f)$ and $c=\text{coker}(f)$). In particular, any other decomposition is isomorphic to \eqref{candecompo} through a uniquely determined isomorphism. 
	\end{itemize}
\end{Def}

\begin{Rem}
	\ \vspace*{0.0cm}
	\begin{itemize}[leftmargin=0.5cm]
		\item We can define the \textbf{image}\index{image} of $f$ to be $\text{im}(f)\coloneqq\ker(\text{coker}(f))$ and the \textbf{coimage}\index{coimage} of $f$ to be $\text{coim}(f)\coloneqq\text{coker}(\ker(f))$. Then the full expansion of the canonical decomposition \eqref{candecompo} actually reads
		\[
		\begin{tikzcd}
			\ker(f)\arrow[r, hook, "\ker(f)"] & X\arrow[rrr, "f"]\arrow[dr, two heads, "\text{coim}(f)"'] & & & Y\;\arrow[r, two heads, "\text{coker}(f)"] & \;\text{coker}(f) \\
			& & \text{coim}(f)\arrow[r, "\hat{f}"', "\sim"] & \text{im}(f)\arrow[ur, hook, "\text{im}(f)"'] & & 
		\end{tikzcd}\quad,
		\]
		
		\item Axiom (A4) therefore demands the existence of kernel and cokernel for each morphism in $\mathsf{A}$, and requires $\hat{f}$ to be an isomorphism, so that we may write $I\coloneqq\text{coim}(f)\cong\text{im}(f)$ (any other decomposition by a triple $(I',q',j')$ is isomorphic to \eqref{candecompo} in the sense that $I'\cong I$ canonically\footnote{A proof can be found in \cite[Proposition VIII.3.1]{[Mac78]}.}). In fact, any monomorphism can now be written as kernel of a suitable morphism \big(with reference to \eqref{candecompo}, $f\cong\ker(c)=\ker(\text{coker}(f))$\big) and any epimorphism as a cokernel \big($f\cong\text{coker}(k)=\text{coker}(\ker(f))$\big).
		
		\item Suppose $\ker(f)=0=\text{coker}(f)$. Then $\text{id}_Y:Y\rightarrow Y$ is a valid kernel for $c:Y\rightarrow 0$ and $\text{id}_X:X\rightarrow X$ a valid cokernel for $k:0\rightarrow X$, thus both unique up to isomorphism, which in our notation translates to $\ker(c)\cong Y$ and $\text{coker}(k)\cong X$. Consequently, the axiom implies $X\cong Y$ as objects of $\mathsf{A}$. Rephrased (using Lemma \ref{aboutmonepi} below), in abelian categories any morphism which is both mono and epi is an isomorphism.
		
		\item If $\mathsf{A}$ is additive/abelian, then its dual category $\mathsf{A}^\text{opp}$ obtained by setting $\text{obj}(\mathsf{A}^\text{opp})=\text{obj}(\mathsf{A})$ and $\text{Hom}_{\mathsf{A}^\text{opp}}(X,Y)=\text{Hom}_\mathsf{A}(Y,X)$ (reversing all assignments) is itself additive/abelian. In particular, canonical decompositions are preserved by dualization and mono-/epimorphisms turn into respectively epi-/monomorphisms.
	\end{itemize}
\end{Rem}

\begin{Ex}
	The prototypical example of abelian category is of course $\mathsf{Ab}$, the \textbf{category of abelian groups}\index{category!of abelian groups} and group homomorphisms between them. The known operations clearly fulfill axioms (A1)--(A3), with any trivial group being a valid zero object, mono-/epimorphisms being given by injective/surjective group homomorphisms and co-/kernels coinciding with their algebraical definitions. Existence of a canonical decomposition for $f:X\rightarrow Y$ is guaranteed by the isomorphism theorem for groups: just take $I\coloneqq X/\ker(f)$, $k:\ker(f)\rightarrow X$ the obvious inclusion, $c:Y\rightarrow \text{coker}(f)$ and $q:X\rightarrow X/\ker(f)$ the obvious projections, and finally $j:X/\ker(f)\xrightarrow{\sim}\text{im}(f)$ the asserted group isomorphism; then clearly $j\circ q=f$ and $\text{coker}(k)=X/\text{im}(k)=X/\ker(f)\cong \text{im}(f)=\ker(c)$.
	
	Other examples of abelian categories are the category $\mathsf{Vect}_\mathbb{K}$ of finite-dimen-\break sional vector spaces over a field $\mathbb{K}$ and, more generally, the \textbf{categories ${}_R\mathsf{Mod}$, $\mathsf{Mod}_R$ of left/right $R$-modules}\index{category!of left/right modules} over a ring $R$ (remember that $\mathsf{Mod}_\mathbb{Z}=\mathsf{Ab}$, and in case $R$ is commutative it holds ${}_R\mathsf{Mod}=\mathsf{Mod}_R$). \hfill $\blacklozenge$
\end{Ex}

Working in abelian categories is very advantageous: well-known notions and results from the category $\mathsf{Ab}$ can be implemented and exploited one-to-one. Therefore, we can speak of (co)chain complexes, resolutions, exactness of sequences, (co)homology groups, ... and make use of the Five-Lemma, Snake Lemma and other diagram chase methods. For example:

\begin{Def}\label{abeliansubcat}
	Let $\mathsf{A}$ be an abelian category, $\mathsf{B}\subset\mathsf{A}$ a subcategory. Then $\mathsf{B}$ is an \textbf{abelian subcategory}\index{abelian subcategory} if it is itself abelian and any short exact sequence $0\rightarrow X\xrightarrow{f} Y\xrightarrow{g} Z\rightarrow 0$ in $\mathsf{B}$ (meaning $\ker(f)=0=\text{coker}(g)$ and $\text{im}(f)=\ker(g)$ as subobjects of $\mathsf{B}$ each) is also short exact in $\mathsf{A}$.  
\end{Def} 

A few words to gain further familiarity with mono and epimorphisms in the case of \textit{concrete} abelian categories (whose objects may be regarded as ``sets with extra structure'', thus legitimizing the notation $x\in X$, etc.). 

\begin{Lem}\label{aboutmonepi}
	In a concrete abelian category, a morphism $f:X\rightarrow Y$ is:
	\begin{itemize}[leftmargin=0.5cm]
		\item a monomorphism if and only if it holds $(f(x)=0\in Y$ $\implies$ $x=0\in X)$, or equivalently, if and only if $\ker(f)=0$ \textup(the zero morphism\textup);
		
		\item an epimorphism if and only if it holds \textup($\forall y\in Y$ $\exists x\in X$ s.t. $f(x)=y$\textup), or equivalently, if and only if $\textup{coker}(f)=0$ \textup(the zero morphism\textup). 
	\end{itemize}
	Furthermore, in any abelian category, when given a monomorphism/\!epimorphism $f:X\rightarrow Y$, we can complete it to a short exact sequence of the form 
	\[
	\begin{tikzcd}[column sep=1.1em]
		0\arrow[r] & X\arrow[r, hook, "f"] & Y\arrow[r, two heads, "c"] & \textup{coker}(f)\arrow[r] & 0 & \textup{resp.} & 0\arrow[r] & \ker(f)\arrow[r, hook, "k"] & X\arrow[r, two heads, "f"] & Y\arrow[r] & 0\;.	
	\end{tikzcd}\vspace*{-0.1cm}
	\]
	Consequently, any short exact sequence $0\rightarrow X\xrightarrow{f} Y\xrightarrow{g} Z\rightarrow 0$ is canonically isomorphic to $0\rightarrow \ker(g)\hookrightarrow Y\twoheadrightarrow \textup{coker}(f)\rightarrow 0$ .
\end{Lem}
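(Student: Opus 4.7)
My plan is to split the proof into three pieces: the internal equivalences within each bullet point, the two short-exact-sequence completions, and finally the canonical form of a general SES.

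For the first piece, I would actually establish the stronger statement that in \emph{any} abelian category, $f$ is a monomorphism iff the object $\ker(f)$ is zero; concreteness then enters only to translate $\ker(f)=0$ into the set-theoretic implication $(f(x)=0\Rightarrow x=0)$. For ``$\Leftarrow$'', given $\varphi_1,\varphi_2:W\to X$ with $f\varphi_1=f\varphi_2$, biadditivity (A1) yields $f\circ(\varphi_1-\varphi_2)=0$, so the universal property of the kernel factors the difference through $0$ and hence it vanishes. For ``$\Rightarrow$'', the kernel map $k:\ker(f)\to X$ satisfies $f\circ k=0=f\circ 0$, so $k=0$; since $k$ is itself a monomorphism (Definition \ref{monepicoker}), this forces $\text{id}_{\ker(f)}=0$, making $\ker(f)$ a zero object. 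In the concrete setting, the underlying set of $\ker(f)$ is $\{x\in X:f(x)=0\}$, which closes the equivalence. The epimorphism version is obtained by dualization, invoking the concrete description $\text{coker}(f)\cong Y/\text{im}(f)$ to read off surjectivity.

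For the second piece, given a monomorphism $f$ I take $c=\text{coker}(f)$ and check that $0\to X\xrightarrow{f} Y\xrightarrow{c}\text{coker}(f)\to 0$ is exact: exactness at $X$ is the first piece; exactness at $\text{coker}(f)$ holds because cokernels are epimorphisms (Definition \ref{monepicoker}); and exactness at $Y$ reads $\text{im}(f)=\ker(c)$, which is exactly the identification built into axiom (A4). The epi version is dual.

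For the third piece, starting with an exact $0\to X\xrightarrow{f} Y\xrightarrow{g} Z\to 0$, the iso $\hat f$ of (A4) together with exactness at $Y$ gives a canonical $X\xrightarrow{\sim}\text{im}(f)=\ker(g)$. Symmetrically, exactness at $Z$ forces $g$ epi, so $Z$ is canonically identified with $\text{coim}(g)=\text{coker}(\ker g)$; and since (A4) factors $f$ through an epi onto $\text{im}(f)=\ker(g)$ followed by the inclusion into $Y$, the cokernel of $f$ in $Y$ coincides with $\text{coker}(\ker(g)\hookrightarrow Y)=\text{coim}(g)\cong Z$, whence $\text{coker}(f)\cong Z$. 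Plugging these isomorphisms as the outer vertical arrows (with $\text{id}_Y$ in the middle) of the obvious ladder between the two sequences yields the canonical iso of short exact sequences. The main obstacle I anticipate is precisely this third piece --- not the construction of the vertical isomorphisms, but verifying that they are genuinely \emph{canonical} (forced by universal properties rather than by an arbitrary choice) and that the resulting ladder squares commute. This boils down to a short but careful diagram chase leveraging the uniqueness clauses in the kernel/cokernel/image definitions.
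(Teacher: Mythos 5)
The paper states this lemma without proof (it is invoked as a standard fact), so there is no authorial argument to compare against; judged on its own, your proof is correct and complete. The decomposition into three pieces is the natural one, and each step checks out: the mono criterion via (A1) and the universal property of the kernel (together with the observation that $k$ is itself mono, forcing $\mathrm{id}_{\ker(f)}=0$), the dual statement for epis, exactness of the completed sequences being essentially definitional once one unwinds $\text{im}(f)=\ker(\text{coker}(f))$, and the identification $\text{coker}(f)\cong\text{coker}(\ker(g)\hookrightarrow Y)$ via the fact that the coimage map $q$ in the canonical decomposition is epi. Your own diagnosis of where the real work lies is accurate: the vertical isomorphisms in the final ladder are forced by the uniqueness clauses of the kernel/cokernel universal properties, and the right-hand square commutes by construction since the isomorphism $\text{coker}(f)\xrightarrow{\sim}Z$ is precisely the unique factorization of $g$ through $c$. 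The only caveat worth recording is one about the statement rather than your proof: the translation of ``epimorphism'' into set-theoretic surjectivity relies on the cokernel admitting its naive description $Y/\text{im}(f)$, which the paper itself later warns fails for sheaves (Lemma \ref{aboutmonepisheaves}); your argument correctly isolates that this is exactly where concreteness is used.
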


Here are a couple properties which will provide a shortcut in the construction of derived functors:

\begin{Def}\label{injprojobj}
	Let $\mathsf{A}$ be an abelian category.
	\begin{itemize}[leftmargin=0.5cm]
		\item An $X\in\text{obj}(\mathsf{A})$ is called \textbf{injective object}\index{object!injective} if for every monomorphism $f\in\text{Hom}_\mathsf{A}(Y,Z)$ and any $g\in\text{Hom}_\mathsf{A}(Y,X)$ there exists some $h\in\text{Hom}_\mathsf{A}(Z,X)$ such that $h\circ f= g$ (left diagram below). 
		\newline An $X\in\text{obj}(\mathsf{A})$ is called \textbf{projective object}\index{object!projective} if for every epimorphism $f\in\text{Hom}_\mathsf{A}(Y,Z)$ and any $g\in\text{Hom}_\mathsf{A}(X,Z)$ there exists some $h\in\text{Hom}_\mathsf{A}(X,Y)$ such that $f\circ h= g$ (right diagram).
		\begin{equation}\label{injprojobjdiag}
			\begin{tikzcd}
				Y\arrow[rr, hook, "f"]\arrow[dr, "g"'] & & Z\arrow[dl, dashed, "h"] \\
				& X &
			\end{tikzcd}
			\qquad\qquad
			\begin{tikzcd}
				Y\arrow[rr, two heads, "f"] & & Z \\
				& X\arrow[ul, dashed, "h"]\arrow[ur, "g"'] &
			\end{tikzcd}
		\end{equation}
		
		\item Then $\mathsf{A}$ \textbf{has enough injectives}\index{has enough!injectives} if every $X\in\text{obj}(\mathsf{A})$ admits a \textbf{(right) injective resolution}\index{resolution!(right) injective} by injective objects $\{I^i\}_{i\geq 0}\subset\text{obj}(\mathsf{A})$, that is, there exists a possibly infinite complex
		\[
		0\longrightarrow X\xrightarrow{\varepsilon_X} I^0\longrightarrow I^1\longrightarrow I^2\longrightarrow ...
		\] 
		($X\xrightarrow{\varepsilon_X}I^\bullet$ for short) everywhere exact. 
		\newline Instead, $\mathsf{A}$ \textbf{has enough projectives}\index{has enough!projectives} if every $X\in\text{obj}(\mathsf{A})$ admits a \textbf{(left) projective resolution}\index{resolution!(left) projective} by projective objects $\{P^i\}_{i\leq 0}\subset\text{obj}(\mathsf{A})$, that is, there exists a possibly infinite complex
		\[
		... \longrightarrow P^{-2}\longrightarrow P^{-1}\longrightarrow P^0\xrightarrow{\varepsilon_X} X\longrightarrow 0
		\] 
		($P^\bullet\xrightarrow{\varepsilon_X}X$ for short) everywhere exact.
	\end{itemize}
\end{Def}

For completeness' sake, we also explain how cohomology is defined for generic abelian categories (see \cite[subsections II.6.2--II.6.3]{[GM03]}).

\begin{Def}\label{abeliancohomology}
	Let $\mathsf{A}$ be an abelian category, $X^\bullet\equiv(X^\bullet,d^\bullet)$ a (cochain) complex $(...\xrightarrow{d^{n-1}}X^n\xrightarrow{d^n}X^{n+1}\xrightarrow{d^{n+1}}...)$ in $\mathsf{A}$ (hence $d^n\circ d^{n-1}=0$ for all $n\in\mathbb{Z}$; more in Section \ref{ch2.1}). Then the \textbf{$n$-th cohomology object}\index{object!nth@$n$-th cohomology} of $X^\bullet$ is
	\[
	H^n(X^\bullet)\coloneqq \text{coker}(a^{n-1})\cong\ker(b^n)\in\text{obj}(\mathsf{A})
	\]
	(an object of $\mathsf{A}$ because the latter contains co-/kernels!), where $a^{n-1}, b^n$ are determined by the commutative diagram
	\begin{equation}\label{cohomodiag}
		\begin{tikzcd}
			& \text{coker}(d^{n-1})\arrow[dr, dashed, "b^n"]\arrow[r, two heads] & \text{im}(d^n)\arrow[d, hook] \\
			X^{n-1}\arrow[r, "d^{n-1}"]\arrow[dr, dashed, "a^{n-1}"]\arrow[d, two heads] & X^n\arrow[u, two heads]\arrow[r, "d^n"] & X^{n+1} \\
			\text{im}(d^{n-1})\arrow[r, hook]& \ker(d^n)\arrow[u, hook] & \\
		\end{tikzcd}\quad.\vspace*{-0.5cm}
	\end{equation}
	The complex $X^\bullet$ is called \textit{exact} if it is acyclic at each term, that is, $H^n(X^\bullet)=0$ for all $n\in\mathbb{Z}$.
\end{Def}

It is straightforward to see that in $\mathsf{Ab}$ or ${}_R\mathsf{Mod}$ the definition of cohomology lines up with the usual one: $H^n(X^\bullet)=\text{coker}(a^{n-1})=\ker(d^n)/\text{im}(a^{n-1})=$ $\ker(d^n)/\text{im}(d^{n-1})=\ker\big(X^n\!\rightarrow\!\text{im}(d^n)\big)/\text{im}(d^{n-1})\cong\ker\big(\text{coker}(d^{n-1})\!\twoheadrightarrow\!\text{im}(d^n)\big)$ $=\ker(b^n)$.

\subsection{Functors between abelian categories}\label{ch1.3}

Let us briefly explore the behaviour of functors between abelian categories.

\begin{Def}\label{additiveexactfunc}
	Let $\mathsf{C},\mathsf{C}'$ be additive categories. A functor $\mathsf{F}:\mathsf{C}\rightarrow\mathsf{C}'$ is \textbf{additive}\index{functor!additive} if  the maps $\mathsf{F}_{X,Y}:\text{Hom}_\mathsf{C}(X,Y)\rightarrow\text{Hom}_{\mathsf{C}'}(\mathsf{F}(X),\mathsf{F}(Y))$ are homomorphisms of abelian groups for any $X,Y\in\text{obj}(\mathsf{C})$. If moreover $\mathsf{C}$ and $\mathsf{C}'$ are $\mathbb{K}$-linear, $F$ is \textit{linear} when these maps are actually $\mathbb{K}$-linear maps of vector spaces.
	
	Let $\mathsf{A},\mathsf{A}'$ be abelian categories. An additive functor $\mathsf{F}:\mathsf{A}\rightarrow\mathsf{A}'$ is \textbf{exact}\index{functor!exact (between abelian categories)} if it maps short exact sequences $0\rightarrow X\rightarrow Y\rightarrow Z\rightarrow 0$ in $\mathsf{A}$ to short exact sequences $0\rightarrow \mathsf{F}(X)\rightarrow \mathsf{F}(Y)\rightarrow \mathsf{F}(Z)\rightarrow 0$ in $\mathsf{A}'$. It is just \textbf{left/right exact}\index{functor!left/right exact} if it fails to be exact at $\mathsf{F}(Z)$ respectively $\mathsf{F}(X)$.
\end{Def}

As a standard exercise (carried out in \cite[Propositions II.6.5, II.6.6]{[GM03]}), we have:

\begin{Ex}\label{exactfuncexample}
	Given an abelian category $\mathsf{A}$ and a fixed $X\in\text{obj}(\mathsf{A})$, the (additive) Hom-functors $\text{Hom}_\mathsf{A}(X,\square):\mathsf{A}\rightarrow\mathsf{Ab}$ and $\text{Hom}_\mathsf{A}(\square,X):\mathsf{A}^\text{opp}\rightarrow\mathsf{Ab}$ are both left exact.
	
	Given a ring $R$ and a fixed right/left module $X$ over $R$, the (additive) functors $X\otimes_R \square:{}_R\mathsf{Mod}\rightarrow\mathsf{Ab}$ and $\square\otimes_R X:\mathsf{Mod}_R\rightarrow\mathsf{Ab}$ are both right exact. (If they are also left exact, we call $X$ a flat module over $R$.)	\hfill $\blacklozenge$
\end{Ex}

The Hom-functors are especially useful for identifying injective and projective objects of an abelian category (cf. Definition \ref{injprojobj}).

\begin{Lem}\label{Homforinjprojobj}
	Let $\mathsf{A}$ be an abelian category, $X\in\textup{obj}(\mathsf{A})$ fixed. Then the left exact functors $\textup{Hom}_\mathsf{A}(X,\square)$ and $\textup{Hom}_\mathsf{A}(\square,X)$ are exact if and only if $X$ is a projective respectively an injective object. 
\end{Lem}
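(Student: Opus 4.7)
The plan is to handle both statements in parallel, since each reduces — via the already-established left exactness from Example \ref{exactfuncexample} — to checking exactness at a single spot, which in turn matches the lifting/extension condition in Definition \ref{injprojobj} word for word.

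First I would fix a short exact sequence $0\to Y'\xrightarrow{i} Y\xrightarrow{p} Y''\to 0$ in $\mathsf{A}$ and apply $\text{Hom}_\mathsf{A}(X,\square)$ to obtain the left exact sequence
\[
0\longrightarrow \text{Hom}_\mathsf{A}(X,Y')\xrightarrow{i_*}\text{Hom}_\mathsf{A}(X,Y)\xrightarrow{p_*}\text{Hom}_\mathsf{A}(X,Y'')\,.
\]
Exactness of the covariant Hom-functor then amounts solely to the surjectivity of $p_*$, i.e.\ to the assertion that every $g\in\text{Hom}_\mathsf{A}(X,Y'')$ lifts along the epimorphism $p$ to some $h\in\text{Hom}_\mathsf{A}(X,Y)$ with $p\circ h=g$. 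This is precisely the defining property (right diagram in \eqref{injprojobjdiag}) of $X$ being projective, provided the relevant epimorphism ranges over all of $\mathsf{A}$; but by Lemma \ref{aboutmonepi} any epimorphism $p:Y\twoheadrightarrow Y''$ in $\mathsf{A}$ can be completed to a short exact sequence $0\to \ker(p)\hookrightarrow Y\twoheadrightarrow Y''\to 0$, so testing on short exact sequences is equivalent to testing on epimorphisms. Both implications follow immediately.

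Next I would treat the contravariant case symmetrically. Applying $\text{Hom}_\mathsf{A}(\square, X)$ to the same short exact sequence produces, by Example \ref{exactfuncexample},
\[
0\longrightarrow\text{Hom}_\mathsf{A}(Y'',X)\xrightarrow{p^*}\text{Hom}_\mathsf{A}(Y,X)\xrightarrow{i^*}\text{Hom}_\mathsf{A}(Y',X)\,,
\]
and exactness of the functor reduces to surjectivity of $i^*$, i.e.\ to every $g\in\text{Hom}_\mathsf{A}(Y',X)$ extending along the monomorphism $i$ to some $h\in\text{Hom}_\mathsf{A}(Y,X)$ with $h\circ i=g$. That is the injectivity condition (left diagram in \eqref{injprojobjdiag}), and again by Lemma \ref{aboutmonepi} any monomorphism in $\mathsf{A}$ fits into a short exact sequence, so the two viewpoints coincide. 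The equivalence is once more immediate in both directions.

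The argument is essentially a matter of comparing definitions, so no serious obstacle arises; the only tiny subtlety worth spelling out is the passage between the short-exact-sequence formulation of exactness and the mono/epi formulation of in-/projectivity, which is supplied by Lemma \ref{aboutmonepi}. Dually, one could also observe that the contravariant case follows formally from the covariant one applied to the opposite category $\mathsf{A}^{\text{opp}}$, exchanging monos with epis and injective objects with projective objects.
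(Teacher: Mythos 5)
Your proof is correct and follows essentially the same route as the paper's: reduce exactness of the left exact Hom-functors to surjectivity of $p_*$ (resp.\ $i^*$), identify that surjectivity with the lifting/extension condition of Definition \ref{injprojobj}, and use the completion of epimorphisms (resp.\ monomorphisms) to short exact sequences from Lemma \ref{aboutmonepi} to pass between the two formulations. The closing remark on deducing the contravariant case via $\mathsf{A}^{\text{opp}}$ is a nice economy the paper does not make, but the substance is identical.
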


\begin{proof}
	Let us start with projectivity. Assume $X$ is projective and consider the short exact sequence $0\rightarrow K\rightarrow Y\rightarrow Z\rightarrow 0$, where the epimorphism $f:Y\twoheadrightarrow Z$ is chosen as in diagram \eqref{injprojobjdiag} right. We must check that $\text{Hom}_\mathsf{A}(X,Y)\rightarrow\text{Hom}_\mathsf{A}(X,Z)\rightarrow 0$ is exact in $\mathsf{Ab}$, that is, that $f_*$ is surjective. Indeed, projectivity of $X$ guarantees that, given any $g\in\text{Hom}_\mathsf{A}(X,Z)$, there exists some $h\in\text{Hom}_\mathsf{A}(X,Y)$ such that $g=f\circ h=f_*(h)$. Conversely, assuming right exactness of $\text{Hom}_\mathsf{A}(X,\square)$, we exploit that any epimorphism $f:Y\twoheadrightarrow Z$ can be completed to a short exact sequence like above by adding its kernel $K\equiv\ker(f)$ on the left. Then, given an arrow $g\in\text{Hom}_\mathsf{A}(X,Z)$, surjectivity of $f_*$ yields an $h\in\text{Hom}_\mathsf{A}(X,Y)$ making \eqref{injprojobjdiag} right commute.
	
	About injectivity. Assume $X$ is injective and consider the short exact sequence $0\rightarrow Y\hookrightarrow Z\rightarrow C\rightarrow 0$, where the monomorphism $f:Y\hookrightarrow Z$ is chosen as in \eqref{injprojobjdiag} left. To prove that $\text{Hom}_\mathsf{A}(Z,X)\rightarrow\text{Hom}_\mathsf{A}(Y,X)\rightarrow 0$ is exact in $\mathsf{Ab}$, that is, that $f^*$ is surjective, we employ injectivity of $X$: for each $g\in\text{Hom}_\mathsf{A}(Y,X)$ there exists some $h\in\text{Hom}_\mathsf{A}(Z,X)$ such that $g=h\circ f=f^*(h)$. The reverse implication works similarly, after having completed the monomorphism $f:Y\hookrightarrow Z$ to a short exact sequence with $C\equiv\text{coker}(f)$ on the right.    
\end{proof}

We mention an important result which justifies the compatibility of abelian categories with ${}_R\mathsf{Mod}$. A proof can be found in \cite[section 1.6]{[Wei94]}.

\begin{Thm}[Freyd--Mitchell Embedding Theorem]
	Let $\mathsf{A}$ be an abelian, small\footnote{A category $\mathsf{A}$ is small if both obj($\mathsf{A}$) and the collection of all its Hom-sets are sets.} category. Then there exists a \textup(unital\textup) ring $R$ and a fully faithful\footnote{A functor $\mathsf{F}:\mathsf{C}\rightarrow\mathsf{D}$ is \textit{full} and \textit{faithful} if $\mathsf{F}_{X,Y}:\textup{Hom}_\mathsf{C}(X,Y)\rightarrow\textup{Hom}_\mathsf{D}(\mathsf{F}(X),\mathsf{F}(Y))$ is surjective respectively injective for all $X, Y \in$ obj$(\mathsf{C})$ (\textit{fully faithful} if both).} exact functor $\mathsf{F}:\mathsf{A}\rightarrow{}_R\mathsf{Mod}$.  
\end{Thm}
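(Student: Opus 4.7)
The plan is to factor the embedding through an intermediate Grothendieck abelian category built via the Yoneda embedding, and then to translate into a module category using an injective cogenerator and classical Gabriel--Popescu-style arguments.

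Concretely, set $\mathsf{G}\coloneqq\text{Lex}(\mathsf{A}^\text{opp},\mathsf{Ab})$, the full subcategory of left exact additive functors, and consider the Yoneda functor $h\colon\mathsf{A}\to\mathsf{G}$, $X\mapsto\text{Hom}_\mathsf{A}(-,X)$ (each representable is indeed left exact by Example \ref{exactfuncexample}, so the corestriction makes sense). By the Yoneda lemma $h$ is fully faithful, and it is left exact. The upgrade to genuine \emph{exactness} is a universal-property computation: for a short exact sequence $0\to X_1\to X_2\to X_3\to 0$ in $\mathsf{A}$ and any $F\in\mathsf{G}$, using $\text{Hom}_\mathsf{G}(h(X),F)=F(X)$ (Yoneda) together with left-exactness of $F$ viewed on $\mathsf{A}^\text{opp}$ one obtains $\text{Hom}_\mathsf{G}(\text{coker}(h(X_1)\to h(X_2)),F)=\ker(F(X_2)\to F(X_1))=F(X_3)=\text{Hom}_\mathsf{G}(h(X_3),F)$, whence $h(X_3)$ is the cokernel in $\mathsf{G}$ by Yoneda once more. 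Moreover, smallness of $\mathsf{A}$ makes $\{h(X)\mid X\in\text{obj}(\mathsf{A})\}$ a \emph{set} of generators for $\mathsf{G}$, and filtered colimits in $\mathsf{G}$ are exact, so $\mathsf{G}$ is a Grothendieck abelian category.

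Being Grothendieck, $\mathsf{G}$ has enough injectives and therefore admits an injective cogenerator $E$; set $R\coloneqq\text{End}_\mathsf{G}(E)^\text{opp}$. A Gabriel--Popescu-type argument exploiting injectivity of $E$ (for exactness), its cogenerator status (for faithfulness), and an additional fullness refinement valid on the image of $h$, produces a fully faithful exact functor from that image into ${}_R\mathsf{Mod}$; composing with $h$ yields the sought $\mathsf{F}\colon\mathsf{A}\to{}_R\mathsf{Mod}$.

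The principal obstacle lies precisely in this last step: a naive functor $\text{Hom}_\mathsf{G}(U,-)$ against a generator $U$ is only left exact (being a right adjoint), so establishing the full exactness of the module-category embedding on the subcategory $h(\mathsf{A})$ requires non-trivial input from Gabriel topology theory and careful control over how cokernels in $\mathsf{G}$ relate to the pointwise cokernels in $\text{Fun}(\mathsf{A}^\text{opp},\mathsf{Ab})$. This technical heart of the Freyd--Mitchell theorem is exactly what justifies the choice of relegating a full proof to \cite{[Wei94]}.
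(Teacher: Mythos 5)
A preliminary remark on the comparison: the paper does not prove this theorem at all --- it cites \cite{[Wei94]} and merely signals, by stating Theorem \ref{YonedaEmbThm} immediately afterwards, that the Yoneda embedding is the first ingredient. So your proposal is being measured against the standard argument of the cited source. Your first stage reproduces that argument correctly: the corestricted Yoneda functor $h\colon\mathsf{A}\to\mathsf{G}=\text{Lex}(\mathsf{A}^\textup{opp},\mathsf{Ab})$ is fully faithful, lands in $\mathsf{G}$ by Example \ref{exactfuncexample}, and your universal-property computation identifying $h(X_3)$ with $\text{coker}_{\mathsf{G}}(h(X_1)\to h(X_2))$ is the right way to upgrade left exactness to exactness (it does presuppose that $\mathsf{G}$ is abelian, i.e.\ that these reflective cokernels exist --- part of the Grothendieck claim, which is standard but not free, and which you correctly flag).

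The genuine gap is the second stage, which is where the entire content of the theorem lives, and your proposal asserts rather than proves it. Two specific points. First, Gabriel--Popescu is the wrong lemma to name: for a generator $U$ of a Grothendieck category it yields a fully faithful $\text{Hom}_{\mathsf{G}}(U,\square)$ with an exact \emph{left adjoint}, but $\text{Hom}_{\mathsf{G}}(U,\square)$ itself is only left exact, so no amount of appeal to it produces the exact embedding you need --- you concede this in your final paragraph, at which point the sentence ``produces a fully faithful exact functor'' has no argument behind it. What actually closes the gap is Mitchell's lemma: in $\mathsf{G}^\textup{opp}$ the injective cogenerator $E$ becomes a projective generator $P$, so $\text{Hom}(P,\square)$ is exact (projectivity) and faithful (generator), and fullness on the image of $h$ is forced by replacing $P$ with a coproduct of copies of $P$ indexed over data drawn from the \emph{set} $\textup{obj}(\mathsf{A})$ --- this is the one place where smallness of $\mathsf{A}$ is used beyond making $\mathsf{G}$ locally small --- after which $R$ is the endomorphism ring of the enlarged generator. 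Second, mind the variance: $\text{Hom}_{\mathsf{G}}(\square,E)$ is contravariant, so composed with the covariant $h$ it embeds $\mathsf{A}$ contravariantly into a module category; the standard remedy is to run the whole construction on $\mathsf{A}^\textup{opp}$ from the start. Without carrying out these steps (or an equivalent), the proposal performs the reduction that every textbook performs and stops exactly where the theorem begins.
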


Therefore, $\mathsf{A}$ is equivalent to a full subcategory of ${}_R\mathsf{Mod}$ (that is, it embeds into ${}_R\mathsf{Mod}$) and all its structures can be identified with their analogues over modules. In particular, we can think of its objects as being left $R$-modules and its morphisms as being $R$-linear maps. The correspondence however fails for injective and projective objects.

In the proof mentioned above, the following familiar-looking theorem is used:

\begin{Thm}[Yoneda Embedding Theorem]\label{YonedaEmbThm}
	Let $\mathsf{A}$ be an additive category, $\mathsf{Fun}(\mathsf{A},\mathsf{Ab})$, $\mathsf{Fun}(\mathsf{A}^\textup{opp},\mathsf{Ab})$ the \textup(abelian!\textup) categories of all covariant respectively contravariant functors from $\mathsf{A}$ to $\mathsf{Ab}$ and natural transformations between them. Then there exist:
	\begin{itemize}	
		\item A left exact functor $h_*:\mathsf{A}^\textup{opp}\rightarrow\mathsf{Fun}(\mathsf{A},\mathsf{Ab})$ sending $X\mapsto\textup{Hom}_\mathsf{A}(X,\square)$ and $f\mapsto\textup{Hom}_\mathsf{A}(f,\square)=f_*$; if moreover $\mathsf{A}^\textup{opp}$ is abelian, then $h_*$ is fully faithful and thus identifies it with the full abelian subcategory $\{h_X:\mathsf{A}\rightarrow\mathsf{Ab}\mid X\in\textup{obj}(\mathsf{A})\}$.
		
		\item A left exact functor $h^*:\mathsf{A}\rightarrow\mathsf{Fun}(\mathsf{A}^\textup{opp},\mathsf{Ab})$ sending $X\mapsto\textup{Hom}_\mathsf{A}(\square, X)$ and $f\mapsto\textup{Hom}_\mathsf{A}(\square, f)=f^*$; if moreover $\mathsf{A}$ is abelian, then $h^*$ is fully faithful and thus identifies it with the full abelian subcategory $\{h^X:\mathsf{A}^\textup{opp}\rightarrow\mathsf{Ab}\mid X\in\textup{obj}(\mathsf{A})\}$.	
	\end{itemize} 
\end{Thm}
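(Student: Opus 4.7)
The two assertions are dual to one another under passage to the opposite category, so the plan is to treat the first (covariant) statement in detail, with the second following by applying the same argument to $\mathsf{A}^{\text{opp}}$ in place of $\mathsf{A}$. First I would verify that $h_*$ is a well-defined additive functor: for each fixed $X \in \text{obj}(\mathsf{A})$, the assignment $W \mapsto \text{Hom}_\mathsf{A}(X, W)$ is a covariant functor $\mathsf{A} \to \mathsf{Ab}$ whose left exactness is already recorded in Example \ref{exactfuncexample}. For a morphism $g \in \text{Hom}_{\mathsf{A}^{\text{opp}}}(X, Y) = \text{Hom}_\mathsf{A}(Y, X)$, pre-composition defines a natural transformation $h_*(g) : \text{Hom}_\mathsf{A}(X, \square) \Rightarrow \text{Hom}_\mathsf{A}(Y, \square)$ with components $\alpha \mapsto \alpha \circ g$. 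Functoriality of $h_*$ and additivity of each map $h_{*,X,Y}$ both follow immediately from the bilinearity of composition postulated in axiom (A1).

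For the left exactness of $h_*$, I would use that the abelian structure on $\mathsf{Fun}(\mathsf{A}, \mathsf{Ab})$ is inherited pointwise from $\mathsf{Ab}$, so that a sequence of natural transformations is exact if and only if its evaluation at every $W \in \text{obj}(\mathsf{A})$ is exact in $\mathsf{Ab}$. A short exact sequence $0 \to X \to Y \to Z \to 0$ in $\mathsf{A}^{\text{opp}}$ corresponds to $0 \to Z \to Y \to X \to 0$ in $\mathsf{A}$, and evaluating its image under $h_*$ at any $W$ yields
\[
0 \to \text{Hom}_\mathsf{A}(X, W) \to \text{Hom}_\mathsf{A}(Y, W) \to \text{Hom}_\mathsf{A}(Z, W),
\]
which is exact by the left exactness of $\text{Hom}_\mathsf{A}(\square, W)$ stated in Example \ref{exactfuncexample}.

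The heart of the matter is the fully faithful assertion, which rests on the classical \emph{Yoneda Lemma}: for any $\mathsf{F} \in \mathsf{Fun}(\mathsf{A}, \mathsf{Ab})$ and any $X \in \text{obj}(\mathsf{A})$, evaluation at the identity yields a bijection
\[
\text{Hom}_{\mathsf{Fun}(\mathsf{A}, \mathsf{Ab})}\bigl(\text{Hom}_\mathsf{A}(X, \square),\, \mathsf{F}\bigr) \xrightarrow{\sim} \mathsf{F}(X), \qquad \eta \mapsto \eta_X(\text{id}_X),
\]
with inverse sending $\xi \in \mathsf{F}(X)$ to the natural transformation whose $W$-component is $\alpha \mapsto \mathsf{F}(\alpha)(\xi)$. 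Specializing $\mathsf{F} = h_*(Y) = \text{Hom}_\mathsf{A}(Y, \square)$ produces an isomorphism $\text{Hom}_{\mathsf{Fun}(\mathsf{A}, \mathsf{Ab})}(h_*(X), h_*(Y)) \cong \text{Hom}_\mathsf{A}(Y, X) = \text{Hom}_{\mathsf{A}^{\text{opp}}}(X, Y)$, and tracing the definitions confirms that this is precisely $h_{*,X,Y}$: starting with $g \in \text{Hom}_\mathsf{A}(Y, X)$, the transformation $h_*(g)$ evaluated at $W = X$ sends $\text{id}_X$ to $\text{id}_X \circ g = g$. Hence $h_*$ is fully faithful, and under the additional abelian hypothesis on $\mathsf{A}^{\text{opp}}$ its essential image inherits an abelian structure, yielding the claimed identification with a full abelian subcategory of $\mathsf{Fun}(\mathsf{A}, \mathsf{Ab})$.

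The only genuinely nontrivial ingredient is the Yoneda Lemma, which the paper does not establish. I expect this to be the main obstacle, although it is in fact a purely formal manipulation: given any $\eta$ and any $\alpha : X \to W$, the naturality square of $\eta$ along $\alpha_* : \text{Hom}_\mathsf{A}(X, X) \to \text{Hom}_\mathsf{A}(X, W)$ forces $\eta_W(\alpha) = \eta_W(\alpha_*(\text{id}_X)) = \mathsf{F}(\alpha)(\eta_X(\text{id}_X))$, which simultaneously proves injectivity (the whole $\eta$ is determined by $\eta_X(\text{id}_X)$) and surjectivity (any $\xi \in \mathsf{F}(X)$ produces a bona fide natural transformation through that very formula). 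With Yoneda in hand, every piece of the statement falls into place, and the contravariant counterpart for $h^*$ follows by dualizing the entire argument.
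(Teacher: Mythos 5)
The paper states Theorem \ref{YonedaEmbThm} without proof (it is invoked as a known ingredient in the Freyd--Mitchell theorem, with the details deferred to the literature), so there is no in-paper argument to compare against; judged on its own, your proof is correct and follows the standard route. The decomposition into (i) well-definedness and additivity of $h_*$ from axiom (A1), (ii) left exactness checked pointwise using the left exactness of $\textup{Hom}_\mathsf{A}(\square,W)$ from Example \ref{exactfuncexample} together with the fact that the abelian structure on $\mathsf{Fun}(\mathsf{A},\mathsf{Ab})$ is inherited objectwise from $\mathsf{Ab}$, and (iii) full faithfulness as an immediate specialization of the Yoneda Lemma to $\mathsf{F}=h_*(Y)$, is exactly how this is done in the references the paper leans on, and your sketch of the Yoneda Lemma itself (determination of $\eta$ by $\eta_X(\textup{id}_X)$ via the naturality square, and the explicit inverse $\xi\mapsto(\alpha\mapsto\mathsf{F}(\alpha)(\xi))$) is complete modulo the routine verification that the latter formula is natural. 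Two minor points worth being aware of: the left exactness claim only makes sense once $\mathsf{A}^{\textup{opp}}$ (resp.\ $\mathsf{A}$) is abelian, since otherwise there are no short exact sequences to test against --- your argument silently and correctly operates under that hypothesis, matching the theorem's own slight imprecision; and the essential image $\{h_X\}$ is a full subcategory that is abelian \emph{as a category} (being equivalent to $\mathsf{A}^{\textup{opp}}$), but it is not an abelian subcategory in the strict sense of Definition \ref{abeliansubcat}, since an epimorphism $h_Y\rightarrow h_Z$ of representables need not be a pointwise surjection in $\mathsf{Fun}(\mathsf{A},\mathsf{Ab})$; this looseness is already present in the theorem statement and is not a defect of your argument.
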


The Yoneda Embedding Theorem also plays a role in the following statement, whose inclusion in the present work might seem a little arbitrary at first, but which will actually prove useful when dealing with exactness of some specific functors between categories of sheaves (see Lemmas \ref{sectfuncexact} and \ref{inversepushforwardexact}).

\begin{Pro}\label{adjointfunc}
	Let $\mathsf{C},\mathsf{C}'$ be additive categories, $\mathsf{F}:\mathsf{C}\rightarrow\mathsf{C}'$, $\mathsf{G}:\mathsf{C}'\rightarrow\mathsf{C}$ a pair of \textup(left respectively right\textup) adjoint functors. That is, suppose there exists a natural isomorphism $\Psi:\textup{Hom}_{\mathsf{C}'}(\mathsf{F}(\square),\blacksquare)\rightarrow\textup{Hom}_\mathsf{C}(\square,\mathsf{G}(\blacksquare))$ of bifunctors $\mathsf{C}^\textup{opp}\times\mathsf{C}'\rightarrow\mathsf{Sets}$, thus yielding bijections $\Psi_{X,Y}:\textup{Hom}_{\mathsf{C}'}(\mathsf{F}(X),Y)\xrightarrow{\sim}\textup{Hom}_\mathsf{C}(X,\mathsf{G}(Y))$ for each $X\in\textup{obj}(\mathsf{C})$, $Y\in\textup{obj}(\mathsf{C}')$. Then $\mathsf{F}$ is right exact and $\mathsf{G}$ is left exact.
\end{Pro}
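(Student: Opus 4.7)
The plan is to leverage the adjunction to transfer left-exactness of representable Hom-functors through $\mathsf{F}$ and $\mathsf{G}$, then appeal to Yoneda-style reasoning to reflect this back to the level of objects. Implicitly, one must read the statement under the further assumption that $\mathsf{C}$ and $\mathsf{C}'$ are in fact abelian, since otherwise the phrase ``left/right exact'' is not defined (cf. Definition \ref{additiveexactfunc}).

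For the left-exactness of $\mathsf{G}$, I would pick a short exact sequence $0\to X\to Y\to Z\to 0$ in $\mathsf{C}'$ and test it against an arbitrary $W\in\text{obj}(\mathsf{C})$. The components $\Psi_{W,\square}$ assemble into a commutative ladder whose top row is obtained by applying the covariant functor $\text{Hom}_{\mathsf{C}'}(\mathsf{F}(W),\square)$ to the SES, and whose bottom row is obtained by applying $\text{Hom}_\mathsf{C}(W,\mathsf{G}(\square))$. The top row is exact at its first two positions by Example \ref{exactfuncexample}, and since the vertical arrows are isomorphisms of abelian groups, the bottom row
\[
0\to\text{Hom}_\mathsf{C}(W,\mathsf{G}(X))\to\text{Hom}_\mathsf{C}(W,\mathsf{G}(Y))\to\text{Hom}_\mathsf{C}(W,\mathsf{G}(Z))
\]
is exact for every $W\in\text{obj}(\mathsf{C})$. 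By the very definition of monomorphism and kernel phrased in Hom-theoretic terms (Definition \ref{monepicoker}), this forces $\mathsf{G}(X)\to\mathsf{G}(Y)$ to be a monomorphism satisfying the universal property of $\ker(\mathsf{G}(Y)\to\mathsf{G}(Z))$, yielding the exactness of $0\to\mathsf{G}(X)\to\mathsf{G}(Y)\to\mathsf{G}(Z)$ in $\mathsf{C}$.

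The right-exactness of $\mathsf{F}$ is entirely dual. Starting from a SES $0\to X\to Y\to Z\to 0$ in $\mathsf{C}$ and a test object $W\in\text{obj}(\mathsf{C}')$, I would apply the contravariant Hom-functor $\text{Hom}_\mathsf{C}(\square,\mathsf{G}(W))$, which is left exact again by Example \ref{exactfuncexample}, and transport the resulting short left-exact sequence through $\Psi^{-1}$ to obtain
\[
0\to\text{Hom}_{\mathsf{C}'}(\mathsf{F}(Z),W)\to\text{Hom}_{\mathsf{C}'}(\mathsf{F}(Y),W)\to\text{Hom}_{\mathsf{C}'}(\mathsf{F}(X),W)
\]
exact for every $W\in\text{obj}(\mathsf{C}')$. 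By the dual Hom-characterization of epimorphism and cokernel in Definition \ref{monepicoker}, this identifies $\mathsf{F}(Z)$ with $\text{coker}(\mathsf{F}(X)\to\mathsf{F}(Y))$ and makes $\mathsf{F}(Y)\to\mathsf{F}(Z)$ an epimorphism, so $\mathsf{F}(X)\to\mathsf{F}(Y)\to\mathsf{F}(Z)\to 0$ is exact in $\mathsf{C}'$.

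The only non-cosmetic step, and the one that deserves genuine care, is the Yoneda reflection: one must argue that the agreement of the Hom-sequences for \emph{all} test objects forces the relevant kernels (resp. cokernels) to be canonically isomorphic inside the target category. This follows at once from the fact that both notions are defined through universal properties phrased exclusively in terms of Hom-functors (Definition \ref{monepicoker}); equivalently, one may invoke the full faithfulness of the Yoneda embeddings $h_*$ and $h^*$ of Theorem \ref{YonedaEmbThm}, which guarantees that representing objects are determined up to unique isomorphism by the functors they represent. No obstruction remains beyond this standard categorical principle.
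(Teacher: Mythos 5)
Your proposal is correct and follows essentially the same route as the paper's own proof: transfer the left-exactness of the representable Hom-functors across the adjunction isomorphisms $\Psi_{W,\square}$, then use the full faithfulness of the Yoneda embedding together with the Hom-theoretic characterizations of Definition \ref{monepicoker} to reflect exactness back to the object level. The only (cosmetic) difference is that you invoke the universal-property formulation of kernels and cokernels directly, whereas the paper verifies $\ker(\beta)=\mathrm{im}(\alpha)$ by testing against the specific objects $X=\mathsf{G}(Y)$ and $X=\ker(\beta)$.
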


\begin{proof}
	Let us start from $\mathsf{G}$, considering a short exact sequence $0\rightarrow Y\xrightarrow{f} Y'\xrightarrow{g} Y''\rightarrow 0$ in $\mathsf{C}'$. Applying the left exact $h_{\mathsf{F}(X)}=\text{Hom}_{\mathsf{C}'}(\mathsf{F}(X),\blacksquare):\mathsf{C}'\rightarrow\mathsf{Ab}$ for any $X\in\text{obj}(\mathsf{C})$ (see Example \ref{exactfuncexample}), we obtain the exact top row in
	\[
	\begin{tikzcd}
		0\arrow[r] & \text{Hom}_{\mathsf{C}'}(\mathsf{F}(X),Y)\arrow[r,"f_*"]\arrow[d, "\Psi_{X,Y}"'] & \text{Hom}_{\mathsf{C}'}(\mathsf{F}(X),Y')\arrow[r,"g_*"]\arrow[d, "\Psi_{X,Y'}"] & \text{Hom}_{\mathsf{C}'}(\mathsf{F}(X),Y'')\arrow[d, "\Psi_{X,Y''}"] \\
		0\arrow[r] & \text{Hom}_\mathsf{C}(X,\mathsf{G}(Y))\arrow[r, "\alpha_*"'] & \text{Hom}_\mathsf{C}(X,\mathsf{G}(Y'))\arrow[r, "\beta_*"'] & \text{Hom}_\mathsf{C}(X,\mathsf{G}(Y''))
	\end{tikzcd}\quad.
	\]
	Consequently, as each vertical map is bijective by assumption, the bottom row is exact as well, again for any $X\in\text{obj}(\mathsf{C})$. Since the Yoneda embedding $h^*$ is fully faithful, meaning for example that 
	\[
	\text{Hom}_{\mathsf{Fun}(\mathsf{A}^\textup{opp},\mathsf{Ab})}\big(\text{Hom}_\mathsf{C}(\square,\mathsf{G}(Y)),\text{Hom}_\mathsf{C}(\square,\mathsf{G}(Y'))\big)\cong\text{Hom}_\mathsf{C}(\mathsf{G}(Y),\mathsf{G}(Y'))\,,
	\]
	the bottom row identifies a unique sequence $0\rightarrow\mathsf{G}(Y)\xrightarrow{\alpha}\mathsf{G}(Y')\xrightarrow{\beta}\mathsf{G}(Y'')$ in $\mathsf{C}$ --- then necessarily $\alpha=\mathsf{G}(f)$ and $\beta=\mathsf{G}(g)$ --- which actually inherits exactness: take $X=\mathsf{G}(Y)$, then $\beta\circ\alpha=(\beta_*\circ\alpha_*)(\text{id}_{\mathsf{G}(Y)})=0$; conversely, take $X=\ker(\beta)$, then the inclusion $\iota:X\hookrightarrow\mathsf{G}(Y')$ satisfies $\beta_*(\iota)=0$, so $\iota\in\ker(\beta_*)=\text{im}(\alpha_*)$ and $\ker(\beta)=\text{im}(\iota)\subseteq\text{im}(\alpha)$, collectively implying that $\ker(\beta)=\text{im}(\alpha)$; and similarly $\ker(\alpha)=0$. This shows that $\mathsf{G}$ is left exact.
	
	For $\mathsf{F}$, we just mirror the procedure: any short exact sequence $0\rightarrow X\xrightarrow{f} X'\xrightarrow{g} X''\rightarrow 0$ in $\mathsf{C}$ induces via the left exact $h^{\mathsf{G}(Y)}=\text{Hom}_\mathsf{C}(\square,\mathsf{G}(Y)):\mathsf{C}^\text{opp}\rightarrow\mathsf{Ab}$ and the bijections $\Psi_{\square,Y}^{-1}$ a family of exact sequences $0\rightarrow\text{Hom}_{\mathsf{C}'}(\mathsf{F}(X''),Y)\rightarrow\text{Hom}_{\mathsf{C}'}(\mathsf{F}(X'),Y)\rightarrow\text{Hom}_{\mathsf{C}'}(\mathsf{F}(X),Y)$ for each $Y\in\text{obj}(\mathsf{C}')$, which in turn uniquely identify the desired sequence $\mathsf{F}(X)\xrightarrow{\mathsf{F}(f)}\mathsf{F}(X')\xrightarrow{\mathsf{F}(g)}\mathsf{F}(X'')\rightarrow 0$, thanks to the fully faithful $h_*$ acting as
	\[
	\text{Hom}_{\mathsf{Fun}(\mathsf{A},\mathsf{Ab})}\big(\text{Hom}_{\mathsf{C}'}(\mathsf{F}(X''),\blacksquare),\text{Hom}_{\mathsf{C}'}(\mathsf{F}(X'),\blacksquare)\big)\cong\text{Hom}_{\mathsf{C}'}(\mathsf{F}(X'),\mathsf{F}(X''))\,.
	\]
	The proof of exactness proceeds as above.
\end{proof}

\newpage

\section{Derived categories}
\thispagestyle{plain}

\subsection{The category of complexes}\label{ch2.1}

In this and next chapter we take a look in greater detail at the process of derivation of categories very briefly outlined in \cite[section 3.1]{[Imp21]}. Our main reference remains the very rich \cite[sections III.1--III.4]{[GM03]}. Let us start with some basic terminology.

\begin{Def}
	Let $\mathsf{A}$ be an abelian category. The \textbf{category $\mathsf{Kom(A)}$ of (cochain) complexes}\index{category!of coc@of (cochain) complexes} over $\mathsf{A}$ is defined as usual:
	\begin{itemize}[leftmargin=0.5cm]
		\renewcommand{\labelitemi}{\textendash}
		\item objects\footnote{Observe that we could regard $X^\bullet$ as a $\mathbb{Z}$-graded object $X=\bigoplus_{n\in\mathbb{Z}} X^n$ endowed with a graded morphism $d_X$ of degree 1 squaring to zero.} are cochain complexes $X^\bullet\equiv(X^\bullet, d^\bullet_X)$ of the form
		\[
		...\rightarrow X^{-1}\xrightarrow{d^{-1}_X} X^0\xrightarrow{d^0_X} X^1\xrightarrow{d^1_X}X^2\rightarrow...\;,
		\]
		for $\{X^n\}_{n\in\mathbb{Z}}\subset\textup{obj}(\mathsf{A})$ and morphisms $\{d^n_X\}_{n\in\mathbb{Z}}$ (differentials) in $\mathsf{A}$ such that $d^n_X\circ d^{n-1}_X=0$ for all $n\in\mathbb{Z}$;
		
		\item morphisms are chain maps $f^\bullet:X^\bullet\rightarrow Y^\bullet$, that is, collections of morphisms $f^n:X^n\rightarrow Y^n$ in $\mathsf{A}$ which commute with the differentials, $f^{n+1}\circ d^n_X = d^n_Y\circ f^n:X^n\rightarrow Y^{n+1}$ for all $n\in\mathbb{Z}$. Composition of any compatible pair of morphisms happens componentwise. 
	\end{itemize}
	If we consider as objects only those complexes bounded from below/above, meaning $X^n=0$ for all $n<n_-\in\mathbb{Z}$ respectively $n>n_+\in\mathbb{Z}$, then we rather write $\mathsf{Kom^+(A)}$ respectively $\mathsf{Kom^-(A)}$, and let $\mathsf{Kom^b(A)}\coloneqq\mathsf{Kom^+(A)}\cap\mathsf{Kom^-(A)}$ denote the \textbf{category of bounded complexes}\index{category!of bounded complexes}. All these are full subcategories of $\mathsf{Kom(A)}$.
\end{Def}

For example, projective resolutions belong to $\mathsf{Kom^-(A)}$, while injective ones to $\mathsf{Kom^+(A)}$. As a good excuse to revise the theory of the previous chapter, we prove that:

\begin{Lem}\label{komabelian}
	If $\mathsf{A}$ is an abelian category, so is $\mathsf{Kom(A)}$.
\end{Lem}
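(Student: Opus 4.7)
The plan is to verify axioms (A1)--(A4) for $\mathsf{Kom(A)}$ by performing all constructions degreewise and checking that the requisite differentials are inherited from those of $\mathsf{A}$.

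First I would dispatch (A1): given two chain maps $f^\bullet,g^\bullet:X^\bullet\rightarrow Y^\bullet$, the componentwise sum $(f+g)^n\coloneqq f^n+g^n$ is again a chain map because $d_Y\circ(f+g)=d_Y\circ f + d_Y\circ g = f\circ d_X + g\circ d_X = (f+g)\circ d_X$ by biadditivity of composition in $\mathsf{A}$. Hence $\text{Hom}_{\mathsf{Kom(A)}}(X^\bullet,Y^\bullet)$ is an abelian subgroup of $\prod_n\text{Hom}_\mathsf{A}(X^n,Y^n)$, and composition is biadditive in each degree, hence globally biadditive. For (A2), the complex $0^\bullet$ with $0^n=0$ and zero differentials is easily verified to be a zero object. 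For (A3), the componentwise direct sum $(X\oplus Y)^\bullet$ with $(X\oplus Y)^n\coloneqq X^n\oplus Y^n$ and differential $d^n_{X\oplus Y}\coloneqq d^n_X\oplus d^n_Y$ (which squares to zero by construction) yields a complex, and the degreewise inclusions/projections from (A3) in $\mathsf{A}$ are visibly chain maps and satisfy the required identities.

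The heart of the argument is (A4). Given a chain map $f^\bullet:X^\bullet\rightarrow Y^\bullet$, I would define $\ker(f^\bullet)^n\coloneqq\ker(f^n)$ and $\text{coker}(f^\bullet)^n\coloneqq\text{coker}(f^n)$. The differential of $X^\bullet$ restricts to $\ker(f^\bullet)$: for the monomorphism $k^n:\ker(f^n)\hookrightarrow X^n$, one has $f^{n+1}\circ d^n_X\circ k^n = d^n_Y\circ f^n\circ k^n = 0$, so by the universal property of $k^{n+1}$, there is a unique $d^n_K:\ker(f^n)\rightarrow\ker(f^{n+1})$ lifting $d^n_X\circ k^n$; uniqueness forces $d^{n+1}_K\circ d^n_K=0$. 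Dually, $d^n_Y$ descends to the cokernels giving a differential $d^n_C$ squaring to zero. The componentwise decomposition \eqref{candecompo} in $\mathsf{A}$ then yields objects $I^n\cong\text{coim}(f^n)\cong\text{im}(f^n)$ together with morphisms $q^n,j^n$; I would show that the differentials of $X^\bullet$ and $Y^\bullet$ induce via the universal properties a well-defined $d^n_I:I^n\rightarrow I^{n+1}$ with $d^{n+1}_I\circ d^n_I=0$, and that $q^\bullet,j^\bullet$ are chain maps. Assembling everything, the sequence
\[
\ker(f^\bullet)\xhookrightarrow{k^\bullet} X^\bullet\xrightarrow{q^\bullet} I^\bullet\xrightarrow{j^\bullet} Y^\bullet\xtwoheadrightarrow{c^\bullet}\text{coker}(f^\bullet)
\]
in $\mathsf{Kom(A)}$ satisfies $j^\bullet\circ q^\bullet = f^\bullet$ and, degreewise, $\text{coker}(k^n)=I^n=\ker(c^n)$; since kernels and cokernels in $\mathsf{Kom(A)}$ are themselves computed degreewise (this is a quick check using the universal properties and the fact that a chain map into/out of a kernel/cokernel complex is determined by its components), we obtain $\text{coker}(k^\bullet)\cong I^\bullet\cong\ker(c^\bullet)$ in $\mathsf{Kom(A)}$, which is precisely (A4). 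Uniqueness of the isomorphism between any two such decompositions follows from the degreewise uniqueness guaranteed by (A4) in $\mathsf{A}$.

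The main obstacle I anticipate is purely bookkeeping rather than conceptual: one has to invoke universal properties repeatedly to promote degreewise data to chain-level data, and to verify that kernels and cokernels of chain maps are computed componentwise before one can cite the axiom (A4) of $\mathsf{A}$ term by term. Finally, the boundedness conditions are preserved under these constructions, so the same argument shows that $\mathsf{Kom}^\pm(\mathsf{A})$ and $\mathsf{Kom^b(A)}$ are abelian subcategories of $\mathsf{Kom(A)}$ in the sense of Definition \ref{abeliansubcat}.
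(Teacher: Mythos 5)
Your proposal is correct and follows essentially the same route as the paper's proof: verify (A1)--(A3) by componentwise constructions, then establish (A4) by decomposing each $f^n$ canonically in $\mathsf{A}$, inducing the differentials on the kernel, image and cokernel complexes via universal properties, and checking that the resulting chain-level decomposition satisfies $\textup{coker}(k^\bullet)=I^\bullet=\ker(c^\bullet)$. No gaps to report.
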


\begin{proof}
	Preadditivity of $\mathsf{Kom(A)}$ (axiom (A1) of Definition \ref{addcat}) is easily verified, since we can add and compose compatible morphisms of complexes componentwise, where abelianity of $\mathsf{A}$ ensures the desired relations. The zero object is just the trivial complex of infinitely many zero objects $0\in\text{obj}(\mathsf{A})$ chained together by zero differentials. Moreover, direct products/sums of complexes are again well-defined complexes: for $(X_1^\bullet, d_{X_1}^\bullet), (X_2^\bullet, d_{X_2}^\bullet)\in\text{obj}(\mathsf{Kom(A)})$ define $(X_1^\bullet, d_{X_1}^\bullet)\oplus(X_2^\bullet, d_{X_2}^\bullet)$ to be the cochain complex $((X_1\oplus X_2)^\bullet, d_{X_1\oplus X_2}^\bullet)$ given by $(X_1\oplus X_2)^n\coloneqq X_1^n\oplus X_2^n\in\text{obj}(\mathsf{A})$ and $d_{X_1\oplus X_2}^n=d_{X_1}^n\oplus d_{X_2}^n\in\text{Hom}_\mathsf{A}((X_1\oplus X_2)^n,\break(X_1\oplus X_2)^{n+1})$, so that $d_{X_1\oplus X_2}^n\circ d_{X_1\oplus X_2}^{n-1}=(d_{X_1}^{n}\circ d_{X_1}^{n-1})\oplus(d_{X_2}^{n}\circ d_{X_2}^{n-1})=0$. Defining chain maps $p_k^\bullet:(X_1\oplus X_2)^\bullet\rightarrow X_k^\bullet$ and $i_k^\bullet:X_k^\bullet\rightarrow (X_1\oplus X_2)^\bullet$ for $k=1,2$ to be memberwise the projection respectively inclusion each $(X_1\oplus X_2)^n$ is equipped with (so that they do commute with the differentials), indeed one can conclude the existence and uniqueness of the decomposition \eqref{directprodsum} on the chain level. Therefore, also axioms (A2), (A3) hold and $\mathsf{Kom(A)}$ is an additive category.
	
	What do kernels and cokernels look like in $\mathsf{Kom(A)}$? Consider a chain map $f^\bullet:X^\bullet\rightarrow Y^\bullet$, then each component $f^n\in\text{Hom}_\mathsf{A}(X^n,Y^n)$ admits a canonical decomposition \eqref{candecompo}, so that we may expand the diagram defining $f^\bullet$ to
	\[
	\begin{tikzcd}
		...\arrow[r, dashed] & \text{ker}(f^{n-1})\arrow[r, dashed]\arrow[d, hook, "k^{n-1}"'] & \text{ker}(f^{n})\arrow[r, dashed]\arrow[d, hook, "k^{n}"] & \text{ker}(f^{n+1})\arrow[r, dashed]\arrow[d, hook, "k^{n+1}"] & ... \\
		...\arrow[r] & X^{n-1}\arrow[r, "d_X^{n-1}"]\arrow[d, "q^{n-1}"'] & X^{n}\arrow[r, "d_X^{n}"]\arrow[d, "q^{n}"] & X^{n+1}\arrow[r]\arrow[d, "q^{n+1}"] & ... \\
		...\arrow[r] & I^{n-1}\arrow[r, "d_I^{n-1}"]\arrow[d, "j^{n-1}"'] & I^{n}\arrow[r, "d_I^{n}"]\arrow[d, "j^{n}"] & I^{n+1}\arrow[r]\arrow[d, "j^{n+1}"] & ... \\
		...\arrow[r] & Y^{n-1}\arrow[r, "d_Y^{n-1}"]\arrow[d, two heads, "c^{n-1}"'] & Y^{n}\arrow[r, "d_Y^{n}"]\arrow[d, two heads, "c^{n}"] & Y^{n+1}\arrow[r]\arrow[d, two heads, "c^{n+1}"] & ... \\
		...\arrow[r, dashed] & \text{coker}(f^{n-1})\arrow[r, dashed] & \text{coker}(f^{n})\arrow[r, dashed] & \text{coker}(f^{n+1})\arrow[r, dashed] & ...
	\end{tikzcd}\qquad,
	\]
	where $j^n\circ q^n=f^n$ and $\text{coker}(\text{ker}(f^n))=\text{coker}(k^n)=I^n=\ker(c^n)=$\break $\ker(\text{coker}(f^n))\in\text{obj}(\mathsf{A})$ for all $n\in\mathbb{Z}$. The latter defines a complex $(I^\bullet, d_I^\bullet)\in\text{obj}(\mathsf{Kom(A)})$ canonically, so that the central arrows in the diagram are necessarily the differentials of $I^\bullet$. Now we can connect the top and botton rows with dashed arrows, chosen so to make the overall diagram commutative. These define two further complexes $(\text{ker}(f^\bullet), d_{\text{ker}(f)}^\bullet)$, $(\text{coker}(f^\bullet), d_{\text{coker}(f)}^\bullet)\in\text{obj}(\mathsf{Kom(A)})$ (for example, $d_{\text{ker}(f)}^n\circ d_{\text{ker}(f)}^{n-1}=0$ because it is seen to factor through $d_X^n\circ d_X^{n-1}=0$). In fact, they can be shown to fulfill the universal property of kernel and cokernel (see Definition \ref{monepicoker} and diagram \eqref{kercokerdiag}). We understand that $f^\bullet:X^\bullet\rightarrow Y^\bullet$ canonically decomposes in $\mathsf{Kom(A)}$ as
	\begin{equation*}
		\begin{tikzcd}
			\ker(f^\bullet)\arrow[r, hook, "k^\bullet"] & X^\bullet\arrow[r, "q^\bullet"] & I^\bullet\arrow[r, "j^\bullet"] & Y^\bullet\arrow[r, two heads, "c^\bullet"] & \text{coker}(f^\bullet)\;,
		\end{tikzcd}
	\end{equation*}
	with $j^\bullet\circ q^\bullet = f^\bullet$ and $\text{coker}(k^\bullet)=I^\bullet=\ker(c^\bullet)$. Therefore, axiom (A4) holds and $\mathsf{Kom(A)}$ is consequently abelian.
\end{proof}

A quick glance to Definition \ref{abeliancohomology} reveals that cohomology yields functors from the category of complexes to the underlying abelian category:

\begin{Def}\label{cohomfunc}
	Let $\mathsf{A}$ be an abelian category. Then for any $n\in\mathbb{Z}$ we can define the \textbf{$n$-th cohomology functor}\index{functor!nthc@$n$-th cohomology} $H^n:\mathsf{Kom(A)}\rightarrow\mathsf{A}$ through the assignments \begin{small}\begin{equation}
			\begin{aligned}
				&X^\bullet=(X^\bullet, d^\bullet)\in\text{obj}(\mathsf{Kom(A)})\mapsto H^n(X^\bullet)\in\text{obj}(\mathsf{A})\quad \text{(from \eqref{cohomodiag})} \\
				&f^\bullet\in\text{Hom}_\mathsf{Kom(A)}(X^\bullet,Y^\bullet) \mapsto H^n(f^\bullet)\!=\!\langle f^n(\cdot)\rangle\in \text{Hom}_\mathsf{A}(H^n(X^\bullet),H^n(Y^\bullet))
			\end{aligned}
	\end{equation}\end{small}
	\!\!(latter is the usual morphism induced on cohomology).
\end{Def}

\begin{Def}\label{quasiso}
	Let $\mathsf{A}$ be an abelian category. A morphism $f^\bullet:X^\bullet\rightarrow Y^\bullet$ in $\mathsf{Kom(A)}$ is \textbf{homotopic to $0$}\index{homotopic!to 0} if there exists a chain homotopy $k^\bullet:X^\bullet\rightarrow Y^{\bullet-1}$, that is, morphisms $k^n:X^n\rightarrow Y^{n-1}$ in $\mathsf{A}$ such that
	\[
	f^n=k^{n+1}\circ d^n_X + d^{n-1}_Y\circ k^n: X^n\rightarrow Y^n\,. 
	\]
	Then two morphisms $f^\bullet, g^\bullet\in\text{Hom}_\mathsf{Kom(A)}(X^\bullet,Y^\bullet)$ are \textbf{homotopic}\index{homotopic!pair of morphisms} (written $f^\bullet\sim g^\bullet$) if their difference is homotopic to 0.
	
	We say that a morphism of complexes $f^\bullet: X^\bullet\rightarrow Y^\bullet$ is a \textbf{quasi-isomorphism}\index{quasi-isomorphism} if the induced morphisms $H^n(f^\bullet)$ are isomorphisms in $\mathsf{A}$ for all $n\in\mathbb{Z}$, and thus call $X^\bullet$ and $Y^\bullet$ quasi-isomorphic complexes.
\end{Def}	

It is a standard proof to show that homotopic morphisms of complexes induce the same morphisms in cohomology (this works just as in $\mathsf{Ab}$). We can also invert the functor $H^0$:

\begin{Rem}\label{0-complex}
	There always exists a fully faithful functor $\mathsf{J}:\mathsf{A}\rightarrow\mathsf{Kom(A)}$ which embeds the abelian category $\mathsf{A}$ into its category of complexes: it sends $X\in\text{obj}(\mathsf{A})$ to the \textbf{0-complex}\index{0-complex}
	\[
	X^\bullet\equiv X[0]^\bullet=\big(...\rightarrow 0\rightarrow 0\xrightarrow{d^{-1}} X\xrightarrow{d^0} 0\rightarrow 0\rightarrow...\big)\,
	\]
	acyclic away from 0 and such that $H^0(X^\bullet)\cong X$, and sends $f\in\text{Hom}_\mathsf{A}(X,Y)$ to the morphism of complexes $f^\bullet:X^\bullet\rightarrow Y^\bullet$ given by $f^0=f$ and $f^n=0$ for all $0\neq n\in\mathbb{Z}$. Clearly, $H^0\circ\mathsf{J}$ is isomorphic to the identity functor on $\mathsf{A}$.
	
	It is worth noting that, given a projective resolution $P^\bullet\xrightarrow{\varepsilon_X} X$ as in Definition \ref{injprojobj}, we have a quasi-isomorphism $f^\bullet:P^\bullet\rightarrow X^\bullet$ to the 0-complex of $X$ with $f^0=\varepsilon_X$, $f^i:P^i\rightarrow 0$ for $i<0$ and $f^i=0$ for $i>0$, so that we can effectively replace $X$ with $P^\bullet$ when useful (the precise choice of projective resolution of $X$ is irrelevant since any such two will be homotopic, as shown in \cite[Theorem III.1.3]{[GM03]}). The dual version applies for right resolutions by injective objects.  
\end{Rem}

\subsection{The universal property of derived categories}\label{ch2.2}

We can produce a first tentative definition of derived category of an abelian category as the solution to a universal problem.

\begin{Pro}\label{derivedexists}
	Let $\mathsf{A}$ be an abelian category. Then there exist a category $\mathsf{D(A)}$ with $\textup{obj}(\mathsf{D(A)})=\textup{obj}(\mathsf{Kom(A)})$ and a functor $\mathsf{L}:\mathsf{Kom(A)}\rightarrow\mathsf{D(A)}$ such that:
	\renewcommand{\theenumi}{\roman{enumi}}
	\begin{enumerate}[leftmargin=0.6cm] 
		\item $\mathsf{L}(f^\bullet)$ is an isomorphism for any quasi-isomorphism $f^\bullet$ in $\mathsf{Kom(A)}$;
		
		\item for any functor $\mathsf{F}:\mathsf{Kom(A)}\rightarrow\mathsf{D}$ fulfilling i. there exists a unique functor $\mathsf{G}:\mathsf{D(A)}\rightarrow\mathsf{D}$ such that $\mathsf{F}=\mathsf{G}\circ\mathsf{L}$ as functors.
	\end{enumerate}
	\!We call $\mathsf{D(A)}$ the \textbf{derived category}\index{derived category} of $\mathsf{A}$ \textup(by ii. unique up to equivalence\textup) and $\mathsf{L}$ the \textbf{localization functor}\index{localization functor}.
	Working with $\mathsf{Kom^\#\!(A)}$ for $\#=+,-,\mathsf{b}$ instead produces derived categories $\mathsf{D^\#\!(A)}$, with $\mathsf{D^b(A)}$ taking the name of \textbf{bounded derived category}\index{derived category!bounded} of $\mathsf{A}$.
\end{Pro}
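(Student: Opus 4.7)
The plan is to construct $\mathsf{D(A)}$ explicitly as a formal localization of $\mathsf{Kom(A)}$ at the class $S$ of quasi-isomorphisms, then read off the universal property essentially by construction. Concretely, I would set $\text{obj}(\mathsf{D(A)}) \coloneqq \text{obj}(\mathsf{Kom(A)})$ and build morphisms by generators and relations: for every morphism $f$ of $\mathsf{Kom(A)}$ take a generator (also denoted $f$), and for every quasi-isomorphism $s : X^\bullet \to Y^\bullet$ adjoin a formal generator $s^{-1} : Y^\bullet \to X^\bullet$; then impose the relations that composition and identities of $\mathsf{Kom(A)}$ carry over unchanged, together with $s \circ s^{-1} = \text{id}_{Y^\bullet}$ and $s^{-1} \circ s = \text{id}_{X^\bullet}$ for every $s \in S$. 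A morphism $X^\bullet \to Y^\bullet$ in $\mathsf{D(A)}$ is then an equivalence class of finite alternating zigzags $X^\bullet \leftarrow \cdot \to \cdot \leftarrow \cdots \to Y^\bullet$ (the backward arrows being quasi-isos) modulo these relations, and the localization functor $\mathsf{L}$ is the identity on objects, sending each morphism of $\mathsf{Kom(A)}$ to its length-one zigzag class. Property i.\ then holds immediately: for any quasi-iso $s$, the generator $s^{-1}$ supplies a two-sided inverse for $\mathsf{L}(s)$.

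For property ii., given a functor $\mathsf{F}:\mathsf{Kom(A)}\to\mathsf{D}$ inverting quasi-isomorphisms, I would define $\mathsf{G}$ on objects by $\mathsf{G}(X^\bullet) \coloneqq \mathsf{F}(X^\bullet)$, and on a representative zigzag by replacing each forward generator $f$ with $\mathsf{F}(f)$ and each backward generator $s^{-1}$ with the inverse $\mathsf{F}(s)^{-1}$ in $\mathsf{D}$ (which exists by hypothesis), then composing. A direct check shows that this assignment respects all the imposed relations and therefore descends to a well-defined functor on equivalence classes satisfying $\mathsf{F} = \mathsf{G}\circ\mathsf{L}$. Uniqueness is forced: any candidate $\mathsf{G}$ must coincide with $\mathsf{F}$ on length-one forward zigzags and must send $\mathsf{L}(s)^{-1}$ to $\mathsf{F}(s)^{-1}$, which together determine $\mathsf{G}$ on every zigzag by functoriality. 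The bounded variants $\mathsf{D^\#\!(A)}$ arise from the identical construction applied to $\mathsf{Kom^\#\!(A)}$ for $\# \in \{+,-,\mathsf{b}\}$.

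The genuine obstacle is not the formal manipulation but the set-theoretic legitimacy of $\mathsf{D(A)}$: a priori, the equivalence classes of zigzags between two fixed complexes might form a proper class, so the Hom-"sets" need not be sets. Overcoming this cleanly requires a calculus of fractions, representing arbitrary zigzags by short roofs $X^\bullet \xleftarrow{s} Z^\bullet \xrightarrow{f} Y^\bullet$ with $s\in S$; this in turn demands Ore-type conditions on $S$, which the class of quasi-isomorphisms satisfies only \emph{after} quotienting $\mathsf{Kom(A)}$ by chain homotopy. As the chapter outline anticipates, this is exactly the purpose of the intermediate homotopy category $\mathsf{H(A)}$: $\mathsf{D(A)}$ will eventually be re-identified with the tractable localization of $\mathsf{H(A)}$ at its image of $S$. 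For the proposition at hand, however, existence via generators and relations is enough --- granting standard universe hygiene, or allowing $\mathsf{D(A)}$ to be large at this stage --- and the universal property in ii.\ automatically yields uniqueness up to canonical equivalence.
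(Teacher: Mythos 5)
Your construction is essentially identical to the paper's: the paper also builds $\mathsf{D(A)}$ as the formal localization $\mathsf{Kom(A)}[S^{-1}]$ via an oriented graph whose paths are exactly your zigzags, with $s\circ s^{-1}$ and $s^{-1}\circ s$ collapsed to identities, and verifies i.\ and ii.\ by the same generator-replacement argument for $\mathsf{G}$. Your closing remark on the set-theoretic size of the Hom-classes is a legitimate caveat the paper silently glosses over, and you correctly identify that the later roof calculus on $\mathsf{H(A)}$ is what resolves it.
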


\begin{proof}
	The proof involves the process of localization of a category, which works for an arbitrary category $\mathsf{C}$. We aim to define its derived category as a certain $\mathsf{C}[S^{-1}]$ for $S$ an arbitrary class of morphisms in $\mathsf{C}$ to be turned by $\mathsf{L}:\mathsf{C}\rightarrow\mathsf{C}[S^{-1}]$ into isomorphisms.
	
	Set $\text{obj}(\mathsf{C}[S^{-1}])\coloneqq \text{obj}(\mathsf{C})$ and $\mathsf{L}(X)\coloneqq X$ for all $X\in\text{obj}(\mathsf{C}[S^{-1}])$. The morphisms are constructed in the following ``geometrical'' manner. Introduce formal morphisms $s^{-1}$ for each $s\in S$ and construct an oriented graph $\Gamma$ whose vertices are the objects of $\mathsf{C}$, whose edges are its morphisms, oriented according to the direction of their arrows, and for each $s$ add an edge with reversed orientation, corresponding to $s^{-1}$. Then a morphism $X\rightarrow Y$ in $\mathsf{C}[S^{-1}]$ is the equivalence class including all oriented paths from $X$ to $Y$, where we may translate composition of compatible morphisms to the concatenation of their arrows and simplify paths associated to $s\circ s^{-1}$ and $s^{-1}\circ s$ to identity, ``static'' paths. Then we request $L$ to map any $f:X\rightarrow Y$ in $\mathsf{C}$ to its equivalence class $[f]$, so that in particular $\mathsf{L}(s)$ is an isomorphism of inverse $\mathsf{L}(s^{-1})$ (as their representatives compose to the identities).
	
	Now, given another functor $\mathsf{F}:\mathsf{C}\rightarrow\mathsf{D}$ behaving like $\mathsf{L}$, we define $\mathsf{G}:\mathsf{C}[S^{-1}]\rightarrow\mathsf{D}$ as the functor $\mathsf{G}(X)\coloneqq \mathsf{F}(X)$ for $X\in\text{obj}(\mathsf{C}[S^{-1}])$, $\mathsf{G}([f])\coloneqq\mathsf{F}(f)$ on morphisms of $\mathsf{C}$ and $\mathsf{G}([s^{-1}])\coloneqq\mathsf{F}(s)^{-1}$ for each $s\in S$. Then clearly $\mathsf{F}=\mathsf{G}\circ\mathsf{L}$, and $\mathsf{G}$ is uniquely defined (the choice of representative for each class of morphisms is irrelevant).
	
	In our specific case, just take $\mathsf{C}=\mathsf{Kom(A)}$, $S=\{\text{quasi-isomorphisms in }\break\mathsf{Kom(A)}\}$ (so that, to all effects, each $s^{-1}$ stands for the quasi-inverse) and let $\mathsf{D(A)}\coloneqq\mathsf{Kom(A)}[S^{-1}]$. 
\end{proof}

Observe that we can embed the abelian category $\mathsf{A}$ into its derived category through the fully faithful functor $\mathsf{L}\circ\mathsf{J}:\mathsf{A}\rightarrow\mathsf{D(A)}$, where $\mathsf{J}:\mathsf{A}\rightarrow\mathsf{Kom(A)}$ is as in Remark \ref{0-complex}. This is not obvious at first glance and will be proved later (see Proposition \ref{AinD(A)}), when we have a better understanding of the derived category.

Moreover, since cohomology functors invert quasi-isomorphisms (by definition), they ignore the localization procedure and can thus be regarded also as functors $H^n:\mathsf{D(A)}\rightarrow\mathsf{A}$. Once again, $H^0\circ\mathsf{L}\circ\mathsf{J}$ is isomorphic to $\mathsf{Id_A}$ on $\mathsf{A}$.

\begin{Rem}\label{semisimple}
	As an application of the universal property of derived categories, let us consider the (complete) subcategory $\mathsf{Kom_0(A)}\subset\mathsf{Kom(A)}$ consisting of all cyclic complexes, that is, those $X^\bullet\in\text{obj}(\mathsf{Kom(A)})$ with zero differential, $d^\bullet=0$ --- this makes $\mathsf{Kom_0(A)}$ structurally isomorphic to the product category of infinitely many copies of $\mathsf{A}$. The functor $\mathsf{H}^\bullet:\mathsf{Kom(A)}\rightarrow\mathsf{Kom_0(A)}$ specified by $(X^\bullet,d^\bullet)\mapsto(H^n(X^\bullet),0)_{n\in\mathbb{Z}}$ and $f^\bullet\mapsto (H^n(f^\bullet))_{n\in\mathbb{Z}}$ turns quasi-isomorphisms into (infinite tuples of) isomorphisms, hence by the universal property for the derived category of $\mathsf{A}$ there exists a unique functor $\mathsf{G}:\mathsf{D(A)}\rightarrow\mathsf{Kom_0(A)}$ such that $\mathsf{H}^\bullet=\mathsf{G}\circ\mathsf{L}$.
	
	The interesting part is that if $\mathsf{A}$ is \textbf{semisimple}\index{semisimple category}, meaning that any short exact sequence is isomorphic to one of the form $0\rightarrow X\xrightarrow{(\text{id}_X,0)} X\oplus Y\rightarrow Y\rightarrow 0$ (one says it is \textbf{split}\index{short exact sequence!split}), then $\mathsf{G}$ is actually an equivalence of categories (and vice versa), so that we can infer further properties of $\mathsf{D(A)}$ just by focusing on cyclic complexes! (A proof of this result is presented in \cite[Proposition III.2.4]{[GM03]}.)
\end{Rem}

However, for generic abelian categories the precise structure of their derived category remains so far quite obscure: we need a more flexible approach allowing us to manipulate the class $S$ of selected morphisms. This is achieved by generalizing the concept of multiplicative subset of a ring.

\begin{Def}\label{locclass}
	A class of morphisms $S$ of an arbitrary category $\mathsf{C}$ is called \textbf{localizing}\index{localizing class of morphisms} if:
	\renewcommand{\theenumi}{\alph{enumi}}
	\begin{enumerate}[leftmargin=0.6cm] 
		\item for each $X\in\text{obj}(\mathsf{C})$ holds $\text{id}_X\in S$, and $s, t\in S$ implies $s\circ t\in S$ (whenever defined);
		
		\item for any $f\in\text{Hom}_\mathsf{C}(X,Y)$ and $s:Z\rightarrow Y$ in $S$ there exist $g\in\text{Hom}_\mathsf{C}(W,Z)$ and $t:W\rightarrow X$ in $S$ such that the left square below commutes; dually, for any $f\in\text{Hom}_\mathsf{C}(Y,X)$ and $s:Y\rightarrow Z$ in $S$ there exist $g\in\text{Hom}_\mathsf{C}(Z,W)$ and $t:X\rightarrow W$ in $S$ such that the right square commutes:
		\begin{equation}\label{localsquare}
			\begin{tikzcd}
				W\arrow[r, dashed, "g"]\arrow[d, dashed, "t"'] & Z\arrow[d, "s"] \\
				X\arrow[r, "f"'] & Y
			\end{tikzcd}
			\qquad\qquad
			\begin{tikzcd}
				W & Z\arrow[l, dashed, "g"'] \\
				X\arrow[u, dashed, "t"] & Y\arrow[l, "f"]\arrow[u, "s"']
			\end{tikzcd}\quad;
		\end{equation}
		
		\item for any pair $f_1,f_2\in\text{Hom}_\mathsf{C}(X,Y)$, the existence of an $s:Y\rightarrow Z$ in $S$ such that $s\circ f_1= s\circ f_2$ is equivalent to that of a $t:W\rightarrow X$ in $S$ such that $f_1\circ t= f_2\circ t$.
		\newline \big(Subtracting on both sides, for a given $f\in\text{Hom}_\mathsf{C}(X,Y)$ we may reformulate: $\exists s:Y\rightarrow Z$ in $S$ s.t. $s\circ f=0$ $\Longleftrightarrow$ $\exists t:W\rightarrow X$ in $S$ s.t. $f\circ t= 0$.\big)
	\end{enumerate}
\end{Def}

The advantage of working with a localizing class of morphisms is that in $\mathsf{C}[S^{-1}]$ (where we have inverse morphisms $s^{-1}$ for each $s\in S$; cf. proof of Proposition \ref{derivedexists}) the two squares of \eqref{localsquare} imply also the relations 
\[
s^{-1}\circ f= s^{-1}\circ f\circ t\circ t^{-1}= s^{-1}\circ s\circ g\circ t^{-1} = g\circ t^{-1}:X\rightarrow Z
\]
respectively $f\circ s^{-1}= t^{-1}\circ g:Z\rightarrow X$, both telling us that we can modify the general expression $f_1\circ s_1^{-1}\circ f_2\circ s_2^{-1}\circ...\circ f_k\circ s_k^{-1}$ of a path in $\mathsf{C}[S^{-1}]$ by moving all morphisms of $S$ to the left or right and treat them separately. This trick is crucial for the following description.

\begin{Pro}\label{derivedstructure}
	Let $\mathsf{C}$ be an arbitrary category, with a localizing class of morphisms $S$. Then the derived category of $\mathsf{C}$ has the following explicit structure\textup: $\textup{obj}(\mathsf{C}[S^{-1}])=\textup{obj}(\mathsf{C})$ \textup(cf. Proposition \ref{derivedexists}\textup) and morphisms in $\textup{Hom}_{\mathsf{C}[S^{-1}]}(X,Y)$ are equivalence classes $\varphi=[(s,f)]$ of \textbf{roofs}\index{roof} $(s,f)$ in $\mathsf{C}$, which are diagrams of the form\footnote{The squiggly arrow is a morphism in $\mathsf{C}[S^{-1}]$, so it doesn't quite fit into the roof diagram, whence its distinctive look; we will omit it more often than not though.}
	\begin{equation}\label{roof}
		\begin{tikzcd}
			& X'\arrow[dl, "s"']\arrow[dr, "f"] & \\
			X\arrow[rr, squiggly, "\varphi"'] & & Y
		\end{tikzcd}\vspace*{0.2cm}
	\end{equation}
	for $f\in\textup{Hom}_\mathsf{C}(X',Y)$ and $s\in S$, where the equivalence relation asserts $(s,f)\sim (t,g)$ if and only if there exists a ``topping'' roof $(r, h)$ making the diagram
	\begin{equation}\label{equivroofs}
		\begin{tikzcd}
			& & X'''\arrow[dl, "r"']\arrow[dr, "h"] & & \\
			& X'\arrow[dl, "s"'] & & X''\arrow[dlll, "t"]\arrow[dr, "g"] & \\
			X & & & & Y \arrow[from=ulll, crossing over, "f"']
		\end{tikzcd}\vspace*{0.1cm}
	\end{equation}
	commute. The composition of morphisms $[(t,g)]\circ[(s,f)]$ is possible when the neighbouring arrows share the same target object, and gives $[(s\circ t', g\circ f')]$, the class of
	\begin{equation}\label{roofcompo}
		\begin{tikzcd}
			& & X''\arrow[dl, dashed, "t'"']\arrow[dr, dashed, "f'"] & & \\
			& X'\arrow[dl, "s"']\arrow[dr, "f"'] & & Y'\arrow[dl, "t"]\arrow[dr, "g"] & \\
			X & & Y & & Z 
		\end{tikzcd}\quad,
	\end{equation}
	where the top square is provided by \eqref{localsquare} left. \textup(Observe that, by item a. of Definition \ref{locclass}, indeed $s\circ t'\in S$.\textup) 
	\newline The identity roof therefore reads $X\xleftarrow{\textup{id}_X} X\xrightarrow{\textup{id}_X} X$ \textup(with $\textup{id}_X\in S$\textup). 
\end{Pro}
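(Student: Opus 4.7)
The plan is to realise the proposed roof description as a bona fide category $\mathsf{C}_S$ and then identify it with the abstract $\mathsf{C}[S^{-1}]$ of Proposition \ref{derivedexists} via the latter's universal property. Set $\text{obj}(\mathsf{C}_S) = \text{obj}(\mathsf{C})$ and declare the Hom-sets to be equivalence classes of roofs \eqref{roof} under the relation \eqref{equivroofs}, with composition \eqref{roofcompo}. Alongside, consider the functor $\mathsf{L}_S:\mathsf{C}\to\mathsf{C}_S$ that sends $f:X\to Y$ to the class of the trivial roof $X\xleftarrow{\text{id}_X} X\xrightarrow{f} Y$; one checks on the fly that it inverts every $s\in S$, with inverse $[(s,\text{id})]$, since both compositions reduce to identity roofs via the obvious toppings.

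The first substantive step is that the topping relation is an equivalence. Reflexivity and symmetry are immediate from the shape of \eqref{equivroofs}. For transitivity, suppose two roofs $(s_1,f_1)$ and $(s_3,f_3)$ are linked to a common $(s_2,f_2)$ via toppings over objects $A$ and $B$, both sitting above the middle source $X'_2$. Applying property (b) of Definition \ref{locclass} to the two arrows $A \to X'_2$ and $B \to X'_2$ produces a common refinement $C$ with legs $C\to A$ and $C\to B$, one of which lies in $S$; the resulting two paths $C \to Y$ may a priori differ, but they coincide after postcomposition with a morphism in $S$ coming from the two original toppings, and property (c) then furnishes a further $t\in S$ precomposing with which equalises them. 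The resulting object is the sought topping of $(s_1,f_1)\sim (s_3,f_3)$.

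For composition to be well defined I must show (i) that the completion \eqref{roofcompo} exists, (ii) that its class does not depend on the completing square, and (iii) that it descends to equivalence classes. Part (i) is just (b) applied to the pair $(f,t)$. For (ii), given two completions of the middle corner, apply (b) once more to their sources to obtain a joint refinement and then (c) to force the two induced legs into $Y$ to coincide, yielding a topping between the two candidate composites. For (iii), a topping between representatives $(s,f)\sim (s',f')$ lifts through the completing square via (b) and, after one more appeal to (c), produces a topping of the two composites; the argument on the right factor is symmetric. Associativity of \eqref{roofcompo} is proved by stacking two completions and exhibiting a mutual topping of the two bracketings through the same arguments, while identity roofs are units by inspection. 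At this point $\mathsf{C}_S$ is genuinely a category and $\mathsf{L}_S$ is a functor.

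It remains to verify that $\mathsf{L}_S$ satisfies the universal property of Proposition \ref{derivedexists}. Given $\mathsf{F}:\mathsf{C}\to\mathsf{D}$ inverting $S$, define $\mathsf{G}:\mathsf{C}_S\to\mathsf{D}$ by $\mathsf{G}([(s,f)])\coloneqq \mathsf{F}(f)\circ\mathsf{F}(s)^{-1}$; the topping diagram shows this is well defined on classes, and functoriality with respect to \eqref{roofcompo} follows from the commutativity of the top square in $\mathsf{D}$ after inverting $\mathsf{F}(s)$ and $\mathsf{F}(t)$. Uniqueness of $\mathsf{G}$ is forced by the factorisation $[(s,f)] = [(\text{id},f)]\circ[(s,\text{id})]^{-1}$ together with $\mathsf{F}=\mathsf{G}\circ\mathsf{L}_S$. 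By the uniqueness up to equivalence in Proposition \ref{derivedexists} we conclude $\mathsf{C}_S\simeq\mathsf{C}[S^{-1}]$. The principal obstacle is not any single step but the systematic bookkeeping: most verifications reduce to iterating (b) to patch diagrams together and then invoking (c) to convert agreement after postcomposition with an element of $S$ into agreement after precomposition with one, and throughout one must track equivalence classes rather than individual representatives.
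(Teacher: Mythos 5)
Your proposal is correct and follows essentially the same route as the paper: realise the roofs as a category, exhibit the localization functor sending $f$ to $[(\mathrm{id},f)]$, and then verify the universal property of Proposition \ref{derivedexists} by setting $\mathsf{G}([(s,f)])\coloneqq\mathsf{F}(f)\circ\mathsf{F}(s)^{-1}$, with uniqueness forced by the factorisation of $[(s,f)]$ through $\mathsf{L}(s)^{-1}$. The only difference is that you additionally sketch the well-definedness of $\sim$ and of composition via repeated use of conditions (b) and (c) of Definition \ref{locclass}, which the paper explicitly leaves as an exercise in diagram-drawing.
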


\begin{proof}
	The verifications of well-definiteness of the equivalence relation $\sim$ and of the composition law are just a matter of carefully drawing the necessary diagrams. We rather focus on the main claim that this category of roofs, provisionally denoted $\mathsf{R}$, indeed coincides with $\mathsf{C}[S^{-1}]$. Thereto, we need only check that it shares the same properties of the derived category described in Proposition \ref{derivedexists}.
	
	First we need a surrogate $\mathsf{L_R}:\mathsf{C}\rightarrow\mathsf{R}$ of the localization functor $\mathsf{L}:\mathsf{C}\rightarrow\mathsf{C}[S^{-1}]$ turning morphism of $S$ into isomorphism in $\mathsf{R}$. Set $\mathsf{L_R}(X)\coloneqq X$ for any $X\in\text{obj}(\mathsf{C})$ and assign to any $f\in\text{Hom}_\mathsf{C}(X,Y)$ the class $\mathsf{L_R}(f)\coloneqq[(\text{id}_X,f)]$ of the roof $X\xleftarrow{\text{id}_X} X\xrightarrow{f} Y$. Then any $s:X\rightarrow Y$ in $S$ has invertible image in $\mathsf{R}$, with inverse $\mathsf{L_R}(s)^{-1}$ being represented by the roof $Y\xleftarrow{s} X\xrightarrow{\text{id}_X} X$ (indeed $[\mathsf{L_R}(s)^{-1}]\circ[\mathsf{L_R}(s)]=[(\text{id}_X,\text{id}_X)]$ by choosing $\text{id}_X$ as dashed arrows in \eqref{roofcompo}, respectively $[\mathsf{L_R}(s)]\circ[\mathsf{L_R}(s)^{-1}]=[(\text{id}_Y,\text{id}_Y)]$ by choosing instead $s^{-1}$).
	
	Now, we must verify the factorization property. Let $\mathsf{F}:\mathsf{C}\rightarrow\mathsf{D}$ be a functor turning any $s\in S$ into an isomorphism in $\mathsf{D}$. Define the functor $\mathsf{G}:\mathsf{R}\rightarrow\mathsf{D}$ expected to satisfy $\mathsf{F}=\mathsf{G}\circ\mathsf{L_R}$ as follows. Firstly, impose $\mathsf{G}(X)\coloneqq \mathsf{F}(X)$ for any $X\in\text{obj}(\mathsf{R})$. Secondly, send any roof class $[(s,f)]\in\text{Hom}_\mathsf{R}(X,Y)$ as in \eqref{roof} to $\mathsf{G}\big([(s,f)]\big)\coloneqq\mathsf{F}(f)\circ\mathsf{F}(s)^{-1}\in\text{Hom}_\mathsf{D}(\mathsf{G}(X),\mathsf{G}(Y))$ (well-definiteness regarding the choice of representative is left as an exercise). The functorial features of $\mathsf{G}$ are evident: $\mathsf{G}\big([(\text{id}_X,\text{id}_X)]\big)=\mathsf{F}(\text{id}_X)\circ\mathsf{F}(\text{id}_X)^{-1}=\text{id}_{\mathsf{F}(X)}=\text{id}_{\mathsf{G}(X)}$ and
	\begin{align*}
		\mathsf{G}\big([(t,g)]\big)\circ\mathsf{G}\big([(s,f)]\big) &=\mathsf{F}(g)\circ\mathsf{F}(t)^{-1}\circ\mathsf{F}(f)\circ\mathsf{F}(s)^{-1} \\
		&=\mathsf{F}(g)\circ\mathsf{F}(f')\circ\mathsf{F}(t')^{-1}\circ\mathsf{F}(s)^{-1} \\
		&=\mathsf{F}(g\circ f')\circ\mathsf{F}(s\circ t')^{-1}=\mathsf{G}\big([(s\circ t', g\circ f')]\big)	
	\end{align*}
	(where in the second equality we detoured through the dashed arrows of diagram \eqref{roofcompo}, while in the third we used the functoriality of $\mathsf{F}$). Then we clearly have $(\mathsf{G}\circ\mathsf{L_R})(X)=\mathsf{G}(X)=\mathsf{F}(X)$ on objects and $(\mathsf{G}\circ\mathsf{L_R})(f)=\mathsf{G}\big([(\text{id}_X,f)]\big)=\mathsf{F}(f)\circ\mathsf{F}(\text{id}_X)^{-1}=\mathsf{F}(f)$ on morphisms. 
	
	It remains to show that such a functor $\mathsf{G}$ is unique. Thereto, consider another candidate $\tilde{\mathsf{G}}$ fulfilling $\mathsf{F}=\tilde{\mathsf{G}}\circ\mathsf{L_R}$. The latter applied to any $X\in\text{obj}(\mathsf{R})$ gives $\tilde{\mathsf{G}}(X)=\mathsf{F}(X)$, whereas $\tilde{\mathsf{G}}$ applied to the equality $[(s,f)]\circ\mathsf{L_R}(s)=[(s,f)]\circ[(\text{id}_{X'},s)]=[(\text{id}_{X'}\circ\text{id}_{X'},f\circ\text{id}_{X'})]=\mathsf{L_R}(f)$ (always referring to \eqref{roof}) yields $\tilde{\mathsf{G}}([(s,f)])\circ\mathsf{F}(s)=\mathsf{F}(f)$, whence $\tilde{\mathsf{G}}([(s,f)])=\mathsf{F}(f)\circ\mathsf{F}(s)^{-1}$, by assumption on $\mathsf{F}$. Then $\tilde{\mathsf{G}}=\mathsf{G}$, as claimed.
\end{proof}

In literature it is possible to encounter a variation of the diagram \eqref{roof} representing $[(s,f)]$, namely the \textit{right} $S$-roof
\[
\begin{tikzcd}
	& X' & \\
	X\arrow[ur, "f"] & & Y\arrow[ul, "s"']
\end{tikzcd}
\]
for $s\in S$ (the usual \eqref{roof} are called \textit{left} $S$-roofs). Composition of any pair of such roofs then appeals to \eqref{localsquare} right. 

As a bonus, we observe that restriction of the localizing class to a full subcategory behaves nicely under localization (shown in \cite[Proposition III.2.10]{[GM03]}):

\begin{Lem}\label{derivedsubcat}
	Let $\mathsf{C}$ be a category with localizing class of morphisms $S$, $\mathsf{B}\subset\mathsf{C}$ a full subcategory such that the restriction $S_\mathsf{B}\subset S$ of $S$ to it is a localizing class for $\mathsf{B}$. Assume also that:
	\renewcommand{\theenumi}{\roman{enumi}}
	\begin{enumerate}[leftmargin=0.6cm]
		\item for any $s:X\rightarrow Y$ in $S$ and $Y\in\textup{obj}(\mathsf{B})$ there exist some $W\in\textup{obj}(\mathsf{B})$ and $f:W\rightarrow X$ such that $s\circ f\in S$, or
		
		\item for any $s:X\rightarrow Y$ in $S$ and $X\in\textup{obj}(\mathsf{B})$ there exist some $Z\in\textup{obj}(\mathsf{B})$ and $f:Y\rightarrow Z$ such that $f\circ s\in S$.
	\end{enumerate} 
	Then $\mathsf{B}[S_\mathsf{B}^{-1}]\subset\mathsf{C}[S^{-1}]$ is a full subcategory, meaning that the inclusion functor $\mathsf{I}:\mathsf{B}[S_\mathsf{B}^{-1}]\rightarrow\mathsf{C}[S^{-1}]$ is fully faithful.
\end{Lem}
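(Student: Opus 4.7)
The plan is to unpack the explicit roof description of localized categories furnished by Proposition \ref{derivedstructure}. The inclusion functor $\mathsf{I}$ is defined in the obvious way: it sends each object of $\mathsf{B}$ to itself inside $\mathsf{C}$, and sends a roof class $[(s,f)]$ with $s\in S_\mathsf{B}$ to the class of the very same roof regarded inside $\mathsf{C}$. Functoriality is automatic because both the composition rule \eqref{roofcompo} and the equivalence relation \eqref{equivroofs} are purely diagrammatic and remain valid after inclusion into an ambient category, given $S_\mathsf{B}\subset S$. The content of the lemma is that $\mathsf{I}$ is both full and faithful, and hypotheses (i), (ii) furnish precisely the tools needed to replace apex objects of roofs, or of toppings, living in $\mathsf{C}$ by apex objects sitting in $\mathsf{B}$.

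For fullness under (i), I would take an arbitrary class in $\textup{Hom}_{\mathsf{C}[S^{-1}]}(X,Y)$ with $X,Y\in\textup{obj}(\mathsf{B})$, represented by a roof $X\xleftarrow{s} X'\xrightarrow{f} Y$ in $\mathsf{C}$. Applying (i) to the morphism $s$ (whose target $X$ lies in $\mathsf{B}$) yields some $W\in\textup{obj}(\mathsf{B})$ and a morphism $g:W\to X'$ with $s\circ g\in S$. The modified roof $(s\circ g,\,f\circ g)$ then has apex $W\in\textup{obj}(\mathsf{B})$ and both legs lie in $\mathsf{B}$ by fullness, while its left leg sits in $S_\mathsf{B}$ by construction. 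Choosing $(g,\textup{id}_W)$ as a topping shows that $(s\circ g,\,f\circ g)\sim(s,f)$ in $\mathsf{C}[S^{-1}]$, so the original class is hit by $\mathsf{I}$.

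For faithfulness under (i), suppose two roofs $(s_1,f_1),(s_2,f_2)$ living in $\mathsf{B}$ have equal images in $\mathsf{C}[S^{-1}]$, witnessed by a topping $(r,h)$ over an apex $Z\in\textup{obj}(\mathsf{C})$ with common composition $u\coloneqq s_1\circ r=s_2\circ h\in S$ landing in $X\in\textup{obj}(\mathsf{B})$. A second application of (i), this time to $u$, produces $W\in\textup{obj}(\mathsf{B})$ and $v:W\to Z$ with $u\circ v\in S$. Then $(r\circ v,\,h\circ v)$ is a topping entirely inside $\mathsf{B}$: both legs are morphisms of $\mathsf{B}$ by fullness, and the common composition into $X$ is $u\circ v\in S_\mathsf{B}$. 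Hence the two roofs were already equivalent in $\mathsf{B}[S_\mathsf{B}^{-1}]$. Under assumption (ii) the argument is dual, carried out through right $S$-roofs $X\xrightarrow{f} X'\xleftarrow{s} Y$: one postcomposes the appropriate apex with the morphism supplied by (ii) to pull it into $\mathsf{B}$, and uses that $f\circ s\in S$ to guarantee the required $S$-membership on the new right leg.

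The main obstacle, though not a deep one, is keeping careful track of the $S$-condition at each step: after altering the apex of a roof or a topping, the distinguished leg must still be an element of $S_\mathsf{B}$, and not merely a morphism of $\mathsf{B}$. This is exactly what the conclusions $s\circ f\in S$ and $f\circ s\in S$ in hypotheses (i) and (ii) guarantee, and it is the reason a naive ``any morphism into $\mathsf{B}$'' version of those hypotheses would be insufficient to preserve the calculus of fractions on descending from $\mathsf{C}[S^{-1}]$ to $\mathsf{B}[S_\mathsf{B}^{-1}]$.
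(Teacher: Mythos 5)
Your proof is correct and takes essentially the same route as the paper's: fullness (the paper's ``surjectivity'') is obtained by precomposing the apex of a $\mathsf{C}$-roof with the morphism supplied by hypothesis (i), and faithfulness (``injectivity'') by doing the same to the apex of the topping, with fullness of $\mathsf{B}$ together with the guaranteed $S$-membership of the composites placing everything in $S_\mathsf{B}$. The dual treatment of (ii) via right $S$-roofs also matches the paper, so there is nothing to add.
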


\begin{proof}
	Suppose condition (i) holds. We must check that $F_{X,Y}:\text{Hom}_{\mathsf{B}[S_\mathsf{B}^{-1}]}(X,Y)$ $\rightarrow \text{Hom}_{\mathsf{C}[S^{-1}]}(\mathsf{I}(X),\mathsf{I}(Y))$ is bijective for any $X,Y\in\text{obj}(\mathsf{B}[S_\mathsf{B}^{-1}])=\text{obj}(\mathsf{B})$. 
	
	Regarding injectivity, let two $S$-roofs $X\xleftarrow{s} X'\xrightarrow{f} Y$ and $X\xleftarrow{t} X''\xrightarrow{g} Y$ be equivalent in $\mathsf{C}$, by \eqref{equivroofs} meaning there exists a topping roof $X'\xleftarrow{r} X'''\xrightarrow{h} X''$, with $s, t, r\in S$ and morphisms $f, g, h$ in $\mathsf{C}$. By assumption (i) for $s\circ r: X'''\rightarrow X$ in $S$, there exist some $W\in\text{obj}(\mathsf{B})$ and $u:W\rightarrow X'''$ such that $s\circ r\circ u\in S$. But $\mathsf{B}\subset\mathsf{C}$ being a full subcategory means that $s\circ r\circ u$ is moreover a morphism in $\mathsf{B}$, hence in $S_\mathsf{B}$ by definition. Also, $g\circ h\circ u:W\rightarrow Y$ belongs to $\mathsf{B}$ as well, so that the topping roof $X\xleftarrow{s\circ r\circ u} W\xrightarrow{g\circ h\circ u} Y$ implies that $(s,f)$ and $(t,g)$ are equivalent as $S_\mathsf{B}$-roofs in $\mathsf{B}$.
	
	About surjectivity, note that for any $S$-roof $X\xleftarrow{s} X'\xrightarrow{f} Y$ there exist again by (i) and fullness of $\mathsf{B}$ some $W\in\text{obj}(\mathsf{B})$ and $u:W\rightarrow X'$ such that $s\circ u\in S_\mathsf{B}$, and the so obtained $S_\mathsf{B}$-roof $X\xleftarrow{s\circ u} W\xrightarrow{f\circ u} Y$ is clearly equivalent to the original one, onto which it maps through $F_{X,Y}$.
	
	This completes the proof. Just note that under the assumption (ii), we would instead need to use right $S$-roofs.
\end{proof}

Applying last lemma to $\mathsf{B}'\!\coloneqq\!\mathsf{Kom^b(A)}\subset\mathsf{B}\!\coloneqq\!\mathsf{Kom^\#\!(A)}\subset\mathsf{C}\!\coloneqq\!\mathsf{Kom(A)}$ for\break $\#=+,-$, with localizing classes consisting of quasi-isomorphisms of the respective bounded complexes, we obtain chains of full subcategories $\mathsf{D^b(A)}\subset\mathsf{D^\#\!(A)}\subset\mathsf{D(A)}$, so that we can in fact regard $\mathsf{D^b(A)}$ as either the result of a localization process or as the full subcategory of $\mathsf{D(A)}$ whose complexes $X^\bullet$ fulfill $H^n(X^\bullet)\cong 0$ when $n<n_-$ and $n>n_+$, for some $n_-<n_+\in\mathbb{Z}$.

Another useful result in the same style (proven for example in \cite[Theorem 13.2.8]{[KS06]}) is:

\begin{Lem}\label{abeliansubcatderivedequiv}
	Let $\mathsf{B}$ be an abelian category. Let $\mathsf{A}\subset\mathsf{B}$ be a full, abelian subcategory which is \textup{thick} --- that is, for any $0\rightarrow X_1\rightarrow X_2\rightarrow X_3\rightarrow 0$ short exact in $\mathsf{B}$ with $X_1,X_3\in\textup{obj}(\mathsf{A})$ also the ``extension'' $X_2\in\textup{obj}(\mathsf{A})$. Suppose also that any $X\in\textup{obj}(\mathsf{A})$ can be embedded into some $X'\in\textup{obj}(\mathsf{A})$ which is injective as an object of $\mathsf{B}$.
	
	Then, for $\#=\emptyset, +, -, \mathsf{b}$, the natural functor $\mathsf{D^\#\!(A)}\rightarrow \mathsf{D^\#\!(B)}$ induces an equivalence $\mathsf{D^\#\!(A)}\rightarrow \mathsf{D_\mathsf{A}^\#\!(B)}$, where $\mathsf{D_\mathsf{A}^\#\!(B)}\subset \mathsf{D^\#\!(B)}$ is the full additive subcategory whose complexes have cohomology in $\mathsf{A}$ \textup{\big(}that is, $H^n(X^\bullet)\in\textup{obj}(\mathsf{A})$ for all $X^\bullet\in\textup{obj}(\mathsf{D_\mathsf{A}^\#\!(B)})$\textup{\big)}.
\end{Lem}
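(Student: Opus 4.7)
The plan is to build on the universal property from Proposition 2.2.2. The inclusion $\iota:\mathsf{A}\hookrightarrow\mathsf{B}$ extends componentwise to a functor $\tilde\iota:\mathsf{Kom^\#\!(A)}\to\mathsf{Kom^\#\!(B)}$. Because $\mathsf{A}$ is thick in $\mathsf{B}$, kernels and cokernels of morphisms in $\mathsf{A}$ computed in $\mathsf{B}$ already lie in $\mathsf{A}$, so cohomology is compatible and $\tilde\iota$ sends quasi-isomorphisms to quasi-isomorphisms. By the universal property this descends to a functor $\Phi:\mathsf{D^\#\!(A)}\to\mathsf{D^\#\!(B)}$ whose image sits inside $\mathsf{D_\mathsf{A}^\#\!(B)}$. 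The task is then to show that $\Phi$ is essentially surjective and fully faithful onto $\mathsf{D_\mathsf{A}^\#\!(B)}$.

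The technical core is the following \emph{resolution lemma}: under the hypotheses, for every $Y^\bullet\in\mathsf{Kom^\#\!(B)}$ with cohomology in $\mathsf{A}$ there exist $X^\bullet\in\mathsf{Kom^\#\!(A)}$ and a quasi-isomorphism $Y^\bullet\to\tilde\iota(X^\bullet)$ in $\mathsf{Kom^\#\!(B)}$. For $\#=+$ this is the classical injective resolution argument performed \emph{inside} $\mathsf{A}$: the embedding hypothesis says that every object of $\mathsf{A}$ fits into an $\mathsf{A}$-object which is $\mathsf{B}$-injective, so starting from the lowest nontrivial degree we construct inductively, degree by degree, a complex of such objects together with the required quasi-isomorphism, exactly as in the proof that enough-injectives abelian categories admit injective resolutions. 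The bounded case $\#=\mathsf{b}$ follows by truncating the $+$-resolution once its cohomology has been exhausted, using that the cohomology is already in $\mathsf{A}$. The cases $\#=-$ and $\#=\emptyset$ reduce to $\#=+$ via smart truncations $\tau^{\geq -n}$: one applies the bounded-below result to each truncation and patches the resulting $\mathsf{A}$-resolutions in a compatible tower.

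Essential surjectivity of $\Phi$ is then immediate. For full faithfulness we use the roof description of morphisms in the derived category (Proposition 2.2.4), after checking that quasi-isomorphisms form a localizing class of $\mathsf{Kom^\#\!(B)}$. A morphism $\Phi(X_1^\bullet)\to\Phi(X_2^\bullet)$ is represented by a roof $X_1^\bullet\xleftarrow{s}Z^\bullet\xrightarrow{f}X_2^\bullet$ with $s$ a quasi-isomorphism; since $Z^\bullet$ is quasi-isomorphic to $X_1^\bullet\in\mathsf{Kom^\#\!(A)}$, it has cohomology in $\mathsf{A}$, and the resolution lemma supplies a quasi-isomorphism $r:Z^\bullet\to W^\bullet$ with $W^\bullet\in\mathsf{Kom^\#\!(A)}$. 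Composing with $r$ gives an equivalent roof lying entirely in $\mathsf{A}$, establishing fullness. Faithfulness is analogous: two $\mathsf{A}$-roofs that agree in $\mathsf{D^\#\!(B)}$ via a topping roof \eqref{equivroofs} with apex in $\mathsf{B}$ can be made to agree in $\mathsf{D^\#\!(A)}$ by replacing that apex with its $\mathsf{A}$-resolution.

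The main obstacle is the resolution lemma itself, and specifically the $\#=-$ and $\#=\emptyset$ cases. For $\#=+$ the construction is clean and purely diagrammatic, but for complexes unbounded on the right the inductive build-up must be either performed in reverse using truncations or assembled from a compatible family of bounded-below resolutions, and one must verify that the patching does not break quasi-isomorphy. The $\#=\emptyset$ case in particular may need additional care (for instance a Mittag-Leffler or homotopy-colimit argument) which is why the statement in the literature—for example in \cite{[KS06]}—is typically proven under mild extra assumptions on $\mathsf{A}$ and $\mathsf{B}$ that are automatically satisfied in the applications of interest for this thesis.
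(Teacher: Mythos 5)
The paper never proves this lemma --- it is imported wholesale from \cite{[KS06]}, Theorem 13.2.8 --- so there is no internal argument to compare you against; your strategy (a resolution lemma feeding into the roof calculus of Proposition \ref{derivedstructure}) is the standard one from that reference, and for $\#=+,\mathsf{b}$ your outline is essentially the right proof. Two remarks even there. First, once the resolution lemma is in hand, fullness and faithfulness are exactly condition (ii) of the paper's Lemma \ref{derivedsubcat}, so you are re-deriving machinery you could simply invoke (and note that quasi-isomorphisms are a localizing class in $\mathsf{H^\#\!(B)}$, not in $\mathsf{Kom^\#\!(B)}$ --- the paper is emphatic about this distinction in Section \ref{ch2.3}). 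Second, the inductive construction is \emph{not} ``exactly as in the proof that enough-injectives categories admit injective resolutions'': the terms $Y^n$ of the complex being resolved lie only in $\mathsf{B}$, so what you embed at stage $n$ is not $Y^n$ but an auxiliary object built from the cohomology and the previously constructed terms; you need $\mathsf{B}$-injectivity of the chosen $\mathsf{A}$-objects to extend maps from subobjects of $Y^n$ over all of $Y^n$, and thickness to certify that the auxiliary object lies in $\mathsf{A}$ at all. That is precisely where both hypotheses are consumed, and your write-up never touches them.

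The genuine gap is $\#=-$ and $\#=\emptyset$. The injective-embedding hypothesis produces resolutions that grow to the \emph{right}, starting the induction at the lowest nonzero degree; for a complex unbounded below there is no degree at which to start, and for a bounded-above complex the $+$-construction applied to each $\tau^{\geq -n}Y^\bullet$ yields resolutions that must be re-truncated on the right (this part is fine: thickness gives $\ker(d_I^N)\in\textup{obj}(\mathsf{A})$, as an extension of $H^N$ by an image of $I^{N-1}$) and then assembled into a tower. That assembly is the entire difficulty: the individual resolutions are defined only up to homotopy, there is no canonical map between consecutive ones, and even a strict inverse system need not have the correct cohomology in the limit without a Mittag--Leffler-type control. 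You flag this honestly, but as written the proposal establishes the lemma only for $\#=+,\mathsf{b}$; the remaining cases require either the more delicate construction of \cite{[KS06]} or a supplementary hypothesis (e.g.\ the dual, projective-type embedding condition for $\#=-$).
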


\subsection{Exact triangles of complexes}\label{ch2.3}

Despite its precise description of the structure of derived categories, we cannot apply Proposition \ref{derivedstructure} to $\mathsf{Kom(A)}$ yet, because $S\coloneqq\{\text{quasi-isomorphisms in}\break\text{Kom}(A)\}$ is \textit{not} a localizing class of morphisms! However, it is when belonging to a quotient of the category of complexes, namely the homotopy category $\mathsf{H(A)}$, as will be discussed in the next section. 

Here we deal with an even more pressing problem: the derived category $\mathsf{D(A)}$ of an abelian category $\mathsf{A}$, as constructed by Proposition \ref{derivedexists}, is not abelian in general! (A justification will be given in Section \ref{ch2.4}, where the failure is shown to occur already at $\mathsf{H(A)}$, closely related to $\mathsf{D(A)}$.) This is indeed worrisome because many notions we introduced in Section \ref{ch1.2} are no longer valid, in particular that of short exact sequences depending on kernels and cokernels. We need due replacements. They come in a familiar aspect (see \cite[chapter 3]{[Imp21]}). 

\begin{Def}\label{translfunc}
	Let $\mathsf{A}$ be an abelian category. The \textbf{$n$-th translation functor}\index{functor!ntht@$n$-th translation} $T^n:\mathsf{Kom(A)}\rightarrow\mathsf{Kom(A)}$ maps $(X^\bullet, d_X^\bullet)\in\text{obj}(\mathsf{Kom(A)})$ to the translated complex $(X[n]^\bullet,d_{X[n]}^\bullet)\in\text{obj}(\mathsf{Kom(A)})$ given by $X[n]^k\coloneqq X^{n+k}$ and $d_{X[n]}^\bullet\coloneqq(-1)^n d_X^{n+\bullet}$, and maps any $f^\bullet\in\text{Hom}_\mathsf{Kom(A)}(X^\bullet,Y^\bullet)$ to the translated chain map $f[n]^\bullet\in\text{Hom}_\mathsf{Kom(A)}(X[n]^\bullet,Y[n]^\bullet)$ given by $f[n]^k\coloneqq f^{n+k}:X^{n+k}\rightarrow Y^{n+k}$.
	
	Clearly, $T^n$ is an equivalence with inverse $T^{-n}$, and does restrict to an autoequivalence of $\mathsf{Kom^\#\!(A)}$ as well as of the associated $\mathsf{D^\#\!(A)}$, for $\#=+,-,\mathsf{b}$.
\end{Def}

Later in Chapter 4 we will see (or actually remember) that the first translation functor $T^1$ is a key ingredient for triangulated categories. Another recurring concept is that of \textit{cones} (for a more thorough discussion, including a topological viewpoint, see \cite[section 1.5]{[Wei94]}):

\begin{Def}\label{coneofcomplexes}
	Let $\mathsf{A}$ be an abelian category, $f^\bullet\in\text{Hom}_\mathsf{Kom(A)}(X^\bullet, Y^\bullet)$.
	\begin{itemize}[leftmargin=0.5cm]
		\item The \textbf{cone (of complexes)}\index{cone!of complexes} of $f^\bullet$ is the cochain complex $(C(f)^\bullet, d_{C(f)}^\bullet)\in\text{obj}(\mathsf{Kom(A)})$ given by $C(f)^n\coloneqq X[1]^n\oplus Y^n$ and
		\begin{align}
			&d_{C(f)}^\bullet \coloneqq \begin{pmatrix} d_{X[1]}^\bullet & 0 \\ f[1]^\bullet & d_Y^\bullet \end{pmatrix}\!:\,X^{\bullet+1}\oplus Y^\bullet\rightarrow X^{\bullet+2}\oplus Y^{\bullet+1}, \nonumber\\[0.2cm] 
			&d_{C(f)}^n(x^{n+1},y^n)=\big(-d_X^{n+1}(x^{n+1}),f^{n+1}(x^{n+1})+d_Y^n(y^n)\big)
		\end{align}
		(indeed fulfilling $d_{C(f)}^n\circ d_{C(f)}^{n-1}=0$, since  $f^{n+1}\circ d_{X[1]}^{n-1} + d_Y^n\circ f^n= -f^{n+1}\circ d_X^n$ $+ f^{n+1}\circ d_X^n=0$ by the chain property of $f^\bullet$). 
		
		\item The \textbf{cylinder (of complexes)}\index{cylinder of complexes} of $f^\bullet$ is the cochain complex $(Z(f)^\bullet, d_{Z(f)}^\bullet)\in\text{obj}(\mathsf{Kom(A)})$ given by $Z(f)^n\coloneqq X^n\oplus C(f)^n= X^n\oplus X[1]^n\oplus Y^n$ and 
		\begingroup
		\allowdisplaybreaks
		\begin{small}\begin{align}
				&\mkern-16mu d_{Z(f)}^\bullet \!\coloneqq\! \begin{pmatrix} d_X^\bullet & -\text{id}_{X[1]}^\bullet & 0 \\ 0 & d_{X[1]}^\bullet & 0 \\ 0 & f[1]^\bullet & d_Y^\bullet \end{pmatrix}\!:\, X^\bullet\oplus X^{\bullet+1}\oplus Y^\bullet\rightarrow X^{\bullet+1}\oplus X^{\bullet+2}\oplus Y^{\bullet+1}, \nonumber\\[0.2cm]
				&\mkern-16mu d_{Z(f)}^n(x^n\!,x^{n+1}\!,y^n)\!=\!\big(d_X^n(x^n)\!-\! x^{n+1}\!,-d_X^{n+1}(x^{n+1}), f^{n+1}(x^{n+1})\!+\! d_Y^n(y^n)\big)
		\end{align}\end{small}
		\endgroup
		(a well-defined differential as well). The cylinder projects onto the cone of $f^\bullet$ via the chain map $p_{23}^\bullet:Z(f)^\bullet\rightarrow C(f)^\bullet$ cutting away the first entry.
	\end{itemize}
	Of course, $X^\bullet, Y^\bullet\in\text{obj}(\mathsf{Kom^\#\!(A)})$ implies $C(f)^\bullet, Z(f)^\bullet\in\text{obj}(\mathsf{Kom^\#\!(A)})$ for $\#=+,-,\mathsf{b}$, and similarly in their derived categories. And if the constituents belong to an additive subcategory of complexes, so do the cone and cylinder. 
\end{Def}

We observe that if $X^\bullet$ and $Y^\bullet$ are 0-complexes connected by some $f\in\text{Hom}_\mathsf{A}(X,Y)$, yielding a trivial chain map $f^\bullet=\mathsf{J}(f)\in\text{Hom}_\mathsf{Kom(A)}(X^\bullet,Y^\bullet)$ as per Remark \ref{0-complex}, then $C(f)^\bullet$ has $X$ in degree $-1$ and $Y$ in degree 0, hence $H^{-1}(C(f)^\bullet)\cong\ker(d_{C(f)}^{-1})\cong X\cap\ker(f^0)\cong \ker(f^\bullet)$ respectively $H^0(C(f)^\bullet)\cong$ $\ker(d_Y^0)/\text{im}(f^0)\cong\text{coker}(f^\bullet)$ (use diagram \eqref{cohomodiag} for a more rigorous proof). Consequently, if $f$ is injective, then $C(f)^\bullet$ is quasi-isomorphic to $\mathsf{J}(\text{coker}(f))$, while $f$ surjective implies that $C(f)^\bullet$ is quasi-isomorphic to $T^{-1}\big(\mathsf{J}(\ker(f))\big)$.

Therefore, cones encapsulate to a certain extent the concepts of kernels and cokernels which are lost when passing to the derived category. Now, a key lemma linking cones and cylinders.

\begin{Lem}\label{p3i3quiso}
	Let $\mathsf{A}$ be an abelian category, $f^\bullet\in\textup{Hom}_\mathsf{Kom(A)}(X^\bullet, Y^\bullet)$. Then the diagram
	\begin{equation}\label{distinguidiag}
		\begin{tikzcd}
			& 0\arrow[r] & Y^\bullet\arrow[r, "i_2^\bullet"]\arrow[d, "i_3^\bullet"'] & C(f)^\bullet\arrow[r, "p_1^\bullet"]\arrow[d, "{\textup{id}_{C(f)}^\bullet}"] & X[1]^\bullet\arrow[r] & 0 \\
			0\arrow[r] & X^\bullet\arrow[r, "{i_1^\bullet}"]\arrow[d, "{\textup{id}_X^\bullet}"'] & Z(f)^\bullet\arrow[r, "p_{23}^\bullet"']\arrow[d, "p_3^\bullet"] & C(f)^\bullet\arrow[r] & 0 & \\
			& X^\bullet\arrow[r, "f^\bullet"'] & Y^\bullet & & & \\
		\end{tikzcd}\vspace*{-0.3cm}
	\end{equation}
	commutes and has exact rows in $\mathsf{Kom(A)}$, where $i_k^\bullet, p_k^\bullet$ denote the obvious inclusion respectively projection chain maps,\footnote{Warning: here and henceforth, we denote by the same symbols $i_k^\bullet$, $p_k^\bullet$ the obvious chain maps, even when these are associated to cones or cylinders of different morphisms!} except for 
	\[
	p_3^n:Z(f)^n\rightarrow Y^n,\,(x^n,x^{n+1},y^n)\mapsto f^n(x^n)+y^n\,.
	\]
	In particular, $i_3^\bullet, p_3^\bullet$ are quasi-isomorphisms fulfilling $p_3^\bullet\circ i_3^\bullet=\textup{id}_Y^\bullet$ and $i_3^\bullet\circ p_3^\bullet\sim\textup{id}_{Z(f)}^\bullet$, meaning that $Z(f)^\bullet\cong Y^\bullet\in\textup{obj}(\mathsf{D(A)})$ by Proposition \ref{derivedexists}. 
\end{Lem}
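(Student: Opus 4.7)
The plan is to split the proof into four manageable pieces matching the four assertions: (i) all maps in \eqref{distinguidiag} are well-defined chain maps and the three cells commute; (ii) both short rows are exact in $\mathsf{Kom(A)}$; (iii) the equality $p_3^\bullet\circ i_3^\bullet=\textup{id}_Y^\bullet$ and the homotopy $i_3^\bullet\circ p_3^\bullet\sim\textup{id}_{Z(f)}^\bullet$ hold; (iv) both $i_3^\bullet$ and $p_3^\bullet$ are quasi-isomorphisms, so that Proposition \ref{derivedexists} turns them into isomorphisms in $\mathsf{D(A)}$, giving the final identification.

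For (i), the inclusion $i_3^n\colon y^n\mapsto(0,0,y^n)$ trivially commutes with $d_{Z(f)}^\bullet$ because only the last slot is nonzero. The only delicate check is that $p_3^n\colon(x^n,x^{n+1},y^n)\mapsto f^n(x^n)+y^n$ is a chain map; computing $p_3^{n+1}\circ d_{Z(f)}^n$ produces a term $f^{n+1}(d_X^n(x^n))$ together with $-f^{n+1}(x^{n+1})+f^{n+1}(x^{n+1})=0$, and applying the chain-map condition $f^{n+1}\circ d_X^n=d_Y^n\circ f^n$ collapses the remainder to $d_Y^n(f^n(x^n)+y^n)$. Commutativity of the diagram cells is immediate from the coordinate formulas: $p_{23}^\bullet\circ i_3^\bullet=i_2^\bullet$ since $(0,0,y^n)\mapsto(0,y^n)$, and $p_3^\bullet\circ i_1^\bullet=f^\bullet$ since $(x^n,0,0)\mapsto f^n(x^n)$. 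Claim (ii) is essentially formal: the decompositions $C(f)^n=X^{n+1}\oplus Y^n$ and $Z(f)^n=X^n\oplus C(f)^n$ give termwise split short exact sequences in $\mathsf{A}$, and the maps $i_1^\bullet,i_2^\bullet,p_1^\bullet,p_{23}^\bullet$ are the obvious inclusions and projections, which commute with the differentials by construction.

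The heart of the argument is (iii), in particular finding the chain homotopy. The equality $p_3^n\circ i_3^n(y^n)=f^n(0)+y^n=y^n$ is obvious, so $p_3^\bullet\circ i_3^\bullet=\textup{id}_Y^\bullet$. For the other composite I compute $(\textup{id}_{Z(f)}^\bullet-i_3^\bullet\circ p_3^\bullet)(x^n,x^{n+1},y^n)=(x^n,x^{n+1},-f^n(x^n))$, so I need $k^n\colon Z(f)^n\to Z(f)^{n-1}$ with $d_{Z(f)}^{n-1}\circ k^n+k^{n+1}\circ d_{Z(f)}^n$ producing exactly that. The shape of $d_{Z(f)}^\bullet$ suggests shifting the first coordinate into the middle one so that the $-\textup{id}_{X[1]}^\bullet$ entry recovers $x^n$, the $d_{X[1]}^\bullet$ entry cancels a $d_X^n(x^n)$ term, and the $f[1]^\bullet$ entry yields the desired $-f^n(x^n)$. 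The natural guess $k^n(x^n,x^{n+1},y^n)=(0,-x^n,0)$ works: a direct expansion confirms $d_{Z(f)}^{n-1}(0,-x^n,0)=(x^n,d_X^n(x^n),-f^n(x^n))$ and $k^{n+1}\circ d_{Z(f)}^n(x^n,x^{n+1},y^n)=(0,x^{n+1}-d_X^n(x^n),0)$, whose sum is $(x^n,x^{n+1},-f^n(x^n))$ as required. Getting the signs and placements right here is the only real obstacle, but it is essentially forced by the structure of $d_{Z(f)}^\bullet$.

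For (iv), homotopic chain maps induce the same morphism on cohomology, so $H^n(i_3^\bullet\circ p_3^\bullet)=H^n(\textup{id}_{Z(f)}^\bullet)=\textup{id}$ while $H^n(p_3^\bullet\circ i_3^\bullet)=H^n(\textup{id}_Y^\bullet)=\textup{id}$; hence $H^n(i_3^\bullet)$ and $H^n(p_3^\bullet)$ are mutually inverse isomorphisms for every $n\in\mathbb{Z}$, so both maps are quasi-isomorphisms. The universal property of $\mathsf{L}\colon\mathsf{Kom(A)}\to\mathsf{D(A)}$ then ensures $\mathsf{L}(i_3^\bullet)$ and $\mathsf{L}(p_3^\bullet)$ are mutually inverse isomorphisms in $\mathsf{D(A)}$, yielding $Z(f)^\bullet\cong Y^\bullet$.
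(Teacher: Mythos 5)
Your proof is correct and follows essentially the same route as the paper's: routine verification of the chain-map and commutativity/exactness claims, the computation $p_3^\bullet\circ i_3^\bullet=\textup{id}_Y^\bullet$, an explicit chain homotopy witnessing $i_3^\bullet\circ p_3^\bullet\sim\textup{id}_{Z(f)}^\bullet$ (your $k^n(x^n,x^{n+1},y^n)=(0,-x^n,0)$ is just the negative of the paper's $(0,x^n,0)$, applied to the negated difference, so the two are identical up to sign convention), and the standard deduction that homotopy inverses are quasi-inverses on cohomology. No gaps.
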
 

\begin{proof}
	That the morphisms involved are indeed chain maps is an obvious check, as well as commutativity of the diagram and exactness of its rows. Moreover, $(p_3^n\circ i_3^n)(y^n)=f^n(0)+y^n=\text{id}_Y^n(y^n)$ for all $y^n\in Y^n$ and any $n\in\mathbb{Z}$, so it remains to prove that $i_3^\bullet\circ p_3^\bullet$ and $\text{id}_{Z(f)}^\bullet$ are homotopic chain maps. Let $k^\bullet:Z(f)^\bullet\rightarrow Z(f)^{\bullet-1}$ be given by $k^n(x^n,x^{n+1},y^n)\coloneqq (0,x^n,0)$, then
	\begin{align*}
		(&d_{Z(f)}^{n-1}\circ k^n + k^{n+1}\circ d_{Z(f)}^n)(x^n,x^{n+1},y^n) \\		
		&= d_{Z(f)}^{n-1}(0,x^n,0) + k^{n+1}(d_X^n(x^n)-x^{n+1},-d_X^{n+1}(x^{n+1}),f^{n+1}(x^{n+1})+d_Y^n(y^n)) \\
		&= (-x^n,-d_X^n(x^n),f^n(x^n)) + (0,d_X^n(x^n)-x^{n+1},0)\\
		&= (0,0,f^n(x^n)+y^n) - (x^n,x^{n+1},y^n)=(i_3^n\circ p_3^n-\text{id}_{Z(f)}^n)(x^n,x^{n+1},y^n)\;, 
	\end{align*}
	as desired. According to Definition \ref{quasiso}, $i_3^\bullet$ and $p_3^\bullet$ are then quasi-isomorphisms. 
\end{proof}

We now describe \textit{triangles of complexes}, the replacements of short exact sequences in derived categories. Their choice is a sensible one due to Proposition \ref{shortexactaretriangles} below, and the reason why they are ideal to work with in $\mathsf{D(A)}$ is explained in Theorem \ref{longexactcohomtriangles}.

\begin{Def}\label{exacttriangles}
	Let $\mathsf{A}$ be an abelian category. Then a \textbf{triangle of complexes}\index{triangle of complexes} in $\mathsf{Kom^\#\!(A)}$ or $\mathsf{D^\#\!(A)}$ is a diagram such as $\Delta=\big(X^\bullet\xrightarrow{f^\bullet} Y^\bullet\xrightarrow{g^\bullet} Z^\bullet\xrightarrow{h^\bullet} X[1]^\bullet\big)$ made of complexes and chain maps between them. Given a second one $\tilde{\Delta}$, a \textbf{morphism of triangles of complexes}\index{morphism!of triangles of complexes} is a triple $(i^\bullet,j^\bullet,k^\bullet)$ of chain maps making the diagram
	\begin{equation}\label{triangmorph}
		\begin{tikzcd}
			X^\bullet\arrow[r,"f^\bullet"]\arrow[d,"i^\bullet"'] & Y^\bullet\arrow[r,"g^\bullet"]\arrow[d,"j^\bullet"'] & Z^\bullet\arrow[r,"h^\bullet"]\arrow[d,"k^\bullet"] & {X[1]}^\bullet\arrow[d,"{i[1]}^\bullet"] \\
			\tilde{X}^\bullet\arrow[r,"\tilde{f}^\bullet"'] & \tilde{Y}^\bullet\arrow[r,"\tilde{g}^\bullet"'] & \tilde{Z}^\bullet\arrow[r,"\tilde{h}^\bullet"'] & \tilde{X}[1]^\bullet
		\end{tikzcd}
	\end{equation}
	commute in the chosen category, moreover called an isomorphism if $i^\bullet$, $j^\bullet$, $k^\bullet$ are isomorphisms.
	
	A triangle is specifically a \textbf{distinguished triangle of complexes}\index{distinguished triangle!of complexes} if it is (quasi-)isomorphic to some
	\begin{equation}\label{distinguitriangle}
		X^\bullet\xrightarrow{i_1^\bullet} Z(f)^\bullet\xrightarrow{p_{23}^\bullet} C(f)^\bullet\xrightarrow{\delta[1]^\bullet} X[1]^\bullet
		\quad\equiv\quad
		\begin{tikzcd}[column sep=0.5cm]
			X^\bullet\arrow[rr, "i_1^\bullet"] & & Z(f)^\bullet\arrow[dl, "p_{23}^\bullet"] \\
			& C(f)^\bullet\arrow[ul, "{\delta[1]^\bullet}"]
		\end{tikzcd}
	\end{equation}
	as resulting from the exact middle row of diagram \eqref{distinguidiag}, where on the right we used the triangular depiction we are accustomed to from \cite{[Imp21]} (here the connecting morphism $\delta[1]^\bullet=p_1^\bullet$ is implicitly understood to map to $X[1]^\bullet$).
\end{Def}

\begin{Pro}\label{shortexactaretriangles}
	Let $\mathsf{A}$ be an abelian category. Any short exact sequence $0\rightarrow X^\bullet\xrightarrow{f^\bullet} Y^\bullet\xrightarrow{g^\bullet} Z^\bullet\rightarrow 0$ of complexes in $\mathsf{Kom(A)}$ is quasi-isomorphic to the middle row of diagram \eqref{distinguidiag}, that is, there exists a triple of quasi-isomorphisms as in \eqref{triangmorph}.
	
	Consequently, the associated triangle $X^\bullet\rightarrow Y^\bullet\rightarrow Z^\bullet\rightarrow X[1]^\bullet$ is isomorphic to \eqref{distinguitriangle}, making it a distinguished triangle.  
\end{Pro}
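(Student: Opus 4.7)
The plan is to construct an explicit morphism of short exact sequences of complexes
\[
\begin{tikzcd}[column sep=1.6em]
0\arrow[r] & X^\bullet\arrow[r, "i_1^\bullet"]\arrow[d, "\textup{id}_X^\bullet"'] & Z(f)^\bullet\arrow[r, "p_{23}^\bullet"]\arrow[d, "p_3^\bullet"'] & C(f)^\bullet\arrow[r]\arrow[d, "\varphi^\bullet"'] & 0 \\
0\arrow[r] & X^\bullet\arrow[r, "f^\bullet"'] & Y^\bullet\arrow[r, "g^\bullet"'] & Z^\bullet\arrow[r] & 0
\end{tikzcd}
\]
in which all three vertical maps are quasi-isomorphisms. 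The left vertical is the identity and the middle vertical is $p_3^\bullet$ from Lemma \ref{p3i3quiso}, already known to be a quasi-isomorphism with $p_3^\bullet\circ i_1^\bullet = f^\bullet$ (direct from its formula $p_3^n(x^n,x^{n+1},y^n)=f^n(x^n)+y^n$). So all the work is to define $\varphi^\bullet$ and verify that it makes the right square commute and is a quasi-isomorphism.

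The natural candidate is the componentwise assignment $\varphi^n:C(f)^n=X^{n+1}\oplus Y^n\to Z^n,\,(x^{n+1},y^n)\mapsto g^n(y^n)$. First I would verify that $\varphi^\bullet$ is a chain map: the defining relation $d_{C(f)}^n(x^{n+1},y^n)=(-d_X^{n+1}x^{n+1},f^{n+1}x^{n+1}+d_Y^ny^n)$ together with $g^\bullet\circ f^\bullet=0$ (from exactness at $Y^\bullet$) gives $\varphi^{n+1}\circ d_{C(f)}^n=g^{n+1}\circ d_Y^n=d_Z^n\circ \varphi^n$. Commutativity of the right square is then immediate from $g^\bullet\circ f^\bullet=0$: $g^n\circ p_3^n(x^n,x^{n+1},y^n)=g^n(y^n)=\varphi^n\circ p_{23}^n(x^n,x^{n+1},y^n)$. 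Exactness of the top row is visible degree by degree, since each $i_1^n$ is the obvious inclusion into a direct sum and $p_{23}^n$ is the complementary projection.

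The only remaining point is that $\varphi^\bullet$ is a quasi-isomorphism. For this I would invoke the long exact cohomology sequences associated to the two short exact sequences of complexes (standard for any abelian category, via the snake lemma applied degreewise), together with the naturality of the connecting homomorphism: the above commutative diagram produces a ladder of long exact sequences in which two out of three vertical maps in each five-term window, namely $H^n(\textup{id}_X^\bullet)$ and $H^n(p_3^\bullet)$, are isomorphisms. The Five Lemma then forces $H^n(\varphi^\bullet)$ to be an isomorphism for every $n\in\mathbb{Z}$, which is precisely what ``quasi-isomorphism'' means. The ``consequently'' part is then formal: the distinguished triangle \eqref{distinguitriangle} attached to the middle row, together with the triple $(\textup{id}_X^\bullet,p_3^\bullet,\varphi^\bullet)$ of quasi-isomorphisms, becomes an isomorphism in $\mathsf{D(A)}$ between \eqref{distinguitriangle} and the triangle $X^\bullet\xrightarrow{f^\bullet}Y^\bullet\xrightarrow{g^\bullet}Z^\bullet\xrightarrow{\delta[1]^\bullet\circ(\varphi^\bullet)^{-1}}X[1]^\bullet$, so that the latter is distinguished by Definition \ref{exacttriangles}.

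The main obstacle, such as it is, lies in picking $\varphi^\bullet$ correctly so that both squares actually commute before inverting anything: the projection-onto-$Y$-then-$g$ map is the right choice precisely because $g^\bullet$ kills the image of $f^\bullet$, which is what makes the $X^{n+1}$ summand of $C(f)^n$ invisible to $Z^n$; after that, the heavy lifting is done by the previous lemma and by the Five Lemma. No chain-homotopy construction is needed here because we only need quasi-isomorphism, not homotopy equivalence.
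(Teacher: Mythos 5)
Your proposal is correct, and the morphism you construct is exactly the one the paper uses: your $\varphi^\bullet$, defined by $(x^{n+1},y^n)\mapsto g^n(y^n)$, is the paper's $k^\bullet=g^\bullet\circ p_2^\bullet$, and the verifications that it is a chain map and that both squares commute coincide with the paper's. The one place where you genuinely diverge is the proof that $\varphi^\bullet$ is a quasi-isomorphism. The paper argues directly: it identifies $\mathcal{K}er$ of this map as the subcomplex $X[1]^\bullet\oplus\textup{im}(f)^\bullet\subset C(f)^\bullet$, exhibits an explicit null-homotopy of its identity map to conclude it is acyclic, and then reads off from the long exact sequence of $0\rightarrow\ker(k)^\bullet\rightarrow C(f)^\bullet\rightarrow Z^\bullet\rightarrow 0$ that each $H^n(k^\bullet)$ is an isomorphism. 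You instead apply the Five Lemma to the ladder of long exact cohomology sequences induced by the morphism of short exact sequences, using that $\textup{id}_X^\bullet$ and $p_3^\bullet$ (Lemma \ref{p3i3quiso}) are already known to be quasi-isomorphisms. Your route is shorter and avoids any homotopy computation, at the cost of invoking naturality of the connecting homomorphism; the paper's route is more self-contained and hands you the acyclic kernel explicitly, which is in the spirit of the surrounding computations (compare Remark \ref{C(f)acyclic}). Both are sound, and your concluding step passing to the distinguished triangle is the same as the paper's.
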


\begin{proof}
	Choose as morphism of triangles $(i^\bullet,j^\bullet,k^\bullet)\coloneqq (\text{id}_X^\bullet,p_3^\bullet,g^\bullet\circ p_2^\bullet)$, fitting into
	\begin{equation*}
		\begin{tikzcd}
			0\arrow[r] & X^\bullet\arrow[r,"i_1^\bullet"]\arrow[d,"\text{id}_X^\bullet"'] & Z(f)^\bullet\arrow[r,"p_{23}^\bullet"]\arrow[d,"p_3^\bullet"] & C(f)^\bullet\arrow[r]\arrow[d,"g^\bullet\circ p_2^\bullet"] & 0 \\
			0\arrow[r] & X^\bullet\arrow[r,"f^\bullet"'] & Y^\bullet\arrow[r,"g^\bullet"'] & Z^\bullet\arrow[r] & 0
		\end{tikzcd}\quad,
	\end{equation*}
	where the cylinder and cone in the first row are built out of our given $f^\bullet$. Indeed $k^\bullet$ is a chain map:
	\begin{align*}
		d_Z^n(k^n(x^{n+1},y^n))&=d_Z^n(g^n(y^n))=0+g^{n+1}(d_Y^n(y^n))\\ 
		&=g^{n+1}(f^{n+1}(x^{n+1}))+g^{n+1}(d_Y^n(y^n))\\
		&=k^{n+1}(-d_X^{n+1}(x^{n+1}),f^{n+1}(x^{n+1})+d_Y^n(y^n))\\
		&=k^{n+1}(d_{C(f)}^n(x^{n+1},y^n))\,,
	\end{align*}
	exploiting exactness of the second row. The so obtained diagram is clearly commutative, again due to $g^n\circ f^n=0$.
	
	Since $\text{id}_X^\bullet$ and $p_3^\bullet$ are quasi-isomorphisms (the latter by Lemma \ref{p3i3quiso}), we need only check that $k^\bullet$ is. Its kernel is the complex $\ker(k)^\bullet = X[1]^\bullet\oplus \ker(g)^\bullet\break= X[1]^\bullet\oplus\text{im}(f)^\bullet\subset C(f)^\bullet$ whose differential is just obtained by restriction, namely $d_{\ker(k)}^n(x^{n+1},x^n)=(-d_X^{n+1}(x^{n+1}),\,x^{n+1}+d_X^n(x^n))$, where injectivity of $f^\bullet$ allows us to identify $X^\bullet\equiv\text{im}(f)^\bullet\subset Y^\bullet$. Now observe that $\text{id}_{\ker(k)}^\bullet\sim 0$ through $h^n:X^{n+1}\oplus X^n\rightarrow X^n\oplus X^{n-1},\, (x^{n+1},x^n)\mapsto (x^n,0)$ \big(indeed it holds $(h^{n+1}\circ d_{\ker(k)}^n+d_{\ker(k)}^{n-1}\circ h^n)(x^{n+1},x^n)=(x^{n+1}+d_X^n(x^n),0)+(-d_X^n(x^n),x^n)=$ $\text{id}_{\ker(k)}^n(x^{n+1},x^n)$\big), so that we obtain $H^n(\text{id}_{\ker(k)}^\bullet)=H^n(0)=0$ and therefore $H^n(\ker(k)^\bullet)=H^n(\text{id}_{\ker(k)}^\bullet)\langle H^n(\ker(k)^\bullet)\rangle=0$.
	
	Consequently, the short exact sequence $0\rightarrow\ker(k)^\bullet\rightarrow C(f)^\bullet\xrightarrow{k^\bullet} Z^\bullet\rightarrow 0$\break --- exact because $\ker(k)^\bullet$ is a subcomplex of the cone of $f^\bullet$ and $k^\bullet$ is surjective (by definition, since $g^\bullet$ is) --- induces\footnote{This is a standard result in homological algebra, which here holds true because $\mathsf{Kom(A)}$ is an abelian category!} the long exact one
	\begin{small}\[
		...\rightarrow 0=H^n(\ker(k)^\bullet)\rightarrow H^n(C(f)^\bullet)\xrightarrow{H^n(k^\bullet)} H^n(Z^\bullet)\rightarrow 0=H^{n+1}(\ker(k)^\bullet)\rightarrow...\,.
		\]\end{small}
	\!\!Then $H^n(k^\bullet)$ is an isomorphisms for all $n\in\mathbb{Z}$, hence $k^\bullet$ is a quasi-isomorphism, as desired. 
\end{proof}

\begin{Rem}\label{alternadistinguitriang}
	Consider the sequence $0\rightarrow Y^\bullet\xrightarrow{i_2^\bullet} C(f)^\bullet\xrightarrow{p_1^\bullet} X[1]^\bullet\rightarrow 0$ arising from $f^\bullet\in\text{Hom}_\mathsf{Kom(A)}(X^\bullet,Y^\bullet)$, clearly short exact. By Proposition \ref{shortexactaretriangles}, it is quasi-isomorphic to the middle row of \eqref{distinguidiag}, hence the associated triangle $X^\bullet\rightarrow Y^\bullet\rightarrow C(f)^\bullet\rightarrow X[1]^\bullet$ (shifted once to the right) is distinguished. 
	
	In fact, many authors call a triangle distinguished when isomorphic to\footnote{Here the first projection in the triangular diagram on the right is formally wrong as written --- $p_1^\bullet$ readily ``climbs up'' to $X[1]^\bullet$ --- but we preserve the notation for visual clarity.} 
	\begin{equation}\label{conetriangle}
		X^\bullet\xrightarrow{f^\bullet} Y^\bullet\xrightarrow{i_2^\bullet} C(f)^\bullet\xrightarrow{p_1^\bullet} X[1]^\bullet
		\quad\equiv\quad
		\begin{tikzcd}[column sep=0.5cm]
			X^\bullet\arrow[rr, "f^\bullet"] & & Y^\bullet\arrow[dl, "i_2^\bullet"] \\
			& C(f)^\bullet\arrow[ul, "{p_1[1]^\bullet}"]
		\end{tikzcd}\quad,
	\end{equation}
	which is compatible with Definition \ref{exacttriangles}, as we have just argued. We will interchange between the two definitions as appropriate.
\end{Rem}

\begin{Thm}\label{longexactcohomtriangles}
	Let $\mathsf{A}$ be an abelian category, $X^\bullet\xrightarrow{f^\bullet} Y^\bullet\xrightarrow{g^\bullet} Z^\bullet\xrightarrow{h^\bullet} X[1]^\bullet$ a distinguished triangle in $\mathsf{D(A)}$. Then this induces a long exact sequence in cohomology
	\begin{equation}
		\!...\rightarrow H^n(X^\bullet)\xrightarrow{H^n(f^\bullet)} H^n(Y^\bullet)\xrightarrow{H^n(g^\bullet)} H^n(Z^\bullet)\xrightarrow{H^n(h^\bullet)} H^{n+1}(X^\bullet)\rightarrow...
	\end{equation}
	in $\mathsf{A}$.
\end{Thm}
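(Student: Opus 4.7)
The strategy is to reduce to the canonical cone triangle and then exploit the fact that the induced short exact sequence lives inside the abelian category $\mathsf{Kom(A)}$, where long exact cohomology sequences are available by standard homological algebra.

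First I would replace the given distinguished triangle by one isomorphic to it in $\mathsf{D(A)}$ of the canonical cone form, say \eqref{conetriangle} of Remark \ref{alternadistinguitriang}: $X^\bullet \xrightarrow{f^\bullet} Y^\bullet \xrightarrow{i_2^\bullet} C(f)^\bullet \xrightarrow{p_1^\bullet} X[1]^\bullet$. Since $H^n:\mathsf{D(A)}\to\mathsf{A}$ is a well-defined functor (it inverts quasi-isomorphisms, so factors through $\mathsf{L}$), any isomorphism of triangles in $\mathsf{D(A)}$ descends to an isomorphism of the long sequences of cohomology objects. Hence it suffices to establish exactness for the canonical cone triangle.

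Next I would observe, as in Remark \ref{alternadistinguitriang}, that in $\mathsf{Kom(A)}$ one has the honest short exact sequence
\[
0\longrightarrow Y^\bullet\xrightarrow{i_2^\bullet} C(f)^\bullet\xrightarrow{p_1^\bullet} X[1]^\bullet\longrightarrow 0,
\]
componentwise split on the underlying graded objects by construction of the cone. Because $\mathsf{Kom(A)}$ is abelian (Lemma \ref{komabelian}), the standard homological snake-lemma argument applies verbatim and yields a long exact sequence
\[
\ldots\to H^n(Y^\bullet)\xrightarrow{H^n(i_2^\bullet)} H^n(C(f)^\bullet)\xrightarrow{H^n(p_1^\bullet)} H^n(X[1]^\bullet)\xrightarrow{\delta^n} H^{n+1}(Y^\bullet)\to\ldots
\]
in $\mathsf{A}$, where $\delta^n$ is the connecting morphism and $H^n(X[1]^\bullet)=H^{n+1}(X^\bullet)$.

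The key identification --- and the only part requiring any calculation --- is that $\delta^n = H^{n+1}(f^\bullet)$, so that the above sequence coincides with the one claimed in the theorem (with $g^\bullet = i_2^\bullet$ and $h^\bullet = p_1^\bullet$). I would verify this by the usual element-chase (legitimate in the concrete case by Freyd--Mitchell, and in general by the categorical reformulation of the connecting morphism): a cohomology class in $H^{n+1}(X^\bullet)$ is represented by $x\in X^{n+1}$ with $d_X^{n+1}(x)=0$; lift through $p_1^n$ to $(x,0)\in C(f)^n=X^{n+1}\oplus Y^n$; apply the differential to get $d_{C(f)}^n(x,0) = (0,f^{n+1}(x))$, which lies in the image of $i_2^{n+1}$; pulling back along $i_2^{n+1}$ gives $f^{n+1}(x)\in Y^{n+1}$, whose class is $H^{n+1}(f^\bullet)[x]$.

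The main obstacle is really the bookkeeping: keeping track of the degree shift in $X[1]^\bullet$ (with its sign-flipped differential, per Definition \ref{translfunc}) and checking that the connecting morphism comes out to $+H^{n+1}(f^\bullet)$ rather than $-H^{n+1}(f^\bullet)$; either sign is fine for exactness, but one must be careful to present the sequence consistently. Once that identification is pinned down, exactness is immediate from the standard long exact sequence, and the reduction step transports it back to the arbitrary distinguished triangle in $\mathsf{D(A)}$.
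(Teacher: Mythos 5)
Your proof is correct, and it follows the same overall strategy as the paper's — reduce to a canonical triangle, pass to an honest short exact sequence in $\mathsf{Kom(A)}$, invoke the standard long exact cohomology sequence, and identify the connecting morphism — but it makes a different choice of short exact sequence. The paper works with the middle row of diagram \eqref{distinguidiag}, namely $0\rightarrow X^\bullet\xrightarrow{i_1^\bullet}Z(f)^\bullet\xrightarrow{p_{23}^\bullet}C(f)^\bullet\rightarrow 0$ involving the cylinder, identifies the connecting homomorphism with $H^n(h^\bullet)$, and then relies on the quasi-isomorphism $Z(f)^\bullet\cong Y^\bullet$ from Lemma \ref{p3i3quiso} to recognize the middle terms as $H^n(Y^\bullet)$. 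You instead use the cone sequence $0\rightarrow Y^\bullet\xrightarrow{i_2^\bullet}C(f)^\bullet\xrightarrow{p_1^\bullet}X[1]^\bullet\rightarrow 0$ of Remark \ref{alternadistinguitriang} and identify the connecting morphism with $H^{n+1}(f^\bullet)$. Your route is slightly more economical: it bypasses the cylinder entirely, and your explicit computation $d_{C(f)}^n(x,0)=(-d_X^{n+1}(x),f^{n+1}(x))=(0,f^{n+1}(x))$ for a cocycle $x$ actually carries out the identification of the connecting map that the paper merely asserts ``as can be shown by applying the Snake Lemma.'' The one point worth keeping explicit (which you do address) is that the reduction step requires $H^n$ to be well defined on $\mathsf{D(A)}$ and to send isomorphisms of triangles in $\mathsf{D(A)}$ to isomorphisms of the resulting sequences; this is exactly why the cohomology functors factor through the localization.
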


\begin{proof}
	By definition of distinguished triangle, we may just focus on \eqref{distinguitriangle}, whose associated short exact sequence in $\mathsf{Kom(A)}$ is the middle row of diagram \eqref{distinguidiag}. This induces the long exact sequence
	\begin{small}\[
		...\!\rightarrow H^n(X^\bullet)\xrightarrow{H^n(i_1^\bullet)} H^n(Z(f)^\bullet)\xrightarrow{H^n(p_{23}^\bullet)} H^n(C(f)^\bullet)\xrightarrow{H^n(\delta[1]^\bullet)} H^{n+1}(X^\bullet)\rightarrow\!...,
		\]\end{small}
	\!where the connecting homomorphism $H^n(\delta[1]^\bullet)$ actually corresponds to $H^n(h^\bullet)\!:$ $H^n(Z^\bullet)\cong H^n(C(f)^\bullet)\rightarrow H^n(X[1]^\bullet)=H^{n+1}(X^\bullet)$, as can be shown by applying the Snake Lemma. Since the notion of distinguished triangles carries over to the derived category of $\mathsf{A}$, indeed the obtained long sequence is exact in $\mathsf{D(A)}$ as well.
\end{proof}

Proposition \ref{shortexactaretriangles} and Theorem \ref{longexactcohomtriangles} together explain why triangles are nice: any short exact sequence in $\mathsf{Kom(A)}$ ultimately induces a long exact one in cohomology, after due manipulations. From the perspective of the functors $H^n$, this just tells us that they are, rather unsurprisingly, \textit{cohomological}.

\begin{Def}\label{cohomfunconabelian}
Let $\mathsf{A}, \mathsf{B}$ be abelian categories. A functor $\mathsf{F}:\mathsf{D^\#\!(A)}\rightarrow\mathsf{B}$ (or on the homotopy category of $\mathsf{A}$ we will introduce below), for $\#=\emptyset, +, -, \mathsf{b}$, is a \textbf{cohomological functor}\index{functor!cohomological (on a derived category)} if any distinguished triangle $X^\bullet\xrightarrow{f^\bullet} Y^\bullet\xrightarrow{g^\bullet} Z^\bullet\xrightarrow{h^\bullet} X[1]^\bullet$ is mapped to a long exact sequence
\[
...\rightarrow\mathsf{F}(T^n(X^\bullet))\rightarrow\mathsf{F}(T^n(Y^\bullet))\rightarrow\mathsf{F}(T^n(Z^\bullet))\rightarrow\mathsf{F}(T^{n+1}(X^\bullet))\rightarrow...\,,
\]
where $T^n$ are the translation autoequivalences on the respective categories of complexes.
\end{Def}

Since $H^n=H^0\circ T^n:\mathsf{D^\#\!(A)}\rightarrow\mathsf{A}$, Theorem \ref{longexactcohomtriangles} indeed shows that all functors $H^n$ are cohomological in name and in fact. Later on we will see that also the $\text{Hom}$-functors are cohomological.

\begin{Rem}\label{C(f)acyclic}
Consider $X^\bullet\xrightarrow{f^\bullet} Y^\bullet\xrightarrow{i_2^\bullet} C(f)^\bullet\xrightarrow{p_1^\bullet} X[1]^\bullet$, the distinguished triangle associated to $f^\bullet\in\text{Hom}_\mathsf{Kom(A)}(X^\bullet,Y^\bullet)$ from Remark \ref{alternadistinguitriang}. By Theorem \ref{longexactcohomtriangles} it induces the long exact sequence in cohomology
\begin{small}\[
	...\rightarrow H^n(X^\bullet)\xrightarrow{H^n(f^\bullet)} H^n(Y^\bullet)\xrightarrow{H^n(i_2^\bullet)} H^n(C(f)^\bullet)\xrightarrow{H^n(p_1^\bullet)} H^{n+1}(X^\bullet)\xrightarrow{H^{n+1}(f^\bullet)}...\,.
	\]\end{small}
\!\!Now, by standard homological algebra we know that if $f^\bullet$ is a quasi-isomorphism, then $C(f)^\bullet$ is acyclic: $\ker(H^n(i_2^\bullet))=\text{im}(H^n(f^\bullet))=H^n(Y^\bullet)$ and $\text{im}(H^n(p_1^\bullet))=\ker(H^{n+1}(f^\bullet))=\{0\}$, hence $H^n(i_2^\bullet)$ and $H^n(p_1^\bullet)$ are both trivial, and we conclude that $H^n(C(f)^\bullet)=\ker(H^n(p_1^\bullet))=\text{im}(H^n(i_2^\bullet))=\{0\}$. The converse is true as well. 
\end{Rem}

\subsection{The homotopy category}\label{ch2.4}

We proceed to introduce all the machinery necessary for Proposition \ref{quasilocalizing}, which will finally validate the explicit description of $\mathsf{D(A)}$ offered by Proposition \ref{derivedstructure}.

\begin{Def}
	Let $\mathsf{A}$ be an abelian category. Then its \textbf{homotopy category}\index{homotopy category} $\mathsf{H(A)}$ is given by $\text{obj}(\mathsf{H(A)})\coloneqq\text{obj}(\mathsf{Kom(A)})$ and by identifying homotopic morphisms $f^\bullet\sim g^\bullet\in\text{Hom}_{\mathsf{Kom(A)}}(X^\bullet, Y^\bullet)$ to the same $f^\bullet=g^\bullet\in\text{Hom}_{\mathsf{H(A)}}(X^\bullet, Y^\bullet)$.
	
	The full subcategories $\mathsf{H^+(A)}, \mathsf{H^-(A)}, \mathsf{H^b(A)}\subset\mathsf{H(A)}$ are similarly constructed from their respective categories of bounded complexes.
\end{Def}

Arguing as in Lemma \ref{komabelian}, it is obvious that $\mathsf{H(A)}$ is an additive category, which actually suffices to restrict the cohomology functors to functors $H^n:\mathsf{H(A)}\rightarrow\mathsf{A}$ as described in Definition \ref{cohomfunc}.\footnote{Note that homotopic morphisms $f^\bullet\sim g^\bullet$ yielding the same cohomological morphisms in $\mathsf{A}$ is all the more consistent with $f^\bullet=g^\bullet$ in $\mathsf{H(A)}$.} Moreover, the functor $\mathsf{Kom(A)}\rightarrow\mathsf{H(A)}$ mapping chain maps to their equivalence class modulo homotopy is an additive functor. And Definition \ref{quasiso} of quasi-isomorphism transposes unscathed to $\mathsf{H(A)}$.

Unfortunately, $\mathsf{H(A)}$ is an abelian category only under a quite specific circumstance, as shown by Verdier in \cite[Proposition II.1.3.6]{[Ver96]}:\footnote{A direct counterexample supporting non-abelianity of $\mathsf{H(A)}$ is provided at \url{https://math.stackexchange.com/questions/1280199/the-homotopy-category-of-complexes/1284823\#1284823}.} 

\begin{Lem}\label{whenD(A)abelian}
	Let $\mathsf{A}$ be an abelian category. Then its homotopy category $\mathsf{H(A)}$ is abelian if and only if $\mathsf{A}$ is semisimple \textup(cf. Remark \ref{semisimple}\textup).
\end{Lem}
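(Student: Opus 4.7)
\medskip

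\noindent\textit{Proof plan.} I will handle the two implications separately, with the $(\Leftarrow)$ direction being essentially a structural decomposition and the $(\Rightarrow)$ direction being the main content of the statement.

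For $(\Leftarrow)$, assume $\mathsf{A}$ is semisimple. The plan is to show that every complex $(X^\bullet,d^\bullet)\in\text{obj}(\mathsf{Kom(A)})$ splits, already in $\mathsf{Kom(A)}$, as a direct sum $H^\bullet(X^\bullet)\oplus E^\bullet$, where $H^\bullet(X^\bullet)$ is endowed with zero differentials and $E^\bullet$ is a direct sum of elementary 2-term complexes $\cdots\to 0\to M\xrightarrow{\text{id}_M} M\to 0\to\cdots$. Concretely, the short exact sequences $0\to\ker(d^n)\to X^n\twoheadrightarrow\text{im}(d^n)\to 0$ and $0\to\text{im}(d^{n-1})\hookrightarrow\ker(d^n)\twoheadrightarrow H^n(X^\bullet)\to 0$ all split by semisimplicity, which yields a decomposition $X^n\cong H^n(X^\bullet)\oplus\text{im}(d^{n-1})\oplus\text{im}(d^n)$ under which $d^n$ becomes the identity from the third summand in degree $n$ to the second in degree $n+1$. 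Each elementary piece $M\xrightarrow{\text{id}}M$ is null-homotopic via the identity chain homotopy, so $E^\bullet\cong 0$ in $\mathsf{H(A)}$ and $X^\bullet\cong H^\bullet(X^\bullet)\in\text{obj}(\mathsf{Kom_0(A)})$ in $\mathsf{H(A)}$. The functor $\mathsf{H}^\bullet$ of Remark \ref{semisimple} thus induces an equivalence $\mathsf{H(A)}\simeq\mathsf{Kom_0(A)}\cong\prod_{n\in\mathbb{Z}}\mathsf{A}$, and the latter is abelian as a product of abelian categories.

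For $(\Rightarrow)$, assume $\mathsf{H(A)}$ is abelian; I want to show that every short exact sequence $0\to X\xrightarrow{f} Y\xrightarrow{g} Z\to 0$ in $\mathsf{A}$ splits. The main device is the \emph{test complex} $W^\bullet=(X\xrightarrow{f} Y)$ concentrated in degrees $0$ and $1$, together with the chain map $\psi^\bullet:W^\bullet\to X[0]^\bullet$ defined by $\psi^0=\text{id}_X$ and $\psi^n=0$ otherwise. A direct calculation establishes two facts: first, $f^\bullet\circ\psi^\bullet\sim 0$ always (the homotopy is $h^1=\text{id}_Y$); second, $\psi^\bullet\sim 0$ if and only if there exists $h^1:Y\to X$ with $\text{id}_X=h^1\circ f$, i.e., if and only if $f$ admits a left inverse, equivalently the sequence splits. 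Hence, if the sequence does \emph{not} split, $\psi^\bullet$ furnishes a nonzero element of $\text{Hom}_\mathsf{H(A)}(W^\bullet,X[0]^\bullet)$ killed by post-composition with $f^\bullet$, which means $f^\bullet$ is not a monomorphism in $\mathsf{H(A)}$ despite $f$ being one in $\mathsf{A}$. Under the assumed abelian structure on $\mathsf{H(A)}$, this forces $\ker(f^\bullet)\neq 0$ in $\mathsf{H(A)}$; exploiting the canonical decomposition of $f^\bullet$ together with the fully faithful embedding $\mathsf{A}\hookrightarrow\mathsf{H(A)}$ via $X\mapsto X[0]^\bullet$ (for which $H^0$ is a retraction) then produces the required splitting, contradicting the assumption that the sequence is non-split.

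The main obstacle is this last step of the converse: kernels and cokernels in $\mathsf{H(A)}$ need not coincide with those computed in $\mathsf{Kom(A)}$, and the homotopy quotient obscures which morphisms truly vanish. What \emph{looks} like an evident short exact sequence in $\mathsf{Kom(A)}$ may fail to be one in $\mathsf{H(A)}$, and vice versa. Navigating this requires carefully checking universal properties up to homotopy rather than on the nose — in particular, verifying that the natural candidate $\ker(f^\bullet)_\mathsf{Kom(A)}$ does not satisfy the universal property in $\mathsf{H(A)}$ precisely when the sequence fails to split. The cleanest route (along the lines of Verdier in \cite[Proposition II.1.3.6]{[Ver96]}) is to show that abelianity of $\mathsf{H(A)}$ forces the equivalence $\mathsf{H(A)}\simeq\mathsf{Kom_0(A)}$ of the $(\Leftarrow)$ direction to hold, from which the splitting of every SES in $\mathsf{A}$ follows by restriction to the full subcategory of $0$-complexes.
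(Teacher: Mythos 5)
The paper itself does not prove this lemma --- it is quoted from Verdier --- so there is no in-house argument to measure you against; I am judging the proposal on its own merits.

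Your $(\Leftarrow)$ direction is correct: the two split short exact sequences give $X^n\cong H^n(X^\bullet)\oplus\text{im}(d^{n-1})\oplus\text{im}(d^n)$, the complex decomposes into its cohomology plus contractible two-term pieces, and --- a point you should state explicitly --- chain homotopies between zero-differential complexes necessarily vanish, so the inclusion $\mathsf{Kom_0(A)}\hookrightarrow\mathsf{H(A)}$ is fully faithful as well as (by your splitting) essentially surjective. Hence $\mathsf{H(A)}\simeq\prod_{n\in\mathbb{Z}}\mathsf{A}$ is abelian.

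The $(\Rightarrow)$ direction has a genuine gap, at exactly the spot you flag. Your computation correctly shows that if the sequence does not split then $\psi^\bullet\neq 0$ while $f^\bullet\circ\psi^\bullet=0$, so $f^\bullet$ is not a monomorphism in $\mathsf{H(A)}$. But this contradicts nothing: abelian categories are full of non-monomorphisms, and ``$\ker(f^\bullet)\neq 0$'' is a perfectly consistent state of affairs from which no splitting of the original sequence follows. What your argument actually requires is the converse input --- that abelianity of $\mathsf{H(A)}$ forces $f^\bullet$ to \emph{be} a monomorphism whenever $f$ is one in $\mathsf{A}$ --- and that is the real content of the lemma; your closing appeal to ``abelianity forces $\mathsf{H(A)}\simeq\mathsf{Kom_0(A)}$'' is circular, since that equivalence is essentially the statement being proved. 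A correct completion goes through the triangulated structure of $\mathsf{H(A)}$ (Theorem \ref{homotopycatistriangulated}, whose proof is independent of this lemma): in a triangulated category every categorical monomorphism $u$ splits, because rotating its distinguished triangle gives $C[-1]\xrightarrow{-w[-1]}A\xrightarrow{u}B\xrightarrow{v}C$ with $u\circ w[-1]=0$, whence $w=0$ and the long exact Hom-sequence of Proposition \ref{Homareexact} produces a retraction of $u$; dually every categorical epimorphism splits. If $\mathsf{H(A)}$ is moreover abelian, axiom (A4) writes any morphism as a split epimorphism followed by a split monomorphism, so every $u$ admits $g$ with $u\circ g\circ u=u$. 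Applying this to $u=f^\bullet$ and using $\text{Hom}_{\mathsf{H(A)}}(X[0]^\bullet,Y[0]^\bullet)\cong\text{Hom}_{\mathsf{A}}(X,Y)$ yields $g\in\text{Hom}_{\mathsf{A}}(Y,X)$ with $f\circ g\circ f=f$, hence $g\circ f=\text{id}_X$ since $f$ is mono, and the sequence splits. Your $\psi^\bullet$ then becomes an optional ingredient: it is the connecting morphism of the triangle $W^\bullet\to X[0]^\bullet\xrightarrow{f^\bullet}Y[0]^\bullet\to W[1]^\bullet$, and your ``claim 2'' is the standard fact that this connecting morphism vanishes precisely when $f$ is a split monomorphism.
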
 

So the desirable notion of exactness of sequences already fails here, and should be replaced by the triangles of Section \ref{ch2.3}. The advantage of passing through the homotopy category is but another:

\begin{Thm}
	Let $\mathsf{A}$ be an abelian category and consider the class $S\coloneqq\{\text{quasi-isomorphisms in }\mathsf{H(A)}\}$. Then the localization $\mathsf{H(A)}[S^{-1}]$ \textup(as constructed in the proof of Proposition \ref{derivedexists}\textup) is canonically isomorphic to $\mathsf{D(A)}$. Similarly, $\mathsf{H^\#\!(A)}[S^{-1}]$ is canonically isomorphic to $\mathsf{D^\#\!(A)}$ for $\#=+,-,\mathsf{b}$.
\end{Thm}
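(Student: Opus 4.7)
The strategy is to show that $\mathsf{D(A)}$, equipped with a canonical factorization $\bar{\mathsf{L}} : \mathsf{H(A)} \to \mathsf{D(A)}$ of the localization $\mathsf{L} : \mathsf{Kom(A)} \to \mathsf{D(A)}$ through the quotient functor $Q : \mathsf{Kom(A)} \to \mathsf{H(A)}$, satisfies the universal property characterizing $\mathsf{H(A)}[S^{-1}]$; this will force the asserted canonical isomorphism.

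The crux is proving that $\mathsf{L}$ descends to $\mathsf{H(A)}$, i.e., that homotopic chain maps $f^\bullet \sim g^\bullet$ become equal in $\mathsf{D(A)}$. Thereto I would exploit the cylinder $Z(\textup{id}_X)^\bullet$ of the identity on $X^\bullet$ (Definition \ref{coneofcomplexes}) as follows. Besides the chain map $i_3^\bullet$ of Lemma \ref{p3i3quiso} --- which for $f = \textup{id}_X$ reads $i_3^n(x) = (0,0,x)$ --- the cylinder admits a second natural ``endpoint'' inclusion $i_0^\bullet : X^\bullet \to Z(\textup{id}_X)^\bullet$, namely $i_0^n(x) = (x, 0, 0)$, and both are chain maps satisfying $p_3^\bullet \circ i_0^\bullet = \textup{id}_X^\bullet = p_3^\bullet \circ i_3^\bullet$. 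Since $p_3^\bullet$ is a quasi-isomorphism by Lemma \ref{p3i3quiso}, the invertibility of $\mathsf{L}(p_3^\bullet)$ in $\mathsf{D(A)}$ forces $\mathsf{L}(i_0^\bullet) = \mathsf{L}(p_3^\bullet)^{-1} = \mathsf{L}(i_3^\bullet)$. Given a homotopy $k^\bullet$ witnessing $f^\bullet \sim g^\bullet$ in the sense of Definition \ref{quasiso}, I would assemble a chain map $H^\bullet : Z(\textup{id}_X)^\bullet \to Y^\bullet$ by $H^n(x, y, x') \coloneqq f^n(x) - k^{n+1}(y) + g^n(x')$; a direct calculation shows that the chain-map condition on $H^\bullet$ is exactly the homotopy relation $(f-g)^{n+1} = k^{n+2} \circ d_X^{n+1} + d_Y^n \circ k^{n+1}$. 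Because $H^\bullet \circ i_0^\bullet = f^\bullet$ and $H^\bullet \circ i_3^\bullet = g^\bullet$ hold already in $\mathsf{Kom(A)}$, applying $\mathsf{L}$ yields $\mathsf{L}(f^\bullet) = \mathsf{L}(g^\bullet)$, so $\mathsf{L} = \bar{\mathsf{L}} \circ Q$ for a uniquely determined $\bar{\mathsf{L}}$.

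It remains to verify that $\bar{\mathsf{L}}$ is the localization of $\mathsf{H(A)}$ at $S$. It clearly inverts elements of $S$, each admitting a representative in $\mathsf{Kom(A)}$ which $\mathsf{L}$ inverts. For the universal property, let $\mathsf{F} : \mathsf{H(A)} \to \mathsf{D}$ be any functor inverting $S$; then $\mathsf{F} \circ Q$ inverts every quasi-isomorphism of $\mathsf{Kom(A)}$, so Proposition \ref{derivedexists} yields a unique $\mathsf{G} : \mathsf{D(A)} \to \mathsf{D}$ with $\mathsf{G} \circ \mathsf{L} = \mathsf{F} \circ Q$. Since $Q$ is the identity on objects and surjective on Hom-sets, this forces $\mathsf{G} \circ \bar{\mathsf{L}} = \mathsf{F}$, while uniqueness of $\mathsf{G}$ is inherited from the same proposition. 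The bounded variants follow by the same argument applied to $\mathsf{H^\#\!(A)}$ and $\mathsf{D^\#\!(A)}$ for $\# = +, -, \mathsf{b}$, noting that the cone and cylinder constructions respect each boundedness condition.

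The main obstacle is the sign bookkeeping in checking that $H^\bullet$ commutes with differentials; conceptually, however, the whole proof reduces to the single observation that the two ``endpoint inclusions'' of the identity-cylinder become identified in the derived category, which in turn is a formal consequence of Lemma \ref{p3i3quiso}.
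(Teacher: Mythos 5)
Your proof is correct, and the key technical step is handled by a genuinely different (and arguably cleaner) device than the paper's. Both arguments share the same skeleton: everything reduces to showing that $\mathsf{L}:\mathsf{Kom(A)}\rightarrow\mathsf{D(A)}$ identifies homotopic chain maps, after which the universal property of Proposition \ref{derivedexists} is transferred to the homotopy quotient. The paper proves the identification by building, from the homotopy $k^\bullet$, comparison maps $c(k)^\bullet:C(f)^\bullet\rightarrow C(g)^\bullet$ and $z(k)^\bullet:Z(f)^\bullet\rightarrow Z(g)^\bullet$ between the cones and cylinders of $f^\bullet$ and $g^\bullet$, invoking the Five-Lemma to see that $z(k)^\bullet$ is a quasi-isomorphism, and then chasing a diagram one of whose triangles commutes only after localization. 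You instead work with the single cylinder $Z(\textup{id}_X)^\bullet$ and its two endpoint inclusions $i_0^\bullet$ and $i_3^\bullet$, both sections of the quasi-isomorphism $p_3^\bullet$ of Lemma \ref{p3i3quiso} and hence equalized by $\mathsf{L}$, and you package the homotopy into one chain map $H^\bullet$ through which $f^\bullet$ and $g^\bullet$ both factor; I checked your sign bookkeeping and it is right --- the chain-map condition on $H^n(x,y,x')=f^n(x)-k^{n+1}(y)+g^n(x')$ is exactly the homotopy relation. This is the classical cylinder-object argument: it dispenses with the Five-Lemma and the auxiliary cone map entirely, and every square and triangle in your factorization commutes already in $\mathsf{Kom(A)}$ rather than only in $\mathsf{D(A)}$. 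The concluding universal-property bookkeeping (surjectivity of the quotient $Q$ on objects and Hom-sets forcing $\mathsf{G}\circ\bar{\mathsf{L}}=\mathsf{F}$, with uniqueness inherited from Proposition \ref{derivedexists}) is the same in substance as the paper's explicit bijectivity check for its comparison functor, merely phrased as a characterization by universal property rather than as a direct construction; both are complete.
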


\begin{proof}
	Since the composition $\mathsf{F}:\mathsf{Kom(A)}\rightarrow\mathsf{H(A)}\rightarrow\mathsf{H(A)}[S^{-1}]$ maps by construction quasi-isomorphisms into isomorphisms, by Proposition \ref{derivedexists} there exists a unique functor $\mathsf{G}:\mathsf{D(A)}\rightarrow\mathsf{H(A)}[S^{-1}]$ such that $\mathsf{F}=\mathsf{G}\circ\mathsf{L}$. For any $X\in\text{obj}(\mathsf{Kom(A)})=\text{obj}(\mathsf{D(A)})$ holds $\mathsf{G}(X)=(\mathsf{G}\circ\mathsf{L})(X)=\mathsf{F}(X)=X\in\text{obj}(\mathsf{H(A)}[S^{-1}])$, whence bijectivity on objects of $\mathsf{G}$. Moreover, any morphisms class $[f^\bullet]$ in $\mathsf{H(A)}[S^{-1}]$ admits a representative $f^\bullet$ in $\mathsf{H(A)}$, thus surely one in $\mathsf{Kom(A)}$ by definition of homotopy category, so that $[f^\bullet]=\mathsf{F}(f^\bullet)=(\mathsf{G}\circ\mathsf{L})(f^\bullet)=\mathsf{G}([\tilde{f}^\bullet])$, proving surjectivity of $\mathsf{G}$ on morphisms.
	
	About injectivity on morphisms, observe that $f^\bullet\sim g^\bullet\in\text{Hom}_{\mathsf{Kom(A)}}(X^\bullet, Y^\bullet)$ implies $\mathsf{L}(f^\bullet)=\mathsf{L}(g^\bullet)\in\text{Hom}_{\mathsf{D(A)}}(X^\bullet, Y^\bullet)$. Let us prove this claim explicitly. By assumption, $f^n=g^n+d_Y^{n-1}\circ k^n + k^{n+1}\circ d_X^n$ for a homotopy $k^\bullet:X^\bullet\rightarrow Y^{\bullet-1}$. This gives rise to the morphism $c(k)^\bullet:C(f)^\bullet=X^{\bullet+1}\oplus Y^\bullet\rightarrow C(g)^\bullet=X^{\bullet+1}\oplus Y^\bullet$, $c(k)^n(x^{n+1},y^n)\coloneqq(x^{n+1},y^n+k^{n+1}(x^{n+1}))$, actually a chain map because 
	\begin{align*}
		\big(d_{C(g)}^n\!\circ c(k)^n)&(x^{n+1},y^n\big)\!=\!\big(\!-d_X^{n+1}(x^{n+1}),g^{n+1}(x^{n+1})+d_Y^n(y^n+k^{n+1}(x^{n+1}))\big)\\
		&=\big(-d_X^{n+1}(x^{n+1}), f^{n+1}(x^{n+1})+d_Y^n(y^n)+k^{n+2}(-d_X^{n+1}(x^{n+1}))\big)\\
		&= c(k)^{n+1}\big(-d_X^{n+1}(x^{n+1}),f^{n+1}(x^{n+1})+d_Y^n(y^n)\big)\\
		&=\big(c(k)^{n+1}\circ d_{C(f)}^n\big)(x^{n+1},y^n).
	\end{align*}
	Similarly, $k^\bullet$ defines a chain map $z(k)^\bullet:Z(f)^\bullet\rightarrow Z(g)^\bullet$, $z(k)^n(x^n,x^{n+1},y^n)\coloneqq (x^n,x^{n+1},y^n+k^{n+1}(x^{n+1}))$.
	
	The exact rows of $f^\bullet$ and $g^\bullet$ like the top one in \eqref{distinguidiag} combine to give the diagram
	\begin{equation*}
		\begin{tikzcd}
			0\arrow[r] & Y^\bullet\arrow[r,"i_2^\bullet"]\arrow[d,"\text{id}_Y^\bullet"'] & C(f)^\bullet\arrow[r,"p_1^\bullet"]\arrow[d,"c(k)^\bullet"] & X[1]^\bullet\arrow[r]\arrow[d,"\text{id}_{X[1]}^\bullet"] & 0 \\
			0\arrow[r] & Y^\bullet\arrow[r,"i_2^\bullet"'] & C(g)^\bullet\arrow[r,"p_1^\bullet"'] & X[1]^\bullet\arrow[r] & 0
		\end{tikzcd}\quad,
	\end{equation*}
	which is commutative by definition of $c(k)^\bullet$. Since the rows are short exact in $\mathsf{Kom(A)}$, this induces a diagram in cohomology where all vertical arrows are still identities, except for the $H^n(c(k)^\bullet):H^n(C(f)^\bullet)\rightarrow H^n(C(g)^\bullet)$, however isomorphisms due to the Five-Lemma. Therefore, $c(k)^\bullet$ is a quasi-isomorphism. This plays into showing that $z(k)^\bullet$ is a quasi-isomorphism as well, which can be analogously deduced from the middle rows of Lemma \ref{p3i3quiso}.
	
	Finally, consider the diagram
	\[
	\begin{tikzcd}
		& X^\bullet\arrow[r, "\text{id}_X^\bullet"]\arrow[dl, "f^\bullet"']\arrow[d, "i_1^\bullet"] & X^\bullet\arrow[d, "i_1^\bullet"']\arrow[dr, "g^\bullet"] & \\
		Y^\bullet\arrow[r, "i_3^\bullet"'] & Z(f)^\bullet\arrow[r, "z(k)^\bullet"'] & Z(g)^\bullet\arrow[r, "p_3^\bullet"'] & Y^\bullet
	\end{tikzcd}\quad,
	\]
	which only commutes in the square and right triangle (where $p_3^\bullet$ is the usual ``modified'' projection). The left triangle does not commute in $\mathsf{Kom(A)}$ \big(clearly $i_3^n(f^n(x^n))=(0,0,f^n(x^n))\neq(x^n,0,0)=i_1^n(x^n)$\big), but in $\mathsf{D(A)}$ it \textit{does} commute, because $f^\bullet=p_3^\bullet\circ i_1^\bullet$ (see right triangle) turns under $\mathsf{L}:\mathsf{Kom(A)}\rightarrow\mathsf{D(A)}$ into $\mathsf{L}(f^\bullet)=\mathsf{L}(p_3^\bullet)\circ\mathsf{L}(i_1^\bullet)$, hence $\mathsf{L}(i_1^\bullet)=\mathsf{L}(p_3^\bullet)^{-1}\circ\mathsf{L}(f^\bullet)=\mathsf{L}(i_3^\bullet)\circ\mathsf{L}(f^\bullet)$ (again by Lemma \ref{p3i3quiso}). Thanks to full commutativity of the diagram and $p_3^\bullet\circ z(k)^\bullet\circ i_3^\bullet=\text{id}_Y^\bullet$, we deduce at last that $\mathsf{L}(f^\bullet)=\mathsf{L}(p_3^\bullet\circ z(k)^\bullet\circ i_3^\bullet\circ f^\bullet)=\mathsf{L}(g^\bullet)$. 
	
	Therefore, $\mathsf{G}$ is injective on morphisms: suppose $[f^\bullet]=[g^\bullet]$ in $\mathsf{H(A)}[S^{-1}]$, meaning they share homotopic representatives $f^\bullet\sim g^\bullet$ in $\mathsf{Kom(A)}$ at worst, then we just proved that $\mathsf{L}(f^\bullet)=\mathsf{L}(g^\bullet)$ in $\mathsf{D(A)}$, as desired. We conclude that $\mathsf{G}$ is bijective on both objects and morphisms, thus establishing the equivalence between $\mathsf{H(A)}[S^{-1}]$ and $\mathsf{D(A)}$, respectively between $\mathsf{H^\#\!(A)}[S^{-1}]$ and $\mathsf{D^\#\!(A)}$.  
\end{proof}

Thanks to the statement just shown, we might as well \textit{define} the derived category of an abelian category $\mathsf{A}$ to be $\mathsf{D(A)}\coloneqq\mathsf{H(A)}[S^{-1}]$. Now, Proposition \ref{derivedstructure} can be applied, as justified by the following result.

\begin{Pro}\label{quasilocalizing}
	Let $\mathsf{A}$ be an abelian category. Then the class of morphisms $S\coloneqq\{\text{quasi-isomorphisms in }\mathsf{H(A)}\}$ is localizing in $\mathsf{H(A)}$ \textup(and similarly $S^\#\coloneqq\{\text{quasi-isomorphisms in }\mathsf{H^\#\!(A)}\}$ is localizing in $\mathsf{H^\#\!(A)}$\textup). 
\end{Pro}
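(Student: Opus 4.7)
The strategy is to verify, one at a time, the three axioms (a), (b), (c) of Definition \ref{locclass} for the class $S$ of quasi-isomorphisms in $\mathsf{H(A)}$. Axiom (a) is essentially immediate: for each $X^\bullet$, the morphism $\textup{id}_{X^\bullet}$ induces identities on every cohomology object, and the functoriality of each $H^n$ (cf.\ Definition \ref{cohomfunc}) ensures that the composition of two quasi-isomorphisms is again a quasi-isomorphism.

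The crux of the argument is axiom (b). For the left square in \eqref{localsquare}, given $f^\bullet:X^\bullet\rightarrow Y^\bullet$ in $\mathsf{H(A)}$ and a quasi-isomorphism $s^\bullet:Z^\bullet\rightarrow Y^\bullet$, I will construct a ``homotopy pullback''
\[
W^n\coloneqq X^n\oplus Y^{n-1}\oplus Z^n,\qquad d_W^n(x,y,z)\coloneqq\bigl(d_X^n x,\ f^n x-s^n z-d_Y^{n-1} y,\ d_Z^n z\bigr),
\]
which satisfies $d_W^2=0$ by a direct computation using that $f^\bullet,\,s^\bullet$ are chain maps. Taking $t^\bullet, g^\bullet$ to be the obvious projections $W^\bullet\rightarrow X^\bullet,\,Z^\bullet$, they are chain maps, and the assignment $h^n(x,y,z)\coloneqq y$ furnishes a chain homotopy exhibiting $f^\bullet\circ t^\bullet\sim s^\bullet\circ g^\bullet$, so the square commutes in $\mathsf{H(A)}$. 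To see that $t^\bullet\in S$, the inclusion of the last two summands fits into a short exact sequence of complexes
\[
0\longrightarrow C(s^\bullet)[-1]^\bullet\longrightarrow W^\bullet\xrightarrow{\,t^\bullet\,} X^\bullet\longrightarrow 0\;,
\]
where the sign conventions of Definitions \ref{translfunc} and \ref{coneofcomplexes} must be respected for this inclusion to be genuinely a chain map. Since $s^\bullet$ is a quasi-isomorphism, $C(s^\bullet)^\bullet$ is acyclic by Remark \ref{C(f)acyclic}, and so is its shift; the induced long exact cohomology sequence then forces $H^n(t^\bullet)$ to be an isomorphism for every $n$. The right, dual square in \eqref{localsquare} is handled symmetrically via the analogous ``homotopy pushout'' construction, or, more economically, by passing to $\mathsf{A}^\text{opp}$, which is itself abelian.

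For axiom (c), I exploit the additive reformulation spelled out in Definition \ref{locclass}: it suffices to prove that there exists $s^\bullet\in S$ with $s^\bullet\circ f^\bullet\sim 0$ if and only if there exists $t^\bullet\in S$ with $f^\bullet\circ t^\bullet\sim 0$. Assuming the former for a quasi-isomorphism $s^\bullet:Y^\bullet\rightarrow Z^\bullet$, a choice of null-homotopy $k^\bullet$ witnessing $s^n\circ f^n=d_Z^{n-1}\circ k^n+k^{n+1}\circ d_X^n$ packages into a chain map
\[
\varphi^\bullet:X^\bullet\longrightarrow C(s^\bullet)[-1]^\bullet,\qquad \varphi^n(x)\coloneqq(f^n(x),\,-k^n(x))\in Y^n\oplus Z^{n-1}\;,
\]
whose composition with the evident projection $\pi^\bullet:C(s^\bullet)[-1]^\bullet\rightarrow Y^\bullet$ recovers $f^\bullet$ on the nose. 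Since $C(s^\bullet)[-1]^\bullet$ is acyclic, the zero arrow $0\rightarrow C(s^\bullet)[-1]^\bullet$ lies in $S$. Applying axiom (b), just proved, to the pair $\varphi^\bullet$ and this quasi-isomorphism produces some $t^\bullet:W^\bullet\rightarrow X^\bullet$ in $S$ together with a square commuting in $\mathsf{H(A)}$ whose lower leg factors through $0$, forcing $\varphi^\bullet\circ t^\bullet\sim 0$; composing with $\pi^\bullet$ gives $f^\bullet\circ t^\bullet\sim 0$, as required. The reverse implication is symmetric, obtained by dualizing to $\mathsf{A}^\text{opp}$. The boundedness variants $S^\#$ are inherited verbatim because all of the constructions above ($W^\bullet$, $C(s^\bullet)[-1]^\bullet$, $\varphi^\bullet$) preserve boundedness in either direction.

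The principal obstacle I anticipate is sign bookkeeping. With the conventions of Definitions \ref{translfunc} and \ref{coneofcomplexes}, the signs of $s^n z$ and $d_Y^{n-1} y$ inside $d_W^n$, and likewise the sign in front of $k^n(x)$ inside $\varphi^n$, must be pinned down exactly so that the inclusion $C(s^\bullet)[-1]^\bullet\hookrightarrow W^\bullet$ is a chain map and the homotopies $h^\bullet$, $k^\bullet$ really do witness the claimed relations. These are entirely mechanical verifications but very easy to botch on a first pass; once the signs are fixed, every other step follows routinely.
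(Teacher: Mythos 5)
Your proof is correct and follows essentially the same route as the paper's: axiom (a) by functoriality of the $H^n$, axiom (b) via the shifted mapping cone $W^\bullet=C(i_2^\bullet\circ f^\bullet)[-1]^\bullet$ (your ``homotopy pullback'' is exactly this complex, with the square commuting only up to the homotopy you exhibit), with $t^\bullet\in S$ deduced from acyclicity of $C(s^\bullet)^\bullet$ and the long exact cohomology sequence, and axiom (c) by packaging the null-homotopy into the chain map $X^\bullet\to C(s^\bullet)[-1]^\bullet$. The only cosmetic difference is that in (c) you produce $t^\bullet$ by re-invoking the already-proved axiom (b) against the quasi-isomorphism $0\to C(s^\bullet)[-1]^\bullet$, whereas the paper takes $W^\bullet=C(g^\bullet)[-1]^\bullet$ directly --- but unwinding your construction yields the identical complex, so the two arguments coincide.
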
   

\begin{proof}
	We must check that the three items of Definition \ref{locclass} hold. Point a. is\break obvious: $\text{id}_X^\bullet\in S$ for all $X^\bullet\in\text{obj}(\mathsf{H(A)})$, and $s^\bullet,t^\bullet\in S$ implies $s^\bullet\circ t^\bullet\in S$, by functoriality of each $H^n$.
	
	Regarding b., take any triple $Z^\bullet\xrightarrow{s^\bullet}Y^\bullet\xleftarrow{f^\bullet}X^\bullet$ in $\mathsf{H(A)}$ with $s^\bullet\in S$. The objective is to find a complementing triple making diagram \eqref{localsquare} left commute. Let $i_2^\bullet:Y^\bullet\rightarrow C(s)^\bullet=Z^{\bullet+1}\oplus Y^\bullet$ be the second inclusion, so $i_2^\bullet\circ f^\bullet:X^\bullet\rightarrow C(s)^\bullet$,\break and choose $W^\bullet\coloneqq C(i_2\circ f)[-1]^\bullet= X^\bullet\oplus C(s)^{\bullet-1}=X^\bullet\oplus Z^\bullet\oplus Y^{\bullet-1}$ and $t^\bullet\coloneqq p_1^\bullet:W^\bullet\rightarrow X^\bullet$. These fit into the diagram
	\begin{equation}\label{messydiag}
		\begin{tikzcd}
			{C(i_2\circ f)[-1]^\bullet}\arrow[r, dashed, "{p_1^\bullet}"]\arrow[d, dashed, "g^\bullet"'] & X^\bullet\arrow[r, "{i_2^\bullet\circ f^\bullet}"]\arrow[d, "f^\bullet"'] & C(s)^\bullet\arrow[r, "{i_2^\bullet}"]\arrow[d, "{\text{id}_{C(s)}^\bullet}"] & C(i_2\circ f)^\bullet\arrow[d, "{g[1]}^\bullet"] \\
			Z^\bullet\arrow[r, "s^\bullet"'] & Y^\bullet\arrow[r, "{i_2^\bullet}"'] & C(s)^\bullet\arrow[r, "{p_1^\bullet}"'] & Z[1]^\bullet
		\end{tikzcd}\quad,
	\end{equation}
	where the dashed arrows hint to the desired morphisms. Define $g^\bullet:W^\bullet\rightarrow Z^\bullet$ by $g^n(x^n,z^n,y^{n-1})$ $\coloneqq -z^n$, then the left square commutes modulo homotopy $k^\bullet:W^\bullet\rightarrow Y^{\bullet-1}$ given by $k^n(x^n,z^n,y^{n-1})\coloneqq -y^{n-1}$: indeed we compute $\big(k^{n+1}\circ d_W^n + d_Y^{n-1}\circ k^n\big)(x^n,z^n,y^{n-1})=-(-f^n(x^n)-s^n(z^n)-d_Y^{n-1}(y^{n-1}))+d_Y^{n-1}(-y^{n-1})=f^n(x^n) + s^n(z^n)=\big(f^n\circ p_1^n -s^n\circ g^n\big)(x^n,z^n,y^{n-1})$, where the first equality used the definition of differential of $W^\bullet$,
	\begin{align*}
		d_W^n=-d_{C(i_2\circ f)}^{n-1}=\!-\!\begin{pmatrix} -d_X^n & 0 \\ i_2^n\circ f^n & d_{C(s)}^{n-1} \end{pmatrix} \!=\! \begin{pmatrix}
			d_X^n & 0 & 0 \\ 0 & d_Z^n & 0 \\ -f^n & -s^n & -d_Y^{n-1}
		\end{pmatrix}\!:W^n\rightarrow W^{n+1},
	\end{align*}
	whence $d_W^n(x^n,z^n,y^{n-1})=\big(d_X^n(x^n), d_Z^n(z^n),-f^n(x^n)-s^n(z^n)-d_Y^{n-1}(y^{n-1})\big)$. Therefore, the left-hand square commutes in $\mathsf{H(A)}$. Finally, we argue that $p_1^\bullet$ is a quasi-isomorphism. By Remark \ref{C(f)acyclic} applied to the second row of diagram \eqref{messydiag}, $C(s)^\bullet$ is acyclic. But the first row is itself a distinguished triangle, hence Theorem \ref{longexactcohomtriangles} gives a long exact sequence
	\[
	...\rightarrow H^{n-1}(C(s)^\bullet)\cong 0 \rightarrow H^{n}(W^\bullet)\xrightarrow{H^n(p_1^\bullet)} H^{n}(X^\bullet)\rightarrow H^{n}(C(s)^\bullet)\cong 0\rightarrow...\,,
	\]
	in turn telling us that $H^n(p_1^\bullet)$ must be an isomorphism for all $n\in\mathbb{Z}$, that is, $p_1^\bullet\in S$. Item b. is thus fulfilled (the second condition corresponding to \eqref{localsquare} right is proved analogously).
	
	About condition c., assume $f^\bullet\in\text{Hom}_\mathsf{H(A)}(X^\bullet,Y^\bullet)$ and suppose there exists an $s^\bullet:Y^\bullet\rightarrow Z^\bullet$ in $S$ with $s^\bullet\circ f^\bullet=0$, meaning that $s^\bullet\circ f^\bullet\sim 0$ in $\mathsf{Kom(A)}$ via some $k^\bullet:X^\bullet\rightarrow Z^{\bullet-1}$ (thus fulfilling $d_Z^{n-1}\circ k^n= s^n\circ f^n -k^{n+1}\circ d_X^n$). Then we seek some $t^\bullet:W^\bullet\rightarrow X^\bullet$ in $S$ such that $f^\bullet\circ t^\bullet=0$. Thereto, consider
	\begin{equation*}
		\begin{tikzcd}
			C(s)[-1]^\bullet\arrow[r, "{p_1^\bullet}"] & Y^\bullet\arrow[r, "s^\bullet"] & Z^\bullet\arrow[r, "{i_2^\bullet}"] & {C(s)^\bullet = Y^{\bullet+1}\oplus Z^\bullet} \\
			C(s)[-1]^\bullet\arrow[u, "{\text{id}_{C(s)[-1]}^\bullet}"] & X^\bullet\arrow[l, "g^\bullet"]\arrow[u, "f^\bullet"]\arrow[ur, "0"] & C(g)[-1]^\bullet\arrow[l, dashed, "t^\bullet"] & C(s)[-2]^\bullet=C(s)^{\bullet-2}\arrow[l, "i_2^\bullet"]\\
		\end{tikzcd}\quad,
		\vspace*{-0.3cm}
	\end{equation*}
	where the morphism $g^\bullet$ making the square commute is given by $g^n(x^n)\coloneqq(f^n(x^n),-k^n(x^n))\in Y^n\oplus Z^{n-1}$, a well-defined chain map: 
	\begin{align*}
		-d_{C(s)}^{n-1}&(g^n(x^n))=\big(d_Y^n(f^n(x^n)),-s^n(f^n(x^n))+d_Z^{n-1}(k^n(x^n))\big) \\
		&=\big(f^{n+1}(d_X^n(x^n)),-k^{n+1}(d_X^n(x^n))-d_Z^{n-1}(k^n(x^n))+d_Z^{n-1}(k^n(x^n))\big) \\
		&=g^{n+1}(d_X^n(x^n)).
	\end{align*}
	Choose $W^\bullet\coloneqq C(g)[-1]^\bullet = X^\bullet\oplus C(s)^{\bullet-2}=X^\bullet\oplus Y^{\bullet-1}\oplus Z^{\bullet-2}$ and $t^\bullet\coloneqq p_1^\bullet:W^\bullet\rightarrow X^\bullet$, $t^n(x^n,y^{n-1},z^{n-2})= x^n$. Then $f^\bullet\circ t^\bullet=p_1^\bullet\circ g^\bullet\circ t^\bullet=0$ as desired, since the bottom row is exact. Moreover, $C(s)^\bullet$ is acyclic by Remark \ref{C(f)acyclic}, hence the long exact sequence in cohomology induced by the distinguished triangle in the bottom row implies that $t^\bullet\in S$. The converse direction, namely the existence of a suitable $t^\bullet$ implying that of a suitable $s^\bullet$, can be shown similarly. So item c. holds and we are done. 
\end{proof}

\subsection{The structure of derived categories}\label{ch2.5}

In light of Proposition \ref{quasilocalizing}, let us spell out once again the general structure of a derived category.

\begin{Def}\label{dercatdef}
	Let $\mathsf{A}$ be an abelian category, with associated homotopy category $\mathsf{H(A)}$ and localizing class $S\coloneqq\{\text{quasi-isomorphisms in }\mathsf{H(A)}\}\subset\mathsf{H(A)}$. Then its \textit{derived category} $\mathsf{D(A)}=\mathsf{H(A)}[S^{-1}]$ has the following structure:
	\begin{itemize}[leftmargin=0.5cm]
		\renewcommand{\labelitemi}{\textendash}
		\item $\text{obj}(\mathsf{D(A)})=\text{obj}(\mathsf{Kom(A)})$, hence objects are cochain complexes $(X^\bullet,d_X^\bullet)$ with each $X^n\in\text{obj}(\mathsf{A})$;
		
		\item morphisms $\varphi\in\text{Hom}_\mathsf{D(A)}(X^\bullet,Y^\bullet)$ are equivalence classes $[(s^\bullet,f^\bullet)]$ of roofs
		\begin{equation}
			\begin{tikzcd}
				& {X'}^\bullet\arrow[dl, "s^\bullet"']\arrow[dr, "f^\bullet"] & \\
				X^\bullet & & Y^\bullet
			\end{tikzcd}\quad,
		\end{equation}
		where $f^\bullet\in\text{Hom}_{\mathsf{H(A)}}(X'^\bullet,Y^\bullet)$ and $s^\bullet\in S$. The defining equivalence relation is specified by diagram \eqref{equivroofs}, and composition of morphisms occurs as in \eqref{roofcompo}.
	\end{itemize}
	The derived category is equipped with a \textit{localization functor}\footnote{Pre-composed with the additive functor $\mathsf{Kom(A)}\rightarrow\mathsf{H(A)}$, we get the earlier localization $\mathsf{L}:\mathsf{Kom(A)}\rightarrow\mathsf{D(A)}$ from Proposition \ref{derivedexists}, which then passes on its universal property.} $\mathsf{L}:\mathsf{H(A)}\rightarrow\mathsf{D(A)}$ which is the identity on objects and maps any $f^\bullet\in\text{Hom}_\mathsf{H(A)}(X^\bullet,Y^\bullet)$ to the class $[(\text{id}_X^\bullet,f^\bullet)]\in\text{Hom}_\mathsf{D(A)}(X^\bullet,Y^\bullet)$. Furthermore, it fulfills that $\mathsf{L}(s^\bullet)$ is an isomorphism for any $s^\bullet\in S$, and if any other functor $\mathsf{F}:\mathsf{H(A)}\rightarrow\mathsf{D}$ acts this way, then it factors like $\mathsf{F}=\mathsf{G}\circ\mathsf{L}$ for a unique $\mathsf{G}:\mathsf{D(A)}\rightarrow\mathsf{D}$.
	The variants $\mathsf{D^\#\!(A)}$ have analogous structure.
\end{Def}

By Lemma \ref{whenD(A)abelian} we know that $\mathsf{D(A)}$ fails to be an abelian category in general. But at least:

\begin{Lem}\label{D(A)additive}
	Let $\mathsf{A}$ be an abelian category. Then its derived category $\mathsf{D(A)}$ is additive.
\end{Lem}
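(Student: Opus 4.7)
The plan is to verify axioms (A1)--(A3) of Definition \ref{addcat} for $\mathsf{D(A)}$ by promoting the already-established additive structure of $\mathsf{H(A)}$ (noted just before Proposition \ref{quasilocalizing}) through the localization functor $\mathsf{L}: \mathsf{H(A)} \to \mathsf{D(A)}$, using the roof description of Proposition \ref{derivedstructure} together with the localizing-class axioms that Proposition \ref{quasilocalizing} validates for $S$.

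For (A1), given two morphisms $\varphi_i = [(s_i: X_i'^\bullet \to X^\bullet,\, f_i: X_i'^\bullet \to Y^\bullet)]$ in $\text{Hom}_{\mathsf{D(A)}}(X^\bullet, Y^\bullet)$, I would first bring them to a common left leg: applying item b of Definition \ref{locclass} to the pair $(s_1, s_2)$ (with $s_2 \in S$) yields some $W^\bullet$, $u: W^\bullet \to X_1'^\bullet$ in $S$ and $v: W^\bullet \to X_2'^\bullet$ such that $s_1 \circ u = s_2 \circ v$, a composition lying in $S$ by item a. Topping with $(u, \text{id}_W)$ and $(v, \text{id}_W)$ respectively shows that $\varphi_i = [(s_1\circ u,\, f_i \circ (u \text{ or } v))]$, so that I may define
\[
\varphi_1 + \varphi_2 \,:=\, [(s_1 \circ u,\, f_1\circ u + f_2 \circ v)]\,;
\]
the zero element is $[(\text{id}_X^\bullet, 0)]$ and the inverse of $[(s,f)]$ is $[(s,-f)]$. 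The main obstacle is well-definedness: independence of the refinement $(W^\bullet, u, v)$, and of the choices of roof representatives $(s_i, f_i)$. Both checks reduce to producing further common toppings via repeated applications of items b and c of Definition \ref{locclass}, combined with the preadditive structure on $\mathsf{H(A)}$. Once well-definedness is in place, commutativity, associativity and inverses descend from $\mathsf{H(A)}$; biadditivity of composition is verified by refining the intermediate apex in a composition diagram like \eqref{roofcompo} and invoking biadditivity in $\mathsf{H(A)}$. A useful byproduct is that $\mathsf{L}$ becomes an additive functor, since $\mathsf{L}(f) + \mathsf{L}(g) = [(\text{id}_X^\bullet, f)] + [(\text{id}_X^\bullet, g)] = [(\text{id}_X^\bullet, f+g)] = \mathsf{L}(f+g)$ by the same construction.

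Axiom (A2) is then easy: taking the zero complex $0^\bullet$ of $\mathsf{Kom(A)}$, the group $\text{Hom}_{\mathsf{D(A)}}(X^\bullet, 0^\bullet)$ is trivial because every chain map into $0^\bullet$ is zero, and $\text{Hom}_{\mathsf{D(A)}}(0^\bullet, X^\bullet)$ is trivial because any roof $0^\bullet \xleftarrow{s} Z^\bullet \xrightarrow{f} X^\bullet$ is equivalent to $(\text{id}_{0^\bullet}, 0)$ via the topping with apex $0^\bullet$, legs $0: 0^\bullet \to Z^\bullet$ and $\text{id}_{0^\bullet}$ (the identity of $0^\bullet$ being vacuously a quasi-isomorphism, hence in $S$). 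Finally, axiom (A3) follows from the fact that the direct sum $X_1^\bullet \oplus X_2^\bullet \in \text{obj}(\mathsf{Kom(A)}) \subset \text{obj}(\mathsf{H(A)})$, with its inclusions $i_k^\bullet$ and projections $p_k^\bullet$, continues under $\mathsf{L}$ to satisfy $p_k \circ i_k = \text{id}$, $p_k \circ i_\ell = 0$ for $k \neq \ell$, and $i_1 \circ p_1 + i_2 \circ p_2 = \text{id}$: the first two identities pass through any functor, while the third exploits the additivity of $\mathsf{L}$ established above. Hence diagram \eqref{directprodsum} is realized in $\mathsf{D(A)}$, completing the verification that $\mathsf{D(A)}$ is additive.
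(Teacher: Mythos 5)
Your proposal is correct and follows essentially the same route as the paper's proof: addition of morphisms via a common denominator produced by the localizing-class square \eqref{localsquare}, the zero complex as the zero object, and the componentwise direct sum with the identities of \eqref{directprodsum} checked on roof representatives. The only detail worth making explicit is that the second leg $v$ of your common refinement is automatically a quasi-isomorphism (since $s_2\circ v=s_1\circ u\in S$ and $s_2\in S$, by two-out-of-three for quasi-isomorphisms), which is what legitimises the topping roof exhibiting $\varphi_2=[(s_1\circ u,\,f_2\circ v)]$ --- a point the paper does record.
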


\begin{proof}
	We must explain how to sum two morphisms $\varphi_k\in\text{Hom}_\mathsf{D(A)}(X^\bullet,Y^\bullet)$ represented by roofs $X^\bullet\xleftarrow{s_k^\bullet}X_k'^\bullet\xrightarrow{f_k^\bullet}Y^\bullet$, for $k=1,2$. Thereto, we must find a ``common denominator'', that is, make them share the same quasi-isomorphism. This is achieved by the following diagram:
	\begin{equation}
		\begin{tikzcd}
			& & X''^\bullet\arrow[dl, dashed, "r_1^\bullet"']\arrow[dr, dashed, "r_2^\bullet"]\arrow[dd, "t^\bullet"] & & \\
			& X_1'^\bullet\arrow[dl, "f_1^\bullet"']\arrow[dr, "s_1^\bullet"'] & & X_2'^\bullet\arrow[dl, "s_2^\bullet"]\arrow[dr, "f_2^\bullet"] & \\
			Y^\bullet & & X^\bullet & & Y^\bullet 
		\end{tikzcd}\qquad,
	\end{equation}
	where the commutative square is constructed using diagram \eqref{localsquare}; this tells us that $r_1^\bullet\in S$ and hence $r_2^\bullet$ is necessarily a quasi-isomorphism too. Then $t^\bullet\coloneqq s_1^\bullet\circ r_1^\bullet= s_2^\bullet\circ r_2^\bullet\in S$ is our desired common denominator, and setting $g_k^\bullet\coloneqq f_k^\bullet\circ r_k^\bullet$ we see that $\varphi_k$ is also represented by the roof $X^\bullet\xleftarrow{t^\bullet}X''^\bullet\xrightarrow{g_k^\bullet}Y^\bullet$. Then we let $(\varphi_1+\varphi_2)\in\text{Hom}_\mathsf{D(A)}(X^\bullet,Y^\bullet)$ be the class represented by the roof $X^\bullet\xleftarrow{t^\bullet}X''^\bullet\xrightarrow{g_1^\bullet+g_2^\bullet}Y^\bullet$. A careful check reveals that it depends exclusively on $\varphi_1,\varphi_2$ and not on the chosen representatives.
	
	Symmetry of the above construction clearly implies $\varphi_1+\varphi_2=\varphi_2+\varphi_1$, and we recall that the identity is trivially represented by $X^\bullet\xleftarrow{\text{id}_X^\bullet}X^\bullet\xrightarrow{\text{id}_X^\bullet}X^\bullet$. So we recognize the abelian structure of the group $\text{Hom}_\mathsf{D(A)}(X^\bullet,Y^\bullet)$. Biadditivity of the composition is just a matter of properly drawing (a lot of) diagrams. Hence, $\mathsf{D(A)}$ is preadditive.
	
	The zero object of $\mathsf{D(A)}$ is just the trivial complex, and indeed any morphism directed from/to it is necessarily trivial.\footnote{As a sidenote, we observe that $[(s^\bullet,f^\bullet)]=0\in\text{Hom}_\mathsf{D(A)}(X^\bullet,Y^\bullet)$ if and only if there exists some $s^\bullet:Y^\bullet\rightarrow Z^\bullet$ in $S$ (then forcedly non-zero) such that $s^\bullet\circ f^\bullet=0$, or equivalently, some $t^\bullet:W^\bullet\rightarrow X'^\bullet$ in $S$ such that $f^\bullet\circ t^\bullet=0$. A valid representative is $X^\bullet\xleftarrow[\text{id}_X^\bullet]{}X^\bullet\xrightarrow[0]{}Y^\bullet$.} Finally, concerning axiom (A3), given two $X_1^\bullet,X_2^\bullet\in\text{obj}(\mathsf{D(A)})$ and their direct sum $Y^\bullet=X_1^\bullet\oplus X_2^\bullet\in\text{obj}(\mathsf{D(A)})$ (defined in the obvious way), we let $i_k^\bullet\in\text{Hom}_\mathsf{H(A)}(X_k^\bullet,Y^\bullet)$ be represented by the roof $X_k^\bullet\xleftarrow{\text{id}_{X_k}^\bullet} X_k^\bullet\xrightarrow{i_k^\bullet}Y^\bullet$ and $p_k^\bullet\in\text{Hom}_\mathsf{H(A)}(Y^\bullet,X_k^\bullet)$ by $Y^\bullet\xleftarrow{\text{id}_Y^\bullet} Y^\bullet\xrightarrow{p_k^\bullet}X_k^\bullet$. Then the necessary relations described by \eqref{directprodsum} are easily verified; for example
	\[
	\begin{tikzcd}[column sep=0.5cm]
		& & X_1^\bullet\arrow[dl, "\text{id}_{X_1}^\bullet"']\arrow[dr, "i_1^\bullet"] & & \\
		& X_1^\bullet\arrow[dl, "\text{id}_{X_1}^\bullet"']\arrow[dr, "i_1^\bullet"'] & & Y^\bullet\arrow[dl, "\text{id}_Y^\bullet"]\arrow[dr, "p_k^\bullet"] & \\
		X_1^\bullet\arrow[rr, squiggly, "i_1^\bullet"'] & & Y^\bullet\arrow[rr, squiggly, "p_k^\bullet"'] & & X_k^\bullet 
	\end{tikzcd}\;\;\;\;\;\;
	\begin{tikzcd}[column sep=0.5cm]
		& & Y^\bullet\arrow[dl, "\text{id}_Y^\bullet"']\arrow[dr, "\text{id}_Y^\bullet"]\arrow[dd, "\text{id}_Y^\bullet"] & & \\
		& Y^\bullet\arrow[dl, "i_1^\bullet\circ p_1^\bullet"']\arrow[dr, "\text{id}_Y^\bullet"'] & & Y^\bullet\arrow[dl, "\text{id}_Y^\bullet"]\arrow[dr, "i_2^\bullet\circ p_2^\bullet"] & \\
		Y^\bullet & & Y^\bullet\arrow[ll, squiggly, "i_1^\bullet\circ p_1^\bullet"]\arrow[rr, squiggly, "i_2^\bullet\circ p_2^\bullet"'] & & Y^\bullet 
	\end{tikzcd}	
	\]
	tell us that $p_1^\bullet\circ i_1^\bullet=\text{id}_{X_1}^\bullet$, $p_2^\bullet\circ i_1^\bullet=0$ respectively $i_1^\bullet\circ p_1^\bullet + i_2^\bullet\circ p_2^\bullet = \text{id}_Y^\bullet$ hold also at the level of representatives, hence in $\mathsf{D(A)}$. Therefore, the derived category is additive.
\end{proof}

Let us finally explain how to regard an abelian category as a full subcategory of its derived category.

\begin{Def}
	Let $\mathsf{A}$ be an abelian category. For each $k\in\mathbb{Z}$, any $X\in\text{obj}(\mathsf{A})$ defines a complex in $\mathsf{Kom^\#\!(A)}$ (respectively in $\mathsf{H^\#\!(A)}$ or $\mathsf{D^\#\!(A)}$, for $\#=\emptyset,+,-,\mathsf{b}$) of the form\footnote{It should be clear from the context whether $X[n]^\bullet$ refers to an $n$-fold, possibly non-trivial shifted complex or the complex induced by an object.}
	\[
	X[-k]^\bullet\coloneqq\big(...\rightarrow 0\rightarrow 0\rightarrow X\rightarrow 0\rightarrow 0\rightarrow ...\big)\,.
	\]
	where $X$ lies at $k$-th position. We call it the \textbf{$k$-complex}\index{kcomplex@$k$-complex} induced by $X$, acyclic except for $H^k(X[-k]^\bullet)\cong X$. Back in Remark \ref{0-complex}, we met $0$-complexes $X[0]^\bullet$.
	
	In turn, an $X^\bullet\in\mathsf{Kom^\#\!(A)}$ (or belonging to the other categories of complexes) is an \textbf{$H^k$-complex}\index{hk@$H^k$-complex} if it is acyclic away from the $k$-th position, $H^j(X^\bullet)=0$ for all $j\neq k$ (one also says that $X^\bullet$ is \textit{concentrated in degree $k$}). Consequently, $k$-complexes are $H^k$-complexes. 
\end{Def}

\begin{Pro}\label{AinD(A)}
	Let $\mathsf{A}$ be an abelian category. Then the functor $\mathsf{J'}\coloneqq \mathsf{L\circ J}:\mathsf{A}\rightarrow\mathsf{D^\#\!(A)}$ is fully faithful \textup(see again Remark \ref{0-complex}\textup). In particular, $\mathsf{A}$ is equivalent\footnote{That is, $\mathsf{J'}:\mathsf{A}\rightarrow\mathsf{D_0^\#\!(A)}$ is also essentially surjective, meaning that any $H^0$-complex $Y^\bullet\in\text{obj}(\mathsf{D_0^\#\!(A)})$ is isomorphic in $\mathsf{D_0^\#\!(A)}$ to $\mathsf{J'}(X^\bullet)$ for some $X^\bullet\in\text{obj}(\mathsf{A})$.} to the full subcategory $\mathsf{D_0^\#\!(A)}\subset\mathsf{D^\#\!(A)}$ of all $H^0$-complexes.
\end{Pro}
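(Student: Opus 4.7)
The plan is to establish the three separate properties packaged in the statement: faithfulness, fullness and essential surjectivity onto $\mathsf{D_0^\#\!(A)}$. The common thread in all three is that any $H^0$-complex is canonically quasi-isomorphic (via a zig-zag through its truncation) to the 0-complex of its $H^0$, so that roofs based at 0-complexes may be replaced by strictly ``vertical'' representatives.

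\textbf{Faithfulness.} I would exploit the fact, already noted after Proposition \ref{derivedexists}, that $H^0 \circ \mathsf{L}\circ\mathsf{J}$ is isomorphic to $\mathsf{Id}_\mathsf{A}$. If $\mathsf{J}'(f)=\mathsf{J}'(g)$ for $f,g\in\mathrm{Hom}_\mathsf{A}(X,Y)$, then applying the (well-defined) cohomology functor $H^0:\mathsf{D^\#\!(A)}\rightarrow\mathsf{A}$ gives $f=H^0(\mathsf{J}'(f))=H^0(\mathsf{J}'(g))=g$.

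\textbf{Fullness.} Given $\varphi\in\mathrm{Hom}_\mathsf{D^\#\!(A)}(X[0]^\bullet,Y[0]^\bullet)$, represent it by a roof $X[0]^\bullet\xleftarrow{s^\bullet}Z^\bullet\xrightarrow{f^\bullet}Y[0]^\bullet$ with $s^\bullet$ a quasi-isomorphism. Since $X[0]^\bullet$ is $H^0$-concentrated and $s^\bullet\in S$, the complex $Z^\bullet$ must itself be an $H^0$-complex, with $H^0(s^\bullet):H^0(Z^\bullet)\xrightarrow{\sim} X$ an isomorphism in $\mathsf{A}$. Define the candidate
\[
h\coloneqq H^0(f^\bullet)\circ H^0(s^\bullet)^{-1}\in\mathrm{Hom}_\mathsf{A}(X,Y)\,.
\]
To see that $\mathsf{J}'(h)=\varphi$, introduce the canonical truncation $\tau^{\leq 0}Z^\bullet$ (coinciding with $Z^k$ for $k<0$, with $\ker(d_Z^0)$ in degree $0$, and zero above), together with the inclusion $\iota:\tau^{\leq 0}Z^\bullet\hookrightarrow Z^\bullet$ and the projection $\pi:\tau^{\leq 0}Z^\bullet\twoheadrightarrow H^0(Z^\bullet)[0]^\bullet$ induced by $\ker(d_Z^0)\twoheadrightarrow H^0(Z^\bullet)$. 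Both are quasi-isomorphisms (see the obstacle below). Because $s^\bullet$ and $f^\bullet$ are chain maps targeting complexes concentrated in degree $0$, their degree-$0$ components vanish on $\mathrm{im}(d_Z^{-1})$, hence $s^\bullet\circ\iota$ and $f^\bullet\circ\iota$ factor through $\pi$ as $H^0(s^\bullet)^\bullet\circ\pi$ and $H^0(f^\bullet)^\bullet\circ\pi$ respectively. This exhibits $\tau^{\leq 0}Z^\bullet$ as a topping roof witnessing the equivalence of $(s^\bullet,f^\bullet)$ with $(H^0(s^\bullet)^\bullet,H^0(f^\bullet)^\bullet)$ atop $H^0(Z^\bullet)[0]^\bullet$. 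A second, trivial topping roof (namely $H^0(Z^\bullet)[0]^\bullet$ mapped by $\mathrm{id}$ to itself and by $H^0(s^\bullet)^\bullet$ to $X[0]^\bullet$) then identifies the latter with $(\mathrm{id}_X^\bullet,h^\bullet)=\mathsf{J}'(h)$, using $h\circ H^0(s^\bullet)=H^0(f^\bullet)$ by definition of $h$. By transitivity, $\varphi=\mathsf{J}'(h)$.

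\textbf{Essential surjectivity and main obstacle.} The same zig-zag $Y^\bullet\xleftarrow{\iota}\tau^{\leq 0}Y^\bullet\xrightarrow{\pi}H^0(Y^\bullet)[0]^\bullet$, applied to an arbitrary $H^0$-complex $Y^\bullet\in\mathrm{obj}(\mathsf{D_0^\#\!(A)})$, exhibits a $\mathsf{D^\#\!(A)}$-isomorphism $Y^\bullet\cong\mathsf{J}'(H^0(Y^\bullet))$. The only genuine calculation is to check that $\pi$ is a quasi-isomorphism: its kernel subcomplex is $(\dots\rightarrow Y^{-2}\rightarrow Y^{-1}\rightarrow\mathrm{im}(d_Y^{-1})\rightarrow 0)$, and one must verify that it is acyclic. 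Below degree $-1$ this follows from $H^k(Y^\bullet)=0$; at degree $-1$ it follows from surjectivity of $d_Y^{-1}:Y^{-1}\twoheadrightarrow\mathrm{im}(d_Y^{-1})$ together with $H^{-1}(Y^\bullet)=0$; at degree $0$ it is automatic since $\mathrm{im}(d_Y^{-1})$ hits itself. This is the only slightly delicate diagram chase in the proof, and being purely abelian-categorical it can be carried out via the canonical decomposition of $d_Y^{-1}$ from axiom (A4).
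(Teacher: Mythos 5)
Your proof is correct and follows essentially the same route as the paper's: both hinge on the canonical truncation $\tau^{\leq 0}$ of the roof's apex (the paper's $V^\bullet$ with $V^0=\ker(d_Z^0)$) serving as the topping roof that replaces an arbitrary roof between $0$-complexes by one of the form $(\mathrm{id},h^\bullet)$, and both use the same zig-zag $Y^\bullet\leftarrow\tau^{\leq 0}Y^\bullet\rightarrow H^0(Y^\bullet)[0]^\bullet$ for essential surjectivity. The only cosmetic differences are that you split the roof comparison into two steps via transitivity where the paper uses a single topping diagram, and you obtain faithfulness directly from $H^0\circ\mathsf{J}'\cong\mathsf{Id}_\mathsf{A}$ rather than by exhibiting an explicit inverse to $F$ --- both perfectly valid.
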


\begin{proof}
	The fully faithful functor $\mathsf{J}:\mathsf{A}\rightarrow\mathsf{Kom^\#\!(A)}$ can be refined to a functor $\mathsf{J}:\mathsf{A}\rightarrow\mathsf{H^\#\!(A)}$, again fully faithful because equality of two morphisms $f^\bullet, g^\bullet$ in $\mathsf{H^\#\!(A)}$ means that $f^\bullet\sim g^\bullet$, which immediately implies $f^0=g^0$ due to triviality of the differentials of 0-complexes (whereas surjectivity on morphisms is obvious). This makes $\mathsf{A}$ into the full subcategory of $\mathsf{H^\#\!(A)}$ of all 0-complexes, so that $\text{Hom}_\mathsf{A}(X,Y)=\text{Hom}_\mathsf{H^\#\!(A)}(X[0]^\bullet,Y[0]^\bullet)$ for all $X,Y\in\text{obj}(\mathsf{A})$.
	
	Our goal is to prove that, given any two 0-complexes $X[0]^\bullet, Y[0]^\bullet$, the group homomorphism
	\[
	F:\text{Hom}_\mathsf{H^\#\!(A)}(X[0]^\bullet,Y[0]^\bullet)\rightarrow\text{Hom}_\mathsf{D^\#\!(A)}(X[0]^\bullet,Y[0]^\bullet),\,f^\bullet\mapsto [(\text{id}_{X[0]}^\bullet,f^\bullet)]
	\]
	is bijective, where $\text{Hom}_\mathsf{D^\#\!(A)}(X[0]^\bullet,Y[0]^\bullet)=\text{Hom}_\mathsf{D^\#\!(A)}(\mathsf{L}(X[0]^\bullet),\mathsf{L}(Y[0]^\bullet))$. We claim that its inverse is given by
	\[
	F^{-1}:\text{Hom}_\mathsf{D^\#\!(A)}(X[0]^\bullet,Y[0]^\bullet)\rightarrow\text{Hom}_\mathsf{H^\#\!(A)}(X[0]^\bullet,Y[0]^\bullet),\; [(s^\bullet,f^\bullet)]\mapsto f^\bullet\;.
	\]
	Obviously, $F^{-1}\circ F$ is the identity on $\text{Hom}_\mathsf{H^\#\!(A)}(X[0]^\bullet,Y[0]^\bullet)$. Conversely, given a $\varphi=[(s^\bullet,f^\bullet)]\in\text{Hom}_\mathsf{D^\#\!(A)}(X[0]^\bullet,Y[0]^\bullet)$, let $\phi\coloneqq (F\circ F^{-1})(\varphi)=[(\text{id}_{X[0]}^\bullet,g^\bullet)]\in\text{Hom}_\mathsf{D^\#\!(A)}(X[0]^\bullet,Y[0]^\bullet)$ for $g^\bullet$ in $\mathsf{H^\#\!(A)}$ consisting of $g^0\coloneqq H^0(f^\bullet)\circ H^0(s^\bullet)^{-1}:X\cong H^0(X[0]^\bullet)\rightarrow H^0(Y[0]^\bullet)\cong Y$ in $H^\#\!(\mathsf{A})$. By the equivalence relation for roofs, $\phi=\varphi$ in $\mathsf{D^\#\!(A)}$ if and only if we can make
	\[
	\begin{tikzcd}
		& & V^\bullet\arrow[dl, "r^\bullet"']\arrow[dr, "h^\bullet"] & & \\
		& Z^\bullet\arrow[dl, "s^\bullet"'] & & X[0]^\bullet\arrow[dlll, "{\text{id}_{X[0]}^\bullet}"]\arrow[dr, "g^\bullet"] & \\
		X[0]^\bullet & & & & Y[0]^\bullet\arrow[from=ulll, crossing over, "f^\bullet"'] 
	\end{tikzcd}
	\]
	commute through a suitable topping roof in $\mathsf{D^\#\!(A)}$. This is given by $V^{-n}\coloneqq Z^{-n}$, $V^0\coloneqq\ker(d_Z^0)$, $V^n\coloneqq 0$ for $n>0$ (with induced differential), and the natural embedding $r^\bullet:V^\bullet\rightarrow Z^\bullet$ (a quasi-isomorphism because equal to the identity in cohomology in non-positive degree), and $h^\bullet$ trivial except for $h^0\coloneqq H^0(s^\bullet)\circ q: \ker(d_Z^0)\rightarrow H^0(Z^\bullet)\rightarrow X$. The commutativity is easily verified: the only non-trivial equality for the left triangle is $h^0(z)=H^0(s^\bullet)\langle z\rangle=s^0(z)=(r^0\circ s^0)(z)$ for all $z\in V^0$; for the right one, $(g^0\circ h^0)(z)=H^0(f^\bullet)\langle z\rangle=f^0(z)=(f^0\circ r^0)(z)$. Therefore, both roofs represent $\varphi=\phi$, meaning that $\mathsf{F}\circ\mathsf{F}^{-1}$ is the identity on $\text{Hom}_\mathsf{D^\#\!(A)}(X[0]^\bullet,Y[0]^\bullet)$ and thus $\mathsf{L}$ is fully faithful. Then $\mathsf{J}'$ is indeed fully faithful (as composition of fully faithful functors).
	
	Finally, we prove that any $H^0$-complex $Z^\bullet$ is isomorphic to some 0-complex $X[0]^\bullet$ within $\mathsf{D_0^\#\!(A)}$. The obvious candidate is its cohomological counterpart $X[0]^\bullet\coloneqq\mathsf{J}'(H^0(Z^\bullet))=(...\rightarrow 0\rightarrow H^0(Z^\bullet)\rightarrow 0\rightarrow...)$ (a sensible choice by definition of $H^0$-complex). The isomorphism is precisely given by the topping roof in the diagram above, where $h^\bullet$ is a quasi-isomorphism because $r^\bullet$, $s^\bullet$ and $\text{id}_{X[0]}^\bullet$ are. Hence, if restricted to its image, $\mathsf{J}'$ yields an equivalence between $\mathsf{A}$ and the full subcategory $\mathsf{D_0^\#\!(A)}$. 
\end{proof}

Therefore, we will sometimes suppress the functor $\mathsf{J}'$ and simply interpret $\mathsf{A}\subset\mathsf{D^\#\!(A)}$.

\subsection{Ext-abelian groups}\label{ch2.6}

\begin{Def}
	Let $\mathsf{A}$ be an abelian category, $X, Y\in\text{obj}(\mathsf{A})$, $k\in\mathbb{Z}$ any. Then the $k$-th \textbf{Ext-abelian group}\index{Ext-abelian group} is given by
	\begin{equation}\label{Extgroup}
		\text{Ext}_\mathsf{A}^k(X,Y)\coloneqq \text{Hom}_{\mathsf{D(A)}}(X[0]^\bullet,Y[k]^\bullet)\,,
	\end{equation}
	$\cong\text{Hom}_\mathsf{D(A)}(X[l]^\bullet,Y[k+l]^\bullet)\in\text{obj}(\mathsf{Ab})$ for any $l\in\mathbb{Z}$, after applying the translation functor $T^l:\mathsf{D(A)}\rightarrow\mathsf{D(A)}$ from Definition \ref{translfunc} --- an equivalence --- thus bijective on Hom-groups. 
	
	The composition law for Hom-groups then induces the bilinear \textbf{Yoneda product}\index{Yoneda product}\footnote{If $X=Y=Z$, then $\diamond$ gives the vector space $\bigoplus_{n\in\mathbb{Z}}\text{Ext}_\mathsf{A}^n(X,X)$ a structure of graded algebra.}
	\begin{equation}
		\diamond:\text{Ext}_\mathsf{A}^k(X,Y)\times\text{Ext}_\mathsf{A}^l(Y,Z)\rightarrow\text{Ext}_\mathsf{A}^{k+l}(X,Z)
	\end{equation}
	(independent of the choice of translation, by above). Since $\mathsf{D(A)}$ is additive, the bifunctor $\text{Hom}_\mathsf{D(A)}$ actually yields bifunctors $\text{Ext}_\mathsf{A}^k:\mathsf{A}^\text{opp}\times\mathsf{A}\rightarrow\mathsf{Ab}$, so that $\text{Ext}_\mathsf{A}^k(X,\square)$ is covariant and $\text{Ext}_\mathsf{A}^k(\square,Y)$ contravariant.
\end{Def}

\begin{Rem}\label{explicitExtmorph}
	In case $k>0$, we can explicitly construct specific morphisms in the $k$-th Ext-group as follows: for the \textit{acyclic}
	\begin{equation}\label{Extcomplex}
		\mkern-12mu K^\bullet\!=\!\big(...\!\rightarrow 0\rightarrow K^{-k}\!\coloneqq\! Y\rightarrow K^{-k+1}\rightarrow...\rightarrow K^0\rightarrow K^1\!\coloneqq\! X\rightarrow 0\rightarrow\!...\big)\,,
	\end{equation}
	we let $c_{K^\bullet}\in\text{Ext}_\mathsf{A}^k(X,Y)$ be the morphism represented by the roof
	\begin{equation}\label{Extroof}
		\begin{tikzcd}
			& \tilde{K}^\bullet\arrow[dl, "s^\bullet"']\arrow[dr, "f^\bullet"] & \\
			X[0]^\bullet & & Y[k]^\bullet
		\end{tikzcd}\quad,
	\end{equation}
	where $\tilde{K}^1\coloneqq 0$ and otherwise $\tilde{K}^n\coloneqq K^n$, $s^0\coloneqq d_K^0:K^0\rightarrow K^1=X$ (only non-trivial in $s^\bullet$) and $f^{-k}\coloneqq\text{id}_Y:K^{-k}=Y\rightarrow Y$ (only non-trivial in $f^\bullet$). Moreover, given another acyclic
	\[
	L^\bullet=\big(...\rightarrow 0\rightarrow L^{-l}\coloneqq Z\rightarrow L^{-l+1}\rightarrow...\rightarrow L^0\rightarrow L^1\coloneqq Y\rightarrow 0\rightarrow...\big)
	\]
	for $l>0$, we can chain $L^\bullet$ and $K^\bullet$ together to get the acyclic complex
	\begin{small}\begin{equation}\label{astcomplex}
			L^\bullet\ast K^\bullet=\big(...\rightarrow 0\rightarrow \mkern-16mu \underbrace{L^{-l}\!=\!Z}_{=(L^\bullet\ast K^\bullet)^{-k-l}}\mkern-16mu\rightarrow \!...\!\rightarrow L^0\!\xrightarrow[d_K^{-k}\circ d_L^0]{}\! K^{-k+1}\!\rightarrow\! ...\!\rightarrow \mkern-12mu\underbrace{K^1\!=\!X}_{=(L^\bullet\ast K^\bullet)^1}\mkern-12mu\rightarrow 0\rightarrow...\big)\,,
	\end{equation}\end{small}
	\!\!whose associated morphism in $\text{Ext}_\mathsf{A}^{k+l}(X,Z)$ is better described in the last item of the following theorem.
\end{Rem}

\begin{Thm}\label{Extprop}
	Let $\mathsf{A}$ be an abelian category, $X,Y\in\textup{obj}(\mathsf{A})$ fixed. The \textup{Ext}-groups have the following properties:
	\renewcommand{\theenumi}{\alph{enumi}}
	\begin{enumerate}[leftmargin=0.5cm]
		\item Short exact sequences $0\rightarrow X\rightarrow X'\rightarrow X''\rightarrow 0$ and $0\rightarrow Y\rightarrow Y'\rightarrow Y''\rightarrow 0$ in $\mathsf{A}$ induce respectively long exact sequences
		\[
		...\rightarrow \textup{Ext}_\mathsf{A}^k(X'',Y)\rightarrow \textup{Ext}_\mathsf{A}^k(X',Y)\rightarrow \textup{Ext}_\mathsf{A}^k(X,Y)\rightarrow \textup{Ext}_\mathsf{A}^{k+1}(X'',Y)\rightarrow...\;,
		\]
		\[
		...\rightarrow \textup{Ext}_\mathsf{A}^k(X,Y)\rightarrow \textup{Ext}_\mathsf{A}^k(X,Y')\rightarrow \textup{Ext}_\mathsf{A}^k(X,Y'')\rightarrow \textup{Ext}_\mathsf{A}^{k+1}(X,Y)\rightarrow...\;.
		\]
		
		\item For $k<0$, $\textup{Ext}_\mathsf{A}^k(X,Y)=\{0\}$.
		
		\item For $k=0$ holds $\textup{Ext}_\mathsf{A}^0(X,Y)\cong \textup{Hom}_\mathsf{A}(X,Y)$.
		
		\item For $k>0$ and any $f\in\textup{Ext}_\mathsf{A}^k(X,Y)$ there exists instead some acyclic complex $K^\bullet$ like in \eqref{Extcomplex} such that $f=c_{K^\bullet}$. Given also $c_{L^\bullet}\in\textup{Ext}_\mathsf{A}^l(Y,Z)$, it holds $c_{L^\bullet\ast K^\bullet}=c_{L^\bullet}\diamond c_{K^\bullet}$.
	\end{enumerate}
\end{Thm}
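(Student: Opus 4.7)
The plan is to exploit the identification of short exact sequences with distinguished triangles (Proposition \ref{shortexactaretriangles}) together with the concrete roof description of morphisms in $\mathsf{D(A)}$ from Definition \ref{dercatdef}. Parts (c) and (a) fall out quickly. For (c), the fully faithful embedding $\mathsf{J}':\mathsf{A}\hookrightarrow\mathsf{D(A)}$ of Proposition \ref{AinD(A)} immediately yields $\textup{Ext}_\mathsf{A}^0(X,Y)=\textup{Hom}_\mathsf{D(A)}(X[0]^\bullet,Y[0]^\bullet)\cong\textup{Hom}_\mathsf{A}(X,Y)$. For (a), regard $0\to X\to X'\to X''\to 0$ as a short exact sequence of $0$-complexes in $\mathsf{Kom(A)}$; by Proposition \ref{shortexactaretriangles} it becomes a distinguished triangle $X[0]^\bullet\to X'[0]^\bullet\to X''[0]^\bullet\to X[1]^\bullet$ in $\mathsf{D(A)}$. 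Invoking the (forthcoming) cohomological behaviour of the $\textup{Hom}$-bifunctor on distinguished triangles — alluded to right after Definition \ref{cohomfunconabelian} and to be fully justified in Chapter 4 — together with the translation equivalence $T^l$ preserving Hom-groups, unwinds this triangle into the first long exact sequence; the second follows symmetrically from $\textup{Hom}_\mathsf{D(A)}(X,-)$.

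For (b), let $k<0$ and $\varphi=[(s^\bullet,f^\bullet)]\in\textup{Ext}_\mathsf{A}^k(X,Y)$ be represented by $X[0]^\bullet\xleftarrow{s^\bullet}Z^\bullet\xrightarrow{f^\bullet}Y[k]^\bullet$. As in the proof of Proposition \ref{AinD(A)}, truncate by setting $V^n\coloneqq Z^n$ for $n<0$, $V^0\coloneqq\ker(d_Z^0)$, and $V^n\coloneqq 0$ for $n>0$; the inclusion $r^\bullet:V^\bullet\hookrightarrow Z^\bullet$ is a quasi-isomorphism, so $\varphi$ is also represented by the composite roof through $V^\bullet$. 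But $V^\bullet$ is zero in positive degrees while $Y[k]^\bullet$ is concentrated in degree $-k>0$, forcing the composite chain map $f^\bullet\circ r^\bullet:V^\bullet\to Y[k]^\bullet$ to vanish identically; hence $\varphi=0$.

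Part (d) is the substantive one. For $k>0$, start with a roof $X[0]^\bullet\xleftarrow{s^\bullet}Z^\bullet\xrightarrow{f^\bullet}Y[k]^\bullet$ and apply the same truncation, so $Z^\bullet$ is concentrated in degrees $\leq 0$, $s^0:Z^0\twoheadrightarrow X$ is surjective with $\ker(s^0)=\textup{im}(d_Z^{-1})$, and the only non-trivial component of $f^\bullet$ is $f^{-k}:Z^{-k}\to Y$, annihilated by $d_Z^{-k-1}$ by the chain-map condition. Construct $K^\bullet$ by splicing: set $K^1\coloneqq X$ with $d_K^0\coloneqq s^0$; $K^n\coloneqq Z^n$ for $-k+2\leq n\leq 0$ with the original differentials; $K^{-k}\coloneqq Y$; and take $K^{-k+1}$ to be the pushout of $Z^{-k+1}\xleftarrow{d_Z^{-k}}Z^{-k}\xrightarrow{f^{-k}}Y$, with $d_K^{-k}$ and $K^{-k+1}\to Z^{-k+2}$ the canonically induced maps. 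Acyclicity at each degree is then a direct check: at the top, $s^0$ is epi with the expected kernel; strictly between $-k+1$ and $0$ it is inherited from acyclicity of $Z^\bullet$ there; at degree $-k+1$ the pushout relations identify $\ker(d_K^{-k+1})$ with $\textup{im}(d_K^{-k})$; and at degree $-k$, the pivotal input is that $f^{-k}$ annihilates $\ker(d_Z^{-k})=\textup{im}(d_Z^{-k-1})$ thanks to the chain condition, making $d_K^{-k}$ a monomorphism. The chain map $g^\bullet:Z^\bullet\to\tilde K^\bullet$ given by the identity in degrees $-k+2\leq n\leq 0$, the pushout inclusion at $-k+1$, and $f^{-k}$ at $-k$ then provides a common topping showing the defining roof of $c_{K^\bullet}$ (cf.~\eqref{Extroof}) is equivalent to the original roof; hence $c_{K^\bullet}=\varphi$.

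The product formula follows by an explicit roof computation. Compose the defining roofs of $c_{K^\bullet}$ and $c_{L^\bullet}$ (first translating the latter by $k$) via \eqref{roofcompo}: the required common complex is $W^\bullet$ with $W^n\coloneqq L^{n+k}$ for $n\leq -k$, $W^n\coloneqq K^n$ for $-k<n\leq 0$, $W^1\coloneqq 0$, and spliced differential $d_W^{-k}\coloneqq d_K^{-k}\circ d_L^0$. By the same acyclicity arguments used above — now invoking acyclicity of both $K^\bullet$ and $L^\bullet$ together with injectivity of $d_K^{-k}$ — the complex $W^\bullet$ is quasi-isomorphic to $X[0]^\bullet$, and direct inspection shows $W^\bullet=\widetilde{L^\bullet\ast K^\bullet}$ in the sense of \eqref{astcomplex}; hence the composite roof is precisely the defining roof of $c_{L^\bullet\ast K^\bullet}$, giving the desired equality $c_{L^\bullet}\diamond c_{K^\bullet}=c_{L^\bullet\ast K^\bullet}$. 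The main obstacle throughout lies in (d): without the critical observation that the chain-map condition $f^{-k}\circ d_Z^{-k-1}=0$, combined with acyclicity of $Z^\bullet$ in negative degrees, forces $f^{-k}|_{\ker(d_Z^{-k})}=0$, the spliced differential $d_K^{-k}$ need not be a monomorphism and $K^\bullet$ would fail to be acyclic at degree $-k$, collapsing the entire construction.
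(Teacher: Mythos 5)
Your proposal is correct, and for parts (a), (b), (c) and the product half of (d) it follows essentially the same route as the paper: (a) is deferred in both cases to the cohomological behaviour of $\textup{Hom}$ on distinguished triangles established only in Chapter 4 (Corollary \ref{D(A)istriangulated}), (c) is Proposition \ref{AinD(A)}, (b) is the same truncate-and-kill argument (your truncation at degree $0$ rather than at degree $k-1$ is an immaterial variant), and your complex $W^\bullet$ for the Yoneda product is exactly the paper's $\widetilde{L^\bullet\ast K^\bullet}$ with the comparison maps $\bar t^\bullet,\bar h^\bullet$.

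The one place you genuinely diverge is the existence half of (d): the paper only gestures at ``successive refinement of a starting roof'' and outsources the argument to \cite[Theorem III.5.5c)]{[GM03]}, whereas you give an explicit construction, splicing $X$, the truncated $Z^\bullet$, the pushout of $Z^{-k+1}\xleftarrow{d_Z^{-k}}Z^{-k}\xrightarrow{f^{-k}}Y$, and $Y$ into an acyclic $K^\bullet$, and exhibiting the chain map $g^\bullet:Z^\bullet\rightarrow\tilde K^\bullet$ as the topping datum for \eqref{equivroofs}. I checked the construction: the pushout exists in any abelian category, $g^\bullet$ is a chain map compatible with both legs of the roofs, and your identification of the crux --- that $f^{-k}$ kills $\ker(d_Z^{-k})=\textup{im}(d_Z^{-k-1})$ by the chain condition, so that $d_K^{-k}$ is mono and $K^\bullet$ is acyclic at degree $-k$ --- is exactly right. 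Two minor caveats, neither fatal: the element-wise reasoning should be understood via Freyd--Mitchell (as the paper itself does throughout), and for $k=1$ the degrees $-k+1$ and $0$ coincide, so the pushout sits directly under $X$ and the splicing needs the (easily checked) fact that $s^0\circ d_Z^{-1}=0$ makes the induced map to $X$ well defined. Net effect: your argument actually proves more than the paper writes down, at the cost of a longer verification; the paper's citation buys brevity.
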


\begin{proof}
	\ \vspace*{0.0cm}
	\renewcommand{\theenumi}{\alph{enumi}}
	\begin{enumerate}[leftmargin=0.5cm]
		\item Regard the provided short exact sequences as distinguished triangles $X^\bullet\rightarrow X'^\bullet\rightarrow X''^\bullet\rightarrow X[1]^\bullet$ respectively $Y^\bullet\rightarrow Y'^\bullet\rightarrow Y''^\bullet\rightarrow Y[1]^\bullet$ $\mathsf{D^b}(\mathsf{A})$ (cf. Proposition \ref{shortexactaretriangles}). Then we wish for the Hom-analogue of Theorem \ref{longexactcohomtriangles}. The proof requires some knowledge of triangulated categories, of which the derived category $\mathsf{D^\#\!(A)}$ constitutes a notable example (see Corollary \ref{D(A)istriangulated}). We will come back to this in due time...
		
		\item For $k>0$, consider the morphism $\varphi=[(s^\bullet,f^\bullet)]\in\text{Ext}_\mathsf{A}^{-k}(X^\bullet,Y^\bullet)=$ $\text{Hom}_\mathsf{D(A)}(X[0]^\bullet,Y[-k]^\bullet)$ represented by the solid line roof
		\begin{equation}\label{commutingroofs}
			\begin{tikzcd}
				& X'^\bullet\arrow[dl, "s^\bullet"']\arrow[dr, "f^\bullet"] & \\
				X[0]^\bullet & & Y[-k]^\bullet \\
				& \tilde{X}^\bullet\arrow[ul, dashed, "t^\bullet"]\arrow[uu, dashed, "r^\bullet"]\arrow[ur, dashed, "0"'] 
			\end{tikzcd}\quad.
		\end{equation}
		Define a new subcomplex $\tilde{X}^\bullet\xhookrightarrow{r^\bullet}X'^\bullet$ by $\tilde{X}^j\coloneqq X'^j$ if $j<k-1$, $\tilde{X}^{k-1}\coloneqq\ker(d_{X'}^{k-1})$ and $\tilde{X}^j\coloneqq 0$ otherwise, with differential induced by that on $X'^\bullet$. Also set $t^0\coloneqq s^0$ as only non-trivial term of $t^\bullet$. These fit into the dashed part of the diagram, which then clearly commutes (in particular, $s^0\circ r^0=s^0=t^0$ and $f^k\circ r^k=0$ due to $\tilde{X}^k=0$). Now, since by definition $H^n(s^\bullet)$ is always an isomorphism , we have $H^n(X'^\bullet)=0$ except for $H^0(X'^\bullet)\cong X$. This construction forcedly implies that both $r^\bullet$ and $t^\bullet$ are quasi-isomorphisms. We conclude that the lower dashed roof is a valid representative of $\varphi$... but it is the zero roof, hence $\varphi=0$ and, by its arbitrariness, $\text{Ext}_\mathsf{A}^{-k}(X^\bullet,Y^\bullet)=\{0\}$ as desired.
		
		\item Now let $k=0$. In the proof of Proposition \ref{AinD(A)}, we showed that the functor $\mathsf{J}'\!:\!\mathsf{A}\rightarrow\mathsf{D(A)}$ is fully faithful, so that $\text{Hom}_\mathsf{A}(X,Y)\cong\text{Hom}_\mathsf{D(A)}(X[0]^\bullet,Y[0]^\bullet)$ $=\text{Ext}_\mathsf{A}^0(X^\bullet,Y^\bullet)$.
		
		\item The proof concerning the statement about $k>0$ is more involved: it roughly proceeds by successive refinement of a starting roof to versions resembling more and more the desired diagram \eqref{Extroof}, where their compatibility at each step is guaranteed by commutative diagrams of the form \eqref{commutingroofs}. We refer the interested reader to part c) of the proof of \cite[Theorem III.5.5]{[GM03]}. 
		\newline For the behaviour under product, suppose $c_{K^\bullet}=[(s^\bullet,f^\bullet)]\in\text{Ext}_\mathsf{A}^k(X,Y)$, $c_{L^\bullet}=[(r^\bullet,g^\bullet)]\in\text{Ext}_\mathsf{A}^l(Y,Z)$ and $c_{L^\bullet\ast K^\bullet}=[(t^\bullet,h^\bullet)]\in\text{Ext}_\mathsf{A}^{k+l}(X,Z)$, represented by roofs as in \eqref{Extroof}. Consider the (Yoneda) product rule for roofs:
		\[
		\begin{tikzcd}[column sep=0.5cm]
			& & \widetilde{L^\bullet\ast K^\bullet}\arrow[dl, dashed, "\bar{t}^\bullet"]\arrow[dr, dashed, "\bar{h}^\bullet"']\arrow[ddll, out=195, in=68,  "t^\bullet"']\arrow[ddrr, out=-15, in = 120, "h^\bullet"] & & \\
			& \tilde{K}^\bullet\arrow[dl, "s^\bullet"]\arrow[dr, "f^\bullet"'] & & \tilde{L}[k]^\bullet\arrow[dl, "{r[k]^\bullet}"]\arrow[dr, "{g[k]^\bullet}"'] & \\
			X[0]^\bullet & & Y[k]^\bullet & & Z[k+l]^\bullet
		\end{tikzcd}\quad.
		\]
		With reference to the chained complex \eqref{astcomplex}, define the dashed arrows by $\bar{h}^i\coloneqq\text{id}_{L^{i+1}}$ for $-k-l\leq i\leq -k$, respectively $\bar{t}^i\coloneqq\text{id}_{K}^i$ for $-k+1\leq i\leq 0$ and $\bar{t}^0\coloneqq d_L^0$ (otherwise zero). Indeed we have $s^\bullet\circ\bar{t}^\bullet=t^\bullet$, $g[k]^\bullet\circ\bar{h}^\bullet= h^\bullet$ and $f^\bullet\circ\bar{t}^\bullet=r[k]^\bullet\circ\bar{h}^\bullet$, proving that $c_{L^\bullet}\diamond c_{K^\bullet}=c_{L^\bullet\ast K^\bullet}$.   
	\end{enumerate}\vspace*{-0.5cm}
\end{proof}

Here are a couple of definitions capturing the richness of morphisms in a derived category. They are provided as an additional tool for analysis, but won't actually play a role in the rest of the work.

\begin{Def}
	Let $\mathsf{A}$ be an abelian category. Then we define its \textbf{homological dimension}\index{homological dimension} to be\footnote{Observe that in the ``worst'' case, $\text{dim}_h(\mathsf{A})=0$ because by Theorem \ref{Extprop}c. $\text{Ext}_\mathsf{A}^0(X,X)=\text{Hom}_\mathsf{A}(X,X)\ni\text{id}_X$ for any $X\in\text{obj}(\mathsf{A})$. Similarly for $\text{dim}_p(X)$, $\text{dim}_i(X)$.} 
	\begin{equation}\label{homologicaldimension}
		\text{dim}_h(\mathsf{A})\coloneqq\text{max}\big(\{k\in\mathbb{N}_0\mid \exists X,Y\in\text{obj}(\mathsf{A})\text{ s.t. }\text{Ext}_\mathsf{A}^k(X,Y)\neq\{0\}\}\big)
	\end{equation}
	($=\infty$ if no such $k$ exists). Similarly, the \textbf{projective/injective homological dimensions}\index{homological dimension!projective/injective} of a fixed object $X\in\text{obj}(\mathsf{A})$ are 
	\begin{equation}
		\begin{aligned}
			\text{dim}_p(X)&\coloneqq\text{sup}\big(\{k\in\mathbb{N}_0\mid \exists Y\in\text{obj}(\mathsf{A})\text{ s.t. }\text{Ext}_\mathsf{A}^k(X,Y)\neq\{0\}\}\big) \\
			\text{dim}_i(X)&\coloneqq\text{sup}\big(\{k\in\mathbb{N}_0\mid \exists Y\in\text{obj}(\mathsf{A})\text{ s.t. }\text{Ext}_\mathsf{A}^k(Y,X)\neq\{0\}\}\big)
		\end{aligned}
	\end{equation}
	(again possibly $=\infty$).    
\end{Def}

\begin{Pro}\label{injprojExt}
	Let $\mathsf{A}$ be an abelian category. The following conditions about homological dimensions are equivalent:
	\renewcommand{\theenumi}{\roman{enumi}}
	\begin{enumerate}[leftmargin=0.5cm]
		\item $\textup{dim}_h(\mathsf{A})=0 \xLeftrightarrow{(1)} \textup{Ext}_\mathsf{A}^1(X,Y)\!=\!\{0\}$ $\forall X,Y\!\in\!\textup{obj}(\mathsf{A}) \xLeftrightarrow{(2)} \mathsf{A}$ is semisimple.
		
		\item $\textup{dim}_p(X)=0 \xLeftrightarrow{(1)} \textup{Ext}_\mathsf{A}^1(X,Y)=\{0\}$ $\forall Y\!\in\!\textup{obj}(\mathsf{A}) \xLeftrightarrow{(2)} X$ is projective.
		
		\item $\textup{dim}_i(X)=0 \xLeftrightarrow{(1)} \textup{Ext}_\mathsf{A}^1(Y,X)=\{0\}$ $\forall Y\!\in\!\textup{obj}(\mathsf{A}) \xLeftrightarrow{(2)} X$ is injective.
	\end{enumerate}
\end{Pro}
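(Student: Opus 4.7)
The plan is to prove the three items in parallel, since they share an identical structural template. In each, the $\Rightarrow$ direction of (1) is immediate from the definition \eqref{homologicaldimension}, so the real work lies in the reverse implication of (1) and in establishing (2).

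For the $\Leftarrow$ direction of (1), I would dimension-shift through the Yoneda product. Given $\alpha\in\textup{Ext}_\mathsf{A}^{k+1}(X,Y)$ with $k\geq 1$, Theorem \ref{Extprop}d lets me write $\alpha=c_{K^\bullet}$ for a Yoneda extension $0\to Y\to K^{-k}\to\cdots\to K^0\to X\to 0$. I would then split this at an intermediate object $W$ into a short exact sequence and a length-$k$ Yoneda extension, so that the multiplicativity $c_{L^\bullet\ast M^\bullet}=c_{L^\bullet}\diamond c_{M^\bullet}$ factors $\alpha$ as a Yoneda product with one factor in $\textup{Ext}^1$. For item (ii), splitting at $W=\ker(K^0\to X)$ places the $\textup{Ext}^1$-factor in $\textup{Ext}^1(X,W)$, which vanishes by hypothesis; for item (iii), splitting at $W=\textup{coker}(Y\to K^{-k})$ places it in $\textup{Ext}^1(W,X)$, again zero by hypothesis; for item (i) either splitting works. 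In all cases $\alpha=0$, which closes the induction on $k$.

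For the equivalences (2), everything reduces to the case $k=1$, where Theorem \ref{Extprop}d parametrises $\textup{Ext}_\mathsf{A}^1(X,Y)$ (surjectively) by short exact sequences $0\to Y\to E\to X\to 0$. The bridge lemma I need is that the class $c_{K^\bullet}$ vanishes precisely when the sequence splits: I would verify this by exhibiting an explicit topping roof $X[0]^\bullet\to\tilde K^\bullet$ built from a section $X\to E$ for the forward direction, and reading a section off the equivalence of roofs for the reverse. Granting this, item (i) matches Remark \ref{semisimple} (every short exact sequence splits $\Leftrightarrow$ $\mathsf{A}$ semisimple); item (ii) states that every epimorphism onto $X$ admits a section, which is Definition \ref{injprojobj}'s criterion for $X$ projective (equivalently, exactness of $\textup{Hom}_\mathsf{A}(X,\square)$ by Lemma \ref{Homforinjprojobj}); item (iii) is the formal dual, interchanging the two arguments of $\textup{Ext}$ and mono- with epimorphisms.

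The main obstacle I anticipate is the splitting $\Leftrightarrow$ vanishing identification for $\textup{Ext}^1$: Theorem \ref{Extprop}d supplies only the surjectivity of $K^\bullet\mapsto c_{K^\bullet}$, so a short diagrammatic argument about roof equivalence in $\mathsf{D(A)}$ is required to extract the ``vanishing $\Rightarrow$ split'' half. A related bookkeeping point is the correct ordering of the Yoneda product in the dimension-shifting step, which must be tracked to ensure the $\textup{Ext}^1$-factor aligns with the variables fixed by each hypothesis.
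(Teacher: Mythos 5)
Your treatment of the implications labelled (1) is essentially the paper's: the reverse direction is obtained exactly as there, by writing a class in $\textup{Ext}_\mathsf{A}^{k}$ as $c_{K^\bullet}$ via Theorem \ref{Extprop}d, splitting $K^\bullet$ at $\textup{im}(d_K^{-1})$ (your $\ker(K^0\to X)$) into $K_1^\bullet\ast K_2^\bullet$ and killing the $\textup{Ext}^1$-factor, with the dual splitting for item (iii). Where you genuinely diverge is in the equivalences (2). The paper does not pass through a general ``$c_{K^\bullet}=0$ iff $K^\bullet$ splits'' lemma for the direction you correctly identify as the hard one: to show that vanishing of $\textup{Ext}_\mathsf{A}^1(X,Y)$ forces every extension $0\to Y\to Z\xrightarrow{g} X\to 0$ to split, it applies the long exact sequence of Theorem \ref{Extprop}a to $\textup{Hom}_\mathsf{A}(X,\square)$, so that $g_*:\textup{Hom}_\mathsf{A}(X,Z)\to\textup{Hom}_\mathsf{A}(X,X)$ is surjective, $\textup{id}_X$ lifts to a section, and the Five Lemma identifies the sequence with a split one. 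This sidesteps the diagrammatic extraction of a section from an equivalence of roofs, which is exactly the step you flag as delicate; your route is viable but re-proves at the roof level what the long exact sequence gives for free. Conversely, the half of your bridge lemma that the paper only asserts (``split $\Rightarrow c_{K^\bullet}=0$'', via an explicit topping roof) would be a genuine addition, since the paper's $\xLeftarrow{(2)}$ in item (i) is left at the level of ``can ultimately be shown''. One caution on item (ii): ``every epimorphism onto $X$ admits a section'' is not literally Definition \ref{injprojobj}'s criterion for projectivity, which is the lifting property against arbitrary epimorphisms $Y\twoheadrightarrow Z$; to close that gap you still need the pullback argument the paper supplies (form $Z\times_U X$, observe its projection to $X$ is an epimorphism, split it, and compose), and dually for item (iii). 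With that step made explicit, your plan is complete and correct.
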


\begin{proof}
	\ \vspace*{0.0cm}
	\renewcommand{\theenumi}{\roman{enumi}}
	\begin{enumerate}[leftmargin=0.5cm]
		\item The implication $\xRightarrow{(1)}$ is obvious. For $\xLeftarrow{(1)}$, consider $k>1$ and any $f=c_{K^\bullet}\in\text{Ext}_\mathsf{A}^k(X,Y)$ as constructed in Remark \ref{explicitExtmorph}. Then the two complexes
		\begin{align*}
			\mkern-12mu K_1^\bullet\!&\coloneqq\!\big(...\!\rightarrow 0\rightarrow K^{-k}\!\coloneqq\! Y\rightarrow...\rightarrow K^{-1}\rightarrow Z\!\coloneqq\!\text{im}(d_K^{-1})\rightarrow 0\rightarrow\!...\big)\,, \\
			K_2^\bullet\!&\coloneqq\!\big(...\!\rightarrow 0\rightarrow \text{im}(d_K^{-1})\rightarrow K^0\rightarrow K^1\coloneqq X\rightarrow 0\rightarrow ...\big)
		\end{align*}
		yield $K_1^\bullet\ast K_2^\bullet=K^\bullet$, hence $f=c_{K_1^\bullet}\diamond c_{K_2^\bullet}$ by Theorem \ref{Extprop}d. But $c_{K_2^\bullet}$ $\in\text{Ext}_\mathsf{A}^1(X,Z)=\{0\}$, hence $f=0$ and $\text{Ext}_\mathsf{A}^k(X,Y)=\{0\}$.
		\newline For $\xRightarrow{(2)}$, consider the short exact sequence $0\rightarrow Y\xrightarrow{f} Z\xrightarrow{g} X\rightarrow 0$, which by Theorem \ref{Extprop}a.\&b. produces the (portion of long) exact sequence 
		\vspace*{-0.2cm}
		\[
		\{0\}\rightarrow\text{Hom}_\mathsf{A}(X,Y)\xrightarrow{f_*}\text{Hom}_\mathsf{A}(X,Z)\xrightarrow{g_*}\text{Hom}_\mathsf{A}(X,X)\rightarrow\text{Ext}_\mathsf{A}^1(X,Y)\!=\!\{0\}.
		\]
		Then there exists some $h:X\rightarrow Z$ such that $g_*(h)=g\circ h=\text{id}_X$, also factorizing through the canonical isomorphism $k:X\rightarrow\text{im}(h)$ and inclusion $\iota:\text{im}(h)\rightarrow Z$. A straightforward application of the Five-Lemma tells us that $0\rightarrow Y\xrightarrow{(\text{id}_Y,0)} Y\oplus\text{im}(h)\xrightarrow{0+k^{-1}} X\cong\text{im}(h)\rightarrow 0$ is isomorphic to the original sequence (with middle isomorphism $f+\iota:Y\oplus\text{im}(h)\rightarrow Z$ making the associated diagram commute). Consequently, any short exact sequence splits, precisely the definition of $\mathsf{A}$ being semisimple (cf. Remark \ref{semisimple}). Conversely, any morphism in $\text{Ext}_\mathsf{A}^1(X,Y)$ is built out of a triple $K^\bullet=(0\rightarrow Y\xrightarrow{f} K^0\xrightarrow{g} X\rightarrow 0)$, splitting by assumption, which can ultimately be shown to yield $c_{K^\bullet}=0$, thus the implication $\xLeftarrow{(2)}$.
		
		\item $\xRightarrow{(1)}$ is clear, and $\xLeftarrow{(1)}$ is shown like in i. through the splitting $\text{Ext}_\mathsf{A}^k(X,Y)=\text{Ext}_\mathsf{A}^{k-1}(\text{im}(d_K^{-1}),Y)\diamond\text{Ext}_\mathsf{A}^1(X,\text{im}(d_K^{-1}))=0$ for all $k\geq 1$.
		\newline Regarding $\xLeftrightarrow{(2)}$, the restriction of $\xLeftrightarrow{(2)}$ in i. implies that given some fixed $X\in\text{obj}(\mathsf{A})$, $\text{Ext}_\mathsf{A}^1(X,Y)=\{0\}$ for all $Y\in\text{obj}(\mathsf{A})$ if and only if any short exact sequence with third object $X$ splits. Recall also the projectivity condition of $X$, diagram \eqref{injprojobjdiag},\vspace*{-0.5cm} 
		\[
		\begin{tikzcd}
			& & & X\arrow[dl, dashed, "h"']\arrow[d, "f"] & \\
			\Big(0\arrow[r] & Y\arrow[r, hook] & \Big)\;Z\arrow[r, two heads, "g"'] & U\arrow[r] & 0
		\end{tikzcd}\vspace*{-0.3cm}
		\]
		(solid arrows are given). For $\xRightarrow{(2)}$, let $Z\times_U X\!\coloneqq\!\{(z,x)\in Z\!\times\! X\mid g(z)\!=\!f(x)\}$. By surjectivity of the projection $p_2:Z\times_U X\rightarrow X$ and assumed splitting of the arising short exact sequence, there exists an inclusion $i_2:X\rightarrow Z\times_U X$. Then $h\coloneqq p_1\circ i_2:X\rightarrow Z$ makes the diagram commute, proving that $X$ is projective. Conversely for $\xLeftarrow{(2)}$, replacing above $U=X$ and $f=\text{id}_X$, there exists by assumption an $h:X\rightarrow Z$ such that $g\circ h=\text{id}_X$, showing that any short exact sequence $0\rightarrow Y\hookrightarrow Z \twoheadrightarrow X\rightarrow 0$ is isomorphic to a splitting one with middle term $Y\oplus\text{im}(h)$. As reasoned above, this is equivalent to $\text{Ext}_\mathsf{A}^1(X,Y)=\{0\}$ for all $Y\in\text{obj}(\mathsf{A})$.
		
		\item This is proved similarly to the previous part. 
	\end{enumerate}\vspace*{-0.5cm}
\end{proof}

Here is another useful result involving projective and injective objects.

\begin{Lem}
	Let $\mathsf{A}$ be an abelian category, $X,X'\in\textup{obj}(\mathsf{A})$.
	\renewcommand{\theenumi}{\roman{enumi}}
	\begin{enumerate}[leftmargin=0.5cm]
		\item Given an acyclic complex
		\[
		P^\bullet\coloneqq\big(...\rightarrow 0\rightarrow X'\rightarrow P^{-k}\rightarrow...\rightarrow P^0\rightarrow X\rightarrow 0\rightarrow...\big)\,,
		\]
		with projective $P^i\in\textup{obj}(\mathsf{A})$, it holds $\textup{dim}_p(X')=\textup{max}(\{\textup{dim}_p(X)-k-1,0\})$.
		
		\item Given an acyclic complex
		\[
		I^\bullet\coloneqq\big(...\rightarrow 0\rightarrow X\rightarrow I^0\rightarrow...\rightarrow I^k\rightarrow X'\rightarrow 0\rightarrow...\big)\,,
		\]
		with injective $I^i\in\textup{obj}(\mathsf{A})$, it holds $\textup{dim}_i(X')=\textup{max}(\{\textup{dim}_i(X)-k-1,0\})$. 
	\end{enumerate}
\end{Lem}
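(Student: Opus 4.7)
The idea is to split the long acyclic complex into $k+1$ elementary pieces and then use the long exact sequence of Ext-groups (Theorem \ref{Extprop}a.) to shift cohomological degrees by one at each step. Specifically, for part (i), set $K^0\coloneqq\ker(P^0\to X)$ and more generally $K^{-i+1}\coloneqq\ker(P^{-i+1}\to P^{-i+2})=\textup{im}(P^{-i}\to P^{-i+1})$ for $1\leq i\leq k$, with the conventions $K^{-k+1}=\textup{coker}(X'\to P^{-k})$ and $X'=K^{-k}$. By acyclicity of $P^\bullet$, these fit into short exact sequences
\[
0\to X'\to P^{-k}\to K^{-k+1}\to 0,\quad 0\to K^{-i+1}\to P^{-i+1}\to K^{-i+2}\to 0,\quad 0\to K^0\to P^0\to X\to 0
\]
in $\mathsf{A}$ (for $1\leq i\leq k$), which is what I would verify first.

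Next, I would argue that for any projective $P\in\textup{obj}(\mathsf{A})$ and any $Y\in\textup{obj}(\mathsf{A})$, $\textup{Ext}_\mathsf{A}^m(P,Y)=\{0\}$ for \emph{all} $m\geq 1$, not only $m=1$ as directly given by Proposition \ref{injprojExt}ii. This is the step I expect to be most delicate, since the excerpt only provides the $m=1$ characterization. The argument is the same splitting trick employed in the proof of Proposition \ref{injprojExt}i.: any $f=c_{K^\bullet}\in\textup{Ext}_\mathsf{A}^m(P,Y)$ can, by Theorem \ref{Extprop}d., be factored as $c_{K_1^\bullet}\diamond c_{K_2^\bullet}$ with $c_{K_2^\bullet}\in\textup{Ext}_\mathsf{A}^1(P,\textup{im}(d_K^{-1}))=\{0\}$, whence $f=0$ (an induction on $m$ makes this rigorous).

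Having this vanishing in hand, each long exact Ext-sequence associated to the short exact sequences above (Theorem \ref{Extprop}a.) collapses, for $m\geq 1$, to isomorphisms
\[
\textup{Ext}_\mathsf{A}^m(K^{-i+1},Y)\xrightarrow{\;\sim\;}\textup{Ext}_\mathsf{A}^{m+1}(K^{-i+2},Y),\qquad 0\leq i\leq k,
\]
which I would iteratively chain to conclude
\[
\textup{Ext}_\mathsf{A}^m(X',Y)\cong\textup{Ext}_\mathsf{A}^{m+k+1}(X,Y)\qquad\forall\, m\geq 1,\;\forall\,Y\in\textup{obj}(\mathsf{A}).
\]
A brief case analysis then finishes part (i): if $\dim_p(X)=\infty$, the same holds for $X'$ and both sides of the equality are $\infty$; if $\dim_p(X)=n<\infty$ and $n\geq k+2$, the above isomorphism is non-zero precisely for $m=n-k-1$ and vanishes for larger $m$, giving $\dim_p(X')=n-k-1$; if $n\leq k+1$, then $\textup{Ext}_\mathsf{A}^m(X',Y)=0$ for every $m\geq 1$ and every $Y$, so $\dim_p(X')=0=\max(n-k-1,0)$.

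For part (ii), I would proceed dually: chop $I^\bullet$ into short exact sequences via cokernels $C^i\coloneqq\textup{coker}(I^{i-1}\to I^i)$, note that $\textup{Ext}_\mathsf{A}^m(Y,I)=0$ for $m\geq 1$ whenever $I$ is injective (by the mirror splitting argument), and chain the long exact sequences of Theorem \ref{Extprop}a. in the second variable to obtain $\textup{Ext}_\mathsf{A}^m(Y,X')\cong\textup{Ext}_\mathsf{A}^{m+k+1}(Y,X)$ for $m\geq 1$. The same case analysis then yields the claimed formula.
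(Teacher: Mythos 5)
Your proposal is correct and follows essentially the same route as the paper: the paper runs an induction on $k$, splitting off one short exact sequence $0\to X'\to P^{-k}\to\textup{im}(d_P^{-k})\to 0$ at a time and composing the resulting degree-shifting maps, which is exactly your decomposition into $k+1$ short exact sequences written recursively; the vanishing $\textup{Ext}_\mathsf{A}^m(P,Y)=\{0\}$ for all $m\geq 1$ that you flag as delicate is already available from Proposition \ref{injprojExt}ii.\ together with the definition of $\textup{dim}_p$, proved there by the very splitting trick you describe. The only substantive point the paper tracks that you elide is that the connecting map is merely surjective (not bijective) in degree $0$, but since the dimension count only involves $\textup{Ext}^m$ for $m\geq 1$ this does not affect your conclusion.
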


\begin{proof}
	We prove the statement in the projective case i. (the injective scenario can be shown with an analogous procedure). Given any $Y\in\text{obj}(\mathsf{A})$, we construct by induction on $k$ a map $\delta:\text{Ext}_\mathsf{A}^d(X',Y)\rightarrow\text{Ext}_\mathsf{A}^{d+k+1}(X,Y)$ which is surjective for $d=0$ and an isomorphism for $d\geq 1$.
	
	For $k=0$, $P^\bullet=(0\rightarrow X'\rightarrow P^0\rightarrow X\rightarrow 0)$ induces by Theorem \ref{Extprop}a. the portion of long exact sequence
	\[
	\text{Ext}_\mathsf{A}^d(P^0,Y)\rightarrow\text{Ext}_\mathsf{A}^d(X',Y)\xrightarrow{\delta}\mkern-15mu\rightarrow\text{Ext}_\mathsf{A}^{d+1}(X,Y)\rightarrow\text{Ext}_\mathsf{A}^{d+1}(P^0,Y)=\{0\}\,,
	\]
	so that $\delta$ is readily surjective, and moreover an isomorphism when $d>0$ (because $\text{Ext}_\mathsf{A}^d(P^0,Y)=\{0\}$ by Proposition \ref{injprojExt}ii., $P^0$ being projective; and similarly for the third term). For the step $k-1\rightarrow k$, split $P^\bullet=P_1^\bullet\ast P_2^\bullet$ into $P_1^\bullet\coloneqq\big(0\rightarrow X'\rightarrow P^{-k}\rightarrow X''\coloneqq\text{im}(d_P^{-k})\rightarrow 0\big)$ and $P_2^\bullet\coloneqq\big(0\rightarrow X''\rightarrow\break P^{-k+1}\rightarrow...\rightarrow P^0\rightarrow X\rightarrow 0\big)$. The induction hypothesis for $P_2^\bullet$ gives $\delta_2:\text{Ext}_\mathsf{A}^d(X'',Y)\rightarrow\text{Ext}_\mathsf{A}^{d+k}(X,Y)$, while $\delta_1:\text{Ext}_\mathsf{A}^d(X',Y)\rightarrow\text{Ext}_\mathsf{A}^{d+1}(X'',Y)$ is deduced from $P_1^\bullet$ as in the base case. Then $\delta\coloneqq\delta_2[1]\circ\delta_1$ inherits the desired properties.
	
	We conclude that if $d\coloneqq\text{dim}_p(X')\geq 1$, meaning there exists some $Y\in\text{obj}(\mathsf{A})$ such that $\text{Ext}_\mathsf{A}^{\text{dim}_p(X')+k+1}(X,Y)\cong\text{Ext}_\mathsf{A}^{\text{dim}_p(X')}(X',Y)\neq\{0\}$, then $\text{dim}_p(X)=\text{dim}_p(X')+k+1$, whereas $\text{dim}_p(X')= 0$ implies that $\text{Ext}_\mathsf{A}^1(X',Y)=\{0\}$ for all $Y$, thus $\text{Ext}_\mathsf{A}^{k+2}(X,Y)=\{0\}$ and $\text{dim}_p(X)\leq k+1$. All together, $\text{dim}_p(X')=\text{max}(\{\text{dim}_p(X)-k-1,0\})$. 
\end{proof}

\begin{Cor}
	Let $\mathsf{A}$ be an abelian category, $X\in\textup{obj}(\mathsf{A})$.
	\begin{itemize}[leftmargin=0.5cm]
		\item If $\mathsf{A}$ has enough projectives \textup(see Definition \ref{injprojobj}\textup), then $\textup{dim}_p(X)\leq k$ if and only if there exists a projective resolution of $X$ of length $\leq k+1$.
		
		\item If $\mathsf{A}$ has enough injectives, then $\textup{dim}_i(X)\leq k$ if and only if there exists an injective resolution of $X$ of length $k+1$. 
	\end{itemize} 
\end{Cor}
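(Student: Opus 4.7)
The plan is to prove both parts as direct consequences of the preceding lemma; I will spell out the projective case, as the injective case is entirely dual.

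For the direction $(\Leftarrow)$ of the projective statement, I would take a projective resolution $0 \to P^{-k} \to P^{-k+1} \to \ldots \to P^0 \to X \to 0$ of length $\leq k+1$ (padding by zeros if shorter) and view it as the complex $P^\bullet$ of the preceding lemma with $X' \coloneqq P^{-k}$ and lemma-parameter $k-1$, so that exactly $k$ intermediate projectives $P^{-k+1}, \ldots, P^0$ sit between $X'$ and $X$. The lemma then reads
$$\textup{dim}_p(P^{-k}) = \max\{\textup{dim}_p(X) - k,\, 0\},$$
whose left-hand side vanishes since $P^{-k}$ is projective (Proposition \ref{injprojExt}ii.), giving $\textup{dim}_p(X) \leq k$. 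The edge case $k = 0$ is immediate: a resolution of length $1$ just forces $X$ itself to be projective.

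For $(\Rightarrow)$, assuming $\textup{dim}_p(X) \leq k$ with $k \geq 1$, I would exploit that $\mathsf{A}$ has enough projectives to pick any full projective resolution $\ldots \to P^{-2} \to P^{-1} \to P^0 \to X \to 0$ and truncate it at position $-(k-1)$ by setting $K \coloneqq \ker(P^{-k+1} \to P^{-k+2})$ (with the convention $P^1 \coloneqq X$ when $k = 1$). The truncated complex
$$0 \to K \to P^{-k+1} \to \ldots \to P^0 \to X \to 0$$
is acyclic with $k$ projective interior terms, so the preceding lemma yields $\textup{dim}_p(K) = \max\{\textup{dim}_p(X) - k,\, 0\} = 0$, whence $K$ is projective by Proposition \ref{injprojExt}ii. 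The displayed complex is therefore itself a projective resolution of $X$ of length $k+1$; for $k = 0$ one simply uses $0 \to X \xrightarrow{\textup{id}_X} X \to 0$.

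The injective case follows verbatim by dualising: in $(\Leftarrow)$ one applies part (ii) of the preceding lemma with $X'$ chosen as the rightmost injective $I^k$, and in $(\Rightarrow)$ one truncates a full injective resolution $0 \to X \to I^0 \to I^1 \to \ldots$ by setting $C \coloneqq \textup{coker}(I^{k-2} \to I^{k-1})$ (with the convention $I^{-1} \coloneqq X$ when $k = 1$) and invokes Proposition \ref{injprojExt}iii. to conclude $C$ is injective. No real obstacle is expected — the argument is purely a repackaging of the preceding lemma, with the only care needed being the length-counting convention (length $=$ number of nonzero projective/injective terms in the resolution) and the handling of the degenerate low-$k$ cases.
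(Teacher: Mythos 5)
Your proof is correct and is exactly the derivation the paper intends (the Corollary is stated without proof as an immediate consequence of the preceding Lemma): truncating a resolution at the $k$-th syzygy/cosyzygy, applying the Lemma with parameter $k-1$, and invoking Proposition \ref{injprojExt} to identify dimension-zero objects with projectives/injectives. Your index bookkeeping (lemma-parameter $m$ giving $m+1$ interior terms) and the separate treatment of the degenerate case $k=0$ are both handled correctly.
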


We also mention a lemma which will be crucial for our analysis of elliptic curves in Chapter 7. It refers to \cite[Exercise 13.13]{[KS06]}.

\begin{Lem}\label{whendimhleq1}
	Let $\mathsf{A}$ be an abelian category with $\textup{dim}_h(\mathsf{A})\leq 1$. Then any $X^\bullet\in\textup{obj}(\mathsf{D^b(A)})$ can be written as $X^\bullet\cong\bigoplus_{n\in\mathbb{Z}} H^n(X^\bullet)[-n]^\bullet\in\textup{obj}(\mathsf{D^b(A)})$.
\end{Lem}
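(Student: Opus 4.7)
The plan is to proceed by induction on the length $\ell(X^\bullet)\coloneqq n_+-n_-$ of the cohomological support of $X^\bullet\in\text{obj}(\mathsf{D^b(A)})$, where $n_-\coloneqq\min\{n\mid H^n(X^\bullet)\ne 0\}$ and $n_+\coloneqq\max\{n\mid H^n(X^\bullet)\ne 0\}$ are both finite by boundedness (we may discard the zero complex). The base case $\ell(X^\bullet)=0$ amounts to $X^\bullet$ being an $H^{n_0}$-complex for a single $n_0\in\mathbb{Z}$, where Proposition~\ref{AinD(A)} (applied after a translation) immediately yields $X^\bullet\cong H^{n_0}(X^\bullet)[-n_0]^\bullet$ in $\mathsf{D^b(A)}$.

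For the inductive step, set $n_0\coloneqq n_-$ and consider the smart truncation
$$\tau^{\le n_0}X^\bullet\coloneqq\bigl(\cdots\to X^{n_0-2}\to X^{n_0-1}\to\ker(d_X^{n_0})\to 0\to\cdots\bigr),$$
a subcomplex of $X^\bullet$ with quotient $\tau^{>n_0}X^\bullet\coloneqq X^\bullet/\tau^{\le n_0}X^\bullet$. By construction, $\tau^{\le n_0}X^\bullet$ is an $H^{n_0}$-complex with $H^{n_0}(\tau^{\le n_0}X^\bullet)\cong H^{n_0}(X^\bullet)$, while $\tau^{>n_0}X^\bullet$ carries cohomology $H^n(X^\bullet)$ in degrees $n>n_0$ and zero elsewhere; in particular $\ell(\tau^{>n_0}X^\bullet)<\ell(X^\bullet)$. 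Proposition~\ref{shortexactaretriangles} then turns the short exact sequence $0\to\tau^{\le n_0}X^\bullet\to X^\bullet\to\tau^{>n_0}X^\bullet\to 0$ into a distinguished triangle
$$\tau^{\le n_0}X^\bullet\longrightarrow X^\bullet\longrightarrow\tau^{>n_0}X^\bullet\xrightarrow{\,\delta\,}\tau^{\le n_0}X^\bullet[1]$$
in $\mathsf{D^b(A)}$, whose first and third terms the base case and the induction hypothesis identify with $H^{n_0}(X^\bullet)[-n_0]^\bullet$ and $\bigoplus_{n>n_0}H^n(X^\bullet)[-n]^\bullet$ respectively.

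It will then remain to show that this triangle splits, so that $X^\bullet\cong\bigoplus_{n\in\mathbb{Z}}H^n(X^\bullet)[-n]^\bullet$. By the standard triangulated-category argument (the Hom-analogue of Theorem~\ref{longexactcohomtriangles} foreshadowed in Theorem~\ref{Extprop}(a) and to be formalized in Chapter~4), splitting is equivalent to the vanishing of $\delta$: applying $\text{Hom}_{\mathsf{D^b(A)}}(\tau^{>n_0}X^\bullet,\square)$ to the triangle produces an exact sequence in which $\text{id}_{\tau^{>n_0}X^\bullet}$ is mapped by $\delta_{*}$ to $\delta$, and once $\delta=0$ this identity lifts to a section of the epimorphism $X^\bullet\to\tau^{>n_0}X^\bullet$. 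To verify vanishing, we decompose $\tau^{>n_0}X^\bullet$ via the induction hypothesis, exploit additivity of $\text{Hom}_{\mathsf{D^b(A)}}$ in its first slot (Lemma~\ref{D(A)additive}), and use the translation equivalences to rewrite
$$\text{Hom}_{\mathsf{D^b(A)}}\bigl(\tau^{>n_0}X^\bullet,\tau^{\le n_0}X^\bullet[1]\bigr)\cong\prod_{n>n_0}\text{Ext}_\mathsf{A}^{\,n-n_0+1}\bigl(H^n(X^\bullet),H^{n_0}(X^\bullet)\bigr),$$
each factor of which vanishes since $n-n_0+1\ge 2$ while $\dim_h(\mathsf{A})\le 1$. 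Hence $\delta=0$ and the induction closes.

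The only real obstacle is precisely this last conversion of ``$\delta=0$'' into a genuine direct-sum decomposition: it is the one step requiring machinery beyond the current chapter, relying on the Hom-exactness and splitting properties of distinguished triangles in any triangulated category, which the paper defers to Chapter~4. Granted this, the rest is careful bookkeeping with truncations, translations and the Ext-dimension hypothesis.
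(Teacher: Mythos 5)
Your argument is correct and is essentially the standard proof of this statement; the paper itself gives no proof, deferring to \cite[Exercise 13.13]{[KS06]}, whose intended solution is exactly this induction on smart truncations combined with the vanishing of $\textup{Ext}^{\,\ge 2}$ when $\textup{dim}_h(\mathsf{A})\le 1$ and the splitting of a distinguished triangle whose connecting morphism vanishes. The one step you flag as deferred --- that $\delta=0$ forces $X^\bullet\cong\tau^{\le n_0}X^\bullet\oplus\tau^{>n_0}X^\bullet$ --- is indeed the standard triangulated-category fact (a section of $X^\bullet\to\tau^{>n_0}X^\bullet$ obtained from the $\textup{Hom}$-long exact sequence, then the Five-Lemma) and is legitimately available once Chapter 4 is in place, so there is no gap.
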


\newpage

\section{Derived functors}
\thispagestyle{plain}

\subsection{The construction of derived functors}\label{ch3.1}

In the present chapter we wish to relate derived categories of abelian categories through functors preserving some of their desirable features, such as exactness of sequences, here understood in terms of distinguished triangles:

\begin{Def}\label{exactderivedfunc}
	Let $\mathsf{A}, \mathsf{B}$ be abelian categories. A functor $\mathsf{D(A)}\rightarrow\mathsf{D(B)}$ between their derived categories is \textbf{exact}\index{functor!exact (between derived categories)} if it maps distinguished triangles in $\mathsf{D(A)}$ to distinguished triangles in $\mathsf{D(B)}$, then of the form \eqref{distinguitriangle} or \eqref{conetriangle}.
\end{Def}

We construct these particular ``derived functors'' starting from exact functors of abelian categories, and their extension to complexes.

\begin{Def}
	Let $\mathsf{A}, \mathsf{B}$ be abelian categories. A functor $\mathsf{F}:\mathsf{A}\rightarrow\mathsf{B}$ extends to $\mathsf{Kom^\#\!(F)}:\mathsf{Kom^\#\!(A)}\rightarrow\mathsf{Kom^\#\!(B)}$ (for $\#=\emptyset,+,-,\mathsf{b}$) by acting componentwise: 
	\begin{itemize}[leftmargin=0.5cm]
		\renewcommand{\labelitemi}{\textendash}
		\item for any $X^\bullet\in\text{obj}(\mathsf{Kom^\#\!(A)})$, $\mathsf{F}(X^\bullet)^\bullet\equiv\mathsf{F}(X)^\bullet\in\text{obj}(\mathsf{Kom^\#\!(B)})$ is given by $\mathsf{F}(X)^n\coloneqq\mathsf{F}(X^n)$ with differential $d_{\mathsf{F}(X)}^n\coloneqq \mathsf{F}(d_X^n):\mathsf{F}(X^n)\rightarrow\mathsf{F}(X^{n+1})$, for $n\in\mathbb{Z}$; 
		
		\item for any $f^\bullet:X^\bullet\rightarrow Y^\bullet$ in $\mathsf{Kom^\#\!(A)}$, $\mathsf{F}(f)^\bullet:\mathsf{F}(X)^\bullet\rightarrow\mathsf{F}(Y)^\bullet$ is given by $\mathsf{F}(f)^n\coloneqq \mathsf{F}(f^n)$, $n\in\mathbb{Z}$.
	\end{itemize}
	Since $\mathsf{Kom^\#\!(F)}$ clearly preserves the homotopy relation (one just applies $\mathsf{F}$ to chain homotopies $k^\bullet$; cf. Definition \ref{quasiso}), it restricts to the homotopy functor $\mathsf{H^\#\!(F)}:\mathsf{H^\#\!(A)}\rightarrow\mathsf{H^\#\!(B)}$.
\end{Def}

\begin{Pro}\label{D(F)exact}
	Let $\mathsf{A}, \mathsf{B}$ be abelian categories, $\mathsf{F}:\mathsf{A}\rightarrow\mathsf{B}$ an exact functor. Then $\mathsf{H^\#\!(F)}$ maps quasi-isomorphisms to quasi-isomorphisms, hence induces a functor $\mathsf{D^\#\!(F)}:\mathsf{D^\#\!(A)}\rightarrow\mathsf{D^\#\!(B)}$ on derived categories, which is moreover exact \textup(in the sense of Definition \ref{exactderivedfunc}\textup).
\end{Pro}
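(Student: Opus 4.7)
The plan is to work in two moves: first upgrade the hypothesis ``$\mathsf{F}$ is exact'' to the statement ``$\mathsf{F}$ commutes with cohomology'', then exploit the universal property of $\mathsf{D^\#\!(A)}$.

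First I would note that exactness of $\mathsf{F}$ forces it to preserve kernels, images and cokernels up to canonical isomorphism. Indeed, by Lemma \ref{aboutmonepi} every morphism $f:X\to Y$ in $\mathsf{A}$ sits in the two short exact sequences $0\to\ker(f)\to X\to\textup{im}(f)\to 0$ and $0\to\textup{im}(f)\to Y\to\textup{coker}(f)\to 0$; applying $\mathsf{F}$ and invoking uniqueness of the canonical decomposition \eqref{candecompo} in $\mathsf{B}$ yields $\mathsf{F}(\ker f)\cong\ker(\mathsf{F}f)$, $\mathsf{F}(\textup{im}\, f)\cong\textup{im}(\mathsf{F}f)$ and $\mathsf{F}(\textup{coker}\,f)\cong\textup{coker}(\mathsf{F}f)$, all naturally in $f$. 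Reading off the diagram \eqref{cohomodiag} then gives a natural isomorphism $H^n(\mathsf{F}(X)^\bullet)\cong\mathsf{F}(H^n(X^\bullet))$ in $\mathsf{B}$, together with the compatibility $H^n(\mathsf{F}(f)^\bullet)\cong\mathsf{F}(H^n(f^\bullet))$ under this isomorphism. Consequently, if $f^\bullet$ is a quasi-isomorphism --- so each $H^n(f^\bullet)$ is an isomorphism in $\mathsf{A}$ --- then each $H^n(\mathsf{F}(f)^\bullet)$ is an isomorphism in $\mathsf{B}$ (any functor preserves isomorphisms), so that $\mathsf{H^\#\!(F)}$ indeed maps quasi-isomorphisms to quasi-isomorphisms.

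Post-composing $\mathsf{H^\#\!(F)}$ with the localization $\mathsf{L_B}:\mathsf{H^\#\!(B)}\to\mathsf{D^\#\!(B)}$, the composite sends quasi-isomorphisms in $\mathsf{H^\#\!(A)}$ to isomorphisms in $\mathsf{D^\#\!(B)}$. The universal property of the derived category (Proposition \ref{derivedexists} together with Definition \ref{dercatdef}) then yields a unique functor $\mathsf{D^\#\!(F)}:\mathsf{D^\#\!(A)}\to\mathsf{D^\#\!(B)}$ with $\mathsf{D^\#\!(F)}\circ\mathsf{L_A}=\mathsf{L_B}\circ\mathsf{H^\#\!(F)}$, giving the desired induced functor.

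For exactness in the sense of Definition \ref{exactderivedfunc}, I would verify that $\mathsf{D^\#\!(F)}$ commutes with the translation $T^1$ and with the cone construction. The first is immediate from the definitions, since $\mathsf{F}(X[1])^n=\mathsf{F}(X^{n+1})=\mathsf{F}(X)[1]^n$ and additivity of $\mathsf{F}$ matches signs of differentials. For the cone, additivity ensures $\mathsf{F}(X^{n+1}\oplus Y^n)\cong\mathsf{F}(X)^{n+1}\oplus\mathsf{F}(Y)^n$ canonically, and applying $\mathsf{F}$ entrywise to the matrix differential in Definition \ref{coneofcomplexes} produces the differential of $C(\mathsf{F}(f))^\bullet$; hence $\mathsf{F}(C(f))^\bullet\cong C(\mathsf{F}(f))^\bullet$ in $\mathsf{Kom^\#\!(B)}$ naturally in $f^\bullet$, compatibly with the inclusion $i_2^\bullet$ and projection $p_1^\bullet$. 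Any distinguished triangle in $\mathsf{D^\#\!(A)}$ is, by Remark \ref{alternadistinguitriang}, isomorphic to a cone triangle $X^\bullet\xrightarrow{f^\bullet}Y^\bullet\xrightarrow{i_2^\bullet}C(f)^\bullet\xrightarrow{p_1^\bullet}X[1]^\bullet$; its image under $\mathsf{D^\#\!(F)}$ is then isomorphic (via the two natural identifications just established) to the cone triangle on $\mathsf{F}(f)^\bullet$, which is distinguished by definition. Since $\mathsf{D^\#\!(F)}$ preserves isomorphisms, distinguished triangles in general are sent to distinguished triangles, proving exactness. The only genuinely delicate step is the naturality of $\mathsf{F}\circ H^n\cong H^n\circ\mathsf{Kom^\#\!(F)}$: one must be careful that the isomorphism identifying $\mathsf{F}(\ker)$ with $\ker(\mathsf{F})$ is the one induced by the universal property and not an ad hoc choice, so that the argument is functorial and not merely pointwise.
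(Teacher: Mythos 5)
Your proof is correct, and its second and third steps (inducing $\mathsf{D^\#\!(F)}$ via the universal property of localization, and checking exactness by showing $\mathsf{F}$ commutes with translation and cones up to natural isomorphism) coincide with the paper's. The first step, however, takes a genuinely different route. You prove the stronger statement that $\mathsf{F}$ commutes with cohomology, $H^n(\mathsf{F}(X)^\bullet)\cong\mathsf{F}(H^n(X^\bullet))$, by showing an exact functor preserves kernels, images and cokernels, and you deduce preservation of quasi-isomorphisms directly. The paper instead proves only that $\mathsf{F}$ sends acyclic complexes to acyclic complexes (via the short exact sequences $0\to K^n\to X^n\to K^{n+1}\to 0$ with $K^n=\ker(d_X^n)$) and then invokes Remark \ref{C(f)acyclic}: $f^\bullet$ is a quasi-isomorphism if and only if $C(f)^\bullet$ is acyclic, and $\mathsf{F}(C(f)^\bullet)\cong C(\mathsf{F}(f))^\bullet$ by additivity. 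The trade-off is precisely the issue you flag at the end: your route needs the isomorphism $\mathsf{F}\circ H^n\cong H^n\circ\mathsf{Kom^\#\!(F)}$ to be natural in the complex, so that invertibility of $\mathsf{F}(H^n(f^\bullet))$ transfers to invertibility of $H^n(\mathsf{F}(f)^\bullet)$ through a commuting square, whereas the paper's route only ever needs certain cohomology objects to \emph{vanish} --- a statement insensitive to the choice of identification --- so no naturality check is required. Your approach buys a genuinely useful extra fact (commutation of $\mathsf{F}$ with cohomology, which the paper never records explicitly) at the cost of that one careful verification; the paper's is leaner and recycles the cone machinery already set up in Section \ref{ch2.3}. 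Both arguments are complete.
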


\begin{proof}
	Let $X^\bullet\in\text{obj}(\mathsf{Kom^\#\!(A)})$ be acyclic, meaning that $K^n\coloneqq\ker(d_X^n)\cong\text{im}(d_X^{n-1})\subset X^n$ for each $n\in\mathbb{Z}$. Then, by exactness of $\mathsf{F}$ (Definition \ref{additiveexactfunc}), the short exact sequence $0\rightarrow  K^n\xrightarrow{i^n} X^n\xrightarrow{p^n} K^{n+1}\rightarrow 0$ in $\mathsf{A}$ (where $i^\bullet$ is the canonical inclusion and $p^\bullet$ equals $d_X^\bullet$ restricted to its image, so that $d_X^n=i^{n+1}\circ p^n$) is mapped to the short exact sequence $0\rightarrow\mathsf{F}(K^n)\xrightarrow{\mathsf{F}(i^n)} \mathsf{F}(X^n)\xrightarrow{\mathsf{F}(p^n)} \mathsf{F}(K^{n+1})\rightarrow 0$ in $\mathsf{B}$, with $\mathsf{F}(d_X^n)=\mathsf{F}(i^{n+1})\circ\mathsf{F}(p^n)$. Since $\mathsf{F}(i^{n+1})$ is a monomorphism and $\mathsf{F}(p^n)$ an epimorphism, it follows that $\text{im}(\mathsf{F}(d_X^{n-1}))\cong\mathsf{F}(X^{n-1})/\ker(\mathsf{F}(d_X^{n-1}))=\mathsf{F}(X^{n-1})/\ker(\mathsf{F}(p^{n-1}))\cong\text{im}(\mathsf{F}(p^{n-1}))=\mathsf{F}(K^n)$ and $\ker(\mathsf{F}(d_X^n))\cong\mathsf{F}(K^n)$ as well, so that $H^n(\mathsf{F}(X)^\bullet)=\ker(\mathsf{F}(d_X^n))/\text{im}(\mathsf{F}(d_X^{n-1}))\cong\{0\}$. Therefore $\mathsf{F}$ maps acyclic complexes $X^\bullet$ to acyclic complexes $\mathsf{F}(X)^\bullet\in\text{obj}(\mathsf{Kom^\#\!(B)})$.
	
	Now simply recall from Remark \ref{C(f)acyclic} that $f^\bullet:X^\bullet\rightarrow Y^\bullet$ is a quasi-isomorphism in $\mathsf{H^\#\!(A)}$ if and only if its cone $C(f)^\bullet\in\text{obj}(\mathsf{Kom^\#\!(A)})$ is acyclic, which we just saw to imply $\mathsf{F}(C(f)^\bullet)\cong C(\mathsf{F}(f))^\bullet\in\text{obj}(\mathsf{Kom^\#\!(B)})$ acyclic, again equivalent to $\mathsf{F}(f)^\bullet:\mathsf{F}(X)^\bullet\rightarrow\mathsf{F}(Y)^\bullet$ being a quasi-isomorphism in $\mathsf{H^\#\!(B)}$. Consequently, $\mathsf{H^\#\!(F)}$ induces a functor $\mathsf{D^\#\!(F)}$ as claimed. 
	
	Here we just used that $\mathsf{F}$ preserves cones thanks to its additivity: the isomorphism above is $\mathsf{F}(C(f)^\bullet)\cong\mathsf{F}(X)^{\bullet+1}\oplus\mathsf{F}(Y)^\bullet=C(\mathsf{F}(f))^\bullet$. The same is true for cylinders: $\mathsf{F}(Z(f)^\bullet)\cong \mathsf{F}(X)^\bullet\oplus \mathsf{F}(X)^{\bullet+1}\oplus \mathsf{F}(Y)^\bullet=Z(\mathsf{F}(f))^\bullet$. In particular, $\mathsf{D^\#\!(F)}$ maps distinguished triangles in $\mathsf{A}$ --- all those quasi-isomorphic to \eqref{distinguitriangle} obtained from diagram \eqref{distinguidiag} --- to distinguished triangles in $\mathsf{B}$, quasi-isomorphic to the canonical one 
	\[
	\mathsf{F}(X)^\bullet\xrightarrow{\mathsf{F}(i_1)^\bullet} Z(\mathsf{F}(f))^\bullet\xrightarrow{\mathsf{F}(p_{23})^\bullet} C(\mathsf{F}(f))^\bullet\xrightarrow{\mathsf{F}(\delta)[1]^\bullet}\mathsf{F}(X)[1]^\bullet\,.
	\] 
	In other words, $\mathsf{D^\#\!(F)}$ is exact. 
\end{proof}

To construct derived functors we cannot, however, apply exact functors on any kind of complex.

\begin{Def}\label{adaptedclass}
	Let $\mathsf{A}, \mathsf{B}$ be abelian categories. We say that a class of objects $\mathcal{R}\subset\text{obj}(\mathsf{A})$ is \textbf{adapted}\index{adapted class of objects} to a left/right exact functor $\mathsf{F}:\mathsf{A}\rightarrow\mathsf{B}$ if it is stable under finite direct sums, $(R_i\in\mathcal{R}\,\;\forall i\in I\implies\bigoplus_{i\in I}R_i\in\mathcal{R})$, and:
	\begin{itemize}[leftmargin=0.5cm]
		\item in case $\mathsf{F}$ is left exact, if $\mathsf{F}$ maps acyclic complexes in $\mathsf{Kom^+(\mathcal{R})}$ to acyclic complexes in $\mathsf{Kom(B)}$, and for each $X\in\text{obj}(\mathsf{A})$ there exists some $R\supset X$ in~$\mathcal{R}$ (read: any object of $\mathsf{A}$ is a subobject of some object built from $\mathcal{R}$);
		
		\item in case $\mathsf{F}$ is right exact, if $\mathsf{F}$ maps acyclic complexes in $\mathsf{Kom^-(\mathcal{R})}$ to acyclic complexes in $\mathsf{Kom(B)}$, and each $X\in\text{obj}(\mathsf{A})$ is $X=R/R'$ for some $R, R'\in\mathcal{R}$ (read: any object of $\mathsf{A}$ is a quotient object of two objects from $\mathcal{R}$). 
	\end{itemize}
	By the proof of Proposition \ref{D(F)exact}, the condition on acyclic complexes is readily guaranteed as soon as $\mathsf{F}$ is exact (in which case $\mathcal{R}=\mathsf{A}$ is trivially adapted to~it). 
\end{Def}

\begin{Pro}\label{D(R)equivalent}
	Let $\mathsf{A}, \mathsf{B}$ be abelian categories, $\mathcal{R}\subset\textup{obj}(\mathsf{A})$ adapted to a left/right exact functor $\mathsf{F}:\mathsf{A}\rightarrow\mathsf{B}$, and let $S_\mathcal{R}$ be the class of quasi-isomorphisms in $\mathsf{H^+}(\mathcal{R})$ respectively $\mathsf{H^-}(\mathcal{R})$. Then $S_\mathcal{R}$ is localizing and the canonical functors $\mathsf{H^+(\mathcal{R})}[S_\mathcal{R}^{-1}]\rightarrow\mathsf{D^+(A)}$ and $\mathsf{H^-(\mathcal{R})}[S_\mathcal{R}^{-1}]\rightarrow\mathsf{D^-(A)}$ are equivalences.
\end{Pro}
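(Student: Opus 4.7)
The plan is to mirror the localization argument used for Proposition \ref{quasilocalizing}, refining it so every complex we construct stays inside $\mathcal{R}$, and then to invoke Lemma \ref{derivedsubcat} together with a resolution lemma for essential surjectivity. I focus on the left-exact case with $\#=+$; the right-exact case is obtained by dualizing arrows and replacing the subobject condition by the quotient one.

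First, I would verify the axioms of Definition \ref{locclass} for $S_\mathcal{R}$ in $\mathsf{H^+(\mathcal{R})}$. Axiom (a) is immediate from the functoriality of cohomology. For axiom (b), given a pair $Z^\bullet \xrightarrow{s^\bullet} Y^\bullet \xleftarrow{f^\bullet} X^\bullet$ with $s^\bullet \in S_\mathcal{R}$, I replay the construction from Proposition \ref{quasilocalizing}: the auxiliary complex is $W^\bullet = C(i_2\circ f)[-1]^\bullet \cong X^\bullet \oplus Z^\bullet \oplus Y^{\bullet - 1}$. In every degree $W^n$ is a finite direct sum of objects of $\mathcal{R}$, so closure of $\mathcal{R}$ under finite direct sums yields $W^\bullet \in \textup{obj}(\mathsf{H^+(\mathcal{R})})$, while acyclicity of $C(s)^\bullet$ forces $p_1^\bullet : W^\bullet \to X^\bullet$ into $S_\mathcal{R}$. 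Axiom (c) is handled by the entirely analogous cone construction, again preserved by the direct-sum stability.

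The main technical step is a resolution lemma: under the subobject condition on $\mathcal{R}$, every $X^\bullet \in \textup{obj}(\mathsf{Kom^+(A)})$ admits a quasi-isomorphism $\varphi^\bullet : X^\bullet \to R^\bullet$ with $R^\bullet \in \textup{obj}(\mathsf{Kom^+(\mathcal{R})})$. I would build $R^\bullet$ by induction on degree starting from the first nonzero level: having defined $R^i$ and $\varphi^i$ for $i\leq n$, embed the pushout $X^{n+1} \oplus_{X^n} R^n$ into some object of $\mathcal{R}$ by the subobject hypothesis, and use the universal property of the pushout to extract both $R^{n+1}$, the differential $d_R^n$, and the component $\varphi^{n+1}$. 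The main obstacle is the inductive bookkeeping needed to ensure that the cone $C(\varphi)^\bullet$ is acyclic at each degree; this reduces to a standard kernel/cokernel diagram chase in $\mathsf{A}$, essentially the classical Cartan--Eilenberg construction of injective-type resolutions.

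With these ingredients in place, essential surjectivity of $\mathsf{H^+(\mathcal{R})}[S_\mathcal{R}^{-1}] \to \mathsf{D^+(A)}$ is immediate: $\varphi^\bullet$ identifies any given $X^\bullet$ with its resolution $R^\bullet$ in $\mathsf{D^+(A)}$, whose underlying complex lies in $\mathsf{H^+(\mathcal{R})}$. For full faithfulness I apply Lemma \ref{derivedsubcat} with $\mathsf{C} = \mathsf{H^+(A)}$, $\mathsf{B} = \mathsf{H^+(\mathcal{R})}$, and $S_\mathsf{B} = S_\mathcal{R}$: its condition (ii) asks that for any quasi-isomorphism $s^\bullet : X^\bullet \to Y^\bullet$ in $\mathsf{H^+(A)}$ with $X^\bullet \in \mathsf{H^+(\mathcal{R})}$, there exist $Z^\bullet \in \mathsf{H^+(\mathcal{R})}$ and $f^\bullet : Y^\bullet \to Z^\bullet$ with $f^\bullet \circ s^\bullet \in S_\mathcal{R}$. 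Taking $f^\bullet$ to be the resolution of $Y^\bullet$ furnished above, $s^\bullet$ and $f^\bullet$ are both quasi-isomorphisms, so their composition is as well. The lemma then yields the claimed equivalence; the right-exact, $\#=-$ case is handled dually, using left resolutions by complexes of quotient-objects in $\mathcal{R}$ and condition (i) of Lemma \ref{derivedsubcat}.
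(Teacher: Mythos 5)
Your proposal is correct and follows essentially the same route as the paper: the localizing axioms are checked by rerunning the constructions of Proposition \ref{quasilocalizing} and observing that all the cones stay in $\mathcal{R}$ by stability under finite direct sums, the resolution $X^\bullet\to R^\bullet$ is built degreewise via pushouts embedded into objects of $\mathcal{R}$ (with the quasi-isomorphism check deferred, as the paper also defers it to \cite[subsection III.5.25]{[GM03]}), and the equivalence then follows from Lemma \ref{derivedsubcat} condition (ii) together with essential surjectivity.
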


\begin{proof}
	Just focus on the case of $\mathsf{F}$ left exact (the other argument follows analogously). That $S_\mathcal{R}$ is localizing is easy to see: one just needs to retake the steps of Proposition \ref{quasilocalizing}, and notice that any construction made there consisted of successive cones; here, they remain within $\mathcal{R}$ by definition of adapted class of objects.
	
	Now we claim that for any complex $X^\bullet\in\text{obj}(\mathsf{H^+(A)})$ there exist some $Y^\bullet\in\text{obj}(\mathsf{H^+(\mathcal{R})})$ and a quasi-isomorphism $t^\bullet:X^\bullet\rightarrow Y^\bullet$. Assume without loss of generality that $X^n=0$ for $n<0$. Then we determine $Y^n, d_Y^n$ and $t^n$ by induction on $n\in\mathbb{N}$. For $n=0,1$, observe first that by assumption on $\mathcal{R}$ there exists some $R\in\mathcal{R}$ such that $X^0\subset R$; set $Y^0\coloneqq R$ and $t^0$ the associated inclusion. The pushout $Y^0\sqcup_{X^0} X^1\in\text{obj}(\mathsf{A})$ (in $\mathsf{A}$ by abelianity!) fits into the diagram
	\[
	\begin{tikzcd}
		0\arrow[r] & X^0\arrow[r, "d_X^0"]\arrow[d, "t^0"'] & X^1\arrow[d, dashed, "i_2"] & \\
		0\arrow[r] & Y^0\arrow[r, dashed,"i_1"'] & Y^0\sqcup_{X^0} X^1\arrow[r, dashed, "j"'] & Y^1
	\end{tikzcd}\quad,
	\]
	where $Y^0\sqcup_{X^0} X^1$ in turn injects into some $Y^1\in\mathcal{R}$. Define $d_Y^0\coloneqq j\circ i_1$ and $t^1\coloneqq j\circ i_2$. For the induction step $n\rightarrow n+1$, construct the pushout $\text{coker}(d_Y^{n-1})\sqcup_{X^n} X^{n+1}\in\text{obj}(\mathsf{A})$, contained in some $Y^{n+1}\in\mathcal{R}$, and the diagram
	\[
	\begin{tikzcd}
		& X^n\arrow[r, "d_X^n"]\arrow[d, "\bar{t}^n"]\arrow[dl, "t^n"'] & X^{n+1}\arrow[d, dashed, "i_2"] & \\
		Y^n\arrow[r, "p"'] & \text{coker}(d_Y^{n-1})\arrow[r, dashed,"i_1"'] & \text{coker}(d_Y^{n-1})\sqcup_{X^n} X^{n+1}\arrow[r, dashed, "j"'] & Y^{n+1}
	\end{tikzcd}\quad
	\]
	(where $\bar{t}^n\coloneqq p\circ t^n$). Then set $d_Y^{n+1}\coloneqq j\circ i_1\circ p$ and $t^{n+1}\coloneqq j\circ i_2$. This ultimately yields a complex $Y^\bullet\in\text{obj}(\mathsf{H^+(\mathcal{R})})$ and a morphism $t^\bullet:X^\bullet\rightarrow Y^\bullet$. The proof that the latter is actually a quasi-isomorphism is a little more involved; we refer the reader to \cite[subsection III.5.25]{[GM03]}.
	
	The claim now completes the prerequisites for Lemma \ref{derivedsubcat}: $S_\mathcal{R}$ is localizing and condition\footnote{When $\mathsf{F}$ is right exact, one must instead check condition (i), by help of a suitably modified claim.} (ii), namely \big($s^\bullet:\text{obj}(\mathsf{H^+(\mathcal{R})})\ni W^\bullet\rightarrow X^\bullet\in\text{obj}(\mathsf{H^+(A)})$ in $S$ $\implies$ $\exists Y^\bullet\in\text{obj}(\mathsf{H^+(\mathcal{R})})$ $\exists t^\bullet:X^\bullet\rightarrow Y^\bullet$ s.t. $t^\bullet\circ s^\bullet\in S$\big), holds since $t^\bullet\in S$. Consequence is that the functor $\mathsf{H^+(\mathcal{R})}[S_\mathcal{R}^{-1}]\rightarrow\mathsf{H^+(A)}[S^{-1}]=\mathsf{D^+(A)}$ is fully faithful. But the claim above also implies that any complex in $\mathsf{D^+(A)}$ is quasi-isomorphic to one from $\mathsf{H^+(\mathcal{R})}$, so that the functor is actually an equivalence.  
\end{proof}

In practice, it is sometimes convenient to identify classes of objects which are adapted to any left/right exact functor.

\begin{Lem}\label{enoughinjprojobj}
	Assume the abelian category $\mathsf{A}$ contains enough injectives \textup(see Definition \ref{injprojobj}\textup). Then $\mathcal{I}\coloneqq\{X\in\textup{obj}(\mathsf{A})\mid X\text{ is injective}\}\subset\textup{obj}(\mathsf{A})$ is adapted to any left exact functor from $\mathsf{A}$.
	
	Similarly, assume $\mathsf{A}$ contains enough projectives. Then $\mathcal{P}\coloneqq\{X\in\textup{obj}(\mathsf{A})\mid X\text{ is projective}\}\subset\textup{obj}(\mathsf{A})$ is adapted to any right exact functor from $\mathsf{A}$. 
\end{Lem}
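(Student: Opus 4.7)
The plan is to verify, for the injective/left-exact case, each of the three requirements of Definition~\ref{adaptedclass} in turn, the third being the only one needing real work; the projective/right-exact statement will then follow by a formally dual argument.

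For stability under finite direct sums, one relies on the standard fact that a finite direct sum of injective objects is itself injective: given a monomorphism $A\hookrightarrow B$ and a morphism $A\to I_1\oplus I_2$, project onto each summand to obtain $A\to I_j$, use injectivity of each $I_j$ to extend to $B\to I_j$, and recombine. The second condition, that every $X\in\textup{obj}(\mathsf{A})$ embeds into some object of $\mathcal{I}$, is literally the definition of \textquotedblleft $\mathsf{A}$ has enough injectives.\textquotedblright

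The main work lies in showing that a left exact $\mathsf{F}$ sends any acyclic $I^\bullet\in\textup{obj}(\mathsf{Kom^+(\mathcal{I})})$ to an acyclic complex in $\mathsf{Kom(B)}$. Assume $I^n=0$ for $n<N$ and set $K^n\coloneqq\ker(d_I^n)=\textup{im}(d_I^{n-1})$, so that acyclicity gives, for each $n\geq N$, a short exact sequence
\[
0\longrightarrow K^n\xrightarrow{\iota^n} I^n\xrightarrow{\pi^n} K^{n+1}\longrightarrow 0\,.
\]
The crucial step --- which I expect to be the main obstacle, though it is resolved by a simple induction --- is to argue that every $K^n$ is itself injective. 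Indeed $K^N=0$ is injective; assuming $K^n$ is injective, the identity $\textup{id}_{K^n}$ extends along the monomorphism $\iota^n$ to a retraction $I^n\to K^n$, so the sequence splits and $I^n\cong K^n\oplus K^{n+1}$. Since $I^n$ is injective, so is any direct summand, whence $K^{n+1}$ is injective, closing the induction.

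Once every short exact sequence above is split, its image under the additive functor $\mathsf{F}$ is again split short exact in $\mathsf{B}$, because any additive functor preserves the data $(\iota^n,\pi^n,\sigma^n,\tau^n)$ of a splitting. Concatenating these split pieces recovers $\mathsf{F}(I)^\bullet$, which is therefore acyclic, as desired. The right-exact case with $\mathcal{P}\coloneqq\{X\in\textup{obj}(\mathsf{A})\mid X\text{ projective}\}$ is treated by the dual argument: finite sums of projectives are projective; having enough projectives yields, for any $X$, an epimorphism $P^0\twoheadrightarrow X$ and then an epimorphism $P^{-1}\twoheadrightarrow\ker(P^0\twoheadrightarrow X)$ from another projective, exhibiting $X$ as the cokernel of a morphism between objects of $\mathcal{P}$; finally, for acyclic $P^\bullet\in\textup{obj}(\mathsf{Kom^-(\mathcal{P})})$ bounded above, the cokernels $C^n\coloneqq\textup{coker}(d_P^{n-1})$ are shown inductively to be projective (so that the short exact sequences $0\to C^{n-1}\to P^n\to C^n\to 0$ split), and the same split-preservation argument applies to the right-exact $\mathsf{F}$.
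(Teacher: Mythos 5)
Your proof is correct, and for the only substantive step --- that a left exact $\mathsf{F}$ sends acyclic complexes in $\mathsf{Kom^+(\mathcal{I})}$ to acyclic complexes --- it takes a genuinely different route from the paper. The paper argues via contractibility: since $I^\bullet$ is acyclic, the zero morphism $0_{I^\bullet}$ is a quasi-isomorphism, and it cites \cite[subsection III.5.24]{[GM03]} for the fact that this forces $0_{I^\bullet}\sim\textup{id}_I^\bullet$; additive functors preserve chain homotopies, and homotopic maps agree on cohomology, so $\mathsf{F}(I)^\bullet$ is acyclic. You instead prove by induction that every syzygy $K^n=\ker(d_I^n)$ is injective (base case $K^N=0$ from bounded-belowness; inductive step by extending $\textup{id}_{K^n}$ along $K^n\hookrightarrow I^n$ to split the sequence and realize $K^{n+1}$ as a direct summand of the injective $I^n$), so that $I^\bullet$ decomposes into split short exact sequences, which any additive functor preserves. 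The two arguments are close cousins --- your splitting computation is essentially the content of the contractibility lemma the paper outsources --- but yours is more self-contained, avoids constructing a homotopy, and makes transparent that only additivity of $\mathsf{F}$ is used (as in the paper's argument, left exactness plays no role in this step; it only enters the definition of which bounded direction of complexes one considers). Your treatment of the first two conditions of Definition~\ref{adaptedclass} and the dual projective case matches the paper's, modulo the paper leaving the dual case entirely to the reader.
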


\begin{proof}
	We just prove the injective case. Let $\mathsf{B}$ be the target abelian category, $\mathsf{F}:\mathsf{A}\rightarrow\mathsf{B}$ any left exact functor. It is clear from the injectivity diagram \eqref{injprojobjdiag} left that $\mathcal{I}$ is closed under direct sums. Also, the assumption that $\mathsf{A}$ has enough injectives allows us to regard any $X\in\text{obj}(\mathsf{A})$ as a subobject of some $I^0\in\mathcal{I}$ (borrowing the notation of Definition \ref{injprojobj}). 
	
	It remains to check that any acyclic $I^\bullet\in\text{obj}(\mathsf{Kom^+(\mathcal{I})})$ is mapped to an acylic $\mathsf{F}(I^\bullet)\in\text{obj}(\mathsf{Kom^+(B)})$. The zero morphism $0_{I^\bullet}: I^\bullet\rightarrow I^\bullet$ is trivially a quasi-isomorphism in this case. With some additional work (see \cite[subsection III.5.24]{[GM03]}) one can prove this to imply that $0_{I^\bullet}\sim\text{id}_I^\bullet$, and thus, by functoriality of $\mathsf{F}$, that $0_{\mathsf{F}(I^\bullet)}\sim\text{id}_{\mathsf{F}(I)}^\bullet$. But homotopic morphisms induce the same morphisms in cohomology, hence $H^n(\mathsf{F}(I)^\bullet)=H^n(\text{id}_{\mathsf{F}(I)}^\bullet)\langle H^n(\mathsf{F}(I)^\bullet)\rangle = H^n(0_{\mathsf{F}(I)}^\bullet)\langle H^n(\mathsf{F}(I)^\bullet)\rangle \cong 0$ for all $n\in\mathbb{Z}$, meaning that $\mathsf{F}(I)^\bullet$ is acyclic.
\end{proof}

Focusing on the classes of injectives and projectives objects is particularly advantageous because of the following result.

\begin{Pro}\label{injprojderivedarehomotopy}
	Let $\mathsf{A}$ be an abelian category, $\mathcal{I}, \mathcal{P}\subset\textup{obj}(\mathsf{A})$ the classes of injective respectively projective objects defined above. Then the localization functors $\mathsf{L}_\mathcal{I}:\mathsf{H^+(\mathcal{I})}\rightarrow\mathsf{D^+(A)}$ and $\mathsf{L}_\mathcal{P}:\mathsf{H^-(\mathcal{P})}\rightarrow\mathsf{D^-(A)}$ are equivalences onto their images, thus identifying $\mathsf{H^+(\mathcal{I})}$ and $\mathsf{H^-(\mathcal{P})}$ as full subcategories of ${D^+(A)}$ respectively ${D^-(A)}$.
	
	Moreover, if $\mathsf{A}$ has enough injectives/projectives, then the functors are readily equivalences and we can effectively substitute $\mathsf{D^+(A)}$ by $\mathsf{H^+(\mathcal{I})}$ and $\mathsf{D^-(A)}$ by $\mathsf{H^-(\mathcal{P})}$.
\end{Pro}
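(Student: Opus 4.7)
The plan is to reduce both statements to a single technical fact: any quasi-isomorphism $s^\bullet: I^\bullet \to J^\bullet$ between bounded-below complexes of injectives is a homotopy equivalence (the projective case being dualized throughout). Concretely, I would first establish the key lemma: if $C^\bullet \in \textup{obj}(\mathsf{H^+(A)})$ is acyclic and $I^\bullet \in \textup{obj}(\mathsf{H^+(\mathcal{I})})$, then any chain map $f^\bullet:C^\bullet\to I^\bullet$ is null-homotopic. The construction of the homotopy $k^n:C^n\to I^{n-1}$ proceeds by induction on $n$, starting with $k^n=0$ in sufficiently low degree where both complexes vanish. At the inductive step one checks that $f^n-d_I^{n-1}\circ k^n$ annihilates $\ker(d_C^n)$; acyclicity of $C^\bullet$ identifies this kernel with $\textup{im}(d_C^{n-1})$, so the difference factors through the canonical monomorphism $C^n/\ker(d_C^n)\hookrightarrow C^{n+1}$ (using the decomposition of Definition \ref{abcat}). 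Injectivity of $I^n$ and diagram \eqref{injprojobjdiag} left then extend this factored morphism along the monomorphism into $C^{n+1}$, producing the required $k^{n+1}$.

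Applied to $C^\bullet=C(s)^\bullet$, which is acyclic by Remark \ref{C(f)acyclic} and belongs to $\mathsf{H^+(\mathcal{I})}$ since $\mathcal{I}$ is closed under finite direct sums, the lemma forces the connecting morphism $p_1^\bullet:C(s)^\bullet\to I[1]^\bullet$ in the distinguished triangle \eqref{conetriangle} to be null-homotopic; a standard argument (splitting the cone) then promotes $s^\bullet$ to a homotopy equivalence, hence an isomorphism in $\mathsf{H^+(\mathcal{I})}$. Consequently every element of $S_\mathcal{I}$ is already invertible, so $\mathsf{H^+(\mathcal{I})}[S_\mathcal{I}^{-1}]\cong\mathsf{H^+(\mathcal{I})}$ canonically.

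Next I would prove full-faithfulness of $\mathsf{L}_\mathcal{I}$ in general (without assuming enough injectives) via the roof description of Definition \ref{dercatdef}. Any $\varphi\in\textup{Hom}_\mathsf{D^+(A)}(I^\bullet,J^\bullet)$ is represented by a roof $I^\bullet\xleftarrow{s^\bullet} X^\bullet\xrightarrow{g^\bullet} J^\bullet$ with $s^\bullet\in S$; completing $s^\bullet$ to a distinguished triangle $X^\bullet\to I^\bullet\to C(s)^\bullet\to X[1]^\bullet$ and applying the cohomological functor $\textup{Hom}_{\mathsf{H^+(A)}}(\square,J^\bullet)$ (the analogue of the long exact sequence of Theorem \ref{longexactcohomtriangles} for Hom, valid here because the key lemma makes $\textup{Hom}_{\mathsf{H^+(A)}}(C(s)^\bullet,J^\bullet)=0$), one concludes that $s^*:\textup{Hom}_{\mathsf{H^+(A)}}(I^\bullet,J^\bullet)\to \textup{Hom}_{\mathsf{H^+(A)}}(X^\bullet,J^\bullet)$ is bijective. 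The preimage $h^\bullet$ of $g^\bullet$ gives a representative of $\varphi$ with leg $\textup{id}_I^\bullet$, yielding fullness; injectivity of $s^*$ applied to differences $f^\bullet-g^\bullet$ yields faithfulness. Thus $\mathsf{L}_\mathcal{I}$ identifies $\mathsf{H^+(\mathcal{I})}$ with a full subcategory of $\mathsf{D^+(A)}$.

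Finally, when $\mathsf{A}$ has enough injectives, Lemma \ref{enoughinjprojobj} shows that $\mathcal{I}$ is adapted to $\textup{Id}_\mathsf{A}$, so the construction inside the proof of Proposition \ref{D(R)equivalent} produces for every $X^\bullet\in\textup{obj}(\mathsf{H^+(A)})$ a quasi-isomorphism $X^\bullet\to I^\bullet$ with $I^\bullet\in\textup{obj}(\mathsf{H^+(\mathcal{I})})$; this is essential surjectivity of $\mathsf{L}_\mathcal{I}$, which combined with full-faithfulness makes it an equivalence. The projective statement is obtained by running the entire argument in $\mathsf{A}^{\textup{opp}}$, exchanging injectives for projectives, $\mathsf{H^+}$ for $\mathsf{H^-}$, and right resolutions for the left resolutions of Definition \ref{injprojobj}. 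The main obstacle is the key lemma, and specifically phrasing the inductive extension of the homotopy purely in terms of the categorical canonical decomposition \eqref{candecompo} rather than element-chasing, so that the universal property in diagram \eqref{injprojobjdiag} can be applied cleanly at each step.
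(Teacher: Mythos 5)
Your argument is correct and is, at bottom, the same Gelfand--Manin argument that the paper's (sketched) proof points to; the genuine difference lies in what is made explicit and in how full faithfulness is organized. The paper verifies that $S_\mathcal{I}$ is localizing, then checks a strengthened form of condition (ii) of Lemma \ref{derivedsubcat} --- every quasi-isomorphism $s^\bullet$ out of a complex of injectives admits a $t^\bullet$ with $t^\bullet\circ s^\bullet\sim\textup{id}$ --- and invokes that lemma to identify $\mathsf{H^+(\mathcal{I})}[S_\mathcal{I}^{-1}]$ as a full subcategory, outsourcing the underlying homotopy-theoretic fact to \cite{[GM03]}. You instead prove that fact yourself: your key lemma (any chain map from an acyclic bounded-below complex into a bounded-below complex of injectives is null-homotopic) is exactly the content the paper omits, and your inductive construction of the homotopy is sound --- the check that $f^n-d_I^{n-1}\circ k^n$ kills $\ker(d_C^n)=\textup{im}(d_C^{n-1})$ follows from the chain-map identity together with the inductive hypothesis, and the extension step is a clean application of the factorization $\textup{coim}(d_C^n)\cong\textup{im}(d_C^n)\hookrightarrow C^{n+1}$ from \eqref{candecompo} and the lifting property \eqref{injprojobjdiag}. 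From the resulting vanishing of $\textup{Hom}_{\mathsf{H^+(A)}}(C(s)^\bullet,J^\bullet)$ you then get both conclusions (quasi-isomorphisms in $\mathsf{H^+(\mathcal{I})}$ are already isomorphisms; $s^*$ is bijective on Hom-groups into injective complexes, so every roof straightens) without passing through Lemma \ref{derivedsubcat} at all. What your route buys is a self-contained proof organized around a single reusable lemma, which moreover immediately yields the comparison $\textup{Hom}_{\mathsf{H(A)}}\cong\textup{Hom}_{\mathsf{D(A)}}$ invoked later in the proof of Proposition \ref{ExtRHom}; the one caveat is that the long exact Hom-sequence of Proposition \ref{Homareexact} and the triangulated structure of $\mathsf{H^+(A)}$ are only established in Chapter 4, after this proposition, so you should either flag the forward reference (as the paper itself does elsewhere, e.g.\ in Theorem \ref{Extprop}a.) or verify the needed exactness directly for the cone triangle \eqref{conetriangle}. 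Your essential-surjectivity step via Proposition \ref{D(R)equivalent} and Lemma \ref{enoughinjprojobj}, and the dualization to $\mathcal{P}$, match the paper exactly.
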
 

\begin{proof}(\textit{Sketch})
	We don't prove this statement in detail (carried in full in \cite[subsections III.5.22--25]{[GM03]}), for some steps are just slight adaptations of the proof of Proposition \ref{D(R)equivalent}, and otherwise just technical. We rather highlight the strategy to adopt in the injective case.
	
	Firstly, one needs to verify that $S_\mathcal{I}\coloneqq\{\text{quasi-isomorphisms in }\mathsf{H^+(\mathcal{I})}\}$ is a localizing class of morphisms. Secondly, one shows that Lemma \ref{derivedsubcat} is applicable by checking the following stronger version of its condition (ii): for $I^\bullet\in\text{obj}(\mathsf{H^+(\mathcal{I})})$, $X^\bullet\in\text{obj}(\mathsf{H^+(A)})$ and a quasi-isomorphism $s^\bullet\in\text{Hom}_\mathsf{H^+(A)}(I^\bullet, X^\bullet)$ there exists some $t^\bullet\in\text{Hom}_\mathsf{H^+(A)}(X^\bullet, I^\bullet)$ such that $t^\bullet\circ s^\bullet\sim\text{id}_I^\bullet$ (a fact we exploited while proving last lemma). This identifies $\mathsf{H^+(\mathcal{I})}[S_\mathcal{I}^{-1}]\subset\mathsf{D^+(A)}$ as a full subcategory. But actually one sees that $\mathsf{H^+(\mathcal{I})}[S_\mathcal{I}^{-1}]=\mathsf{H^+}(\mathcal{I})$ (that is, any morphism in the homotopy category is readily an isomorphism). 
	
	Finally, if $\mathsf{A}$ has enough injectives, one indeed discovers that $\mathsf{H^+(\mathcal{I})}$ is equivalent to the derived category of $\mathsf{A}$: for any $X^\bullet\in\text{obj}(\mathsf{D^+(A)})$ there exist some $I^\bullet\in\text{obj}(\mathsf{H^+(\mathcal{I})})$ and quasi-isomorphism $t^\bullet\in\text{Hom}_\mathsf{D^+(A)}(X^\bullet, I^\bullet)$ (we saw this in Proposition \ref{D(R)equivalent} with $\mathcal{I}$ replaced by the adapted class $\mathcal{R}$).
\end{proof}

Returning to the general framework, with the preparations above at hand, we are finally ready to define derived functors.

\begin{Def}\label{derivedfunc}
	Let $\mathsf{A}, \mathsf{B}$ be abelian categories, $\mathcal{R}\subset\text{obj}(\mathsf{A})$ adapted to a left/right exact functor $\mathsf{F}:\mathsf{A}\rightarrow\mathsf{B}$ and let $S_\mathcal{R}$ be the class of quasi-isomorphisms in $\mathsf{H^+(\mathcal{R})}$ respectively $\mathsf{H^-(\mathcal{R})}$, giving by Proposition \ref{D(R)equivalent} the natural equivalences $\Psi:\mathsf{H^\#\!(\mathcal{R})}[S_\mathcal{R}^{-1}]\rightarrow\mathsf{D^\#\!(A)}$ with quasi-inverses\footnote{Hence by definition, there exist natural isomorphisms $\alpha:\mathsf{Id}_{\mathsf{H^\#\!(\mathcal{R})}[S_\mathcal{R}^{-1}]}\rightarrow\Phi\circ\Psi$ and $\beta:\mathsf{Id}_{\mathsf{D^\#\!(A)}}\rightarrow\Psi\circ\Phi$, thus fulfilling that each $\alpha({X^\bullet})\in\text{Hom}_{\mathsf{H^\#\!(\mathcal{R})}[S_\mathcal{R}^{-1}]}\big(X^\bullet, \Phi(\Psi(X^\bullet))\big)$ for $X^\bullet\in\text{obj}(\mathsf{H^\#\!(\mathcal{R})}[S_\mathcal{R}^{-1}])$ and each $\beta({Y^\bullet})\in\text{Hom}_\mathsf{D^\#\!(A)}\big(Y^\bullet,\Psi(\Phi(Y^\bullet))\big)$ for $Y^\bullet\in\text{obj}(\mathsf{D^\#\!(A)})$ are isomorphisms.} $\Phi:\mathsf{D^\#\!(A)}\rightarrow\mathsf{H^\#\!(\mathcal{R})}[S_\mathcal{R}^{-1}]$. Then:
	\begin{itemize}[leftmargin=0.5cm]
		\item Define $\mathsf{RF}(X^\bullet)^\bullet\equiv\mathsf{RF}(X)^\bullet\in\text{obj}(\mathsf{H^+(B)})$ for any $X^\bullet\in\text{obj}(\mathsf{H^+(\mathcal{R})}[S_\mathcal{R}^{-1}])=\text{obj}(\mathsf{Kom^+(\mathcal{R})})$ by \begin{equation}\label{derivedfunconcomplexes}
			\mathsf{RF}(X)^n\coloneqq\mathsf{F}(X^n)\in\text{obj}(\mathsf{B})\,.
		\end{equation}
		Since $\mathsf{F}$ preserves acyclicity, we get a functor $\bar{\mathsf{F}}:\mathsf{H^+(\mathcal{R})}[S_\mathcal{R}^{-1}]\rightarrow \mathsf{D^+(B)}$ acting like \eqref{derivedfunconcomplexes} on complexes (argue as in the proof of Proposition \ref{D(F)exact}). Together with $\Phi$, this gives the \textbf{right derived functor}\index{derived functor!right} $\mathsf{RF}\coloneqq\bar{\mathsf{F}}\circ\Phi:\mathsf{D^+(A)}\rightarrow\mathsf{D^+(B)}$ of the left exact functor $\mathsf{F}$.
		
		\item Define $\mathsf{LF}(X^\bullet)^\bullet\equiv\mathsf{LF}(X)^\bullet\in\text{obj}(\mathsf{H^-(B)})$ for any $X^\bullet\in\text{obj}(\mathsf{H^-(\mathcal{R})}[S_\mathcal{R}^{-1}])=\text{obj}(\mathsf{Kom^-(\mathcal{R})})$ by
		\begin{equation}
			\mathsf{LF}(X)^n\coloneqq\mathsf{F}(X^n)\in\text{obj}(\mathsf{B})\,.
		\end{equation}
		We get a functor $\bar{\mathsf{F}}:\mathsf{H^-(\mathcal{R})}[S_\mathcal{R}^{-1}]\rightarrow \mathsf{D^-(B)}$, which in turn yields the \textbf{left derived functor}\index{derived functor!left} $\mathsf{LF}\coloneqq\mathsf{F}\circ\Phi:\mathsf{D^-(A)}\rightarrow\mathsf{D^-(B)}$ of the right exact functor~$\mathsf{F}$.
	\end{itemize}
\end{Def}

Since $\Psi$ is canonical, it is quite clear that the derived functors $\mathsf{RF}$ and $\mathsf{LF}$ do not depend on $\Phi$. However, the same is not evident for $\mathcal{R}$. Therefore, we characterize derived functors also through a universal property, much like we did for derived categories.

\begin{Def}\label{universalderivedfunc}
	Let $\mathsf{A}, \mathsf{B}$ be abelian categories. The \textit{right/left derived functor} of an additive left/right exact functor $\mathsf{F}:\mathsf{A}\rightarrow\mathsf{B}$ is a pair $(\mathsf{RF},\varepsilon_\mathsf{F})$ respectively $(\mathsf{LF},\varepsilon_\mathsf{F})$ where $\mathsf{RF}, \mathsf{LF}$ are exact functors and $\varepsilon_\mathsf{F}$ are natural transformations fitting into the diagrams
	\begin{equation}\label{derfuncdiag}
		\mkern-18mu
		\begin{tikzcd}[row sep=1cm, column sep=0.6cm]
			& \mathsf{D^+(A)}\arrow[dr, "\mathsf{RF}"] & \\
			\mathsf{H^+(A)}\arrow[ur, "\mathsf{L_A}"]\arrow[dr, "\mathsf{H^+(F)}"']\arrow[rr, yshift=0.5ex, out=15, in=165, "\mathsf{RF\circ L_A}"]\arrow[rr, yshift=-0.5ex, out=-15, in=195, "\mathsf{L_B\circ H^+(F)}"'] & & \mathsf{D^+(B)} \\
			& \mathsf{H^+(B)}\arrow[ur, "\mathsf{L_B}"']\arrow[uu, shorten=6ex, "\varepsilon_\mathsf{F}"] & 
		\end{tikzcd}\quad,\quad
		\begin{tikzcd}[row sep=1cm, column sep=0.6cm]
			& \mathsf{D^-(A)}\arrow[dr, "\mathsf{LF}"]\arrow[dd, shorten=6ex, "\varepsilon_\mathsf{F}"'] & \\
			\mathsf{H^-(A)}\arrow[ur, "\mathsf{L_A}"]\arrow[dr, "\mathsf{H^-(F)}"']\arrow[rr, yshift=0.5ex, out=15, in=165, "\mathsf{LF\circ L_A}"]\arrow[rr, yshift=-0.5ex, out=-15, in=195, "\mathsf{L_B\circ H^-(F)}"'] & & \mathsf{D^-(B)} \\
			& \mathsf{H^-(B)}\arrow[ur, "\mathsf{L_B}"'] & 
		\end{tikzcd}
	\end{equation}
	and satisfying the following universal property: for any exact functor (as in Definition \ref{exactderivedfunc}) $\mathsf{G}:\mathsf{D^\#\!(A)}$ $\rightarrow\mathsf{D^\#\!(B)}$ and any natural transformation $\varepsilon:\mathsf{L_B\circ H^+(F)}\rightarrow\mathsf{G\circ L_A}$ respectively $\varepsilon:\mathsf{G\circ L_A}\rightarrow\mathsf{L_B\circ H^-(F)}$, there exist unique natural transformations $\eta:\mathsf{RF}\rightarrow\mathsf{G}$ and $\eta:\mathsf{G}\rightarrow\mathsf{LF}$ making the diagrams of functors
	\begin{equation}\label{universalderfuncdiag}
		\mkern-18mu
		\begin{tikzcd}[column sep=0.2cm]
			& \mathsf{L_B\circ H^+(F)}\arrow[dl, "\varepsilon_\mathsf{F}"']\arrow[dr, "\varepsilon"] & \\
			\mathsf{RF\circ L_A}\arrow[rr, dashed, "\eta\circ\mathsf{L_A}"'] & & \mathsf{G\circ L_A}
		\end{tikzcd}\quad,\quad
		\begin{tikzcd}[column sep=0.2cm]
			& \mathsf{L_B\circ H^-(F)} & \\
			\mathsf{G\circ L_A}\arrow[ur, "\varepsilon"]\arrow[rr, dashed, "\eta\circ\mathsf{L_A}"'] & & \mathsf{LF\circ L_A}\arrow[ul, "\varepsilon_\mathsf{F}"']
		\end{tikzcd}
	\end{equation}
	commute. 
\end{Def}

\subsection{Uniqueness and existence}\label{ch3.2}

In this section we prove that the right derived functor $\mathsf{RF}:\mathsf{D^+(A)}\rightarrow\mathsf{D^+(B)}$ as characterized by Definitions \ref{derivedfunc} and \ref{universalderivedfunc} exists and is unique. Analogous arguments confirm the existence and uniqueness of left derived functors.

\begin{Lem}
	The right derived functor $\mathsf{RF}:\mathsf{D^+(A)}\rightarrow\mathsf{D^+(B)}$ of a left exact functor $\mathsf{F}:\mathsf{A}\rightarrow\mathsf{B}$ between abelian categories $\mathsf{A}, \mathsf{B}$ is unique. 
\end{Lem}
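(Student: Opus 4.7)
The strategy is the standard universal-property argument: any two candidates for the right derived functor are related by a unique natural isomorphism, obtained by playing the two universal properties against each other. Accordingly, suppose $(\mathsf{RF}_1,\varepsilon_\mathsf{F}^1)$ and $(\mathsf{RF}_2,\varepsilon_\mathsf{F}^2)$ are both right derived functors of $\mathsf{F}$ in the sense of Definition \ref{universalderivedfunc}. In particular, both $\mathsf{RF}_1$ and $\mathsf{RF}_2$ are exact functors $\mathsf{D^+(A)}\rightarrow\mathsf{D^+(B)}$, hence each is a legitimate candidate to be fed as the generic $\mathsf{G}$ into the universal property of the other.

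First I would apply the universal property of $(\mathsf{RF}_1,\varepsilon_\mathsf{F}^1)$ with $\mathsf{G}\coloneqq\mathsf{RF}_2$ and $\varepsilon\coloneqq\varepsilon_\mathsf{F}^2:\mathsf{L_B\circ H^+(F)}\rightarrow\mathsf{RF}_2\circ\mathsf{L_A}$, producing a unique natural transformation $\eta_{12}:\mathsf{RF}_1\rightarrow\mathsf{RF}_2$ such that the analogue of diagram \eqref{universalderfuncdiag} commutes, i.e.\ $(\eta_{12}\circ\mathsf{L_A})\circ\varepsilon_\mathsf{F}^1=\varepsilon_\mathsf{F}^2$. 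Symmetrically, applying the universal property of $(\mathsf{RF}_2,\varepsilon_\mathsf{F}^2)$ to $\mathsf{G}\coloneqq\mathsf{RF}_1$ and $\varepsilon\coloneqq\varepsilon_\mathsf{F}^1$ yields a unique $\eta_{21}:\mathsf{RF}_2\rightarrow\mathsf{RF}_1$ with $(\eta_{21}\circ\mathsf{L_A})\circ\varepsilon_\mathsf{F}^2=\varepsilon_\mathsf{F}^1$.

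The next step is to show $\eta_{21}\circ\eta_{12}=\mathsf{id}_{\mathsf{RF}_1}$ and $\eta_{12}\circ\eta_{21}=\mathsf{id}_{\mathsf{RF}_2}$. For the first, splice the two identities above to obtain $\bigl((\eta_{21}\circ\eta_{12})\circ\mathsf{L_A}\bigr)\circ\varepsilon_\mathsf{F}^1=\varepsilon_\mathsf{F}^1$, so $\eta_{21}\circ\eta_{12}$ is a natural transformation $\mathsf{RF}_1\rightarrow\mathsf{RF}_1$ producing, together with $\varepsilon_\mathsf{F}^1$, the commutative diagram \eqref{universalderfuncdiag} for the choice $\mathsf{G}\coloneqq\mathsf{RF}_1$ and $\varepsilon\coloneqq\varepsilon_\mathsf{F}^1$. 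But $\mathsf{id}_{\mathsf{RF}_1}$ evidently has the same property, and the universal property of $(\mathsf{RF}_1,\varepsilon_\mathsf{F}^1)$ asserts that such a transformation is unique; hence $\eta_{21}\circ\eta_{12}=\mathsf{id}_{\mathsf{RF}_1}$. The reverse composition is handled identically by swapping the roles of $1$ and $2$.

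Thus $\eta_{12}$ is a natural isomorphism, and it is the unique one compatible with the structure transformations $\varepsilon_\mathsf{F}^1,\varepsilon_\mathsf{F}^2$; so the pair $(\mathsf{RF},\varepsilon_\mathsf{F})$ is unique up to unique isomorphism. The only mildly delicate point I anticipate is being careful that ``uniqueness'' always refers to uniqueness with respect to the specified compatibility datum $\varepsilon_\mathsf{F}$: without the natural transformation, $\mathsf{RF}$ alone is only determined up to non-canonical equivalence, but with $\varepsilon_\mathsf{F}$ in hand the isomorphism $\eta_{12}$ is pinned down. An entirely parallel argument settles uniqueness of the left derived functor $(\mathsf{LF},\varepsilon_\mathsf{F})$ of a right exact functor by reversing the directions of the natural transformations in \eqref{derfuncdiag}--\eqref{universalderfuncdiag}.
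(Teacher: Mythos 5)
Your argument is correct and is essentially the paper's own proof: both apply the universal property of each candidate pair to the other, obtain the two natural transformations $\eta_{12}$, $\eta_{21}$, and then use the uniqueness clause of the universal property (applied to each candidate against itself) to conclude that the composites are identities. Your closing remark that uniqueness is pinned down only relative to the datum $\varepsilon_\mathsf{F}$ is a fair and slightly more careful phrasing than the paper's.
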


\begin{proof}
	This is an immediate consequence of \eqref{universalderfuncdiag} left: suppose there is another suitable pair $(\overline{\mathsf{RF}},\bar{\varepsilon}_\mathsf{F})$, then the universal property of $(\mathsf{RF},\varepsilon_\mathsf{F})$ for $(\mathsf{G},\varepsilon)=(\overline{\mathsf{RF}},\bar{\varepsilon}_\mathsf{F})$ and of $(\overline{\mathsf{RF}},\bar{\varepsilon}_\mathsf{F})$ for $(\mathsf{G},\varepsilon)=(\mathsf{RF},\varepsilon_\mathsf{F})$ gives the combined diagram
	\[
	\begin{tikzcd}[column sep=0.2cm]
		& \mathsf{L_B\circ H^+(F)}\arrow[dl, "\varepsilon_\mathsf{F}"']\arrow[dr, "\bar{\varepsilon}_\mathsf{F}"] & \\
		\mathsf{RF\circ L_A}\arrow[rr, dashed, yshift=1ex, "\eta\circ\mathsf{L_A}"] & & \overline{\mathsf{RF}}\circ\mathsf{L_A}\arrow[ll, dashed, yshift=-1ex, "\bar{\eta}\circ\mathsf{L_A}"]
	\end{tikzcd}\quad.
	\]
	By functoriality, the dashed arrows compose to $(\bar{\eta}\circ\eta)\circ\mathsf{L_A}:\mathsf{RF}\circ\mathsf{L_A}\rightarrow\mathsf{RF}\circ\mathsf{L_A}$ and $(\eta\circ\bar{\eta})\circ\mathsf{L_A}:\overline{\mathsf{RF}}\circ\mathsf{L_A}\rightarrow\overline{\mathsf{RF}}\circ\mathsf{L_A}$, whence we read off the automorphisms $\bar{\eta}\circ\eta$ of $\mathsf{RF}$ and $\eta\circ\bar{\eta}$ of $\overline{\mathsf{RF}}$. A further application of \eqref{universalderfuncdiag} confirms that these are identity natural transformations. Thus $\overline{\mathsf{RF}}\cong\mathsf{RF}$ and $\bar{\eta}$ is the unique natural isomorphism inverting $\eta$. We conclude that the right derived functor of $\mathsf{F}$ is indeed unique.  
\end{proof}

To show the existence of derived functors we need some more work. First, a preparatory lemma.

\begin{Lem}\label{preplemma}
	Let $\mathsf{A}, \mathsf{B}$ be abelian categories, $\mathcal{R}\subset\textup{obj}(\mathsf{A})$ adapted to a left exact functor $\mathsf{F}:\mathsf{A}\rightarrow\mathsf{B}$, and let $S_\mathcal{R}$ be the class of quasi-isomorphisms in $\mathsf{H^+}(\mathcal{R})$. Given any triangle $\Delta=\big(X^\bullet\xrightarrow{f^\bullet}Y^\bullet\rightarrow Z^\bullet\rightarrow X[1]^\bullet\big)$ in $\mathsf{H^+(\mathcal{R})}[S_\mathcal{R}^{-1}]$ that becomes distinguished when sitting in $\mathsf{D^+(A)}$, there exists a distinguished triangle $\Delta_\mathcal{R}$ in $\mathsf{H^+(\mathcal{R})}$ of the form \eqref{conetriangle} isomorphic to $\Delta$ in $\mathsf{D^+(A)}$. 
\end{Lem}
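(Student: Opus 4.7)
The plan is to take an explicit cone-triangle built inside $\mathsf{H^+(\mathcal{R})}$ and argue it is isomorphic to $\Delta$ in $\mathsf{D^+(A)}$ via a standard triangulated-category comparison. I will assume the equivalence of Proposition \ref{D(R)equivalent} (so that $S_\mathcal{R}$ is localizing and hence morphisms in $\mathsf{H^+(\mathcal{R})}[S_\mathcal{R}^{-1}]$ admit roof representations), and the fact, to be established in Chapter 4 for the triangulated category $\mathsf{D^+(A)}$, that any commutative square between the first morphisms of two distinguished triangles extends to a morphism of triangles, and that if two of its vertical components are isomorphisms then so is the third.

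First I would represent the morphism $f^\bullet$. Because $S_\mathcal{R}$ is localizing, there exist some $\tilde{X}^\bullet \in \text{obj}(\mathsf{H^+(\mathcal{R})})$, a quasi-isomorphism $s^\bullet:\tilde{X}^\bullet \to X^\bullet$ in $\mathsf{H^+(\mathcal{R})}$, and a morphism $g^\bullet:\tilde{X}^\bullet \to Y^\bullet$ such that $f^\bullet = [(s^\bullet,g^\bullet)]$. Since $\mathcal{R}$ is closed under finite direct sums (Definition \ref{adaptedclass}), the componentwise formulas $C(g)^n = \tilde{X}[1]^n \oplus Y^n$ and $Z(g)^n = \tilde{X}^n \oplus \tilde{X}[1]^n \oplus Y^n$ land in $\mathcal{R}$, so $C(g)^\bullet, Z(g)^\bullet \in \text{obj}(\mathsf{H^+(\mathcal{R})})$. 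I can therefore define
\[
\Delta_\mathcal{R} \;\coloneqq\; \Big(\tilde{X}^\bullet \xrightarrow{g^\bullet} Y^\bullet \xrightarrow{i_2^\bullet} C(g)^\bullet \xrightarrow{p_1^\bullet} \tilde{X}[1]^\bullet\Big),
\]
a distinguished triangle of the form \eqref{conetriangle}, which lives natively in $\mathsf{H^+(\mathcal{R})}$.

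Next I would compare $\Delta_\mathcal{R}$ with $\Delta$ inside $\mathsf{D^+(A)}$. By construction $\mathsf{L}(s^\bullet)$ is an isomorphism in $\mathsf{D^+(A)}$, and by the roof-composition rule \eqref{roofcompo} the identity $f^\bullet \circ \mathsf{L}(s^\bullet) = \mathsf{L}(g^\bullet)$ holds. Hence the square
\[
\begin{tikzcd}
\tilde{X}^\bullet \arrow[r,"g^\bullet"]\arrow[d,"\mathsf{L}(s^\bullet)"'] & Y^\bullet \arrow[d,"\text{id}_Y^\bullet"] \\
X^\bullet \arrow[r,"f^\bullet"'] & Y^\bullet
\end{tikzcd}
\]
commutes in $\mathsf{D^+(A)}$, producing a commutative square between the first morphisms of $\Delta_\mathcal{R}$ (pushed into $\mathsf{D^+(A)}$) and $\Delta$, both distinguished by hypothesis.

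Finally, I would invoke the triangulated axioms for $\mathsf{D^+(A)}$: the above square extends to a morphism of distinguished triangles $(\mathsf{L}(s^\bullet),\text{id}_Y^\bullet,h^\bullet):\Delta_\mathcal{R}\to\Delta$, and since the first two components are isomorphisms, the third component $h^\bullet: C(g)^\bullet \to Z^\bullet$ is forced to be an isomorphism in $\mathsf{D^+(A)}$ (this is the standard five-lemma-type consequence, proved via the long exact Hom-sequence provided $\text{Hom}_\mathsf{D^+(A)}(W^\bullet,\square)$ is cohomological; see Chapter 4). Therefore $\Delta_\mathcal{R}$ and $\Delta$ are isomorphic as triangles in $\mathsf{D^+(A)}$, which is the required conclusion. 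The main obstacle is this last step: it relies on triangulated-category machinery developed only in Chapter 4, so the proof is logically contingent on those results, but the existence of $(\mathsf{L}(s^\bullet),\text{id}_Y^\bullet)$ as a commutative square is enough to drive the comparison once that machinery is granted.
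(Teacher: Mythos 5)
Your proposal follows essentially the same route as the paper's proof: represent $f^\bullet$ by a roof $X^\bullet\xleftarrow{s^\bullet}\tilde{X}^\bullet\xrightarrow{g^\bullet}Y^\bullet$ with $s^\bullet\in S_\mathcal{R}$, take the cone triangle over $g^\bullet$ as $\Delta_\mathcal{R}$, and use the commuting left square $(\mathsf{L}(s^\bullet),\mathrm{id}_Y^\bullet)$ together with completion to a morphism of distinguished triangles and a Five-Lemma argument to get the isomorphism in $\mathsf{D^+(A)}$. Your extra remark that $C(g)^\bullet$ stays in $\mathsf{H^+(\mathcal{R})}$ because $\mathcal{R}$ is closed under finite direct sums is a detail the paper leaves implicit, but the argument is the same.
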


\begin{proof}
	Proposition \ref{D(R)equivalent} furnishes the equivalence $\Phi:\mathsf{D^+(A)}\rightarrow\mathsf{H^+(\mathcal{R})}[S_\mathcal{R}^{-1}]$, which translates $f\in\text{Hom}_\mathsf{D^+(A)}(X^\bullet,Y^\bullet)$ into a morphisms class in $\mathsf{H^+(\mathcal{R})}[S_\mathcal{R}^{-1}]$ represented by the roof $X^\bullet\xleftarrow{s^\bullet}T^\bullet\xrightarrow{g^\bullet} Y^\bullet$, for $g^\bullet$ in $H^+(\mathcal{R})$ and $s^\bullet\in S_\mathcal{R}$. Consider then the distinguished triangle $\Delta_\mathcal{R}\coloneqq\big(T^\bullet\xrightarrow{g^\bullet} Y^\bullet\rightarrow C(g)^\bullet\rightarrow T[1]^\bullet\big)$ in $\mathsf{H^+(\mathcal{R})}$ and the diagram
	\[
	\begin{tikzcd}
		T^\bullet\arrow[r, "g^\bullet"]\arrow[d, "s^\bullet"'] & Y^\bullet \arrow[r]\arrow[d, "\text{id}_Y^\bullet"] & C(g)^\bullet\arrow[r] & T[1]^\bullet\arrow[d, "{s[1]^\bullet}"] \\
		X^\bullet\arrow[r, "f^\bullet"'] & Y^\bullet\arrow[r] & Z^\bullet\arrow[r] & X[1]^\bullet
	\end{tikzcd}
	\]
	of distinguished triangles in $\mathsf{D^+(A)}$ (by assumption) with commutative left square. Imposing overall commutativity, we can complete it through some $t^\bullet:C(g)^\bullet\rightarrow Z^\bullet$ which is, by the Five Lemma, a quasi-isomorphism in $\mathsf{D^+(A)}$ (since $s^\bullet$ and $\text{id}_Y^\bullet$ are). Therefore, by Definition \ref{exacttriangles}, $\Delta$ and $\Delta_\mathcal{R}$ are isomorphic in $\mathsf{D^+(A)}$.  
\end{proof}

\begin{Thm}\label{derivedfuncisexact}
	The right derived functor $\mathsf{RF}:\mathsf{D^+(A)}\rightarrow\mathsf{D^+(B)}$ of a left exact functor $\mathsf{F}:\mathsf{A}\rightarrow\mathsf{B}$ between abelian categories $\mathsf{A}, \mathsf{B}$ exists, is exact and fulfills the universal property of Definition \ref{universalderivedfunc}.
\end{Thm}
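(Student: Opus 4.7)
The plan is to take $\mathsf{RF}:=\bar{\mathsf{F}}\circ\Phi$ as in Definition~\ref{derivedfunc} and verify the three requirements in turn: that the definition makes sense, that $\mathsf{RF}$ is exact, and that the pair $(\mathsf{RF},\varepsilon_{\mathsf{F}})$ enjoys the universal property. The first task is to show that the componentwise extension $\bar{\mathsf{F}}:\mathsf{H^+(\mathcal{R})}\to\mathsf{D^+(B)}$ descends to a functor on $\mathsf{H^+(\mathcal{R})}[S_{\mathcal{R}}^{-1}]$, which by the cone criterion of Remark~\ref{C(f)acyclic} reduces to showing that $\bar{\mathsf{F}}$ preserves acyclicity on $\mathsf{Kom^+(\mathcal{R})}$. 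This is exactly the content of the adapted-class hypothesis (Definition~\ref{adaptedclass}); the rest of the argument is the one used in Proposition~\ref{D(F)exact}, only now exploiting that we need acyclicity-preservation only on complexes of objects of $\mathcal{R}$. Composition with the quasi-inverse $\Phi$ from Proposition~\ref{D(R)equivalent} then yields $\mathsf{RF}$. For $\varepsilon_{\mathsf{F}}$ I would use the unit $\beta:\mathsf{Id}_{\mathsf{D^+(A)}}\xrightarrow{\sim}\Psi\circ\Phi$ of the equivalence: for $X^\bullet\in\mathsf{H^+(A)}$ pick a roof representative $X^\bullet\xrightarrow{t^\bullet}R^\bullet$ of $\beta_{\mathsf{L_A}(X^\bullet)}$ with $R^\bullet\in\mathsf{H^+(\mathcal{R})}$ and set $\varepsilon_{\mathsf{F}}(X^\bullet):=\mathsf{L_B}(\mathsf{F}(t^\bullet))\in\mathrm{Hom}_{\mathsf{D^+(B)}}\bigl(\mathsf{L_B H^+(F)}(X^\bullet),\mathsf{RF}(\mathsf{L_A}X^\bullet)\bigr)$; naturality of $\varepsilon_{\mathsf{F}}$ is inherited from that of $\beta$.

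Exactness of $\mathsf{RF}$ follows cleanly from Lemma~\ref{preplemma}: any distinguished triangle $\Delta$ in $\mathsf{D^+(A)}$ is isomorphic to (the image of) a canonical cone triangle $\Delta_{\mathcal{R}}=\bigl(T^\bullet\xrightarrow{g^\bullet}Y^\bullet\to C(g)^\bullet\to T[1]^\bullet\bigr)$ living in $\mathsf{H^+(\mathcal{R})}$. Since $\mathsf{F}$ is additive, applying it componentwise commutes with cones and translations, i.e.\ $\mathsf{F}(C(g)^\bullet)\cong C(\mathsf{F}(g))^\bullet$ and $\mathsf{F}(T[1]^\bullet)=\mathsf{F}(T)[1]^\bullet$, so $\mathsf{RF}(\Delta)$ is isomorphic in $\mathsf{D^+(B)}$ to the canonical cone triangle on $\mathsf{F}(g)^\bullet$, which is distinguished by Definition~\ref{exacttriangles}.

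For the universal property, given a candidate pair $(\mathsf{G},\varepsilon)$ I would define $\eta$ pointwise as follows. For each $X^\bullet\in\mathsf{D^+(A)}$ choose a representative $t^\bullet_{X}:X^\bullet\to R^\bullet_X$ of $\beta_{X^\bullet}$ with $R^\bullet_X\in\mathsf{H^+(\mathcal{R})}$; then $\mathsf{L_A}(t^\bullet_X)$ is an isomorphism in $\mathsf{D^+(A)}$, so both $\mathsf{G}(t^\bullet_X)$ and $\mathsf{RF}(t^\bullet_X)$ are isomorphisms in $\mathsf{D^+(B)}$, and I set
\[
\eta_{X^\bullet}:=\mathsf{G}(t^\bullet_X)^{-1}\circ\varepsilon_{R^\bullet_X}\circ\mathsf{RF}(t^\bullet_X):\mathsf{RF}(X^\bullet)\longrightarrow\mathsf{G}(X^\bullet).
\]
Independence of $\eta_{X^\bullet}$ from the choice of $t^\bullet_X$, naturality of $\eta$, and commutativity of diagram~\eqref{universalderfuncdiag} all reduce to the naturality of $\beta$ and $\varepsilon$ combined with the fact that $\mathsf{G}$ automatically inverts quasi-isomorphisms (being a functor on $\mathsf{D^+(A)}$). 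Uniqueness is then read off the same formula: any competitor $\eta'$ satisfying the diagram must agree with $\eta$ on every object $\mathsf{L_A}(R^\bullet)$ with $R^\bullet\in\mathsf{H^+(\mathcal{R})}$, because $\varepsilon_{\mathsf{F}}$ is the identity on those (by construction both sides are $\mathsf{L_B}\mathsf{F}(R^\bullet)$); naturality and the essential surjectivity of $\Psi$ then force $\eta'=\eta$ on all of $\mathsf{D^+(A)}$.

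The main obstacle I anticipate is not any single step but the careful book-keeping across the two layers of localisation—the one inside $\mathsf{H^+(\mathcal{R})}[S_{\mathcal{R}}^{-1}]$ and the one defining $\mathsf{D^+(A)}$—needed to see that $\varepsilon_{\mathsf{F}}$ is a genuine natural isomorphism when restricted to the subcategory coming from $\mathsf{H^+(\mathcal{R})}$; this is the fact that powers both the existence and the uniqueness of the factoring $\eta$ in the universal property. Once this is pinned down, the remainder is a routine diagram chase anchored on the equivalence $\Psi\dashv\Phi$ and the adaptedness of $\mathcal{R}$.
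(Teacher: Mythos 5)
Your proposal is correct and follows essentially the same route as the paper: $\mathsf{RF}=\bar{\mathsf{F}}\circ\Phi$ with well-definedness and exactness coming from adaptedness of $\mathcal{R}$, preservation of cones, and Lemma \ref{preplemma}, then $\varepsilon_\mathsf{F}$ built from the unit $\beta$ of the equivalence and $\eta$ obtained by conjugating $\varepsilon$ with the isomorphism $\beta(X^\bullet)$. The only (cosmetic) differences are that you represent $\beta(X^\bullet)$ by a single quasi-isomorphism $X^\bullet\to R^\bullet$ rather than the paper's two-step roof, and your formula for $\eta_{X^\bullet}$ carries an extra factor $\mathsf{RF}(t^\bullet_X)$ which is the identity up to the canonical identification $\mathsf{RF}(R^\bullet_X)\cong\mathsf{F}(R^\bullet_X)$.
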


\begin{proof}
	As a starting point, consider the action \eqref{derivedfunconcomplexes} of $\mathsf{RF}$ on complexes, the equivalences $\Psi:\mathsf{H^+(\mathcal{R})}[S_\mathcal{R}^{-1}]\rightarrow\mathsf{D^+(A)}$ and $\Phi:\mathsf{D^+(A)}\rightarrow\mathsf{H^+(\mathcal{R})}[S_\mathcal{R}^{-1}]$, as well as the natural isomorphisms $\alpha:\mathsf{Id}_{\mathsf{H^+(\mathcal{R})}[S_\mathcal{R}^{-1}]}\rightarrow\Phi\circ\Psi$ and $\beta:\mathsf{Id}_{\mathsf{D^+(A)}}\rightarrow\Psi\circ\Phi$ (in footnote) of Definition \ref{derivedfunc}. There, we defined the intermediate functor $\bar{\mathsf{F}}:\mathsf{H^+(\mathcal{R})}[S_\mathcal{R}^{-1}]\rightarrow\mathsf{D^+(B)}$, which clearly commutes with the localization, $\bar{\mathsf{F}}\circ\mathsf{L_\mathcal{R}}=\mathsf{L_B}\circ\mathsf{H^+(F)}:\mathsf{H^+(\mathcal{R})}\rightarrow\mathsf{D^+(B)}$, and is consequently exact \big(by Remark \ref{alternadistinguitriang} and because cones are preserved: $\bar{\mathsf{F}}(C(f)^\bullet)=\bar{\mathsf{F}}(\mathsf{L_\mathcal{R}}(C(f)^\bullet))=\mathsf{L_B}(\mathsf{H^+(F)}(C(f)^\bullet))=\mathsf{F}(C(f)^\bullet)\cong C(\mathsf{F}(f)^\bullet)$\big).
	
	Exactness of $\mathsf{RF}=\bar{\mathsf{F}}\circ\Phi$ is easily proved: it remains to check that $\Phi$ is exact. First notice that it maps triangles in $\mathsf{D^+(A)}$ to triangles in $\mathsf{H^+(\mathcal{R})}[S_\mathcal{R}^{-1}]$, because commuting with the translation functor. To show the same for distinguished triangles, just apply Lemma \ref{preplemma}; in its language, any distinguished $\Delta$ in $\mathsf{D^+(A)}$ is mapped by $\Phi$ to some distinguished (localized) $\Delta_\mathcal{R}$ in $\mathsf{H^+(\mathcal{R})}[S_\mathcal{R}^{-1}]$.
	
	Now we construct the natural transformation $\varepsilon_\mathsf{F}$ fitting into diagram \eqref{derfuncdiag} left. Pick $X^\bullet\in\text{obj}(\mathsf{D^+(A)})=\text{obj}(\mathsf{H^+(A)})$ and let $Y^\bullet\coloneqq(\Phi\circ\mathsf{L_A})(X^\bullet)\in\text{obj}(\mathsf{H^+(\mathcal{R})}[S_\mathcal{R}^{-1}])=\text{obj}(\mathsf{H^+(\mathcal{R})})$, so that $\beta({X^\bullet}):X^\bullet\rightarrow\Psi(Y^\bullet)$ is an isomorphism in $\mathsf{D^+(A)}$, without loss of generality given by $X^\bullet\xrightarrow{s^\bullet} Z^\bullet\xleftarrow{t^\bullet} Y^\bullet$ in $\mathsf{H^+(A)}$ for $s,t\in S_\mathsf{A}$ and $Z^\bullet\in\text{obj}(\mathsf{H^+(\mathcal{R})})$ (cf. the claim in the proof of Proposition \ref{D(R)equivalent}; then actually $X^\bullet\in\text{obj}(\mathsf{H^+(\mathcal{R})})$ too). $\mathcal{R}$ being adapted, the usual argument about acyclic cones tells us that $\mathsf{H^+(F)}(s^\bullet)$, $\mathsf{H^+(F)}(t^\bullet)\in S_\mathsf{B}$. Hence, applying $\mathsf{L_B}\circ\mathsf{H^+(F)}$ to the roof and exploiting the commuting relation of $\bar{\mathsf{F}}$ and definition of $\mathsf{RF}$, we obtain a morphism
	\[
	\varepsilon_\mathsf{F}(X^\bullet):\mathsf{L_B}(\mathsf{H^+(F)}(X^\bullet))\rightarrow\mathsf{L_B}(\mathsf{H^+(F)}(Y^\bullet))=\bar{\mathsf{F}}(\mathsf{L_\mathcal{R}}(Y^\bullet))=\mathsf{RF}(\mathsf{L_A}(X^\bullet))
	\]
	in $\mathsf{D^+(B)}$, which actually can be shown not to depend on the specific roof chosen. Let us prove that the induced $\varepsilon_\mathsf{F}:\mathsf{L_B}\circ\mathsf{H^+(F)}\rightarrow\mathsf{RF}\circ\mathsf{L_A}$ is a natural transformation. So consider $X_k^\bullet\in\text{obj}(\mathsf{D^+(A)})=\text{obj}(\mathsf{H^+(A)})$ for $k=1,2$ and $\varphi\in\text{Hom}_\mathsf{D^+(A)}(X_1^\bullet,X_2^\bullet)$, which by naturality of $\beta$ yields diagrams
	\[
	\begin{tikzcd}[column sep=1cm, row sep=1cm]
		X_1^\bullet\arrow[r, "\beta(X_1^\bullet)"]\arrow[d, "\varphi"'] & \Psi(\Phi(X_1^\bullet))\arrow[d, "\Psi(\Phi(\varphi))"] \\
		X_2^\bullet\arrow[r, "\beta(X_2^\bullet)"'] & \Psi(\Phi(X_2^\bullet))
	\end{tikzcd}\quad\implies\quad
	\begin{tikzcd}[column sep=1cm, row sep=1cm]
		\mathsf{L_B}(\mathsf{H^+(F)}(X_1^\bullet))\arrow[r, "\varepsilon_\mathsf{F}(X_1^\bullet)"]\arrow[d, "\mathsf{L_B}(\mathsf{H^+(F)}(\varphi))"'] & \mathsf{RF}(\mathsf{L_A}(X_1^\bullet))\arrow[d, "\mathsf{RF}(\mathsf{L_A}(\varphi))"] \\
		\mathsf{L_B}(\mathsf{H^+(F)}(X_2^\bullet))\arrow[r, "\varepsilon_\mathsf{F}(X_2^\bullet)"'] & \mathsf{RF}(\mathsf{L_A}(X_2^\bullet))
	\end{tikzcd}
	\]
	in $\mathsf{D^+(A)}$ respectively $\mathsf{D^+(B)}$ (the second diagram is obtained from the first by applying $\mathsf{L_B}\circ\mathsf{H^+(F)}$ and reasoning just like before). This proves that $\varepsilon_\mathsf{F}$ is a natural transformation, and in fact unique due to the universal property of the localization functors (see Definition \ref{dercatdef}).
	
	Finally, we must verify the universality of $(\mathsf{RF},\varepsilon_\mathsf{F})$. Consider another exact functor $\mathsf{G}:\mathsf{D^+(A)}\rightarrow\mathsf{D^+(B)}$ and a natural transformation $\varepsilon:\mathsf{L_B\circ H^+(F)}\rightarrow \mathsf{G\circ L_A}$. Our goal is to define a natural transformation $\eta:\mathsf{RF}\rightarrow\mathsf{G}$ making \eqref{universalderfuncdiag} left commute. Let $X^\bullet\in\text{obj}(\mathsf{D^+(A)})=\text{obj}(\mathsf{H^+(A)})$. Then we have $\varepsilon(X^\bullet)\in\text{Hom}_\mathsf{D^+(B)}(\mathsf{F}(X^\bullet),\mathsf{G}(X^\bullet))$ and $\beta(X^\bullet)\in\text{Hom}_\mathsf{D^+(A)}\big(X^\bullet,\Psi(\Phi(X^\bullet))\big)$, the latter an isomorphism represented again by the roof $X^\bullet\xrightarrow{s^\bullet} Z^\bullet\xleftarrow{t^\bullet} Y^\bullet=(\Psi\circ\Phi)(X^\bullet)$, which yields the commutative diagram
	\[
	\begin{tikzcd}[row sep=1cm]
		\mkern-69mu(\mathsf{L_B}\circ\mathsf{H^+(F)})(X^\bullet)=\mathsf{F}(X^\bullet)\arrow[r, "\varepsilon(X^\bullet)"]\arrow[d, shift right=-24mu, "(\mathsf{L_B}\circ\mathsf{F})(s^\bullet)"']\arrow[dd, dashed, shift right=36mu, "\varepsilon_\mathsf{F}(X^\bullet)"']  & \mathsf{G}(X^\bullet)=(\mathsf{G}\circ\mathsf{L_A})(X^\bullet)\arrow[d, shift right=36mu, "(\mathsf{G}\circ\mathsf{L_A})(s^\bullet)"]\arrow[dd, dashed, shift left=24mu, "\mathsf{G}(\beta(X^\bullet))"] \\
		\mkern+90mu\mathsf{F}(Z^\bullet)\arrow[r, shorten=6.4ex, xshift=-5.9ex, "\varepsilon(Z^\bullet)"] & \mkern-128mu\mathsf{G}(Z^\bullet) \\
		\mkern-43mu(\mathsf{RF}\circ\mathsf{L_A})(X^\bullet)=\mathsf{F}(Y^\bullet)\arrow[r, shorten=-1ex, xshift=1.5ex, "\varepsilon(Y^\bullet)"']\arrow[u, shift right=24mu, "(\mathsf{L_B}\circ\mathsf{F})(t^\bullet)"] & \mkern+19mu\mathsf{G}(Y^\bullet)=\mathsf{G}((\Psi\circ\Phi)(X^\bullet))\arrow[u, shift right=-36mu, "(\mathsf{G}\circ\mathsf{L_A})(t^\bullet)"']
	\end{tikzcd}
	\]
	in $\mathsf{D^+(B)}$, with vertical columns made of quasi-isomorphisms. Inverting those eating $t^\bullet$, we get two dashed arrows $\varepsilon_\mathsf{F}(X^\bullet):(\mathsf{L_B\circ H^+(F)})(X^\bullet)\rightarrow(\mathsf{RF\circ L_A})(X^\bullet)$ (by the usual argument from above) and $\mathsf{G}(\beta(X^\bullet)):\mathsf{G}(X^\bullet)\rightarrow\mathsf{G}((\Psi\circ\Phi)(X^\bullet))$, the latter being an isomorphism in $\mathsf{D^+(B)}$ (by functoriality). Then we define
	\[
	\eta(X^\bullet)\coloneqq\mathsf{G}(\beta(X^\bullet))^{-1}\circ\varepsilon((\Psi\circ\Phi)(X^\bullet)):\mathsf{RF}(X^\bullet)\rightarrow\mathsf{G}(X^\bullet)\,,
	\]
	which in turn specifies a natural transformation $\eta:\mathsf{RF}\rightarrow\mathsf{G}$ since both $\beta$ and $\varepsilon$ are. We can read off \eqref{universalderfuncdiag} from the outer square of the last diagram. That $\eta(X^\bullet)$, thus $\eta$, is unique follows from $\mathsf{G}(\beta(X^\bullet))$ being an isomorphism.
\end{proof}

\subsection{Classical derived functors and examples}\label{ch3.3}

\begin{Def}\label{classicalderivedfunc}
	Let $\mathsf{A}, \mathsf{B}$ be abelian categories, $\mathsf{F}:\mathsf{A}\rightarrow\mathsf{B}$ a left exact functor. Then $\mathsf{R}^n\mathsf{F}\coloneqq H^0(T^n\circ\mathsf{RF}) = H^n(\mathsf{RF}):\mathsf{A}\rightarrow\mathsf{B}$ is the \textbf{classical $n$-th right derived functor}\index{derived functor!classical right} of $\mathsf{F}$, where $T^n:\mathsf{D^+(B)}\rightarrow\mathsf{D^+(B)}$ is the translation autoequivalence and $H^n:\mathsf{D^+(B)}\rightarrow\mathsf{B}$ the $n$-th cohomological functor.\footnote{The notation may cause a little confusion: $H^0$ and $H^n$ act on both $\mathsf{D^\#\!(A)}$ and $\mathsf{D^\#\!(B)}$, yielding $\mathsf{A}$ respectively $\mathsf{B}$. Properly said, $\mathsf{R}^n\mathsf{F}=H^n\circ\mathsf{RF}\circ\mathsf{J'}$ where $\mathsf{J'}:\mathsf{A}\rightarrow\mathsf{D^\#\!(A)}$ is the fully faithful embedding from Proposition \ref{AinD(A)}, so that $\mathsf{R}^n\mathsf{F}(X)=H^n(F(X[0])^\bullet)$ for $X\in\text{obj}(\mathsf{A})$. Same remark applies to classical left derived functors.}
	
	If instead $\mathsf{F}$ is right exact, then $\mathsf{L}^n\mathsf{F}\coloneqq H^0(T^n\circ\mathsf{LF}) = H^n(\mathsf{LF}):\mathsf{A}\rightarrow\mathsf{B}$ is the \textbf{classical $n$-th left derived functor}\index{derived functor!classical left} of $\mathsf{F}$ (with corresponding functors $T^n$ and $H^n$ on $\mathsf{D^-(B)}$). 
\end{Def}

\begin{Pro}\label{ExtRHom}
	Let $\mathsf{A}$ be an abelian category with enough injectives, $X\in\textup{obj}(\mathsf{A})$ fixed, yielding the left exact functor $\textup{Hom}_\mathsf{A}(X,\square):\mathsf{A}\rightarrow\mathsf{Ab}$ \textup(see Example \ref{exactfuncexample}\textup), of which we can then consider the right derived functor $\mathsf{R}\textup{Hom}_\mathsf{A}(X,\square):\mathsf{D^+(A)}\rightarrow\mathsf{D^+(Ab)}$. Then there is an isomorphism of functors \begin{equation}\label{ExtRHomeq}
		\textup{Ext}_\mathsf{A}^n(X,\square)\cong\mathsf{R}^n\textup{Hom}_\mathsf{A}(X,\square):\mathsf{A}\rightarrow\mathsf{Ab}
	\end{equation}
	for all $n\in\mathbb{Z}$.
	
	Specularly, if $\mathsf{A}$ has enough projectives and $Y\in\textup{obj}(\mathsf{A})$ is fixed, the contravariant left exact functor $\textup{Hom}_\mathsf{A}(\square,Y):\mathsf{A}^\textup{opp}\rightarrow\mathsf{Ab}$ yields
	\begin{equation}\label{eqExtRHom}
		\textup{Ext}_\mathsf{A}^n(\square,Y)\cong\mathsf{R}^n\textup{Hom}_\mathsf{A}(\square, Y):\mathsf{A^\textup{opp}}\rightarrow\mathsf{Ab}\,.
	\end{equation}  
\end{Pro}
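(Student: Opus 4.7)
The plan is to compute both sides of the asserted isomorphism by choosing an injective resolution $Y \xrightarrow{\sim} I^\bullet$ of $Y$, exploiting that $\mathsf{A}$ has enough injectives.

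\textbf{First}, I would unwind the right derived functor. By Lemma \ref{enoughinjprojobj} the class $\mathcal{I}$ of injective objects is adapted to $\text{Hom}_\mathsf{A}(X,\square)$, so Definitions \ref{derivedfunc} and \ref{classicalderivedfunc} give $\mathsf{R}^n\text{Hom}_\mathsf{A}(X, Y) \cong H^n(\text{Hom}_\mathsf{A}(X, I^\bullet))$ for any injective resolution $Y \xrightarrow{\sim} I^\bullet$. Simultaneously, that resolution gives an isomorphism $Y[0]^\bullet \cong I^\bullet$ in $\mathsf{D^+(A)}$, hence $Y[n]^\bullet \cong I[n]^\bullet$ after translation, so by \eqref{Extgroup} I obtain $\text{Ext}_\mathsf{A}^n(X, Y) \cong \text{Hom}_{\mathsf{D^+(A)}}(X[0]^\bullet, I[n]^\bullet)$.

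\textbf{The crux} is then to argue that
\[
\text{Hom}_{\mathsf{H^+(A)}}(X[0]^\bullet, I[n]^\bullet) \xrightarrow{\sim} \text{Hom}_{\mathsf{D^+(A)}}(X[0]^\bullet, I[n]^\bullet)
\]
whenever the target is a bounded-below complex of injectives. I would deduce this from the machinery already built: replacing $X[0]^\bullet$ by an injective resolution $X \to J^\bullet$ (so $X[0]^\bullet \cong J^\bullet$ in $\mathsf{D^+(A)}$) the identification reduces to the statement that $\mathsf{H^+(\mathcal{I})} \hookrightarrow \mathsf{D^+(A)}$ is a full subcategory, which is Proposition \ref{injprojderivedarehomotopy}. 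The validity of this replacement on the homotopy side relies on the lifting property mentioned there, namely that any quasi-isomorphism $s^\bullet : J^\bullet \to X[0]^\bullet$ admits a $t^\bullet$ with $t^\bullet \circ s^\bullet \sim \text{id}_{J}^\bullet$, a direct consequence of the injectivity of each term of $J^\bullet$ via Lemma \ref{Homforinjprojobj}.

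\textbf{Next}, I would compute $\text{Hom}_{\mathsf{H^+(A)}}(X[0]^\bullet, I[n]^\bullet)$ by hand: a chain map is determined by a single morphism $f : X \to I^n$ satisfying $d_I^n \circ f = 0$, and a null-homotopy amounts to writing $f = d_I^{n-1} \circ k$ for some $k : X \to I^{n-1}$. This manifestly yields $H^n(\text{Hom}_\mathsf{A}(X, I^\bullet))$. Chaining the isomorphisms established in the previous steps delivers \eqref{ExtRHomeq}, and naturality in $Y$ follows from the functoriality of injective resolutions up to quasi-isomorphism (two choices of resolution are canonically homotopy equivalent, and the induced Hom-complexes are quasi-isomorphic).

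\textbf{Finally}, the projective assertion \eqref{eqExtRHom} is obtained by dualizing: viewed in $\mathsf{A}^\text{opp}$ (itself abelian, with projectives of $\mathsf{A}$ as injectives), the contravariant $\text{Hom}_\mathsf{A}(\square, Y) = \text{Hom}_{\mathsf{A}^\text{opp}}(Y,\square)$ becomes a covariant left exact functor to which the same four-step procedure applies, with projective resolutions of $X$ in $\mathsf{A}$ playing the role of the injective resolutions above. \textbf{The main obstacle} is the fullness/lifting argument in the crux step: it is precisely the property encoded by Lemma \ref{Homforinjprojobj} that makes injective (respectively projective) objects the correct adapted class, and is the reason the hypothesis ``enough injectives'' (respectively ``enough projectives'') is essential.
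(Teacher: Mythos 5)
Your overall strategy coincides with the paper's: resolve $Y$ injectively, identify $\textup{Ext}^n_\mathsf{A}(X,Y)$ with homotopy classes of chain maps $X[0]^\bullet\rightarrow I[n]^\bullet$, observe that these compute $H^n(\textup{Hom}_\mathsf{A}(X,I^\bullet))\cong\mathsf{R}^n\textup{Hom}_\mathsf{A}(X,Y)$, and dualize for the projective half. Your by-hand computation of $\textup{Hom}_{\mathsf{H^+(A)}}(X[0]^\bullet,I[n]^\bullet)$ is just the special case of the paper's inner-Hom-complex calculation \eqref{innerHomcomp} when the source is a $0$-complex, and is perfectly fine.

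The one place where your write-up is shakier than the paper's is the crux step. The paper invokes the statement that $\textup{Hom}_{\mathsf{H(A)}}(Z_1^\bullet,Z_2^\bullet)\cong\textup{Hom}_{\mathsf{D(A)}}(Z_1^\bullet,Z_2^\bullet)$ for an \emph{arbitrary} source $Z_1^\bullet$ as soon as $Z_2^\bullet$ is a bounded-below complex of injectives (proved by noting that a quasi-isomorphism $W^\bullet\rightarrow Z_1^\bullet$ has acyclic cone and that $\textup{Hom}_{\mathsf{H(A)}}(C^\bullet,I^\bullet)=0$ for $C^\bullet$ acyclic, so every roof straightens uniquely). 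You instead reduce to the case where \emph{both} complexes are injective by replacing $X[0]^\bullet$ with a resolution $J^\bullet$ and citing the fullness of $\mathsf{H^+(\mathcal{I})}\subset\mathsf{D^+(A)}$ from Proposition \ref{injprojderivedarehomotopy}. But justifying that replacement on the homotopy side --- that is, that precomposition with the augmentation $X[0]^\bullet\rightarrow J^\bullet$ is a bijection $\textup{Hom}_{\mathsf{H^+(A)}}(J^\bullet,I[n]^\bullet)\rightarrow\textup{Hom}_{\mathsf{H^+(A)}}(X[0]^\bullet,I[n]^\bullet)$ --- is itself an instance of the general fact you are trying to sidestep, so as written the reduction is circular; moreover the lifting property you quote concerns quasi-isomorphisms \emph{out of} complexes of injectives, whereas the augmentation of an injective resolution points \emph{into} $J^\bullet$. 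The fix is simply to prove the general statement directly for the source $X[0]^\bullet$ via the acyclic-cone argument, after which the detour through $J^\bullet$ becomes unnecessary. Everything else, including the dualization yielding \eqref{eqExtRHom}, matches the paper.
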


\begin{proof}
	On the one hand, we can replace any $Y\equiv Y[0]^\bullet\in\text{obj}(\mathsf{Kom^+(A)})=\text{obj}(\mathsf{D^+(A)})$ by any of its injective resolutions $Y\xrightarrow{\varepsilon_Y}I^\bullet$ in $\mathsf{Kom^+(\mathcal{I})}$ (all quasi-isomorphic to it by Remark \ref{0-complex}), so that $\text{Ext}_\mathsf{A}^n(X,Y)=\text{Hom}_\mathsf{D^+(A)}(X[0]^\bullet,Y[n]^\bullet)$ $\cong\text{Hom}_\mathsf{D^+(A)}(X[0]^\bullet,I[n]^\bullet)$. Now one can elaborate on the argument in the second paragraph of the proof of Proposition \ref{injprojderivedarehomotopy} to show that, for a generic abelian category $\mathsf{A}$, $\text{Hom}_\mathsf{H(A)}(Z_1^\bullet,Z_2^\bullet)\cong\text{Hom}_\mathsf{D(A)}(Z_1^\bullet,Z_2^\bullet)$ as soon as $Z_1^\bullet\in\text{obj}(\mathsf{Kom^-(\mathcal{P})})$ or $Z_2^\bullet\in\text{obj}(\mathsf{Kom^+(\mathcal{I})})$. In our case, this ultimately yields $\text{Ext}_\mathsf{A}^n(X,Y)\cong\text{Hom}_\mathsf{H^+(A)}(X[0]^\bullet,I[n]^\bullet)$.
	
	About the right-hand side of \eqref{ExtRHomeq}, by definition (cf. footnote) we have $\mathsf{R}^n\text{Hom}_\mathsf{A}(X,Y)\!=\!H^n(\text{Hom}_\mathsf{A}(X,Y)[0]^\bullet)\!\cong\! H^n(\text{Hom}_\mathsf{Kom^+(A)}^\bullet(X[0]^\bullet,I[0]^\bullet))$, where the first equality holds by \eqref{derivedfunconcomplexes} and the second by the aforementioned substitution of $Y$. To understand the last cohomological group, we must explain what is the complex $\text{Hom}_\mathsf{Kom(A)}^\bullet(Z_1^\bullet,Z_2^\bullet)\in\text{obj}(\mathsf{Kom(Ab)})=\text{obj}(\mathsf{D(Ab)})$ given some fixed $Z_1^\bullet, Z_2^\bullet\in\text{obj}(\mathsf{Kom(A)})$. It is defined as 
	\begin{align}\label{innerHomcomp}
		& \text{Hom}_\mathsf{Kom(A)}^n(Z_1^\bullet, Z_2^\bullet)\coloneqq\prod_{i\in\mathbb{Z}}\text{Hom}_\mathsf{A}(Z_1^i,Z_2^{i+n})\in\text{obj}(\mathsf{Ab})\quad\text{and}\nonumber \\
		& d^n:\text{Hom}_\mathsf{Kom(A)}^n(Z_1^\bullet, Z_2^\bullet)\rightarrow \text{Hom}_\mathsf{Kom(A)}^{n+1}(Z_1^\bullet, Z_2^\bullet),\, f\equiv \prod_{i\in\mathbb{Z}}f^i\mapsto d^n(f)\,,\\
		& d^n(f)\coloneqq\prod_{i\in\mathbb{Z}}\big(d_{Z_2}^{i+n}\circ f^i -(-1)^nf^{i+1}\circ d_{Z_1}^i\big)\,. \nonumber
	\end{align}
	One can carefully verify that $d^{n+1}\circ d^n=0$, so that $\text{Hom}_\mathsf{Kom(A)}^\bullet(Z_1^\bullet,Z_2^\bullet)$ is indeed a well-defined complex of (product) abelian groups. By definition, any $n$-th cocycle $f\in Z^n(\text{Hom}_\mathsf{Kom(A)}^\bullet(Z_1^\bullet,Z_2^\bullet))$ must fulfill $d^n(f)=0$, then possible if and only if for each $i\in\mathbb{Z}$ holds $(-1)^n d_{Z_2[n]}^i\circ f^i= d_{Z_2}^{i+n}\circ f^i = (-1)^n f^{i+1}\circ d_{Z_1}^i$, that is, if and only if $f$ is a morphism of complexes in $\text{Hom}_\mathsf{Kom(A)}(Z_1^\bullet,Z_2[n]^\bullet)$. Similarly, rewriting the differential in \eqref{innerHomcomp} shows that any coboundary $f\in B^n(\text{Hom}_\mathsf{Kom(A)}^\bullet(Z_1^\bullet,Z_2^\bullet))$ is of the form $f=k^{n+1}\circ d_{Z_1} +d_{Z_2[n]}\circ k^n$ for some morphism of complexes $k^\bullet:\text{Hom}_\mathsf{Kom(A)}^\bullet(Z_1^\bullet,Z_2^\bullet)\rightarrow \text{Hom}_\mathsf{Kom(A)}^{\bullet-1}(Z_1^\bullet,Z_2^\bullet)$, that is, coboundaries are all those morphisms $Z_1^\bullet\rightarrow Z_2[n]^\bullet$ homotopic to the zero morphism. Hence, modding by them we obtain
	\[
	H^n(\text{Hom}_\mathsf{Kom(A)}^\bullet(Z_1^\bullet,Z_2^\bullet))=\frac{Z^n(\text{Hom}_\mathsf{Kom(A)}^\bullet(Z_1^\bullet,Z_2^\bullet))}{B^n(\text{Hom}_\mathsf{Kom(A)}^\bullet(Z_1^\bullet,Z_2^\bullet))}\cong\text{Hom}_\mathsf{H(A)}(Z_1^\bullet,Z_2[n]^\bullet)\,.
	\]
	Therefore, $\mathsf{R}^n\text{Hom}_\mathsf{A}(X,Y)\cong\text{Hom}_\mathsf{H^+(A)}(X[0]^\bullet,I[n]^\bullet)$, and we conclude thanks to the previous paragraph that $\mathsf{R}^n\text{Hom}_\mathsf{A}(X,Y)\cong\text{Ext}_\mathsf{A}^n(X,Y)$, as desired. The argument for the isomorphism \eqref{eqExtRHom} is similar.
\end{proof}

\begin{Rem}\label{classicalderivedonresolutions}
	If $\mathsf{A}$ is an abelian category with enough injectives, the proof just concluded suggests an explicit way to compute $\mathsf{R}^n\mathsf{F}(X)$ of a left exact $\mathsf{F}:\mathsf{A}\rightarrow\mathsf{B}$ for any $X\in\text{obj}(\mathsf{A})$: by assumption and Remark \ref{0-complex}, $X$ admits a right injective resolution $X\xrightarrow{\varepsilon_X}I^\bullet$ (unique up to quasi-isomorphism) and can be replaced by it.\footnote{In fact, it is enough to consider an $\mathsf{F}$-acyclic right resolution for $X$, as in Definition \ref{Facyclic} below. Right injective resolutions are a special case thereof.} Then
	\begin{equation}\label{classderinjres}
		\mathsf{R}^n\mathsf{F}(X)\cong H^n(\mathsf{F}(I)^\bullet)\in\text{obj}(\mathsf{B})\,.
	\end{equation}
	Specularly, if $\mathsf{A}$ has enough projectives and $\mathsf{F}$ is right exact, substituting $X$ by any left projective resolution $P^\bullet\xrightarrow{\varepsilon_X} X$ yields
	\begin{equation}\label{classderprojres}
		\mathsf{L}^n\mathsf{F}(X)\cong H^n(\mathsf{F}(P)^\bullet)\in\text{obj}(\mathsf{B})\,.
	\end{equation}
\end{Rem}

\begin{Cor}\label{whenclassicalderivedtrivial}
	Let $\mathsf{F}:\mathsf{A}\rightarrow\mathsf{B}$ be a left/right exact functor between abelian categories. Then $\mathsf{R}^0\mathsf{F}\cong\mathsf{F}\cong\mathsf{L}^0\mathsf{F}$ and $\mathsf{R}^{-n}\mathsf{F}=0=\mathsf{L}^n\mathsf{F}$, for $n\in\mathbb{N}$.
	
	Furthermore, any short exact sequence $0\rightarrow X_1\rightarrow X_2\rightarrow X_3\rightarrow 0$ in $\mathsf{A}$ induces long exact sequences in $\mathsf{B}$ of the form
	\begin{align*}
		0&\rightarrow \mathsf{F}(X_1)\rightarrow \mathsf{F}(X_2)\rightarrow \mathsf{F}(X_3)\rightarrow \mathsf{R}^1\mathsf{F}(X_1)\rightarrow... \\
		&\rightarrow \mathsf{R}^n\mathsf{F}(X_1)\rightarrow \mathsf{R}^n\mathsf{F}(X_2)\rightarrow\mathsf{R}^n\mathsf{F}(X_3)\rightarrow \mathsf{R}^{n+1}\mathsf{F}(X_1)\rightarrow ...\,,  
	\end{align*}
	respectively
	\begin{align*}
		...&\rightarrow \mathsf{L}^{n+1}\mathsf{F}(X_3)\rightarrow \mathsf{L}^n\mathsf{F}(X_1)\rightarrow \mathsf{L}^n\mathsf{F}(X_2)\rightarrow\mathsf{L}^n\mathsf{F}(X_3)\rightarrow... \\
		&\rightarrow \mathsf{L}^1\mathsf{F}(X_3)\rightarrow \mathsf{F}(X_1)\rightarrow \mathsf{F}(X_2)\rightarrow \mathsf{F}(X_3)\rightarrow 0\,.  
	\end{align*}
\end{Cor}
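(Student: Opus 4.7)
The plan is to derive both claims from results already in place: the explicit computation of classical derived functors via adapted resolutions (Remark~\ref{classicalderivedonresolutions}), the exactness of $\mathsf{RF}$ and $\mathsf{LF}$ (Theorem~\ref{derivedfuncisexact}), and the long exact cohomology sequence attached to a distinguished triangle (Theorem~\ref{longexactcohomtriangles}).

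First I would treat $\mathsf{R}^0\mathsf{F}\cong\mathsf{F}$ and $\mathsf{R}^{-n}\mathsf{F}=0$ in the left exact case; the right exact case will be dual. Since $\mathsf{RF}$ is built from an adapted class $\mathcal{R}\subset\text{obj}(\mathsf{A})$, by (the claim inside the proof of) Proposition~\ref{D(R)equivalent} any $X\in\text{obj}(\mathsf{A})$ admits a quasi-isomorphism $X[0]^\bullet\to R^\bullet$ with $R^\bullet\in\text{obj}(\mathsf{Kom^+(\mathcal{R})})$ concentrated in non-negative degrees, i.e.\ a resolution
\[
0\to X\to R^0\to R^1\to R^2\to\cdots\,.
\]
Left exactness of $\mathsf{F}$ then produces the exact sequence $0\to\mathsf{F}(X)\to\mathsf{F}(R^0)\to\mathsf{F}(R^1)$ in $\mathsf{B}$, whence by \eqref{classderinjres}
\[
\mathsf{R}^0\mathsf{F}(X)=H^0(\mathsf{F}(R)^\bullet)=\ker\bigl(\mathsf{F}(R^0)\to\mathsf{F}(R^1)\bigr)\cong\mathsf{F}(X)\,,
\]
and, since $R^{-n}=0$ for $n\geq 1$, $\mathsf{R}^{-n}\mathsf{F}(X)=H^{-n}(\mathsf{F}(R)^\bullet)=0$. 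Naturality in $X$ is inherited from that of the equivalence $\Psi:\mathsf{H^+(\mathcal{R})}[S_\mathcal{R}^{-1}]\to\mathsf{D^+(A)}$ used in Definition~\ref{derivedfunc}, so these assemble into a natural isomorphism of functors $\mathsf{R}^0\mathsf{F}\cong\mathsf{F}$. The right exact case follows identically, replacing $R^\bullet$ by a left adapted resolution $P^\bullet$ concentrated in non-positive degrees and using \eqref{classderprojres} together with the right exactness of $\mathsf{F}$.

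For the long exact sequence, I would view the given short exact sequence $0\to X_1\to X_2\to X_3\to 0$ as a distinguished triangle $X_1[0]^\bullet\to X_2[0]^\bullet\to X_3[0]^\bullet\to X_1[1]^\bullet$ in $\mathsf{D^+(A)}$ by Proposition~\ref{shortexactaretriangles} (after identifying $\mathsf{A}$ with the full subcategory $\mathsf{D_0^+(A)}$ via Proposition~\ref{AinD(A)}). Exactness of $\mathsf{RF}$ in the sense of Definition~\ref{exactderivedfunc} turns this into a distinguished triangle in $\mathsf{D^+(B)}$, and applying the cohomological functors $H^n$ to it, via Theorem~\ref{longexactcohomtriangles}, yields a long exact sequence whose $n$-th term is $H^n(\mathsf{RF}(X_i[0]^\bullet))=\mathsf{R}^n\mathsf{F}(X_i)$ by Definition~\ref{classicalderivedfunc}. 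Substituting the vanishing $\mathsf{R}^{-n}\mathsf{F}=0$ for $n\geq 1$ and the identification $\mathsf{R}^0\mathsf{F}\cong\mathsf{F}$ from the first part truncates the sequence on the left, yielding the stated form beginning with $0\to\mathsf{F}(X_1)\to\mathsf{F}(X_2)\to\mathsf{F}(X_3)\to\mathsf{R}^1\mathsf{F}(X_1)\to\cdots$. The $\mathsf{L}^n\mathsf{F}$ assertion is entirely dual, performed in $\mathsf{D^-(A)}$ and $\mathsf{D^-(B)}$.

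The only delicate point, which I would address explicitly, is verifying that the isomorphism $\mathsf{R}^0\mathsf{F}\cong\mathsf{F}$ is compatible with the connecting morphisms of the cohomological long exact sequence, so that the substitution in the final step genuinely produces $\mathsf{F}(X_1\to X_2)$ and $\mathsf{F}(X_2\to X_3)$ at the beginning. This reduces to the naturality of the assignment $X\mapsto R^\bullet$ up to $S_\mathcal{R}$, which is built into the universal property of $\mathsf{RF}$ (Definition~\ref{universalderivedfunc}) and its compatibility with $H^0$; everything else is a straightforward combination of the cited results.
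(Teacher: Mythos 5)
Your proposal is correct and follows essentially the same route as the paper: the identifications $\mathsf{R}^0\mathsf{F}\cong\mathsf{F}$ and $\mathsf{R}^{-n}\mathsf{F}=0$ are read off from the action of $\mathsf{RF}$ on an adapted resolution as in Remark~\ref{classicalderivedonresolutions}, and the long exact sequences come from applying the exact functor $\mathsf{RF}$ (resp.\ $\mathsf{LF}$) to the distinguished triangle attached to the short exact sequence and then invoking Theorem~\ref{longexactcohomtriangles}. Your write-up is in fact somewhat more careful than the paper's, which leaves the $H^0$ computation and the truncation of the sequence implicit.
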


\begin{proof}
	This is a straightforward consequence of the definition of classical derived functor, as can be seen by its action on objects after replacing them with suitable right/left resolutions as done in Remark \ref{classicalderivedonresolutions}. Note that in the special case when $\mathsf{F}$ is an Hom-functor, we reach the same conclusion by Proposition \ref{ExtRHom} and Theorem \ref{Extprop}.
	
	The statement about long exact sequences follows from the fact that derived functors are exact: the distinguished triangle $X_1^\bullet\rightarrow X_2^\bullet\rightarrow X_3^\bullet\rightarrow X_1[1]^\bullet$ in $\mathsf{D}^\#\!(A)$ associated to the given short exact sequence is mapped to the distinguished triangle $X_1^\bullet\rightarrow X_2^\bullet\rightarrow X_3^\bullet\rightarrow X_1[1]^\bullet$ in $\mathsf{D}^\#\!(B)$, which in turn induces by Theorem \ref{longexactcohomtriangles} the declared long exact sequences in cohomology. 
\end{proof}

\begin{Ex}\label{Torfunc}
	Inspired again by Example \ref{exactfuncexample}, we wonder what happens when deriving the right exact functor $\square\otimes_R Y:\mathsf{Mod}_R\rightarrow\mathsf{Ab}$, where $R$ is an associative, unital ring and $Y\in\text{obj}({}_R\mathsf{Mod})$ is fixed. Then it turns out that $\mathcal{R}\coloneqq\{X\in\text{obj}(\mathsf{Mod}_R)\mid X\text{ is flat}\}\subset\text{obj}(\mathsf{Mod}_R)$ is adapted to $\square\otimes_R Y$ (proof omitted). We thus obtain the left derived functor $\mathsf{L}(\square\otimes_R Y):\mathsf{D^-(Mod}_R)\rightarrow\mathsf{D^-(Ab)}$ and its classical counterparts 
	\begin{equation}
		\text{Tor}_n^R(\square,Y)\coloneqq\mathsf{L}^{-n}(\square\otimes_R Y):\mathsf{Mod}_R\rightarrow\mathsf{Ab}\,,
	\end{equation}
	the $n$-th \textbf{Tor-functors}\index{Tor-functor} ($n\geq 0$). 
	
	The specular construction for the right exact $X\otimes_R \square:{}_R\mathsf{Mod}\rightarrow\mathsf{Ab}$ and $X\in\text{obj}(\mathsf{Mod}_R)$ gives 
	\begin{equation}
		\text{Tor}^R_n(X,\square)\coloneqq\mathsf{L}^{-n}(X\otimes_R \square):{}_R\mathsf{\mathsf{Mod}}\rightarrow\mathsf{Ab}\,.
	\end{equation}
	\hfill $\blacklozenge$
\end{Ex}

\begin{Def}\label{Facyclic}
	Let $\mathsf{RF},\mathsf{LF}:\mathsf{D^\pm(A)}\rightarrow\mathsf{D^\pm(B)}$ be the right/left derived functor of a left/right exact functor $\mathsf{F}:\mathsf{A}\rightarrow\mathsf{B}$ between abelian categories. Then $X\in\text{obj}(\mathsf{A})$ is \textbf{$\mathsf{F}$-acyclic}\index{facy@$\mathsf{F}$-acyclic} if $\mathsf{R}^n\mathsf{F}(X)=H^n(\mathsf{F}(X[0])^\bullet)=0\in\text{obj}(\mathsf{B})$ for all $n>0$, respectively if $\mathsf{L}^n\mathsf{F}(X)=0$ for all $n<0$.
\end{Def}

In this setting, if the class $\mathcal{Z}$ of all $\mathsf{F}$-acyclic objects in $\mathsf{A}$ is sufficiently large --- meaning that any object of $\mathsf{A}$ is a subobject of an $\mathsf{F}$-acyclic one --- then one can prove that $\mathcal{Z}$ contains any $\mathcal{R}\subset\text{obj}(\mathsf{A})$ adapted to $\mathsf{F}$, particularly $\mathcal{I}$ of all injective objects, and any sufficiently large subclass $\tilde{\mathcal{Z}}\subset\mathcal{Z}$ is adapted to $\mathsf{F}$ (see \cite[Theorem III.6.16]{[GM03]}). So the existence of derived functors (almost) reverts to that of adapted classes.
\vspace*{0.3cm}

\noindent As a final word about general derived functors, we deal with their restrictibility and composability.

\begin{Pro}\label{derivedfuncrestrictstobdd}
	Let $\mathsf{A},\mathsf{B}$ be abelian categories, with $\mathsf{A}$ having enough injectives, and let $\mathsf{F}:\mathsf{A}\rightarrow\mathsf{B}$ be a left exact functor with associated right derived functor $\mathsf{RF}:\mathsf{D^+(A)}\rightarrow\mathsf{D^+(B)}$.
	\renewcommand{\theenumi}{\roman{enumi}}
	\begin{enumerate}[leftmargin=0.5cm]
		\item Suppose $\mathsf{C}\subset\mathsf{B}$ is a thick subcategory \textup(see Lemma \ref{abeliansubcatderivedequiv}\textup) with $\mathsf{R}^i\mathsf{F}(X)\in\textup{obj}(\mathsf{C})$ for all $X\in\textup{obj}(\mathsf{A})$, and assume that there exists some $n\in\mathbb{Z}$ such that $\mathsf{R}^i\mathsf{F}(X)=0$ for all $i<n$ and $X\in\textup{obj}(\mathsf{A})$. Then $\mathsf{RF}$ takes values into the full subcategory of $\mathsf{D^+(B)}$ consisting of complexes with cohomology in $\mathsf{C}$.
		
		\item If $\mathsf{RF}(X[0]^\bullet)\in\textup{obj}(\mathsf{D^b(B)})$ for any $X\in\textup{obj}(\mathsf{A})$, then $\mathsf{RF}$ restricts to an exact functor $\mathsf{RF}:\mathsf{D^b(A)}\rightarrow\mathsf{D^b(B)}$. \textup(This is valid even if $\mathsf{A}$ does not have enough injectives but $\mathsf{RF}$ still exists; see \textup{\cite[Corollary 2.68]{[Huy06]}} and the ensuing remark.\textup) 
	\end{enumerate}
\end{Pro}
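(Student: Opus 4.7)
The plan is to attack both parts through canonical truncation, reducing everything to a bounded case that succumbs to induction on cohomological length; part (i) then requires one extra truncation step to pass from $\mathsf{D^b(A)}$ to $\mathsf{D^+(A)}$. Throughout, the two key tools are the exactness of $\mathsf{RF}$ (Theorem \ref{derivedfuncisexact}) and the resulting long exact sequence in cohomology (Theorem \ref{longexactcohomtriangles}).

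First I would treat a bounded complex $X^\bullet \in \textup{obj}(\mathsf{D^b(A)})$ by induction on its cohomological length $\ell(X^\bullet) \coloneqq \#\{k \in \mathbb{Z} \mid H^k(X^\bullet) \neq 0\}$. The base case $\ell = 1$ makes $X^\bullet$ an $H^k$-complex, so by a translated application of Proposition \ref{AinD(A)} it is isomorphic in $\mathsf{D^b(A)}$ to the $k$-complex $H^k(X^\bullet)[-k]^\bullet$; since $\mathsf{RF}$ commutes with the translation autoequivalence (a direct consequence of its exactness, as $T^n$ preserves distinguished triangles up to sign), one obtains $H^i(\mathsf{RF}(X^\bullet)) \cong \mathsf{R}^{i-k}\mathsf{F}(H^k(X^\bullet))$, which lies in $\textup{obj}(\mathsf{C})$ for (i) and whose total complex lies in $\textup{obj}(\mathsf{D^b(B)})$ for (ii) by the respective hypotheses. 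For the inductive step, choose $m$ splitting the cohomological support of $X^\bullet$ non-trivially and apply $\mathsf{RF}$ to the canonical truncation distinguished triangle
\begin{equation*}
\tau^{\leq m}(X^\bullet) \longrightarrow X^\bullet \longrightarrow \tau^{>m}(X^\bullet) \longrightarrow \tau^{\leq m}(X^\bullet)[1]\,.
\end{equation*}
The outer complexes have strictly smaller cohomological length, so the inductive hypothesis applies to them. The resulting long exact sequence wedges each $H^i(\mathsf{RF}(X^\bullet))$ between cohomology objects of $\mathsf{RF}(\tau^{\leq m}(X^\bullet))$ and $\mathsf{RF}(\tau^{>m}(X^\bullet))$: thickness of $\mathsf{C}$ (an abelian subcategory closed under kernels, cokernels and extensions in $\mathsf{B}$) closes the induction for (i), while for (ii) an extension of two bounded-cohomology complexes remains bounded.

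For (i) on an arbitrary $X^\bullet \in \textup{obj}(\mathsf{D^+(A)})$, fix a target degree $i \in \mathbb{Z}$ and choose $N \geq i$. The canonical truncation triangle
\begin{equation*}
\tau^{\leq N}(X^\bullet) \longrightarrow X^\bullet \longrightarrow \tau^{>N}(X^\bullet) \longrightarrow \tau^{\leq N}(X^\bullet)[1]
\end{equation*}
places $\tau^{\leq N}(X^\bullet)$ in $\mathsf{D^b(A)}$, since $X^\bullet$ is bounded below, so the previous step already yields $H^i(\mathsf{RF}(\tau^{\leq N}(X^\bullet))) \in \textup{obj}(\mathsf{C})$. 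The crux is the sub-claim that $H^i(\mathsf{RF}(\tau^{>N}(X^\bullet))) = 0 = H^{i-1}(\mathsf{RF}(\tau^{>N}(X^\bullet)))$, which collapses the long exact sequence to an isomorphism $H^i(\mathsf{RF}(X^\bullet)) \cong H^i(\mathsf{RF}(\tau^{\leq N}(X^\bullet)))$. To establish it, quasi-isomorphically replace $\tau^{>N}(X^\bullet)$ with a representative $Z^\bullet$ strictly concentrated in degrees $> N$ (take $Z^{N+1} \coloneqq X^{N+1}/\textup{im}(d_X^N)$ and $Z^j \coloneqq X^j$ for $j > N+1$), then, leveraging that $\mathsf{A}$ has enough injectives, construct an injective resolution $Z^\bullet \to I^\bullet$ with $I^j = 0$ for $j \leq N$ by mimicking the inductive pushout procedure from the proof of Proposition \ref{D(R)equivalent} with $\mathcal{I}$ in place of $\mathcal{R}$. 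Then $\mathsf{RF}(\tau^{>N}(X^\bullet)) \cong \mathsf{F}(I^\bullet)$ by \eqref{derivedfunconcomplexes}, which vanishes componentwise in degrees $\leq N$ and therefore has vanishing cohomology there --- in particular at degrees $i$ and $i-1$, both $\leq N$. The uniform vanishing hypothesis $\mathsf{R}^p\mathsf{F} = 0$ for $p < n$ --- automatic with $n = 0$ for right derived functors of left-exact ones --- is what guarantees that each $H^i(\mathsf{RF}(X^\bullet))$ is assembled from only finitely many pieces in this truncation argument.

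The main obstacle I anticipate is the bookkeeping around truncations in the unbounded-below case: one must verify that the replacement $Z^\bullet$ is genuinely zero below degree $N+1$ (not merely acyclic there) and that the standard inductive construction of an injective resolution preserves this vanishing, giving an $I^\bullet$ concentrated in degrees $\geq N+1$. Once these details are settled, the exactness of $\mathsf{RF}$ automatically makes the restriction $\mathsf{RF}\colon\mathsf{D^b(A)}\to\mathsf{D^b(B)}$ asserted in (ii) exact in the sense of Definition \ref{exactderivedfunc}, since distinguished triangles are preserved by the ambient functor on $\mathsf{D^+(A)}$ and the image lies in $\mathsf{D^b(B)}$ by the bounded-case induction.
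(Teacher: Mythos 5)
The paper itself gives no proof of this proposition (it defers part (ii) to \cite[Corollary 2.68]{[Huy06]}), so there is nothing to compare against line by line; judged on its own, your argument is correct and is essentially the standard truncation-and-induction proof used in the cited reference. The induction on cohomological length via the canonical truncation triangle, the identification of an $H^k$-complex with $H^k(X^\bullet)[-k]^\bullet$ through the shifted Proposition \ref{AinD(A)}, and the long-exact-sequence sandwich all go through; likewise your reduction of the unbounded case of (i) to the vanishing of $H^{i}$ and $H^{i-1}$ of $\mathsf{RF}(\tau^{>N}(X^\bullet))$, obtained by resolving a representative concentrated in degrees $>N$ by injectives concentrated in degrees $>N$ (the pushout construction of Proposition \ref{D(R)equivalent} indeed preserves this). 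Two small remarks. First, your inductive step for (i) needs $\mathsf{C}$ closed under subobjects and quotients taken in $\mathsf{B}$, not merely under extensions: the exact sequence $A\to H\to B$ exhibits $H$ as an extension of a \emph{subobject} of $B$ by a \emph{quotient} of $A$. You state this reading of ``thick'' explicitly, which is the intended one (it is the hypothesis under which Lemma \ref{abeliansubcatderivedequiv} is quoted from \cite{[KS06]}), but it is strictly more than the closure-under-extensions condition spelled out there, so the parenthetical deserves to stay. Second, your closing sentence slightly misattributes the role of the hypothesis $\mathsf{R}^i\mathsf{F}=0$ for $i<n$: in your argument the finiteness of the assembly comes from the injective resolution of $\tau^{>N}(X^\bullet)$ living in degrees $>N$, of which the uniform vanishing of the classical derived functors in negative degrees is a consequence rather than an input; as you note, it holds automatically with $n=0$ here, so nothing is lost.
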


\begin{Pro}\label{derivedfunccompo}
	Let $\mathsf{A}, \mathsf{B}, \mathsf{C}$ be abelian categories, $\mathsf{F}:\mathsf{A}\rightarrow\mathsf{B}$ and $\mathsf{G}:\mathsf{B}\rightarrow\mathsf{C}$ additive left exact functors, $\mathcal{R}_\mathsf{A}\subset\textup{obj}(\mathsf{A}), \mathcal{R}_\mathsf{B}\subset\textup{obj}(\mathsf{B})$ classes adapted to $\mathsf{F}$ respectively $\mathsf{G}$ such that $\mathsf{F}(\mathcal{R}_\mathsf{A})\subset\mathcal{R}_\mathsf{B}$. Then the right derived functor $\mathsf{R(G\circ F)}:\mathsf{D^+(A)}\rightarrow\mathsf{D^+(C)}$ exists and the natural transformation $E:\mathsf{R(G\circ F)}\rightarrow \mathsf{RG\circ RF}$ is a natural isomorphism.
\end{Pro}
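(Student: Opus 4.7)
The plan is first to verify that $\mathcal{R}_\mathsf{A}$ itself is adapted to the composite $\mathsf{G}\circ\mathsf{F}$, so that the existence of $\mathsf{R(G\circ F)}$ is reduced to Theorem \ref{derivedfuncisexact}. Closure of $\mathcal{R}_\mathsf{A}$ under finite direct sums and the property that every $X\in\textup{obj}(\mathsf{A})$ embeds into some $R\in\mathcal{R}_\mathsf{A}$ are immediate from $\mathcal{R}_\mathsf{A}$ being adapted to $\mathsf{F}$. For the acyclicity clause, given any acyclic $X^\bullet\in\textup{obj}(\mathsf{Kom^+(\mathcal{R}_\mathsf{A})})$, adaptedness of $\mathcal{R}_\mathsf{A}$ to $\mathsf{F}$ yields an acyclic $\mathsf{F}(X)^\bullet\in\textup{obj}(\mathsf{Kom(B)})$; the assumption $\mathsf{F}(\mathcal{R}_\mathsf{A})\subset\mathcal{R}_\mathsf{B}$ moreover places $\mathsf{F}(X)^\bullet$ inside $\mathsf{Kom^+(\mathcal{R}_\mathsf{B})}$, whereupon adaptedness of $\mathcal{R}_\mathsf{B}$ to $\mathsf{G}$ ensures that $\mathsf{G}(\mathsf{F}(X))^\bullet$ is acyclic in $\mathsf{Kom(C)}$. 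Hence $\mathcal{R}_\mathsf{A}$ is adapted to $\mathsf{G}\circ\mathsf{F}$ and $\mathsf{R(G\circ F)}$ exists.

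To construct $E$, I would invoke the universal property of $(\mathsf{R(G\circ F)},\varepsilon_{\mathsf{G\circ F}})$ recorded in Definition \ref{universalderivedfunc} against the candidate $\mathsf{RG}\circ\mathsf{RF}$, which is exact as the composition of the exact functors $\mathsf{RF}$ and $\mathsf{RG}$. Since $\mathsf{H^+(G\circ F)}=\mathsf{H^+(G)}\circ\mathsf{H^+(F)}$, horizontal pasting of $\varepsilon_\mathsf{F}$ and $\varepsilon_\mathsf{G}$ supplies the comparison $\varepsilon:\mathsf{L_C}\circ\mathsf{H^+(G\circ F)}\rightarrow\mathsf{RG}\circ\mathsf{RF}\circ\mathsf{L_A}$ as the composite
\begin{equation*}
\mathsf{L_C}\circ\mathsf{H^+(G)}\circ\mathsf{H^+(F)}\;\xrightarrow{\varepsilon_\mathsf{G}\,\mathsf{H^+(F)}}\;\mathsf{RG}\circ\mathsf{L_B}\circ\mathsf{H^+(F)}\;\xrightarrow{\mathsf{RG}\,\varepsilon_\mathsf{F}}\;\mathsf{RG}\circ\mathsf{RF}\circ\mathsf{L_A}.
\end{equation*}
Universality then delivers a unique natural transformation $E:\mathsf{R(G\circ F)}\rightarrow\mathsf{RG}\circ\mathsf{RF}$ making the analogue of diagram \eqref{universalderfuncdiag} commute.

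To check that each component $E(X^\bullet)$ is an isomorphism, I would reduce to an $\mathcal{R}_\mathsf{A}$-replacement. By Proposition \ref{D(R)equivalent}, every $X^\bullet\in\textup{obj}(\mathsf{D^+(A)})$ admits a quasi-isomorphism $X^\bullet\rightarrow Y^\bullet$ with $Y^\bullet\in\textup{obj}(\mathsf{Kom^+(\mathcal{R}_\mathsf{A})})$. Along this replacement, Definition \ref{derivedfunc} evaluates $\mathsf{R(G\circ F)}(X^\bullet)$ as $\mathsf{G}(\mathsf{F}(Y))^\bullet$ and $\mathsf{RF}(X^\bullet)$ as $\mathsf{F}(Y)^\bullet$; the inclusion $\mathsf{F}(\mathcal{R}_\mathsf{A})\subset\mathcal{R}_\mathsf{B}$ places $\mathsf{F}(Y)^\bullet$ in $\mathsf{Kom^+(\mathcal{R}_\mathsf{B})}$, so a second application of Definition \ref{derivedfunc} identifies $\mathsf{RG}(\mathsf{F}(Y)^\bullet)$ with $\mathsf{G}(\mathsf{F}(Y))^\bullet$ as well. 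Tracing the construction of $E$ through these identifications then exhibits $E(X^\bullet)$ as the identity on the common value $\mathsf{G}(\mathsf{F}(Y))^\bullet$, hence an isomorphism; naturality in $X^\bullet$ comes for free from the uniqueness clause of Definition \ref{universalderivedfunc}.

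The main obstacle is the bookkeeping of the quasi-inverses $\Phi$ of the equivalences $\Psi$ in Definition \ref{derivedfunc}: to genuinely compare $\mathsf{R(G\circ F)}(X^\bullet)$ with $\mathsf{RG}(\mathsf{RF}(X^\bullet))$ one has to juggle three distinct adapted-class replacements (for $\mathcal{R}_\mathsf{A}$ inside $\mathsf{A}$, for $\mathcal{R}_\mathsf{B}$ inside $\mathsf{B}$, and the one automatically attached to $\mathsf{G}\circ\mathsf{F}$ via $\mathcal{R}_\mathsf{A}$), and to verify that the horizontal composite $\varepsilon$ displayed above is compatible with the three pairs of natural isomorphisms $\alpha,\beta$ that they produce. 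Once this alignment is in place, the identification above becomes entirely formal and no further content is needed.
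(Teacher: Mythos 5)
Your proposal is correct and follows essentially the same route as the paper's own proof: show $\mathcal{R}_\mathsf{A}$ is adapted to $\mathsf{G}\circ\mathsf{F}$ using $\mathsf{F}(\mathcal{R}_\mathsf{A})\subset\mathcal{R}_\mathsf{B}$, obtain $E$ from the universal property of Definition \ref{universalderivedfunc} applied to the exact composite $\mathsf{RG}\circ\mathsf{RF}$, and conclude that $E(X^\bullet)$ is an isomorphism because every object of $\mathsf{D^+(A)}$ is isomorphic to a complex in $\mathsf{Kom^+(\mathcal{R}_\mathsf{A})}$, on which all three functors agree. You merely spell out the whiskered composite defining $\varepsilon$ and the acyclicity check, which the paper leaves implicit.
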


\begin{proof}
	The proof is straightforward. By assumption, $\mathsf{F}$ maps $\mathsf{Kom^+(\mathcal{R}_\mathsf{A})}$ into $\mathsf{Kom^+(\mathcal{R}_\mathsf{B})}$, hence $\mathcal{R}_\mathsf{A}$ is clearly adapted to the additive left exact functor $\mathsf{G\circ F}:\mathsf{A}\rightarrow\mathsf{C}$ as well. Therefore, $\mathsf{R(G\circ F)}$ exists and is constructed as in Definition \ref{derivedfunc}. Moreover, it is exact (using Lemma \ref{preplemma} and the fact that both $\mathsf{RF}, \mathsf{RG}$ are) and unique. The universal property \eqref{universalderfuncdiag} provides the natural transformation $E:\mathsf{R(G\circ F)}\rightarrow \mathsf{RG\circ RF}$ (what we called $\eta$ in the diagram) towards the exact functor $\mathsf{RG\circ RF}:\mathsf{D^+(A)}\rightarrow\mathsf{D^+(C)}$. Since $E(X^\bullet)$ is manifestly an isomorphism in $\mathsf{D^+(C)}$ for each $X^\bullet\in\text{obj}(\mathsf{Kom^+(\mathcal{R}_\mathsf{A})})$, and any object of $\mathsf{D^+(A)}$ has such form by assumption (up to isomorphism), $E$ is a natural isomorphism. 
\end{proof}

At the level of classical derived functors, we understand that there are isomorphisms $E^n(X):\mathsf{R}^n(\mathsf{G\circ F})(X)\xrightarrow{\sim}\mathsf{R}^n\mathsf{G}(\mathsf{RF}(X))$ in $\mathsf{C}$ for all $X\in\text{obj}(\mathsf{A})$. They suggest that it is possible to reconstruct the classical derived functor of a composition of functors from the classical derived functors of its components --- something non-trivial because we observe that the act of taking cohomology destroys the composition law of functors! Key is a proper description of the isomorphisms $E^n(X)$, along with some additional data, together forming a \textit{spectral sequence}. We refer the interested reader to \cite[section III.7]{[GM03]}.

\newpage

\section{Triangulated categories}
\thispagestyle{plain}

\subsection{The axioms of triangulated categories}\label{ch4.1}

Derived categories have a very nice property: they are triangulated. We have already discussed the basics of triangulated categories back in \cite[section 3.1]{[Imp21]}, where we saw them to obey a set of specific axioms. Since we will now spend a little more time on the subject so to prove the debut statement, it is beneficial to mention again their defining features.

\begin{Def}\label{triangcat}
	An additive category $\mathsf{D}$ is a \textbf{triangulated category}\index{triangulated category} if it is endowed with an additive autoequivalence $T:\mathsf{D}\rightarrow\mathsf{D}$, the \textbf{translation functor}\index{functor!translation} (defined analogously to Definition \ref{translfunc}), and a class of \textbf{distinguished triangles}\index{distinguished triangle}
	\begin{equation}\label{triangtriangle}
		X\xrightarrow{u}Y\xrightarrow{v}Z\xrightarrow{w}T(X)=X[1]
		\qquad\equiv\qquad
		\begin{tikzcd}
			X\arrow[rr, "u"] & & Y\arrow[dl, "v"]\\
			& Z\arrow[ul, "{w[1]}"]
		\end{tikzcd}
	\end{equation}
	in $\mathsf{D}$, written $(u,v,w)$ for short, and \textbf{morphisms of distinguished triangles}\index{morphism!of distinguished triangles} $(f,g,h)$ fitting into\vspace*{-0.2cm}
	\begin{equation}
		\begin{tikzcd}
			X\arrow[r,"u"]\arrow[d,"f"'] & Y\arrow[r,"v"]\arrow[d,"g"'] & Z\arrow[r,"w"]\arrow[d,"h"] & {X[1]}\arrow[d,"{f[1]}"] \\
			X'\arrow[r,"u'"'] & Y'\arrow[r,"v'"'] & Z'\arrow[r,"w'"'] & X'[1]
		\end{tikzcd}
	\end{equation}
	--- also isomorphisms if $f,g,h$ are isomorphisms in $\mathsf{D}$ --- which satisfy the following axioms:
	\begin{itemize}[leftmargin=0.5cm]
		\renewcommand{\labelitemi}{\textendash}
		
		\item (\textbf{T1})\renewcommand{\theenumi}{\alph{enumi}}\vspace*{-0.3cm} 
		\begin{enumerate}[leftmargin=0.5cm] 
			\item $X\xrightarrow{\textup{id}_X}X\rightarrow 0\rightarrow X[1]$ is distinguished for any $X\in\text{obj}(\mathsf{D})$;
			
			\item any triangle (generally of the form \eqref{triangtriangle}) isomorphic to a distinguished triangle is itself distinguished;
			
			\item any morphism $u\in\text{Hom}_\mathsf{D}(X,Y)$ can be completed to a distinguished triangle like \eqref{triangtriangle}.
		\end{enumerate}
		
		\item (\textbf{T2}) A triangle $X\xrightarrow{u}Y\xrightarrow{v}Z\xrightarrow{w}X[1]$ is distinguished if and only if $Y\xrightarrow{v}Z\xrightarrow{w}X[1]\xrightarrow{-u[1]}Y[1]$ is distinguished (or equivalently, if and only if $Z[-1]\xrightarrow{-w[-1]}X\xrightarrow{u}Y\xrightarrow{v}Z$ is).
		
		\item (\textbf{T3}) The braid diagram (see \cite{[May01]}):
		\begin{equation}\label{commbraid}
			\begin{tikzcd}
				X \arrow[rr, orange, bend left = 45, "w" black]\arrow[dr, red, "u"' black] & \circlearrowleft & Z\arrow[rr, green, bend left = 45, "v'" black]\arrow[dr, orange, "w'" black] & & X' \arrow[rr, cyan, bend left = 45, "a''" black]\arrow[dr, green, "v''"' black] & \circlearrowleft & \mkern-6mu Z'[1] \\
				& Y\arrow[ur, green, "v"' black]\arrow[dr, red, "u'"' black] & & Y' \arrow[ur, dashed, cyan, "a'" black]\arrow[dr, orange, "w''" black] & & \mkern-6mu Y[1]\arrow[ur, "{u'[1]}"'] \\
				& & Z'\arrow[ur, dashed, cyan, "a" black]\arrow[rr, red, bend right = 45, "u''"' black] & & \mkern-6mu X[1]\arrow[ur, "{u[1]}"']
			\end{tikzcd}
		\end{equation}
		made of red $(u,u',u'')$, green $(v,v',v'')$ and orange $(w,w',w'')$ distinguished triangles, where $w = v\circ u$ and $a''= u'[1]\circ v''$, can be completed to a commutative diagram in $\mathsf{D}$ via dashed arrows $a$ and $a'$ forming a blue distinguished triangle $(a,a',a'')$. 
	\end{itemize}
\end{Def}  

In literature, the visually less elegant (though triangle-shape preserving) octahedron diagram is more commonly used to describe (T3), in fact also known as \textit{octahedral axiom}:
\begin{equation}\label{octahedraldiag}
	\begin{tikzcd}
		& Y\arrow[dr, green, "v" black] & \\
		X'\arrow[d, cyan, "{a''[1]}"' black]\arrow[ur, green, "{v''[1]}" black] & & Z\arrow[ll, green, "v'"' black]\arrow[ddl, orange, "w'"' black] \\
		Z'\arrow[from=uur, red, crossing over, "\!\!u'" black]\arrow[rr, red, crossing over, "{u''[1]}"' black]\arrow[dr, dashed, cyan, "a"' black] & & X\arrow[u, orange, "w"' black]\arrow[uul, red, crossing over, "\overset{}{u}" black] \\
		& Y'\arrow[ur, orange, "{w''[1]}"' black]\arrow[uul, dashed, cyan, "a'"' black] &
	\end{tikzcd}
\end{equation}
Looking at colours for reference, we can reformulate the condition above as: given the upper pyramid, with distinguished front red/back green facets and commuting lateral facets, it is possible to construct a compatible lower pyramid with distinguished left blue/right orange facets and commuting front/back facets, so that the overall octahedron commutes.

We omitted what is often labeled as the axiom (TR3), according to Verdier's terminology (see \cite[section II.1]{[Ver96]}). This because it can actually be deduced from our (T1)--(T3), as shown in \cite[Lemma 3.3]{[Imp21]}. Still, we state it again, for it plays a crucial role in the proofs to follow:

\begin{Lem}[TR3]\label{tr3}								
	Assume the rows of the diagram
	\begin{equation}\label{TR3diag}
		\begin{tikzcd}
			X\arrow[r,"u"]\arrow[d,"f"'] & Y \arrow[r,"v"]\arrow[d,"g"'] & Z \arrow[r,"w"]\arrow[d, dashed, "h"] & X[1]\arrow[d, "{f[1]}"] \\
			X'\arrow[r, "u'"'] & Y'\arrow[r, "v'"'] & Z'\arrow[r, "w'"'] & X'[1]
		\end{tikzcd}
	\end{equation}
	in $\mathsf{D}$ are two distinguished triangles and suppose the left square commutes. Then there is a dashed arrow $h\in\textup{Hom}_\mathsf{D}(Z,Z')$ making the whole diagram commute, so that $(f,g,h)$ is a morphism of distinguished triangles; moreover, if $f$ and $g$ are isomorphisms, so is $h$.
\end{Lem}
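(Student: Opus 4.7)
The plan is to apply the octahedral axiom (T3) twice, exploiting the common factorization $\psi\coloneqq g\circ u = u'\circ f \colon X\to Y'$ that the commutativity of the left square of \eqref{TR3diag} hands us for free. First I would invoke (T1c) to complete $\psi$ to a distinguished triangle
\[
X\xrightarrow{\psi} Y'\xrightarrow{p} M\xrightarrow{q} X[1]\,,
\]
which will play the role of a ``bridge'' between the two given rows. Feeding the composable pair $(u,g)$ into the braid \eqref{commbraid} --- with red, green and orange triangles chosen respectively as the given distinguished triangle for $u$, a completion of $g$ via (T1c), and the triangle for $\psi$ just produced --- (T3) yields a blue distinguished triangle whose first edge is a morphism $\alpha\colon Z\to M$. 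Commutativity of the braid then enforces the two identities $\alpha\circ v = p\circ g$ and $q\circ\alpha = w$. A mirror application of (T3) to the pair $(f,u')$ (again with composition $\psi$, but with red/green/orange now being a completion of $f$, the given triangle for $u'$, and once more the bridge triangle for $\psi$) produces a morphism $\beta\colon M\to Z'$ fitting into the corresponding blue distinguished triangle $C(f)\to M\to Z'\to C(f)[1]$ and satisfying $\beta\circ p = v'$ and $w'\circ\beta = f[1]\circ q$.

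Setting $h\coloneqq \beta\circ\alpha\colon Z\to Z'$, the middle and right squares of \eqref{TR3diag} commute by a two-line computation:
\[
h\circ v = \beta\circ(\alpha\circ v) = \beta\circ p\circ g = v'\circ g,\qquad w'\circ h = (w'\circ\beta)\circ\alpha = f[1]\circ q\circ\alpha = f[1]\circ w\,,
\]
so $(f,g,h)$ is a bona fide morphism of distinguished triangles.

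For the second assertion, when $f$ and $g$ are isomorphisms I would invoke the cohomological nature of the Hom-bifunctor on any triangulated category --- a consequence of (T1)--(T3) that can be established before this lemma by a self-contained argument based on (T1a) and (T2). Applying $\textup{Hom}_\mathsf{D}(W,\square)$ to the two distinguished rows of \eqref{TR3diag} for an arbitrary test object $W\in\textup{obj}(\mathsf{D})$ yields long exact sequences of abelian groups linked by the vertical maps induced from $f,g,h,f[1]$. Since $\textup{Hom}_\mathsf{D}(W,f)$, $\textup{Hom}_\mathsf{D}(W,g)$ and $\textup{Hom}_\mathsf{D}(W,f[1])$ are isomorphisms by hypothesis, the classical Five Lemma forces $\textup{Hom}_\mathsf{D}(W,h)$ to be an isomorphism for every $W$, whereupon the Yoneda embedding upgrades this to $h$ itself being an isomorphism in $\mathsf{D}$. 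The main obstacle will be the careful bookkeeping within the two braid diagrams to ensure that the edges $\alpha,\beta$ supplied by (T3) are precisely the ones satisfying the four identities listed; this amounts to patiently decoding which colored sub-triangle of \eqref{commbraid} plays each role in each application, with potential sign pitfalls induced by the shift conventions in (T2).
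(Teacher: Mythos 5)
Your proof is correct, and in fact the paper does not prove this lemma at all: it states it without proof and defers to \cite[Lemma 3.3]{[Imp21]} (and, for the braid formulation, to \cite{[May01]}), so there is no internal proof to compare against. Your argument --- factor the common composite $\psi=g\circ u=u'\circ f$ through a bridge triangle on $\psi$, run the octahedral axiom (T3) once on the pair $(u,g)$ and once on $(f,u')$, and compose the two resulting edges $Z\xrightarrow{\alpha}M\xrightarrow{\beta}Z'$ --- is precisely the standard derivation of (TR3) from (T1)--(T3), and the four identities you extract, $\alpha\circ v=p\circ g$, $q\circ\alpha=w$, $\beta\circ p=v'$ and $w'\circ\beta=f[1]\circ q$, are exactly the commutativities guaranteed by the braid \eqref{commbraid} in your two instantiations; the two-line computation then gives the middle and right squares. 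The five-lemma-plus-Yoneda argument for the final isomorphism claim is likewise the standard one.

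One caveat: you assert that the exactness of $\textup{Hom}_\mathsf{D}(W,\square)$ on distinguished triangles "can be established before this lemma by a self-contained argument based on (T1a) and (T2)". That is not quite right. Both the vanishing of consecutive compositions (Remark \ref{consecindistitriang}) and the paper's proof of Proposition \ref{Homareexact} invoke the fill-in property of Lemma \ref{tr3} itself, and (T1a) and (T2) alone do not seem to suffice. This does not damage your proof, because the existence half of the lemma is exactly what your first two paragraphs establish, and that half is all the Hom-exactness argument consumes; you only need to order the steps as: existence of $h$ via the double octahedron, then exactness of $\textup{Hom}_\mathsf{D}(W,\square)$, then the five lemma and Yoneda. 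Stated in that order the logic is airtight.
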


\begin{Rem}\label{consecindistitriang}
	A few small but remarkable first consequences of (TR3) are:
	\begin{itemize}[leftmargin=0.5cm]
		\item The composition of any two consecutive morphisms in a distinguished triangle is zero: just take in \eqref{TR3diag} the top row to be the triangle $X'\xrightarrow{\textup{id}_{X'}}X'\rightarrow 0\rightarrow X'[1]$, distinguished by (T1)a., and $(f,g)=(\text{id}_{X'},u')$; then the only $h:0\dashrightarrow Z'$ making the diagram commute is $h=0$, so that $v'\circ u'=h\circ 0=0$ as claimed.
		
		\item The complementing distinguished triangle in (T1)c. is unique up to isomorphism: for any such two as in \eqref{TR3diag}, just take $u'=u: X'=X\rightarrow Y'=Y$, $f=\text{id}_X$ and $g=\text{id}_Y$; then by lemma there exists an isomorphism $h:Z\dashrightarrow Z'$.
	\end{itemize}  
\end{Rem}

The second bullet point prompts the following quite reminiscent definition.

\begin{Def}\label{conesintriangulated}
	Let $\mathsf{D}$ be a triangulated category. We call the object $Z$ completing $u\in\text{Hom}_\mathsf{D}(X,Y)$ to a distinguished triangle as per (T1)c. the \textbf{cone}\index{cone} of $u$, and write $Z\equiv C(u)$. By last remark it is unique only up to isomorphism.
	
	Conversely, if for a given $Y\in\text{obj}(\mathsf{D})$ there are $X,Z\in\text{obj}(\mathsf{D})$ and suitable morphisms such that $X\rightarrow Y\rightarrow Z\rightarrow X[1]$ is a distinguished triangle in $\mathsf{D}$, then $Y$ is sometimes called the \textbf{extension}\index{extension (of objects)} of $Z$ by $X$. Then a full subcategory $\mathsf{D}'\subset\mathsf{D}$ is \textit{stable under extensions} if for any such distinguished triangle in $\mathsf{D}$ holds $\big(X,Z\in\text{obj}(\mathsf{D}')\implies Y\in\text{obj}(\mathsf{D}')\big)$.
\end{Def}

\begin{Rem}
	One might hope that cones behave functorially under composition of morphisms, namely that $C(v\circ u)\cong C(v)\circ C(u)$ for compatible morphisms in $\mathsf{D}$. Unfortunately, this is not the case. 
	
	However, we can exploit the axiom (T3) to construct $C(v\circ u)$. Looking at the commutative braid \eqref{commbraid}, we can replace without loss of generality $Z'=C(u)$, $X'=C(v)$ and $Y'=C(v\circ u)$. Then the blue distinguished triangle demanded by the axiom is $C(u)\xrightarrow{a} C(v\circ u)\xrightarrow{a'} C(v)\xrightarrow{a''} C(u)[1]$, which allows us to interpret $C(v\circ u)=C(a'')[-1]=C(u'[1]\circ v'')[-1]$.
\end{Rem} 

\begin{Pro}\label{Homareexact}
	Let $\mathsf{D}$ be a triangulated category, $X\xrightarrow{u} Y\xrightarrow{v} Z\xrightarrow{w} X[1]$ a distinguished triangle, $D\in\textup{obj}(\mathsf{D})$ fixed. Then\begin{normalsize}
		\begin{align}\label{covHomexact}
			...\rightarrow \textup{Hom}_\mathsf{D}(D,X[n])\xrightarrow{u_*[n]} \textup{Hom}_\mathsf{D}(D,Y[n])&\xrightarrow{v_*[n]}\textup{Hom}_\mathsf{D}(D,Z[n]) \\
			&\xrightarrow{w_*[n]}\textup{Hom}_\mathsf{D}(D,X[n+1])\rightarrow...\,, \nonumber
		\end{align}
		\begin{align}\label{contraHomexact}
			...\rightarrow \textup{Hom}_\mathsf{D}(X[n+1],D)\xrightarrow{w^*[n]} \textup{Hom}_\mathsf{D}(Z[n],D)&\xrightarrow{v^*[n]}\textup{Hom}_\mathsf{D}(Y[n],D) \\
			&\xrightarrow{u^*[n]}\textup{Hom}_\mathsf{D}(X[n],D)\rightarrow...\,, \nonumber
		\end{align}
	\end{normalsize}are long exact sequences, where for example $u_*[n]=\textup{Hom}_\mathsf{D}(D,u[n])=u[n]\circ\square$ while $u^*[n]=\textup{Hom}_\mathsf{D}(u[n],D)=\square\circ u[n]$.
\end{Pro}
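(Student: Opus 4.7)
The plan is to reduce the statement to proving exactness of a single three-term sequence, then use the rotation axiom (T2) and the fact that $[1]$ is an autoequivalence to propagate it. Concretely, for a distinguished triangle $X\xrightarrow{u}Y\xrightarrow{v}Z\xrightarrow{w}X[1]$ and fixed $D\in\textup{obj}(\mathsf{D})$, it suffices to show exactness of
\[
\textup{Hom}_\mathsf{D}(D,X)\xrightarrow{u_*}\textup{Hom}_\mathsf{D}(D,Y)\xrightarrow{v_*}\textup{Hom}_\mathsf{D}(D,Z)
\]
at $\textup{Hom}_\mathsf{D}(D,Y)$. Exactness at every other position of \eqref{covHomexact} then follows by applying this three-term statement to the forward/backward rotated distinguished triangles produced by (T2), after any fixed translation $[n]$ (itself preserving distinguishedness).

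For the key exactness, the inclusion $\textup{im}(u_*)\subseteq\ker(v_*)$ is immediate from $v\circ u=0$, the first bullet of Remark \ref{consecindistitriang}. For the reverse inclusion, I would take $f\in\textup{Hom}_\mathsf{D}(D,Y)$ with $v\circ f=0$ and hunt for $g:D\to X$ with $u\circ g=f$. Rotating the trivial triangle $D\xrightarrow{\textup{id}_D}D\to 0\to D[1]$ once via (T2) yields the distinguished $D\to 0\to D[1]\xrightarrow{-\textup{id}_D[1]}D[1]$, while (T2) applied to the original triangle yields the distinguished $Y\xrightarrow{v}Z\xrightarrow{w}X[1]\xrightarrow{-u[1]}Y[1]$. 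Placing these as the top and bottom rows of the diagram in Lemma \ref{tr3}, with leftmost vertical arrows $f:D\to Y$ and $0:0\to Z$, the left square commutes precisely because $v\circ f=0$. Hence (TR3) supplies $\phi\in\textup{Hom}_\mathsf{D}(D[1],X[1])$ whose right-square commutativity reads $-u[1]\circ\phi=f[1]\circ(-\textup{id}_D[1])=-f[1]$, i.e.\ $u[1]\circ\phi=f[1]$. Since $[1]$ is an autoequivalence, it induces a bijection on Hom-groups, so $\phi=g[1]$ for a unique $g\in\textup{Hom}_\mathsf{D}(D,X)$; the previous identity then rewrites as $(u\circ g)[1]=f[1]$, whence $u\circ g=f$ after applying $[-1]$.

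The contravariant sequence \eqref{contraHomexact} is proved by the same recipe using (TR3), but placing $D$ in the bottom row rather than the top. Given $f\in\textup{Hom}_\mathsf{D}(Y,D)$ with $f\circ u=0$, one rotates the trivial triangle of $D$ backward via (T2) to the distinguished $0\to D\xrightarrow{\textup{id}_D}D\to 0$, and applies (TR3) to the diagram whose top row is the original triangle, bottom row this backward-rotated trivial one, and whose leftmost vertical arrows are $0:X\to 0$ and $f:Y\to D$ — the left square commutes precisely because $f\circ u=0$. The dashed arrow produced by (TR3) is then the sought $g\in\textup{Hom}_\mathsf{D}(Z,D)$ satisfying $g\circ v=f$; $\textup{im}(v^*)\subseteq\ker(u^*)$ is again just $v\circ u=0$, and (T2) together with translations extend exactness to the full sequence.

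The main obstacle is the careful sign bookkeeping in the rotated triangles entering the covariant case: both $-u[1]$ and $-\textup{id}_D[1]$ carry signs which must cancel correctly in the relation delivered by (TR3); miss a sign and one obtains $u\circ g=-f$ (harmless up to replacing $g$ by $-g$) or worse, a vacuous identity that fails to produce the required $g$. Everything else is straightforward diagram-chasing, once one trusts the rotation axiom to put each term of the long exact sequence in turn at the central position where (TR3) can be invoked.
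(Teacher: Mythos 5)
Your proposal is correct and follows essentially the same route as the paper's own proof: reduce to exactness at $\textup{Hom}_\mathsf{D}(D,Y)$ via (T2) and translation, get $\textup{im}(u_*)\subseteq\ker(v_*)$ from the vanishing of consecutive compositions, and obtain the converse by applying (TR3) to the once-rotated trivial triangle of $D$ over the once-rotated given triangle, with the same sign bookkeeping $-u[1]\circ g[1]=f[1]\circ(-\textup{id}_{D[1]})$ yielding $u\circ g=f$. Your sketch of the contravariant case is also the intended "same outline" the paper leaves implicit.
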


\begin{proof}
	We just prove exactness of the first sequence, the second following the same outline. We can restrict our analysis to e.g. $\text{Hom}_\mathsf{D}(D,Y)$, because exactness at the other Hom-groups follows by translation of the generating triangle, which leaves it distinguished by axiom (T2). Hence, to show is that $\ker(v_*)=\text{im}(u_*)$.
	
	Firstly, suppose $f=u_*(g)=u\circ g\in\text{im}(u_*)$ for some $g\in\text{Hom}_\mathsf{D}(D,X)$. Then Remark \ref{consecindistitriang} immediately implies that $v_*(f)=v\circ u\circ g=0\circ g=0$, that is, $f\in\ker(v_*)$. Conversely, suppose $f\in\ker(v_*)$, and look at
	\[
	\begin{tikzcd}
		D\arrow[r]\arrow[d,"f"'] & 0 \arrow[r]\arrow[d] & D[1] \arrow[rr, "-\text{id}_{D[1]}"]\arrow[d, dashed, "{g[1]}"] & & D[1]\arrow[d, "{f[1]}"] \\
		Y\arrow[r, "v"'] & Z\arrow[r, "w"'] & X[1]\arrow[rr, "{-u[1]}"'] & & Y[1]
	\end{tikzcd}\quad,
	\]
	where both rows are distinguished triangles by (T1) and (T2). Since the left square commutes by assumption on $f$, Lemma \ref{tr3} provides the dashed arrow $g[1]$, whence we recover $g\in\text{Hom}_\mathsf{D}(D,X)$ again thanks to the translation allowed by (T2). This builds a further square to the left of the diagram, with leftmost vertical arrow $g$, whose commutativity guarantees that $f=f\circ\text{id}_D=u\circ g=u_*(g)\in\text{im}(u_*)$.
\end{proof}

Functors on triangulated categories can be cohomological or exact as well:

\begin{Def}\label{cohomfuncontriangulated}
	Let $(\mathsf{D}, T_\mathsf{D})$, $(\mathsf{D}', T_{\mathsf{D}'})$ be triangulated categories, $\mathsf{A}$ an abelian category.
	\begin{itemize}[leftmargin=0.5cm]
		\item A functor $\mathsf{F}:\mathsf{D}\rightarrow\mathsf{A}$ is \textbf{cohomological}\index{functor!cohomological (on a triangulated category)} if it is additive and any distinguished triangle in $\mathsf{D}$ like \eqref{triangtriangle} induces an exact sequence $\mathsf{F}(X)\xrightarrow{\mathsf{F}(u)}\mathsf{F}(Y)\xrightarrow{\mathsf{F}(v)}\mathsf{F}(Z)$ in $\mathsf{A}$, so $\ker({\mathsf{F}(v)})=\text{im}(\mathsf{F}(u))$.
		
		\item A functor $\mathsf{F}:\mathsf{D}\rightarrow\mathsf{D}'$ is \textbf{triangulated}\index{functor!triangulated (or exact)} (or \textit{exact}) if it commutes with the translation functors, $\mathsf{F}\circ T_\mathsf{D}=T_{\mathsf{D}'}\circ\mathsf{F}$, and maps distinguished triangles in $\mathsf{D}$ to distinguished triangles in $\mathsf{D}'$.
	\end{itemize}
	
\end{Def}

We see with little effort that the cohomological condition is just the same as in Definition \ref{cohomfunconabelian}. However, we can bypass the requisite of a \textit{long} exact sequence because this comes for free in triangulated categories: if $\mathsf{F}$ is cohomological, by axiom (T2) also the sequences associated to the distinguished triangles shifted once to the left and to the right are exact as well, hence they chain together along with all other translated copies to form the expected long exact sequence.

By Proposition \ref{Homareexact}, the covariant functor $\textup{Hom}_\mathsf{D}(D,\square):\mathsf{D}\rightarrow\mathsf{Ab}$ and the contravariant functor $\textup{Hom}_\mathsf{D}(\square,D):\mathsf{D}\rightarrow\mathsf{Ab}$ are cohomological.\footnote{From the perspective of distinguished triangles, this means that they are exact. A generic triangle in $\mathsf{D}$ is called \textit{exact} if it induces long exact sequences upon application of $\textup{Hom}_\mathsf{D}(D,\square)$ and $\textup{Hom}_\mathsf{D}(\square,D)$ for every $D\in\text{obj}(\mathsf{D})$.}  

\vspace*{0.5cm}

\noindent As a final note, one may wonder whether the morphism analogue of axiom (T1)c. holds. That is, which data is necessary to construct a morphism between two given distinguished triangles connected by one vertical arrow?

\begin{Cor}
	Let $\mathsf{D}$ be a triangulated category and suppose we are given two distinguished triangles
	\[
	\begin{tikzcd}
		X\arrow[r,"u"]\arrow[d, dashed, "f"'] & Y \arrow[r,"v"]\arrow[d,"g"'] & Z \arrow[r, "w"]\arrow[d, dashed, "h"] & X[1]\arrow[d, dashed, "{f[1]}"] \\
		X'\arrow[r, "u'"'] & Y'\arrow[r, "v'"'] & Z'\arrow[r, "w'"'] & X'[1]
	\end{tikzcd}
	\]
	just connected by some $g:Y\rightarrow Y'$. If $v'\circ g\circ u=0$, then it is possible to complete $g$ to a morphism of triangles $(f,g,h)$ as depicted, unique as soon as $\textup{Hom}_\mathsf{D}(X,Z'[-1])=\{0\}$. 
	
	\textup(The converse statement is trivially true: from the readily complete commutative diagram we obtain $v'\circ g\circ u=v'\circ u'\circ f=0$, by Remark \ref{consecindistitriang}.\textup) 
\end{Cor}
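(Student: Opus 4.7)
The plan is to combine the cohomological behaviour of the Hom-functors from Proposition \ref{Homareexact} with the completion axiom (TR3) of Lemma \ref{tr3}. Specifically, the Hom-functors yield the precise obstruction-theoretic control needed to fabricate $f$ out of the sole datum of $g$, after which (TR3) supplies $h$ essentially for free.

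First I would construct $f$. Applying the cohomological $\textup{Hom}_\mathsf{D}(X,\square)$ to the bottom distinguished triangle gives the exact sequence
\[
\textup{Hom}_\mathsf{D}(X,X')\xrightarrow{u'_*}\textup{Hom}_\mathsf{D}(X,Y')\xrightarrow{v'_*}\textup{Hom}_\mathsf{D}(X,Z')\,.
\]
The element $g\circ u\in\textup{Hom}_\mathsf{D}(X,Y')$ satisfies $v'_*(g\circ u)=v'\circ g\circ u=0$ by hypothesis, hence lies in $\ker(v'_*)=\textup{im}(u'_*)$, yielding some $f\in\textup{Hom}_\mathsf{D}(X,X')$ with $u'\circ f=g\circ u$. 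The left square of the diagram then commutes, and Lemma \ref{tr3} directly produces the dashed arrow $h\in\textup{Hom}_\mathsf{D}(Z,Z')$ turning $(f,g,h)$ into a morphism of distinguished triangles.

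For uniqueness under the extra assumption, suppose $(f_1,g,h_1)$ and $(f_2,g,h_2)$ are both valid completions. Then $u'\circ(f_1-f_2)=g\circ u-g\circ u=0$, so $f_1-f_2\in\ker(u'_*)$; rotating the bottom triangle backwards via (T2) and applying $\textup{Hom}_\mathsf{D}(X,\square)$ gives the exact sequence $\textup{Hom}_\mathsf{D}(X,Z'[-1])\rightarrow\textup{Hom}_\mathsf{D}(X,X')\xrightarrow{u'_*}\textup{Hom}_\mathsf{D}(X,Y')$, so the vanishing of $\textup{Hom}_\mathsf{D}(X,Z'[-1])$ forces $f_1=f_2$. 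For $h$, I would apply the contravariant cohomological $\textup{Hom}_\mathsf{D}(\square,Z')$ to the top triangle to obtain
\[
\textup{Hom}_\mathsf{D}(X[1],Z')\xrightarrow{w^*}\textup{Hom}_\mathsf{D}(Z,Z')\xrightarrow{v^*}\textup{Hom}_\mathsf{D}(Y,Z')\,;
\]
commutativity of the middle square for both completions gives $(h_1-h_2)\circ v=0$, so $h_1-h_2\in\ker(v^*)=\textup{im}(w^*)$, meaning $h_1-h_2=k\circ w$ for some $k\in\textup{Hom}_\mathsf{D}(X[1],Z')$. Because $T^1$ is an autoequivalence, $\textup{Hom}_\mathsf{D}(X[1],Z')\cong\textup{Hom}_\mathsf{D}(X,Z'[-1])=\{0\}$, forcing $k=0$ and hence $h_1=h_2$.

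The main obstacle I anticipate is not a single deep step but rather bookkeeping of shifts: the one vanishing hypothesis $\textup{Hom}_\mathsf{D}(X,Z'[-1])=\{0\}$ has to do double duty, governing both the kernel of $u'_*$ (after a backward rotation of the \emph{bottom} triangle) and the image of $w^*$ (on the \emph{top} triangle, via the autoequivalence $T^1$). I would therefore carefully align the rotations imposed by (T2) with the direction of each Hom-functor so that, in both uniqueness arguments, the relevant obstruction group identifies with $\textup{Hom}_\mathsf{D}(X,Z'[-1])$.
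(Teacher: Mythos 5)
Your proposal is correct and follows essentially the same route as the paper: existence of $f$ via exactness of the covariant Hom-sequence of the bottom triangle at $\textup{Hom}_\mathsf{D}(X,Y')$ followed by (TR3), and uniqueness of $f$ and $h$ by locating their ambiguities in $\textup{im}(w'_*[-1])$ and $\textup{im}(w^*)$ respectively, both killed by $\textup{Hom}_\mathsf{D}(X,Z'[-1])=\{0\}$ via the translation equivalence. The only difference is presentational — you separate existence from uniqueness where the paper describes the solution set as a coset — so there is nothing to add.
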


\begin{proof}
	Apply $\text{Hom}_\mathsf{D}(X,\square)$ to the primed distinguished triangle to get (according to Proposition \ref{Homareexact}) a long exact sequence like \eqref{covHomexact} containing
	\begin{small}\[
		...\mkern-6mu\rightarrow \!\textup{Hom}_\mathsf{D}(X,Z'[-1])\!\xrightarrow{w_*'[-1]}\! \textup{Hom}_\mathsf{D}(X,X')\!\xrightarrow{u_*'}\!\textup{Hom}_\mathsf{D}(X,Y')\!\xrightarrow{v_*'}\!\textup{Hom}_\mathsf{D}(X,Z')\!\rightarrow\mkern-6mu...\,.
		\]\end{small}
	Any $f\in\textup{Hom}_\mathsf{D}(X,X')$ such that $u'\circ f= g\circ u\in\text{Hom}_\mathsf{D}(X,Y')$ must be a preimage of the form $f=(u_*')^{-1}(g\circ u)+\tilde{f}$, where for $\tilde{f}\in\text{Hom}_\mathsf{D}(X,X')$ to vanish under $u_*'$ we must have $\tilde{f}=w_*'[-1](l)\in\text{im}(w_*'[-1])$ ($=\ker(u_*')$ by exactness) for some $l\in\text{Hom}_\mathsf{D}(X,Z'[-1])$. Lemma \ref{tr3} then provides a morphism $h\in\text{Hom}_\mathsf{D}(Z,Z')$ making the overall diagram commute.
	
	Such $h$ is determined up to the image under $w^*$ of some $\tilde{l}\in\text{Hom}_\mathsf{D}(X[1],Z')$ (arguing just as above, because then $(h+w^*(\tilde{l}))\circ v= h\circ v + \tilde{l}\circ \cancel{w\circ v}=v'\circ g$ for the second square). Now simply recall that the translation functor $T:\mathsf{D}\rightarrow\mathsf{D}$ is an equivalence, meaning that $T_{X,Z'[-1]}:\text{Hom}_\mathsf{D}(X,Z'[-1])\rightarrow\text{Hom}_\mathsf{D}(X[1],Z')$ is bijective. So $h$ is unique in case $\text{Hom}_\mathsf{D}(X,Z'[-1])=\{0\}$, which also uniquely sets $f$ and the morphism of triangles $(f,g,h)$.
\end{proof}

\subsection{Triangularity of homotopy categories}

The goal of this subsection is to prove the following theorem:

\begin{Thm}\label{homotopycatistriangulated}
	Let $\mathsf{A}$ be an abelian category. Then its homotopy category $\mathsf{H^\#\!(A)}$ \textup(for $\#=\emptyset,+,-,\mathsf{b}$\textup), equipped with the translation functor $T=T[1]:\mathsf{H^\#\!(A)}\rightarrow\mathsf{H^\#\!(A)}$ from Definition \ref{translfunc} and with distinguished triangles of complexes from Definition \ref{exacttriangles} as the distinguished triangles, is a triangulated category.
\end{Thm}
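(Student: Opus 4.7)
The goal is to verify the three axioms (T1)--(T3) of Definition 4.1.1 for $\mathsf{H^\#\!(A)}$, using cone triangles as the distinguished ones (Remark 2.3.9). Additivity of $\mathsf{H^\#\!(A)}$ is built into the construction (along the lines of Lemma 2.1.2), and the translation functor $T = T[1]$ is manifestly an additive autoequivalence with inverse $T[-1]$, so the whole proof reduces to checking the triangle axioms.

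For (T1), part (b) holds by definition (distinguished = isomorphic to a cone triangle), and part (c) is immediate since any $f^\bullet \in \textup{Hom}_{\mathsf{H^\#\!(A)}}(X^\bullet,Y^\bullet)$ is the opening morphism of the cone triangle \eqref{conetriangle}. For (a), I would show that the triangle $X^\bullet \xrightarrow{\textup{id}} X^\bullet \to 0 \to X[1]^\bullet$ is isomorphic in $\mathsf{H^\#\!(A)}$ to $X^\bullet \xrightarrow{\textup{id}} X^\bullet \xrightarrow{i_2^\bullet} C(\textup{id}_X)^\bullet \xrightarrow{p_1^\bullet} X[1]^\bullet$; this reduces to proving $C(\textup{id}_X)^\bullet \cong 0$ in $\mathsf{H^\#\!(A)}$, i.e. that $\textup{id}_{C(\textup{id}_X)}^\bullet$ is null-homotopic. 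The homotopy $k^n : C(\textup{id}_X)^n = X^{n+1} \oplus X^n \to C(\textup{id}_X)^{n-1}$, $k^n(x^{n+1},x^n) = (x^n, 0)$, does the job by direct computation (the same formula appeared in the proof of Lemma 2.3.4).

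For (T2), it suffices to show the rotated triangle $Y^\bullet \xrightarrow{i_2^\bullet} C(f)^\bullet \xrightarrow{p_1^\bullet} X[1]^\bullet \xrightarrow{-f[1]^\bullet} Y[1]^\bullet$ is isomorphic in $\mathsf{H^\#\!(A)}$ to the cone triangle of $i_2^\bullet$, namely $Y^\bullet \xrightarrow{i_2^\bullet} C(f)^\bullet \xrightarrow{i_2'^\bullet} C(i_2)^\bullet \xrightarrow{p_1'^\bullet} Y[1]^\bullet$. The needed quasi-inverse pair is $\alpha^\bullet : C(i_2)^\bullet \to X[1]^\bullet$, $\alpha^n(y^{n+1},x^{n+1},y^n) = x^{n+1}$, and $\beta^\bullet : X[1]^\bullet \to C(i_2)^\bullet$, $\beta^n(x^{n+1}) = (-f^{n+1}(x^{n+1}), x^{n+1}, 0)$. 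One checks that both are chain maps, that $\alpha^\bullet \circ \beta^\bullet = \textup{id}_{X[1]}^\bullet$ on the nose, and that $\beta^\bullet \circ \alpha^\bullet \sim \textup{id}_{C(i_2)}^\bullet$ via the homotopy $h^n(y^{n+1},x^{n+1},y^n) = (y^n, 0, 0)$; commutativity of the resulting triangle morphism (with the sign $-f[1]^\bullet$ appearing exactly because of the sign convention in the cone differential) is then a bookkeeping check. The reverse direction in (T2) follows by applying the same statement to $T^{-1}$.

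For (T3), given composable $u^\bullet : X^\bullet \to Y^\bullet$ and $v^\bullet : Y^\bullet \to Z^\bullet$, I would take the three distinguished triangles $(u^\bullet, i_2^\bullet, p_1^\bullet)$, $(v^\bullet, i_2^\bullet, p_1^\bullet)$ and $(v^\bullet \circ u^\bullet, i_2^\bullet, p_1^\bullet)$ built on $C(u)^\bullet$, $C(v)^\bullet$ and $C(v \circ u)^\bullet$, and explicitly define
\[
a^n : C(u)^n \to C(v\circ u)^n,\ (x^{n+1},y^n) \mapsto (x^{n+1}, v^n(y^n)),
\]
\[
a'^n : C(v\circ u)^n \to C(v)^n,\ (x^{n+1},z^n) \mapsto (u^{n+1}(x^{n+1}), z^n),
\]
\[
a''^n : C(v)^n \to C(u)[1]^n = C(u)^{n+1},\ (y^{n+1},z^n) \mapsto (0, y^{n+1}),
\]
which are chain maps making the braid \eqref{commbraid} commute on the nose (in $\mathsf{Kom^\#\!(A)}$, hence a fortiori in $\mathsf{H^\#\!(A)}$). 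It then remains to verify that $(a^\bullet, a'^\bullet, a''^\bullet)$ is a distinguished triangle, i.e. isomorphic in $\mathsf{H^\#\!(A)}$ to the cone triangle of $a^\bullet$. This is the technical heart of the argument: one compares $C(a)^\bullet$ to $C(v)^\bullet$ by writing out an explicit chain-homotopy equivalence, analogous in spirit to the one used in (T2), with the homotopy constructed from the formula relating $C(v \circ u)^\bullet$ to the iterated cone $C(i_2^u : Y^\bullet \to C(u)^\bullet)$ shifted appropriately.

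The main obstacle is (T3): the explicit chain-homotopy equivalence $C(a)^\bullet \simeq C(v)^\bullet$ requires a careful sign analysis (the differential on $C(a)^n$ is a three-by-three block with minus signs propagating from iterated shifts), and one must produce both the equivalence and the homotopies witnessing it, then verify compatibility with $a'^\bullet$ and $a''^\bullet$ modulo homotopy. Once this equivalence is in hand, the three braid axioms (T1), (T2), (T3) for $\mathsf{H^\#\!(A)}$ are complete, and the same argument restricts verbatim to $\mathsf{H^+(A)}$, $\mathsf{H^-(A)}$, $\mathsf{H^b(A)}$ since cones and cylinders preserve boundedness.
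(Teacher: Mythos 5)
Your handling of (T1) and (T2) is correct and coincides with the paper's own argument: the same null-homotopy $k^n(x^{n+1},x^n)=(x^n,0)$ collapses $C(\textup{id}_X)^\bullet$, and your pair $(\alpha^\bullet,\beta^\bullet)$ is exactly the paper's $(p_2^\bullet,\theta^\bullet)$, with the identical homotopy $(y^{n+1},x^{n+1},y^n)\mapsto(y^n,0,0)$. Where you genuinely diverge is (T3). The paper first establishes Lemma \ref{semisplit} (a short exact sequence of complexes completes to a distinguished triangle iff it is semisplit) and uses it to normalize all three given triangles, so that $Y^\bullet=X^\bullet\oplus Z'^\bullet$ and $Z^\bullet=X^\bullet\oplus Z'^\bullet\oplus X'^\bullet$ as graded objects with twisted differentials; the missing vertex is then simply $Y'^\bullet=Z'^\bullet\oplus X'^\bullet$, and the blue triangle is distinguished by a second application of the same lemma --- no comparison of iterated cones ever occurs. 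You instead take the three cone triangles on $C(u)^\bullet$, $C(v)^\bullet$, $C(v\circ u)^\bullet$ and write $a^\bullet,a'^\bullet,a''^\bullet$ explicitly; your formulas are correct, all three are chain maps, and the braid \eqref{commbraid} commutes on the nose in $\mathsf{Kom^\#\!(A)}$ (cleaner than the paper's version, where one square commutes only up to homotopy). The price is the step you defer: the homotopy equivalence $C(a)^\bullet\simeq C(v)^\bullet$ compatible with $a'^\bullet$ and $a''^\bullet$. That step does go through --- the comparison map is $(x^{n+2},y^{n+1},x^{n+1},z^n)\mapsto(y^{n+1}+u^{n+1}(x^{n+1}),z^n)$, with homotopy inverse $(y^{n+1},z^n)\mapsto(0,y^{n+1},0,z^n)$ and one-line homotopies such as $(x^{n+2},y^{n+1},x^{n+1},z^n)\mapsto(x^{n+1},0)$ witnessing $p_1^\bullet\sim a''^\bullet\circ\pi^\bullet$ --- but as written your proof asserts it rather than proves it, and it is precisely the computation the paper's semisplit detour is designed to avoid. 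Either route is legitimate; yours trades one preparatory lemma for one explicit, sign-sensitive verification. Two minor points: the reverse implication in (T2) is better justified by rotating twice more and using that a triangle is distinguished iff its translate is, rather than ``applying the statement to $T^{-1}$''; and the paper additionally records an independent proof of (TR3) at this stage because its own (T3) argument and later results rely on it --- your version of (T3) does not need this, but a complete substitute for the paper's section should still supply it.
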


We check each axiom of a triangulated category separately, introducing some more theoretical bits in the process. A first point worth stressing out (again) is:

\begin{Rem}
	Distinguished triangles of complexes in $\mathsf{H^\#\!(A)}$ are all those which are isomorphic to some diagram like \eqref{distinguitriangle}. We previously argued (see Remark \ref{alternadistinguitriang}) that this is equivalent to ask that they are isomorphic to triangles of the form \eqref{conetriangle}, by resorting to Proposition \ref{shortexactaretriangles}. In $\mathsf{H^\#\!(A)}$ (not yet introduced back then) we can directly show that there exists an isomorphism of triangles of complexes
	\[
	\begin{tikzcd}
		X^\bullet\arrow[r, "f^\bullet"]\arrow[d, "\text{id}_X^\bullet"'] & Y^\bullet\arrow[r, "i_2^\bullet"]\arrow[d, "i_3^\bullet"'] & C(f)^\bullet\arrow[r, "p_1^\bullet"]\arrow[d, "\text{id}_{C(f)}^\bullet"] & X[1]^\bullet\arrow[d, "\text{id}_{X[1]}^\bullet"] \\
		X^\bullet\arrow[r, "i_1^\bullet"'] & Z(f)^\bullet\arrow[r, "p_{23}^\bullet"'] & C(f)^\bullet\arrow[r, "p_1^\bullet"'] & X[1]^\bullet
	\end{tikzcd}\quad.
	\]
	The only non-trivial verification is the commutativity of the left square. In fact, $i_3^n\circ f^n - i_1^n= d_{Z(f)}^{n-1}\circ k^n-k^{n+1}\circ d_X^n$ for the homotopy $k^\bullet:X^\bullet\rightarrow Z(f)^{\bullet-1}$ given by $k^n(x^n)\coloneqq(0,x^n,0)$ (so the commutativity breaks in $\mathsf{Kom^\#\!(A)}$!). Moreover, we know from Lemma \ref{p3i3quiso} that $i_3^\bullet$ is a quasi-isomorphism. 
\end{Rem}

Therefore, we rather interpret distinguished triangles in $\mathsf{H^\#\!(A)}$ as those isomorphic to \eqref{conetriangle} --- a wink to cones in general triangulated categories, by Definition \ref{conesintriangulated}.

\begin{proof}[Proof of \textup{(T1)}]
	Item (T1)b. follows from the very definition of distinguished triangle of complexes, whereas part (T1)c. by completing $u^\bullet:X^\bullet\rightarrow Y^\bullet$ with its cone of complexes $C(u)^\bullet=X[1]^\bullet\oplus Y^\bullet\in\text{obj}(\mathsf{H^\#\!(A)})$ and natural maps $i_2^\bullet$ and $p_1^\bullet$ in $\mathsf{H^\#\!(A)}$. 
	
	About (T1)a., for any $X^\bullet\in\text{obj}(\mathsf{H^\#\!(A)})$ the triangle $X^\bullet\xrightarrow{\text{id}_X^\bullet}X^\bullet\rightarrow 0\rightarrow X[1]^\bullet$ almost resembles the distinguished $X^\bullet\xrightarrow{\text{id}_X^\bullet}X^\bullet\xrightarrow{i_2^\bullet} C(\text{id}_X)^\bullet\xrightarrow{p_1^\bullet} X[1]^\bullet$. The obvious triple $(\text{id}_X^\bullet,\text{id}_X^\bullet,0^\bullet)$ is an isomorphism of triangles between them. Indeed, $\text{id}_{C(\text{id}_X)}^n-0^n=k^{n+1}\circ d_{C(\text{id}_X)}^n+d_{C(\text{id}_X)}^{n-1}\circ k^n$ for the homotopy $k^\bullet:C(\text{id}_X)^\bullet\rightarrow C(\text{id}_X)^{\bullet-1}$ given by $k^n(x^{n+1},x^n)\coloneqq(x^n,0)$, so that in $\mathsf{H^\#\!(A)}$ we may substitute $0^\bullet:0\rightarrow C(\text{id}_X)^\bullet$ with the isomorphism $\text{id}_{C(\text{id}_X)}^\bullet$.
\end{proof}

\begin{proof}[Proof of \textup{(T2)}]
	Let $\Delta_1\coloneqq\big(X^\bullet\xrightarrow{u^\bullet} Y^\bullet\xrightarrow{v^\bullet} Z^\bullet\xrightarrow{w^\bullet} X[1]^\bullet\big)$ and $\Delta_2\coloneqq\big(Y^\bullet\xrightarrow{v^\bullet} Z^\bullet\xrightarrow{w^\bullet} X[1]^\bullet\xrightarrow{-u[1]^\bullet}Y[1]^\bullet\big)$ be triangles of complexes in $\mathsf{H^\#\!(A)}$. It suffices to prove that $\Delta_1$ distinguished implies $\Delta_2$ distinguished (the converse direction is obtained by repeating the argument below twice for $\Delta_2$ and shifting). So assume $\Delta_1$ is distinguished and thus (without loss of generality) that $Z^\bullet=C(u)^\bullet$ with natural $v^\bullet=i_2^\bullet$, $w^\bullet=p_1^\bullet$. Consider then the distinguished triangle $\Delta_3\coloneqq\big(Y^\bullet\xrightarrow{v^\bullet} Z^\bullet\xrightarrow{i_2^\bullet} C(v)^\bullet\xrightarrow{p_1^\bullet}Y[1]^\bullet\big)$, where by definition $C(v)^\bullet=Y[1]^\bullet\oplus Z^\bullet=Y[1]^\bullet\oplus X[1]^\bullet\oplus Y^\bullet$ and $d_{C(v)}^n(y^{n+1},x^{n+1},y^n)=(-d_Y^{n+1}(y^{n+1}),-d_X^{n+1}(x^{n+1}), y^{n+1}+u^{n+1}(x^{n+1})+d_Y^n(y^n))$. We claim that
	\[
	\begin{tikzcd}
		Y^\bullet\arrow[r, "v^\bullet"]\arrow[d, "\text{id}_Y^\bullet"'] & Z^\bullet\arrow[r, "p_1^\bullet"]\arrow[d, "\text{id}_Z^\bullet"'] & X[1]^\bullet\arrow[r, "{-u[1]^\bullet}"]\arrow[d, "\theta^\bullet"] & Y[1]^\bullet\arrow[d, "\text{id}_{Y[1]}^\bullet"] \\
		Y^\bullet\arrow[r, "v^\bullet"'] & Z^\bullet\arrow[r, "i_2^\bullet"'] & C(v)^\bullet\arrow[r, "p_1^\bullet"'] & Y[1]^\bullet
	\end{tikzcd}\quad.
	\]
	is a morphism of triangles in $\mathsf{H^\#\!(A)}$, where $\theta^n(x^{n+1})\coloneqq(-u^{n+1}(x^{n+1}),x^{n+1},0)$. The right square commutes since $p_1^n(\theta^n(x^{n+1}))=-u^{n+1}(x^{n+1})=-u[1]^n(x^{n+1})$, while the middle one does so up to homotopy $k^\bullet:Z^\bullet\rightarrow C(v)^{\bullet-1}$ given by $k^n(x^{n+1},y^n)\coloneqq(y^n,0,0)$. Finally, $p_2^\bullet:C(v)^\bullet\rightarrow X[1]^\bullet$ inverts $\theta^\bullet$: indeed $p_2^\bullet\circ\theta^\bullet=\text{id}_X^\bullet$ and $\text{id}_{C(v)}^n - \theta^n\circ p_2^n =k^{n+1}\circ d_{C(v)}^n+d_{C(v)}^{n-1}\circ k^n$ through the homotopy $k^\bullet:C(v)^\bullet\rightarrow C(v)^{\bullet-1},\, k^n(y^{n+1},x^{n+1},y^n)\coloneqq(y^n,0,0)$. Therefore, $(\text{id}_Y^\bullet,\text{id}_Z^\bullet,\theta^\bullet)$ is an isomorphism between $\Delta_2$ and $\Delta_3$, which confirms the former to be distinguished too.
\end{proof}

\begin{Rem}
	Although implied by (T1)--(T3), we will use Lemma \ref{tr3} to prove the remaining axiom (T3), so an independent proof is necessary in order to avoid a circular argument. But this is a quick job: with reference to diagram \eqref{TR3diag}, we can take $Z^\bullet=C(u)^\bullet$, $Z'^\bullet=C(u')^\bullet$ and natural neighbouring morphisms; then the dashed arrow making the whole commute is $h^\bullet\coloneqq f[1]^\bullet \oplus g^\bullet:Z^\bullet\rightarrow Z'^\bullet$.
\end{Rem}

\begin{Lem}\label{semisplit}
	Let $\mathsf{A}$ be an abelian category. Then any short exact sequence $0\rightarrow X^\bullet\xrightarrow{u^\bullet} Y^\bullet\xrightarrow{v^\bullet} Z^\bullet\rightarrow 0$ in $\mathsf{Kom^\#\!(A)}$ is \textbf{semisplit}\index{short exact sequence!semisplit} --- that is, there exists a family $\{w^n:Z^n\rightarrow Y^n\}_{n\in\mathbb{Z}}$ \textup(not necessarily forming a morphism of complexes\textup) such that $v^n\circ w^n=\textup{id}_{Z^n}$ for all $n\in\mathbb{Z}$ --- if and only if it can be completed in $\mathsf{H^\#\!(A)}$ to a distinguished triangle $X^\bullet\xrightarrow{u^\bullet} Y^\bullet\xrightarrow{v^\bullet} Z^\bullet\rightarrow X[1]^\bullet$.
\end{Lem}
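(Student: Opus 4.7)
The plan is to prove both directions by working with the canonical chain map $\psi^\bullet: C(u)^\bullet \to Z^\bullet$ defined componentwise by $\psi^n(x^{n+1}, y^n) \coloneqq v^n(y^n)$, which is always a chain map satisfying $\psi^\bullet \circ i_2^\bullet = v^\bullet$ \emph{on the nose} (a routine verification using $v^\bullet \circ u^\bullet = 0$). The triangle of Definition \ref{exacttriangles} associated to our short exact sequence will be distinguished precisely when $\psi^\bullet$ can be promoted to a homotopy equivalence.

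For the forward direction $(\Rightarrow)$, assume the splittings $\{w^n:Z^n\to Y^n\}$ with $v^n w^n=\textup{id}_{Z^n}$ exist. The first step is to construct a chain-map partial inverse $\phi^\bullet:Z^\bullet\to C(u)^\bullet$ by setting $\phi^n(z^n)\coloneqq(\alpha^n(z^n),w^n(z^n))$, where $\alpha^n:Z^n\to X^{n+1}$ is the unique morphism satisfying $u^{n+1}\alpha^n=w^{n+1}d_Z^n-d_Y^n w^n$. Such $\alpha^n$ is well-defined since a direct computation shows $v^{n+1}(w^{n+1}d_Z^n-d_Y^n w^n)=d_Z^n-d_Z^n=0$, placing the right-hand side in $\textup{im}(u^{n+1})=\ker(v^{n+1})$, and $u^{n+1}$ is a monomorphism. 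A careful bookkeeping of the chain condition on $\phi^\bullet$ reduces to verifying the above defining relation after applying $u^{n+2}$, which follows from $d_X\circ d_X=0$ and $d_Z\circ d_Z=0$. Next, one checks $\psi^\bullet\circ\phi^\bullet=\textup{id}_Z^\bullet$ exactly (immediate from $v^n w^n=\textup{id}_{Z^n}$), and $\phi^\bullet\circ\psi^\bullet\sim\textup{id}_{C(u)}^\bullet$ via the homotopy $K^n(x^{n+1},y^n)\coloneqq(\rho^n(y^n),0)$, where $\rho^n:Y^n\to X^n$ is uniquely determined by $u^n\rho^n=\textup{id}_{Y^n}-w^n v^n$ (well-defined by the same argument). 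Finally, setting $\tilde{w}^\bullet\coloneqq p_1^\bullet\circ\phi^\bullet:Z^\bullet\to X[1]^\bullet$ — which \emph{is} a chain map — the triple $(\textup{id}_X^\bullet,\textup{id}_Y^\bullet,\psi^\bullet)$ furnishes an isomorphism in $\mathsf{H^\#(A)}$ between the distinguished cone triangle of $u^\bullet$ and $X^\bullet\xrightarrow{u^\bullet}Y^\bullet\xrightarrow{v^\bullet}Z^\bullet\xrightarrow{\tilde{w}^\bullet}X[1]^\bullet$, so the latter is distinguished.

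For the reverse direction $(\Leftarrow)$, assume the triangle is distinguished for some completing $w^\bullet$. Lemma \ref{tr3} (whose independent proof in the homotopy category has just been sketched in the preceding remark) applied to this triangle and the cone triangle of $u^\bullet$ with $f^\bullet=\textup{id}_X^\bullet$, $g^\bullet=\textup{id}_Y^\bullet$ supplies a chain map $\gamma^\bullet:C(u)^\bullet\to Z^\bullet$, an isomorphism in $\mathsf{H^\#(A)}$, with $\gamma^\bullet\circ i_2^\bullet\sim v^\bullet$ and $w^\bullet\circ\gamma^\bullet\sim p_1^\bullet$. The standard trick of replacing $\gamma^\bullet$ by the chain-homotopic $\gamma^\bullet+d_Z\circ H+H\circ d_{C(u)}$ with $H^n(x^{n+1},y^n)\coloneqq -s^n(y^n)$ (where $s^n$ witnesses $\gamma^\bullet\circ i_2^\bullet-v^\bullet=d_Z\circ s+s\circ d_Y$) adjusts $\gamma^\bullet$ so that $\gamma^\bullet\circ i_2^\bullet=v^\bullet$ on the nose, whence $\gamma^n(x^{n+1},y^n)=\beta^n(x^{n+1})+v^n(y^n)$ for the chain map $\beta^\bullet\coloneqq\gamma^\bullet\circ i_1^\bullet:X[1]^\bullet\to Z^\bullet$. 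In particular $\gamma^n$ is degree-wise an epimorphism in $\mathsf{A}$. The splitting $\tilde{w}^n:Z^n\to Y^n$ is then extracted from a homotopy inverse $\delta^\bullet:Z^\bullet\to C(u)^\bullet$ by modifying the naive choice $p_2^n\circ\delta^n$, using the homotopy $\gamma^\bullet\circ\delta^\bullet-\textup{id}_Z^\bullet=d_Z\circ t+t\circ d_Z$ and the direct-sum decomposition $C(u)^n=X^{n+1}\oplus Y^n$ to absorb the error terms.

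The main obstacle is this last extraction: moving from a chain-level homotopy equivalence to the strict degree-wise equalities $v^n\tilde{w}^n=\textup{id}_{Z^n}$ required by semisplitness. Concretely, $v^n(p_2^n\delta^n(z^n))=z^n-\beta^n(p_1^n\delta^n(z^n))+(d_Z\circ t+t\circ d_Z)(z^n)$, and the correction must be arranged so that both the $\beta$-term and the coboundary term are cancelled via a careful rechoice of $\delta^\bullet$ exploiting that $\gamma^n$, being a homotopy equivalence whose matrix form $\beta^n\oplus v^n$ is epi in each degree, is split at the level of the graded object $C(u)^n$.
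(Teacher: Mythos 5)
Your ``semisplit $\Rightarrow$ distinguished'' direction is correct and is essentially the paper's own argument written without choosing coordinates: once the sections $w^n$ are used to identify $Y^n$ with $X^n\oplus Z^n$, your $\alpha^\bullet$ becomes the chain map $f^\bullet:Z^\bullet\to X[1]^\bullet$ that the paper reads off from the differential of $Y^\bullet$, your $\phi^\bullet=(\alpha^\bullet,w^\bullet)$ is the paper's $g^\bullet=(f^\bullet,0,\textup{id}_Z^\bullet)$, and your homotopy $K^n=(\rho^n,0)$ is the paper's $\bar{k}^n(x^{n+1},x^n,z^n)=(x^n,0,0)$. Defining $\alpha^n$ and $\rho^n$ by lifting through the monomorphism $u^{n+1}$ is a tidy way to avoid fixing the identification, and your verifications do go through; the content is identical to the paper's.

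The reverse direction is where the proposal has a genuine gap, and it is not one you can close along the lines you sketch. Everything hinges on the final claim that the homotopy equivalence $\gamma^n$, ``being a homotopy equivalence whose matrix form $\beta^n\oplus v^n$ is epi in each degree, is split at the level of the graded object $C(u)^n$.'' A homotopy equivalence need not be degreewise split epi, and any degreewise splitting of $\gamma^n$ would still have to be corrected for the $\beta$-term by lifting a map through $v^n$ --- which is exactly the section you are trying to manufacture, so the argument is circular. Worse, no argument of this shape can succeed, because the homotopy-level data obtained from Lemma \ref{tr3} does not determine the degreewise extension classes of the sequence. Concretely, take $X^\bullet=Y^\bullet=(\mathbb{Z}\xrightarrow{\textup{id}}\mathbb{Z})$ concentrated in degrees $0$ and $1$, let $u^\bullet$ be multiplication by $2$, and let $Z^\bullet=(\mathbb{Z}/2\xrightarrow{\textup{id}}\mathbb{Z}/2)$ with $v^\bullet$ the reduction. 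All three complexes are contractible, hence zero objects of $\mathsf{H^b(Ab)}$, so the triangle $X^\bullet\xrightarrow{u^\bullet}Y^\bullet\xrightarrow{v^\bullet}Z^\bullet\xrightarrow{0}X[1]^\bullet$ is isomorphic in $\mathsf{H^b(Ab)}$ to the zero triangle and is therefore distinguished; yet $v^n:\mathbb{Z}\to\mathbb{Z}/2$ admits no section, so the sequence is not semisplit.

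For comparison, the paper handles this direction by transporting the canonical degreewise splitting $i_{23}^\bullet$ of the cylinder sequence $0\to X^\bullet\to Z(u)^\bullet\to C(u)^\bullet\to 0$ along the isomorphism of triangles supplied by the definition of distinguished triangle. That transport is legitimate only for a degreewise isomorphism of complexes, whereas the comparison maps are merely homotopy equivalences, so it stumbles on the same obstruction you identified --- and the example above shows the obstruction is real rather than presentational. Your forward implication can stand as written; the reverse implication needs a strengthened hypothesis (e.g.\ an isomorphism with the canonical triangle already in $\mathsf{Kom^\#\!(A)}$) before it can be proved.
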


\begin{proof}
	Assume first that the given short exact sequence can be completed to a distinguished triangle, then isomorphic to \eqref{distinguitriangle} by definition. The associated short exact sequence $0\rightarrow X^\bullet\xrightarrow{i_1^\bullet}Z(u)^\bullet\xrightarrow{p_{23}^\bullet}C(u)^\bullet\rightarrow 0$ (see middle row of \eqref{distinguidiag}) is clearly semisplit with canonical inclusion $w^n\coloneqq i_{23}^n:C(u)^n\rightarrow Z(u)^n$ fulfilling $p_{23}^n\circ w^n=\text{id}_{C(u)^n}$ for all $n\in\mathbb{Z}$. Since the original short exact sequence is isomorphic to it, it is semisplit as well.
	
	For the converse direction, suppose $0\rightarrow X^\bullet\xrightarrow{u^\bullet} Y^\bullet\xrightarrow{v^\bullet} Z^\bullet\rightarrow 0$ in $\mathsf{Kom^\#\!(A)}$ semisplits, thus (without loss of generality) that $Y^\bullet=X^\bullet\oplus Z^\bullet$, with canonical injections $u^\bullet=i_1^\bullet$, $w^\bullet=i_2^\bullet$ and projection $v^\bullet=p_2^\bullet$. Then forcedly $d_Y^n(x^n,z^n)=(d_X^n(x^n)-f^n(z^n),d_Z^n(z^n))$ for some family $\{f^n:Z^n\rightarrow X^{n+1}\}_{n\in\mathbb{Z}}$ which must necessarily form a morphism of complexes $f^\bullet:Z^\bullet\rightarrow X[1]^\bullet$ for $d_Y^\bullet$ to square to zero. With these preparations, let us consider the diagram
	\[
	\begin{tikzcd}
		X^\bullet\arrow[r, "i_1^\bullet"]\arrow[d, "\text{id}_X^\bullet"'] & X^\bullet\oplus Z^\bullet\arrow[r, "p_2^\bullet"]\arrow[d, "\text{id}_Y^\bullet"'] & Z^\bullet\arrow[r, "f^\bullet"]\arrow[d, "g^\bullet"] & X[1]^\bullet\arrow[d, "\text{id}_{X[1]}^\bullet"] \\
		X^\bullet\arrow[r, "i_1^\bullet"'] & X^\bullet\oplus Z^\bullet\arrow[r, "i_{23}^\bullet"'] & C(u)^\bullet\arrow[r, "p_1^\bullet"'] & X[1]^\bullet
	\end{tikzcd}\quad,
	\]
	where $g^\bullet\coloneqq(f^\bullet,0,\text{id}_Z^\bullet)$ is a morphism of complexes (easy check) which maps to $C(u)^\bullet=X[1]^\bullet\oplus X^\bullet\oplus Z^\bullet$, endowed with the differential $d_{C(u)}^n(x^{n+1},x^n,z^n)=(-d_X^{n+1}(x^{n+1}),i_1^{n+1}(x^{n+1})+d_X^n(x^n)-f^n(z^n),d_Z^n(z^n))$. Now, commutativity of the side squares is obvious, while that of the central one holds up to homotopy, $i_{23}^n-g^n\circ p_2^n=k^{n+1}\circ d_Y^n+d_{C(u)}^{n-1}\circ k^n$ for $k^\bullet:Y^\bullet\rightarrow C(u)^{\bullet-1},\, k^n(x^n,z^n)\coloneqq(x^n,0,0)$. Finally, $p_3^\bullet:C(u)^\bullet\rightarrow Z^\bullet$ inverts $g^\bullet$, because $p_3^\bullet\circ g^\bullet=\text{id}_Z^\bullet$ and $\text{id}_{C(u)}^n-g^n\circ p_3^n=\bar{k}^{n+1}\circ d_{C(u)}^n + d_{C(u)}^{n-1}\circ \bar{k}^n$ for $\bar{k}^n(x^{n+1},x^n,z^n)\coloneqq(x^n,0,0)$. We conclude that $(\text{id}_X^\bullet,\text{id}_Y^\bullet,g^\bullet)$ is an isomorphism of triangles in $\mathsf{H^\#\!(A)}$, so that the top row is a distinguished triangle.
\end{proof}

We are finally ready to conclude the proof of Theorem \ref{homotopycatistriangulated}.

\begin{proof}[Proof of \textup{(T3)}]
	Consider the initial braid \eqref{commbraid} formed by the red, green and orange distinguished triangles, which by Lemma \ref{semisplit} we know to yield semisplit short exact sequences. Consequently, we can safely assume that $Y^\bullet=X^\bullet\oplus Z'^\bullet$ and $Z^\bullet=Y^\bullet\oplus X'^\bullet=X^\bullet\oplus Z'^\bullet\oplus X'^\bullet$, with corresponding natural injections and projections, as well as semisplitting maps $f^\bullet:Z'^\bullet\rightarrow X[1]^\bullet$ and $(g^\bullet,h^\bullet):X'^\bullet\rightarrow Y[1]^\bullet$ entering the differentials of $Y^\bullet$ respectively $Z^\bullet$ (compare to the proof of lemma). So our braid updates to
	\[
	\begin{tikzcd}[column sep=0.2cm]
		X^\bullet \arrow[rr, orange, bend left = 45, "{i_1^\bullet}" black]\arrow[dr, red, "{i_1^\bullet}"' black] & \mkern-6mu\circlearrowleft & \mkern-24mu{X^\bullet\!\oplus\! {Z'^\bullet}\!\oplus\! {X'^\bullet}}\arrow[rr, green, bend left = 45, "{p_3^\bullet}" black]\arrow[dr, orange, end anchor={[xshift=-1.2ex, yshift=1.5ex]west}, "{w'^\bullet}" black] & & {X'^\bullet}\arrow[rr, cyan, bend left = 45, "{a''^\bullet}" black]\arrow[dr, green, end anchor={[xshift=1.3ex, yshift=1.5ex]west}, "{(g^\bullet,h^\bullet)}" black] & \mkern+12mu\circlearrowleft & {Z'[1]}^\bullet \\
		& X^\bullet\oplus Z'^\bullet\arrow[ur, green, start anchor={[xshift=-0.5ex,yshift=1.5ex]east}, "i_{12}^\bullet"' black]\arrow[dr, red, start anchor={[xshift=2ex]south}, end anchor={[xshift=-5.5ex, yshift=1.5ex]west}, "p_2^\bullet"' black] & & \mkern-12mu Y'^\bullet \arrow[dr, orange, start anchor={[yshift=0ex]south}, "w''^\bullet" black] & & {(X\oplus Z')[1]}^\bullet\arrow[ur, "{p_2[1]}^\bullet"'] \\
		& & \mkern-48mu Z'^\bullet\arrow[rr, red, bend right = 45, start anchor={[xshift=-2ex]south}, "f^\bullet"' black] & & {X[1]}^\bullet\arrow[ur, "{i_1[1]^\bullet}"']
	\end{tikzcd}.
	\]
	Now we are tempted to set $Y'^n=Z'^n\oplus X'^n$ with differential $d_{Y'}^n(z'^n,x'^n)=(d_{Z'}^n(z'^n)-h^n(x'^n),d_{X'}^n(x'^n))$. This does define a complex $(Y'^\bullet,d_{Y'}^\bullet)$ in $\mathsf{H^\#\!(A)}$, which moreover respects the semisplitting of the orange distinguished triangle. So the dashed blue arrows completing the diagram are the natural morphisms $a^\bullet\coloneqq i_1^\bullet:Z'^\bullet\rightarrow Y'^\bullet$ and $a'^\bullet\coloneqq p_2^\bullet: Y'^\bullet \rightarrow X'^\bullet$, while $w'^\bullet= p_{23}^\bullet$ and $w''^\bullet=(f^\bullet\oplus g^\bullet)$, so $w''^n(z'^n,x'^n)=f^n(z'^n) + g^n(x'^n)$. Lemma \ref{semisplit} applies to the semisplit short exact sequence $0\rightarrow Z'^\bullet\rightarrow Y'^\bullet\rightarrow X'^\bullet\rightarrow 0$ just constructed to confirm that $Z'^\bullet\xrightarrow{a^\bullet} Y'^\bullet\xrightarrow{a'^\bullet} X'^\bullet\xrightarrow{a''^\bullet} Z'[1]^\bullet$ is a distinguished triangle in $\mathsf{H^\#\!(A)}$ (in particular, $a''^\bullet$ is precisely the same morphism making the top right triangle commute).
	
	We must still verify that the completed diagram does commute. For the remaining two triangles this is immediate. For the left square, we easily see that $a^\bullet\circ p_2^\bullet=p_{23}^\bullet\circ i_{12}^\bullet$. The right one instead commutes only up to homotopy $k^\bullet: Y'^\bullet\rightarrow Y[1]^{\bullet-1}=Y^\bullet$ given by $k^n(z'^n,x'^n)\coloneqq(0,z'^n)$. Indeed we compute:
	\begin{align*}
		\big(k^{n+1}\!\circ\! d_{Y'}^n - d_Y^n\!\circ\! k^n\big)(z'^n,x'^n) \!&= k^{n+1}(d_{Z'}^n(z'^n)-h^n(x'^n),d_{X'}^n(x'^n))\!-\!d_Y^n(0,z'^n) \\
		&= (0,d_{Z'}^n(z'^n)-h^n(x'^n))-(-f^n(z'^n),d_{Z'}^n(z'^n))\\
		&= (f^n(z'^n)+g^n(x'^n),0) - (g^n(x'^n),h^n(x'^n)) \\
		&= \big(i_1[1]^n\circ(f^n\oplus g^n)-(g^n,h^n)\circ a'^n\big)(z'^n,x'^n)
	\end{align*}
	Therefore, axiom (T3) holds and, together with the previous proofs, we conclude that $\mathsf{H^\#\!(A)}$ has the structure of a triangulated category. 
\end{proof}

\subsection{Triangularity of derived categories}

We wish to take our discovery that homotopy categories of complexes are triangulated one step further. To pass this result to derived categories we must, however, account for the localization process. This imposes some further requirements on the localizing class of morphisms involved. We deal first with general triangulated categories and focus on derived categories only later on.

\begin{Def}
	Let $\mathsf{D}=(\mathsf{D},T)$ be a triangulated category and $S$ a localizing class of morphisms within it (see Definition \ref{locclass}). Then $S$ is \textbf{compatible with triangulation}\index{localizing class of morphisms!compatible with triangulation} if $\big(s\in S \Leftrightarrow T(s)\in S\big)$ and $\big(f,g\in S\text{ in }\eqref{TR3diag}\implies h\in S\big)$ hold.
\end{Def}

\begin{Thm}\label{derivedcatistriangulated}
	Let $\mathsf{D}=(\mathsf{D},T)$ be a triangulated category and $S$ a localizing class of morphisms in $\mathsf{D}$ which is compatible with its triangulation. Let $\mathsf{D}_S\coloneqq\mathsf{D}[S^{-1}]$ denote the associated derived category with localization functor $\mathsf{L}:\mathsf{D}\rightarrow\mathsf{D}_S$, as constructed in the proof of Proposition \ref{derivedexists} and described in Proposition \ref{derivedstructure}. Endow $\mathsf{D}_S$ with the induced translation functor $T_S:\mathsf{D}_S\rightarrow\mathsf{D}_S$, fulfilling $T=T_S$ on $\textup{obj}(\mathsf{D}_S)=\textup{obj}(\mathsf{D})$ and $T_S\circ\mathsf{L}=\mathsf{L}\circ T$.
	
	Then $\mathsf{D}_S$, together with $T_S$ and as distinguished triangles all those isomorphic to some $\mathsf{L}(\Delta)$ for $\Delta$ a distinguished triangle in $\mathsf{D}$, is a triangulated category. 
\end{Thm}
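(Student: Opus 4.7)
The strategy is to verify axioms (T1)--(T3) for $\mathsf{D}_S$ by systematically lifting data to $\mathsf{D}$, applying the corresponding axioms there, and pushing the result back through $\mathsf{L}$. First I would establish that $\mathsf{D}_S$ is additive: this is essentially Lemma \ref{D(A)additive} applied verbatim, since the localizing property lets any two parallel roofs be put over a common denominator and hence yields an abelian group structure on Hom-sets with biadditive composition. The translation functor $T_S$ is a well-defined autoequivalence precisely because of the compatibility $s \in S \Leftrightarrow T(s) \in S$, so both $T$ and its quasi-inverse descend through $\mathsf{L}$.

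For (T1)a I would apply $\mathsf{L}$ to the distinguished triangle $X \xrightarrow{\textup{id}_X} X \to 0 \to X[1]$ in $\mathsf{D}$; part (T1)b is built into the definition of distinguished triangles in $\mathsf{D}_S$. For (T1)c, any $\varphi \in \textup{Hom}_{\mathsf{D}_S}(X,Y)$ is represented by a roof $X \xleftarrow{s} X' \xrightarrow{f} Y$ with $s \in S$; complete $f$ in $\mathsf{D}$ to a distinguished triangle $X' \xrightarrow{f} Y \xrightarrow{v} Z \xrightarrow{w} X'[1]$ using (T1)c for $\mathsf{D}$, and observe that $(\mathsf{L}(s), \textup{id}_Y, \textup{id}_Z)$ defines an isomorphism of triangles in $\mathsf{D}_S$ onto $X \xrightarrow{\varphi} Y \xrightarrow{\mathsf{L}(v)} Z \xrightarrow{\mathsf{L}(s)[1]^{-1} \circ \mathsf{L}(w)} X[1]$, rendering the latter distinguished. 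Axiom (T2) is immediate: $T_S \circ \mathsf{L} = \mathsf{L} \circ T$ means that rotations in $\mathsf{D}_S$ of $\mathsf{L}$-images of distinguished triangles are themselves $\mathsf{L}$-images of rotated (hence distinguished) triangles in $\mathsf{D}$.

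The main obstacle is (T3). Given composable $u: X \to Y$ and $v: Y \to Z$ in $\mathsf{D}_S$ together with the three distinguished triangles completing $u$, $v$, $v \circ u$ that appear in the braid \eqref{commbraid}, represent $u = [(s_1, f_1)]$ via $X \xleftarrow{s_1} X_1 \xrightarrow{f_1} Y$ and $v = [(s_2, g_1)]$ via $Y \xleftarrow{s_2} Y_1 \xrightarrow{g_1} Z$. Applying property (b) of the localizing class to the pair $(f_1, s_2)$ yields $t: X_2 \to X_1$ in $S$ and $f: X_2 \to Y_1$ with $f_1 \circ t = s_2 \circ f$; thus, after transport along the isomorphisms $\mathsf{L}(s_1 \circ t): X_2 \xrightarrow{\sim} X$ and $\mathsf{L}(s_2): Y_1 \xrightarrow{\sim} Y$ in $\mathsf{D}_S$, the three arrows $u$, $v$, $v \circ u$ are precisely the $\mathsf{L}$-images of the honest composable morphisms $f: X_2 \to Y_1$, $g_1: Y_1 \to Z$, $g_1 \circ f: X_2 \to Z$ in $\mathsf{D}$. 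Now invoke (T3) in $\mathsf{D}$ for this composition to produce an octahedron with red/green/orange distinguished triangles and a completing blue distinguished triangle, then apply $\mathsf{L}$ and transport through the above isomorphisms to obtain an octahedron in $\mathsf{D}_S$ based at $X, Y, Z$ whose outer arrows are $u$, $v$, $v \circ u$.

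The final step is to reconcile this octahedron with the originally prescribed completions of $u$, $v$, $v \circ u$. By the second bullet of Remark \ref{consecindistitriang}, completions to a distinguished triangle are unique up to isomorphism, so each of the three given distinguished triangles is isomorphic (in $\mathsf{D}_S$) to the one produced from the transported octahedron; conjugating the blue triangle by these three isomorphisms yields a distinguished completion which fits the prescribed data, establishing (T3). The compatibility of $S$ with triangulation is what makes this last step go through: the ``third morphism'' in any replacement isomorphism of distinguished triangles, produced via Lemma \ref{tr3} in $\mathsf{D}$ from two isomorphisms in $\mathsf{D}_S$, is forced to be an isomorphism in $\mathsf{D}_S$ precisely because morphisms in $S$ are mapped by $\mathsf{L}$ to isomorphisms and the compatibility axiom propagates $S$-membership across the rows of \eqref{TR3diag}.
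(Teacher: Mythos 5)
Your overall strategy coincides with the paper's: verify (T1) and (T2) by pushing triangles through $\mathsf{L}$ and transporting along the formal inverses of $S$, and prove (T3) by choosing roof representatives, using the localizing property to replace $u$, $v$, $v\circ u$ by honest composable morphisms in $\mathsf{D}$, invoking the octahedral axiom there, and transporting back. The treatment of (T1) and (T2) is fine.

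There is, however, a genuine gap in the way you close the argument for (T3). Both your ``reconciliation'' step (identifying the prescribed completions of $u$, $v$, $v\circ u$ with the ones produced by the transported octahedron, and conjugating the blue triangle by the resulting isomorphisms) and your appeal to the uniqueness of cones from Remark \ref{consecindistitriang} presuppose that the axiom (TR3) of Lemma \ref{tr3} already holds \emph{in $\mathsf{D}_S$}: you need to complete a commuting square in $\mathsf{D}_S$, whose vertical arrows are classes of roofs, to a morphism of distinguished triangles in $\mathsf{D}_S$. Your one-sentence justification --- that the third morphism is ``produced via Lemma \ref{tr3} in $\mathsf{D}$ from two isomorphisms in $\mathsf{D}_S$'' --- does not work as stated: Lemma \ref{tr3} in $\mathsf{D}$ takes as input two morphisms \emph{of $\mathsf{D}$} making a square commute \emph{in $\mathsf{D}$}, whereas your vertical arrows are roofs and the square only commutes after clearing denominators. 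Bridging this is precisely the nontrivial part of the theorem, and it is why the paper proves (TR3) for $\mathsf{D}_S$ as a separate step before (T3): one must choose roof representatives $(s,\tilde f)$ and $(t,\tilde g)$ of the two vertical arrows, apply condition b.\ of Definition \ref{locclass} to $X''\xrightarrow{u\circ s}Y\xleftarrow{t}Y''$ to raise the apex of the left roof to a common $\tilde X$ with $s\circ\tilde t\in S$, use condition c.\ to turn commutativity in $\mathsf{D}_S$ of the front square into genuine commutativity in $\mathsf{D}$ after precomposition with a further element of $S$, complete the resulting honest morphism $u''$ to a distinguished triangle in $\mathsf{D}$ via (T1)c, and only then apply Lemma \ref{tr3} in $\mathsf{D}$ twice --- once to each slope of the resulting ``three-dimensional roof'' --- to manufacture the roof $(r,\tilde h)$ representing the missing arrow, with $r\in S$ by compatibility of $S$ with the triangulation. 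Once you insert this construction as a standalone lemma, the rest of your (T3) argument goes through and matches the paper's.
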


Due to its length, we chop again the proof into several parts. First recall that any $u\in\text{Hom}_{\mathsf{D}_S}(X,Y)$ is an equivalence class represented by a roof $X\xleftarrow{s} Z\xrightarrow{u'} Y$ in $\mathsf{D}$ with $s\in S$. Then by definition of $T_S$ we have that $T_S(u)\in\text{Hom}_{\mathsf{D}_S}(T(X),T(Y))$ is represented by the roof $T(X)\xleftarrow{T(s)} T(Z)\xrightarrow{T(u')} T(Y)$ in $\mathsf{D}$ with $T(s)\in S$ (since $S$ is assumed compatible), that is, $T_S([(s,u')])=[(T(s),T(u'))]$. By functoriality of $T$, this relation respects the defining equivalence relation \eqref{equivroofs} between roofs.

\begin{proof}[Proof of \textup{(T1)}]
	The triangle $\big(X\!\xrightarrow{[(\text{id}_X,\text{id}_X)]} \!X\!\xrightarrow{[(\text{id}_X,0)]} 0\xrightarrow{[(\text{id}_0,0)]} \!X[1]\!=\!T_S(X)\big)$, for any given $X\in\text{obj}(\mathsf{D}_S)$, is distinguished because it is isomorphic to\break $\mathsf{L}\big(X\xrightarrow{\text{id}_X} X\rightarrow 0\rightarrow X[1]\big)$, that is, the image of the distinguished triangle from (T1)a. for $\mathsf{D}$.
	
	Part b. is clear as well: if $\Delta_2$ is isomorphic to the distinguished triangle $\Delta_1\cong\mathsf{L}(\Delta)$ in $\mathsf{D}_S$, for $\Delta$ distinguished in $\mathsf{D}$, then $\Delta_2\cong\mathsf{L}(\Delta)$ is clearly a distinguished triangle as well.
	
	Part c. is a little less trivial. Consider $u=[(s,u')]\in\text{Hom}_{\mathsf{D}_S}(X,Y)$, represented by $X\xleftarrow{s} Z\xrightarrow{u'} Y$, and complete $u'\in\text{Hom}_\mathsf{D}(Z,Y)$ to a distinguished triangle $\Delta=\big(Z\xrightarrow{u'} Y\xrightarrow{v} U\xrightarrow{w} Z[1]\big)$ in $\mathsf{D}$. Then $\mathsf{L}(\Delta)$ fits into the top row of the diagram
	\[
	\begin{tikzcd}
		Z\arrow[r, "u'"]\arrow[d, "s"'] & Y\arrow[r, "v"]\arrow[d, "\text{id}_Y"'] & U\arrow[r, "w"]\arrow[d, "\text{id}_U"] & Z[1]\arrow[d, "{s[1]}"] \\
		X\arrow[r, "u"'] & Y\arrow[r, "v"'] & U\arrow[r, "{s[1]\circ w}"'] & X[1]
	\end{tikzcd}
	\]
	in $\mathsf{D}_S$, where each morphism except $u$ should be understood as being of the form $[(\text{id},\square)]$ (for example $u'\equiv[(\text{id}_Z,u')]\in\text{Hom}_{\mathsf{D}_S}(Z,Y)$). Using the composition law \eqref{roofcompo} for roofs, we easily deduce that this diagram commutes. Hence $(s,\text{id}_Y,\text{id}_U)$ is a morphism of triangles, and particularly an isomorphism, because $s$ is formally invertible in $\mathsf{D}_S$. The bottom row, then isomorphic to $\mathsf{L}(\Delta)$, is the desired completion of $u$ to a distinguished diagram in $\mathsf{D}_S$. 
\end{proof}

\begin{proof}[Proof of \textup{(T2)}]
	This is evident. Just expoit that $T_S\circ\mathsf{L}=\mathsf{L}\circ T$ and pass through the left/right shift of the associated distinguished triangles in $\mathsf{D}$. 
\end{proof}

Just like in the previous section, the axiom (TR3) plays a role into the proof of (T3), so we must check it beforehand.

\begin{proof}[Proof of \textup{(TR3)}]
	Since the descending arrows of \eqref{TR3diag} in $\mathsf{D}_S$ subtend each a representative roof, one should imagine a third row of objects and morphisms $(s,X'',\tilde{f})\rightarrow(t,Y'',\tilde{g})\rightarrow(r,Z'',\tilde{h})\rightarrow(s[1],X''[1],\tilde{f}[1])$ raising vertically from the diagram itself to form an elongated ``three-dimensional'' roof in $\mathsf{D}$ (see picture below, where $(f,g,h)$ in $\mathsf{D}_S$ are included for clarity). The dashed pair $(r,\tilde{h})$ representing $h\in\text{Hom}_{\mathsf{D}_S}(Z,Z')$ is what must be constructed. 
	
	Let us build the ``main beam'' $X''\xrightarrow{u''}Y''\xrightarrow{v''}Z''\xrightarrow{w''}X''[1]$ substaining our big roof. We first use condition b. of Definition \ref{locclass} on $X''\xrightarrow{u\circ s} Y\xleftarrow{t} Y''$ to construct two complementing arrows $X''\xleftarrow{\tilde{t}}\tilde{X}\xrightarrow{\tilde{u}}Y''$ in $\mathsf{D}$ with $\tilde{t}\in S$ and such that $(u\circ s)\circ\tilde{t}=t\circ\tilde{u}:\tilde{X}\rightarrow Y$ (cf. diagram \eqref{localsquare} left). The situation then looks like this:
	\[
	\begin{tikzcd}[column sep=0.15cm]
		& \tilde{X}\arrow[d, dashed, "\tilde{t}"']\arrow[drrrrr, dashed, "\tilde{u}"] & & & & & & & & & & & & & \\
		& X''\arrow[rrrrr, dashed, "u''"]\arrow[dr, "s"]\arrow[ddl, near start, "\tilde{f}"'] & & & & & Y''\arrow[dr, "t"] & & & & & Z''\arrow[dr, dashed, "r"] & & & & & \mkern-18mu X''[1]\arrow[dr, start anchor={[xshift=-2ex, yshift=-1.5ex]east}, end anchor={[xshift=-0ex, yshift=2ex]west}, "{s[1]}"] & \\
		& & X\arrow[rrrrr, near start, "u"]\arrow[dll, pos=0.5, "f"'] & & & & & Y\arrow[dll, pos=0.5, "g"']\arrow[rrrrr, near start, "v"] & & & & & Z\arrow[rrrrr, near start, end anchor={[xshift=-3.3ex]west}, "w"]\arrow[dll, dashed, pos=0.15, "h"] & & & & & \mkern-24mu X[1]\arrow[dll, pos=0.15, start anchor={[xshift=-2ex, yshift=-1.5ex]west}, end anchor={[xshift=-0.5ex, yshift=2ex]east}, "{f[1]}"]  \\
		X'\arrow[rrrrr, "u'"'] & & & & & Y'\arrow[from=uur, near start, crossing over, "\tilde{g}"']\arrow[rrrrr, "v'"'] & & & & & Z'\arrow[from=uur, near start, dashed, crossing over, "\tilde{h}"']\arrow[rrrrr, "w'"'] & & & & & X'[1]\arrow[from=uur, crossing over, near start, start anchor={[xshift=-4ex, yshift=-1.5ex]east}, "{\tilde{f}[1]}"']  & &
	\end{tikzcd}
	\]
	With $\tilde{X}$ sitting atop $X''$, we clearly see that the roof $X\xleftarrow{s\circ\tilde{t}}\tilde{X}\xrightarrow{\tilde{f}\circ\tilde{t}} X'$ represents $f:X\rightarrow X'$ as well (check with \eqref{equivroofs}), so that we can replace $(s,X'',\tilde{f})$ with it. The back square commutes by construction; for the front one, observe that
	\[
	u'\circ\tilde{f}\circ s^{-1}= u'\circ f=g\circ u=\tilde{g}\circ t^{-1}\circ u=\tilde{g}\circ\tilde{u}\circ (s\circ\tilde{t})^{-1}=\tilde{g}\circ\tilde{u}\circ\tilde{t}^{-1}\circ s^{-1}
	\]
	(the second equality follows by assumption), whence $u'\circ\tilde{f}\circ\tilde{t}=\tilde{g}\circ\tilde{u}:\tilde{X}\rightarrow Y'$, as desired (formally this holds only in $\mathsf{D}_S$, but then there exists a common $q\in S$ such that precomposition by it makes the equation valid in $\mathsf{D}$ as well). Therefore, we rename $\tilde{X}$ to $X''$ and set $u''\coloneqq\tilde{u}\in\text{Hom}_\mathsf{D}(X'',Y'')$. 
	
	Now, axiom (T1)c. for $\mathsf{D}$ allows us to complete $u''$ to a distinguished triangle $X''\xrightarrow{u''}Y''\xrightarrow{v''}Z''\xrightarrow{w''}X''[1]$ in $\mathsf{D}$. Then Lemma \ref{tr3} applied to both slopes of the resulting big roof yields dashed $r\in\text{Hom}_\mathsf{D}(Z'',Z)$ respectively $\tilde{h}\in\text{Hom}_\mathsf{D}(Z'',Z')$, with $r\in S$ because $S$ is compatible with triangulation. We conclude that $h\coloneqq[(r,\tilde{h})]\in\text{Hom}_{\mathsf{D}_S}(Z,Z')$ makes the triple $(f,g,h)$ a valid morphism of distinguished triangles in $\mathsf{D}_S$. 
\end{proof}

\begin{proof}[Proof of \textup{(T3)}]
	This time we will favour the octahedral diagram \eqref{octahedraldiag}, which makes the situation visually more clear. Namely, we will gradually construct the following figure:\footnote{Be appreciative... was hell to draw...}
	\[
	\begin{tikzcd}[column sep=0.5cm, row sep=0.3cm]
		& & \mkern-6mu Y\arrow[ddddll, red, start anchor={[xshift=-1ex, yshift=-1ex]west}, end anchor={[xshift=0.5ex]north}, "f'"' ]\arrow[ddrr, green, "g"] & & \\
		& & & & \\
		& \mkern+12mu X'\arrow[uur, green, "{g''[1]}"]\arrow[ddl, cyan, start anchor={[xshift=2.5ex, yshift=-1ex]west}, end anchor={[xshift=-0.6ex, yshift=1.5ex]east}, "{a''[1]}"']\arrow[from=ddd, cyan, near end, "r'"'] & & & Z\arrow[lll, green, near start, "g'"'] \\
		& & & & \\
		Z'\arrow[rrr, red, crossing over, "{f''[1]}"'] & & & X\arrow[uuuul, red, crossing over, pos=0.2, "f"]\arrow[uur, orange, "h"'] \\
		& X''\arrow[ddl, dashed, pos=0.2, "{\tilde{a}''[1]}"']\arrow[dr, dashed] & & & Z\arrow[uuu, orange, "\text{id}_Z"']\arrow[lll, dashed]\arrow[dddll, orange, pos=0.2, "\tilde{h}'"'] \\
		& & Y''\arrow[urr, dashed, pos=0.2, "q"]\arrow[uuuuuu, dashed, pos=0.6, "t"] & & \\
		Z''\arrow[uuu, cyan, "r"]\arrow[rrr, dashed, crossing over]\arrow[drr, cyan, dashed, "\tilde{a}"']\arrow[from=urr, dashed, crossing over] & & & \tilde{X}\arrow[uuu, orange, crossing over, pos=0.8, "s"']\arrow[uur, dashed, pos=0.2, "\tilde{h}"']\arrow[ul, dashed, pos=0.7, end anchor={[yshift=-1ex]east}, "p"] & \\
		& & Y'\arrow[uuul, dashed, cyan, pos=0.85, "\tilde{a}'"]\arrow[ur, orange, "{\tilde{h}''[1]}"'] & & 
	\end{tikzcd}
	\]
	The starting point is the upper pyramid in $\mathsf{D}_S$, formed by the distinguished triangles $(f,f',f'')$ and $(g,g',g'')$, with $h=g\circ f$ and $a''=f'\circ g''$; our goal is to construct a lower pyramid with two more distinguished triangles (orange and blue) and commuting lateral facets. As hinted, we will achieve this through an intermediate floor (dashed).
	
	Suppose first that $f$, $g$ are represented by the roofs $X\xleftarrow{s} U\xrightarrow{p} Y$ respectively $Y\xleftarrow{t}Y''\xrightarrow{q} Z$ for $s,t\in S$. According to diagram \eqref{roofcompo}, $h=g\circ f$ is represented by $X\xleftarrow{s\circ t'} \tilde{X}\xrightarrow{q\circ p'} Z$ for some capping $U\xleftarrow{t'}\tilde{X}\xrightarrow{p'} Y''$ such that $t'\in S$ and $p\circ t'=t\circ p'$. Then since also $(s\circ t', \tilde{X}, p\circ t')$ represents $f$, we may overwrite $U\leftarrow\tilde{X}$, $s\circ t'\leftarrow s$, $p'\leftarrow p$ and set $\tilde{h}\coloneqq q\circ p:\tilde{X}\rightarrow Z$, as reported in our octahedron.
	
	Now complete $p\in\text{Hom}_\mathsf{D}(\tilde{X},Y'')$ and $q\in\text{Hom}_\mathsf{D}(Y'',Z)$ to distinguished triangles $\tilde{X}\xrightarrow{p} Y''\rightarrow Z''\rightarrow \tilde{X}[1]$ respectively $Y''\xrightarrow{q} Z\rightarrow X''\rightarrow Y''[1]$ in $\mathsf{D}$, and trace some $\tilde{a}''[1]: X''\rightarrow Z''$ by imposing commutativity. This effectively gives an upper pyramid in $\mathsf{D}$ (dashed black). We claim it to be isomorphic in $\mathsf{D}_S$ to our original upper pyramid, possible if and only if the diagrams
	\[
	\begin{tikzcd}
		\tilde{X}\arrow[r, "p"]\arrow[d, "s"'] & Y''\arrow[r]\arrow[d, "t"'] & Z''\arrow[r]\arrow[d, dashed, "r"] & \tilde{X}[1]\arrow[d, "{s[1]}"] \\
		X\arrow[r, "f"'] & Y\arrow[r, "f'"'] & Z'\arrow[r, "f''"'] & X[1]
	\end{tikzcd}\qquad
	\begin{tikzcd}
		Y''\arrow[r, "q"]\arrow[d, "t"'] & Z\arrow[r]\arrow[d, "\text{id}_Z"'] & X''\arrow[r]\arrow[d, dashed, "r'"] & Y''[1]\arrow[d, "{t[1]}"] \\
		Y\arrow[r, "g"'] & Z\arrow[r, "g'"'] & X'\arrow[r, "g''"'] & Y[1]
	\end{tikzcd}
	\]
	are isomorphisms of distinguished triangles in $\mathsf{D}_S$ (as before, $s\equiv[(\text{id}_{\tilde{X}},s)]\in\text{Hom}_{\mathsf{D}_S}(\tilde{X},X)$ is implicitly understood). Focusing on the first one, whose left square commutes, we see that axiom (TR3) for $\mathsf{D}_S$ (shown above!) produces the missing arrow $r\in\text{Hom}_{\mathsf{D}_S}(Z'',Z')$. Since $S$ is compatible with triangulation, we further have $r\in S$, so that all vertical arrows are formally invertible in $\mathsf{D}_S$. This makes $(s,t,r)$ into an isomorphism. The argument for $(t,\text{id}_Z,r')$ is analogous.
	
	So we can simply focus on the black dashed upper pyramid. But this belonging to $\mathsf{D}$ immediately gives by axiom (T3) a lower counterpart as drawn above. Setting $a\coloneqq\tilde{a}\circ r^{-1}\in\text{Hom}_{\mathsf{D}_S}(Z',Y')$, $a'\coloneqq r'\circ\tilde{a}'\in\text{Hom}_{\mathsf{D}_S}(Y',X')$, $h'\coloneqq \tilde{h}'\circ\text{id}_Z\in\text{Hom}_{\mathsf{D}_S}(Z,Y')$ and $h''\coloneqq s[1]\circ \tilde{h}''\in\text{Hom}_{\mathsf{D}_S}(Y',X[1])$, we obtain a lower pyramid in $\mathsf{D}_S$ with the desired properties: $(h,h',h'')$, $(a,a',a'')$ form distinguished triangles in $\mathsf{D}_S$ (being these isomorphic to the image under localization of distinguished ones in $\mathsf{D}$), and the adjacent triangles commute by construction, $a'\circ h'=g'$ and $h''\circ a=f''$ (cf. last pair of diagrams).
	
	This also concludes the proof of Theorem \ref{derivedcatistriangulated}.    
\end{proof}

\begin{Cor}\label{D(A)istriangulated}
	The derived category $\mathsf{D^\#\!(A)}$ \textup(for $\#=\emptyset,+,-,\mathsf{b}$\textup) of an abelian category $\mathsf{A}$ is triangulated.
\end{Cor}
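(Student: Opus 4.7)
The plan is to deduce the corollary by a direct application of Theorem \ref{derivedcatistriangulated} to the triangulated category $\mathsf{H^\#\!(A)}$ (established in Theorem \ref{homotopycatistriangulated}) together with the localizing class $S = \{\text{quasi-isomorphisms in } \mathsf{H^\#\!(A)}\}$ (which is localizing by Proposition \ref{quasilocalizing}). By the identification $\mathsf{D^\#\!(A)} = \mathsf{H^\#\!(A)}[S^{-1}]$ given in Definition \ref{dercatdef}, the only substantive work consists in verifying that $S$ is \emph{compatible with the triangulation} on $\mathsf{H^\#\!(A)}$. Once that is in hand, Theorem \ref{derivedcatistriangulated} produces a triangulated structure on $\mathsf{D^\#\!(A)}$, with the induced translation functor acting as $T[1]$ on objects (consistent with Definition \ref{translfunc}) and with distinguished triangles being those isomorphic to $\mathsf{L}(\Delta)$ for $\Delta$ distinguished in $\mathsf{H^\#\!(A)}$ --- equivalently, by Remark \ref{alternadistinguitriang}, those isomorphic to cone triangles of the form \eqref{conetriangle}.

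First I would check the translation-stability condition: an $s^\bullet \in \mathrm{Hom}_{\mathsf{H^\#\!(A)}}(X^\bullet, Y^\bullet)$ is a quasi-isomorphism if and only if $H^n(s^\bullet)$ is an isomorphism for every $n \in \mathbb{Z}$; but $H^n(T(s)^\bullet) = H^n(s[1]^\bullet) = H^{n+1}(s^\bullet)$, and the sign in the differential on translated complexes does not affect the induced cohomology morphisms. Hence $s^\bullet \in S$ if and only if $T(s^\bullet) \in S$, and the first half of compatibility is immediate.

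The second half is the main obstacle, and the real content of the proof. Given a diagram \eqref{TR3diag} in $\mathsf{H^\#\!(A)}$ with rows distinguished triangles and with $f^\bullet, g^\bullet \in S$, I need to argue that any completing $h^\bullet \in \mathrm{Hom}_{\mathsf{H^\#\!(A)}}(Z^\bullet, Z'^\bullet)$ provided by Lemma \ref{tr3} is itself a quasi-isomorphism. Here the key input is Theorem \ref{longexactcohomtriangles}: each distinguished triangle in $\mathsf{H^\#\!(A)}$ (or $\mathsf{D^\#\!(A)}$) induces a long exact sequence of cohomology objects in $\mathsf{A}$. Applying the cohomological functors $H^n$ to both rows of \eqref{TR3diag} produces a ladder of long exact sequences in $\mathsf{A}$ with vertical arrows $H^n(f^\bullet)$, $H^n(g^\bullet)$, $H^n(h^\bullet)$ and $H^n(f[1]^\bullet) = H^{n+1}(f^\bullet)$. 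Four out of every five of these vertical arrows are isomorphisms (since $f^\bullet, g^\bullet \in S$), so by the Five-Lemma in $\mathsf{A}$ the remaining $H^n(h^\bullet)$ is an isomorphism for every $n$, that is, $h^\bullet \in S$.

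With compatibility verified, Theorem \ref{derivedcatistriangulated} directly furnishes the triangulated structure on $\mathsf{D^\#\!(A)} = \mathsf{H^\#\!(A)}[S^{-1}]$ for each $\# = \emptyset, +, -, \mathsf{b}$, proving the corollary. A minor cosmetic remark is that distinguished triangles in $\mathsf{D^\#\!(A)}$ may equivalently be characterized (as advertised all along in Definition \ref{exacttriangles} and Remark \ref{alternadistinguitriang}) as those isomorphic in $\mathsf{D^\#\!(A)}$ to a cone triangle, since every short exact sequence of complexes is already quasi-isomorphic to such a triangle by Proposition \ref{shortexactaretriangles}; this reconciles the present triangulated-categorical notion with the derived-categorical one used throughout Chapter 2.
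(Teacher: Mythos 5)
Your proposal is correct and follows essentially the same route as the paper: invoke Theorem \ref{homotopycatistriangulated} and Theorem \ref{derivedcatistriangulated}, reducing everything to checking that the class of quasi-isomorphisms is compatible with the triangulation, where translation-stability is immediate and the condition on $h^\bullet$ follows from the Five-Lemma applied to the ladder of long exact cohomology sequences of Theorem \ref{longexactcohomtriangles}. No substantive differences from the paper's own argument.
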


\begin{proof}
	By Theorem \ref{homotopycatistriangulated}, we already know that $\mathsf{H^\#\!(A)}$ together with the translation functor $T:\mathsf{H^\#\!(A)}\rightarrow\mathsf{H^\#\!(A)}$ and all distinguished triangles of complexes is a triangulated category. So according to Theorem \ref{derivedcatistriangulated}, it suffices to check that the localizing class $S\coloneqq\{\text{quasi-isomorphisms in }\mathsf{H^\#\!(A)}\}$ is compatible with its triangulated structure. Obviously, $s^\bullet\in S$ if and only if $T(s^\bullet)=s^{\bullet+1}\in S$. The condition $\big(f^\bullet,g^\bullet\in S\text{ in }\eqref{TR3diag}\implies h^\bullet\in S\big)$ is easily verified as well: just use Lemma \ref{tr3} to get some $h^\bullet\in\text{Hom}_\mathsf{H^\#\!(A)}(Z^\bullet,Z'^\bullet)$ and then apply the Five-Lemma to each quintet of vertical arrows $\big(H^n(f^\bullet),H^n(g^\bullet),H^n(h^\bullet),H^{n+1}(f^\bullet), H^{n+1}(g^\bullet)\big)$ connecting the long exact sequences in cohomology provided by Theorem \ref{longexactcohomtriangles}; each $H^n(h^\bullet):H^n(Z^\bullet)\rightarrow H^n(Z'^\bullet)$ is then an isomorphism, thus $h^\bullet\in S$.
	
	By Theorem \ref{derivedcatistriangulated}, $\mathsf{D^\#\!(A)}=\mathsf{H^\#\!(A)}[S^{-1}]$ then becomes a triangulated category when equipped with the induced translation functor $T_S:\mathsf{D^\#\!(A)}\rightarrow\mathsf{D^\#\!(A)}$ and as distinguished triangles all those isomorphic to the image under the localization functor $\mathsf{L}:\mathsf{H^\#\!(A)}\rightarrow\mathsf{D^\#\!(A)}$ of some distinguished triangle of complexes in $\mathsf{H^\#\!(A)}$.
\end{proof}

Having just shown that derived categories of complexes are triangulated, so that Definition \ref{cohomfuncontriangulated} for $\mathsf{D}=\mathsf{D^\#\!(A)}$ coincides with Definition \ref{cohomfunconabelian}, Proposition \ref{Homareexact} immediately implies the following corollary. 

\begin{Cor}\label{Homarecohomological}
	Let $\mathsf{A}$ be an abelian category, $X^\bullet\in\textup{obj}(\mathsf{D^\#\!(A)})$. By Proposition \ref{Homareexact}, the covariant functor $\textup{Hom}_\mathsf{D^\#\!(A)}(X^\bullet,\square):\mathsf{D^\#\!(A)}\rightarrow\mathsf{A}$ and the contravariant functor $\textup{Hom}_\mathsf{D^\#\!(A)}(\square,X^\bullet):\mathsf{D^\#\!(A)}\rightarrow\mathsf{A}$ are cohomological in the abelian sense.
	
	Moreover, by Theorem \ref{longexactcohomtriangles}, the cohomology functors $H^n:\mathsf{D^\#\!(A)}\rightarrow\mathsf{A}$ are cohomological in the triangulated sense as well.
\end{Cor}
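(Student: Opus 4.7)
The plan is to assemble the corollary directly from three results already at hand, namely Corollary \ref{D(A)istriangulated}, Proposition \ref{Homareexact}, and Theorem \ref{longexactcohomtriangles}. Indeed, the statement is phrased as an immediate consequence of the first two for the Hom-functors, and of the third for the cohomology functors; the real work, such as it is, lies only in reconciling the two parallel notions of cohomological functor (the abelian one from Definition \ref{cohomfunconabelian} and the triangulated one from Definition \ref{cohomfuncontriangulated}).

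First I would invoke Corollary \ref{D(A)istriangulated} to equip $\mathsf{D^\#\!(A)}$ with its triangulated structure (translation $T$ and the distinguished triangles from Definition \ref{exacttriangles}). Then Proposition \ref{Homareexact}, specialised to $\mathsf{D}\coloneqq\mathsf{D^\#\!(A)}$ and $D\coloneqq X^\bullet$, yields, for any distinguished triangle $Y^\bullet\to Z^\bullet\to W^\bullet\to Y[1]^\bullet$, the long exact sequences \eqref{covHomexact} and \eqref{contraHomexact} in $\mathsf{Ab}$. To match these against Definition \ref{cohomfunconabelian}, which demands a long exact sequence obtained by applying the putative cohomological functor across all iterated translates of the triangle, I would note the tautological natural identification $\textup{Hom}_\mathsf{D^\#\!(A)}(X^\bullet, T^n(\square))=\textup{Hom}_\mathsf{D^\#\!(A)}(X^\bullet,(\square)[n]^\bullet)$; thus \eqref{covHomexact} is exactly the long exact sequence demanded, exhibiting $\textup{Hom}_\mathsf{D^\#\!(A)}(X^\bullet,\square)$ as cohomological in the abelian sense (the contravariant case is symmetric, via \eqref{contraHomexact}). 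For the $H^n$ functors, Theorem \ref{longexactcohomtriangles} already produces the sought long exact sequence in $\mathsf{A}$ directly, and the required commutation with translation follows from $H^n=H^0\circ T^n$.

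The only genuinely subtle point — and the one I expect a careful reader to demand justification for — is the equivalence between the abelian-sense and triangulated-sense definitions of cohomological functor in our setting. The triangulated notion only requires a single 3-term exact sequence $\mathsf{F}(u)\colon\mathsf{F}(Y)\to\mathsf{F}(Z)$, $\mathsf{F}(v)\colon\mathsf{F}(Z)\to\mathsf{F}(W)$ with $\ker\mathsf{F}(v)=\operatorname{im}\mathsf{F}(u)$, whereas the abelian notion packages all such sequences for every translate. The bridge is axiom (T2): since rotating a distinguished triangle produces another distinguished triangle, the 3-term exactness can be applied to every rotation and then spliced to recover the long exact sequence. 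Conversely, truncating a long exact sequence trivially delivers the 3-term one. This compatibility, applied both to the Hom-functors and to $H^n$, confirms the claim in both senses and closes the proof.
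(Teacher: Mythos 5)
Your proposal is correct and matches the paper's own (essentially one-line) argument: the corollary is deduced by combining Corollary \ref{D(A)istriangulated} with Proposition \ref{Homareexact} and Theorem \ref{longexactcohomtriangles}, and the reconciliation of the abelian and triangulated notions of cohomological functor via axiom (T2) that you single out is precisely the observation the paper makes in the remark following Definition \ref{cohomfuncontriangulated}. Nothing further is needed.
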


Looking back, we observe that this result completes the proof of Theorem \ref{Extprop}a.\!\! . The triangularity of $\mathsf{D^\#\!(A)}$ also allows us to take the cone $C(u)^\bullet\in\text{obj}(\mathsf{D^\#\!(A)})$ completing any $u^\bullet\in\text{Hom}_\mathsf{D^\#\!(A)}(X^\bullet, Y^\bullet)$ (unique up to isomorphism by Definition \ref{conesintriangulated}) to be precisely the cone of complexes $C(u)^\bullet=X[1]^\bullet\oplus Y^\bullet$ from Definition \ref{coneofcomplexes}. Everything lines up neatly. Furthermore, we have:
	
\begin{Cor}
	Let $\mathsf{A}$ be an abelian category. Then any short exact sequence $0\rightarrow X^\bullet\xrightarrow{u^\bullet} Y^\bullet\xrightarrow{v^\bullet} Z^\bullet\rightarrow 0$ in $\mathsf{Kom^\#\!(A)}$ can be completed to a distinguished triangle $X^\bullet\xrightarrow{u^\bullet} Y^\bullet\xrightarrow{v^\bullet} Z^\bullet\xrightarrow{w^\bullet} X[1]^\bullet$ in $\mathsf{D^\#\!(A)}$ by an appropriate choice of $w^\bullet$, and any distinguished triangle in $\mathsf{D^\#\!(A)}$ is isomorphic to one of this form. 
\end{Cor}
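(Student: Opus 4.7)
Part 1 is essentially the derived-category upgrade of Proposition \ref{shortexactaretriangles}: that proposition constructs, for any short exact sequence $0\to X^\bullet\xrightarrow{u^\bullet}Y^\bullet\xrightarrow{v^\bullet}Z^\bullet\to 0$ in $\mathsf{Kom}^\#(\mathsf A)$, a triple of quasi-isomorphisms $(\mathrm{id}_X^\bullet, p_3^\bullet, v^\bullet\!\circ p_2^\bullet)$ identifying the triangle associated to this sequence with the canonical distinguished triangle $X^\bullet\xrightarrow{i_1^\bullet} Z(u)^\bullet\xrightarrow{p_{23}^\bullet} C(u)^\bullet\xrightarrow{p_1^\bullet} X[1]^\bullet$ of Definition \ref{exacttriangles}. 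After localizing, these three quasi-isomorphisms become isomorphisms in $\mathsf{D}^\#(\mathsf A)$, so the two triangles become isomorphic there. Defining $w^\bullet$ to be the composition $p_1^\bullet\circ (v^\bullet\!\circ p_2^\bullet)^{-1}$ in $\mathsf{D}^\#(\mathsf A)$, Corollary \ref{D(A)istriangulated} together with axiom (T1)b (which now holds in $\mathsf{D}^\#(\mathsf A)$) immediately yields that $X^\bullet\xrightarrow{u^\bullet}Y^\bullet\xrightarrow{v^\bullet}Z^\bullet\xrightarrow{w^\bullet} X[1]^\bullet$ is a distinguished triangle.

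\textbf{Part 2} unfolds the definition of ``distinguished triangle in $\mathsf{D}^\#(\mathsf A)$''. By Corollary \ref{D(A)istriangulated}, such a triangle $\Delta'$ is isomorphic to $\mathsf{L}(\Delta)$ for some distinguished triangle $\Delta$ in $\mathsf{H}^\#(\mathsf A)$; in turn, by Theorem \ref{homotopycatistriangulated} and Remark \ref{alternadistinguitriang}, $\Delta$ may be assumed to be a cone triangle $X^\bullet\xrightarrow{u^\bullet}Y^\bullet\xrightarrow{i_2^\bullet}C(u)^\bullet\xrightarrow{p_1^\bullet}X[1]^\bullet$. Now consider the short exact sequence
\[
0\rightarrow X^\bullet\xrightarrow{i_1^\bullet} Z(u)^\bullet\xrightarrow{p_{23}^\bullet} C(u)^\bullet\rightarrow 0
\]
in $\mathsf{Kom}^\#(\mathsf A)$ (middle row of diagram \eqref{distinguidiag}). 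By Part 1, it gives rise to a distinguished triangle $X^\bullet\xrightarrow{i_1^\bullet} Z(u)^\bullet\xrightarrow{p_{23}^\bullet} C(u)^\bullet\xrightarrow{p_1^\bullet} X[1]^\bullet$ in $\mathsf{D}^\#(\mathsf A)$. I claim the triple $(\mathrm{id}_X^\bullet, p_3^\bullet, \mathrm{id}_{C(u)}^\bullet)$ is an isomorphism from this triangle onto the cone triangle above, hence onto $\Delta'$ (by composition).

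\textbf{The point to check} is that this triple really defines a morphism of triangles in $\mathsf{D}^\#(\mathsf A)$. Commutativity of the first square, $p_3^\bullet\circ i_1^\bullet=u^\bullet$, is immediate from $p_3^n(x^n,0,0)=u^n(x^n)$. Commutativity of the third square is tautological since both triangles share the connecting morphism $p_1^\bullet$. The nontrivial check is the middle square, $i_2^\bullet\circ p_3^\bullet=p_{23}^\bullet$, which fails in $\mathsf{Kom}^\#(\mathsf A)$ but holds in $\mathsf{H}^\#(\mathsf A)$ via the explicit homotopy $k^n:Z(u)^n\to C(u)^{n-1}$, $k^n(x^n,x^{n+1},y^n)\coloneqq(x^n,0)$: one computes
\[
(d_{C(u)}^{n-1}\!\circ k^n + k^{n+1}\!\circ d_{Z(u)}^n)(x^n,x^{n+1},y^n)=(-x^{n+1}, u^n(x^n))=(i_2^n\!\circ p_3^n - p_{23}^n)(x^n,x^{n+1},y^n).
\]
Thus $i_2^\bullet\circ p_3^\bullet=p_{23}^\bullet$ in $\mathsf{H}^\#(\mathsf A)$, a fortiori in $\mathsf{D}^\#(\mathsf A)$.

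\textbf{The main obstacle} is precisely this homotopy verification, together with keeping track of the fact that the connecting morphism $w^\bullet$ produced by Part 1 genuinely matches the $p_1^\bullet$ of the cone triangle after the identification. Both are handled by Lemma \ref{p3i3quiso} (for $p_3^\bullet$ being a quasi-isomorphism) and the explicit homotopy above; once they are in place, the isomorphism of triangles in $\mathsf{D}^\#(\mathsf A)$ follows formally, and the corollary is proved.
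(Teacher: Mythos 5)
Your proof is correct, and it shares the paper's backbone — reducing the given short exact sequence to the cylinder/cone middle row of diagram \eqref{distinguidiag} via Proposition \ref{shortexactaretriangles} — but it runs on a different engine. The paper routes both directions through Lemma \ref{semisplit}: the middle row semisplits via $i_{23}^\bullet$, so that lemma manufactures the connecting morphism, and (invoked again, somewhat tersely) supplies the converse. You bypass Lemma \ref{semisplit} entirely: for the first claim you define $w^\bullet\coloneqq p_1^\bullet\circ(v^\bullet\circ p_2^\bullet)^{-1}$ directly in the localization and appeal to (T1)b., and for the converse you reduce to cone triangles and exhibit the isomorphism $(\text{id}_X^\bullet,p_3^\bullet,\text{id}_{C(u)}^\bullet)$ by hand, checking $i_2^\bullet\circ p_3^\bullet\sim p_{23}^\bullet$ with an explicit homotopy; your computation is right, and it is the retraction-side counterpart of the homotopy $i_3^\bullet\circ u^\bullet\sim i_1^\bullet$ recorded in the Remark following Theorem \ref{homotopycatistriangulated}. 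The paper's route buys brevity, since Lemma \ref{semisplit} packages the homotopy bookkeeping once and for all; yours buys an explicit formula for $w^\bullet$ and a self-contained argument independent of the semisplitting characterization. One cosmetic point: in Part 2 you need not invoke Part 1 to know that $X^\bullet\xrightarrow{i_1^\bullet}Z(u)^\bullet\xrightarrow{p_{23}^\bullet}C(u)^\bullet\xrightarrow{p_1^\bullet}X[1]^\bullet$ is distinguished — it is the canonical triangle \eqref{distinguitriangle} of Definition \ref{exacttriangles} — and its first two arrows already form a short exact sequence in $\mathsf{Kom^\#\!(A)}$, which is all that ``of this form'' requires.
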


\begin{proof}
	Notice that completing $u^\bullet$ to a distinguished triangle of complexes via axiom {(T1)c.} for $\mathsf{D^\#\!(A)}$ doesn't do the trick --- this would erase the data $(v^\bullet,Z^\bullet)$! However, by Proposition \ref{shortexactaretriangles} we know that $0\rightarrow X^\bullet\rightarrow Y^\bullet\rightarrow Z^\bullet\rightarrow 0$ is quasi-isomorphic to the middle row $0\rightarrow X^\bullet\xrightarrow{i_1^\bullet}Z(u)^\bullet\xrightarrow{p_{23}^\bullet}C(u)^\bullet\rightarrow 0$ of diagram \eqref{distinguidiag}, which clearly semisplits through $i_{23}^\bullet:C(u)^\bullet\rightarrow Z(u)^\bullet$. So Lemma \ref{semisplit} provides the desired $w^\bullet:Z^\bullet\rightarrow X[1]^\bullet$.
	
	Again by Lemma \ref{semisplit}, any distinguished $X^\bullet\xrightarrow{u^\bullet} Y^\bullet\xrightarrow{v^\bullet} Z^\bullet\xrightarrow{w^\bullet} X[1]^\bullet$ in $\mathsf{D^\#\!(A)}$ is ultimately isomorphic to a distinguished triangle stemming from a semisplit short exact sequence as highlighted above. 
\end{proof}

\newpage

\part{Category Theory of Sheaves}
\markboth{II\quad CATEGORY THEORY OF SHEAVES}{II\quad CATEGORY THEORY OF SHEAVES}
\thispagestyle{plain}

\vspace*{2cm}

The concept of sheaf started evolving at slow pace around the 1930's, stemming from the theory of cohomology, and was initially finalized to very specific settings by the various algebraic topologists of that epoch, among whom Leray, Cartan and Serre. In the mid 50's, also thanks to the contributions of Grothendieck and Zariski, its relevance for algebraic geometry became increasingly apparent, elevating sheaf theory to a free-standing mathematical discipline. It eventually provided algebraic geometers with powerful analytical tools such as coherent sheaf cohomology, of which we will have some taste.

On the other hand, schemes were pioneered by Grothendieck in his \textit{Éléments de Géométrie Algébrique} (\textit{EGA}) of 1960, laying the first brick of a theory which now efficiently unifies most of algebraic geometry and number theory, appealing to tools from both commutative algebra, topology and homological algebra. He defined affine schemes as being spectra of commutative rings equipped with a sheaf of rings (which in turn fomented the exploit of sheaf theory), then ``gluing'' together to form schemes with a wide range of properties.

In these two short paragraphs, we can already appreciate the many historical synergies that shaped algebraic geometry. Thanks to this, the B-side of homological mirror symmetry relies on a more traditional and firmer set of tools than the way more convoluted and comparatively young Fukaya categories (which appeared no earlier than the 90's), making it the better understood face of the mirror duality. Accordingly, we will see that the following theory is somewhat more digestible than that presented in the second half of \cite{[Imp21]}, though not without a few compromises, especially when it comes to schemes.      

This considerations aside, the goal of Part II is two-fold. On the one hand, we wish to learn about the algebraic geometry of sheaves, gradually restricting our attention to the framework of coherent sheaves on schemes, and hence on Calabi--Yau varieties; on the other, we work out the simplest known manifestations of homological mirror symmetry, which morally concludes our journey. To be more specific: 

\begin{itemize}[leftmargin=0.5cm]
\item In Chapter 5, we introduce the category of (pre)sheaves of abelian groups on a topological space and prove that it is abelian. Then we expand our vocabulary to include the essential working tools: pushforward, inverse image, skyscraper and local $\mathcal{H}om$- sheaves. In fact, we can study them in the broader context of sheaves of modules on a ringed space, where dual, tensor and pullback sheaves join our cast. Sometimes it is possible to express sheaves through copies of the structure sheaf equipping the underlying ringed space, in which case we speak of free and locally free sheaves. Unfortunately, these do not form abelian categories, so that one must enlarge them to capture (quasi-)coherent sheaves.
\newline But here we pause in order to refresh our memory on varieties and schemes. In particular, by choosing as fixed base the spectrum of our ground field $\mathbb{K}$, we see that schemes give rise to a category into which that of varieties over $\mathbb{K}$ embeds, so that we are encouraged to just look at schemes without loss of generality! After having listed all imaginable decorations they admit, we return to quasi-/coherent sheaves, now considered over (noetherian) schemes; in this particular setting, it is easier to argue that the arising categories are indeed abelian. Finally, we go again over all known types of sheaves and functors they induce, inspecting how well these behave with respect to coherence.     

\item In Chapter 6, we momentarily zoom back to sheaves of modules on topological spaces to see which right and left derived functors they produce, following the recipe of Part I. In this wider context, we introduce sheaf cohomology, which can be more conveniently computed through \v{C}ech cohomology; we see how natural this comparison is and on which spaces the two theories are actually isomorphic (luckily, on all those we care about). Then we get back to coherent sheaves on schemes and their bounded derived category $\mathsf{D^b}(X)$, and construct derived functors on it, bypassing all undesired obstructions by going through the better-behaved category of quasi-coherent sheaves.
\newline We also reserve a few pages to two famous investigational tools for smooth projective schemes, namely Serre duality and Fourier--Mukai theory. This set aside, we finally explain how to identify Calabi--Yau manifolds as ``analytified'' smooth projective varieties, thus reassuring ourselves that all nice properties and functors discussed that far are legal in the Calabi--Yau setting as well. In fact, we take the occasion to summarize the main features of $\mathsf{D^b}(X)$, now precisely the object appearing in Kontsevich's conjecture, which we finally restate the way it appears in \cite{[Imp21]}.     

\item In Chapter 7, we look at how the meticolous work of two theses' worth of theory drastically simplifies in low dimensions. We begin by describing Calabi--Yau 1-folds, which are elliptic curves, clarifying what shape (indecomposable) coherent sheaves thereon can possibly assume, and hence their B-side. Secondly, we explain their A-side, which sees them as just 2-tori and boils down to the Fukaya--Kontsevich category, the better-behaved evolution of Fukaya's construction. The latter is what enters the statement of homological mirror symmetry in dimension 1, whose proof is subsequently sketched. Finally, we say a few words for the 2-dimensional case of $K3$ surfaces.
\end{itemize}   

Our default reference for everything about algebraic geometry, from sheaves to schemes, is Hartshorne's masterful book \cite{[Har77]}, with some help by \cite{[Vak17]}. But the more homological content still referring to derived category theory is again borrowed from \cite{[GM03]}. When it comes to a more focused characterization of $\mathsf{D^b}(X)$ for noetherian schemes, and Fourier--Mukai transforms, we shift to \cite{[Huy06]}. The 1-dimensional case of elliptic curves follows instead the analyses \cite{[PZ00]} and \cite{[Kre00]} of Polishchuk, Zaslow and Kreussler. 

Nevertheless, we emphasize that this selection of literature is just the tip of a much bigger iceberg; some foundational works by other illustrious mathematicians like Grothendieck, Serre and Mukai (to just name a few) are credited only indirectly (if not at all), but can hopefully be back-traced from the sources used. Of course, the same can be said of Part I, and really of any far-reaching mathematical theory like Homological Mirror Symmetry.

\newpage
\pagestyle{fancy}

\section{The category of sheaves}
\thispagestyle{plain}

\subsection{The structure of abelian category}\label{ch5.1}

We begin by introducing the fundamental objects of investigation in the second part of this survey, sheaves, aiming at proving that they form an abelian category. These can be defined on any underlying category, but we will ultimately focus on sheaves of abelian groups and of modules. For a thorough introduction, the reader is referred to \cite[section II.1]{[Har77]}.

\begin{Def}									
Let $X$ be a topological space, $\mathsf{C}$ a category. A \textbf{presheaf $\mathcal{F}$ on $X$ with values in $\mathsf{C}$}\index{presheaf!on a topological space with values in $\mathsf{C}$} consists of:
\begin{itemize}[leftmargin=0.5cm]
	\item an object $\mathcal{F}(U)=\Gamma(U,\mathcal{F})\in\textup{obj}(\mathsf{C})$ for each open subset $U\subset X$, whose elements $s\in\mathcal{F}(U)$ we call \textbf{sections}\index{section} of $\mathcal{F}(U)$, and
	\item \textbf{restriction maps}\index{restriction map} $\rho^U_V\in\textup{Hom}_\mathsf{C}(\mathcal{F}(U),\mathcal{F}(V))$ for all pairs of open subsets $V\subset U\subset X$, whose action on sections we also write as $\rho^U_V(s)=s|_V$.
\end{itemize}
These data must fulfill: $\mathcal{F}(\emptyset)=0$, $\rho_U^U=\textup{id}_{\mathcal{F}(U)}$ for all open $U\subset X$, and $\rho^V_W\circ\rho^U_V=\rho^U_W$ for all triples of open subsets $W\subset V\subset U\subset X$.

The presheaf $\mathcal{F}$ is a \textbf{sheaf on $X$ with values in $\mathsf{C}$}\index{sheaf!!on a topological space with values in $\mathsf{C}$} if the sheaf property (or ``gluing'' property) holds: for any open subset $U\subset X$ with open cover $\{U_i\}_{i\in I}$ so that all sections $s_i\in\mathcal{F}(U_i)$ are compatible, $s_i|_{U_i\cap U_j} = s_j|_{U_i\cap U_j}$, there exists a unique $s\in\mathcal{F}(U)$ such that $s|_{U_i}=s_i$ for all $i\in I$. 
\end{Def}

\begin{Def}									
Let $X$ be a topological space endowed with sheaves $\mathcal{F}$ and $\mathcal{G}$ with values in some category $\mathsf{C}$. A \textbf{morphism of sheaves}\index{morphism!of sheaves} $\chi:\mathcal{F}\rightarrow\mathcal{G}$ over $X$ is a collection of maps $\{\chi_U\in\textup{Hom}_\mathsf{C}(\mathcal{F}(U),\mathcal{G}(U))\mid U\subset X\text{ open}\}$ fulfilling\footnote{Here and in the following, $\rho$ is implicitly taken to denote the restriction maps of all sheaves, indiscriminately.}
\[
\rho^U_V\circ\chi_U=\chi_V\circ\rho^U_V\in\textup{Hom}_\mathsf{C}(\mathcal{F}(U),\mathcal{G}(V)) 
\]
for any pair of open subsets $V\subset U\subset X$. It is an isomorphism of sheaves if each $\chi_U$ is an isomorphism in $\mathsf{C}$.

Given another morphism of sheaves $\tilde{\chi}:\mathcal{G}\rightarrow\mathcal{H}$ over $X$, we simply define the composition $\tilde{\chi}\circ\chi:\mathcal{F}\rightarrow\mathcal{H}$ setwise by $(\tilde{\chi}\circ\chi)_U\coloneqq\tilde{\chi}_U\circ\chi_U\in\textup{Hom}_\mathsf{C}(\mathcal{F}(U),\mathcal{H}(U))$. These operations yield a well-defined category $\mathsf{Sh}(X;\mathsf{C})$, the \textbf{category of sheaves on $X$ with values in $\mathsf{C}$}\index{category!of sheaves on a topological space}. This is a full subcategory of $\mathsf{PSh}(X;\mathsf{C})$, the \textbf{category of presheaves on $X$ with values in $\mathsf{C}$}\index{category!of presheaves on a topological space}, whose morphisms are defined identically.
\end{Def}

Choosing for example $\mathsf{C}=\mathsf{Ab}$ yields \textbf{(pre)sheaves of abelian groups}\index{presheafofabelian@(pre)sheaf of abelian groups}, while:

\begin{Def}\label{ringedspace}
Endowing a topological space $X$ with any sheaf $\mathcal{S}_X$ with values in $\mathsf{C=Rings}$ (the category of rings) makes it a \textbf{ringed space}\index{ringed space} $X=(X,\mathcal{S}_X)$ with \textbf{structure sheaf}\index{sheaf!structure} $\mathcal{S}_X$. 

Given a further ringed space $(Y,\mathcal{S}_Y)$, a \textbf{morphism of ringed spaces}\index{morphism!of ringed spaces} $(f,f^\#):(X,\mathcal{S}_X)\rightarrow (Y,\mathcal{S}_Y)$ consists of a continuous map $f:X\rightarrow Y$ together with a morphism $f^\#:\mathcal{S}_Y\rightarrow f_*(\mathcal{S}_X)$ of sheaves of rings over $Y$ (where $f_*(\mathcal{S}_X)$ is the ``pushforward'' sheaf defined in \eqref{pushforwardsheaf} below).
\end{Def}

\begin{Def}										
Let $X$ be a topological space, $\mathcal{F}$ a presheaf on $X$ with values in $\mathsf{C}$. The \textbf{stalk}\index{stalk} (or \textit{fibre}) of $\mathcal{F}$ at some given $x\in X$ is 
\begin{equation}
	\mathcal{F}_x\coloneqq\varinjlim_{U\ni x}\mathcal{F}(U)= \{(U,s)\mid x\in U,\, U\subset X \text{ open}, \,s\in\mathcal{F}(U)\}/\!\sim\,,
\end{equation}
where $(U,s)\sim(U',s')$ if and only if there exists some open subset $V\subset U\cap U'$ such that $x\in V$ and $s|_V=s'|_V$. We call a class $s_x\coloneqq[(U,s)]\in\mathcal{F}_x$ a \textbf{germ}\index{germ} of $\mathcal{F}$ at $x$. Observe that stalks inherit the structure of the objects of $\mathsf{C}$.

Note also that any presheaf morphism $\chi:\mathcal{F}\rightarrow\mathcal{G}$ over $X$ induces morphisms on stalks $\chi_x:\mathcal{F}_x\rightarrow\mathcal{G}_x,\,s_x=[(U,s)]\mapsto\chi_x(s_x)\coloneqq[(U,\chi_U(s))]$ in $\mathsf{C}$ for each $x\in X$. In other words, the operation of taking the stalk at $x\in X$ yields a well-defined (exact) functor $\square_x:\mathsf{Sh}(X;\mathsf{C})\rightarrow\mathsf{C}$. 
\end{Def}

Stalks are relevant because they highlight the local nature of sheaves: for example, a morphism of sheaves is an isomorphism if and only if the induced morphisms on stalks are isomorphisms at every point of $X$ (see \cite[Proposition II.1.1]{[Har77]}). In studying them we are not limiting ourselves since any object $\mathcal{F}(U)$ turns out to be fully determined by the collection $\bigsqcup_{x\in U}\mathcal{F}_x$! Stalks also serve as a fundamental link between presheaves and sheaves:

\begin{Def}\label{asssheaf}
Let $X$ be a topological space, $\mathcal{F}$ a presheaf on $X$. For any open subset $U\subset X$ define
\begin{align}
	\mathcal{F}^+(U)\!\coloneqq\!\Big\{\varphi:U\!\rightarrow\!\bigsqcup_{x\in U}\mathcal{F}_x \Bigm|\, &\forall x\!\in\! U\!:\,\varphi_x\!\coloneqq\!\varphi(x)\!\in\!\mathcal{F}_x\text{ and }\exists U_x\!\subset\! U\text{ $x$-neighb. } \nonumber\\[-0.3cm]
	&\exists\, s\!\in\!\mathcal{F}(U_x)\text{ s.t. }\varphi_y\!=\!s_y\,\forall y\!\in\! U_x\Big\}\,.
\end{align}
This identifies a sheaf $\mathcal{F}^+$ on $X$, endowed with the obvious restriction maps. We call it the \textbf{sheaf associated to $\mathcal{F}$}\index{sheaf!associated to a presheaf} (or the \textit{sheafification} of $\mathcal{F}$). Then there is a natural morphism $\theta:\mathcal{F}\rightarrow\mathcal{F}^+$ such that each $\theta_U(s)\in\mathcal{F}^+(U)$ has as image the collection of germs of $s\in\mathcal{F}(U)$ in $U$. Of course, $\mathcal{F}^+_x\cong\mathcal{F}_x$ for all $x\in X$, and $(\mathcal{F}\text{ sheaf}\implies\mathcal{F}^+\cong\mathcal{F})$.

The following universal property holds: for any other sheaf $\mathcal{G}$ and morphism of presheaves $\chi:\mathcal{F}\rightarrow\mathcal{G}$, there exists a unique morphism of sheaves $\psi:\mathcal{F}^+\rightarrow\mathcal{G}$ such that $\psi\circ\theta=\chi$ (which also shows that the pair $(\mathcal{F}^+,\theta)$ is unique up to unique isomorphism).

We can use it to show that sheafification also assigns to a morphism of presheaves $\mathcal{F}\rightarrow\mathcal{G}$ a morphism of associated sheaves $\mathcal{F}^+\rightarrow\mathcal{G}^+$ (see the proof of Proposition \ref{biglemma}), and is consequently a functor $\varsigma:\mathsf{PSh}(X;\mathsf{C})\rightarrow\mathsf{Sh}(X;\mathsf{C})$, the \textbf{sheafification functor}\index{functor!sheafification}.
\end{Def}

Recall that our ultimate goal is to describe derived categories of some very specific types of sheaves. This requires us to go through Chapter 1 and check the necessary categorical prerequisites. We do this first for the categories just introduced.

\begin{Pro}\label{pre-/sheafcatadditive}
The categories $\mathsf{PSh}(X;\mathsf{Ab})$ and $\mathsf{Sh}(X;\mathsf{Ab})$ are additive.
\end{Pro}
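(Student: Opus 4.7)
The plan is to verify the three axioms (A1)--(A3) of Definition \ref{addcat} in stages, starting with the presheaf category and then showing that the relevant constructions descend to the full subcategory $\mathsf{Sh}(X;\mathsf{Ab})$.

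First I would handle $\mathsf{PSh}(X;\mathsf{Ab})$ by transporting the additive structure of $\mathsf{Ab}$ pointwise. For (A1), given presheaves $\mathcal{F},\mathcal{G}$ and morphisms $\chi,\tilde\chi:\mathcal{F}\rightarrow\mathcal{G}$, define $(\chi+\tilde\chi)_U\coloneqq\chi_U+\tilde\chi_U$ using the group law on $\mathcal{G}(U)$; since every restriction map $\rho^U_V$ is a group homomorphism, the compatibility $\rho^U_V\circ(\chi_U+\tilde\chi_U)=(\chi_V+\tilde\chi_V)\circ\rho^U_V$ is automatic, turning $\text{Hom}_{\mathsf{PSh}(X;\mathsf{Ab})}(\mathcal{F},\mathcal{G})$ into an abelian group. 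Biadditivity of composition is then a pointwise consequence of biadditivity in $\mathsf{Ab}$. For (A2), the constant presheaf $\mathbf{0}$ with $\mathbf{0}(U)\coloneqq\{0\}$ and trivial restriction maps is a zero object, since any morphism to or from it is forced to be zero on every open subset. For (A3), define $(\mathcal{F}\oplus\mathcal{G})(U)\coloneqq\mathcal{F}(U)\oplus\mathcal{G}(U)$ with componentwise restrictions, and let $i_k,p_k$ be the corresponding pointwise inclusions and projections. The identities $p_k\circ i_k=\text{id}$, $p_k\circ i_l=0$ for $k\neq l$, and $i_1\circ p_1+i_2\circ p_2=\text{id}$ then hold open-set by open-set, inherited from the additive structure of $\mathsf{Ab}$.

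Next I would pass to $\mathsf{Sh}(X;\mathsf{Ab})$. Since it is a full subcategory of $\mathsf{PSh}(X;\mathsf{Ab})$, axiom (A1) is inherited verbatim: Hom-sets and the composition law are the same, so they retain their abelian group structure and biadditivity. For (A2), the zero presheaf trivially satisfies the sheaf-gluing property (any compatible family in a singleton set glues uniquely to its single element), so it still serves as a zero object in $\mathsf{Sh}(X;\mathsf{Ab})$. The essential verification is (A3): I would check that $\mathcal{F}\oplus\mathcal{G}$, built as above, is itself a sheaf whenever $\mathcal{F}$ and $\mathcal{G}$ are. Given an open cover $\{U_i\}$ of $U$ and a compatible family $\{(s_i,t_i)\in\mathcal{F}(U_i)\oplus\mathcal{G}(U_i)\}$, componentwise compatibility gives compatible families $\{s_i\}$ in $\mathcal{F}$ and $\{t_i\}$ in $\mathcal{G}$, which glue uniquely to sections $s\in\mathcal{F}(U)$ and $t\in\mathcal{G}(U)$; then $(s,t)\in(\mathcal{F}\oplus\mathcal{G})(U)$ is the unique glued section, confirming the sheaf property.

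I do not expect any serious obstacle here: everything reduces to working pointwise, and the only place where one might slip is in (A3) for sheaves. The subtle point worth flagging is that this argument covers only finite direct sums, which happen to coincide with finite products in $\mathsf{Ab}$ and therefore remain sheaves without any recourse to the sheafification functor $\varsigma$ from Definition \ref{asssheaf}; arbitrary coproducts of sheaves would require sheafifying the presheaf direct sum, a distinction that becomes relevant only when discussing (co)completeness rather than mere additivity.
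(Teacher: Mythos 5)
Your proof is correct and follows essentially the same route as the paper's: pointwise addition of morphisms for (A1), the zero sheaf for (A2), and the componentwise direct sum --- whose sheaf property is verified component by component --- for (A3). The only difference is organizational: you treat $\mathsf{PSh}(X;\mathsf{Ab})$ first and then check that the constructions preserve the sheaf condition, whereas the paper argues directly for $\mathsf{Sh}(X;\mathsf{Ab})$ and notes the presheaf case is contained in the same argument; your closing observation that only \emph{finite} direct sums avoid sheafification is accurate and a worthwhile clarification.
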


\begin{proof}
We prove additivity for $\mathsf{Sh}(X;\mathsf{Ab})$ so that it will follow automatically for $\mathsf{PSh}(X;\mathsf{Ab})$ as well, by definition. Regarding axiom (A1) of Definition \ref{addcat}, for any two $\mathcal{F}_1,\mathcal{F}_2\in\text{obj}(\mathsf{Sh}(X;\mathsf{Ab}))$ the set $\text{Hom}_{\mathsf{Sh}(X;\mathsf{Ab})}(\mathcal{F}_1,\mathcal{F}_2)$ is an abelian group with structure induced by the single abelian groups $\text{Hom}_\mathsf{Ab}(\mathcal{F}_1(U),\mathcal{F}_2(U))$ for each $U\subset X$ open; specifically, addition of sheaf morphisms is defined setwise and biadditivity of the composition law follows similarly. 

The zero sheaf $0\in\text{obj}(\mathsf{Sh}(X;\mathsf{Ab}))$ of axiom (A2) is the expectable one: it sends each $U$ to the trivial abelian group $\{0\}$, and has restriction maps $\text{id}_{\{0\}}$, fulfilling the presheaf properties and the gluing property (trivial sections glue to trivial sections).

Finally, for axiom (A3) we define the direct sum of $\mathcal{F}_1,\mathcal{F}_2\in\text{obj}(\mathsf{Sh}(X;\mathsf{Ab}))$ to be the sheaf $\mathcal{F}_1 \oplus\mathcal{F}_2$ specified by $(\mathcal{F}_1 \oplus\mathcal{F}_2)(U)\coloneqq\mathcal{F}_1(U)\oplus\mathcal{F}_2(U)$ and $\rho^U_V\coloneqq(\rho_1)^V_U\oplus(\rho_2)^V_U$ (for $\rho_i$ the restriction map of $\mathcal{F}_i$; the sheaf property holding because true componentwise). Moreover, the morphisms of sheaves fitting into diagram \eqref{directprodsum} are the obvious inclusions and projections made up from setwise inclusions and projections of abelian groups.      
\end{proof}

In order to deal with axiom (A4), we must first understand what kernels and cokernels look like in $\mathsf{Sh}(X;\mathsf{Ab})$.

\begin{Def}
Let $\chi:\mathcal{F}\rightarrow\mathcal{G}$ be a morphism of presheaves on a topological space $X$ and $V\subset U\subset X$ any pair of open subsets. Then:
\begin{itemize}[leftmargin=0.5cm]
	\item the \textbf{kernel presheaf}\index{presheaf!kernel} of $\chi$ is the presheaf $\mathcal{K}er(\chi)\in\text{obj}(\mathsf{PSh}(X;\mathsf{Ab}))$ specified by 
	\begin{equation}
		\mathcal{K}er(\chi)(U)\coloneqq\ker(\chi_U:\mathcal{F}(U)\rightarrow\mathcal{G}(U))=(\chi_U)^{-1}(0)
	\end{equation}
	(indeed an abelian group) and $\rho^U_V:\mathcal{K}er(\chi)(U)\rightarrow\mathcal{K}er(\chi)(V),\,s\mapsto s|_{\mathcal{F}(V)}$;
	
	\item the \textbf{image presheaf}\index{presheaf!image} of $\chi$ is the presheaf $\mathcal{I}m(\chi)\in\text{obj}(\mathsf{PSh}(X;\mathsf{Ab}))$ specified by
	\begin{equation}
		\mathcal{I}m(\chi)(U)\coloneqq\text{im}(\chi_U:\mathcal{F}(U)\rightarrow\mathcal{G}(U))=\chi_U(\mathcal{F}(U))
	\end{equation}
	(an abelian group) and $\rho^U_V:\mathcal{I}m(\chi)(U)\rightarrow\mathcal{I}m(\chi)(V),\,s\mapsto s|_{\mathcal{G}(V)}$;
	
	\item the \textbf{cokernel presheaf}\index{presheaf!cokernel} of $\chi$ is the presheaf $\mathcal{C}oker(\chi)\in\text{obj}(\mathsf{PSh}(X;\mathsf{Ab}))$ specified by\footnote{One can also take quotients of presheaves $\mathcal{F}$ by subpresheaves $\mathcal{F}'$ --- any presheaf such that $\mathcal{F}'(U)<\mathcal{F}(U)$ is a subgroup for each open $U\subset X$, endowed with compatible restriction maps from $\mathcal{F}$ --- setwise defined in the obvious way. Then $\mathcal{C}oker(\chi)\cong\mathcal{G}/\mathcal{I}m(\chi)$.}
	\begin{equation}
		\mathcal{C}oker(\chi)(U)\coloneqq\text{coker}(\chi_U:\mathcal{F}(U)\rightarrow\mathcal{G}(U))=\mathcal{G}(U)/\text{im}(\chi_U)
	\end{equation}
	(abelian) and $\rho^U_V:\mathcal{C}oker(\chi)(U)\rightarrow\mathcal{C}oker(\chi)(V),\,s\mapsto s|_{\mathcal{G}(V)}$.
\end{itemize}
\end{Def}

\begin{Lem}\label{PShabelian}
The category $\mathsf{PSh}(X;\mathsf{Ab})$ is abelian.
\end{Lem}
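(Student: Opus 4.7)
The plan is to leverage the fact that $\mathsf{Ab}$ is the prototypical abelian category, and observe that all constructions in $\mathsf{PSh}(X;\mathsf{Ab})$ reduce to the corresponding constructions in $\mathsf{Ab}$ performed sectionwise. Additivity is already in hand by Proposition \ref{pre-/sheafcatadditive}, so only axiom (A4) of Definition \ref{abcat}, existence and uniqueness of the canonical decomposition, needs verification.

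First, I would check that for any morphism $\chi:\mathcal{F}\rightarrow\mathcal{G}$ in $\mathsf{PSh}(X;\mathsf{Ab})$, the kernel presheaf $\mathcal{K}er(\chi)$ and cokernel presheaf $\mathcal{C}oker(\chi)$ introduced above genuinely satisfy the universal properties of Definition \ref{monepicoker}. Given another presheaf $\mathcal{K}'$ with morphism $k':\mathcal{K}'\rightarrow\mathcal{F}$ such that $\chi\circ k'=0$, for each open $U\subset X$ the morphism $k'_U:\mathcal{K}'(U)\rightarrow\mathcal{F}(U)$ factors uniquely as $k_U\circ h_U$ with $h_U\in\mathrm{Hom}_{\mathsf{Ab}}(\mathcal{K}'(U),\mathcal{K}er(\chi)(U))$, because $\mathsf{Ab}$ is abelian. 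The naturality of these factorizations with respect to restriction maps follows from the uniqueness of $h_U$: the two ways of passing from $\mathcal{K}'(U)$ to $\mathcal{K}er(\chi)(V)$ (for $V\subset U$) through diagrams of abelian groups are both factorizations of the same morphism, hence coincide. This assembles the $h_U$'s into a well-defined morphism of presheaves $h:\mathcal{K}'\rightarrow\mathcal{K}er(\chi)$. The argument for $\mathcal{C}oker(\chi)$ is dual.

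Next, I would build the canonical decomposition of $\chi$. For each open $U$, the morphism $\chi_U$ in $\mathsf{Ab}$ admits by axiom (A4) for $\mathsf{Ab}$ a canonical decomposition
\[
\ker(\chi_U)\xhookrightarrow{k_U}\mathcal{F}(U)\xrightarrow{q_U} I_U\xrightarrow{j_U}\mathcal{G}(U)\xtwoheadrightarrow{c_U}\mathrm{coker}(\chi_U)\,,
\]
with $I_U\coloneqq\mathrm{im}(\chi_U)=\mathcal{I}m(\chi)(U)$. Assembling the $I_U$'s together with the restriction maps inherited from $\mathcal{G}$ yields the image presheaf $\mathcal{I}m(\chi)$, and the morphisms $q_U, j_U$ are readily natural in $U$ (each side of the equality $\rho_V^U \circ q_U = q_V \circ \rho_V^U$ etc.\ is determined by factoring $\chi$ through its image in $\mathsf{Ab}$, again uniquely). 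This assembles into a diagram
\[
\mathcal{K}er(\chi)\xhookrightarrow{k}\mathcal{F}\xrightarrow{q}\mathcal{I}m(\chi)\xrightarrow{j}\mathcal{G}\xtwoheadrightarrow{c}\mathcal{C}oker(\chi)\,,
\]
in $\mathsf{PSh}(X;\mathsf{Ab})$, with $j\circ q=\chi$, and satisfying $\mathcal{C}oker(k)=\mathcal{I}m(\chi)=\mathcal{K}er(c)$ because this equality holds sectionwise in $\mathsf{Ab}$.

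Finally, uniqueness of the decomposition up to canonical isomorphism is inherited from $\mathsf{Ab}$ in the same sectionwise manner: any other decomposition $(I',q',j')$ yields at every $U$ a unique isomorphism $I_U\xrightarrow{\sim}I'_U$, and naturality in $U$ follows once more from uniqueness. I expect no real obstacle here: everything is sectionwise abelian, and the only point requiring a moment of care is the compatibility of the sectionwise factorizations with the restriction maps, which is automatic from the universal properties invoked in $\mathsf{Ab}$. Crucially, no sheafification is needed, since kernels, images, and cokernels of presheaves stay presheaves without any ``plussing''; it is precisely the cokernel/image that will later fail to be a sheaf and force us to sheafify, but for the presheaf category everything stays internal.
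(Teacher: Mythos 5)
Your proposal is correct and follows essentially the same route as the paper: both build the canonical decomposition of $\chi$ sectionwise from the canonical decompositions of the $\chi_U$ in $\mathsf{Ab}$ and observe that naturality with respect to the restriction maps (hence assembly into presheaf morphisms) is automatic from the uniqueness clauses of the relevant universal properties. Your additional verification of the kernel/cokernel universal properties and of uniqueness of the decomposition only spells out details the paper leaves implicit, and your closing remark about why no sheafification is needed is exactly the point the paper makes immediately afterwards.
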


\begin{proof}
To check axiom (A4) we must determine the canonical decomposition \eqref{candecompo} of any given presheaf morphism $\chi:\mathcal{F}\rightarrow\mathcal{G}$. The latter induces by definition group homomorphisms $\chi_U\in\text{Hom}_\mathsf{Ab}(\mathcal{F}(U),\mathcal{G}(U))$, which do admit a canonical decomposition of the form
\[
\begin{tikzcd}
	\ker(\chi_U)\arrow[r, hook, "k_U"] & \mathcal{F}(U)\arrow[r, "q_U"] & \mathcal{I}(U)\arrow[r, "j_U"] & \mathcal{G}(U)\arrow[r, two heads, "c_U"] & \text{coker}(\chi_U)\;,
\end{tikzcd}
\]
where $\mathcal{I}(U)=\text{coker}(k_U)=\ker(c_U)$ and $j_U\circ q_U=\chi_U$. In fact, these kernels and cokernels collectively describe the kernel $\mathcal{K}er(\chi)$ and cokernel $\mathcal{C}oker(\chi)$ presheaves, and the maps $k_U$, $q_U$, $j_U$ and $c_U$ upgrade to presheaf morphisms, compatible with restrictions, in particular satisfying $j\circ q=\chi$ respectively $\mathcal{C}oker(k)=\mathcal{I}=\mathcal{K}er(c)$. Therefore, $\mathsf{PSh}(X;\mathsf{Ab})$ is an abelian category.
\end{proof}

\begin{Rem}
Note that the kernel presheaf does fulfill the sheaf property. Indeed, suppose the sections $s_i\in\ker(\chi_{U_i})\subset\mathcal{F}(U_i)$ associated to the open cover $\{U_i\}_{i\in I}$ of $U$ agree on their overlaps, so that, when regarded as sections of $\mathcal{F}$, they glue to a unique $s\in\mathcal{F}(U)$. Then in $\mathcal{G}(U)$ holds $\chi_U(s)|_{U_i}=\chi_{U_i}(s|_{U_i})=\chi_{U_i}(s_i)=0$ (the first equality exploits that $\chi$ is a sheaf morphism, thus commuting with restrictions), so that they necessarily glue to the zero section. Hence, $\chi_U(s)=0$ by uniqueness, which means that $s\in\mathcal{K}er(\chi)(U)$, with $s|_{U_i}=s_i\in\mathcal{K}er(\chi)(U_i)$ for all $i\in I$ by construction. Therefore, we may speak of the \textbf{kernel sheaf}\index{sheaf!kernel} $\mathcal{K}er(\chi)\in\text{obj}(\mathsf{Sh}(X;\mathsf{Ab}))$. We remark that $\mathcal{K}er(\chi)_x\cong\ker(\chi_x)$ naturally for each $x\in X$.

On the other hand, it is possible to show that the image and the (quotient, thus the) cokernel presheaves are not sheaves! Counterexamples can be found in \cite[construction 13.12]{[Gat20]} and \cite[Proposition I.5.4.b]{[GM03]}. Instead, we should focus on their associated sheaves in $\text{obj}(\mathsf{Sh}(X;\mathsf{Ab}))$, the \textbf{image sheaf}\index{sheaf!image} $\mathcal{I}m(\chi)^+\cong\mathcal{F}/\mathcal{K}er(\chi)$ respectively the \textbf{cokernel sheaf}\index{sheaf!cokernel} $\mathcal{C}oker(\chi)^+\cong\mathcal{G}/\mathcal{I}m(\chi)^+$, as described in Definition \ref{asssheaf}. Despite this, we still have that $\mathcal{I}m(\chi)_x\cong\text{im}(\chi_x)$ and $\mathcal{C}oker(\chi)_x\cong\text{coker}(\chi_x)$ for each $x\in X$ (yet a hint at why working with stalks is much more advantageous!).
\end{Rem}

To check abelianity of the category of sheaves, we must then better understand its relation to the category of presheaves:

\begin{Pro}\label{biglemma}
The inclusion/forgetful functor $\iota:\mathsf{Sh}(X;\mathsf{Ab})\rightarrow\mathsf{PSh}(X;\mathsf{Ab})$ admits a left adjoint functor, that is, some $\varsigma:\mathsf{PSh}(X;\mathsf{Ab})\rightarrow\mathsf{Sh}(X;\mathsf{Ab})$ together with a natural isomorphism $\Psi:\textup{Hom}_{\mathsf{Sh}(X;\mathsf{Ab})}(\varsigma(\square),\blacksquare)\rightarrow\textup{Hom}_{\mathsf{PSh}(X;\mathsf{Ab})}(\square,\iota(\blacksquare))$ of bifunctors \textup(compare with Proposition \ref{adjointfunc}\textup). 
\end{Pro}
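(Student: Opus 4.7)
The strategy is to read the adjunction directly off the universal property built into Definition \ref{asssheaf}. Write $\theta_\mathcal{F}:\mathcal{F}\rightarrow\iota(\varsigma(\mathcal{F}))=\mathcal{F}^+$ for the natural morphism provided there; the plan is to show that this is the unit of the adjunction, which automatically produces the claimed natural isomorphism $\Psi$.

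First I would make explicit how $\varsigma$ acts on morphisms, since the proof hinges on this. Given a presheaf morphism $\alpha:\mathcal{F}'\rightarrow\mathcal{F}$, the composite $\theta_{\mathcal{F}}\circ\alpha:\mathcal{F}'\rightarrow\mathcal{F}^+$ targets a sheaf, so the universal property applied to $\mathcal{F}'$ yields a unique sheaf morphism $\varsigma(\alpha):\mathcal{F}'^+\rightarrow\mathcal{F}^+$ with $\iota(\varsigma(\alpha))\circ\theta_{\mathcal{F}'}=\theta_\mathcal{F}\circ\alpha$. Uniqueness forces $\varsigma(\text{id}_\mathcal{F})=\text{id}_{\mathcal{F}^+}$ and $\varsigma(\alpha\circ\beta)=\varsigma(\alpha)\circ\varsigma(\beta)$, so $\varsigma$ is indeed a functor, and by construction $\theta:\mathsf{Id}_{\mathsf{PSh}(X;\mathsf{Ab})}\rightarrow\iota\circ\varsigma$ is a natural transformation.

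Next I would define, for any $\mathcal{F}\in\text{obj}(\mathsf{PSh}(X;\mathsf{Ab}))$ and $\mathcal{G}\in\text{obj}(\mathsf{Sh}(X;\mathsf{Ab}))$, the candidate
\[
\Psi_{\mathcal{F},\mathcal{G}}:\textup{Hom}_{\mathsf{Sh}(X;\mathsf{Ab})}(\varsigma(\mathcal{F}),\mathcal{G})\rightarrow\textup{Hom}_{\mathsf{PSh}(X;\mathsf{Ab})}(\mathcal{F},\iota(\mathcal{G})),\quad \psi\mapsto\iota(\psi)\circ\theta_\mathcal{F}\,.
\]
Bijectivity is exactly the content of the universal property: every presheaf morphism $\chi:\mathcal{F}\rightarrow\iota(\mathcal{G})$ factors as $\chi=\iota(\psi)\circ\theta_\mathcal{F}$ for a unique sheaf morphism $\psi:\varsigma(\mathcal{F})\rightarrow\mathcal{G}$, giving the two-sided inverse $\Psi^{-1}_{\mathcal{F},\mathcal{G}}(\chi)\coloneqq\psi$. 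Additivity of $\Psi_{\mathcal{F},\mathcal{G}}$ is immediate from additivity of $\iota$ and pre-/post-composition, so it is actually a group isomorphism.

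The remaining task is naturality in both slots, which amounts to checking two commutative squares of Hom-sets. For the $\mathcal{G}$-slot, given a sheaf morphism $\phi:\mathcal{G}\rightarrow\mathcal{G}'$, both routes send $\psi\in\textup{Hom}_{\mathsf{Sh}(X;\mathsf{Ab})}(\varsigma(\mathcal{F}),\mathcal{G})$ to $\iota(\phi)\circ\iota(\psi)\circ\theta_\mathcal{F}$, by the functoriality of $\iota$. For the $\mathcal{F}$-slot, given a presheaf morphism $\alpha:\mathcal{F}'\rightarrow\mathcal{F}$, the identity $\iota(\varsigma(\alpha))\circ\theta_{\mathcal{F}'}=\theta_\mathcal{F}\circ\alpha$ established at the outset yields $\Psi_{\mathcal{F}',\mathcal{G}}(\psi\circ\varsigma(\alpha))=\iota(\psi)\circ\iota(\varsigma(\alpha))\circ\theta_{\mathcal{F}'}=\iota(\psi)\circ\theta_\mathcal{F}\circ\alpha=\Psi_{\mathcal{F},\mathcal{G}}(\psi)\circ\alpha$, which is exactly the desired commutativity. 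The only mildly delicate step is this last naturality in $\mathcal{F}$, and it is handled precisely by invoking how $\varsigma$ was defined on morphisms; no genuine obstacle arises, because the universal property of $(\mathcal{F}^+,\theta_\mathcal{F})$ from Definition \ref{asssheaf} does all the heavy lifting.
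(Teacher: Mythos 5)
Your proof is correct, but it takes a genuinely different route from the paper's. You derive the adjunction purely formally from the universal property of $(\mathcal{F}^+,\theta)$ stated in Definition \ref{asssheaf}: the universal property gives bijectivity of $\Psi_{\mathcal{F},\mathcal{G}}$ for free, its uniqueness clause gives functoriality of $\varsigma$ and naturality of $\theta$, and the two naturality squares then follow by routine diagram chases. This is the standard ``universal arrow implies adjunction'' argument, and it is cleaner and more conceptual than what the paper does; it also supplies the naturality verification that the paper explicitly leaves as an exercise. The paper instead does the opposite: it builds $\varsigma(\mathcal{F})$ from scratch as equivalence classes of compatible families $[(U_i,s_i)_{i\in I}]$, proves this is a sheaf, and then verifies injectivity and surjectivity of $\Psi_{\mathcal{F},\mathcal{G}}$ by an explicit hands-on argument with sections and the gluing property --- in effect \emph{proving} the universal property rather than citing it. The one caution with your approach is that the universal property is only \emph{asserted} in Definition \ref{asssheaf}, not proved there, and within this paper the proof of Proposition \ref{biglemma} is the place where that analytic work actually gets done; so your argument is complete only once you (or the reader) supply a direct verification of the universal property from the explicit description of $\mathcal{F}^+$ as locally-germ-valued functions. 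That verification is short and standard, but without it your proof relocates rather than discharges the real content of the proposition.
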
  

\begin{proof}
Let us begin by constructing $\varsigma$. Given a presheaf $\mathcal{F}$, we are tempted to set $\varsigma(\mathcal{F})\coloneqq\mathcal{F}^+\in\text{obj}(\mathsf{Sh}(X;\mathsf{Ab}))$. This is correct (indeed $\varsigma$ is the sheafification functor from Definition \ref{asssheaf}), but in order to make the ensuing construction more transparent --- and as a wholesome exercise in what seen so far about sheaves --- we choose an alternative equivalent definition. For $U\subset X$ open, let
\[
\varsigma(\mathcal{F})(U)\coloneqq\Big\{[(U_i, s_i)_{i\in I}] \Bigm| \bigcup_{i\in I}U_i=U,\,s_i\in\mathcal{F}(U_i)\text{ s.t. }s_i|_{U_i\cap U_j}=s_j|_{U_i\cap U_j}\Big\}\,,
\]
where the relation defining the equivalence classes is
\[
(U_i, s_i)_{i\in I}\sim(U_j', s_j')_{j\in J} \Leftrightarrow \exists \{U_k''\}_{k\in K}: \forall i,j\,\exists U_k''\subset U_i\cap U_j'\text{ and } s_i|_{U_k''}=s_j'|_{U_k''}.
\]
Also define $\rho^U_V:\varsigma(\mathcal{F})(U)\rightarrow\varsigma(\mathcal{F})(V),\,\rho^U_V([(U_i, s_i)_{i\in I}])\coloneqq [(U_i\cap V, s_i|_{U_i\cap V})_{i\in I}]$ and $[(U_i, s_i)_{i\in I}] + [(U_j', s_j')_{j\in J}]\coloneqq [(U_i\cap U_j', s_i|_{U_i\cap U_j'} + s_j'|_{U_i\cap U_j'})_{(i,j)\in I\times J}]$, which give $\varsigma(\mathcal{F})(U)$ the structure of abelian group (with identity $[({U},0)]$), thus making $\varsigma(\mathcal{F})\in\text{obj}(\mathsf{PSh}(X;\mathsf{Ab}))$.

We want to show that it is in fact a sheaf. We do this by contradiction, noticing that a presheaf cannot be a sheaf if either there exists a non-zero section which restricts to zero subsections (they would glue to 0) or if there is a collection of compatible subsections which do not come from a section (again violating the gluing property). For the former scenario, suppose there exists some $0\neq s=[(U_i,s_i)_{i\in I}]\in\varsigma(\mathcal{F})(U)$ and $\{V_j\}_{j\in J}$ covering $U$ such that $s|_{V_j}=0$ for all $j$; then fixing $j$, by $\sim$ there exists some refinement $\{U_{j,k}'\}_{k\in K}$ of $\bigcup_i(U_i\cap V_j)=V_j$ such that $s_i|_{U_{j,k}'}=0|_{U_{j,k}'}=0$ for any $i,k$ with $U_{j,k}'\subset U_i\cap V_j$, implying that $s=[(U_{j,k}',0)_{k\in K}]=0$, a contradiction. In the second case, one can similarly deduce that any compatible sections $s_j\in\varsigma(\mathcal{F})(V_j)$, where $\bigcup_{j\in J}V_j=U$, can be obtained as $s|_{V_j}$ of some $s\in\varsigma(\mathcal{F})(U)$ (the details are in \cite[Proposition II.5.13]{[GM03]}). Consequently, $\varsigma(\mathcal{F})\in\text{obj}(\mathsf{Sh}(X;\mathsf{Ab}))$.
Moreover, given a presheaf morphism $\chi:\mathcal{F}\rightarrow\mathcal{G}$, let $\varsigma(\chi)$ be the sheaf morphism setwise defined as 
\[
\varsigma(\chi)_U:\varsigma(\mathcal{F})(U)\rightarrow\varsigma(\mathcal{G})(U),\,\varsigma(\chi)_U([(U_i, s_i)_{i\in I}])\coloneqq[(U_i, \chi_{U_i}(s_i))_{i\in I}]\,,
\]
indeed commuting with restrictions. This completes the construction of $\varsigma$.

Regarding $\Psi$, we define first the natural transformation $\varepsilon:\mathsf{Id}_{\mathsf{PSh}(X;\mathsf{Ab})}\rightarrow\iota\circ\varsigma$ yielding presheaf morphisms $\varepsilon(\mathcal{F}):\mathcal{F}\rightarrow(\iota\circ\varsigma)(\mathcal{F})$ given by\footnote{So here, in absence of indices, $[(U,s)]$ is understood as the class of pairs of one-element sets $\{U\}$ and $\{s\}$. The same notational cue applies below.} 
\[
\varepsilon(\mathcal{F})_U:\mathcal{F}(U)\rightarrow(\iota\circ\varsigma)(\mathcal{F})(U),\, s\mapsto[(U,s)]=\iota\big([(U,s)]\big)\,.
\]
Then for $\mathcal{F}\in\text{obj}(\mathsf{PSh}(X;\mathsf{Ab}))$ and $\mathcal{G}\in\text{obj}(\mathsf{Sh}(X;\mathsf{Ab}))$ we set
\begin{small}
	\[
	\Psi(\mathcal{F},\mathcal{G}):\textup{Hom}_{\mathsf{Sh}(X;\mathsf{Ab})}(\varsigma(\mathcal{F}),\mathcal{G})\rightarrow\textup{Hom}_{\mathsf{PSh}(X;\mathsf{Ab})}(\mathcal{F},\iota(\mathcal{G})),\,\Psi(\mathcal{F},\mathcal{G})(\chi)\coloneqq\iota(\chi)\circ\varepsilon(\mathcal{F})\,,
	\]
\end{small}
\!\!and aim to prove that it is a bijection. Suppose that $\iota(\chi)\circ\varepsilon(\mathcal{F})=0$, and $s=[(U_i,s_i)_{i\in I}]\in\varsigma(\mathcal{F})(U)$ any. Then $r\coloneqq\chi_U(s)\in\mathcal{G}(U)$ fulfills $r|_{U_i}=\chi_{U_i}(s|_{U_i})=\chi_{U_i}(s_i)=\chi_{U_i}([(U_i,s_i)])=\iota(\chi)_{U_i}([(U_i,s_i)])=(\iota(\chi)\circ\varepsilon(\mathcal{F}))_{U_i}(s_i)=0$ for each $i$, hence the sheaf property of $\mathcal{G}$ implies that necessarily $r=0$, possible if and only if $\chi=0$. This shows that $\Psi(\mathcal{F},\mathcal{G})$ is injective.

For surjectivity, let $\psi:\mathcal{F}\rightarrow\iota(\mathcal{G})$ be a presheaf morphism and $s=[(U_i,s_i)_{i\in I}]$ $\in\varsigma(\mathcal{F})(U)$ any, and $r_i\coloneqq \psi_{U_i}(s_i)\in\mathcal{G}(U_i)$. Then $r_i|_{U_i\cap U_j}=\psi_{U_i\cap U_j}(s_i|_{U_i\cap U_j})=\psi_{U_i\cap U_j}(s_j|_{U_i\cap U_j})=r_j|_{U_i\cap U_j}$ (the second equality following by definition of $\varsigma(\mathcal{F})(U)$), which in the sheaf $\mathcal{G}$ means that there is some $r\in\mathcal{G}(U)$ restricting to these $r_i$. We define $\chi:\varsigma(\mathcal{F})\rightarrow\mathcal{G}$ by $\chi_U(s)\coloneqq r$ (in fact independent of the actual representative $(U_i,s_i)_{i\in I}$). Then $\psi_{U_i}(s_i)=\chi_U(s)|_{U_i}=\iota(\chi)_{U_i}(s_i)=\iota(\chi)_{U_i}([(U_i,s_i)])=(\iota(\chi)\circ\varepsilon(\mathcal{F}))_{U_i}(s_i)\in\mathcal{G}(U_i)$, hence $\psi=\iota(\chi)\circ\varepsilon(\mathcal{F})=\Psi(\mathcal{F},\mathcal{G})(\chi)$, and indeed $\chi\in\textup{Hom}_{\mathsf{Sh}(X;\mathsf{Ab})}(\varsigma(\mathcal{F}),\mathcal{G})$ because $(\rho^U_V\circ\chi_U)(s)|_{U_i}=(\rho^{U_i}_{U_i\cap V}\circ\psi_{U_i})(s_i)=\psi_{U_i\cap V}(s_i|_{U_i\cap V})=\iota(\chi)_{U_i\cap V}([(U_i\cap V,s_i|_{U_i\cap V})])=(\chi_V\circ\rho^U_V)(s)|_{U_i}$. This proves surjectivity.
We leave the proof of naturality of $\Psi$ as an exercise.  
\end{proof}

This proposition allows us to finally regard the category of sheaves of abelian groups as an abelian category:

\begin{Thm}\label{Sh(X,Ab)isabelian}
The category $\mathsf{Sh}(X;\mathsf{Ab})$ is abelian.
\end{Thm}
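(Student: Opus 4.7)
The plan is to verify axiom (A4) of Definition \ref{abcat}, namely that every morphism $\chi:\mathcal{F}\rightarrow\mathcal{G}$ in $\mathsf{Sh}(X;\mathsf{Ab})$ admits a canonical decomposition. Additivity has already been established (Proposition \ref{pre-/sheafcatadditive}), so all the work lies here. My guiding strategy is to transport the decomposition already available in $\mathsf{PSh}(X;\mathsf{Ab})$ (Lemma \ref{PShabelian}) to sheaves via the sheafification functor $\varsigma$, and then use stalks to identify the various candidates for coimage and image.

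First I would set up kernels and cokernels in $\mathsf{Sh}(X;\mathsf{Ab})$. The kernel presheaf $\mathcal{K}er(\chi)$ is already a sheaf (noted in the remark after Lemma \ref{PShabelian}), and its universal property in $\mathsf{PSh}$ restricts to the universal property in $\mathsf{Sh}$ since morphisms into a sheaf are the same as morphisms of the underlying presheaf. For the cokernel, I would take $\mathcal{C}oker(\chi)^+ \coloneqq \varsigma(\mathcal{C}oker(\chi))$: given any sheaf $\mathcal{H}$ and morphism $c':\mathcal{G}\to\mathcal{H}$ with $c'\circ\chi=0$, the presheaf cokernel yields a unique $\mathcal{C}oker(\chi)\to\iota(\mathcal{H})$, which by the adjunction $\varsigma\dashv\iota$ of Proposition \ref{biglemma} factors uniquely through $\varsigma(\mathcal{C}oker(\chi))$. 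Thus $k\coloneqq\mathcal{K}er(\chi)\hookrightarrow\mathcal{F}$ and $c\coloneqq\mathcal{G}\twoheadrightarrow\mathcal{C}oker(\chi)^+$ are genuine kernel and cokernel morphisms in $\mathsf{Sh}(X;\mathsf{Ab})$.

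Next I would construct the middle object of the decomposition as $\mathcal{I}\coloneqq\varsigma(\mathcal{I}m(\chi))$, with $q:\mathcal{F}\to\mathcal{I}$ obtained by composing the presheaf surjection $\mathcal{F}\to\mathcal{I}m(\chi)$ with the unit $\varepsilon(\mathcal{I}m(\chi))$, and $j:\mathcal{I}\to\mathcal{G}$ obtained from the presheaf inclusion $\mathcal{I}m(\chi)\to\mathcal{G}$ via the universal property of sheafification (using that $\mathcal{G}$ is already a sheaf). Then $j\circ q=\chi$ follows from the analogous identity in $\mathsf{PSh}$ and the uniqueness clause of the universal property.

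The heart of the argument, and what I expect to be the main obstacle, is checking that $\mathrm{coker}(k)\cong\mathcal{I}\cong\ker(c)$ in $\mathsf{Sh}(X;\mathsf{Ab})$. Here the stalk functor does all the work: since $\square_x:\mathsf{Sh}(X;\mathsf{Ab})\to\mathsf{Ab}$ is exact, commutes with sheafification ($\mathcal{F}^+_x\cong\mathcal{F}_x$), and detects isomorphisms (a sheaf morphism is an isomorphism iff it is an isomorphism on every stalk), I can compute at each $x\in X$:
\[
\mathrm{coker}(k)_x \cong \mathrm{coker}(k_x),\qquad \mathcal{I}_x\cong\mathcal{I}m(\chi)_x\cong\mathrm{im}(\chi_x),\qquad \ker(c)_x\cong\ker(c_x).
\]
Since $\mathsf{Ab}$ is abelian, $\chi_x$ itself admits a canonical decomposition, yielding $\mathrm{coker}(k_x)\cong\mathrm{im}(\chi_x)\cong\ker(c_x)$ naturally. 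The naturality assembles these stalkwise isomorphisms into sheaf morphisms $\mathrm{coker}(k)\to\mathcal{I}\to\ker(c)$ which are isomorphisms on every stalk, hence isomorphisms of sheaves. Uniqueness of the decomposition up to canonical isomorphism likewise reduces to the corresponding uniqueness on stalks. This establishes axiom (A4), completing the proof that $\mathsf{Sh}(X;\mathsf{Ab})$ is abelian.
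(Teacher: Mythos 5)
Your proposal is correct and follows the same overall strategy as the paper: establish additivity via Proposition \ref{pre-/sheafcatadditive}, take the canonical decomposition of $\iota(\chi)$ in the abelian category $\mathsf{PSh}(X;\mathsf{Ab})$, apply the sheafification functor $\varsigma$, and use the adjunction $\varsigma\dashv\iota$ of Proposition \ref{biglemma} to verify that $\varsigma(\mathcal{C}oker(\iota(\chi)))$ and $\mathcal{K}er(\chi)$ are a genuine cokernel and kernel in $\mathsf{Sh}(X;\mathsf{Ab})$.

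The one place you genuinely diverge is the final identification $\mathcal{C}oker(\varsigma(k))\cong\varsigma(\mathcal{I})\cong\mathcal{K}er(\varsigma(c))$. The paper handles this by rerunning the adjunction-based arguments on $\varsigma(k)$ and $\varsigma(c)$ to show $\mathcal{K}er(\varsigma(c))=\varsigma(\mathcal{K}er(c))$ and $\mathcal{C}oker(\varsigma(k))=\varsigma(\mathcal{C}oker(k))$, then invokes the presheaf-level equality $\mathcal{K}er(c)=\mathcal{I}=\mathcal{C}oker(k)$. You instead pass to stalks, using that $\square_x$ is exact, commutes with sheafification, and detects isomorphisms. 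This is a legitimate and arguably cleaner shortcut for that step: it reduces the coimage-image comparison to the known canonical decomposition of $\chi_x$ in $\mathsf{Ab}$, and it is the argument most texts use. The paper's route has the advantage of staying entirely categorical (no appeal to the concreteness of $\mathsf{Ab}$ or to stalks detecting isomorphisms), which is closer in spirit to how the rest of Chapter 1 is set up. One small caution on your phrasing: a family of stalkwise isomorphisms does not by itself assemble into a sheaf morphism. You should first produce the comparison morphisms $\mathrm{coker}(k)\to\mathcal{I}\to\ker(c)$ at the sheaf level from the universal properties (e.g.\ $c\circ j=0$ because $c\circ j\circ q=c\circ\chi=0$ and $q$ is an epimorphism, so $j$ factors through $\ker(c)$), and only then check on stalks that these particular morphisms are isomorphisms. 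With that ordering made explicit, your argument is complete.
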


\begin{proof}
We already know from Proposition \ref{pre-/sheafcatadditive} that $\mathsf{Sh}(X;\mathsf{Ab})$ is an additive category, so it only remains to check axiom (A4) of Definition \ref{abcat}. Consider a sheaf morphism $\chi:\mathcal{F}\rightarrow\mathcal{G}$ and its associated presheaf morphism $\iota(\chi):\iota(\mathcal{F})\rightarrow\iota(\mathcal{G})$, which, $\mathsf{PSh}(X;\mathsf{Ab})$ being abelian by Lemma \ref{PShabelian}, has canonical decomposition $\mathcal{K}er(\iota(\chi))\xrightarrow{k}\iota(\mathcal{F})\xrightarrow{q}\mathcal{I}\xrightarrow{j}\iota(\mathcal{G})\xrightarrow{c}\mathcal{C}oker(\iota(\chi))$ with $\mathcal{K}er(c)=\mathcal{I}=\mathcal{C}oker(k)$. Then we claim that
\begin{equation}\label{sheafcandecompo}
	\begin{tikzcd}
		\varsigma\big(\mathcal{K}er(\iota(\chi))\big)\arrow[r, "\varsigma(k)"] & \mathcal{F}\arrow[r, "\varsigma(q)"] & \varsigma(\mathcal{I})\arrow[r, "\varsigma(j)"] & \mathcal{G}\arrow[r, "\varsigma(c)"] & \varsigma\big(\mathcal{C}oker(\iota(\chi))\big)
	\end{tikzcd}
\end{equation}
is the canonical decomposition of $\chi$ in $\mathsf{Sh}(X;\mathsf{Ab})$, resulting from the application of the adjoint functor $\varsigma:\mathsf{PSh}(X;\mathsf{Ab})\rightarrow\mathsf{Sh}(X;\mathsf{Ab})$ of Proposition \ref{biglemma}, where we notice that the sheaves $\mathcal{F}=\iota(\mathcal{F})$ and $\mathcal{G}=\iota(\mathcal{G})$ have associated sheaves equal to themselves, that is, $\varsigma(\iota(\mathcal{F}))=\mathcal{F}$ and $\varsigma(\iota(\mathcal{G}))=\mathcal{G}$, thus $\varsigma(\iota(\chi))=\chi$ (cf. Definition \ref{asssheaf}). Clearly, functoriality of $\varsigma$ guarantees that $\varsigma(j)\circ\varsigma(q)=\varsigma(\iota(\chi))=\chi$. So we must only verify that $\varsigma\big(\mathcal{K}er(\iota(\chi))\big)$ and $\varsigma\big(\mathcal{C}oker(\iota(\chi))\big)$ are the kernel respectively cokernel of $\chi$. 

About the cokernel, consider any $\mathcal{Z}\in\text{obj}(\mathsf{Sh}(X;\mathsf{Ab}))$. By the characterizing property of Definition \ref{monepicoker}, the sequence
\[
\{0\}\!\rightarrow\!\text{Hom}_{\mathsf{PSh}}(\mathcal{C}oker(\iota(\chi)),\iota(\mathcal{Z}))\!\rightarrow\!\text{Hom}_{\mathsf{PSh}}(\iota(\mathcal{G}),\iota(\mathcal{Z}))\!\rightarrow\!\text{Hom}_{\mathsf{PSh}}(\iota(\mathcal{F}),\iota(\mathcal{Z}))
\]
is exact. Then, applying the bijections $\Psi(\square,\mathcal{Z})^{-1}:\textup{Hom}_{\mathsf{PSh}(X;\mathsf{Ab})}(\square,\iota(\mathcal{Z}))\longrightarrow\break \textup{Hom}_{\mathsf{Sh}(X;\mathsf{Ab})}(\varsigma(\square),\mathcal{Z})$ from Proposition \ref{biglemma}, we immediately get the necessarily exact sequence
\[
\{0\}\rightarrow\text{Hom}_{\mathsf{Sh}}(\varsigma\big(\mathcal{C}oker(\iota(\chi))\big),\mathcal{Z})\rightarrow\text{Hom}_{\mathsf{Sh}}(\mathcal{G},\mathcal{Z})\rightarrow\text{Hom}_{\mathsf{Sh}}(\mathcal{F},\mathcal{Z})\,,
\]
which in turn implies that $\text{coker}(\chi)\equiv\varsigma(c):\mathcal{G}\rightarrow\mathcal{C}oker(\chi)=\varsigma\big(\mathcal{C}oker(\iota(\chi))\big)$.

For the kernel, some more work is required. First, the fundamental identity $\chi\circ\varsigma(k)=\varsigma(\iota(\chi)\circ k)=0$ is satisfied. Then $\big(\varsigma\big(\mathcal{K}er(\iota(\chi))\big),\varsigma(k)\big)$ is the kernel of $\chi$ if and only if for any $\psi\in\text{Hom}_{\mathsf{Sh}(X;\mathsf{Ab})}(\mathcal{Z},\mathcal{F})$ with $\chi\circ\psi=0$ there exists a unique sheaf morphism $h:\mathcal{Z}\rightarrow \varsigma\big(\mathcal{K}er(\iota(\chi))\big)$ such that $\varsigma(k)\circ h=\psi:\mathcal{Z}\rightarrow\mathcal{F}$ (that is, the left triangle of diagram \eqref{kercokerdiag} commutes).

About uniqueness of $h$, for any $s=[(U_i,s_i)_{i\in I}]\in\varsigma\big(\mathcal{K}er(\iota(\chi))\big)(U)$ the condition $0\overset{!}{=}\varsigma(k)_U(s)=[(U_i,k_{U_i}(s_i))_{i\in I}]$ implies (by definition of $\sim$ in Proposition \ref{biglemma}) that there exists a refining cover $\{V_j\}_{j\in J}$ such that $k_{V_j}(s_i|_{V_j})=0$ for all $V_j\subset U_i$, hence $s_i|_{V_j}=0$ by injectivity of $k_{V_j}$. So $s=[(V_j,s_i|_{V_j})_{i,j}]=0$, meaning that $\varsigma(k)_U$ is injective, specifically an inclusion, so that $h_U=\psi_U$, which proves uniqueness of $h$ for the given input $\psi$.

Let us now construct $h$ explicitly. Pick $t\in\mathcal{Z}(U)$ and $r\coloneqq\psi_U(t)\in\mathcal{F}(U)$. Then $\chi_U(r)=0$ provides an open cover $\{U_i\}_{i\in I}$ such that $0=(\chi_U\circ\psi_U)(t)|_{U_i}=(\chi_{U_i}\circ\psi_{U_i})(t|_{U_i})$. By the universal property of the kernel $k_{U_i}:\mathcal{K}er(\iota(\chi))(U_i)\rightarrow\mathcal{F}(U_i)$ of the group homomorphism $\chi_{U_i}:\mathcal{F}(U_i)\rightarrow\mathcal{G}(U_i)$, we obtain a unique $\bar{h}_i\in\mathcal{K}er(\iota(\chi))(U_i)$ --- the image of $t|_{U_i}$ under the unique $\mathcal{Z}(U_i)\rightarrow\mathcal{K}er(\iota(\chi))(U_i)$ --- such that $k_{U_i}(\bar{h}_i)=\psi_{U_i}(t|_{U_i})$ (refer again to \eqref{kercokerdiag}). All these $\bar{h}_i$ are compatible and so identify a class $\bar{h}=[(U_i,\bar{h}_i)_{i\in I}]\in\varsigma\big(\mathcal{K}er(\iota(\chi))\big)(U)$. Then we define $h_U(t)\coloneqq\bar{h}$ (actually independent of the covering), which yields a sound sheaf morphism $h:\mathcal{Z}\rightarrow\varsigma\big(\mathcal{K}er(\iota(\chi))\big)$ such that
\begin{align*}
	\varsigma(k)_U\circ h_U(t)=\varsigma(k)_U([(U_i,\bar{h}_i)_{i\in I}])&=[(U_i,k_{U_i}(\bar{h}_i))_{i\in I}]=[(U_i,\psi_{U_i}(t|_{U_i}))_{i\in I}] \\
	&=\psi_U([(U_i,t|_{U_i})_{i\in I}])=\psi_U(t)\,,
\end{align*} 
as desired. Therefore, it holds $\ker(\chi)\equiv\varsigma(k):\mathcal{K}er(\chi)=\varsigma\big(\mathcal{K}er(\iota(\chi))\big)\rightarrow~\mathcal{F}$ by Definition \ref{monepicoker}.

Finally, wholly analogous arguments applied to $\varsigma(c)$ and $\varsigma(k)$ show that\footnote{Here we suppressed the natural isomorphism $\varepsilon:\mathsf{Id}_{\mathsf{PSh}(X;\mathsf{Ab})}\rightarrow\iota\circ\varsigma$ from the previous proposition.} $\mathcal{K}er(\varsigma(c))\!=\varsigma\big(\mathcal{K}er\big(\iota(\varsigma(c))\big)\big)\!=\varsigma(\mathcal{K}er(c))$ respectively $\mathcal{C}oker(\varsigma(k))\!=\varsigma(\mathcal{C}oker(k))$, whence $\mathcal{K}er(\varsigma(c))=\varsigma(\mathcal{I})=\mathcal{C}oker(\varsigma(k))$. We conclude that \eqref{sheafcandecompo} is indeed the canonical decomposition of $\chi:\mathcal{F}\rightarrow\mathcal{G}$, and consequently $\mathsf{Sh}(X;\mathsf{Ab})$ is abelian. 
\end{proof}

\begin{Rem}
In general, as mentioned by \cite[section 1.6]{[Wei94]} and \cite[Theorem 17.4.9]{[KS06]}, the category $\mathsf{Sh}(X;\mathsf{A})$ of sheaves on a topological space $X$ with values in any abelian category $\mathsf{A}$ is itself an abelian category, though not necessarily abelian as a subcategory of $\mathsf{PSh}(X;\mathsf{A})$ (because cokernels in the latter may differ; cf. Definition \ref{abeliansubcat}).
\end{Rem}

\subsection{Further properties of sheaves}\label{ch5.2}

Having just shown that the category of pre-/sheaves with values in an abelian category is abelian, a rich range of tools comes at our disposal. For example, we can adapt Lemma \ref{aboutmonepi} and talk about exact sequences.

\begin{Lem}\label{aboutmonepisheaves}
Let $\chi:\mathcal{F}\rightarrow\mathcal{G}$ be a morphism of sheaves with values in an abelian category $\mathsf{A}$. Then $\chi$ is:
\begin{itemize}[leftmargin=0.5cm]
	\item injective \textup(a monomorphism\textup) if and only if $\mathcal{K}er(\chi)=0$, possible if and only if $\chi_U\in\textup{Hom}_\mathsf{A}(\mathcal{F}(U),\mathcal{G}(U))$ has $\ker(\chi_U)=\{0\}$ for all open $U\subset X$, which is equivalent to each map of stalks $\chi_x:\mathcal{F}_x\rightarrow\mathcal{G}_x$ for $x\in X$ having $\ker(\chi_x)=0\in\textup{obj}(\mathsf{A})$;
	
	\item surjective \textup(an epimorphism\textup) if and only if $\mathcal{I}m(\chi)=\mathcal{G}$, possible if and only if $\chi_x\in\textup{Hom}_\mathsf{A}(\mathcal{F}_x,\mathcal{G}_x)$ has $\textup{im}(\chi_x)=\mathcal{G}_x\in\textup{obj}(\mathsf{A})$ for each $x\in X$ \textup(though not equivalent to having each $\chi_U$ surjective\textup!\textup).
\end{itemize}
\vspace*{-0.1cm}
Then a sequence $0\rightarrow\mathcal{F}\xrightarrow{\chi}\mathcal{G}\xrightarrow{\psi}\mathcal{H}\rightarrow 0$ in $\mathsf{Sh}(X;\mathsf{A})$ is \textup{short exact} if $\chi$ is injective, $\psi$ surjective and $\mathcal{K}er(\psi)=\mathcal{I}m(\chi)$. In this case, it will be canonically isomorphic to $0\rightarrow\mathcal{K}er(\psi)\hookrightarrow\mathcal{G}\twoheadrightarrow\mathcal{C}oker(\chi)\rightarrow 0$. More generally, a sequence $...\rightarrow\mathcal{F}^{n-1}\xrightarrow{\chi^{n-1}}\mathcal{F}^n\xrightarrow{\chi^n}\mathcal{F}^{n+1}\rightarrow...$ in $\mathsf{Sh}(X;\mathsf{A})$ is \textup{long exact} if for all $n\in\mathbb{N}$ holds $\mathcal{K}er(\chi^n)=\mathcal{I}m(\chi^{n-1})$. 

By above, this condition can be equivalently checked in $\mathsf{A}$ at the level of each stalk $x\in X$, as $\mathcal{K}er(\chi^n)_x\cong\ker(\chi_x^n)$ and $\mathcal{I}m(\chi^{n-1})_x\cong\textup{im}(\chi_x^{n-1})$. In particular, the stalk functor $\square_x:\mathsf{Sh}(X;\mathsf{A})\rightarrow\mathsf{A}$ is exact.
\end{Lem}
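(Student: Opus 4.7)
My plan is to leverage the abelian structure of $\mathsf{Sh}(X;\mathsf{A})$ established in Theorem \ref{Sh(X,Ab)isabelian} (or its generalization mentioned in the subsequent remark) to reduce the monomorphism/epimorphism characterizations to their categorical descriptions, and then translate these section-wise and stalk-wise. For the monomorphism part, I would first invoke the equivalence from Definition \ref{monepicoker} (read in $\mathsf{Sh}(X;\mathsf{A})$): $\chi$ is a monomorphism iff $\mathcal{K}er(\chi)=0$. Since we have shown that the kernel sheaf is given section-wise by the kernel presheaf (the kernel presheaf already satisfies the sheaf property), the condition $\mathcal{K}er(\chi)=0$ is the same as $\ker(\chi_U)=0$ for every open $U\subset X$. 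To pass to stalks, I would use the natural isomorphism $\mathcal{K}er(\chi)_x\cong\ker(\chi_x)$ together with the fact that a sheaf is zero iff all its stalks vanish (the implication ``all stalks zero $\Rightarrow$ sheaf zero'' follows because a section with all germs zero must be zero on some cover, hence zero by the uniqueness part of the sheaf axiom).

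For the epimorphism part, the categorical reformulation gives that $\chi$ is epi iff $\mathcal{C}oker(\chi)=0$, which is equivalent to $\mathcal{I}m(\chi)=\mathcal{G}$ via the canonical decomposition (A4) in $\mathsf{Sh}(X;\mathsf{A})$ from \eqref{sheafcandecompo}. The stalk-wise criterion then follows from the same ``all stalks zero $\Rightarrow$ sheaf zero'' principle applied to $\mathcal{C}oker(\chi)$, combined with the isomorphism $\mathcal{C}oker(\chi)_x\cong\textup{coker}(\chi_x)$ (which in turn results from sheafification preserving stalks, as highlighted in Definition \ref{asssheaf}). The main subtlety here — and the one I want to flag explicitly in the write-up — is that unlike the monomorphism case, this is \emph{not} equivalent to every $\chi_U$ being surjective, precisely because the image presheaf generally fails to be a sheaf and sheafification genuinely enlarges it; I would illustrate this by noting that a section $t\in\mathcal{G}(U)$ needs only be \emph{locally} in the image of $\chi$ on some cover of $U$, a phenomenon that is invisible at the level of individual section groups but automatic at the level of stalks.

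For the exact sequence statement, the short exact case is immediate from combining the two characterizations with the observation that $\mathcal{K}er(\psi)=\mathcal{I}m(\chi)$ is precisely exactness at $\mathcal{G}$ in the abelian category $\mathsf{Sh}(X;\mathsf{A})$; the canonical isomorphism to $0\to\mathcal{K}er(\psi)\hookrightarrow\mathcal{G}\twoheadrightarrow\mathcal{C}oker(\chi)\to 0$ then follows from Lemma \ref{aboutmonepi} applied in $\mathsf{Sh}(X;\mathsf{A})$. The long exact version is a straightforward concatenation. The stalk-wise reformulation is a direct consequence of the natural isomorphisms $\mathcal{K}er(\chi^n)_x\cong\ker(\chi^n_x)$ and $\mathcal{I}m(\chi^{n-1})_x\cong\textup{im}(\chi^{n-1}_x)$, together with the principle that equality of subsheaves of a common sheaf can be tested stalk-wise (either containment is tested this way, using again that the ``zero-stalk implies zero-sheaf'' principle applied to the relevant quotient). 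This then yields exactness of the stalk functor $\square_x:\mathsf{Sh}(X;\mathsf{A})\to\mathsf{A}$ as a corollary, since it manifestly preserves kernels and cokernels at each stalk.

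The main obstacle in a fully rigorous write-up will be the unglamorous verification that a sheaf vanishing on all stalks vanishes globally — this is the lynchpin converting ``local'' (stalk) data into ``global'' (section) data — and a clean bookkeeping of the natural isomorphisms $\mathcal{K}er(\chi)_x\cong\ker(\chi_x)$ and $\mathcal{C}oker(\chi)_x\cong\textup{coker}(\chi_x)$, the latter requiring the fact that sheafification commutes with taking stalks (a property already invoked, but worth re-deriving explicitly from the definition of $\mathcal{F}^+$ in Definition \ref{asssheaf}).
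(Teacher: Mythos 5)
Your proposal is correct and follows exactly the route the paper intends: the lemma is stated there without proof as a direct adaptation of Lemma \ref{aboutmonepi}, resting on the facts already established in Section \ref{ch5.1} --- that $\mathcal{K}er(\chi)$ is computed section-wise while $\mathcal{I}m(\chi)$ and $\mathcal{C}oker(\chi)$ require sheafification, that all three commute with taking stalks, and that a sheaf vanishes iff all its stalks do (via the uniqueness half of the gluing axiom). Your explicit flagging of the asymmetry between the injective and surjective cases, and of the ``zero stalks implies zero sheaf'' lynchpin, is precisely the content the paper leaves implicit; the only caveat worth noting is that for a general abelian target category $\mathsf{A}$ the stalk-detection argument tacitly assumes the same element-wise reasoning the paper itself uses throughout.
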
  

We can also discuss exactness of additive functors involving sheaves. For example:

\begin{Lem}\label{sectfuncexact}
Let $X$ be a topological space, $U\subset X$ open. Then the functor $\Gamma(U,\square):\mathsf{Sh}(X;\mathsf{Ab})\rightarrow\mathsf{Ab}$ sending $\mathcal{F}\mapsto\mathcal{F}(U)\text{ and } \chi\mapsto\chi_U$ is left exact.
\end{Lem}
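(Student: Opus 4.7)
The plan is to leverage the adjunction $(\varsigma, \iota)$ established in Proposition \ref{biglemma} together with Proposition \ref{adjointfunc}. Since the inclusion $\iota:\mathsf{Sh}(X;\mathsf{Ab})\rightarrow\mathsf{PSh}(X;\mathsf{Ab})$ is the right adjoint of the sheafification $\varsigma$, it is left exact. On the presheaf side, the evaluation functor $\Gamma(U,\square)_{\mathsf{PSh}}:\mathsf{PSh}(X;\mathsf{Ab})\rightarrow\mathsf{Ab}$ is actually \emph{exact}: recalling the proof of Lemma \ref{PShabelian}, the canonical decomposition of a presheaf morphism is built termwise, so a sequence of presheaves is exact if and only if it is exact when evaluated at each open subset. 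Consequently, the composite $\Gamma(U,\square)_{\mathsf{Sh}}=\Gamma(U,\square)_{\mathsf{PSh}}\circ\iota$ inherits the left exactness of $\iota$, concluding the proof.

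As a more hands-on alternative, one can verify left exactness from scratch. Given a short exact sequence $0\to\mathcal{F}\xrightarrow{\chi}\mathcal{G}\xrightarrow{\psi}\mathcal{H}\to 0$ in $\mathsf{Sh}(X;\mathsf{Ab})$, the goal is the exactness of $0\to\mathcal{F}(U)\xrightarrow{\chi_U}\mathcal{G}(U)\xrightarrow{\psi_U}\mathcal{H}(U)$ in $\mathsf{Ab}$. Injectivity of $\chi_U$ is immediate from Lemma \ref{aboutmonepisheaves}. The inclusion $\mathrm{im}(\chi_U)\subset\ker(\psi_U)$ follows from $\psi\circ\chi=0$. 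The reverse inclusion is the real content: for $s\in\mathcal{G}(U)$ with $\psi_U(s)=0$, exactness on stalks (Lemma \ref{aboutmonepisheaves}) provides, for each $x\in U$, a unique $t_x\in\mathcal{F}_x$ with $\chi_x(t_x)=s_x$; representatives $t'_x\in\mathcal{F}(V_x)$ on sufficiently small neighbourhoods $V_x\ni x$ then satisfy $\chi_{V_x}(t'_x)=s|_{V_x}$. By injectivity of $\chi$ on overlaps, the $t'_x$ agree on $V_x\cap V_y$, so by the sheaf property of $\mathcal{F}$ they glue to a section $t\in\mathcal{F}(U)$, and $\chi_U(t)=s$ by injectivity of $\chi_U$.

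The chief obstacle in the direct route is the shrinking/gluing maneuver, which crucially requires the uniqueness of $t_x$ (i.e.\ injectivity of $\chi_x$) in order to identify the local lifts on pairwise intersections. The adjoint-based route packages this subtlety into the general Proposition \ref{adjointfunc} combined with the near-trivial observation that presheaf-level evaluation is exact, making it the more economical of the two arguments. It also foreshadows the reason why $\Gamma(U,\square)$ is a natural target for right derivation, producing sheaf cohomology in Chapter 6, and explains where the failure of right exactness originates, namely in the fact that $\varsigma$ (applied to cokernel presheaves) is only right exact rather than exact.
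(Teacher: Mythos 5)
Your first, adjunction-based argument is exactly the proof the paper gives: it invokes Proposition \ref{biglemma} to exhibit $\iota$ as the right adjoint of $\varsigma$, applies Proposition \ref{adjointfunc} to conclude that $\iota$ is left exact, and then uses that exactness in $\mathsf{PSh}(X;\mathsf{Ab})$ is checked by evaluation at each open subset. Your direct stalk-and-glue alternative is also sound, except that the final equality $\chi_U(t)=s$ follows from the uniqueness part of the gluing axiom for $\mathcal{G}$ (both sections restrict to $s|_{V_x}$ on the cover $\{V_x\}$), not from injectivity of $\chi_U$.
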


\begin{proof}
Firstly, we observe that $\Gamma(U,\square)$ is additive as a functor between additive categories (Definition \ref{additiveexactfunc}), because for any $\mathcal{F},\mathcal{G}\in\text{obj}(\mathsf{Sh}(X;\mathsf{Ab}))$ the map $\Gamma(U,\square)_{\mathcal{F},\mathcal{G}}:\text{Hom}_{\mathsf{Sh}(X;\mathsf{Ab})}(\mathcal{F},\mathcal{G})\rightarrow\text{Hom}_\mathsf{Ab}(\mathcal{F}(U),\mathcal{G}(U)),\,\chi\mapsto\chi_U$ is clearly a homomorphism of abelian groups.

Now, consider any short exact sequence $0\rightarrow\mathcal{F}\rightarrow\mathcal{F}'\rightarrow\mathcal{F}''\rightarrow 0$ in $\mathsf{Sh}(X;\mathsf{Ab})$. From Proposition \ref{biglemma} we recall that the inclusion (forgetful) functor $\iota:\mathsf{Sh}(X;\mathsf{Ab})\rightarrow\mathsf{PSh}(X;\mathsf{Ab})$ is the right adjoint to the sheafification functor $\varsigma:\mathsf{PSh}(X;\mathsf{Ab})\rightarrow\mathsf{Sh}(X;\mathsf{Ab})$. Then by Proposition \ref{adjointfunc}, $\iota$ is left exact (and $\varsigma$ right exact), meaning that $0\rightarrow\iota(\mathcal{F})\rightarrow\iota(\mathcal{F}')\rightarrow\iota(\mathcal{F}'')$ is exact in $\mathsf{PSh}(X;\mathsf{Ab})$, where by definition kernels and cokernels behave nicely setwise. Hence, $0\rightarrow\iota(\mathcal{F})(U)\rightarrow\iota(\mathcal{F}')(U)\rightarrow\iota(\mathcal{F}'')(U)$ is exact for any $U\subset X$. But of course $\iota(\mathcal{F})(U)=\mathcal{F}(U)$ and similarly for $\mathcal{F}',\mathcal{F}''$, confirming that $\Gamma(U,\square)$ is left exact.    
\end{proof}

For later purposes, it is useful to expand our vocabulary a little more.

\begin{Def}\label{pushforwardShAb}
Let $X,Y$ be topological spaces, $f:X\rightarrow Y$ a continuous map, and let $\mathsf{C}$ be any category. 
\begin{itemize}[leftmargin=0.5cm]
	\item Suppose $X$ is equipped with some $\mathcal{F}\in\text{obj}(\mathsf{Sh}(X;\mathsf{C}))$. Then for any open $V\subset Y$ the assignment 
	\begin{equation}\label{pushforwardsheaf}
		V\mapsto f_*\mathcal{F}(V)\coloneqq\mathcal{F}(f^{-1}(V))
	\end{equation}
	defines a sound sheaf $f_*\mathcal{F}$ on $Y$, the \textbf{pushforward sheaf along $f$}\index{sheaf!pushforward} (or \textbf{direct image sheaf}\index{sheaf!direct image}), equipped with restrictions induced by the restriction maps in $\mathcal{F}$ of the respective preimages. In particular, on stalks we have $(f_*\mathcal{F})_{f(x)}\cong\mathcal{F}_x$ for any $x\in X$.
	\newline Given some $\chi\in\text{Hom}_{\mathsf{Sh}(X;\mathsf{C})}(\mathcal{F}_1,\mathcal{F}_2)$, we may also define the morphism $f_*\chi\in\text{Hom}_{\mathsf{Sh}(Y;\mathsf{C})}(f_*\mathcal{F}_1,f_*\mathcal{F}_2)$ by $(f_*\chi)_V:s\in f_*\mathcal{F}_1(V)\mapsto\chi_{f^{-1}(V)}(s)\in f_*\mathcal{F}_2(V)$. This makes the pushforward into a functor $f_*:\mathsf{Sh}(X;\mathsf{C})\rightarrow\mathsf{Sh}(Y;\mathsf{C})$.
	
	\item Suppose $Y$ is equipped with some $\mathcal{G}\in\text{obj}(\mathsf{Sh}(Y;\mathsf{C}))$. Then for any open $U\subset X$ the assignment\footnote{The limit is necessary because $f(U)\subset Y$ is not an open subset in general!}
	\begin{equation}
		U\mapsto f^{-1}\mathcal{G}(U)\coloneqq\mkern-6mu\varinjlim_{V\supseteq f(U)}\mkern-6mu\mathcal{G}(V)
	\end{equation}
	defines a presheaf on $Y$, whose associated sheaf $f^{-1}\mathcal{G}$ we call the \textbf{inverse image sheaf}\index{sheaf!inverse image}, endowed with the restrictions induced by those of $\mathcal{G}$. In particular, on stalks we have $(f^{-1}\mathcal{G})_x\cong\mathcal{G}_{f(x)}$ for any $x\in X$.
	\newline Given some $\psi\in\text{Hom}_{\mathsf{Sh}(Y;\mathsf{C})}(\mathcal{G}_1,\mathcal{G}_2)$, we can apply the constructions of Proposition \ref{biglemma} to obtain some $f^{-1}\psi\in\text{Hom}_{\mathsf{Sh}(X;\mathsf{C})}(f^{-1}\mathcal{G}_1,f^{-1}\mathcal{G}_2)$, hence a functor $f^{-1}:\mathsf{Sh}(Y;\mathsf{C})\rightarrow\mathsf{Sh}(X;\mathsf{C})$. 
\end{itemize}
One can conveniently characterize the functor $f^{-1}$ as the left adjoint of $f_*$. Namely, the inverse image sheaf $f^{-1}\mathcal{G}$ is implicitly determined by the isomorphism of bifunctors $\text{Hom}_{\mathsf{Sh}(X;\mathsf{C})}(f^{-1}\mathcal{G},\mathcal{F})\cong\text{Hom}_{\mathsf{Sh}(Y;\mathsf{C})}(\mathcal{G},f_*\mathcal{F})$ (see \cite[Proposition II.6.17]{[GM03]} for a proof).
\end{Def}

\begin{Ex}\label{skyscrapersheaf}
Let $X$ be a topological space, $x\in X$ fixed, regarded as the one-point space $\{x\}$ included in $X$ via $\iota:\{x\}\hookrightarrow X$. Let $S\in\text{obj}(\mathsf{C})$ of a generic category $\mathsf{C}$, and endow $\{x\}$ with the sheaf $\mathcal{O}_x\in\text{obj}(\mathsf{Sh}(\{x\};\mathsf{C}))\equiv\text{obj}(\mathsf{C})$ such that $\mathcal{O}_x(\{x\})=S$ and $\mathcal{O}_x(\emptyset)=0$. Then the \textbf{skyscraper sheaf}\index{sheaf!skyscraper} $\iota_*\mathcal{O}_x\in\text{obj}(\mathsf{Sh}(X;\mathsf{C}))$ is given by
\vspace*{-0.2cm}
\begin{equation}
	U\subset X\text{ open}\longmapsto\iota_*\mathcal{O}_x(U)=\mathcal{O}_x(\iota^{-1}(U))= 
	\begin{cases}
		S & $if $ x\in U \\ 
		0 & $else $
	\end{cases}\,. 
	\vspace*{-0.4cm} 
\end{equation}
\hfill $\blacklozenge$
\end{Ex}

\begin{Lem}\label{inversepushforwardexact}
Let $f:X\rightarrow Y$ be a continuous map of topological spaces. Then the functor $f_*:\mathsf{Sh}(X;\mathsf{Ab})\rightarrow\mathsf{Sh}(Y;\mathsf{Ab})$ is left exact while the functor $f^{-1}:\mathsf{Sh}(Y;\mathsf{Ab})\rightarrow\mathsf{Sh}(X;\mathsf{Ab})$ is exact.
\end{Lem}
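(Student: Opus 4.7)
The plan is to deduce both exactness properties from machinery already assembled, rather than unrolling the definitions from scratch.

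First I would tackle the functor $f_*$. By the last remark in Definition \ref{pushforwardShAb}, the pair $(f^{-1}, f_*)$ is an adjunction with $f^{-1}$ left adjoint to $f_*$. Since $\mathsf{Sh}(X;\mathsf{Ab})$ and $\mathsf{Sh}(Y;\mathsf{Ab})$ are additive (in fact abelian, by Theorem \ref{Sh(X,Ab)isabelian}), Proposition \ref{adjointfunc} applies directly and gives at once that $f_*$ is left exact and $f^{-1}$ is right exact. This disposes of the first half of the lemma with essentially no work.

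To finish, it remains to upgrade $f^{-1}$ from right exact to exact, i.e.\ to check that $f^{-1}$ preserves monomorphisms. Here I would go through stalks. Given a short exact sequence $0\to\mathcal{G}_1\xrightarrow{\psi}\mathcal{G}_2\to\mathcal{G}_3\to 0$ in $\mathsf{Sh}(Y;\mathsf{Ab})$, Lemma \ref{aboutmonepisheaves} says exactness can be tested pointwise on stalks. For any $x\in X$, the natural isomorphism $(f^{-1}\mathcal{G})_x\cong\mathcal{G}_{f(x)}$ from Definition \ref{pushforwardShAb} gives a commutative diagram
\begin{equation*}
\begin{tikzcd}
0\arrow[r] & (f^{-1}\mathcal{G}_1)_x\arrow[r, "(f^{-1}\psi)_x"]\arrow[d, "\wr"'] & (f^{-1}\mathcal{G}_2)_x\arrow[r]\arrow[d, "\wr"] & (f^{-1}\mathcal{G}_3)_x\arrow[r]\arrow[d, "\wr"] & 0 \\
0\arrow[r] & (\mathcal{G}_1)_{f(x)}\arrow[r, "\psi_{f(x)}"'] & (\mathcal{G}_2)_{f(x)}\arrow[r] & (\mathcal{G}_3)_{f(x)}\arrow[r] & 0
\end{tikzcd}
\end{equation*}
whose bottom row is exact since the stalk functor on $Y$ is exact (again Lemma \ref{aboutmonepisheaves}). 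Hence the top row is exact for every $x\in X$, and a second appeal to Lemma \ref{aboutmonepisheaves} concludes that $0\to f^{-1}\mathcal{G}_1\to f^{-1}\mathcal{G}_2\to f^{-1}\mathcal{G}_3\to 0$ is short exact in $\mathsf{Sh}(X;\mathsf{Ab})$. So $f^{-1}$ is exact.

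The only nontrivial point worth checking carefully is the naturality of the isomorphism $(f^{-1}\mathcal{G})_x\cong\mathcal{G}_{f(x)}$ in $\mathcal{G}$, so that the diagram above genuinely commutes; this follows from the construction of $f^{-1}$ as the sheafification of the presheaf $U\mapsto\varinjlim_{V\supseteq f(U)}\mathcal{G}(V)$, combined with the fact that sheafification does not alter stalks (Definition \ref{asssheaf}). Everything else is bookkeeping, and no hard obstacle is expected.
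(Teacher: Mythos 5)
Your proposal is correct and follows essentially the same route as the paper's own proof: adjunction via Proposition \ref{adjointfunc} gives left exactness of $f_*$ and right exactness of $f^{-1}$, and the remaining exactness of $f^{-1}$ is checked on stalks through the isomorphisms $(f^{-1}\mathcal{G})_x\cong\mathcal{G}_{f(x)}$ together with Lemma \ref{aboutmonepisheaves}. Your extra remark about the naturality of the stalk isomorphism is a point the paper leaves implicit, but it does not change the argument.
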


\begin{proof}
By above, the functors $f^{-1}, f_*$ are a pair of left respectively right adjoint functors between additive categories. Proposition \ref{adjointfunc} then implies that they are right respectively left exact. To see that $f^{-1}$ is also left exact, it suffices to work on stalks as suggested by Lemma \ref{aboutmonepisheaves}: $0\rightarrow\mathcal{G}\rightarrow\mathcal{G}'\rightarrow\mathcal{G}''\rightarrow 0$ short exact in $\mathsf{Sh}(Y;\mathsf{Ab})$ implies in particular that
\[
0\rightarrow\mathcal{G}_{f(x)}\cong(f^{-1}\mathcal{G})_x\rightarrow\mathcal{G'}_{f(x)}\cong(f^{-1}\mathcal{G'})_x\rightarrow\mathcal{G''}_{f(x)}\cong(f^{-1}\mathcal{G''})_x\rightarrow 0 
\]
is exact on the stalks at each $f(x)$, for all $x\in X$. Hence, $0\rightarrow f^{-1}\mathcal{G}\rightarrow f^{-1}\mathcal{G'}\rightarrow f^{-1}\mathcal{G''}\rightarrow 0$ is exact in $\mathsf{Sh}(X;\mathsf{Ab})$ as well. (Notice how the same procedure would have failed for $f_*$ due to $f$ not being surjective in general!)
\end{proof}

\begin{Def}\label{localHomShAb}
Let $X$ be a topological space, $\mathcal{F},\mathcal{G}\in\text{obj}(\mathsf{Sh}(X;\mathsf{Ab}))$. For any $U\subset X$ open, the assignment\footnote{Here $\mathcal{F}|_U,\mathcal{G}|_U\in\text{obj}(\mathsf{Sh}(U;\mathsf{Ab}))$ are the \textit{restricted sheaves}, whose abelian groups are defined only for open subsets $V\subset U$.}
\begin{equation}
	U\mapsto\text{Hom}_{\mathsf{Sh}(U;\mathsf{Ab})}(\mathcal{F}|_U,\mathcal{G}|_U)
\end{equation}
defines a sheaf $\mathcal{H}om(\mathcal{F},\mathcal{G})$ on $X$ with values in $\mathsf{Ab}$ (since each $\mathsf{Sh}(U;\mathsf{Ab})$ is abelian), called the \textbf{local $\mathcal{H}om$-sheaf}\index{sheaf!local $\mathcal{H}om$} (or \textit{sheaf of local morphisms}). 
\end{Def}

\subsection{Sheaves of modules}\label{ch5.3}

To continue our study, we need sheaves to have a richer structure.

\begin{Def}											
Let $(X,\mathcal{S})$ be a ringed space (see Definition \ref{ringedspace}). Then a \textbf{sheaf of $\mathcal{S}$-modules}\index{sheaf!of modules (or $\mathcal{S}$-module)} on $X$ (also abbreviated to \textbf{$\mathcal{S}$-module}) is a sheaf $\mathcal{F}$ on $X$ such that $\mathcal{F}(U)$ is an $\mathcal{S}(U)$-module\footnote{One can of course specify if we are dealing with left (as tacitly done here) or right modules. In the latter case, the arising category is denoted $\mathsf{Sh}(X;\mathsf{Mod}_\mathcal{S})$. If actually $\mathcal{S}\in\text{obj}(\mathsf{Sh}(X;\mathsf{CommRings}))$ (a sheaf of \textit{commutative} rings), we have $\mathsf{Sh}(X;\mathsf{Mod}_\mathcal{S})=\mathsf{Sh}(X;{}_\mathcal{S}\mathsf{Mod})$ as for standard modules.} for every open subset $U\subset X$, and the associated restrictions are $\mathcal{S}(U)$-module homomorphisms: 
\[
\rho^U_V(s_1+s_2)=\rho^U_V(s_1)+\rho^U_V(s_2)\quad\text{and}\quad \rho^U_V(\lambda\cdot s)=\rho^U_V(\lambda)\cdot\rho^U_V(s)\,,
\]
for all open $V\subset U\subset X$ and $s,s_1,s_2\in\mathcal{F}(U)$, $\lambda\in\mathcal{S}(U)$.

Given another sheaf $\mathcal{G}$ of $\mathcal{S}$-modules on $X$, $\chi:\mathcal{F}\rightarrow\mathcal{G}$ is a \textbf{morphism of $\mathcal{S}$-modules}\index{morphism!of sm@of $\mathcal{S}$-modules} if each ring morphism $\chi_U:\mathcal{F}(U)\rightarrow\mathcal{G}(U)$ is $\mathcal{S}(U)$-linear. With composition between any two such compatible morphisms being the usual one, we obtain $\mathsf{Sh}(X;{}_\mathcal{S}\mathsf{Mod})$, the \textbf{category of $\mathcal{S}$-modules on $X$}\index{category!of sm@of $\mathcal{S}$-modules}. We usually denote its Hom-groups by $\text{Hom}_\mathcal{S}(\mathcal{F},\mathcal{G})$.

These properties restrict to stalks: $\mathcal{F}_x$ is an $\mathcal{S}_x$-module and $\chi_x:\mathcal{F}_x\rightarrow\mathcal{G}_x$ is a morphism of $\mathcal{S}_x$-modules for each $x\in X$, thus resulting in the exact \textbf{stalk functor}\index{functor!stalk} $\square_x:\mathsf{Sh}(X;{}_\mathcal{S}\mathsf{Mod})\rightarrow{}_{\mathcal{S}_x}\!\mathsf{Mod}$.
\end{Def}

\begin{Rem}\label{sheafmodulesrem}
Obviously the structure sheaf is a module over itself, $\mathcal{S}\in\text{obj}(\mathsf{Sh}(X;{}_\mathcal{S}\mathsf{Mod}))$. We moreover observe that:
\renewcommand{\theenumi}{\alph{enumi}}
\begin{enumerate}[leftmargin=0.5cm]
	\item Direct sums/products, restrictions and quotients of $\mathcal{S}$-modules are themselves $\mathcal{S}$-modules, and the kernel, image and cokernel sheaves of morphisms of $\mathcal{S}$-modules naturally inherit the structure of $\mathcal{S}$-module; this tells us, jointly with an adaptation of Theorem \ref{Sh(X,Ab)isabelian}, that $\mathsf{Sh}(X;{}_\mathcal{S}\mathsf{Mod})$ \textit{is an abelian category}.
	
	\item Much like ordinary modules are a generalization of abelian groups --- which are just $\mathbb{Z}$-modules --- $\mathcal{S}$-modules, for $\mathcal{S}\in\text{obj}(\mathsf{Sh}(X;\mathsf{Rings}))$, generalize sheaves of abelian groups: indeed any $\mathcal{F}\in\text{obj}(\mathsf{Sh}(X;\mathsf{Ab}))$ can be regarded as a $\underline{\mathbb{Z}}$-module, where $\underline{\mathbb{Z}}$ denotes the constant sheaf\footnote{Let $A$ be a set endowed with the discrete topology. Then the \textit{constant sheaf} $\underline{A}\in\text{obj}(\mathsf{Sh}(X;\mathsf{Sets}))$ on $X$ is set by $\underline{A}(U)\coloneqq\{c:U\rightarrow A\mid c\text{ continuous}\}\cong A$ for any open $U\subset X$, equipped with the usual restriction maps.} over $\mathbb{Z}$. In other words, $\mathsf{Sh}(X;\mathsf{Ab})\cong\mathsf{Sh}(X;{}_\mathcal{\underline{\mathbb{Z}}}\mathsf{Mod})$.
	
	\item On a more specific note, we observe that for any $\mathcal{F}\in\textup{obj}(\mathsf{Sh}(X;{}_\mathcal{S}\mathsf{Mod}))$ there is an obvious isomorphism of $\mathcal{S}$-modules
	\begin{equation}\label{Homglobalsectiso}
		\text{Hom}_\mathcal{S}(\mathcal{S},\mathcal{F})\xrightarrow{\sim} \Gamma(X,\mathcal{F})=\mathcal{F}(X),\,\psi\mapsto \psi_X(1_{\mathcal{S}(X)})\,,
	\end{equation}
	where $1_{\mathcal{S}(X)}$ stands for the unit element of the module $\mathcal{S}(X)$. Similarly, $\text{Hom}_{\mathcal{S}|_U}(\mathcal{S}|_U,\mathcal{F}|_U)\cong \Gamma(U,\mathcal{F}|_U)$. This observation will prove repeatedly useful in the theory to come. 
\end{enumerate}
\end{Rem}

Now that we have introduced sheaves of modules, we can keep expanding our array of tools. In the following definition, we continue to denote the structure sheaf of $X$ by $\mathcal{S}$; the reader should be warned that most literature readily adopts the notation $\mathcal{O}_X$, which we save for the structure sheaf of regular functions on algebraic varieties (introduced in Definition \ref{sheafregfcts} below) and later on when we will talk about coherent sheaves!\footnote{One extra bit: for general topological spaces, we can assume that any sheaf of rings $\mathcal{S}$ is also a sheaf of $\mathbb{K}$-valued functions, that is, $\mathcal{S}(U)\subset\text{Hom}_\mathsf{Sets}(U,\mathbb{K})$ for each open $U\subset X$. (This makes sense of the composition with sections in $\mathcal{S}(U)$.) Upgrading to \textit{continuous} $\mathbb{K}$-valued functions in $\mathsf{Top}$, we obtain a valid counterpart to $\mathcal{O}_X$ on varieties.}

\begin{Def}\label{moresheafofmodules}
Let $(X,\mathcal{S})$ be a ringed space, $\mathcal{F},\mathcal{G}\in\text{obj}(\mathsf{Sh}(X;{}_\mathcal{S}\mathsf{Mod}))$.
\begin{itemize}[leftmargin=0.5cm]
	\item The local $\mathcal{H}om$-sheaf from Definition \ref{localHomShAb} upgrades to a sheaf of modules $\mathcal{H}om_{\mathcal{S}}(\mathcal{F},\mathcal{G})\in\text{obj}(\mathsf{Sh}(X;{}_{\mathcal{S}}\mathsf{Mod}))$ given by $U\mapsto\text{Hom}_{\mathcal{S}|_U}(\mathcal{F}|_U,\mathcal{G}|_U)$ for each $U\subset X$ open (the restricted sheaves clearly inherit the structure of $\mathcal{S}$-modules). In particular, by the standard arguments of Example \ref{exactfuncexample}, we have two left exact functors $\mathcal{H}om_{\mathcal{S}}(\mathcal{F},\square)$, $\mathcal{H}om_{\mathcal{S}}(\square,\mathcal{G}):\mathsf{Sh}(X;{}_{\mathcal{S}}\mathsf{Mod})\rightarrow\mathsf{Sh}(X;{}_{\mathcal{S}}\mathsf{Mod})$ (covariant respectively contravariant). The generalization of \eqref{Homglobalsectiso} then tells us that $\mathcal{H}om_{\mathcal{S}}(\mathcal{S},\square)\cong\mathsf{Id}_{\mathsf{Sh}(X;{}_{\mathcal{S}}\mathsf{Mod})}$. 
	
	\item The \textbf{dual sheaf}\index{sheaf!dual} to $\mathcal{F}$ is simply \begin{equation}
		\mathcal{F}^\vee\coloneqq \mathcal{H}om_{\mathcal{S}}(\mathcal{F},\mathcal{S})\in\text{obj}(\mathsf{Sh}(X;{}_{\mathcal{S}}\mathsf{Mod}))\,.
	\end{equation} 
	
	\item The presheaf given by the assignment
	\begin{equation}
		U\mapsto \mathcal{F}(U)\otimes_{\mathcal{S}(U)}\mathcal{G}(U)
	\end{equation}
	for any $U\subset X$ open can be sheafified to obtain the \textbf{tensor sheaf}\index{sheaf!tensor} $\mathcal{F}\otimes_{\mathcal{S}}\mathcal{G}\in\text{obj}(\mathsf{Sh}(X;{}_{\mathcal{S}}\mathsf{Mod}))$. Together with the definition of dual sheaf, it yields the canonical evaluation map of $\mathcal{S}$-modules $\mathcal{F}^\vee\otimes_{\mathcal{S}}\mathcal{F}\rightarrow\mathcal{S}$.
\end{itemize}
Consider now a morphism of ringed spaces $(f,f^\#):(X,\mathcal{S}_X)\rightarrow (Y,\mathcal{S}_Y)$ as in Definition \ref{ringedspace}.
\begin{itemize}[leftmargin=0.5cm]
	\item The pushforward sheaf $f_*\mathcal{F}$ set by \eqref{pushforwardsheaf} has natural structure of $\mathcal{S}_Y$-module, since $\mathcal{F}(f^{-1}(V))$ is a $\mathcal{S}_X(f^{-1}(V))$-module when endowed with the action $\lambda\cdot s\coloneqq f^\#_V(\lambda)\cdot s\in\mathcal{F}(f^{-1}(V))$ induced by $f^\#_V:\mathcal{S}_Y(V)\rightarrow\mathcal{S}_X(f^{-1}(V))$, for each open $V\subset Y$ (this is compatible with restrictions). Therefore, $f_*\mathcal{F}\in\text{obj}(\mathsf{Sh}(Y;{}_{\mathcal{S}_Y}\!\mathsf{Mod}))$ and we obtain the \textbf{pushforward functor}\index{functor!pushforward} \begin{equation}
		f_*:\mathsf{Sh}(X;{}_{\mathcal{S}_X}\!\mathsf{Mod})\rightarrow\mathsf{Sh}(Y;{}_{\mathcal{S}_Y}\!\mathsf{Mod})\,.
	\end{equation}
	
	\item In contrast, the inverse image sheaf $f^{-1}\mathcal{G}$ does not retain the structure of $\mathcal{S}_X$-module (it is merely a $f^{-1}\mathcal{S}_Y$-module). To obviate this, we operate a base change via the tensor product from above and let\footnote{In case we wish for a \textit{right} $\mathcal{S}_X$-module $f^*\mathcal{G}\in\text{obj}(\mathsf{Sh}(X;\mathsf{Mod}_{\mathcal{S}_X}))$, we must instead define $f^*\mathcal{G}\coloneqq f^{-1}\mathcal{G}\otimes_{f^{\!-\!1}\!\mathcal{S}_Y}\!\mathcal{S}_X$.} 
	\begin{equation}
		U\mapsto f^*\mathcal{G}(U)\coloneqq\mathcal{S}_X(U)\otimes_{f^{\!-\!1}\!\mathcal{S}_Y(U)}\!f^{-1}\mathcal{G}(U)
	\end{equation}
	for each open $U\!\subset\! X$. This yields a well-defined $f^*\mathcal{G}\!=\!\mathcal{S}_X\!\otimes_{f^{\!-\!1}\!\mathcal{S}_Y}\! f^{-1}\mathcal{G}$ $\in\text{obj}(\mathsf{Sh}(X;{}_{\mathcal{S}_X}\!\mathsf{Mod}))$, called the \textbf{pullback sheaf along $f$}\index{sheaf!pullback}, and the \textbf{pullback functor}\index{functor!pullback}
	\begin{equation}
		f^*:\mathsf{Sh}(Y;{}_{\mathcal{S}_Y}\!\mathsf{Mod})\rightarrow\mathsf{Sh}(X;{}_{\mathcal{S}_X}\!\mathsf{Mod})\,.
	\end{equation}
\end{itemize}
We observe that the pullback sheaf behaves as we would hope: the isomorphism of bifunctors
\begin{equation}\label{pullpushareadjoints}
	\text{Hom}_{\mathcal{S}_X}(f^*\mathcal{G},\mathcal{F})\cong\text{Hom}_{\mathcal{S}_Y}(\mathcal{G},f_*\mathcal{F})
\end{equation}
still holds. Also note that $f^*(\mathcal{S}_Y)\cong\mathcal{S}_X$ by construction.
\end{Def}

Lemma \ref{inversepushforwardexact} specializes to:

\begin{Cor}\label{pullbackpushforwardexact}
Let $f:(X,\mathcal{S}_X)\rightarrow (Y,\mathcal{S}_Y)$ be a morphism of ringed spaces. Then the functor $f_*:\mathsf{Sh}(X;{}_{\mathcal{S}_X}\!\mathsf{Mod})\rightarrow\mathsf{Sh}(Y;{}_{\mathcal{S}_Y}\!\mathsf{Mod})$ is left exact while the functor $f^*:\mathsf{Sh}(Y;{}_{\mathcal{S}_Y}\!\mathsf{Mod})\rightarrow\mathsf{Sh}(X;{}_{\mathcal{S}_X}\!\mathsf{Mod})$ is right exact.
\end{Cor}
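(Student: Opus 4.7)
The plan is to invoke Proposition \ref{adjointfunc} directly, exploiting the adjunction \eqref{pullpushareadjoints} between $f^*$ and $f_*$ that has just been announced. Concretely, the excerpt established that there is a natural isomorphism of bifunctors
\[
\textup{Hom}_{\mathcal{S}_X}(f^*(\square),\blacksquare)\;\cong\;\textup{Hom}_{\mathcal{S}_Y}(\square,f_*(\blacksquare))\,,
\]
so $(f^*,f_*)$ is a pair of (left, right) adjoint functors between the additive (in fact abelian, by Remark \ref{sheafmodulesrem}a.) categories $\mathsf{Sh}(Y;{}_{\mathcal{S}_Y}\!\mathsf{Mod})$ and $\mathsf{Sh}(X;{}_{\mathcal{S}_X}\!\mathsf{Mod})$. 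Proposition \ref{adjointfunc} then immediately yields that the left adjoint $f^*$ is right exact and the right adjoint $f_*$ is left exact.

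First, I would spend one sentence checking that both functors are additive, which is necessary to apply Proposition \ref{adjointfunc}. For $f_*$ this is clear, since $(f_*\chi)_V = \chi_{f^{-1}(V)}$ is defined componentwise out of the morphism of $\mathcal{S}_X(f^{-1}(V))$-modules $\chi_{f^{-1}(V)}$, so addition of sheaf morphisms in the source is carried to addition in the target. For $f^*$ additivity is inherited from the inverse image functor $f^{-1}$ (additive by the same componentwise argument, and further via sheafification) and from the bifunctoriality of the tensor product $\mathcal{S}_X\otimes_{f^{-1}\mathcal{S}_Y}\square$ used in the base change.

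Second, I would verify that the hypothesis of Proposition \ref{adjointfunc} is satisfied as stated: the natural isomorphism \eqref{pullpushareadjoints} really is a natural isomorphism of bifunctors into $\mathsf{Sets}$ (indeed into $\mathsf{Ab}$, via the module structure on the Hom-groups), which is exactly what the proposition requires. With this in place, the conclusions of that proposition apply verbatim, giving the two desired exactness properties.

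There is no real obstacle here: the corollary is purely a formal consequence of the adjunction and of the general principle that a right adjoint is left exact and a left adjoint is right exact. The only subtlety worth flagging is that, unlike the situation for $f^{-1}$ in Lemma \ref{inversepushforwardexact} (where left exactness was additionally deduced by passing to stalks via $(f^{-1}\mathcal{G})_x\cong\mathcal{G}_{f(x)}$), one cannot expect $f^*$ to be \emph{left} exact in general, because the base-change tensor product $\mathcal{S}_X\otimes_{f^{-1}\mathcal{S}_Y}\square$ introduced in Definition \ref{moresheafofmodules} is only right exact (a flatness hypothesis on $\mathcal{S}_X$ over $f^{-1}\mathcal{S}_Y$ would be needed, as in Example \ref{Torfunc}). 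Hence the statement of the corollary is optimal at this level of generality, and the proof is simply the one-line application of Proposition \ref{adjointfunc}.
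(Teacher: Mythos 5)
Your proof is correct and is essentially the paper's own argument: the corollary is presented there as a specialization of Lemma \ref{inversepushforwardexact}, whose proof likewise reads the exactness off from Proposition \ref{adjointfunc} applied to the adjunction \eqref{pullpushareadjoints}, a point the paper reiterates after Proposition \ref{pushpullcoherent}. Your added checks (additivity of both functors, and the observation that $f^*$ cannot be expected to be left exact without a flatness hypothesis, in contrast to $f^{-1}$) are accurate but not a departure from the paper's route.
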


Finally, we note that $\mathsf{Sh}(X;\mathsf{Ab})$ and $\mathsf{Sh}(X;{}_{\mathcal{S}}\mathsf{Mod})$ being abelian categories allows us to talk about Ext-abelian groups and \textbf{Ext-modules}\index{Ext-module},\footnote{Technically, $\text{Ext}_{\mathcal{S}}^k(\mathcal{F},\mathcal{G})$ is a module only in case $\mathcal{S}$ is a sheaf of commutative rings; otherwise, it is just an abelian group. Albeit misleading, this choice of language tries to distinguish them from the Ext-groups coming from $\mathsf{Sh}(X;\mathsf{Ab})$.} which we define again (see \eqref{Extgroup}) as
\begin{equation}\label{Extmodules}
\begin{aligned}
	\text{Ext}^k(\mathcal{F},\mathcal{G})&\coloneqq\text{Hom}_{\mathsf{D}(\mathsf{Sh}(X;\mathsf{Ab}))}(\mathcal{F}[0]^\bullet,\mathcal{G}[k]^\bullet)\in\text{obj}(\mathsf{Ab})\quad\text{and} \\
	\text{Ext}_{\mathcal{S}}^k(\mathcal{F},\mathcal{G})&\coloneqq\text{Hom}_{\mathsf{D}(\mathsf{Sh}(X;{}_{\mathcal{S}}\mathsf{Mod}))}(\mathcal{F}[0]^\bullet,\mathcal{G}[k]^\bullet)\in\text{obj}({}_{\mathcal{S}(\!X\!)}\!\mathsf{Mod})
\end{aligned}
\end{equation}
for $k\in\mathbb{Z}$ and $\mathcal{F},\mathcal{G}$ in $\mathsf{Sh}(X;\mathsf{Ab})$, respectively in $\mathsf{Sh}(X;{}_{\mathcal{S}}\mathsf{Mod})$. In particular, they obey Theorem \ref{Extprop}. We will return to them in Chapter 6, when dealing with classical derived functors.

\subsection{Free and locally free sheaves}\label{ch5.4}

It is desirable to express sheaves of modules through copies of the structure sheaf equipping the underlying ringed space. This allows us to talk about \textit{free} modules.

\begin{Def}
Let $(X,\mathcal{S})$ be a ringed space and $\mathcal{F}\in\text{obj}(\mathsf{Sh}(X;{}_\mathcal{S}\mathsf{Mod}))$ an $\mathcal{S}$-module. Then:
\begin{itemize}[leftmargin=0.5cm]
	\item $\mathcal{F}$ is \textbf{free}\index{sheaf!free} of \textbf{rank}\index{rank (of a sheaf)} $n\coloneqq |J|$ if it is of the form $\mathcal{F}\cong\bigoplus_{j\in J}\mathcal{S}\equiv\mathcal{S}^{\oplus n}$; then each $\mathcal{F}(U)\cong\mathcal{S}(U)^{\oplus n}$ is a free $\mathcal{S}(U)$-module of rank $n$.
	
	\item $\mathcal{F}$ is just \textbf{locally free}\index{sheaf!locally free} if there exists an open cover $\{U_i\}_{i\in I}$ of $X$ such that each restricted sheaf $\mathcal{F}|_{U_i}$ is a free $\mathcal{S}|_{U_i}$-module, 
	\[
	\mathcal{F}|_{U_i}\cong\mathcal{S}|_{U_i}^{\oplus n_i}
	\]
	(equivalently, if each $x\in X$ admits a neighbourhood $U_x\subset X$ such that $\mathcal{F}|_{U_x}$ is a free $\mathcal{S}|_{U_x}$-module), with $n_i$ being the rank of $\mathcal{F}$ at $U_i$, possibly infinite and the same for each $U_i$ in case $X$ is connected --- as we always implicitly assume to be the case.
\end{itemize}
Taking morphisms between (locally) free sheaves of $\mathcal{S}$-modules to be exactly the underlying morphisms of $\mathcal{S}$-modules, we obtain the categories $\mathsf{Sh}^\mathsf{lf}_n(X;{}_\mathcal{S}\mathsf{Mod})$ respectively $\mathsf{Sh}^\mathsf{f}_n(X;{}_\mathcal{S}\mathsf{Mod})$ (where $n$ stands for the rank, $<\infty$ unless declared otherwise). They are full subcategories of $\mathsf{Sh}(X;{}_\mathcal{S}\mathsf{Mod})$. 
\end{Def}

\begin{Rem}\label{locallyfreeRem}
Locally free sheaves of finite rank have desirable features when it comes to certain operations. For example, they are closed under tensor products, the local $\mathcal{H}om(\cdot,\cdot)$-sheaf and dualization. Given any $\mathcal{F}\in\text{obj}(\mathsf{Sh}^\mathsf{lf}_n(X;{}_\mathcal{S}\mathsf{Mod}))$ and $\mathcal{F}_1,\mathcal{F}_2\in\text{obj}(\mathsf{Sh}(X;{}_\mathcal{S}\mathsf{Mod}))$, we also have:
\begin{align*}
	(\mathcal{F}^\vee)^\vee&\cong\mathcal{F}, \\ \mathcal{H}om_{\mathcal{S}}(\mathcal{F},\mathcal{F}_1)&\cong\mathcal{F}^\vee\otimes_{\mathcal{S}}\mathcal{F}_1, \\
	\text{Hom}_{\mathcal{S}}(\mathcal{F}\otimes_{\mathcal{S}}\mathcal{F}_1,\mathcal{F}_2)&\cong\text{Hom}_{\mathcal{S}}(\mathcal{F}_1,\mathcal{H}om_{\mathcal{S}}(\mathcal{F},\mathcal{F}_2))\,.
\end{align*}

Moreover, given a morphism of ringed spaces $f:(X,\mathcal{S}_X)\rightarrow(Y,\mathcal{S}_Y)$ and some $\mathcal{F}\in\text{obj}(\mathsf{Sh}(X;{}_{\mathcal{S}_X}\!\mathsf{Mod}))$ and $\mathcal{G}\in\text{obj}(\mathsf{Sh}^\mathsf{lf}_n(Y;{}_{\mathcal{S}_Y}\mathsf{Mod}))$, the \textit{projection formula} holds: 
\[
f_*(\mathcal{F}\otimes_{\mathcal{S}_X}\!f^*\mathcal{G})\cong f_*\mathcal{F}\otimes_{\mathcal{S}_Y}\!\mathcal{G}\,. 
\]  
\end{Rem}

\begin{Def}\label{Picardgroup}
Let $(X,\mathcal{S})$ be a ringed space. An \textbf{invertible sheaf}\index{sheaf!invertible} on $X$ is any $\mathcal{F}\in\text{obj}(\mathsf{Sh}^\mathsf{lf}_1(X;{}_\mathcal{S}\mathsf{Mod}))$. Clearly, the tensor product of invertible sheaves yields an invertible sheaf, and for any given $\mathcal{F}\in\text{obj}(\mathsf{Sh}^\mathsf{lf}_1(X;{}_\mathcal{S}\mathsf{Mod}))$ there exists at least one $\mathcal{G}\in\text{obj}(\mathsf{Sh}^\mathsf{lf}_1(X;{}_\mathcal{S}\mathsf{Mod}))$ such that $\mathcal{F}\otimes_\mathcal{S}\mathcal{G}\cong\mathcal{S}$ (namely $\mathcal{G}=\mathcal{F}^\vee$). Writing in this case $\mathcal{F}\sim\mathcal{G}$, we obtain a well-defined equivalence relation $\sim$ on locally free sheaves of rank 1, and the quotient $\text{Pic}(X)\coloneqq \mathsf{Sh}^\mathsf{lf}_1(X;{}_\mathcal{S}\mathsf{Mod})/\sim$ is called \textbf{Picard group}\index{Picard group}, indeed a group with respect to $\otimes_\mathcal{S}$ and with inversion given by dualization. 
\end{Def}

We also state an important result which was alluded to in the Introduction:

\begin{Pro}\label{locfreeisvectorbundle}
Let $(X,\mathcal{S})$ be a ringed space. Then there exists a 1-to-1 correspondence between locally free sheaves of rank $n$ on $X$ and vector bundles of rank $n$ over $X$. \textup(For example, invertible sheaves correspond to line bundles.\textup)
\end{Pro}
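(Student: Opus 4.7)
The plan is to construct mutually inverse (and in fact fully faithful) functors between the two classes of objects, with the understanding that ``vector bundle of rank $n$'' is interpreted in the category appropriate to $(X,\mathcal{S})$: topological, smooth, holomorphic or algebraic, according to whether $\mathcal{S}$ is the sheaf of continuous, smooth, holomorphic or regular $\mathbb{K}$-valued functions. Fixing this convention is the only genuinely non-formal step, and once it is made the rest is essentially bookkeeping.

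First, given a rank-$n$ vector bundle $\pi:E\rightarrow X$, I would define the sheaf of sections $\mathcal{E}$ by
\[
\mathcal{E}(U)\coloneqq\{s:U\rightarrow E\mid \pi\circ s=\textup{id}_U,\;s\text{ of the regularity class of }\mathcal{S}\}\,,
\]
for $U\subset X$ open, equipped with the obvious restrictions. Fibrewise vector addition and scalar multiplication by $\mathcal{S}(U)$ endow $\mathcal{E}(U)$ with the structure of an $\mathcal{S}(U)$-module compatible with restrictions, and the sheaf property follows at once from the local nature of sections. Given a trivializing open cover $\{U_i\}_{i\in I}$ for $E$ with local trivializations $\varphi_i:\pi^{-1}(U_i)\xrightarrow{\sim} U_i\times\mathbb{K}^n$, post-composition with $\varphi_i$ and projection to the $\mathbb{K}^n$-factor identifies $\mathcal{E}|_{U_i}$ with $\mathcal{S}|_{U_i}^{\oplus n}$, so $\mathcal{E}$ is locally free of rank $n$. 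The assignment $E\mapsto\mathcal{E}$ is clearly functorial on bundle morphisms.

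Conversely, given $\mathcal{F}\in\textup{obj}(\mathsf{Sh}^\mathsf{lf}_n(X;{}_\mathcal{S}\mathsf{Mod}))$ with trivializations $\phi_i:\mathcal{F}|_{U_i}\xrightarrow{\sim}\mathcal{S}|_{U_i}^{\oplus n}$ along some open cover $\{U_i\}_{i\in I}$, the composites $g_{ij}\coloneqq\phi_i\circ\phi_j^{-1}\in\textup{Hom}_{\mathcal{S}|_{U_i\cap U_j}}(\mathcal{S}|_{U_i\cap U_j}^{\oplus n},\mathcal{S}|_{U_i\cap U_j}^{\oplus n})\cong\textup{GL}_n(\mathcal{S}(U_i\cap U_j))$ yield a collection of invertible matrices with entries in $\mathcal{S}$ that satisfy the cocycle condition $g_{ik}=g_{ij}\circ g_{jk}$ on triple overlaps. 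I would then set $E(\mathcal{F})\coloneqq\bigsqcup_{i\in I}U_i\times\mathbb{K}^n/\!\sim$, where $(x,v)\in U_j\times\mathbb{K}^n$ is identified with $(x,g_{ij}(x)v)\in U_i\times\mathbb{K}^n$ for $x\in U_i\cap U_j$; the cocycle condition ensures that $\sim$ is an equivalence relation and that $E(\mathcal{F})\rightarrow X$ is a well-defined rank-$n$ vector bundle, with the transition maps $g_{ij}$ of the correct regularity since they come from $\mathcal{S}$. Independence from the chosen trivialization (up to bundle isomorphism) follows from a standard refinement argument.

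Finally, I would verify that the two constructions are quasi-inverse. Starting from a bundle $E$ with transition functions $\{g_{ij}\}$, the sheaf of sections $\mathcal{E}$ has local trivializations whose transition matrices coincide with the $g_{ij}$, so reassembling yields a bundle canonically isomorphic to $E$. Starting instead from $\mathcal{F}$, the sheaf of sections of $E(\mathcal{F})$ is by construction isomorphic to $\mathcal{S}|_{U_i}^{\oplus n}$ on each $U_i$ via the same transition data, and these local isomorphisms glue by the cocycle condition to a global isomorphism with $\mathcal{F}$. The main obstacle in a fully rigorous treatment is thus not any deep content but the clean specification of the ambient category of bundles and its morphisms; once that is done, tensor products, duals and direct sums on the sheaf side correspond to the familiar operations on bundles, and in particular rank-$1$ locally free sheaves correspond to line bundles, recovering the Picard group $\textup{Pic}(X)$ of Definition \ref{Picardgroup} as the group of isomorphism classes of line bundles under $\otimes$.
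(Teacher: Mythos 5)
Your proposal is correct and follows essentially the same route as the paper's proof: pass from a bundle to its sheaf of sections via local trivializations, and reconstruct the bundle from a locally free sheaf by gluing $U_i\times\mathbb{K}^n$ along the transition matrices $g_{ij}$ extracted from the trivializations, using the cocycle condition. Your version spells out the quasi-inverse verification and the choice of regularity class slightly more explicitly, but the content is identical.
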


\begin{proof}
Recall that a \textbf{vector bundle}\index{vector bundle} of rank $n$ over $X$ consists of the following data: a continuous surjection $\pi:E\rightarrow X$ such that for each $x\in X$ the fibre $\pi^{-1}(x)\subset E$ has the structure of an $n$-dimensional vector space and there exists an $x$-neighbourhood $U\subset X$ and a homeomorphism $\phi_{U}:\pi^{-1}(U)\rightarrow U\times\mathbb{K}^n$ with $\pi|_{\pi^{-1}(U)}=p_1\circ\phi_U:\pi^{-1}(U)\rightarrow U$ ($p_1$ being the first projection); moreover, $\phi_{U}|_{\pi^{-1}(y)}:\pi^{-1}(y)\rightarrow \{y\}\times\mathbb{K}^n\cong\mathbb{K}^n$ is a vector space isomorphism for all $y\in U$. Equivalently, there must exist an open cover $\{U_i\}_{i\in I}$ of $X$ (a \textit{trivializing cover}) and a homeomorphism $\phi_i\equiv\phi_{U_i}$ as above for each $U_i$. Then for each $U_i\cap U_j\neq\emptyset$ we obtain a transition function $T_{ij}:U_i\cap U_j\rightarrow\text{GL}(n,\mathbb{K})$ fulfilling $(\phi_i\circ \phi_j^{-1})(x,v)=(x,T_{ij}(x)[v])$ for all $(x,v)\in (U_i\cap U_j)\times\mathbb{K}^n$; these are subject to the cocycle condition $T_{jk}\circ T_{ij}=T_{ik}$ for every triple $U_i\cap U_j\cap U_k\neq \emptyset$.

Now, a fixed rank $n$ vector bundle $\pi:E\rightarrow X$ is naturally equipped with the sheaf $\mathcal{F}\coloneqq\Gamma(\square, E)$ of sections of $E$, on any open $U\subset X$ defined by $\mathcal{F}(U)\coloneqq\{s:U\rightarrow E\mid \pi\circ s=\text{id}_U\}$, clearly an $\mathcal{S}$-module; in fact, $\mathcal{F}\in\text{obj}(\mathsf{Sh}(X;{}_\mathcal{S}\mathsf{Mod}))$. Considering a trivializing cover $\{U_i\}_{i\in I}$ of $X$, any $s\in\mathcal{F}|_{U_i}(U\cap U_i)$ naturally identifies a unique $\phi_i\circ s:U\cap U_i\rightarrow (U\cap U_i)\times\mathbb{K}^n$, which can be interpreted as a section $\vec{s}_i=(s_i^1,...,s_i^n)$ in $\mathcal{S}|_{U_i}^{\oplus n}(U\cap U_i)$. Therefore, $\mathcal{F}|_{U_i}\cong\mathcal{S}|_{U_i}^{\oplus n}$, implying that $\mathcal{F}\in\text{obj}(\mathsf{Sh}_n^\mathsf{lf}(X;{}_\mathcal{S}\mathsf{Mod}))$.

Moreover, $T_{ij}(x)[(p_2\circ\vec{s}_i)(x)]=[(p_2\circ\vec{s}_j)(x)]\in\mathbb{K}^n$ for all $x\in U\cap U_i\cap U_j$ ($p_2$ denoting the second projection), so that the transition functions equipping the vector bundle precisely coincide with the natural functions $T_{ij}:U_i\cap U_j\rightarrow\text{GL}(n,\mathbb{K})$ attached to every locally free sheaf on $X$ with respect to the open cover $\{U_i\}_{i\in I}$. These themselves satisfy the cocycle condition. Therefore, the data of some $\mathcal{F}\in\text{obj}(\mathsf{Sh}_n^\mathsf{lf}(X;{}_\mathcal{S}\mathsf{Mod}))$ with transition functions $\{T_{ij}\}_{i,j\in I}$ is sufficient to uniquely reconstruct (up to isomorphism) a vector bundle of rank $n$ over $X$ --- by ``gluing the $U_i\times\mathbb{K}^n$ along $U_i\cap U_j$ using the $T_{ij}$''. This reverts the argument of the previous paragraph and thus proves the claimed bijection. 	  
\end{proof}

Despite all their virtues, though, locally free sheaves (and hence vector bundles) lack an essential property: they do not form an abelian category! This makes them unsuitable to our study. We must enlarge their category to include better behaved modules, specifically the so-called \textit{quasi-coherent} sheaves.

\begin{Def}\label{coherentonringed}
Let $(X,\mathcal{S})$ be a ringed space and $\mathcal{Q}\in\text{obj}(\mathsf{Sh}(X;{}_\mathcal{S}\mathsf{Mod}))$.
\begin{itemize}[leftmargin=0.5cm]
	\item $\mathcal{Q}$ is a \textbf{quasi-coherent sheaf}\index{sheaf!quasi-coherent} of $\mathcal{S}$-modules over $X$ if there exists an open cover $\{U_i\}_{i\in I}$ of $X$ and exact sequences of sheaves
	\begin{equation}\label{qcohpresentation}
		\mathcal{S}|_{U_i}^{\oplus n_i}\xrightarrow{\psi_i}\mathcal{S}|_{U_i}^{\oplus m_i}\xrightarrow{\chi_i}\mathcal{Q}|_{U_i}\rightarrow 0\,,
	\end{equation}
	with $n_i, m_i>0$ possibly infinite, for each $i\in I$. This amounts to require that $\mathcal{Q}$ is locally the cokernel of a morphism of free $\mathcal{S}$-modules: $\mathcal{Q}|_{U_i}=\mathcal{I}m(\chi_i)\cong \big(\mathcal{S}|_{U_i}^{\oplus m_i}\big)/\mathcal{K}er(\chi_i)=\mathcal{C}oker(\psi_i)$ (by exactness and a generalization of the Isomorphism Theorem for sheaves). One calls the exact sequence \eqref{qcohpresentation} the \textit{local presentation} of $\mathcal{Q}$.
	
	\item A quasi-coherent $\mathcal{Q}$ is moreover a \textbf{coherent sheaf}\index{sheaf!coherent} of $\mathcal{S}$-modules over $X$ if:
	\renewcommand{\labelitemii}{\textendash}
	\begin{itemize}[leftmargin=0.4cm]
		\item it is \textit{finitely-generated} (or \textit{of finite type}), that is, for each point $x\in X$ there exists some open $x$-neighbourhood $U_x\subset X$ and a surjective sheaf morphism $\chi(x):\mathcal{S}|_{U_x}^{\oplus n_x}\rightarrow \mathcal{Q}|_{U_x}$, for some (finite!) $n_x\in\mathbb{N}$;
		
		\item for all open subsets $U\subset X$, $n\in\mathbb{N}$ and morphisms $\chi:\mathcal{S}|_U^{\oplus n}\rightarrow \mathcal{Q}|_U$ of $\mathcal{S}$-modules, $\mathcal{K}er(\chi)\in\text{obj}(\mathsf{Sh}(X;{}_\mathcal{S}\mathsf{Mod}))$ is finitely-generated in the sense just explained.
	\end{itemize}	
	Note that if the surjective morphisms $\chi(x)$ in the first condition are actually isomorphisms of sheaves, then $\mathcal{Q}$ is locally free.
\end{itemize} 
Defining morphisms between quasi-/coherent sheaves of $\mathcal{S}$-modules to be exactly the underlying morphisms of $\mathcal{S}$-modules, we obtain the \textbf{category of quasi-coherent sheaves}\index{category!of quasi-coherent sheaves} $\mathsf{QCoh}(X;{}_\mathcal{S}\mathsf{Mod})$ respectively the \textbf{category of coherent sheaves}\index{category!of coherent sheaves} $\mathsf{Coh}(X;{}_\mathcal{S}\mathsf{Mod})$ of $\mathcal{S}$-modules over $X$. By definition, they are full subcategories of $\mathsf{Sh}(X;{}_\mathcal{S}\mathsf{Mod})$.
\end{Def}

\begin{Rem}
In the early literature, coherent sheaves were simply defined to be those quasi-coherent sheaves which are finitely-generated, which is useless when working away from noetherian schemes. On the latters, instead, coherent sheaves are synonym of finitely-presented $\mathcal{S}$-modules: $\mathcal{Q}\in\text{obj}(\mathsf{Sh}(X;{}_\mathcal{S}\mathsf{Mod}))$ is \textit{finitely-presented} if it fits into an exact sequence $\mathcal{S}^{\oplus n_i}\rightarrow\mathcal{S}^{\oplus n_j}\rightarrow \mathcal{Q}\rightarrow 0$ with $n_i, n_j<\infty$, implying it is finitely-generated.\footnote{Taking a step back to classical algebra, modules over a noetherian ring are finitely-generated iff they are finitely-presented iff they are coherent. Here, coherence of modules is defined in an analogous way, and in fact inspires the generalization to sheaves. That the category ${}_R\mathsf{CohMod}\subset{}_R\mathsf{Mod}$ of coherent $R$-modules is an abelian subcategory then serves as a motivating tease.}
\end{Rem}

Obviously, locally free sheaves are quasi-coherent, and even coherent when of finite rank on $X$ which is at least \textit{locally noetherian} as a topological space (that is, locally, descending chains of closed subsets become stationary). Therefore, we obtain the following chain of full inclusions of categories:\vspace*{0.1cm}
\begin{center}
\noindent\minibox[frame]{\begin{small}$
		\mathsf{Sh}^\mathsf{f}_n(X;{}_\mathcal{S}\mathsf{Mod})\!\subset\!\mathsf{Sh}^\mathsf{lf}_n(X;{}_\mathcal{S}\mathsf{Mod})\!\subset\!\mathsf{Coh}(X;{}_\mathcal{S}\mathsf{Mod})\!\subset\!\mathsf{QCoh}(X;{}_\mathcal{S}\mathsf{Mod})\!\subset\!\mathsf{Sh}(X;{}_\mathcal{S}\mathsf{Mod})$
\end{small}}
\end{center}\vspace*{0.3cm}

\noindent We know that the first two categories are not abelian, while the last one still is. Since it turns out that the failure of $\mathsf{Sh}^\mathsf{lf}_n(X;{}_\mathcal{S}\mathsf{Mod})$ to be abelian lies in it not containing cokernels, the very definition of quasi-coherent sheaf obviates this issue in a ``minimal manner''. Therefore, $\mathsf{QCoh}(X;{}_\mathcal{S}\mathsf{Mod})$ can be interpreted as the smallest abelian subcategory of $\mathsf{Sh}(X;{}_\mathcal{S}\mathsf{Mod})$ which is locally defined and contains locally free sheaves (of any rank) and their cokernels; in the locally noetherian and finite rank framework, this minimality role is stolen by coherent sheaves.     
\vspace*{0.3cm}

\noindent In order to more formally prove the claimed abelianity of $\mathsf{QCoh}(X;{}_\mathcal{S}\mathsf{Mod})$ and $\mathsf{Coh}(X;{}_\mathcal{S}\mathsf{Mod})$, however, we rather give an alternative definition of quasi-/coherent sheaf specialized to \textit{schemes} --- introduced in the next section --- which are the correct generalization of our ultimate objects of interest, Calabi--Yau manifolds (as we will argue in Section \ref{ch6.5}).

\subsection{Interlude: a refresher of varieties and schemes}\label{ch5.5}

Before proceeding with the theory of sheaves, it is worth revisiting some classical algebraic geometry, in particular the notion of schemes. Accordingly, our focus decisively shifts to \cite[chapters I-II]{[Har77]}, where a more comprehensive account on varieties and schemes can be found.

\begin{Rem}\label{aboutvarieties}
There are several equivalent definitions of variety adopted in literature. Here we briefly outline Harthorne's conventions, so to line up with the later treatment of schemes. We work over a fixed algebraically closed field $\mathbb{K}$ and take $n>0$.

Endow the affine space $\mathbb{A}^n\equiv\mathbb{K}^n$ with the Zariski topology $\mathcal{T}_\text{Zar}$, whose open sets are the complements of the algebraic sets --- those subsets of $\mathbb{A}^n$ which can be written as the vanishing locus of some subset of polynomials $S\subset \mathbb{K}[x_1,...,x_n]$. Then an \textbf{affine variety}\index{variety!affine} is any subset $X\subset\mathbb{A}^n$ closed with respect to $\mathcal{T}_\text{Zar}$ which also is irreducible (not the union of two closed, proper subsets). Open subsets thereof are called \textit{quasi-affine varieties}. To such $X$ we associate the \textit{affine coordinate ring} $A(X)\coloneqq \mathbb{K}[x_1,...,x_n]/I(X)$, with the \textit{ideal of $X$} given by $I(X)\coloneqq \{f\in\mathbb{K}[x_1,...,x_n]\mid f(x)=0\;\forall x\in X\}$.

Similarly, endow the projective space $\mathbb{P}^n\coloneqq(\mathbb{K}^{n+1}\setminus\{0\})/_\sim\subset\mathbb{A}^{n+1}$ (where $x\sim y$ if and only if $y=\lambda x\in\mathbb{K}^{n+1}$ for some $0\neq \lambda\in\mathbb{K}$) with the Zariski topology $\mathcal{T}_\text{Zar}$, whose open sets are again the complements of algebraic sets, now defined as the vanishing loci in $\mathbb{P}^n$ of subsets of \textit{homogeneous} polynomials in $\mathbb{K}[x_0,x_1,...,x_n]$ (now regarded as a graded ring). Then a \textbf{projective variety}\index{variety!projective} is any irreducible closed subset $Y\subset\mathbb{P}^n$, and open subsets thereof are called \textit{quasi-projective varieties}. To $Y$ we associate the \textit{homogeneous coordinate ring} $S(Y)\coloneqq \mathbb{K}[x_0,x_1,...,x_n]/I(Y)$ (a graded ring), where the ideal of $Y$ now reads $I(Y)\coloneqq\langle\{f\in\mathbb{K}[x_0,x_1,...,x_n]\mid f\text{ homogeneous},\, f(x)=0\,\forall x\in Y\}\rangle$, the ideal generated by homogeneous polynomials.  
\end{Rem} 

We recall the concept of regular function $X\rightarrow\mathbb{K}$, which generalizes that of continuous function (retrieved once the target is regarded as $(\mathbb{A}^1,\mathcal{T}_\text{Zar})$).

\begin{Def}\label{regularfunc}
Let $X\subset\mathbb{A}^n$ be a (quasi-)affine variety, $U\subset X$ open. Then a function $\varphi:U\rightarrow \mathbb{K}$ is called \textbf{regular on $U$}\index{regular function} if for any $x\in U$ there exist an open $x$-neighbourhood $U_x\subset U$ and polynomial functions $f_x, g_x:X\rightarrow \mathbb{K}$ in $A(X)$ with $g_x|_{U_x}\neq 0$ everywhere such that $\varphi|_{U_x}=\frac{f_x}{g_x}\big|_{U_x}$.

Similarly, let $Y\subset\mathbb{P}^n$ be a (quasi-)projective variety, $U\subset Y$ open. Then a function $\varphi:U\rightarrow \mathbb{K}$ is regular on $U$ if for any $x\in U$ there exist an open $x$-neighbourhood $U_x\subset U$ and homogeneous polynomial functions $f_x, g_x:Y\rightarrow \mathbb{K}$ in $S(Y)_d$ (thus of \textit{same degree} $d\geq 0$!) with $g_x|_{U_x}\neq 0$ everywhere such that $\varphi|_{U_x}=\frac{f_x}{g_x}\big|_{U_x}$. 
\end{Def}

\begin{Def}
Let $\mathbb{K}$ be an algebraically closed field. Then we call \textbf{variety}\index{variety} any among quasi-/affine and quasi-/projective varieties over $\mathbb{A}\equiv\mathbb{K}$.\footnote{And we call some given variety quasi-/affine or quasi-/projective if it is isomorphic to a quasi-/affine respectively quasi-/projective variety.} Given any two varieties $X,Y$, a \textbf{morphism of varieties}\index{morphism!of varieties} $\varphi:X\rightarrow Y$ is a continuous map such that for every open $V\subset Y$ and any regular function $f:V\rightarrow\mathbb{K}$ also $f\circ\varphi:\varphi^{-1}(V)\rightarrow\mathbb{K}$ is regular. The notions of composition of compatible morphisms and isomorphisms are defined in the obvious manner.

We thus obtain the \textbf{category of varieties}\index{category!of varieties} $\mathsf{Var}(\mathbb{K})$ over $\mathbb{K}$.
\end{Def}

In fact, any variety can be interpreted as a ringed space:

\begin{Def}\label{sheafregfcts}
Let $X$ be a variety. For any open $U\subset X$ let
\begin{equation}
	\mathcal{O}_X(U)\coloneqq\{\varphi:U\rightarrow \mathbb{K} \mid \varphi\text{ regular function on $U$}\}\in\text{obj}(\mathbb{K}\text{-}\mathsf{Alg})\,.
\end{equation}
This defines the \textbf{sheaf of regular functions}\index{sheaf!of regular functions} $\mathcal{O}_X\in\text{obj}(\mathsf{Sh}(X;\mathbb{K}\text{-}\mathsf{Alg}))$ on $X$, a sheaf of $\mathbb{K}$-algebras endowed with the obvious restriction morphisms.

Correspondingly, the stalk $\mathcal{O}_{X,x}\in\text{obj}(\mathbb{K}\text{-}\mathsf{Alg})$ is called the \textit{local ring of regular functions at $x\in X$} (indeed a local ring, because possessing a unique maximal ideal, that consisting of all regular functions vanishing at $x$).
\end{Def}

Because $\mathbb{K}$-algebras are in particular rings, $\mathcal{O}_X$ is a legitimate choice of structure sheaf on $X$. In fact, it is taken to be the default sheaf equipping any variety regarded as a ringed space.

\begin{Ex}\label{sheafholofcts}
By Definition \ref{sheafregfcts}, any variety $X$ over $\mathbb{K}=\mathbb{C}$ is naturally equipped with the structure sheaf $\mathcal{O}_X$ given by the \textbf{sheaf of holomorphic functions}\index{sheaf!of holomorphic functions}. These are the complex-valued declination of regular functions, and correspond to the notion of holomorphic functions from complex geometry.  \hfill $\blacklozenge$
\end{Ex}

We mention that if $X$ is an affine variety, then $\mathcal{O}_X(X)\cong A(X)$, while if $X$ is projective, then $\mathcal{O}_X(X)\cong\mathbb{K}$. This fact is exploited in the proof of the following statement: if $X$ is any variety and $Y$ is affine, then there is a natural bijection $\text{Hom}_{\mathsf{Var}(\mathbb{K})}(X,Y)\leftrightarrow\text{Hom}_{\mathbb{K}\text{-}\mathsf{Alg}}(A(Y),\mathcal{O}_X(X))$.

A relevant property of varieties is \textit{smoothness}, which coincides with the standard definition when working over $\mathbb{C}$ equipped with the usual topology.

\begin{Def}\label{smoothvar}
Let $X\subset\mathbb{A}^n$ be an affine variety of dimension\footnote{The dimension of a quasi-/affine variety is that of the underlying topological space, determined by the longest chain of irreducible closed subsets. The same is true for schemes.} $r\leq n$, with ideal $I(X)=\langle\{f_1,...,f_m\}\rangle$ generated by finitely many polynomials $f_1,...,f_m\in\mathbb{K}[x_1,...,x_n]$. Then $X$ is smooth if the rank of the matrix $\Big(\frac{\partial f_i}{\partial x_j}\Big)_{\!i,j}$ is $n-r$ at each $x\in X$.

More abstractly, any variety $X$ is \textbf{smooth}\index{variety!smooth (or nonsingular)} (or \textit{nonsingular}) if for each $x\in X$ the local ring $\mathcal{O}_{X,x}$ is a \textit{regular} local ring, meaning that $\text{dim}_k(\mathfrak{m}/\mathfrak{m}^2)=\text{dim}_\text{Krull}(\mathcal{O}_{X,x})$, where $\mathfrak{m}\subset\mathcal{O}_{X,x}$ is the unique maximal ideal and $k=\mathcal{O}_{X,x}/\mathfrak{m}$ is the residue field. 
\end{Def}
\vspace*{0.5cm}

\noindent We move our attention to schemes, outlining first their fundamental building blocks. 

\begin{Rem}\label{spectra}
Remember the algebraic notion of \textbf{spectrum}\index{spectrum} $\text{Spec}(R)$ of a ring $R$: it is the set of all prime ideals of $R$, made into a topological space by imposing the sets $V(I)\coloneqq\{\mathfrak{p}\in\text{Spec}(R)\mid \mathfrak{p}\supset I\}$, for $I\subset R$ any ideal, to be the closed subsets.
Then we can promote $\text{Spec}(R)$ to a ringed space by equipping it with the sheaf $\mathcal{O}_{\text{Spec}(R)}\in\text{obj}(\mathsf{Sh}(\text{Spec}(R);\mathsf{Rings}))$ given by
\begin{small}\begin{align*}
		\mathcal{O}_{\text{Spec}(R)}(U)\!\coloneqq\!\bigg\{s\!:U\!\rightarrow\!\bigsqcup_{\mathfrak{p}\in U}\! R_\mathfrak{p}\Bigm|\, &\forall\mathfrak{p}\!\in\! U\!: s(\mathfrak{p})\!\in\! R_\mathfrak{p}\text{ and }\exists U_\mathfrak{p}\!\subset\! U\text{ $\mathfrak{p}$-neighb. }\exists r_1,r_2\!\in\! R \\[-0.3cm]
		&\text{s.t. }\!\forall\mathfrak{q}\!\in\! U_\mathfrak{p}\text{ holds } r_2\!\notin\!\mathfrak{q} \text{ and }
		s(\mathfrak{q})\!=\! \frac{r_1}{r_2}\!\in\! R_\mathfrak{q}\bigg\}\,,
\end{align*}\end{small}
\!\!for each open $U\subset \text{Spec}(R)$ (where $R_\mathfrak{p}$ is the localization of $R$ at the prime ideal $\mathfrak{p}$), with obvious restriction maps. In particular, $\Gamma(\text{Spec}(R),\mathcal{O}_{\text{Spec}(R)})\cong R$ and the stalk at any $\mathfrak{p}\in \text{Spec}(R)$ is $\mathcal{O}_{\text{Spec}(R),\mathfrak{p}}\cong R_\mathfrak{p}$.
\end{Rem}

Now, to define schemes, we must first localize the concept of ringed space of Definition \ref{ringedspace}:

\begin{Def}\label{locallyringedspace}
A ringed space $(X,\mathcal{S}_X)$ is a \textbf{locally ringed space}\index{locally ringed space} if the stalk $\mathcal{S}_{X,x}\in\text{obj}(\mathsf{Rings})$ is a \textit{local} ring for each $x\in X$.

A \textbf{morphism of locally ringed spaces}\index{morphism!of locally ringed spaces} $(f,f^\#):(X,\mathcal{S}_X)\rightarrow (Y,\mathcal{S}_Y)$ is a morphism of the underlying ringed spaces such that the maps $f^\#_x:\mathcal{S}_{Y,f(x)}\rightarrow \mathcal{S}_{X,x}$ obtained by taking the direct limit over all $V\ni f(x)$ in $Y$ are \textit{local morphisms} of local rings.\footnote{That is, the preimage under $f^\#_x$ of the unique maximal ideal of $\mathcal{S}_{X,x}$ is the unique maximal ideal of $\mathcal{S}_{Y,f(x)}$.} It is moreover an isomorphism if and only if $f:X\rightarrow Y$ is a homeomorphism and $f^\#$ is an isomorphism of sheaves. 
\end{Def}

Since $(\text{Spec}(R),\mathcal{O}_{\text{Spec}(R)})$ is a locally ringed space for any ring $R$ (a consequence of $\mathcal{O}_{\text{Spec}(R),\mathfrak{p}}\cong R_\mathfrak{p}$), and since any ring morphism $r:R\rightarrow S$ naturally induces a morphism $(f,f^\#):(\text{Spec}(S),\mathcal{O}_{\text{Spec}(S)})\rightarrow (\text{Spec}(R),\mathcal{O}_{\text{Spec}(R)})$ of\break locally ringed spaces (cf. \cite[Proposition II.2.3]{[Har77]}), that of taking the spectrum turns out to be a functorial operation.

\begin{Def}
An \textbf{affine scheme}\index{scheme!affine} is a locally ringed space $(X,\mathcal{O}_X)$ which is isomorphic to some spectrum $(\text{Spec}(R),\mathcal{O}_{\text{Spec}(R)})$ (as locally ringed spaces). By abuse of notation, we will often denote it as the latter.

A \textbf{scheme}\index{scheme} is a locally ringed space $(X,\mathcal{O}_X)$ which admits an open cover by affine schemes; equivalently, for each $x\in X$ there exists an $x$-neighbourhood $U_x\subset X$ such that $(U_x,\mathcal{O}_X|_{U_x})$ is an affine scheme. Thus, the stalk at $x\in X$ is the local ring $\mathcal{O}_{X,x}\cong R_\mathfrak{p}$ for a suitable ring $R$ and prime ideal $\mathfrak{p}\in\text{Spec}(R)$.

A \textbf{morphism of schemes}\index{morphism of schemes} is simply a morphism of the underlying locally ringed spaces (and similarly for isomorphisms).
\end{Def} 

For example, $\text{Spec}(\mathbb{K})$ is an affine scheme whose underlying topological space is the one-point space $\{*\}$ and whose structure sheaf is $\mathbb{K}$ itself. 

\begin{Rem}\label{projspectra}
We can also work on graded rings. Let $S$ be one such. Then the \textbf{projective spectrum}\index{spectrum!projective} $\text{Proj}(S)$ is defined to be the set of all homogeneous prime ideals which do not contain the whole ideal $\bigoplus_{n>0}S_n\subset S$. Similarly to how done in Remark \ref{spectra}, $\text{Proj}(S)$ can be made into a topological space equipped with a suitable sheaf of rings $\mathcal{O}_{\text{Proj}(S)}$. In fact, $(\text{Proj}(S),\mathcal{O}_{\text{Proj}(S)})$ becomes a scheme (see \cite[Proposition II.2.5]{[Har77]} and the preceding paragraph for more details). Given any ring $R$, a prototypical example is
\[
P_R^n\coloneqq \text{Proj}(R[x_0,...,x_n])\cong P_\mathbb{Z}^n\times\text{Spec}(R)\equiv P_\mathbb{Z}^n\times_{\text{Spec}(\mathbb{Z})}\text{Spec}(R)\,,
\]
the \textit{projective $n$-space over $R$} (the product on the right-hand side is explained in Remark \ref{varschproductremark}). Choosing $R=\mathbb{K}$ to be an algebraically closed field, the subspace of closed points of $P_\mathbb{K}^n$ is naturally isomorphic to the projective space $\mathbb{P}^n$ over $\mathbb{K}$, the variety from Remark \ref{aboutvarieties}.
\end{Rem}	

As one would imagine, schemes form a category, but we are rather interested in the following richer notion.

\begin{Def}
Let $(S,\mathcal{O}_S)$ be some fixed scheme, which we call the \textbf{base scheme}\index{scheme!base}. A \textbf{scheme over $S$}\index{scheme!over a base} is a scheme $(X,\mathcal{O}_X)$ together with a morphism of schemes $\varphi_X:X\rightarrow S$. Given another scheme $(Y,\mathcal{O}_Y)$ over $S$, an \textbf{$S$-morphism}\index{morphism!over a base scheme} from $X$ to $Y$ is a scheme morphism $\chi:X\rightarrow Y$ such that $\varphi_X=\varphi_Y\circ\chi$ (also an $S$-isomorphism when $\chi$ is an isomorphism).

Defining the composition of $S$-morphisms so to respect this commutation law, we obtain $\mathsf{Sch}(S)$, the \textbf{category of schemes over $S$}\index{category!of schemes over a base} (and simply write $\mathsf{Sch}(R)\equiv\mathsf{Sch}(\text{Spec}(R))$ for any ring $R$ when no confusion may arise).

In particular, $\mathsf{Sch}(\mathbb{K})$ denotes the category of \textbf{schemes over a field}\index{scheme!over a field} $\mathbb{K}$ (any). We call the morphism $\varphi_X:X\rightarrow\text{Spec}(\mathbb{K})$ attached to any $X\in\text{obj}(\mathsf{Sch}(\mathbb{K}))$ its \textit{structure morphism}. 
\end{Def}

Here is a first important result, proved in \cite[Proposition II.2.6]{[Har77]}, which justifies why we can focus on schemes without loss of generality:

\begin{Pro}\label{XiPro}
Let $\mathbb{K}$ be an algebraically closed field. Then there exists a natural, fully faithful functor
\begin{equation}\label{Xi}
	\Xi:\mathsf{Var}(\mathbb{K})\rightarrow\mathsf{Sch}(\mathbb{K})\,.
\end{equation}
Moreover, for any variety $X\in\textup{obj}(\mathsf{Var}(\mathbb{K}))$ there is a homeomorphism $f$ from its underlying topological space to that formed by closed points of $\Xi(X)$, and its structure sheaf $\mathcal{O}_X$ \textup(the sheaf of regular functions on $X$\textup) is obtained by restriction of $\mathcal{O}_{\Xi(X)}$ through $f$.
\end{Pro}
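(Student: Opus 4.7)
The plan is to build $\Xi$ first on affine varieties, then globalize via an open-cover argument. Given an affine variety $X \subset \mathbb{A}^n$ with coordinate ring $A(X)$, the natural candidate is $\Xi(X) \coloneqq \mathrm{Spec}(A(X))$, made into a scheme over $\mathbb{K}$ by the structure morphism $\mathrm{Spec}(A(X)) \to \mathrm{Spec}(\mathbb{K})$ induced by $\mathbb{K} \hookrightarrow A(X)$. For projective $Y \subset \mathbb{P}^n$, one takes instead $\Xi(Y) \coloneqq \mathrm{Proj}(S(Y))$, again a $\mathbb{K}$-scheme via Remark \ref{projspectra}. To extend to quasi-affine/quasi-projective varieties $U \subset X$, I would cover $U$ by distinguished open affines (e.g. principal opens $D(f) \subset X$ for $f \in A(X)$), apply the affine construction on each, and glue the resulting affine schemes along the overlaps — the gluing is canonical because the restriction maps of the sheaf of regular functions match, on principal opens, the localizations entering the definition of $\mathcal{O}_{\mathrm{Spec}(A(X))}$ in Remark \ref{spectra}.

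Next, the action on morphisms: given $\varphi \in \mathrm{Hom}_{\mathsf{Var}(\mathbb{K})}(X,Y)$, one produces $\Xi(\varphi) \in \mathrm{Hom}_{\mathsf{Sch}(\mathbb{K})}(\Xi(X),\Xi(Y))$ by pulling back regular functions. Concretely, for $X, Y$ affine, the $\mathbb{K}$-algebra morphism $\varphi^* \colon A(Y) \to A(X)$, $g \mapsto g \circ \varphi$, induces by functoriality of $\mathrm{Spec}$ the desired morphism of locally ringed spaces over $\mathrm{Spec}(\mathbb{K})$. For full faithfulness in this affine case, I would invoke the bijection $\mathrm{Hom}_{\mathsf{Var}(\mathbb{K})}(X,Y) \leftrightarrow \mathrm{Hom}_{\mathbb{K}\text{-}\mathsf{Alg}}(A(Y),A(X))$ already mentioned in the text, together with the dual bijection $\mathrm{Hom}_{\mathsf{Sch}(\mathbb{K})}(\mathrm{Spec}(A(X)),\mathrm{Spec}(A(Y))) \leftrightarrow \mathrm{Hom}_{\mathbb{K}\text{-}\mathsf{Alg}}(A(Y), \Gamma(\mathrm{Spec}(A(X)), \mathcal{O})) = \mathrm{Hom}_{\mathbb{K}\text{-}\mathsf{Alg}}(A(Y), A(X))$. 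The non-affine case then follows by a standard local-to-global argument: a scheme morphism is determined by its restrictions to an affine cover of the source, and morphisms of varieties glue in the same way.

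For the topological-space statement, I would use Hilbert's Nullstellensatz (which is the essential role of $\mathbb{K}$ being algebraically closed): the closed points of $\mathrm{Spec}(A(X))$ are in bijection with the maximal ideals of $A(X)$, and Nullstellensatz identifies these with points of $X \subset \mathbb{A}^n$. The resulting map $f \colon X \to \{\text{closed points of } \Xi(X)\}$ sends $x$ to $\mathfrak{m}_x \coloneqq \{g \in A(X) \mid g(x) = 0\}$; it is a homeomorphism because the closed sets $V(I) \cap \{\text{closed points}\}$ correspond precisely to zero loci of ideals $I \subset A(X)$, i.e. Zariski-closed subsets of $X$. For the projective case, the analogous identification uses the bijection between closed points of $\mathrm{Proj}(S(Y))$ not containing the irrelevant ideal and points of $Y$. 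Finally, on the sheaf side, compatibility $\mathcal{O}_X \cong f^{-1}(\mathcal{O}_{\Xi(X)}|_{\text{closed points}})$ reduces to the classical identification of regular functions on a distinguished open $D(g) \subset X$ with $A(X)_g$, which is exactly the ring of sections of $\mathcal{O}_{\mathrm{Spec}(A(X))}$ on $D(g)$.

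The main obstacle will be faithfulness and fullness in the quasi-projective/non-affine setting: one must carefully verify that a scheme $\mathbb{K}$-morphism $\Xi(X) \to \Xi(Y)$ actually restricts to a continuous map between the subspaces of closed points that is moreover a morphism of varieties, i.e.\ pulls back regular functions to regular functions. This requires checking that the image of a closed point is a closed point (which uses that the structure morphisms over $\mathrm{Spec}(\mathbb{K})$ force residue fields to be $\mathbb{K}$), and then transporting regularity through the local identification of $\mathcal{O}_{\Xi(X)}$-sections with regular functions. Once this is in place, the proposition assembles cleanly from the affine building blocks.
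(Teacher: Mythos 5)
Your construction is correct, but note that the paper itself does not prove this proposition: it defers entirely to \cite[Proposition II.2.6]{[Har77]}. So the relevant comparison is with Hartshorne's argument, and there your route is genuinely different in organization. Hartshorne defines $\Xi(X)=t(X)$ intrinsically as the set of nonempty irreducible closed subsets of $X$ (with closed sets $t(Y)$ for $Y\subset X$ closed), equips it with the pushforward $\alpha_*\mathcal{O}_X$ along the canonical map $\alpha:X\rightarrow t(X)$, $P\mapsto\overline{\{P\}}$, and then only \emph{verifies locally} that the result is a scheme by identifying $(t(V),\alpha_*\mathcal{O}_V)\cong\text{Spec}(A(V))$ for $V$ affine via the Nullstellensatz. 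You instead \emph{build} the scheme as $\text{Spec}(A(X))$, resp.\ $\text{Proj}(S(Y))$, and glue over a cover in the quasi-affine/quasi-projective cases. Both work, and your identification of the closed points, of the sheaf on distinguished opens $D(g)\leftrightarrow A(X)_g$, and of full faithfulness via the two adjunctions $\text{Hom}_{\mathsf{Var}(\mathbb{K})}(X,Y)\leftrightarrow\text{Hom}_{\mathbb{K}\text{-}\mathsf{Alg}}(A(Y),\mathcal{O}_X(X))$ and $\text{Hom}_{\mathsf{Sch}(\mathbb{K})}(Z,\text{Spec}(A))\leftrightarrow\text{Hom}_{\mathbb{K}\text{-}\mathsf{Alg}}(A,\Gamma(Z,\mathcal{O}_Z))$ is exactly the right content; likewise your residue-field argument for why $\mathbb{K}$-morphisms of these finite-type schemes preserve closed points.

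What Hartshorne's intrinsic construction buys, and what your version leaves as genuine (if routine) extra work, is \emph{naturality}: since $t(X)$ and $\alpha_*\mathcal{O}_X$ involve no choices, functoriality and independence of everything from an auxiliary cover are automatic, and only the local scheme-verification remains. In your setup you must check that the glued scheme is independent (up to canonical isomorphism) of the chosen affine cover, that the affine pieces of $\text{Proj}(S(Y))$ over $D_+(x_i)$ agree with $\text{Spec}$ of the affine coordinate rings of $Y\cap U_i$, and that $\Xi(\varphi)$ defined chart-by-chart glues and is independent of the charts. None of this fails, but it is precisely the bookkeeping your last paragraph waves at, and it is worth writing out once if you want the word ``natural'' in the statement to be earned rather than asserted.
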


We stress out that $\Xi$ is far from being an isomorphism of categories: varieties are not synonym of schemes! At the very best, we can restrict the target space to a suitable subcategory of $\mathsf{Sch}(\mathbb{K})$ to promote $\Xi$ to an equivalence. And we will do so, once we have an understanding of all the features decorating the image of any variety under $\Xi$. Anyway, full faithfulness already guarantees to us that when dealing with a \textit{morphism} between varieties we might as well regard it as a morphism of the associated schemes!

Here comes the next storm of definitions, hopefully providing the reader with a satisfactory vocabulary, which will be appealed to times and again when discussing future statements about derived categories. We refer to \cite[sections II.3--II.4]{[Har77]}.

\begin{Def}
A scheme $(X,\mathcal{O}_X)$ is:
\begin{itemize}[leftmargin=0.5cm]
	\item \textbf{integral}\index{scheme!integral} if $\mathcal{O}_X(U)$ is an integral domain for every open $U\subset X$, possible if and only if $X$ is reduced (each $\mathcal{O}_X(U)$ has no nilpotent elements) and irreducible (as a topological space);
	
	\item \textbf{locally noetherian}\index{scheme!locally noetherian} if it can be covered by affine schemes $\text{Spec}(R_i)$ associated to noetherian rings $R_i$ (rings where the ascending chain condition for ideals holds);
	
	\item \textbf{noetherian}\index{scheme!noetherian} if it is locally noetherian and compact, so that it can be covered by \textit{finitely} many affine schemes of noetherian rings as above (in turn implying that $X$ is a noetherian topological space); in particular, $\text{Spec}(R)$ is noetherian if and only if $R$ is.		
\end{itemize}
\noindent A morphism of schemes $f:X\rightarrow Y$ is:
\begin{itemize}[leftmargin=0.5cm]
	\item \textbf{compact}\index{morphism of schemes!compact} if $Y$ can be covered by affine schemes $\text{Spec}(S_i)\subset Y$ such that each $f^{-1}(\text{Spec}(S_i))\subset X$ is compact; 
	
	\item \textbf{locally of finite type}\index{morphism of schemes!locally of finite type} if $Y$ can be covered by affine schemes $\text{Spec}(S_i)\subset Y$ such that for each $i$ the subset $f^{-1}(\text{Spec}(S_i))\subset X$ admits an open cover by affine schemes $\text{Spec}(R_{ij})\subset X$ where each $R_{ij}$ is a finitely-generated $S_i$-algebra;
	
	\item \textbf{of finite type}\index{morphism of schemes!of finite type} if every such cover of each $f^{-1}(\text{Spec}(S_i))$ by affines $\text{Spec}(R_{ij})$ is finite, thus possible if and only if $f$ is locally of finite type and compact;
	
	\item \textbf{finite}\index{morphism of schemes!finite} if each $f^{-1}(\text{Spec}(S_i))=\text{Spec}(R_i)\subset X$ is readily affine, where $R_i$ is an $S_i$-algebra and a finitely-generated $S_i$-module (then $f$ is proper too, the converse being true only for affine varieties over $\mathbb{K}$);
	
	\item \textbf{separated}\index{morphism of schemes!separated} if the diagonal morphism $D:X\rightarrow X\times_Y X$ (making the analogue of diagram \eqref{varschproduct} below commute) is a closed immersion, so that we call $X$ \textbf{separated over $Y$}\index{scheme!separated over a base}, or simply \textit{separated} in case $Y=\text{Spec}(\mathbb{Z})$ (note: $f$ is always separated if both $X$ and $Y$ are affine or noetherian, like $\text{Spec}(\mathbb{K})$); 
	
	\item \textbf{proper}\index{morphism of schemes!proper} if it is separated, of finite type, and universally closed (that is, $f$ maps closed subsets of $X$ to closed ones in $Y$, and so does any morphism $X\times_Y Y'\rightarrow Y'$ induced by any base extension $Y'\rightarrow Y$), so that we call $X$ \textbf{proper over $Y$}\index{scheme!proper over a base}, or simply \textit{proper} in case $Y=\text{Spec}(\mathbb{Z})$.
\end{itemize}
We also assign the above attributes to $X$ itself in case $Y=\text{Spec}(\mathbb{K})$ and the structure morphism $f:X\rightarrow\text{Spec}(\mathbb{K})$ satisfies them (for example, we say that $X$ is separated or proper over $\mathbb{K}$).
\end{Def}

\begin{Rem}\label{varschproductremark}
Separatedness and properness of a morphism of schemes involve the notion of fibred products: the \textbf{fibred product}\index{fibred product} of $X,Y\in\text{obj}(\mathsf{Sch}(S))$ over some base scheme $S$ is the scheme $X\times_S Y\in\text{obj}(\mathsf{Sch}(S))$ together with the projection $S$-morphisms $p_X:X\times_S Y\rightarrow X$ and $p_Y:X\times_S Y\rightarrow Y$ fitting into the universal diagram:
\begin{equation}\label{varschproduct}
	\begin{tikzcd}
		Z\arrow[dr, dashed, "\psi"']\arrow[ddr, out=270, in=135, "\psi_X"']\arrow[drr, out=0, in=150, "\psi_Y"] & &\\
		& X\times_S Y\arrow[d, "p_X"]\arrow[r, "p_Y"'] & Y\arrow[d, "\varphi_Y"] \\
		& X\arrow[r, "\varphi_X"'] & S \\
	\end{tikzcd}\quad.\vspace*{-0.3cm}
\end{equation}
Spelled out explicitly, for each $Z\in\text{obj}(\mathsf{Sch}(S))$ and $S$-morphisms $\psi_X:Z\rightarrow X$ and $\psi_Y:Z\rightarrow Y$, there exists a further $S$-morphism $\psi:Z\rightarrow X\times_S Y$ (unique up to unique isomorphism) making the diagram commute. If $S=\text{Spec}(\mathbb{Z})$, we just speak of the product $X\times Y$. In particular, given another base scheme $S'$ with morphism $S'\rightarrow S$ (a \textbf{base extension}\index{base extension}), we can form a new fibred product $X\times_S S'\in\text{obj}(\mathsf{Sch}(S'))$.

As a bonus fact, we mention that \eqref{Xi} behaves nicely with respect to products: the product $X\times Y\in\text{obj}(\mathsf{Var(\mathbb{K})})$ of any two varieties, defined through an analogous universal property, is mapped to the fibred product $\Xi(X)\times_{\text{Spec}(\mathbb{K})} \Xi(Y)$. This should come as no surprise, since these operations indeed coincide with the respective categorical direct products. 
\end{Rem}

Noetherianity of the interested schemes is sufficient to conclude that any morphism between them is separated, but for properness something stronger is needed.

\begin{Def}
Given any scheme $(Y,\mathcal{O}_Y)$, the \textbf{projective $n$-space over $Y$}\index{projective space over a scheme} is $P_Y^n\coloneqq P_\mathbb{Z}^n\times Y\equiv P_\mathbb{Z}^n\times_{\text{Spec}(\mathbb{Z})} Y$. Then a scheme morphism $f:X\rightarrow Y$ is:
\begin{itemize}[leftmargin=0.5cm]
	\item \textbf{projective}\index{morphism of schemes!projective} if there exists some $n>0$ such that $f$ factors as $f=p\circ i$, where $i:X\rightarrow P_Y^n$ is a closed immersion and $p:P_Y^n\rightarrow Y$ the projection onto $Y$; 
	
	\item \textbf{quasi-projective}\index{morphism of schemes!quasi-projective} if there exists some scheme $X'$ such that $f=p\circ i$, where $i:X\rightarrow X'$ is an open immersion and $p:X'\rightarrow Y$ a projection.
\end{itemize}
In case $Y=\text{Spec}(\mathbb{K})$, we say that $X$ itself is a \textbf{quasi-/projective scheme}\index{scheme!quasi-/projective} (over $\mathbb{K}$).
\end{Def}

\begin{Pro}\label{whenproper}
The following properties about projective schemes and projective morphisms hold.
\begin{itemize}[leftmargin=0.5cm]
	\item Let $f:X\rightarrow Y$ be a morphism of noetherian schemes. Then if $f$ is projective, it is proper, while if $f$ is merely quasi-projective, it is separated and of finite type.
	
	\item \textup{(Chow's Lemma)} Let $S$ be a noetherian scheme and let $X\in\textup{obj}(\mathsf{Sch}(S))$ be proper. Then there exist a projective $X'\in\textup{obj}(\mathsf{Sch}(S))$ and an $S$-morphism $X'\rightarrow X$.
	
	\item Let $S$ be a noetherian scheme. Then $X\in\textup{obj}(\mathsf{Sch}(S))$ is projective if and only if it is proper and there exists a very ample sheaf on $X$ relative to $S$.\footnote{See \cite[sections II.5, II.7]{[Har77]} for the definition of \textit{very ample} and \textit{ample} invertible sheaf.} 
\end{itemize}
\end{Pro}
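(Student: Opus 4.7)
The three statements are standard consequences of the interaction between the valuative criterion of properness, closed/open immersions, and the theory of (very) ample line bundles, so the plan is to reduce each to a single key fact and then argue by composition and base change.

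For the first bullet, I would proceed by composition. A closed immersion is always proper (it is separated, of finite type, and universally closed by elementary arguments), while an open immersion is separated and locally of finite type. Composition of proper morphisms is proper, and composition of separated morphisms of finite type preserves those properties, so everything reduces to showing that the structural projection $p:P_Y^n = P_{\mathbb{Z}}^n \times Y \to Y$ is proper. By base change from the square
\begin{equation*}
\begin{tikzcd}
P_Y^n \arrow[r] \arrow[d, "p"'] & P_{\mathbb{Z}}^n \arrow[d] \\
Y \arrow[r] & \text{Spec}(\mathbb{Z})
\end{tikzcd}
\end{equation*}
it suffices to show $P_{\mathbb{Z}}^n \to \text{Spec}(\mathbb{Z})$ is proper; separatedness and finite type are checked on the standard affine cover by $\text{Spec}(\mathbb{Z}[x_0/x_i,\ldots,x_n/x_i])$, and universal closedness is verified via the valuative criterion: given a discrete valuation ring $R$ with fraction field $K$ and a point $\text{Spec}(K)\to P_{\mathbb{Z}}^n$ given by a tuple $[a_0:\cdots:a_n]$ with $a_i\in K$, one clears denominators by dividing through by the $a_i$ of minimal valuation to obtain the unique extension $\text{Spec}(R)\to P_{\mathbb{Z}}^n$.

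For Chow's Lemma, the approach I would take follows Grothendieck's classical argument. First reduce to the case where $X$ is irreducible (handle the finitely many noetherian components separately and take a disjoint union at the end). Cover $X$ by finitely many affine opens $\{U_i\}_{i=1}^r$, each quasi-projective over $S$, so that each $U_i$ admits a locally closed immersion into some $P_S^{n_i}$. Let $U \coloneqq \bigcap_i U_i$, consider the locally closed immersion
\begin{equation*}
j:U \longhookrightarrow X \times_S P_S^{n_1} \times_S \cdots \times_S P_S^{n_r},
\end{equation*}
and let $X'$ be the scheme-theoretic closure of $j(U)$. The first projection gives an $S$-morphism $X'\to X$ which is proper (since $X'$ is closed in an $X$-scheme that is projective over $X$ by the first bullet) and birational onto $X$, while the remaining projection realises $X'$ as a closed subscheme of $X\times_S P_S^N \subset P_X^N$. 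Properness of $X$ over $S$ then allows one to descend the resulting immersion $X' \to P_S^N$ from being a locally closed immersion to a closed one. This gluing/closure step is the technical heart and the main obstacle; the noetherian hypothesis on $S$ is what keeps the indexing finite and guarantees the scheme-theoretic closure behaves well.

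For the third bullet, one direction is immediate: if $X$ is projective over $S$ via $i:X\hookrightarrow P_S^n$ followed by the projection, then $X$ is proper by the first bullet, and $i^*\mathcal{O}_{P_S^n}(1)$ is very ample on $X$ relative to $S$ by definition. Conversely, suppose $X$ is proper over $S$ and $\mathcal{L}$ is very ample on $X$ relative to $S$. By definition of very ampleness, $\mathcal{L}$ induces an immersion $i:X \hookrightarrow P_S^n$ for some $n$. Now I would invoke the general fact that an immersion from a scheme proper over $S$ into a scheme separated over $S$ is automatically a closed immersion: factor $i = \bar{\imath}\circ j$, where $j$ is an open immersion into the scheme-theoretic closure $\bar{X}$ of $i(X)$ in $P_S^n$ and $\bar{\imath}:\bar{X}\hookrightarrow P_S^n$ is closed; since $X$ is proper over $S$ and $\bar{X}$ is separated over $S$, the morphism $j$ is proper, hence its image is closed in $\bar{X}$, but it is also open and dense, forcing $j$ to be an isomorphism. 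Therefore $i$ is a closed immersion and $X$ is projective over $S$, completing the argument.
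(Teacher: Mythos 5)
The paper does not actually prove this proposition: it is stated as background and implicitly delegated to Hartshorne (Theorem II.4.9, Exercise II.4.10, and Remark II.5.16.1 are the relevant references), so there is no in-text argument to compare yours against. That said, your sketch is the standard and correct route for all three bullets: reduction of properness of a projective morphism to properness of $P^n_{\mathbb{Z}}\rightarrow\textup{Spec}(\mathbb{Z})$ via base change and the valuative criterion; the Grothendieck closure construction for Chow's Lemma; and the ``immersion from a proper scheme into a separated scheme is closed'' argument for the third bullet. Two small points deserve care. In the Chow's Lemma step, the birationality of $X'\rightarrow X$ requires that $U=\bigcap_i U_i$ be dense, which is exactly what the reduction to irreducible $X$ buys you, and after reassembling the irreducible components you should note that a finite disjoint union of projective $S$-schemes is again projective (e.g. via $P^n\sqcup P^m\hookrightarrow P^{n+m+1}$) — as stated in the paper the conclusion is weaker than the usual Chow's Lemma (no surjectivity or birationality is claimed), so your construction in fact proves more than is asked. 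In the third bullet, when you conclude that $j$ is an isomorphism, the image $j(X)$ is open, closed and dense in $\bar{X}$, hence all of $\bar{X}$ topologically; that it is an isomorphism of schemes then uses that $j$ is an immersion and that $\bar{X}$ carries the scheme-theoretic closure structure, which is fine but worth saying. No genuine gaps.
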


We are finally ready to specialize Proposition \ref{XiPro} as explained in \cite[Proposition II.4.10, Example II.3.2.1]{[Har77]}.

\begin{Thm}\label{imageunderXi}
Let $\mathbb{K}$ be an algebraically closed field. Then the image of \eqref{Xi} is the set of quasi-projective integral \textup(noetherian\textup) schemes over $\mathbb{K}$, so that
\begin{equation}
	\mathsf{Var}(\mathbb{K})\rightarrow\{X\!\in\!\textup{obj}(\mathsf{Sch}(\mathbb{K}))\mid X \text{ quasi-proj., integral}\}
\end{equation}
is an equivalence. Specifically,
\begin{equation}
	\mkern-12mu\Xi:\{X\!\in\!\textup{obj}(\mathsf{Var}(\mathbb{K})) \mid X\text{ proj.}\}\mapsto\{X\!\in\!\textup{obj}(\mathsf{Sch}(\mathbb{K})) \mid X\text{ proj., integral}\}
\end{equation}
\textup(namely, a projective variety $X$ is sent to the projective spectrum $\textup{Proj}(S(X))$ of its homogeneous coordinate ring, by \textup{\cite[Exercise II.2.14]{[Har77]}}\textup). 

In particular, by Proposition \ref{whenproper}, $\Xi(X)$ is an integral, noetherian, separated scheme of finite type over $\mathbb{K}$ for any $X\in\textup{obj}(\mathsf{Var}(\mathbb{K}))$.
\end{Thm}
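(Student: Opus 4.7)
Since we already know from Proposition \ref{XiPro} that $\Xi$ is fully faithful, proving the equivalence amounts to showing essential surjectivity onto the full subcategory of quasi-projective integral schemes over $\mathbb{K}$, plus identifying the image. The plan splits into three blocks: (a) check that $\Xi(X)$ is quasi-projective integral for every $X\in\textup{obj}(\mathsf{Var}(\mathbb{K}))$; (b) construct a quasi-inverse $t$ that sends a quasi-projective integral scheme back to a variety; (c) refine the correspondence on the projective layer, verifying the explicit formula $\Xi(X)=\textup{Proj}(S(X))$ for projective $X$.

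For (a), I would first settle the projective case: given $X\subset\mathbb{P}^n$ a projective variety, its homogeneous coordinate ring $S(X)=\mathbb{K}[x_0,\dots,x_n]/I(X)$ is a finitely-generated graded $\mathbb{K}$-algebra with $I(X)$ a homogeneous prime ideal (irreducibility of $X$), hence an integral domain, so $\textup{Proj}(S(X))$ from Remark \ref{projspectra} is an integral scheme endowed with a closed immersion into $P_\mathbb{K}^n$, whence projective over $\mathbb{K}$. One then verifies that this is indeed $\Xi(X)$ by matching closed points with $X$ via Proposition \ref{XiPro}. For a generic (quasi-projective) variety $X$, write $X$ as an open subset of some projective variety $\overline{X}$; using that $\Xi$ preserves open immersions (which follows from the sheaf-theoretic description of $\Xi$ in Proposition \ref{XiPro}), one concludes that $\Xi(X)$ is an open subscheme of a projective integral scheme, hence quasi-projective and integral.

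For (b), the quasi-inverse $t$ is built by taking closed points: given $Y$ quasi-projective integral over $\mathbb{K}$, let $t(Y)$ be the set of closed points with the subspace topology, and endow it with the restriction of $\mathcal{O}_Y$. The closed points of $P_\mathbb{K}^n$ are canonically the points of $\mathbb{P}^n$ (by the Nullstellensatz, since $\mathbb{K}$ is algebraically closed), and via the quasi-projective embedding $Y\hookrightarrow P_\mathbb{K}^n$ the set $t(Y)$ inherits the structure of a locally closed irreducible subset of $\mathbb{P}^n$, i.e.\ a variety in the sense of Remark \ref{aboutvarieties}. One checks that on closed affine charts, $\Gamma(U,\mathcal{O}_Y)$ restricts to the ring of regular functions on the corresponding affine piece of $t(Y)$, so that the structure sheaves match under the homeomorphism. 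A morphism argument, again using closed points, extends $t$ to a functor and gives the natural isomorphisms $t\circ\Xi\cong\mathsf{Id}$ and $\Xi\circ t\cong\mathsf{Id}$.

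For (c), restricting the equivalence from (b) to the projective layer, one uses Proposition \ref{whenproper}: a projective variety $X$ is sent by $\Xi$ to a proper integral scheme carrying a very ample line bundle (inherited from the embedding $X\hookrightarrow \mathbb{P}^n$), hence projective; conversely, a projective integral scheme $Y$ over $\mathbb{K}$ is closed in some $P_\mathbb{K}^n$, and applying $t$ produces a projective variety whose homogeneous coordinate ring reconstructs $Y$ as $\textup{Proj}$. The noetherianity, separatedness and finite-type conclusions at the end of the statement follow because these properties pass to closed subschemes of $P_\mathbb{K}^n$ (which is itself noetherian, separated and of finite type over $\mathbb{K}$) and are preserved under the open immersion in the quasi-projective case.

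The main obstacle I expect is step (b): making precise the passage from the scheme $Y$ to its subset of closed points, and in particular proving that the sheaf $\mathcal{O}_Y$ restricted to $t(Y)$ coincides globally (not only on a chosen affine cover) with the sheaf of regular functions in the sense of Definition \ref{regularfunc}. This involves checking compatibility on overlaps of affine charts and leveraging that $\mathbb{K}$ algebraically closed identifies residue fields at closed points with $\mathbb{K}$ itself, so that evaluation of sections makes sense as a $\mathbb{K}$-valued function. Everything else is either formal (full faithfulness from Proposition \ref{XiPro}), or a direct appeal to Chow's Lemma and Proposition \ref{whenproper} for the projective/quasi-projective dichotomy.
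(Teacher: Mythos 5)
Your proposal is correct and follows essentially the same route as the proof the paper delegates to \cite[Proposition II.4.10, Exercise II.2.14]{[Har77]}: full faithfulness from Proposition \ref{XiPro}, the identification $\Xi(X)\cong\textup{Proj}(S(X))$ for projective $X$ with the quasi-projective case handled by open immersions, and a quasi-inverse built from the subspace of closed points (using the Nullstellensatz and density of closed points in a finite-type scheme over $\mathbb{K}$). You have also correctly isolated the one genuinely delicate point, namely that integrality (reducedness) is what makes the map from sections of $\mathcal{O}_Y$ to $\mathbb{K}$-valued functions on closed points injective, so that the structure sheaf can be identified with the sheaf of regular functions of Definition \ref{regularfunc}.
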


This boosts the following definition, which is often taken in literature as the starting one of variety (possibly with some modifications, for example dropping noetherianity).

\begin{Def}\label{abstractvar}
An \textbf{abstract variety}\index{variety!abstract} is an integral, noetherian, separated scheme of finite type over an algebraically closed field $\mathbb{K}$.\footnote{To simplify the language, note that some literature calls varieties as quasi-projective varieties (which makes sense in light of Theorem \ref{imageunderXi}), while abstract varieties are just dubbed varieties.} If moreover proper over $\mathbb{K}$ (by Proposition \ref{whenproper}, the case when it is also projective), it is said to be \textbf{complete}\index{variety!complete}.
\end{Def}

We also explain when schemes over $\mathbb{K}$ are smooth, and when this is compatible with our definition of smoothness for varieties.

\begin{Def}\label{smoothsch}
A scheme $(X,\mathcal{O}_X)$ is \textbf{smooth over}\index{scheme!smooth over a field} $\mathbb{K}$ if and only if it is regular, meaning that all its local rings $\mathcal{O}_{X,x}$ ($\cong R_\mathfrak{p}$ for some ring $R$ and prime ideal $\mathfrak{p}\in\text{Spec}(R)$) are regular local rings (cf. Definition \ref{smoothvar}).

In particular, if $X$ is irreducible and separated over $\mathbb{K}$, then $X$ is smooth as a scheme if and only if it is smooth as a variety.
\end{Def}

Let us summarize with the help of a big diagram the chains of direct implications among the many attributes a scheme $X\in\text{obj}(\mathsf{Sch}(\mathbb{K}))$ can have (the grey arrows additionally require that $X$ be noetherian):
\begin{figure*}[htp]
\centering
\includegraphics[width=0.95\textwidth]{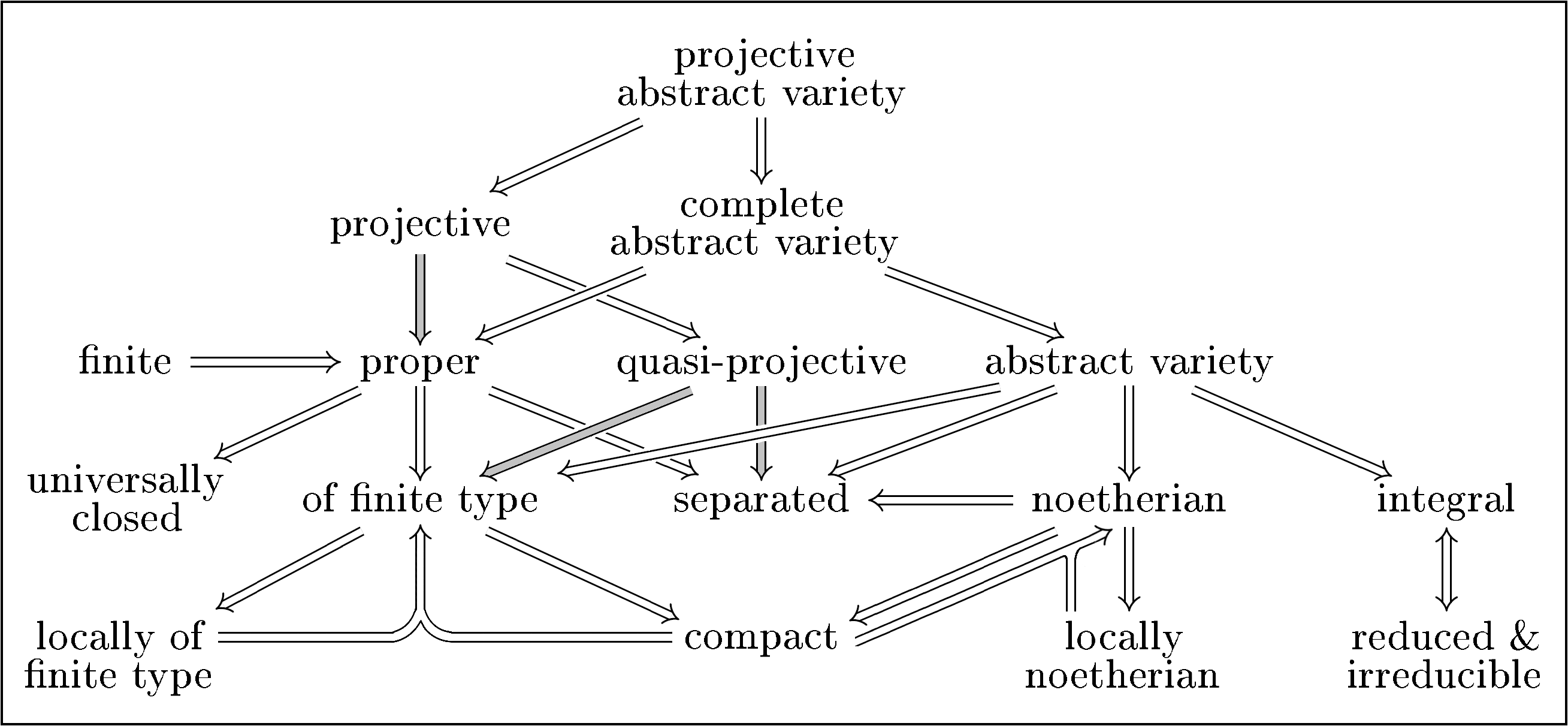}\quad
\end{figure*}\vspace*{-0.5cm}
\begin{equation}\label{bigdiagram}
\end{equation}

Finally, for later purposes, we wish to establish a functor between modules and sheaves of modules on affine schemes. Thereto, we first need to refine the construction of the structure sheaf of spectra from Remark \ref{spectra}.

\begin{Def}\label{asssheafofmodules}
Let $R$ be a ring, $M$ a (left) $R$-module and $X=\text{Spec}(R)$ an affine scheme, with structure sheaf $\mathcal{O}_X$. Then the \textbf{sheaf $\widetilde{M}$ associated to $M$ on $X$}\index{sheaf!associated to a module} is determined by the assignment	
\begin{small}\begin{align*}
		\widetilde{M}(U)\!\coloneqq\!\bigg\{s\!:U\!\rightarrow\!\bigsqcup_{\mathfrak{p}\in U}\! M_\mathfrak{p}\Bigm|\, &\forall\mathfrak{p}\!\in\! U\!: s(\mathfrak{p})\!\in\! M_\mathfrak{p}\text{ and }\exists U_\mathfrak{p}\!\subset\! U\text{ $\mathfrak{p}$-neighb. }\exists m\!\in\! M\,\exists f\!\in\! R \\[-0.3cm]
		&\text{s.t. }\!\forall\mathfrak{q}\!\in\! U_\mathfrak{p}\text{ holds }f\!\notin\!\mathfrak{q} \text{ and }
		s(\mathfrak{q})\!=\! \frac{m}{f}\!\in\! M_\mathfrak{q}\bigg\}\,,
\end{align*}\end{small}
\!\!for each open $U\subset X$ ($M_\mathfrak{p}$ now indicating the localization of $M$ at the prime ideal $\mathfrak{p}$), with obvious restriction maps. Similarly to the structure sheaves of spectra, one can prove that indeed $\widetilde{M}\in\text{obj}(\mathsf{Sh}(X;{}_{\mathcal{O}_X}\!\mathsf{Mod}))$.
\end{Def}

\begin{Pro}\label{squaretildefunc}
Let $R$ be a ring, $M$ a left $R$-module and $X=\textup{Spec}(R)$ an affine scheme, with structure sheaf $\mathcal{O}_X$. Then $\Gamma(X,\widetilde{M})=M$, and the map $M\mapsto\widetilde{M}$ identifies an exact, fully faithful functor $\widetilde{\square}: {}_R\mathsf{Mod}\rightarrow \mathsf{Sh}(X;{}_{\mathcal{O}_X}\!\mathsf{Mod})$.
\end{Pro}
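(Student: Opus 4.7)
The plan is to mirror the well-known analysis of the structure sheaf $\mathcal{O}_{\mathrm{Spec}(R)}$ sketched in Remark \ref{spectra}, exploiting that $\widetilde{M}$ has exactly the same local structure with $M_\mathfrak{p}$ replacing $R_\mathfrak{p}$. First I would establish the key local computation on the basis of distinguished opens $D(f)\coloneqq\{\mathfrak{p}\in X\mid f\notin\mathfrak{p}\}$ for $f\in R$, namely that $\widetilde{M}(D(f))\cong M_f$ (and in particular $\Gamma(X,\widetilde{M})=\widetilde{M}(D(1))=M$). The recipe is standard: the map $M_f\to\widetilde{M}(D(f))$ sending $m/f^n$ to the collection of germs $\mathfrak{p}\mapsto m/f^n\in M_\mathfrak{p}$ is well-defined; injectivity follows because, if $m/f^n$ vanishes in every $M_\mathfrak{p}$ for $\mathfrak{p}\in D(f)$, a covering of $D(f)$ by finitely many $D(g_i)$ on which $h_i m=0$ for suitable $h_i$ produces an annihilator of $m$ that is a unit in $M_f$; surjectivity follows by patching local representations $m_i/f^{n_i}$ agreeing on overlaps, using quasi-compactness of $D(f)$ and the usual gluing step for fractions.

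Next I would build the functor. For $\varphi\in\mathrm{Hom}_R(M,N)$ one obtains induced morphisms $\varphi_\mathfrak{p}:M_\mathfrak{p}\to N_\mathfrak{p}$ on localizations and defines $\widetilde{\varphi}_U:\widetilde{M}(U)\to\widetilde{N}(U)$ pointwise by $s\mapsto(\varphi_\mathfrak{p}\circ s)_{\mathfrak{p}\in U}$; this is clearly $\mathcal{O}_X(U)$-linear and compatible with restrictions, and functoriality with respect to composition and identities is immediate. Faithfulness is trivial since $\widetilde{\varphi}=0$ implies $\widetilde{\varphi}_X=\varphi=0$. For fullness, given a morphism $\psi:\widetilde{M}\to\widetilde{N}$ in $\mathsf{Sh}(X;{}_{\mathcal{O}_X}\mathsf{Mod})$, set $\varphi\coloneqq\psi_X:M\to N$, which is $R$-linear because $\mathcal{O}_X(X)=R$. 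One must then verify $\widetilde{\varphi}=\psi$: via the basis computation $\widetilde{M}(D(f))=M_f$ and the fact that $\psi$ commutes with the restriction maps $M=\widetilde{M}(X)\to\widetilde{M}(D(f))=M_f$ (which are just the localization maps), $\psi_{D(f)}$ must coincide with the localization $\varphi_f$ of $\varphi$, hence $\psi$ agrees with $\widetilde{\varphi}$ on a basis of the topology, and therefore on all opens.

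Finally, for exactness, by Lemma \ref{aboutmonepisheaves} exactness of $0\to\widetilde{M'}\to\widetilde{M}\to\widetilde{M''}\to 0$ in $\mathsf{Sh}(X;{}_{\mathcal{O}_X}\mathsf{Mod})$ can be tested stalkwise. The natural identification $\widetilde{M}_\mathfrak{p}\cong M_\mathfrak{p}$ (built directly into Definition \ref{asssheafofmodules}) then reduces the problem to the elementary fact that localization of $R$-modules at a prime ideal is an exact functor ${}_R\mathsf{Mod}\to{}_{R_\mathfrak{p}}\mathsf{Mod}$, which yields short exactness at every $\mathfrak{p}\in X$ and hence of the sheaf sequence.

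The main obstacle is the fullness step, since it rests on the non-trivial identification $\widetilde{M}(D(f))\cong M_f$; once this local description is in hand, faithfulness, functoriality, and exactness are essentially formal consequences of the compatibility between sheafification on distinguished opens and algebraic localization.
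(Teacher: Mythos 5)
Your proposal is correct and follows essentially the same route as the paper's (much terser) proof: global sections recover $M$, exactness is checked stalkwise via $\widetilde{M}_\mathfrak{p}\cong M_\mathfrak{p}$, and full faithfulness comes from $\Gamma(X,\square)$ inverting the map on Hom-sets. Your computation $\widetilde{M}(D(f))\cong M_f$ on distinguished opens simply supplies the detail that the paper delegates to "an adaptation of the proof of $\Gamma(X,\mathcal{O}_X)\cong R$", and in particular makes explicit why $\Gamma(X,\square)$ is a two-sided inverse in the fullness step.
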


\begin{proof}
The equality $\Gamma(X,\widetilde{M})=M$ follows from an adaptation of the proof of $\Gamma(X,\mathcal{O}_X)\cong R$. Now, $\widetilde{\square}$ is exact because $(\widetilde{M})_\mathfrak{p}\cong M_\mathfrak{p}$ and the exactness of a sequence of sheaves can be checked at the level of stalks (by Lemma \ref{aboutmonepisheaves}). Finally, for any $M, N\in\text{obj}({}_R\mathsf{Mod})$ we have a natural map $(\widetilde{\square})_{M,N}:\text{Hom}_R(M,N)\rightarrow\text{Hom}_{\mathcal{O}_X}(\widetilde{M},\widetilde{N})$, of inverse $\Gamma(X,\square)$ by our initial arguments, proving its bijectivity and hence the full faithfulness of $\widetilde{\square}$.  
\end{proof}

In fact, one can further prove that $\widetilde{\square}$ and $\Gamma(X,\square)$ form an adjoint pair of functors: $\text{Hom}_R(M,\Gamma(X,\mathcal{F}))\cong\text{Hom}_{\mathcal{O}_X}(\widetilde{M},\mathcal{F})$ for any $M\in\text{obj}({}_R\mathsf{Mod})$ and $\mathcal{F}\in\text{obj}(\mathsf{Sh}(X;{}_{\mathcal{O}_X}\!\mathsf{Mod}))$.

\begin{Rem}\label{asssheafofmodforproj}
Given a graded ring $S$, one can define a sheaf $\widetilde{M}$ associated to any graded $S$-module $M$ on the projective spectrum $X=\text{Proj}(S)$ in a manner similar to how was done in Definition \ref{asssheafofmodules} on affine schemes. It turns out that $\widetilde{M}\in\text{obj}(\mathsf{QCoh}(X;{}_{\mathcal{O}_X}\!\mathsf{Mod}))$, and, in case $S$ is noetherian and $M$ is finitely-generated, that $\widetilde{M}\in\text{obj}(\mathsf{Coh}(X;{}_{\mathcal{O}_X}\!\mathsf{Mod}))$ (by \cite[Proposition II.5.11]{[Har77]}).

Then for any $n\in\mathbb{Z}$ we let $\mathcal{O}_X(n)\coloneqq\widetilde{S(n)}\in\text{obj}(\mathsf{QCoh}(X;{}_{\mathcal{O}_X}\!\mathsf{Mod}))$, where $S(n)\coloneqq\bigoplus_{k\in\mathbb{Z}}S_{n+k}$. In particular, we call $\mathcal{O}_X(1)$ the \textit{twisting sheaf of Serre} and $\mathcal{F}(n)\coloneqq\mathcal{F}\otimes_{\mathcal{O}_X}\mathcal{O}_X(n)$ the \textit{$n$-th twisted sheaf} of $\mathcal{F}\in\text{obj}(\mathsf{Sh}(X;{}_{\mathcal{O}_X}\!\mathsf{Mod}))$.

One can further define the \textit{graded $S$-module associated to} $\mathcal{F}$ by $\Gamma_*(\mathcal{F})\coloneqq\bigoplus_{n\in\mathbb{Z}}\Gamma(X,\mathcal{F}(n))$ and show that under mild conditions the sheaf it induces is isomorphic to $\mathcal{F}$ itself (cf. \cite[Proposition II.5.15]{[Har77]}).  
\end{Rem}

\subsection{Quasi-coherent and coherent sheaves}\label{ch5.6}

With a firmer grasp on schemes, we are ready to reprise the theory of sheaves from where we left it back in Section \ref{ch5.4}, providing first an alternative definition of quasi-/coherent sheaf.

\begin{Rem}\label{disclaimer}(\textit{Disclaimer})
The present section (and a generous amount of the remaining theory) heavily relies on schemes, relegating to literature the unavoidably more technical proofs to which they give rise, and often accepting statements ``on faith''. However, the reader may take solace in knowing/recalling that also the more analytical aspects brought up by Floer theory in \cite{[Imp21]} were subject to some major compromises, due to spacetime constraints and the author's ignorance; specularly, the machinery we now overlook is algebraical in nature --- yet another manifestation of mirror symmetry... Anyway, we will mostly follow \cite{[Har77]}, but a rich lore can also be found for example in \cite[chapter 13]{[Vak17]}. 
\end{Rem}

\begin{Def}\label{quasi-/coherentmodules}
Let $(X,\mathcal{O}_X)$ be a scheme. Then $\mathcal{F}\in\text{obj}(\mathsf{Sh}(X;{}_{\mathcal{O}_X}\!\mathsf{Mod}))$ is a \textbf{quasi-coherent $\mathcal{O}_X$-module}\index{ox@$\mathcal{O}_X$-module!quasi-coherent} if $X$ can be covered by a family of open affine subsets $U_i=\text{Spec}(R_i)\subset X$ such that for each $i$ there exists an $R_i$-module $M_i\in\text{obj}({}_{R_i}\!\mathsf{Mod})$ with $\mathcal{F}|_{U_i}\cong\widetilde{M}_i\in\text{obj}(\mathsf{Sh}(U_i;{}_{\mathcal{O}_{U_i}}\!\mathsf{Mod}))$, $\widetilde{M}_i$ being the sheaf associated to $M_i$ as per Definition \ref{asssheafofmodules}.

If each $M_i$ can be chosen to be finitely-generated (thus in ${}_{R_i}\!\mathsf{Mod}^\mathsf{fg}\subset {}_{R_i}\!\mathsf{Mod}$), $\mathcal{F}$ is moreover a \textbf{coherent $\mathcal{O}_X$-module}\index{ox@$\mathcal{O}_X$-module!coherent}. 

Morphisms of quasi-/coherent sheaves are those at the level of the underlying $\mathcal{O}_X$-modules.  
\end{Def}

We stress that the arising categories $\mathsf{QCoh}(X;{}_{\mathcal{O}_X}\!\mathsf{Mod})$ and $\mathsf{Coh}(X;{}_{\mathcal{O}_X}\!\mathsf{Mod})$ coincide with those on generic ringed spaces when Definition \ref{coherentonringed} is targeted to schemes, more precisely to noetherian schemes in the coherent case\footnote{Coherence is ill-behaved on arbitrary, non-noetherian schemes.} (see \cite[Exercise II.5.4]{[Har77]}). 

For example, the structure sheaf $\mathcal{O}_X$ of any scheme $(X,\mathcal{O}_X)$ is a coherent $\mathcal{O}_X$-module.

\begin{Pro}\label{squaretildefunctocoherent}
Let $R$ be a ring and $X=\textup{Spec}(R)$ an affine scheme with structure sheaf $\mathcal{O}_X$. Then the functor $\widetilde{\square}:M\mapsto\widetilde{M}$ from Proposition \ref{squaretildefunc} can be improved to give two equivalences
\begin{equation}\label{squaretildeequivalences}
	\mkern-18mu\widetilde{\square}:{}_R\mathsf{Mod}\rightarrow\mathsf{QCoh}(X;{}_{\mathcal{O}_X}\!\mathsf{Mod})\quad\text{resp.}\quad \widetilde{\square}:{}_R\mathsf{Mod}^\mathsf{fg}\rightarrow\mathsf{Coh}(X;{}_{\mathcal{O}_X}\!\mathsf{Mod})
\end{equation}
\textup(the latter if $R$ is noetherian\textup),\footnote{Apart from $\mathsf{Coh}(X;{}_{\mathcal{O}_X}\!\mathsf{Mod})$ being ill-behaved, if $R$ is non-noetherian we also have that ${}_R\mathsf{Mod}^\mathsf{fg}$ is \textit{not} an abelian category --- rather undesirable indeed!} both of quasi-inverse $\Gamma(X,\square):\mathcal{F}\mapsto\Gamma(X,\mathcal{F})$.
\end{Pro}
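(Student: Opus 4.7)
The plan is to leverage Proposition \ref{squaretildefunc} as much as possible and then reduce the statement to essential surjectivity. From that earlier result we already know that $\widetilde{\square}$ is exact and fully faithful, and that $\Gamma(X,\widetilde{M})=M$; this last equality gives a natural isomorphism $\Gamma(X,\square)\circ\widetilde{\square}\cong \mathsf{Id}_{{}_R\mathsf{Mod}}$. Moreover, the adjunction $\text{Hom}_R(M,\Gamma(X,\mathcal{F}))\cong\text{Hom}_{\mathcal{O}_X}(\widetilde{M},\mathcal{F})$ recalled just after that proposition provides, for every $\mathcal{F}\in\text{obj}(\mathsf{Sh}(X;{}_{\mathcal{O}_X}\!\mathsf{Mod}))$, a canonical counit morphism $\eta_\mathcal{F}:\widetilde{\Gamma(X,\mathcal{F})}\to\mathcal{F}$, obtained by applying the adjunction to $\text{id}_{\Gamma(X,\mathcal{F})}$. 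The entire proposition thus reduces to showing that whenever $\mathcal{F}$ is quasi-coherent, $\eta_\mathcal{F}$ is an isomorphism of $\mathcal{O}_X$-modules.

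To check that $\eta_\mathcal{F}$ is an isomorphism, the natural strategy is to test it on the basis of distinguished opens $D(f)\coloneqq\{\mathfrak{p}\in X\mid f\notin\mathfrak{p}\}=\text{Spec}(R_f)$, for $f\in R$. First I would establish the key technical lemma: for any quasi-coherent $\mathcal{F}$ on the affine scheme $X=\text{Spec}(R)$ and any $f\in R$, the natural localization map $\Gamma(X,\mathcal{F})_f\to\Gamma(D(f),\mathcal{F})$ is an isomorphism, and more generally $\mathcal{F}_\mathfrak{p}\cong\Gamma(X,\mathcal{F})_\mathfrak{p}$ for every $\mathfrak{p}\in X$. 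Granting this, the definition of $\widetilde{\square}$ implies that $\eta_\mathcal{F}$ induces isomorphisms on stalks, hence is a sheaf isomorphism by Lemma \ref{aboutmonepisheaves}. This lemma is proved by covering $X$ by open affines $U_i=\text{Spec}(R_i)$ on which $\mathcal{F}|_{U_i}\cong\widetilde{M_i}$, extracting a \emph{finite} subcover thanks to compactness of $X=\text{Spec}(R)$, and then running a two-step diagram chase with the sheaf property of $\mathcal{F}$ to rule out the kernel and cokernel of the localization map. This compactness-plus-sheaf-axiom computation is the main obstacle of the proof, and is really the heart of the equivalence.

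Once $\eta_\mathcal{F}$ is known to be an isomorphism for quasi-coherent $\mathcal{F}$, we conclude that $\widetilde{\square}:{}_R\mathsf{Mod}\to\mathsf{QCoh}(X;{}_{\mathcal{O}_X}\!\mathsf{Mod})$ is essentially surjective, and, together with the full faithfulness and the natural isomorphism $\Gamma\circ\widetilde{\square}\cong\mathsf{Id}$, this yields the first equivalence with quasi-inverse $\Gamma(X,\square)$. For the coherent statement, assume $R$ is noetherian. I would then argue separately that under this hypothesis, a module $M\in\text{obj}({}_R\mathsf{Mod})$ is finitely-generated if and only if $\widetilde{M}$ is coherent: the ``only if'' direction is immediate from Definition \ref{quasi-/coherentmodules}, while the converse uses that a coherent $\widetilde{M}$ admits a local finite generation everywhere, which, combined with compactness of $X$ and noetherianity of $R$ (ensuring finite type passes from local to global via a finite affine cover and the localization lemma above), upgrades to a global surjection $R^{\oplus n}\twoheadrightarrow M$. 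Thus $\widetilde{\square}$ restricts to an equivalence ${}_R\mathsf{Mod}^\mathsf{fg}\to\mathsf{Coh}(X;{}_{\mathcal{O}_X}\!\mathsf{Mod})$, with quasi-inverse $\Gamma(X,\square)$, completing the proof.
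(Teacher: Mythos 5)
Your proposal is correct and follows the same strategy as the paper: full faithfulness and the identity $\Gamma(X,\widetilde{M})=M$ come from Proposition \ref{squaretildefunc}, and everything reduces to the essential surjectivity statement that a quasi-coherent sheaf on $X=\textup{Spec}(R)$ is globally of the form $\widetilde{M}$ (with $M$ finitely-generated in the coherent, noetherian case). The only difference is that the paper outsources this last fact entirely to \cite[Proposition II.5.4]{[Har77]}, whereas you sketch its actual proof — identifying the adjunction counit $\widetilde{\Gamma(X,\mathcal{F})}\rightarrow\mathcal{F}$ as the map to invert and reducing to the localization isomorphism $\Gamma(X,\mathcal{F})_f\cong\Gamma(D(f),\mathcal{F})$ via quasi-compactness and the sheaf axiom — which is precisely the argument hiding behind that citation.
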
  

\begin{proof}
The statement is a consequence of the more general fact (see \cite[Proposition II.5.4]{[Har77]}) that, for general schemes $(X,\mathcal{O}_X)$, $\mathcal{F}\in\text{obj}(\mathsf{Sh}(X;{}_{\mathcal{O}_X}\!\mathsf{Mod}))$ is quasi-coherent if and only if for each open $U=\text{Spec}(R_U)\subset X$ there exists some $M\in\text{obj}({}_{R_U}\!\mathsf{Mod})$ such that $\mathcal{F}|_U\cong\widetilde{M}\in\text{obj}(\mathsf{Sh}(U;{}_{\mathcal{O}_U}\!\mathsf{Mod}))$, and coherent on $X$ noetherian if and only if additionally $M\in\text{obj}({}_{R_U}\!\mathsf{Mod}^\mathsf{fg})$. (Actually, these are equivalent reformulations of Definition \ref{quasi-/coherentmodules}).

Choosing above $U=X$, thus $R_U=R$, we get the essential surjectivity of $\widetilde{\square}$, whereas full faithfulness at the level of morphisms of $\mathcal{O}_X$-modules is provided by Proposition \ref{squaretildefunc} (recall that a functor is an equivalence if and only if it is fully faithful and essentially surjective). The latter also confirms $\Gamma(X,\square)$ to be the associated quasi-inverse.  
\end{proof}

\begin{Pro}\label{cokernelscoherent}
Let $X$ be a scheme. The kernel, cokernel and image sheaves of any morphism of quasi-coherent sheaves are themselves quasi-coherent. 

If $X$ is also noetherian, the kernel, cokernel and image of any morphism of coherent sheaves are coherent as well.
\end{Pro}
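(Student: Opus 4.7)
The plan is to reduce to the affine case, where the equivalence $\widetilde{\square}$ of Proposition \ref{squaretildefunctocoherent} lets us transfer the problem from sheaves to modules, and then to invoke classical commutative algebra.

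First I would argue that quasi-coherence is a local property: a sheaf is quasi-coherent iff its restriction to some (equivalently, to any) affine open cover is. So, given a morphism $\chi\in\textup{Hom}_{\mathcal{O}_X}(\mathcal{F},\mathcal{G})$ of quasi-coherent sheaves, it suffices to show that $\mathcal{K}er(\chi)|_U$, $\mathcal{I}m(\chi)|_U$ and $\mathcal{C}oker(\chi)|_U$ are quasi-coherent for every open affine $U=\textup{Spec}(R)\subset X$. By Definition \ref{quasi-/coherentmodules}, on such a $U$ we have $\mathcal{F}|_U\cong\widetilde{M}$ and $\mathcal{G}|_U\cong\widetilde{N}$ for some $M,N\in\textup{obj}({}_R\mathsf{Mod})$. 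By the equivalence \eqref{squaretildeequivalences} (whose quasi-inverse is $\Gamma(U,\square)$), the restricted morphism $\chi|_U$ corresponds to a unique $f\in\textup{Hom}_R(M,N)$, so $\chi|_U\cong\widetilde{f}$.

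Next I would use the exactness of $\widetilde{\square}:{}_R\mathsf{Mod}\rightarrow\mathsf{Sh}(U;{}_{\mathcal{O}_U}\!\mathsf{Mod})$ (Proposition \ref{squaretildefunc}). Since exact functors between abelian categories preserve kernels, images and cokernels (these being built from limits and cokernels of the original morphism), applying $\widetilde{\square}$ to the canonical decomposition of $f$ in ${}_R\mathsf{Mod}$ yields
\[
\mathcal{K}er(\chi)|_U\cong\widetilde{\ker(f)}\,,\qquad \mathcal{I}m(\chi)|_U\cong\widetilde{\textup{im}(f)}\,,\qquad \mathcal{C}oker(\chi)|_U\cong\widetilde{\textup{coker}(f)}\,.
\]
Since each of $\ker(f)$, $\textup{im}(f)$ and $\textup{coker}(f)$ is an honest $R$-module, the three sheaves above are of the form $\widetilde{M}$ on the affine $U$, which is precisely the quasi-coherence condition. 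This settles the first claim.

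For the second claim, assume $X$ is noetherian, so the affine cover can be chosen with each $R=R_i$ noetherian. If $\mathcal{F}$ and $\mathcal{G}$ are coherent, the equivalence \eqref{squaretildeequivalences} (right-hand version) lets us take $M,N\in\textup{obj}({}_R\mathsf{Mod}^\mathsf{fg})$. Now classical commutative algebra takes over: over a noetherian ring, every submodule of a finitely-generated module is itself finitely-generated, and quotients of finitely-generated modules are trivially so. Therefore $\ker(f)\subset M$, $\textup{im}(f)\cong M/\ker(f)$ and $\textup{coker}(f)=N/\textup{im}(f)$ all lie in ${}_R\mathsf{Mod}^\mathsf{fg}$, and the displayed isomorphisms above then exhibit $\mathcal{K}er(\chi)|_U$, $\mathcal{I}m(\chi)|_U$, $\mathcal{C}oker(\chi)|_U$ as sheaves associated to finitely-generated modules — i.e. coherent. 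No single step is the real obstacle; the only care needed is verifying that the local identifications $\widetilde{\ker(f)}$, $\widetilde{\textup{im}(f)}$, $\widetilde{\textup{coker}(f)}$ glue consistently across overlapping affines, which follows automatically from the functoriality (and naturality in $R$) of the equivalence $\widetilde{\square}$ together with the universal characterisations of $\mathcal{K}er(\chi)$, $\mathcal{I}m(\chi)$ and $\mathcal{C}oker(\chi)$ recalled in Theorem \ref{Sh(X,Ab)isabelian} and Remark \ref{sheafmodulesrem}.
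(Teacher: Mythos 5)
Your proof is correct and follows essentially the same route as the paper's: reduce to an affine open $U=\textup{Spec}(R)$, transfer the morphism to ${}_R\mathsf{Mod}$ (resp.\ ${}_R\mathsf{Mod}^\mathsf{fg}$ in the noetherian case) via the equivalence $\widetilde{\square}$ of Proposition \ref{squaretildefunctocoherent}, and use that these module categories are abelian — the paper phrases the last step as abelianity while you spell it out via exactness of $\widetilde{\square}$ and the fact that submodules of finitely-generated modules over a noetherian ring are finitely generated, but this is the same argument. No gaps.
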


\begin{proof}
This is a local verification, so we can take $X=\text{Spec}(R)$ to be affine. But then \eqref{squaretildeequivalences} allows us to regard any morphism of quasi-/coherent sheaves as a morphism of modules (respectively finitely-generated ones). Both ${}_R\mathsf{Mod}$ and ${}_R\mathsf{Mod}^\mathsf{fg}$ are abelian categories (the latter supposing $R$ is noetherian), so they contain kernels, cokernels and images of any of their morphisms. Application of the equivalences $\widetilde{\square}$ then proves the proposition.  
\end{proof}

As an immediate consequence, we come to the fundamental result of this section, which legitimizes the existence of the derived category of coherent sheaves.

\begin{Cor}\label{QCohisabelian}
The category $\mathsf{QCoh}(X;{}_{\mathcal{O}_X}\!\mathsf{Mod})$ is abelian for any scheme $(X,\mathcal{O}_X)$, while the category $\mathsf{Coh}(X;{}_{\mathcal{O}_X}\!\mathsf{Mod})$ is abelian for any noetherian scheme $(X,\mathcal{O}_X)$.
\end{Cor}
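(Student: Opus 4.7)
My plan is to verify axioms (A1)--(A4) of Definition \ref{abcat} for the full subcategories $\mathsf{QCoh}(X;{}_{\mathcal{O}_X}\!\mathsf{Mod})$ and $\mathsf{Coh}(X;{}_{\mathcal{O}_X}\!\mathsf{Mod})$ by restriction from the ambient abelian $\mathsf{Sh}(X;{}_{\mathcal{O}_X}\!\mathsf{Mod})$ (Remark \ref{sheafmodulesrem}a.), leaning on Proposition \ref{cokernelscoherent} to supply the essential closure properties.

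Preadditivity (A1) will come for free, since Hom-groups of quasi-/coherent sheaves are by definition the very same Hom-groups of underlying $\mathcal{O}_X$-modules, already abelian with biadditive composition. For (A2), I would note that the zero sheaf is quasi-coherent because affine-locally it equals $\widetilde{0}$, and coherent since the zero module is trivially finitely-generated. For direct sums (A3), given $\mathcal{F}_1,\mathcal{F}_2\in\textup{obj}(\mathsf{QCoh}(X;{}_{\mathcal{O}_X}\!\mathsf{Mod}))$, on any affine $U=\textup{Spec}(R)\subset X$ of a suitable common covering I would write $\mathcal{F}_i|_U\cong\widetilde{M_i}$ for some $M_i\in\textup{obj}({}_R\mathsf{Mod})$; a stalkwise check then gives $(\mathcal{F}_1\oplus\mathcal{F}_2)|_U\cong\widetilde{M_1\oplus M_2}$, so $\mathcal{F}_1\oplus\mathcal{F}_2$ is quasi-coherent, and coherence on a noetherian $X$ is preserved because direct sums of finitely-generated $R$-modules are finitely-generated.

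The heart of the argument is axiom (A4). Any morphism $\chi:\mathcal{F}\to\mathcal{G}$ of quasi-/coherent sheaves, viewed inside the ambient abelian $\mathsf{Sh}(X;{}_{\mathcal{O}_X}\!\mathsf{Mod})$, admits there a canonical decomposition
\[
\mathcal{K}er(\chi)\hookrightarrow\mathcal{F}\twoheadrightarrow\mathcal{I}m(\chi)\hookrightarrow\mathcal{G}\twoheadrightarrow\mathcal{C}oker(\chi)
\]
with $\mathcal{C}oker(\mathcal{K}er(\chi))\cong\mathcal{I}m(\chi)\cong\mathcal{K}er(\mathcal{C}oker(\chi))$. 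I would then invoke Proposition \ref{cokernelscoherent} to guarantee that each of $\mathcal{K}er(\chi)$, $\mathcal{I}m(\chi)$, and $\mathcal{C}oker(\chi)$ is itself quasi-coherent (resp. coherent, under noetherianity of $X$). Thus the entire decomposition already lives in the chosen subcategory, and the required isomorphism transports unchanged from the ambient computation, confirming (A4).

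The only potentially delicate step will be (A4): one needs not merely the existence of kernels and cokernels inside the subcategory, but also their correct categorical compatibility, and the analogous closure in the coherent case is subtle on non-noetherian schemes (compare the warning preceding Definition \ref{quasi-/coherentmodules}). Proposition \ref{cokernelscoherent} already encapsulates exactly this content, so the hard work has in fact been pushed there, and what remains here reduces to a purely formal restriction exercise.
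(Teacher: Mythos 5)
Your proposal is correct and follows essentially the same route as the paper: both arguments exploit that $\mathsf{QCoh}$ and $\mathsf{Coh}$ are full subcategories of the abelian $\mathsf{Sh}(X;{}_{\mathcal{O}_X}\!\mathsf{Mod})$, reduce everything to closure under finite direct sums (checked affine-locally via $\widetilde{M_1\oplus M_2}$) and under kernels/cokernels/images, and delegate the latter entirely to Proposition \ref{cokernelscoherent}. Your write-up is merely more explicit in walking through axioms (A1)--(A4) and in noting that the canonical decomposition transports from the ambient category by fullness, which the paper leaves implicit.
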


\begin{proof}
Since $\mathsf{QCoh}(X;{}_{\mathcal{O}_X}\!\mathsf{Mod})$ and $\mathsf{Coh}(X;{}_{\mathcal{O}_X}\!\mathsf{Mod})$ are full subcategories of the abelian category $\mathsf{Sh}(X;{}_{\mathcal{O}_X}\!\mathsf{Mod})$, it suffices to check closedness under finite sums and co-/kernels. The latter property is provided by Proposition \ref{cokernelscoherent}, while for the former we observe that given two quasi-/coherent sheaves $\mathcal{F}, \mathcal{G}$, with a shared open cover $\{U_i\}_{i\in I}$, it locally holds $(\mathcal{F}\oplus\mathcal{G})|_{U_i}=\mathcal{F}|_{U_i}\oplus\mathcal{G}|_{U_i}\cong \widetilde{M}_i\oplus\widetilde{N}_i\cong(\widetilde{M\oplus N})_i$ (for suitable modules), making $\mathcal{F}\oplus\mathcal{G}$ quasi-/coherent as well. Finally, the zero sheaf is obviously quasi-/coherent.
\end{proof}

\begin{Rem}\label{desirablecoherentprop}
Let us highlight which pleasant features are preserved under quasi-/coherence. Let $(X,\mathcal{O}_X)$ be a scheme.
\begin{itemize}[leftmargin=0.5cm]
	\item Given a quasi-coherent sheaf $\mathcal{F}$, the Hom-group and local $\mathcal{H}om$-sheaf yield
	\begin{equation}\label{locHomonqcoh}
		\begin{aligned}
			&\text{Hom}_{\mathcal{O}_X}\!(\mathcal{F},\square):\mathsf{QCoh}(X;{}_{\mathcal{O}_X}\!\mathsf{Mod})\rightarrow \mathsf{Ab}\,, \\
			&\mathcal{H}om_{\mathcal{O}_X}\!(\mathcal{F},\square):\mathsf{QCoh}(X;{}_{\mathcal{O}_X}\!\mathsf{Mod})\rightarrow \mathsf{QCoh}(X;{}_{\mathcal{O}_X}\!\mathsf{Mod})\,, 
		\end{aligned}
	\end{equation}
	both left exact. If moreover $\mathcal{F}$ is a coherent sheaf, then these functors specialize to the left exact
	\begin{equation}\label{locHomoncoh}
		\begin{aligned}
			&\text{Hom}_{\mathcal{O}_X}\!(\mathcal{F},\square):\mathsf{Coh}(X;{}_{\mathcal{O}_X}\!\mathsf{Mod})\rightarrow \mathsf{Vect}_\mathbb{K}\,, \\
			&\mathcal{H}om_{\mathcal{O}_X}\!(\mathcal{F},\square):\mathsf{Coh}(X;{}_{\mathcal{O}_X}\!\mathsf{Mod})\rightarrow \mathsf{Coh}(X;{}_{\mathcal{O}_X}\!\mathsf{Mod})
		\end{aligned}
	\end{equation}
	(where for Hom we further assumed that $X\in\text{obj}(\mathsf{Sch}(\mathbb{K}))$ is projective, and we denoted by $\mathsf{Vect}_\mathbb{K}$ the category of finite-dimensional vector spaces over $\mathbb{K}$).
	\newline Analogously, we have contravariant left exact functors $\text{Hom}_{\mathcal{O}_X}\!(\square,\mathcal{F})$ and $\mathcal{H}om_{\mathcal{O}_X}\!(\square,\mathcal{F})$. In particular, the dual sheaf $\mathcal{F}^\vee=\mathcal{H}om_{\mathcal{O}_X}\!(\mathcal{F},\mathcal{O}_X)$ of a coherent sheaf $\mathcal{F}$ is coherent as well.
	
	\item Also the tensoring operation behaves well with respect to quasi-/coherent sheaves, yielding the right exact functors \begin{equation}\label{tensoronqcoh}
		\begin{aligned}
			&\mathcal{F}\otimes_{\mathcal{O}_X}\!\square:\mathsf{QCoh}(X;{}_{\mathcal{O}_X}\!\mathsf{Mod})\rightarrow \mathsf{QCoh}(X;{}_{\mathcal{O}_X}\!\mathsf{Mod})\,, \\
			&\square\!\otimes_{\mathcal{O}_X}\mathcal{G}:\mathsf{QCoh}(X;\mathsf{Mod}_{\mathcal{O}_X})\rightarrow \mathsf{QCoh}(X;{}_{\mathcal{O}_X}\!\mathsf{Mod})\,,
		\end{aligned}
	\end{equation}
	for any $\mathcal{F}\in\text{obj}(\mathsf{QCoh}(X;\mathsf{Mod}_{\mathcal{O}_X}))$ and $\mathcal{G}\in\text{obj}(\mathsf{QCoh}(X;{}_{\mathcal{O}_X}\!\mathsf{Mod}))$, respectively the right exact 
	\begin{equation}\label{tensoroncoh}
		\begin{aligned}
			&\mathcal{F}\otimes_{\mathcal{O}_X}\!\square:\mathsf{Coh}(X;{}_{\mathcal{O}_X}\!\mathsf{Mod})\rightarrow \mathsf{Coh}(X;{}_{\mathcal{O}_X}\!\mathsf{Mod})\,, \\
			&\square\!\otimes_{\mathcal{O}_X}\mathcal{G}:\mathsf{Coh}(X;\mathsf{Mod}_{\mathcal{O}_X})\rightarrow \mathsf{Coh}(X;{}_{\mathcal{O}_X}\!\mathsf{Mod})\,,
		\end{aligned}
	\end{equation}
	when $\mathcal{F},\mathcal{G}$ are coherent right respectively left $\mathcal{O}_X$-modules.
	\newline Notice that on $X=\text{Spec}(R)$ affine holds $\Gamma(X,\mathcal{F}\otimes_{\mathcal{O}_X}\!\mathcal{G})\cong\Gamma(X,\mathcal{F})\otimes_R \Gamma(X,\mathcal{G})$. This can be used to define and show that tensor, symmetric and exterior powers of a quasi-coherent sheaf are quasi-coherent.
	
	\item Given a short exact sequence $0\rightarrow\mathcal{F}_1\rightarrow\mathcal{F}_2\rightarrow\mathcal{F}_3\rightarrow 0$ of quasi-coherent sheaves, with $\mathcal{F}_1$ and $\mathcal{F}_3$ also locally free, $\mathcal{F}_2$ must be locally free as well (on an affine subset the same holds for free sheaves).
	\newline Assume that $X$ is moreover noetherian, and let $\mathcal{F}$ be a coherent $\mathcal{O}_X$-module. Then $\mathcal{F}$ is locally free if and only if $\mathcal{F}_x$ is a free $\mathcal{O}_{X,x}$-module for each $x\in X$, and it is also an invertible sheaf if and only if there exists another coherent $\mathcal{G}$ such that $\mathcal{F}\otimes_{\mathcal{O}_X}\!\mathcal{G}\cong\mathcal{O}_X$. 
\end{itemize}
\end{Rem}

It is particularly useful to see how pullback and pushforward functors interact with quasi-/coherent sheaves. Keeping an eye on the many definitions of the last section, we have:

\begin{Pro}\label{pushpullcoherent}
Let $f:(X,\mathcal{O}_X)\rightarrow (Y,\mathcal{O}_Y)$ be a morphism of schemes \textup(thus of the underlying locally ringed spaces; cf. Definition \ref{locallyringedspace}\textup). Then:
\renewcommand{\theenumi}{\roman{enumi}}
\begin{enumerate}[leftmargin=0.6cm]
	\item The pullback functor specializes to $f^*\!\!: \mathsf{QCoh}(Y;{}_{\mathcal{O}_Y}\!\mathsf{Mod})\!\rightarrow\!\mathsf{QCoh}(X;{}_{\mathcal{O}_X}\!\mathsf{Mod})$ and, if $X$, $Y$ are both noetherian, to $f^*\!: \mathsf{Coh}(Y;{}_{\mathcal{O}_Y}\!\mathsf{Mod})\rightarrow \mathsf{Coh}(X;{}_{\mathcal{O}_X}\!\mathsf{Mod})$. They are both right exact.
	
	\item If either $X$ is noetherian or if $f$ is separated and compact, the pushforward functor specializes to the left exact $f_*\!: \mathsf{QCoh}(X;{}_{\mathcal{O}_X}\!\mathsf{Mod})\!\rightarrow \!\mathsf{QCoh}(Y;{}_{\mathcal{O}_Y}\!\mathsf{Mod})$. 
	\newline Moreover, if $f$ is a finite morphism of noetherian schemes or a projective morphism of schemes of finite type over a field $\mathbb{K}$ --- or more generally, if $f$ is a proper morphism between noetherian schemes --- then it induces the left exact $f_*: \mathsf{Coh}(X;{}_{\mathcal{O}_X}\!\mathsf{Mod})\rightarrow \mathsf{Coh}(Y;{}_{\mathcal{O}_Y}\!\mathsf{Mod})$.  
\end{enumerate}
\end{Pro}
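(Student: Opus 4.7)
\medskip

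\noindent\textbf{Proof proposal.} The exactness statements both parts record have already been secured at the level of arbitrary ringed spaces in Corollary \ref{pullbackpushforwardexact}, so the real content is to verify that the prescribed (quasi-)coherence is preserved. My strategy throughout is to localize the problem onto open affine pieces, translate it into pure commutative algebra via the equivalences $\widetilde{\square}$ and $\Gamma(X,\square)$ of Proposition \ref{squaretildefunctocoherent}, and then invoke scheme-theoretic finiteness results to patch everything back together.

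For (i), I would first observe that quasi-/coherence can be tested on any open affine cover. If $V=\textup{Spec}(S)\subset Y$ and $U=\textup{Spec}(R)\subset f^{-1}(V)\subset X$ are open affines, the restriction $f|_U\colon U\to V$ corresponds to a ring homomorphism $\varphi\colon S\to R$, and for $\mathcal{G}|_V\cong\widetilde{M}$ with $M\in\textup{obj}({}_S\mathsf{Mod})$ one checks directly from the definitions of $f^{-1}$, the base-change tensor product, and $\widetilde{\square}$ that
\[
(f^*\mathcal{G})|_U\;\cong\;\widetilde{M\otimes_S R}\,,
\]
which is quasi-coherent on $U$. Gluing these identifications across any open affine cover of $X$ shows $f^*\mathcal{G}\in\textup{obj}(\mathsf{QCoh}(X;{}_{\mathcal{O}_X}\!\mathsf{Mod}))$. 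When $X,Y$ are noetherian and $M$ is finitely generated over $S$, $M\otimes_S R$ is finitely generated over $R$, so coherence is inherited as well. Right exactness is a restriction of Corollary \ref{pullbackpushforwardexact}.

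For (ii), the quasi-coherence preservation is again a local problem on $Y$; I may take $Y=\textup{Spec}(S)$ affine. Either hypothesis (noetherian $X$, or $f$ separated and compact) allows us to cover $X$ by finitely many open affines $U_i=\textup{Spec}(R_i)$, with double intersections $U_i\cap U_j$ themselves covered by finitely many affines. The sheaf axiom for $\mathcal{F}|_{f^{-1}(V)}$, for $V\subset Y$ open, then assembles into an exact sequence of $\mathcal{O}_Y$-modules
\[
0\longrightarrow f_*\mathcal{F}\longrightarrow \prod_i (f|_{U_i})_*(\mathcal{F}|_{U_i})\longrightarrow \prod_{i,j,k}(f|_{U_{ijk}})_*(\mathcal{F}|_{U_{ijk}})\,,
\]
where each term on the right is of the form $\widetilde{N}$ for an $S$-module $N$ (by the affine case, which is immediate from $\Gamma$ and the adjunction $f^*\dashv f_*$). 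Since on affine $Y$ the category of quasi-coherent sheaves is closed under kernels and finite products, $f_*\mathcal{F}$ is quasi-coherent.

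The genuinely hard ingredient is coherence-preservation of $f_*$ under properness. The plan is to reduce first to the projective case via Chow's lemma (second bullet of Proposition \ref{whenproper}): a proper $f\colon X\to Y$ is dominated by a projective morphism $f'\colon X'\to Y$, and one argues via the factorization and a devissage that it suffices to treat the projective situation. In the projective case I would reduce to $X=P^n_Y$ by factoring $f$ through a closed immersion (closed immersions trivially preserve coherence since they correspond locally to surjections of rings and quotient modules), and then, after shrinking $Y$ to an affine, handle $f=p\colon P^n_S\to\textup{Spec}(S)$ for noetherian $S$. Here the theorem one ultimately needs is Serre's finiteness theorem: the cohomology groups $H^i(P^n_S,\mathcal{F})$ of a coherent sheaf $\mathcal{F}$ are finitely generated $S$-modules, the case $i=0$ being precisely $\Gamma(P^n_S,\mathcal{F})$, which computes $(p_*\mathcal{F})(\textup{Spec}(S))$. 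That proof, in turn, proceeds by reducing to the structure sheaves $\mathcal{O}_{P^n_S}(m)$ via resolutions by twists (available by Remark \ref{asssheafofmodforproj}) and explicit Čech computation. This Serre finiteness step is the real obstacle; all the rest of the proposition is either a bookkeeping exercise in adjunctions and sheafification, or follows immediately from Chow's lemma once the projective case is in hand. The finite-morphism subcase is easier, as finiteness means $f^{-1}(\textup{Spec}(S_i))=\textup{Spec}(R_i)$ with $R_i$ finitely generated as an $S_i$-module, so $f_*\widetilde{M}\cong\widetilde{M_S}$ where $M_S$ denotes $M$ regarded as an $S_i$-module, still finitely generated.
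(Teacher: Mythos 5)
Your proposal is correct and its skeleton coincides with the paper's: localize onto affines, translate through the equivalences $\widetilde{\square}\dashv\Gamma$ of Proposition \ref{squaretildefunctocoherent} to get $(f^*\mathcal{G})|_U\cong\widetilde{M\otimes_S R}$ for part (i), and make Serre's finiteness theorem (Theorem \ref{sheafcohomforprojsch}) carry the weight of coherence-preservation in part (ii). You diverge in two sub-arguments, both defensibly. For quasi-coherence of $f_*\mathcal{F}$ you use the kernel-of-finite-products exact sequence coming from the sheaf axiom over a finite affine cover (with affine refinements of the pairwise intersections), whereas the paper invokes closure of $\mathsf{QCoh}$ under extensions (\cite[Proposition II.5.7]{[Har77]}); your route is the more self-contained one, needing only that quasi-coherent sheaves on a scheme are closed under finite products and kernels, which the paper has already established, and it handles the non-separated noetherian case cleanly via the $U_{ijk}$ refinement. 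For coherence under properness, the paper stops at the projective-over-a-field case and defers the rest to \cite[section III.8]{[Har77]}, while you sketch the full reduction chain (Chow's lemma plus d\'evissage to reach the projective case, closed immersion into $P^n_Y$, then Serre finiteness via twists and \v{C}ech computation); this is the standard Grothendieck argument and is consistent with, though more explicit than, what the paper records. The only caveat is that the d\'evissage step (noetherian induction on supports) is named rather than executed, but that is the same level of rigor the paper itself adopts for this clause.
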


\begin{proof}(\textit{Sketch})
Regarding the pullback, the matter is local and hence we can assume $X$ and $Y$ to be affine. Then one applies Proposition \ref{squaretildefunctocoherent} along with \cite[Proposition II.5.2]{[Har77]} to obtain point \textit{i}. Note that right exactness of $f^*=\mathcal{O}_X\otimes_{f^{\!-\!1}\!\mathcal{O}_Y}\! f^{-1}(\square)$ is inherited from that of $f^{-1}$ and the tensor product. 

For the pushforward, by assumption we can take $Y$ to be affine and cover $X$ with finitely many open affine subsets (since $f$ is compact). Then to prove that $f_*\mathcal{F}\in\text{obj}(\mathsf{QCoh}(Y;{}_{\mathcal{O}_Y}\!\mathsf{Mod}))$ for any $\mathcal{F}\in\text{obj}(\mathsf{QCoh}(X;{}_{\mathcal{O}_X}\!\mathsf{Mod}))$ one exploits that the extension $\mathcal{G}_2$ of any pair of quasi-coherent sheaves $\mathcal{G}_1,\mathcal{G}_3$ --- by definition fitting into a short exact sequence $0\rightarrow\mathcal{G}_1\rightarrow\mathcal{G}_2\rightarrow\mathcal{G}_3\rightarrow 0$ --- is itself quasi-coherent (see \cite[Proposition II.5.7]{[Har77]}). 

Restricting $f_*$ to coherent sheaves, we just mention how to prove the scenario where $f$ is projective: by locality, take $Y=\text{Spec}(A)$ for $A$ a finitely-generated $\mathbb{K}$-algebra (thus a ring), so that for any $\mathcal{F}\in\text{obj}(\mathsf{Coh}(X;{}_{\mathcal{O}_X}\!\mathsf{Mod}))$ the previous point already guarantees that $f_*\mathcal{F}$ is quasi-coherent; then by Proposition \ref{squaretildefunctocoherent} holds $f_*\mathcal{F}\cong(\widetilde{\square}\circ\Gamma(Y,\square))(f_*\mathcal{F})=\widetilde{\square}\big(\mathcal{F}(f^{-1}(Y))\big)=\Gamma(X,\mathcal{F})^\sim$, which is actually coherent because $\Gamma(X,\mathcal{F})$ is a finitely-generated $A$-module (by \cite[Theorem II.5.19]{[Har77]}). The more general proof when $f$ is proper is addressed in \cite[section III.8]{[Har77]}.  
\end{proof}

Observe that the right and left exactness of $f^*$ respectively $f_*$ can again be deduced from them being an adjoint pair of functors (see the isomorphism \eqref{pullpushareadjoints}), even in the quasi-/coherent world.

\begin{Cor}\label{globalsectforcoherent}
Let $X$ be a noetherian scheme. Then the global sections functor of Lemma \ref{sectfuncexact} \textup(when $U=X$\textup) specializes to the left exact functors:
\begin{equation}
	\begin{aligned}
		&\Gamma(X,\square): \mathsf{QCoh}(X;{}_{\mathcal{O}_X}\!\mathsf{Mod})\rightarrow\mathsf{Ab}\,, \\
		&\Gamma(X,\square):\mathsf{Coh}(X;{}_{\mathcal{O}_X}\!\mathsf{Mod})\rightarrow\mathsf{Ab}\,.
	\end{aligned}
\end{equation}
In particular, if $X\in\textup{obj}(\mathsf{Sch}(\mathbb{K}))$ is a noetherian scheme over $\mathbb{K}$, respectively a projective \textup(or just a proper\textup) one, we have the left exact functors:
\begin{equation}\label{globsectsfunconcoh}
	\begin{aligned}
		&\Gamma(X,\square):\mathsf{QCoh}(X;{}_{\mathcal{O}_X}\!\mathsf{Mod})\rightarrow\mathsf{Vect}_\mathbb{K}^{\leq\infty}\,, \\
		&\Gamma(X,\square):\mathsf{Coh}(X;{}_{\mathcal{O}_X}\!\mathsf{Mod})\rightarrow\mathsf{Vect}_\mathbb{K}
	\end{aligned}
\end{equation}
\textup(where $\mathsf{Vect}_\mathbb{K}^{\leq\infty}$ is the category of possibly infinite-dimensional $\mathbb{K}$-vector spaces\textup). 
\end{Cor}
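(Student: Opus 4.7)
The plan is to break the corollary into three independent claims: (a) that $\Gamma(X,\square)$ restricts to the quasi-coherent and coherent subcategories at all; (b) that it remains left exact after restriction; and (c) that when $X$ is a $\mathbb{K}$-scheme the image lies in $\mathbb{K}$-vector spaces, with finite dimension in the coherent/proper case. Claims (a) and (b) dispose of the $\mathsf{Ab}$-valued versions, while claim (c) upgrades the target.

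For (a) and (b), I would start from Lemma \ref{sectfuncexact}, whose proof goes through verbatim for sheaves of $\mathcal{O}_X$-modules (one simply forgets down to $\mathsf{Ab}$, or rephrases using that $\iota\dashv\varsigma$ remains an adjunction on module-valued sheaves). Restricting to $\mathsf{QCoh}(X;{}_{\mathcal{O}_X}\!\mathsf{Mod})$ and, under noetherianity of $X$, to $\mathsf{Coh}(X;{}_{\mathcal{O}_X}\!\mathsf{Mod})$ is harmless: these are full abelian subcategories of $\mathsf{Sh}(X;{}_{\mathcal{O}_X}\!\mathsf{Mod})$ by Corollary \ref{QCohisabelian}, and Proposition \ref{cokernelscoherent} tells us that kernels, images and cokernels computed in the ambient sheaf category agree with those computed inside the quasi-/coherent subcategory. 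In the language of Definition \ref{abeliansubcat}, $\mathsf{QCoh}$ and $\mathsf{Coh}$ are abelian subcategories, so any short exact sequence $0\to\mathcal{F}_1\to\mathcal{F}_2\to\mathcal{F}_3\to 0$ in them is a short exact sequence in $\mathsf{Sh}(X;{}_{\mathcal{O}_X}\!\mathsf{Mod})$, and the generalized Lemma \ref{sectfuncexact} then gives exactness of $0\to\Gamma(X,\mathcal{F}_1)\to\Gamma(X,\mathcal{F}_2)\to\Gamma(X,\mathcal{F}_3)$.

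For (c), assume $X\in\textup{obj}(\mathsf{Sch}(\mathbb{K}))$ with structure morphism $f:X\to\textup{Spec}(\mathbb{K})$. The composition law on $\mathcal{O}_X$ coming from $f^\#:\mathcal{O}_{\textup{Spec}(\mathbb{K})}\to f_*\mathcal{O}_X$ turns $\mathcal{O}_X$ into a sheaf of $\mathbb{K}$-algebras, so every $\Gamma(X,\mathcal{F})$ acquires a canonical $\mathbb{K}$-module structure functorial in $\mathcal{F}$; this already refines the target from $\mathsf{Ab}$ to $\mathsf{Vect}_\mathbb{K}^{\leq\infty}$. For the projective (or more generally proper) case, I would rewrite global sections as a pushforward: $\Gamma(X,\mathcal{F})=\Gamma\bigl(\textup{Spec}(\mathbb{K}),f_*\mathcal{F}\bigr)$. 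Proposition \ref{pushpullcoherent}(ii) applies, since $f$ is projective (resp. proper) between noetherian schemes, and yields $f_*\mathcal{F}\in\textup{obj}\bigl(\mathsf{Coh}(\textup{Spec}(\mathbb{K});{}_{\mathcal{O}_{\textup{Spec}(\mathbb{K})}}\!\mathsf{Mod})\bigr)$ for every coherent $\mathcal{F}$. Applying the equivalence $\Gamma(\textup{Spec}(\mathbb{K}),\square)\simeq\widetilde{\square}^{-1}$ of Proposition \ref{squaretildefunctocoherent} to $f_*\mathcal{F}$ identifies it with a finitely-generated $\mathbb{K}$-module, i.e.\ an object of $\mathsf{Vect}_\mathbb{K}$, which is the desired statement.

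The only non-formal ingredient is the preservation of coherence by $f_*$ when $f$ is proper over $\mathbb{K}$, and this is precisely what the author has already imported from \cite[Theorem II.5.19 and section III.8]{[Har77]} inside Proposition \ref{pushpullcoherent}(ii); thus the hardest step is fully offloaded to an earlier result. Everything else is bookkeeping: unravel the pushforward definition to match $\Gamma(X,\square)$, and transport the $\mathbb{K}$-action through the equivalence of categories on the point $\textup{Spec}(\mathbb{K})$. Left exactness of the $\mathsf{Vect}_\mathbb{K}$-valued versions in \eqref{globsectsfunconcoh} is then automatic from (b), since the forgetful functor $\mathsf{Vect}_\mathbb{K}\hookrightarrow\mathsf{Vect}_\mathbb{K}^{\leq\infty}\to\mathsf{Ab}$ reflects exactness.
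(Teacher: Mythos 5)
Your proof is correct and follows essentially the same route as the paper: both identify $\Gamma(X,\square)$ with the pushforward along the structure morphism $f:X\to\textup{Spec}(\mathbb{K})$, invoke Proposition \ref{pushpullcoherent} for preservation of quasi-/coherence, and get left exactness from Lemma \ref{sectfuncexact}. The only cosmetic difference is the finite-dimensionality step, which the paper justifies by citing Theorem \ref{sheafcohomforprojsch} while you extract it from coherence of $f_*\mathcal{F}$ on the point together with the equivalence of Proposition \ref{squaretildefunctocoherent} --- two packagings of the same underlying finiteness result.
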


\begin{proof}
This follows from Proposition \ref{pushpullcoherent} by simply choosing\footnote{Later in Remark \ref{sectfuncispushforward} we will see that the same holds for sheaves of modules, when choosing the target space to be the one-point space equipped with a suitable constant sheaf.} $Y=\text{Spec}(\mathbb{K})$ and $f:X\rightarrow \text{Spec}(\mathbb{K})$ equal to the structure morphism, which is proper over $\mathbb{K}$. Then Lemma \ref{sectfuncexact} guarantees left exactness. 

If $X$ is projective over $\mathbb{K}$, the last proof already alluded to $\Gamma(X,\mathcal{F})$ being a finitely-generated $\mathbb{K}$-module for any $\mathcal{F}\in\text{obj}(\mathsf{Coh}(X;{}_{\mathcal{O}_X}\!\mathsf{Mod}))$ (a fact more generally shown in the proof of Theorem \ref{sheafcohomforprojsch}), that is, a finite-dimensional $\mathbb{K}$-vector space. Relaxing to quasi-coherent sheaves, we are simply allowing vector spaces of infinite dimension too.
\end{proof}

\newpage
\pagestyle{fancy}

\section{The derived category of coherent sheaves}
\thispagestyle{plain}

\subsection{Derived functors on sheaves of modules}\label{ch6.1}

Let us take a step back to focus again on sheaves of modules; we will return to schemes and coherent sheaves only halfway through Section \ref{ch6.2}, once the ground work has been done.

According to Theorem \ref{Sh(X,Ab)isabelian}, Remark \ref{sheafmodulesrem}a. and our analysis from Chapter 2, we can legitimately talk about the derived category $\mathsf{D}(\mathsf{Sh}(X;\mathsf{A}))$ of sheaves on a topological space $X$ with values in any abelian category $\mathsf{A}$ respectively about the derived category $\mathsf{D}(\mathsf{Sh}(X;{}_\mathcal{S}\mathsf{Mod}))$ of $\mathcal{S}$-modules on a ringed space $(X,\mathcal{S})$. We already know a few functors relating their underlying categories of sheaves, so let us see if they induce derived functors.

\begin{Pro}\label{enoughinjectivemodules}
Let $(X,\mathcal{S})$ be a ringed space. Then $\mathsf{Sh}(X;{}_\mathcal{S}\mathsf{Mod})$ has enough injectives \textup(hence so does $\mathsf{Sh}(X;\mathsf{A})$ for any abelian category $\mathsf{A}$, by Remark \ref{sheafmodulesrem}b.\textup).
\end{Pro}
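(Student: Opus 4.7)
The plan is to mimic the standard sheaf-theoretic construction of an injective hull, building an injective $\mathcal{S}$-module that contains any given $\mathcal{F}\in\textup{obj}(\mathsf{Sh}(X;{}_\mathcal{S}\mathsf{Mod}))$ by gluing pointwise injective embeddings via the skyscraper construction from Example \ref{skyscrapersheaf}.

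First I would invoke the classical fact that for every ring $R$ the category ${}_R\mathsf{Mod}$ has enough injectives (a standard corollary of Baer's criterion, which the paper treats as background since it lies outside sheaf theory proper). Applying this stalkwise: for each $x\in X$ the stalk $\mathcal{S}_x$ is a ring, $\mathcal{F}_x\in\textup{obj}({}_{\mathcal{S}_x}\!\mathsf{Mod})$, and hence there exists an injective $\mathcal{S}_x$-module $I_x$ together with a monomorphism $j_x:\mathcal{F}_x\hookrightarrow I_x$.

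Next I would transfer each $I_x$ back to $X$. Let $\iota_x:\{x\}\hookrightarrow X$ denote the natural inclusion and consider the skyscraper sheaf $(\iota_x)_*I_x\in\textup{obj}(\mathsf{Sh}(X;{}_\mathcal{S}\mathsf{Mod}))$, where $I_x$ is viewed as an $\mathcal{S}_x$-module and then as a sheaf on $\{x\}$ in the obvious way. The key technical claim is that $(\iota_x)_*I_x$ is an injective object of $\mathsf{Sh}(X;{}_\mathcal{S}\mathsf{Mod})$. This follows from the adjunction of Definition \ref{pushforwardShAb} (together with its module-theoretic refinement via base change): for every $\mathcal{G}\in\textup{obj}(\mathsf{Sh}(X;{}_\mathcal{S}\mathsf{Mod}))$ one has a natural isomorphism
\[
\textup{Hom}_\mathcal{S}\big(\mathcal{G},(\iota_x)_*I_x\big)\cong\textup{Hom}_{\mathcal{S}_x}(\mathcal{G}_x,I_x)\,,
\]
so the functor $\textup{Hom}_\mathcal{S}(\square,(\iota_x)_*I_x)$ factors as the exact stalk functor $\square_x$ followed by the exact functor $\textup{Hom}_{\mathcal{S}_x}(\square,I_x)$ (exact by injectivity of $I_x$ and Lemma \ref{Homforinjprojobj}); hence it is exact, which by Lemma \ref{Homforinjprojobj} again is precisely the condition that $(\iota_x)_*I_x$ be injective.

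Finally I would assemble everything by taking the product $\mathcal{I}\coloneqq\prod_{x\in X}(\iota_x)_*I_x\in\textup{obj}(\mathsf{Sh}(X;{}_\mathcal{S}\mathsf{Mod}))$, which is injective because products of injective objects are injective (each $\textup{Hom}_\mathcal{S}(\square,\mathcal{I})\cong\prod_{x}\textup{Hom}_\mathcal{S}(\square,(\iota_x)_*I_x)$ is a product of exact functors, hence exact). The embeddings $j_x:\mathcal{F}_x\hookrightarrow I_x$ correspond under adjunction to morphisms $\mathcal{F}\to (\iota_x)_*I_x$, which combine into a single morphism $j:\mathcal{F}\to\mathcal{I}$. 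Monomorphicity of $j$ is checked on stalks (Lemma \ref{aboutmonepisheaves}): at each $y\in X$ the map $j_y$ has $y$-th component equal to $j_y:\mathcal{F}_y\hookrightarrow I_y$, which is injective, so $\ker(j_y)=0$ and therefore $\mathcal{K}er(j)=0$. This embeds an arbitrary $\mathcal{F}$ into an injective object and completes the proof.

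The only genuinely delicate step is the injectivity of $(\iota_x)_*I_x$: one must be careful that the adjunction $\Psi$ of Definition \ref{pushforwardShAb} upgrades from sheaves of abelian groups to sheaves of $\mathcal{S}$-modules (after the appropriate base change from $\iota_x^{-1}\mathcal{S}=\mathcal{S}_x$ to $\mathcal{S}$), and that the stalk functor is not merely additive but exact, so that the composition giving $\textup{Hom}_\mathcal{S}(\square,(\iota_x)_*I_x)$ really is exact. Once this bookkeeping is in place, everything else is a routine assembly.
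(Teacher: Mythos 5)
Your proposal is correct and follows essentially the same route as the paper: stalkwise embeddings $\mathcal{F}_x\hookrightarrow I_x$ into injective $\mathcal{S}_x$-modules, the skyscraper sheaves $(\iota_x)_*I_x$ with the adjunction isomorphism $\textup{Hom}_\mathcal{S}(\mathcal{G},(\iota_x)_*I_x)\cong\textup{Hom}_{\mathcal{S}_x}(\mathcal{G}_x,I_x)$, and exactness of $\textup{Hom}_\mathcal{S}(\square,\prod_x(\iota_x)_*I_x)$ as a product of compositions of the exact stalk functor with the exact $\textup{Hom}_{\mathcal{S}_x}(\square,I_x)$. The only cosmetic difference is that you first certify each skyscraper as injective and then invoke closure of injectives under products, whereas the paper verifies exactness of the Hom-functor of the product directly; these are the same argument.
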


\begin{proof}
According to Definition \ref{injprojobj}, to show is that any $\mathcal{F}\in\text{obj}(\mathsf{Sh}(X;{}_\mathcal{S}\mathsf{Mod}))$ admits a right resolution by injective $\mathcal{S}$-modules, particularly that $\mathcal{F}$ injects into some $\mathcal{I}\in\text{obj}(\mathsf{Sh}(X;{}_\mathcal{S}\mathsf{Mod}))$ which is injective as an object. 

First notice that for each $x\in X$, the stalk $\mathcal{F}_x\in\text{obj}({}_{\mathcal{S}_x}\!\mathsf{Mod})$ injects\footnote{This is a known fact from standard algebra: given a ring $R$, every $R$-module is isomorphic to a submodule of an injective $R$-module.} into some other $\mathcal{S}_x$-module $I_x$ which is injective as an object of ${}_{\mathcal{S}_x}\!\mathsf{Mod}\equiv\mathsf{Sh}(\{x\};{}_\mathcal{S}\mathsf{Mod})$. Letting $\iota_x:\{x\}\hookrightarrow X$ denote the inclusion, so that $(\iota_x)_*I_x\in\text{obj}(\mathsf{Sh}(X;{}_\mathcal{S}\mathsf{Mod}))$ is the corresponding skyscraper sheaf (see Example \ref{skyscrapersheaf}), let us consider the sheaf $\mathcal{I}\coloneqq\prod_{x\in X}(\iota_x)_*I_x$ (an $\mathcal{S}$-module since $\mathsf{Sh}(X;{}_\mathcal{S}\mathsf{Mod})$ is abelian, and hence contains products of its objects!), with stalks $\mathcal{I}_x\cong I_x$.

Given any other $\mathcal{S}$-module $\mathcal{G}$, we have a canonical isomorphism $\text{Hom}_\mathcal{S}(\mathcal{G},\mathcal{I})\cong\prod_{x\in X}\text{Hom}_\mathcal{S}(\mathcal{G},(\iota_x)_*I_x)$. In turn, for each $x\in X$ it holds $\text{Hom}_\mathcal{S}(\mathcal{G},(\iota_x)_*I_x)\cong\text{Hom}_{\mathcal{S}_x}(\mathcal{G}_x,I_x)$, because any morphism $\chi$ of $\mathcal{S}$-modules on the left-hand side is setwise either the constant map $\chi_U\cong\chi_x$ to $I_x$ (if $x\in U$) or zero (else). Setting $\mathcal{G}=\mathcal{F}$, the resulting $\text{Hom}_\mathcal{S}(\mathcal{F},\mathcal{I})\cong\prod_{x\in X}\text{Hom}_{\mathcal{S}_x}(\mathcal{F}_x,I_x)$ tells us that the local inclusions $\mathcal{F}_x\hookrightarrow I_x$ induce an $\mathcal{S}$-module morphism $\mathcal{F}\rightarrow\mathcal{I}$, itself injective because injective on stalks.

Now, recalling the existence of exact stalk functors $\square_x: \mathsf{Sh}(X;{}_\mathcal{S}\mathsf{Mod})\rightarrow{}_{\mathcal{S}_x}\!\mathsf{Mod}$, we observe that by construction 
\[
\text{Hom}_\mathcal{S}(\square,\mathcal{I})=\prod_{x\in X}(\text{Hom}_{\mathcal{S}_x}(\square,I_x)\circ\square_x):\mathsf{Sh}(X;{}_\mathcal{S}\mathsf{Mod})^\text{opp}\rightarrow\mathsf{Ab}\,.
\] 
But each left exact functor $\text{Hom}_{\mathcal{S}_x}(\square,I_x):{}_{\mathcal{S}_x}\!\mathsf{Mod}^\text{opp}\rightarrow\mathsf{Ab}$ is actually exact, as suggested by Lemma \ref{Homforinjprojobj}! Consequently, so is its composition with $\square_x$ and after taking the direct product. Hence, $\text{Hom}_\mathcal{S}(\square,\mathcal{I})$ is exact. Finally, again Lemma \ref{Homforinjprojobj} implies that $\mathcal{I}$ is forcedly an injective sheaf of $\mathcal{S}$-modules, as desired.    
\end{proof}

Therefore, by Lemma \ref{enoughinjprojobj} and Definition \ref{derivedfunc}, we can consider right derived functors of left exact functors from $\mathsf{Sh}(X;\mathsf{A})$ or $\mathsf{Sh}(X;{}_\mathcal{S}\mathsf{Mod})$:

\begin{Ex}
Given a morphism of ringed spaces $f:(X,\mathcal{S}_X)\rightarrow (Y,\mathcal{S}_Y)$, as well as any $\mathcal{F}\in\text{obj}(\mathsf{Sh}(X;\mathsf{A}))$ and $\mathcal{G}\in\text{obj}(\mathsf{Sh}(X;{}_{\mathcal{S}_X}\!\mathsf{Mod}))$, the most prominent examples of right derived functors are:
\begin{itemize}[leftmargin=0.5cm]
	\item (From Example \ref{exactfuncexample})
	\begin{equation}
		\begin{aligned}
			&\mathsf{R}\text{Hom}_{\mathsf{Sh}(X;\mathsf{A})}(\mathcal{F},\square):\mathsf{D^+}(\mathsf{Sh}(X;\mathsf{A}))\rightarrow\mathsf{D^+(Ab)}\,, \\ 
			&\mathsf{R}\text{Hom}_{\mathcal{S}_X}(\mathcal{G},\square):\mathsf{D^+}(\mathsf{Sh}(X;{}_{\mathcal{S}_X}\!\mathsf{Mod}))\rightarrow\mathsf{D^+(Ab)}\,. \\
		\end{aligned}
	\end{equation}
	
	\item (From Definition \ref{moresheafofmodules})
	\begin{equation}\label{localExtfunc}\begin{footnotesize}
			\mkern-12mu\mathcal{E}xt_{\mathcal{S}_X}\!(\mathcal{G},\square)\!\coloneqq\!\mathsf{R}\mathcal{H}om_{\mathcal{S}_X}\!(\mathcal{G},\square)\!:\!\mathsf{D^+}(\mathsf{Sh}(X;{}_{\mathcal{S}_X}\!\mathsf{Mod}))\!\rightarrow\!\mathsf{D^+}(\mathsf{Sh}(X;{}_{\mathcal{S}_X}\!\mathsf{Mod})), \\
		\end{footnotesize}
	\end{equation}
	and similarly on the sheaf of abelian groups. (More on $\mathcal{E}xt$ in Definition \ref{localExt} below.) 
	
	\item (From Lemma \ref{inversepushforwardexact})
	\begin{equation}
		\begin{aligned}
			&\mathsf{R}f_*:\mathsf{D^+}(\mathsf{Sh}(X;\mathsf{A}))\rightarrow\mathsf{D^+}(\mathsf{Sh}(Y;\mathsf{A}))\,, \\ &\mathsf{R}f^{-1}:\mathsf{D^+}(\mathsf{Sh}(Y;\mathsf{A}))\rightarrow\mathsf{D^+}(\mathsf{Sh}(X;\mathsf{A}))\,.
		\end{aligned}
	\end{equation}
	
	\item (From Corollary \ref{pullbackpushforwardexact})
	\begin{equation}\label{Rflowerstar}
		\mathsf{R}f_*:\mathsf{D^+}(\mathsf{Sh}(X;{}_{\mathcal{S}_X}\!\mathsf{Mod}))\rightarrow\mathsf{D^+}(\mathsf{Sh}(Y;{}_{\mathcal{S}_Y}\!\mathsf{Mod}))
	\end{equation} 
	
	\item (From Lemma \ref{sectfuncexact}, for any open $U\subset X$) 
	\begin{equation}\label{RGammaU}
		\mathsf{R}\Gamma(U,\square):\mathsf{D^+}(\mathsf{Sh}(X;\mathsf{A}))\rightarrow\mathsf{D^+(A)}\,.
	\end{equation}	  
\end{itemize}\vspace*{-0.2cm}
\hfill $\blacklozenge$ 
\end{Ex}

\begin{Rem}\label{sectfuncispushforward}
Here are some relevant things to consider, starting with a piece of warning.
\begin{itemize}[leftmargin=0.5cm]
	\item (Important warning) We \textit{cannot} construct with the same approach the right derived functors of the left exact functors $\text{Hom}_{\mathsf{Sh}(X;\mathsf{A})}(\square,\mathcal{F})$, $\text{Hom}_{\mathcal{S}_X}(\square,\mathcal{G})$ and $\mathcal{H}om_{\mathcal{S}_X}\!(\square,\mathcal{G})$ because these are contravariant functors, meaning that we require the source categories to have enough projectives instead of injectives. However, this fails to hold, as we will mention in a while when wondering about the existence of left derived functors.
	
	\item Considering the classical derived functors (see Definition \ref{classicalderivedfunc}) of the right derived Hom-functors above, by Proposition \ref{ExtRHom} we obtain the alternative characterization of the sheaf-theoretic Ext-functors mentioned at \eqref{Extmodules} (often taken as their default definition): 
	\begin{equation}\label{ExtRHomformodules}
		\begin{aligned}
			&\text{Ext}_{\mathsf{Sh}(X;\mathsf{A})}^n(\mathcal{F},\square)\cong\mathsf{R}^n\text{Hom}_{\mathsf{Sh}(X;\mathsf{A})}(\mathcal{F},\square):\mathsf{Sh}(X;\mathsf{A})\rightarrow\mathsf{Ab}\,, \\ 
			&\text{Ext}_{\mathcal{S}_X}^n(\mathcal{G},\square)\cong\mathsf{R}^n\text{Hom}_{\mathcal{S}_X}(\mathcal{G},\square):\mathsf{Sh}(X;{}_{\mathcal{S}_X}\!\mathsf{Mod})\rightarrow\mathsf{Ab}\,, \\ 
		\end{aligned}  
	\end{equation}
	for each $n\in\mathbb{N}$ (thanks to Theorem \ref{Extprop}, we know that these are trivial functors in case $n<0$, while we retrieve the original Hom-functors for $n=0$).
	
	\item The functor \eqref{Rflowerstar} induces the classical $\mathsf{R}^nf_*:\mathsf{Sh}(X;{}_{\mathcal{S}_X}\!\mathsf{Mod})\rightarrow\mathsf{Sh}(Y;{}_{\mathcal{S}_Y}\!\mathsf{Mod})$ for each $n\in\mathbb{Z}$. One calls them \textit{higher direct image functors} and, for any $\mathcal{S}_X$-module $\mathcal{F}$, $\mathsf{R}^nf_*(\mathcal{F})=H^n(\mathsf{R}f_*(\mathcal{F}))\in\text{obj}(\mathsf{Sh}(Y;{}_{\mathcal{S}_Y}\!\mathsf{Mod}))$ is referred to as the \textbf{higher direct image}\index{higher direct image} of $\mathcal{F}$.
	
	\item Choosing specifically $(Y,\mathcal{S}_Y)=(\{y\},\underline{\mathbb{Z}})$, the resulting pushforward functor $f_*:\mathsf{Sh}(X;{}_{\mathcal{S}_X}\!\mathsf{Mod})\rightarrow\mathsf{Sh}(\{y\};{}_{\underline{\mathbb{Z}}}\mathsf{Mod})\cong\mathsf{Sh}(\{y\};\mathsf{Ab})\equiv\mathsf{Ab}$ is none other than the \textbf{global sections functor}\index{functor!global sections} $\Gamma(X,\square):\mathsf{Sh}(X;{}_{\mathcal{S}_X}\!\mathsf{Mod})\rightarrow\mathsf{Ab}$ from Lemma \ref{sectfuncexact}, specialized to $\mathcal{S}_X$-modules, thus left exact with associated right derived functor $\mathsf{R}\Gamma(X,\square)':\mathsf{D^+}(\mathsf{Sh}(X;{}_{\mathcal{S}_X}\!\mathsf{Mod}))\rightarrow \mathsf{D^+}(\mathsf{Ab})$.\footnote{The $'$ next to $\mathsf{R}\Gamma(X,\square)$ serves to distinguish it from the right derived functor $\mathsf{R}\Gamma(X,\square)$ on $\mathsf{D^+}(\mathsf{Sh}(X;\mathsf{A}))$ one obtains by choosing $U=X$ in \eqref{RGammaU}. It is a temporary notation, as motivated in Proposition \ref{sheafcohomologyisabelian} below.}
\end{itemize} 
\end{Rem}

We put particular emphasis on the right derived functors defined in \eqref{localExtfunc}:

\begin{Def}\label{localExt}
Let $(X,\mathcal{S})$ be a ringed space. For $n\in\mathbb{Z}$ and some given $\mathcal{F},\mathcal{G}\in\text{obj}(\mathsf{Sh}(X;{}_{\mathcal{S}}\mathsf{Mod}))$, the $n$-th classical derived functor of $\mathcal{H}om_{\mathcal{S}}(\mathcal{F},\square)$ defines the \textbf{local $\mathcal{E}xt^n$-sheaf}\index{sheaf!local $\mathcal{E}xt^n$} 
\begin{equation}
	\mathcal{E}xt_{\mathcal{S}}^n(\mathcal{F},\mathcal{G})\coloneqq H^n(\mathsf{R}\mathcal{H}om_{\mathcal{S}}(\mathcal{F},\mathcal{G}))\in\text{obj}(\mathsf{Sh}(X;{}_{\mathcal{S}}\mathsf{Mod}))\,,
\end{equation} 
which is also obtainable as the sheafification of the presheaf set by the assignment $U\subset X\text{ open}\mapsto \text{Ext}_{\mathcal{S}|_U}^n(\mathcal{F}|_U,\mathcal{G}|_U)$. 

As expectable from Corollary \ref{whenclassicalderivedtrivial}, $\mathcal{E}xt_{\mathcal{S}}^0(\mathcal{F},\mathcal{G})\cong \mathcal{H}om_{\mathcal{S}}(\mathcal{F},\mathcal{G})$ and\break $\mathcal{E}xt_{\mathcal{S}}^n(\mathcal{F},\mathcal{G})=0$ if $n<0$, whereas $\mathcal{E}xt_{\mathcal{S}}^0(\mathcal{S},\mathcal{G})\cong \mathcal{G} $ and $\mathcal{E}xt_{\mathcal{S}}^n(\mathcal{S},\mathcal{G})=0$ if $n\neq 0$ (for $\mathcal{H}om_{\mathcal{S}}(\mathcal{S},\square)$ is the identity functor; cf. Definition \ref{moresheafofmodules}).
\end{Def}

\begin{Rem}\label{localExtRem}
We state a couple properties regarding local $\mathcal{E}xt$-sheaves (without proof; the reader is advised to consult \cite[section III.6]{[Har77]}).
\begin{itemize}[leftmargin=0.5cm]
	\item Given $\mathcal{F},\mathcal{G}\in\text{obj}(\mathsf{Sh}(X;{}_{\mathcal{S}}\mathsf{Mod}))$, one can compute $\mathcal{E}xt_{\mathcal{S}}^n(\mathcal{F},\mathcal{G})$ by considering locally free projective resolutions of $\mathcal{F}$, that is, exact sequences $\mathcal{L}^\bullet\rightarrow\mathcal{F}\rightarrow 0$ where each $\mathcal{L}^i\in\text{obj}(\mathsf{Sh}^\mathsf{lf}_{r_i}(X;{}_{\mathcal{S}}\mathsf{Mod}))$ is locally free of finite rank $r_i<\infty$. Then $\mathcal{E}xt_{\mathcal{S}}^n(\mathcal{F},\mathcal{G})\cong H^n(\mathcal{H}om_\mathcal{S}(\mathcal{L}^\bullet,\mathcal{G}))$. (This is particularly useful when $X$ is a quasi-projective scheme over $\text{Spec}(R)$ for $R$ a noetherian ring, since any $\mathcal{F}\in\text{obj}(\mathsf{Coh}(X;{}_{\mathcal{S}}\mathsf{Mod}))$ has such a resolution.)
	
	\item Given $\mathcal{F},\mathcal{G}\in\text{obj}(\mathsf{Sh}(X;{}_{\mathcal{S}}\mathsf{Mod}))$ and some $\mathcal{L}\in\text{obj}(\mathsf{Sh}^\mathsf{lf}_r(X;{}_{\mathcal{S}}\mathsf{Mod}))$, it holds
	\begin{align*}
		&\text{Ext}^n(\mathcal{F}\otimes_\mathcal{S}\mathcal{L},\mathcal{G})\cong \text{Ext}^n(\mathcal{F},\mathcal{L}^\vee\otimes_\mathcal{S}\mathcal{G})\qquad\text{and} \\
		&\mathcal{E}xt_\mathcal{S}^n(\mathcal{F}\otimes_\mathcal{S}\mathcal{L},\mathcal{G})\cong \mathcal{E}xt_\mathcal{S}^n(\mathcal{F},\mathcal{L}^\vee\otimes_\mathcal{S}\mathcal{G})\cong \mathcal{E}xt_\mathcal{S}^n(\mathcal{F},\mathcal{G})\otimes_\mathcal{S}\mathcal{L}^\vee\,.
	\end{align*}     
\end{itemize}
\end{Rem}

\vspace*{0.5cm}
\noindent To obtain instead the left derived functors of the known right exact functors on sheaves, we would ideally need the projective counterpart to Proposition \ref{enoughinjectivemodules}. However, this fails to hold on general ringed spaces! (For example, one can use the projectivity diagram to easily prove that the extension by zero $\iota_*\underline{\mathbb{Z}}_x\in\textup{obj}(\mathsf{Sh}(X;{}_{\underline{\mathbb{Z}}}\mathsf{Mod}))$ of the constant sheaf $\underline{\mathbb{Z}}$ on any one-point space $\{x\}$ with a connected neighbourhood in $X$ is not the quotient of a projective sheaf.) 

Rather, we directly find adapted classes to the right exact functors under investigation. Example \ref{Torfunc} gives us a good idea for the tensoring functor (cf. \cite[Proposition III.8.5]{[GM03]}):

\begin{Lem}\label{flatsareadapted}
Let $(X,\mathcal{S})$ be a ringed space and $\mathcal{M}\in\textup{obj}(\mathsf{Sh}(X;\mathsf{Mod}_\mathcal{S}))$, yielding the functor $\mathcal{M}\otimes_\mathcal{S}\square:\mathsf{Sh}(X;{}_\mathcal{S}\mathsf{Mod})\rightarrow \mathsf{Sh}(X;\mathsf{Ab}),\,\mathcal{N}\mapsto\mathcal{M}\otimes_\mathcal{S}\mathcal{N}$, which is right exact \textup(just like for modules over a ring from Example \ref{exactfuncexample}\textup). Then the class of all flat sheaves is adapted to it, where any $\mathcal{N}\in\textup{obj}(\mathsf{Sh}(X;{}_\mathcal{S}\mathsf{Mod}))$ is \textup{flat} over $\mathcal{S}$ if $\square\otimes_\mathcal{S}\mathcal{N}$ is exact.\footnote{Observe then that $\mathcal{N}$ is flat if and only if each stalk $\mathcal{N}_x$ for $x\in X$ is a flat $\mathcal{S}_x$-module.}

The mirrored statement for the right exact functor $\square\otimes_\mathcal{S}\mathcal{N}:\mathsf{Sh}(X;\mathsf{Mod}_\mathcal{S})\rightarrow \mathsf{Sh}(X;\mathsf{Ab})$ holds as well, and we furthermore remark that in case $\mathcal{S}$ belongs to $\mathsf{Sh}(X;\mathsf{CommRings})$, both functors have target category $\mathsf{Sh}(X;\mathsf{Mod}_\mathcal{S})$. 
\end{Lem}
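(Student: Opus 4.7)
The plan is to verify each of the three conditions required by Definition \ref{adaptedclass} for the right-exact case. Let $\mathcal{R}\subset\textup{obj}(\mathsf{Sh}(X;{}_\mathcal{S}\mathsf{Mod}))$ denote the class of all flat $\mathcal{S}$-modules. For the symmetry between left and right modules, I will argue once, for $\mathcal{M}\otimes_\mathcal{S}\square$, since the other case is verbatim the same with roles swapped.

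First I would address closure under finite direct sums. Given flat sheaves $\mathcal{N}_1,\mathcal{N}_2$ and any short exact sequence $0\rightarrow\mathcal{P}'\rightarrow\mathcal{P}\rightarrow\mathcal{P}''\rightarrow 0$ in $\mathsf{Sh}(X;\mathsf{Mod}_\mathcal{S})$, the distributivity of the tensor product over direct sums together with the fact that direct sums in $\mathsf{Sh}(X;\mathsf{Ab})$ preserve exactness (checkable stalkwise) reduces the exactness of $0\rightarrow\mathcal{P}'\otimes_\mathcal{S}(\mathcal{N}_1\oplus\mathcal{N}_2)\rightarrow\mathcal{P}\otimes_\mathcal{S}(\mathcal{N}_1\oplus\mathcal{N}_2)\rightarrow\mathcal{P}''\otimes_\mathcal{S}(\mathcal{N}_1\oplus\mathcal{N}_2)\rightarrow 0$ to that of the corresponding sequences tensored with each $\mathcal{N}_i$ separately, which hold by assumption on flatness.

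Next I would handle the crucial acyclicity-preservation property. The key observation is that $(\mathcal{M}\otimes_\mathcal{S}\mathcal{F})_x\cong \mathcal{M}_x\otimes_{\mathcal{S}_x}\mathcal{F}_x$ for any $x\in X$, combined with the fact that flatness of an $\mathcal{S}$-module is equivalent to stalkwise flatness over $\mathcal{S}_x$ (Lemma \ref{aboutmonepisheaves} allows us to check exactness on stalks). This reduces the problem to the module-theoretic claim: if $F^\bullet\in\textup{obj}(\mathsf{Kom^-}({}_R\mathsf{Mod}))$ is acyclic with each $F^n$ flat, then $M\otimes_R F^\bullet$ is acyclic for every right $R$-module $M$. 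I would prove this by truncating $F^\bullet$ into short exact sequences $0\rightarrow K^n\rightarrow F^n\rightarrow K^{n+1}\rightarrow 0$ with $K^n\coloneqq\ker(d^n)=\textup{im}(d^{n-1})$. Since $F^\bullet$ is bounded above, at the top nonzero degree $N$ we have $K^N=F^N$ flat, and a descending induction using the long exact Tor sequence $0=\textup{Tor}_1(K^{n+1},M)\rightarrow K^n\otimes M\rightarrow F^n\otimes M\rightarrow K^{n+1}\otimes M\rightarrow 0$ shows simultaneously that each $K^n$ is flat and that tensoring preserves the short exact sequences. Gluing these restores exactness of $M\otimes_R F^\bullet$ at every degree.

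Finally I would establish the existence of enough flats. For each pair $(U,s)$ with $U\subset X$ open and $s\in\mathcal{N}(U)$, let $j:U\hookrightarrow X$ be the inclusion, and consider the extension-by-zero sheaf $j_!(\mathcal{S}|_U)$, whose stalk at $x$ is $\mathcal{S}_x$ if $x\in U$ and $0$ otherwise (hence free, thus flat, at every stalk, so $j_!(\mathcal{S}|_U)$ is flat). The adjunction for $j_!$ gives a canonical morphism $j_!(\mathcal{S}|_U)\rightarrow\mathcal{N}$ sending the unit section to $s$, and summing over all such pairs yields a surjection $\bigoplus_{(U,s)}j_!(\mathcal{S}|_U)\twoheadrightarrow\mathcal{N}$ from a flat sheaf (by the first step, closure under sums). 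Iterating this construction on the kernel produces the desired flat resolution $\mathcal{R}^\bullet\rightarrow\mathcal{N}\rightarrow 0$ with $\mathcal{R}^i$ flat for all $i\leq 0$, realizing $\mathcal{N}$ as the cokernel $\mathcal{R}^0/\mathcal{R}'$ of flat sheaves as demanded by the definition.

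The main obstacle will be the stalkwise flatness argument of the second step, specifically the inductive splitting of a bounded-above acyclic complex into short exact sequences and showing that all its kernels inherit flatness — this requires carefully exploiting the boundedness from above to anchor the induction. The extension-by-zero construction in the third step is standard but relies on the (mildly technical) fact that $j_!$ preserves stalks trivially away from $U$, which one should verify explicitly if not already available in the surrounding theory.
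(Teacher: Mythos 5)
Your proposal is correct and follows essentially the same route as the paper: the extension-by-zero sheaves $j_!(\mathcal{S}|_U)$ you use are exactly the paper's $\mathcal{S}_U$ (with the same stalk description and the same $\text{Hom}_\mathcal{S}(\mathcal{S}_U,\mathcal{N})\cong\mathcal{N}(U)$ adjunction producing the surjection from a direct sum of flats), and your acyclicity argument reduces to stalks and invokes precisely the short-exact-sequence/Tor lemma (``second and third terms flat implies first term flat'') that the paper cites. The only difference is that you spell out the closure under finite direct sums and the descending induction explicitly, where the paper asserts them or defers to a reference.
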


\begin{proof}
The most important verification here is that any $\mathcal{F}\in\textup{obj}(\mathsf{Sh}(X;{}_\mathcal{S}\mathsf{Mod}))$ is a quotient of flat sheaves. If, given any open subset $U\subset X$, we define $\mathcal{S}_U\in\text{obj}(\mathsf{Sh}(X;{}_\mathcal{S}\mathsf{Mod}))$ by $(\mathcal{S}_U)_x\coloneqq\mathcal{S}_x$ if $x\in U$ and $(\mathcal{S}_U)_x\coloneqq 0$ otherwise, then we obtain a flat left $\mathcal{S}$-module fulfilling $\text{Hom}_\mathcal{S}(\mathcal{S}_U,\mathcal{F})\cong \mathcal{F}(U)$ via $\psi\mapsto\psi_U(1_{\mathcal{S}(U)})$ (cf. the isomorphism \eqref{Homglobalsectiso}).

Now, consider a collection of open sets $\{U_i\}_{i\in I}$ with associated sections $a_i\in\mathcal{F}(U_i)$ such that for any $x\in X$ the stalk $\mathcal{F}_x$ is generated as an $\mathcal{S}_x$-module by those germs $(a_i)_x=[(U_i,a_i)]$ with $x\in U_i$. The by the previous isomorphism $\Gamma(\bigoplus_{i\in I}U_i,\mathcal{F})\cong$ $\text{Hom}_\mathcal{S}(\bigoplus_{i\in I}\mathcal{S}_{U_i},\mathcal{F})$ determined $\mathcal{S}$-module morphism $\phi:\bigoplus_{i\in I}\mathcal{S}_{U_i}\rightarrow\mathcal{F}$ is then clearly surjective, because it is surjective on its stalks. But any direct sum of flat sheaves is itself flat, whence it follows that $\mathcal{F}\cong (\bigoplus_{i\in I}\mathcal{S}_{U_i})/\ker(\phi)$, the desired quotient of flat sheaves.

According to Definition \ref{adaptedclass}, it remains to prove that $\mathcal{M}\otimes_\mathcal{S}\square$ maps acyclic complexes of flat sheaves to acyclic complexes of sheaves of abelian groups. This works just as in the scenario of modules over rings: one first shows that any short exact triple of left $\mathcal{S}_x$-modules with second and third object being flat also has first one flat (see \cite[Exercise III.7.5]{[GM03]}), and applies this result at the level of stalks for each $x\in X$.   
\end{proof}

Consequently, the following set of examples is now disclosed to us:

\begin{Ex}\label{derivedtensorfunc}
Let $(X,\mathcal{S})$ be a ringed space and let $\mathcal{M}\in\textup{obj}(\mathsf{Sh}(X;\mathsf{Mod}_\mathcal{S}))$ respectively $\mathcal{N}\in\textup{obj}(\mathsf{Sh}(X;{}_\mathcal{S}\mathsf{Mod}))$. Then by Lemma \ref{flatsareadapted} and Definition \ref{derivedfunc} there exist left derived functors
\begin{equation}
	\begin{aligned}
		&\mathcal{M}\overset{\mathsf{L}}{\otimes}_\mathcal{S}\square\coloneqq\mathsf{L}(\mathcal{M}\otimes_\mathcal{S}\square):\mathsf{D^-}(\mathsf{Sh}(X;{}_\mathcal{S}\mathsf{Mod}))\rightarrow\mathsf{D^-}(\mathsf{Sh}(X;\mathsf{Ab}))\,, \\ &\square\overset{\mathsf{L}}{\otimes}_\mathcal{S}\mathcal{N}\coloneqq\mathsf{L}(\square\otimes_\mathcal{S}\mathcal{N}):\mathsf{D^-}(\mathsf{Sh}(X;\mathsf{Mod}_\mathcal{S}))\rightarrow\mathsf{D^-}(\mathsf{Sh}(X;\mathsf{Ab}))\,,
	\end{aligned}
\end{equation}
whose classical counterparts from Definition \ref{classicalderivedfunc} yield cohomology sheaves denoted by $\text{Tor}_n^\mathcal{S}(\mathcal{M},\mathcal{N})\coloneqq H^{-n}(\mathcal{M}\overset{\mathsf{L}}{\otimes}_\mathcal{S}\mathcal{N})\in\text{obj}(\mathsf{Sh}(X;\mathsf{Ab}))$ (the same for both functors!), for $n\geq 0$, which is notationally consistent with Example \ref{Torfunc}.

Moreover, given a morphism of ringed spaces $f:(X,\mathcal{S}_X)\rightarrow (Y,\mathcal{S}_Y)$ with $\mathcal{S}_X,\mathcal{S}_Y\in\text{obj}(\mathsf{Sh}(X;\mathsf{CommRings}))$, from Corollary \ref{pullbackpushforwardexact} we know that the pullback functor is right exact, thus yielding the left derived functor
\begin{equation}
	\mathsf{L}f^*:\mathsf{D^-}(\mathsf{Sh}(Y;{}_{\mathcal{S}_Y}\!\mathsf{Mod}))\rightarrow\mathsf{D^-}(\mathsf{Sh}(X;{}_{\mathcal{S}_X}\!\mathsf{Mod}))
\end{equation}
with associated sheaves $\mathsf{L}^{-n}f^*(\mathcal{G})=H^{-n}(\mathcal{S}_X\overset{\mathsf{L}}{\otimes}_{f^{\!-\!1}\mathcal{S}_Y} f^{-1}\mathcal{G})$ for $n\geq 0$. \hfill $\blacklozenge$
\end{Ex}

\subsection{Sheaf cohomology and \v{C}ech cohomology}\label{ch6.2}

Applying Definition \ref{classicalderivedfunc} to the left exact global sections functor from Lemma \ref{sectfuncexact}, we arrive at the fundamental notion of \textit{sheaf cohomology}:

\begin{Def}\label{sheafcohomology}
Let $X$ be a topological space and $\Gamma(X,\square):\mathsf{Sh}(X;\mathsf{Ab})\rightarrow\mathsf{Ab}$ the global sections functor, yielding $\mathsf{R}\Gamma(X,\square):\mathsf{D^+}(\mathsf{Sh}(X;\mathsf{Ab}))\rightarrow \mathsf{D^+}(\mathsf{Ab})$. Given $n\in\mathbb{Z}$, we call the corresponding classical $n$-th right derived functor $H^n(X,\square)\coloneqq \mathsf{R}^n\Gamma(X,\square)= H^n(\mathsf{R}\Gamma(X,\square)):\mathsf{Sh}(X;\mathsf{Ab})\rightarrow\mathsf{Ab}$ the \textbf{$n$-th sheaf cohomology functor}\index{functor!sheaf cohomology}, and
\begin{equation}
	H^n(X,\mathcal{F})=H^n(\mathsf{R}\Gamma(X,\square))(\mathcal{F})\in\text{obj}(\mathsf{Ab})
\end{equation}
the \textbf{$n$-th sheaf cohomology group}\index{sheaf cohomology group} of $\mathcal{F}\in\text{obj}(\mathsf{Sh}(X;\mathsf{Ab}))$. In particular, by Corollary \ref{whenclassicalderivedtrivial}, we have $H^0(X,\mathcal{F})\cong\Gamma(X,\mathcal{F})=\mathcal{F}(X)$ and $H^n(X,\mathcal{F})\cong\{0\}$ if $n<0$.
\end{Def}

\begin{Pro}\label{sheafcohomologyisabelian}
Let $(X,\mathcal{S})$ be a ringed space, yielding the functor  $\mathsf{R}\Gamma(X,\square)':\mathsf{D^+}(\mathsf{Sh}(X;{}_{\mathcal{S}}\mathsf{Mod}))\rightarrow \mathsf{D^+}(\mathsf{Ab})$ constructed at the end of Remark \ref{sectfuncispushforward}. Denoting by $\Phi:\mathsf{Sh}(X;{}_{\mathcal{S}}\mathsf{Mod})\rightarrow\mathsf{Sh}(X;\mathsf{Ab})$ the \textup(clearly exact\textup) forgetful functor which interprets sheaves of $\mathcal{S}$-modules as mere sheaves of $\underline{\mathbb{Z}}$-modules \textup(thus of abelian groups\textup), it holds 
\[
\mathsf{R}\Gamma(X,\square)'\cong\mathsf{R}\Gamma(X,\square)\circ\mathsf{R}\Phi:\mathsf{D^+}(\mathsf{Sh}(X;{}_{\mathcal{S}}\mathsf{Mod}))\rightarrow\mathsf{D^+}(\mathsf{Ab})\,,
\]
where the right-hand $\mathsf{R}\Gamma(X,\square)$ is defined on $\mathsf{D^+}(\mathsf{Sh}(X;\mathsf{Ab}))$, as obtained in \eqref{RGammaU} for $U=X$.

In other words, when computing sheaf cohomology we can forget the additional structures on the topological space $X$ and its sheaves, and simply focus on the framework of Definition \ref{sheafcohomology}. \textup(Notationally, we can then always write $\mathsf{R}\Gamma(X,\square)$ unprimed.\textup)   
\end{Pro}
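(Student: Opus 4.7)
The plan is to invoke the composition rule for derived functors, namely Proposition \ref{derivedfunccompo}, applied to the factorization $\Gamma(X,\square)' = \Gamma(X,\square) \circ \Phi$. First I would note that this equality is manifest at the level of the underlying left exact functors: $\Phi$ forgets only the $\mathcal{S}$-module structure without touching the section groups, so $\Gamma(X,\Phi(\mathcal{F})) = \Gamma(X,\mathcal{F})$ as an abelian group for any $\mathcal{F} \in \text{obj}(\mathsf{Sh}(X;{}_\mathcal{S}\mathsf{Mod}))$. Moreover, $\Phi$ is itself exact (exactness of sequences of $\mathcal{S}$-modules is tested stalkwise on the underlying abelian groups), so by Proposition \ref{D(F)exact} it admits a derived functor $\mathsf{R}\Phi$ given by the termwise extension of $\Phi$ to complexes; in particular, any class of objects is adapted to $\Phi$.

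Next I would set up the adapted classes required by Proposition \ref{derivedfunccompo}. By Proposition \ref{enoughinjectivemodules}, both $\mathsf{Sh}(X;{}_\mathcal{S}\mathsf{Mod})$ and $\mathsf{Sh}(X;\mathsf{Ab})$ have enough injectives, so I may take $\mathcal{R}_\mathsf{A}$ to be the class of injective $\mathcal{S}$-modules (adapted to $\Phi$ by Lemma \ref{enoughinjprojobj}) and $\mathcal{R}_\mathsf{B}$ to be the class of $\Gamma(X,\square)$-acyclic sheaves of abelian groups, which is adapted to $\Gamma(X,\square)$ by virtue of being a sufficiently large class (it contains the injectives of $\mathsf{Sh}(X;\mathsf{Ab})$, cf. the remark following Definition \ref{Facyclic}). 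The hypothesis of Proposition \ref{derivedfunccompo} then reduces to the inclusion $\Phi(\mathcal{R}_\mathsf{A}) \subseteq \mathcal{R}_\mathsf{B}$: injective sheaves of $\mathcal{S}$-modules must remain $\Gamma$-acyclic when regarded merely as sheaves of abelian groups.

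The hard part of the proof will be precisely this acyclicity claim, which is not formally established earlier in the paper. I would treat it by revisiting the construction given in the proof of Proposition \ref{enoughinjectivemodules}: any injective $\mathcal{I}$ in $\mathsf{Sh}(X;{}_\mathcal{S}\mathsf{Mod})$ embeds into a product $\prod_{x \in X}(\iota_x)_* I_x$ of skyscraper sheaves, and by injectivity this embedding splits, so $\mathcal{I}$ is a direct summand of such a product. Each summand is \emph{flasque} (the restriction maps $\rho^U_V$ from $\iota_{*}I_{x}(U)$ are either $\text{id}_{I_x}$ or zero, hence surjective), products and direct summands of flasque sheaves are flasque, and the forgetful functor obviously preserves flasqueness. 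It remains to invoke the standard fact that every flasque sheaf of abelian groups is $\Gamma(X,\square)$-acyclic: given a short exact sequence $0 \to \mathcal{F}' \to \mathcal{F} \to \mathcal{F}'' \to 0$ with $\mathcal{F}'$ flasque, a Zorn's lemma argument extending partial liftings shows that $\Gamma(X,\mathcal{F}) \to \Gamma(X,\mathcal{F}'')$ is surjective; iterating, any flasque resolution $\mathcal{F} \to \mathcal{I}^\bullet$ of a flasque $\mathcal{F}$ yields an exact complex $\Gamma(X,\mathcal{I}^\bullet)$, whence $H^n(X,\mathcal{F}) = 0$ for $n > 0$.

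Once these ingredients are in place, Proposition \ref{derivedfunccompo} produces a natural isomorphism $E: \mathsf{R}\Gamma(X,\square)' \xrightarrow{\sim} \mathsf{R}\Gamma(X,\square) \circ \mathsf{R}\Phi$, which together with the identification $\mathsf{R}\Phi = \Phi$ extended to complexes gives the statement. In particular, passing to cohomology via $H^n$ recovers the same higher sheaf cohomology groups whether computed in $\mathsf{Sh}(X;{}_\mathcal{S}\mathsf{Mod})$ or in $\mathsf{Sh}(X;\mathsf{Ab})$, which legitimizes suppressing the prime and writing $\mathsf{R}\Gamma(X,\square)$ unambiguously on either derived category.
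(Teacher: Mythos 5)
Your proposal is correct and follows essentially the same route as the paper: factor $\Gamma(X,\square)'=\Gamma(X,\square)\circ\Phi$, apply Proposition \ref{derivedfunccompo} with the injective $\mathcal{S}$-modules on one side and the flasque (equivalently, $\Gamma$-acyclic) sheaves of abelian groups on the other, the crux being that an injective $\mathcal{S}$-module splits off its embedding into a flasque sheaf and hence stays flasque after forgetting the module structure. The only cosmetic difference is that the paper realizes this flasque ambient sheaf as the Godement sheaf $U\mapsto\prod_{x\in U}\mathcal{F}_x$ rather than the product of skyscrapers from Proposition \ref{enoughinjectivemodules}, and verifies adaptedness of the flasque class directly instead of citing the sufficiently-large-acyclic-class criterion.
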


\begin{proof}
Clearly, $\Gamma(X,\square)'=\Gamma(X,\square)\circ\Phi:\mathsf{Sh}(X;{}_{\mathcal{S}}\mathsf{Mod})\rightarrow\mathsf{Ab}$, hence $\mathsf{R}\Gamma(X,\square)'$ $=\mathsf{R}(\Gamma(X,\square)\circ\Phi):\mathsf{D^+}(\mathsf{Sh}(X;{}_{\mathcal{S}}\mathsf{Mod}))\rightarrow\mathsf{D^+}(\mathsf{Ab})$. Our goal is to apply Proposition \ref{derivedfunccompo}, which produces a canonical isomorphism $\mathsf{R}(\Gamma(X,\square)\circ\Phi)\cong\mathsf{R}\Gamma(X,\square)\circ\mathsf{R}\Phi$. Thereto, we need two classes $\mathcal{R}_1\subset\text{obj}(\mathsf{Sh}(X;{}_{\mathcal{S}}\mathsf{Mod}))$, $\mathcal{R}_2\subset\text{obj}(\mathsf{Sh}(X;\mathsf{Ab}))$ adapted to $\Phi$ respectively to $\Gamma(X,\square)$ such that $\Phi(\mathcal{R}_1)\subset\mathcal{R}_2$.

A first valid candidate is $\mathcal{R}_1\coloneqq\{\mathcal{I}\in\text{obj}(\mathsf{Sh}(X;{}_{\mathcal{S}}\mathsf{Mod}))\mid \mathcal{I}\text{ is injective}\}$, by Proposition \ref{enoughinjectivemodules} and Lemma \ref{enoughinjprojobj}. Let $\mathcal{R}_2\coloneqq\{\mathcal{F}\in\text{obj}(\mathsf{Sh}(X;\mathsf{Ab}))\mid \mathcal{F}\text{ is flabby}\}$, where a sheaf $\mathcal{F}$ of abelian groups is ``flabby'' if the restriction morphism $\rho^X_U:\mathcal{F}(X)=\Gamma(X,\mathcal{F})\rightarrow\mathcal{F}(U)=\Gamma(U,\mathcal{F})$ is surjective for all $U\subset X$ open. Given a sheaf $\mathcal{F}$, the assignment $\mathcal{F}_\text{flabby}(U)\coloneqq\prod_{x\in U}\mathcal{F}_x$ then makes $\mathcal{F}_\text{flabby}$ clearly flabby. Assuming $\mathcal{F}\in\mathcal{R}_1$, $\Phi(\mathcal{F})_\text{flabby}$ contains by construction $\Phi(\mathcal{F})$ as a direct summand, thus necessarily flabby. This checks $\Phi(\mathcal{R}_1)\subset\mathcal{R}_2$.

We must still show that $\mathcal{R}_2$ is adapted to the left exact functor $\Gamma(X,\square)$. The first requirement of Definition \ref{adaptedclass} is obviously met: any $\mathcal{F}\in\text{obj}(\mathsf{Sh}(X;\mathsf{Ab}))$ is a subobject of its $\mathcal{F}_\text{flabby}\in\mathcal{R}_2$. Preservation of acyclicity involves more work. The first step is showing that any short exact sequence $0\rightarrow\mathcal{F}\xrightarrow{\chi}\mathcal{G}\xrightarrow{\psi}\mathcal{H}\rightarrow 0$ in $\mathsf{Sh}(X;\mathsf{Ab})$ with $\mathcal{F}\in\mathcal{R}_2$ induces an exact sequence of abelian groups $0\rightarrow\Gamma(X,\mathcal{F})\xrightarrow{\Gamma(X,\chi)}\Gamma(X,\mathcal{G})\xrightarrow{\Gamma(X,\psi)}\Gamma(X,\mathcal{H})\rightarrow 0$, so that by left exactness of the global sections functor one needs only check the surjectivity of $\Gamma(X,\psi)$. This is more tedious than difficult, so we rather refer the reader to \cite[Theorem III.8.3a)]{[GM03]}. A similar strategy is used to prove that if moreover $\mathcal{G}\in\mathcal{R}_2$, then also $\mathcal{H}\in\mathcal{R}_2$.

So suppose $\mathcal{F}^\bullet=(0\rightarrow\mathcal{F}^0\rightarrow\mathcal{F}^1\rightarrow...)\in\text{obj}(\mathsf{Kom^+}(\mathcal{R}_2))$ with differential $d^\bullet$ is acyclic. This means that $\mathcal{Z}^n\coloneqq\ker(d^n)=\text{im}(d^{n-1})$, so that each sequence $0\rightarrow\mathcal{Z}^n\hookrightarrow\mathcal{F}^n\xrightarrow{d^n}\mkern-20mu\rightarrow\mathcal{Z}^{n+1}\rightarrow 0$ is exact. Since $\mathcal{Z}^0=0\in\mathcal{R}_2$ and $\mathcal{F}^0\in\mathcal{R}_2$, also $\mathcal{Z}^1\in\mathcal{R}_2$ by the facts just mentioned. Thus, inductively, $\mathcal{Z}^n\in\mathcal{R}_2$ for all $n\in\mathbb{N}$, implying by above that $0\rightarrow\Gamma(X,\mathcal{Z}^n)\hookrightarrow\Gamma(X,\mathcal{F}^n)\xrightarrow{\Gamma(X,d^n)}\mkern-20mu\rightarrow\Gamma(X,\mathcal{Z}^{n+1})\rightarrow 0$ is always exact, hence $\text{im}(\Gamma(X,d^{n-1}))=\Gamma(X,\mathcal{Z}^n)=\ker(\Gamma(X,d^n))$. Therefore, $H^n(\Gamma(X,\mathcal{F}^\bullet))=\ker(\Gamma(X,d^n))/\text{im}(\Gamma(X,d^{n-1}))\cong\{0\}$ for all $n\in\mathbb{N}$, meaning that $\Gamma(X,\mathcal{F}^\bullet)$ is acyclic. So $\mathcal{R}_2$ is adapted to $\Gamma(X,\square)$.    
\end{proof}

\begin{Lem}\label{sheafcohomthroughext}
Let $(X,\mathcal{S})$ be a ringed space and $\mathcal{F}\in\textup{obj}(\mathsf{Sh}(X;{}_{\mathcal{S}}\mathsf{Mod}))$. Then
\[
H^n(X,\mathcal{F})\cong\textup{Ext}_{\mathcal{S}}^n(\mathcal{S},\mathcal{F})\quad\textup{\big(}\cong\textup{Ext}_{\mathsf{Sh}(X;\mathsf{Ab})}^n(\underline{\mathbb{Z}},\mathcal{F}),\;\text{by Proposition \ref{sheafcohomologyisabelian}}\textup{\big)}
\]
for all $n\in\mathbb{Z}$.
\end{Lem}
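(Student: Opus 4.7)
The plan is to derive the isomorphism from the natural identification of $\Gamma(X,\square)$ with $\textup{Hom}_{\mathcal{S}}(\mathcal{S},\square)$ at the underived level, and then promote this to the derived setting by uniqueness of derived functors. Concretely, equation \eqref{Homglobalsectiso} in Remark \ref{sheafmodulesrem}c. already furnishes a natural isomorphism
\[
\textup{Hom}_{\mathcal{S}}(\mathcal{S},\square)\xrightarrow{\sim}\Gamma(X,\square)':\mathsf{Sh}(X;{}_{\mathcal{S}}\mathsf{Mod})\rightarrow\mathsf{Ab}\,,
\]
of left exact functors (where $\Gamma(X,\square)'$ is the global sections functor restricted to $\mathsf{Sh}(X;{}_{\mathcal{S}}\mathsf{Mod})$, as in the last bullet of Remark \ref{sectfuncispushforward}). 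For $n<0$ both sides of the claimed isomorphism vanish trivially by Corollary \ref{whenclassicalderivedtrivial}, so we restrict to $n\geq 0$.

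First, I would apply Proposition \ref{enoughinjectivemodules}, which tells us that $\mathsf{Sh}(X;{}_{\mathcal{S}}\mathsf{Mod})$ has enough injectives. Consequently, by Lemma \ref{enoughinjprojobj}, the class of injective $\mathcal{S}$-modules is adapted to both $\textup{Hom}_{\mathcal{S}}(\mathcal{S},\square)$ and $\Gamma(X,\square)'$, so their right derived functors
\[
\mathsf{R}\textup{Hom}_{\mathcal{S}}(\mathcal{S},\square),\,\mathsf{R}\Gamma(X,\square)':\mathsf{D^+}(\mathsf{Sh}(X;{}_{\mathcal{S}}\mathsf{Mod}))\rightarrow\mathsf{D^+}(\mathsf{Ab})
\]
exist by Theorem \ref{derivedfuncisexact}. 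By the uniqueness of derived functors up to canonical isomorphism (as characterized by the universal property in Definition \ref{universalderivedfunc}), the natural isomorphism above lifts to a canonical natural isomorphism $\mathsf{R}\textup{Hom}_{\mathcal{S}}(\mathcal{S},\square)\cong\mathsf{R}\Gamma(X,\square)'$. Passing to the $n$-th cohomology functor $H^n:\mathsf{D^+}(\mathsf{Ab})\rightarrow\mathsf{Ab}$ and invoking the characterization \eqref{ExtRHomformodules} yields
\[
\textup{Ext}_{\mathcal{S}}^n(\mathcal{S},\mathcal{F})\cong\mathsf{R}^n\textup{Hom}_{\mathcal{S}}(\mathcal{S},\mathcal{F})\cong\mathsf{R}^n\Gamma(X,\square)'(\mathcal{F})\,.
\]

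Finally, to identify the right-hand side with $H^n(X,\mathcal{F})$ as originally defined (on $\mathsf{Sh}(X;\mathsf{Ab})$) in Definition \ref{sheafcohomology}, I would invoke Proposition \ref{sheafcohomologyisabelian}: denoting by $\Phi$ the exact forgetful functor to $\mathsf{Sh}(X;\mathsf{Ab})$, we have $\mathsf{R}\Gamma(X,\square)'\cong\mathsf{R}\Gamma(X,\square)\circ\mathsf{R}\Phi$, and since $\Phi$ is exact one has $\mathsf{R}\Phi=\Phi$ (acting componentwise on complexes). Taking cohomology then gives $\mathsf{R}^n\Gamma(X,\square)'(\mathcal{F})\cong H^n(X,\Phi(\mathcal{F}))=H^n(X,\mathcal{F})$, which concatenates with the previous chain of isomorphisms to yield the claim. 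The argument is essentially bookkeeping rather than technical: all the real work has been done in Propositions \ref{enoughinjectivemodules} and \ref{sheafcohomologyisabelian} and in the construction of derived functors. The only point requiring a touch of care is ensuring that the natural isomorphism \eqref{Homglobalsectiso} is functorial in its argument (which it obviously is, by construction) so that uniqueness of right derived functors applies cleanly.
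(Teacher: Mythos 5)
Your proof is correct and follows essentially the same route as the paper's: both reduce the claim to the identification $\textup{Ext}_{\mathcal{S}}^n(\mathcal{S},\mathcal{F})\cong\mathsf{R}^n\textup{Hom}_{\mathcal{S}}(\mathcal{S},\mathcal{F})$ from \eqref{ExtRHomformodules} together with the natural isomorphism $\textup{Hom}_{\mathcal{S}}(\mathcal{S},\square)\cong\Gamma(X,\square)$ of \eqref{Homglobalsectiso}, which forces the derived functors to agree. You merely spell out the bookkeeping (uniqueness of derived functors, the appeal to Proposition \ref{sheafcohomologyisabelian}) that the paper leaves implicit.
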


\begin{proof}
This is straightforward: $\text{Ext}_{\mathcal{S}}^n(\mathcal{S},\mathcal{F})\cong\mathsf{R}^n\text{Hom}_{\mathcal{S}}(\mathcal{S},\mathcal{F})$ by \eqref{ExtRHomformodules}, and $\mathsf{R}^n\text{Hom}_{\mathcal{S}}(\mathcal{S},\mathcal{F})\cong \mathsf{R}^n\Gamma(X,\mathcal{F})=H^n(X,\mathcal{F})$ by the isomorphism \eqref{Homglobalsectiso} and the very definition of sheaf cohomology. (As a consequence, observe that the known identities from Theorem \ref{Extprop} for $n\leq 0$ double-check here those of the trivial sheaf cohomology groups.)
\end{proof}

An immediate consequence of Lemma \ref{sheafcohomthroughext} and Theorem \ref{Extprop} is:

\begin{Cor}\label{longexactforsheafcohomology}
Let $(X,\mathcal{S})$ be a ringed space. Then any short exact sequence  $0\rightarrow\mathcal{F}_1\rightarrow\mathcal{F}_2\rightarrow\mathcal{F}_3\rightarrow 0$ in $\mathsf{Sh}(X;{}_{\mathcal{S}}\mathsf{Mod})$ induces a long exact sequence
\[
...\rightarrow H^n(X,\mathcal{F}_1)\rightarrow H^n(X,\mathcal{F}_2)\rightarrow H^n(X,\mathcal{F}_3)\rightarrow H^{n+1}(X,\mathcal{F}_1)\rightarrow...
\]
of sheaf cohomology groups. 
\end{Cor}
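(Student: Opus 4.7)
The plan is to combine exactly the two ingredients the statement flags, namely Lemma~\ref{sheafcohomthroughext} and part (a) of Theorem~\ref{Extprop}, after first disposing of a minor bookkeeping issue: the given short exact sequence lives in $\mathsf{Sh}(X;{}_{\mathcal{S}}\mathsf{Mod})$ whereas sheaf cohomology was defined on $\mathsf{Sh}(X;\mathsf{Ab})$. Proposition~\ref{sheafcohomologyisabelian} resolves this, since the forgetful functor $\Phi\colon\mathsf{Sh}(X;{}_{\mathcal{S}}\mathsf{Mod})\to\mathsf{Sh}(X;\mathsf{Ab})$ is exact (kernels and cokernels of morphisms of $\mathcal{S}$-modules coincide with those computed in abelian groups, by Remark~\ref{sheafmodulesrem}a.), so short exactness of $0\to\mathcal{F}_1\to\mathcal{F}_2\to\mathcal{F}_3\to 0$ is preserved and the sheaf cohomology groups may be computed without ambiguity.

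First I would apply Lemma~\ref{sheafcohomthroughext} to identify, naturally in each $\mathcal{F}_i$, the abelian groups $H^n(X,\mathcal{F}_i)\cong\textup{Ext}_{\mathcal{S}}^n(\mathcal{S},\mathcal{F}_i)$ for every $n\in\mathbb{Z}$. Next I would invoke Theorem~\ref{Extprop}a.\ (second bullet, with the first slot fixed at $X=\mathcal{S}$) applied to the short exact sequence $0\to\mathcal{F}_1\to\mathcal{F}_2\to\mathcal{F}_3\to 0$ in the abelian category $\mathsf{Sh}(X;{}_{\mathcal{S}}\mathsf{Mod})$. This yields directly the long exact sequence
\[
\cdots\to\textup{Ext}_{\mathcal{S}}^n(\mathcal{S},\mathcal{F}_1)\to\textup{Ext}_{\mathcal{S}}^n(\mathcal{S},\mathcal{F}_2)\to\textup{Ext}_{\mathcal{S}}^n(\mathcal{S},\mathcal{F}_3)\to\textup{Ext}_{\mathcal{S}}^{n+1}(\mathcal{S},\mathcal{F}_1)\to\cdots,
\]
and transporting it along the natural isomorphism above gives the desired long exact sequence of sheaf cohomology groups.

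The only delicate point is ensuring that the isomorphism $H^n(X,\cdot)\cong\textup{Ext}_{\mathcal{S}}^n(\mathcal{S},\cdot)$ is natural enough to intertwine the connecting homomorphisms on both sides; this is automatic, since the underlying identification $\Gamma(X,\mathcal{F})\cong\textup{Hom}_{\mathcal{S}}(\mathcal{S},\mathcal{F})$ from \eqref{Homglobalsectiso} is functorial in $\mathcal{F}$, and both long exact sequences arise from applying classical right derived functors to the same short exact sequence (so one may equivalently observe that $\mathsf{R}\textup{Hom}_{\mathcal{S}}(\mathcal{S},\square)\cong\mathsf{R}\Gamma(X,\square)$ as functors $\mathsf{D}^{+}(\mathsf{Sh}(X;{}_{\mathcal{S}}\mathsf{Mod}))\to\mathsf{D}^{+}(\mathsf{Ab})$ by Proposition~\ref{derivedfunccompo}, and then read off the long exact sequence in cohomology using Corollary~\ref{whenclassicalderivedtrivial}).

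As an equally short alternative route, one could bypass the Ext-picture altogether: since $H^n(X,\cdot)=\mathsf{R}^n\Gamma(X,\cdot)$ is by definition the classical right derived functor of the left exact $\Gamma(X,\cdot)$ (here precomposed with the exact $\Phi$), Corollary~\ref{whenclassicalderivedtrivial} applied to the short exact sequence in $\mathsf{Sh}(X;\mathsf{Ab})$ obtained from $\Phi$ immediately yields the long exact sequence. Either way, there is no real obstacle; the whole content lies in noting that sheaf cohomology is a flavour of classical derived functor to which the general homological machinery of Part~I directly applies.
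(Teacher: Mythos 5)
Your proof is correct and follows exactly the paper's route: the paper presents this corollary as an immediate consequence of Lemma~\ref{sheafcohomthroughext} (identifying $H^n(X,\mathcal{F})\cong\textup{Ext}_{\mathcal{S}}^n(\mathcal{S},\mathcal{F})$) combined with the long exact Ext-sequence of Theorem~\ref{Extprop}a. Your additional remarks on naturality and the alternative derivation via Corollary~\ref{whenclassicalderivedtrivial} are sound but not needed beyond what the paper already invokes.
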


\vspace*{0.5cm}
\noindent Despite these incouraging results, sheaf cohomology is in practice quite difficult to compute. A useful tool to tackle it is \textit{\v{C}ech cohomology} (refer to \cite[section III.4]{[Har77]}).

\begin{Def}
Let $X$ be a topological space equipped with an open cover $\mathfrak{U}=(U_i)_{i\in I}$, where we choose a well-ordering for $I$. Given any $\mathcal{F}\in\text{obj}(\mathsf{Sh}(X;\mathsf{Ab}))$, we can define a complex of abelian groups $\check{C}^\bullet(\mathfrak{U},\mathcal{F})$ by setting
\begin{equation}\label{Cechcomplex}
	\check{C}^n(\mathfrak{U},\mathcal{F})\coloneqq\mkern-18mu\prod_{\begin{small}1\leq i_0<...<i_n\leq |I|\end{small}}\mkern-18mu\mathcal{F}(U_{i_0}\cap ...\cap U_{i_n})\in\text{obj}(\mathsf{Ab})\,,
\end{equation}
for $n\geq 0$, and $\check{C}^n(\mathfrak{U},\mathcal{F})\coloneqq 0$ for $n<0$. Thus, any $s\in \check{C}^n(\mathfrak{U},\mathcal{F})$ has the form $s=(s_{i_0...i_n})$ with $s_{i_0...i_n}\in\mathcal{F}(U_{i_0}\cap ...\cap U_{i_n})$, for each allowable $(n+1)$-tuple $1\leq i_0<...<i_n\leq |I|$ of indices in $I$. Note that we can also consider generic tuples (possibly unordered with repeated indices) upon imposing antisymmetry: $s_{i_0...i_n}=(-1)^\sigma s_{\sigma(i_0)...\sigma(i_n)}$ for $\sigma\in S_{n+1}$ such that $1\leq \sigma(i_0)<...<\sigma(i_n)\leq |I|$ (and $s_{i_0...i_n}=0$ upon index repetition). 

The differential $\check{d}^\bullet:\check{C}^\bullet(\mathfrak{U},\mathcal{F})\rightarrow \check{C}^{\bullet+1}(\mathfrak{U},\mathcal{F})$ is given by
\begin{equation}\label{Cechdifferential}
	(\check{d}^n s)_{i_0...i_{n+1}}\coloneqq \sum_{k=0}^{n+1}(-1)^k s_{i_0...\hat{i_k}...i_{n+1}}|_{U_{i_0}\cap...\cap U_{i_{n+1}}}\in \check{C}^{n+1}(\mathfrak{U},\mathcal{F})\,,
\end{equation}
for $s\in \check{C}^n(\mathfrak{U},\mathcal{F})$. (Recall that a hat on the indices signals omission, which explains why the final restriction to $U_{i_0}\cap...\cap U_{i_{n+1}}$ is necessary.) One can easily show that $\check{d}^{n+1}\circ \check{d}^n=0$, so that we effectively get a complex bounded from below $(\check{C}^\bullet(\mathfrak{U},\mathcal{F}),\check{d}^\bullet)\in\text{obj}(\mathsf{Kom^+(Ab)})$, the \textbf{\v{C}ech complex}\index{cech@\v{C}ech complex}.   
\end{Def}

\begin{Def}
Let $X$ be a topological space with an open cover $\mathfrak{U}=(U_i)_{i\in I}$. Then the \textbf{$n$-th \v{C}ech cohomology group}\index{cech@\v{C}ech cohomology group} of $\mathcal{F}\in\text{obj}(\mathsf{Sh}(X;\mathsf{Ab}))$ is defined as
\begin{equation}
	\check{H}^n(\mathfrak{U},\mathcal{F})\coloneqq H^n(\check{C}^\bullet(\mathfrak{U},\mathcal{F}))=\ker(\check{d}^n)/\text{im}(\check{d}^{n-1})\,.
\end{equation}
\end{Def}

\begin{Lem}\label{CechH0isglobalsect}
Let $X$ be a topological space with open cover $\mathfrak{U}=(U_i)_{i\in I}$, and $\mathcal{F}\in\textup{obj}(\mathsf{Sh}(X;\mathsf{Ab}))$. Then $\check{H}^0(\mathfrak{U},\mathcal{F})\cong\mathcal{F}(X)$.
\end{Lem}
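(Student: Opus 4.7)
The plan is to unravel the definition of $\check{H}^0(\mathfrak{U},\mathcal{F})$ and match the resulting condition with the gluing axiom of the sheaf $\mathcal{F}$. Since $\check{C}^{-1}(\mathfrak{U},\mathcal{F})=0$ by definition, we immediately get $\check{H}^0(\mathfrak{U},\mathcal{F})=\ker(\check{d}^0)/\textup{im}(\check{d}^{-1})=\ker(\check{d}^0)\subset \check{C}^0(\mathfrak{U},\mathcal{F})=\prod_{i\in I}\mathcal{F}(U_i)$, so the whole content of the lemma lies in identifying this kernel with $\mathcal{F}(X)$.

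First I would make the differential explicit at degree zero: for $s=(s_i)_{i\in I}\in\check{C}^0(\mathfrak{U},\mathcal{F})$, formula \eqref{Cechdifferential} gives $(\check{d}^0 s)_{ij}=s_j|_{U_i\cap U_j}-s_i|_{U_i\cap U_j}\in\mathcal{F}(U_i\cap U_j)$ for every pair $i<j$. Hence membership in $\ker(\check{d}^0)$ translates into the compatibility condition $s_i|_{U_i\cap U_j}=s_j|_{U_i\cap U_j}$ for all $i,j\in I$ (trivial when $i=j$, automatic by antisymmetry for $i>j$).

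Second, I would invoke the sheaf property of $\mathcal{F}$ applied to the open cover $\mathfrak{U}$ of $X$: such a compatible collection $(s_i)_{i\in I}$ glues to a unique section $s\in\mathcal{F}(X)$ with $s|_{U_i}=s_i$ for every $i$. This defines a map $\ker(\check{d}^0)\to\mathcal{F}(X),\,(s_i)_{i\in I}\mapsto s$, which is a group homomorphism because gluing is compatible with the abelian group structure componentwise. The inverse map $\mathcal{F}(X)\to\ker(\check{d}^0)$ simply sends $t\in\mathcal{F}(X)$ to the tuple $(t|_{U_i})_{i\in I}$; this lands in $\ker(\check{d}^0)$ because restrictions commute, and uniqueness in the gluing axiom shows that the two assignments are mutually inverse. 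There is no serious obstacle here — the statement is essentially a reformulation of the sheaf axiom in \v{C}ech language — so the proof reduces to these two short observations.
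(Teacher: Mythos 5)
Your proposal is correct and follows essentially the same route as the paper: identify $\check{H}^0(\mathfrak{U},\mathcal{F})$ with $\ker(\check{d}^0)$, read off the compatibility condition $s_i|_{U_i\cap U_j}=s_j|_{U_i\cap U_j}$ from the degree-zero differential, and invoke the gluing axiom to match this with $\mathcal{F}(X)$. Your explicit description of the two mutually inverse homomorphisms is slightly more detailed than the paper's, which simply says the reverse inclusion "follows by reversing the argument," but the content is identical.
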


\begin{proof}
By definition, $\check{H}^0(\mathfrak{U},\mathcal{F})=\ker(\check{d}^0)/\{0\}\cong\ker(\check{d}^0\!:\check{C}^0(\mathfrak{U},\mathcal{F})\rightarrow \check{C}^1(\mathfrak{U},\mathcal{F}))$. Any $s\in \check{C}^0(\mathfrak{U},\mathcal{F})=\prod_{i\in I}\mathcal{F}(U_i)$ consists of a family of sections $s_i\in\mathcal{F}(U_i)$, and $(\check{d}^0s)_{i,j}=(s_j-s_i)|_{U_i\cap U_j}$ for $i<j$, so that $\check{d}^0s\overset{!}{=}0$ implies that each couple of sections agree on their overlap. Then, by the sheaf property, there exists a unique $s\in\mathcal{F}(X)$ which restricts to such sections, hence $\ker(\check{d}^0)\subset \mathcal{F}(X)$, and the other inclusion follows by reversing the argument.
\end{proof}

Now we upgrade the \v{C}ech complex to a complex of sheaves.

\begin{Def}
Let $X$ be a topological space equipped with an open cover $\mathfrak{U}=(U_i)_{i\in I}$, $\mathcal{F}\in\text{obj}(\mathsf{Sh}(X;\mathsf{Ab}))$. The \textbf{\v{C}ech complex of sheaves}\index{cech@\v{C}ech complex!of sheaves} $\check{\mathcal{C}}^\bullet(\mathfrak{U},\mathcal{F})$ on $X$ is given by
\begin{equation}
	\begin{aligned}
		&\check{\mathcal{C}}^n(\mathfrak{U},\mathcal{F})\coloneqq\mkern-18mu\prod_{\begin{small}1\leq i_0<...<i_n\leq |I|\end{small}}\mkern-18mu (\iota_{i_0...i_n})_*(\mathcal{F}|_{U_{i_0}\cap...\cap U_{i_n}})\in\text{obj}(\mathsf{Sh}(X;\mathsf{Ab}))\,, \\
		&\check{\mathcal{C}}^n(\mathfrak{U},\mathcal{F})(V)= \mkern-18mu\prod_{\begin{small}1\leq i_0<...<i_n\leq |I|\end{small}}\mkern-18mu \mathcal{F}|_{U_{i_0}\cap...\cap U_{i_n}}(\iota_{i_0...i_n}^{-1}(V))=\check{C}^n(\mathfrak{U}\cap V,\mathcal{F}|_V)\,,
	\end{aligned}
\end{equation}
where $n\geq 0$, $\iota_{i_0...i_n}:U_{i_0}\cap...\cap U_{i_n}\hookrightarrow X$ are inclusions yielding pushforwards $(\iota_{i_0...i_n})_*:\mathsf{Sh}(U_{i_0}\cap...\cap U_{i_n};\mathsf{Ab})\rightarrow\mathsf{Sh}(X;\mathsf{Ab})$, and $\mathfrak{U}\cap V=(U_i\cap V)_{i\in I}$ is an open cover for the generic open subset $V\subset X$ .\footnote{Recall that the $(i_0,...,i_n)$-th summand $\mathcal{F}|_{U_{i_0}\cap...\cap U_{i_n}}(\iota_{i_0...i_n}^{-1}(V))$ is non-zero if and only if $V\subset U_{i_0}\cap...\cap U_{i_n}$.}

Restrictions are the obvious ones (constraining also the cover), whereas the differential $\check{d}^\bullet:\check{\mathcal{C}}^\bullet(\mathfrak{U},\mathcal{F})\rightarrow \check{\mathcal{C}}^{\bullet+1}(\mathfrak{U},\mathcal{F})$ is the collection of all maps $\check{d}_V^n:\check{\mathcal{C}}^n(\mathfrak{U},\mathcal{F})(V)\rightarrow \check{\mathcal{C}}^{n+1}(\mathfrak{U},\mathcal{F})(V)$ defined analogously to \eqref{Cechdifferential}. We thus obtain a genuine $\check{\mathcal{C}}^\bullet(\mathfrak{U},\mathcal{F})\in\text{obj}\big(\mathsf{Kom^+}(\mathsf{Sh}(X;\mathsf{Ab}))\big)$. In particular, $\check{\mathcal{C}}^n(\mathfrak{U},\mathcal{F})(X)=\check{C}^n(\mathfrak{U},\mathcal{F})$.  
\end{Def}

\begin{Lem}\label{Cechinjres}
Let $X$ be a topological space with open cover $\mathfrak{U}=(U_i)_{i\in I}$. For any $\mathcal{F}\in\textup{obj}(\mathsf{Sh}(X;\mathsf{Ab}))$ there exists a natural map $\varepsilon:\mathcal{F}\rightarrow \check{\mathcal{C}}^0(\mathfrak{U},\mathcal{F})$ such that the sequence of sheaves $0\rightarrow\mathcal{F}\xrightarrow{\varepsilon}\check{\mathcal{C}}^0(\mathfrak{U},\mathcal{F})\xrightarrow{\check{d}^0}\check{\mathcal{C}}^1(\mathfrak{U},\mathcal{F})\rightarrow...$ is exact. That is, $\mathcal{F}$ admits an injective resolution $\mathcal{F}\xrightarrow{\varepsilon}\check{\mathcal{C}}^\bullet(\mathfrak{U},\mathcal{F})$.
\end{Lem}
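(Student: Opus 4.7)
The plan is to construct the augmentation $\varepsilon$, verify $\check{d}^0\circ\varepsilon=0$, and then prove exactness stalk-by-stalk via a contracting chain homotopy built from a choice of index covering the given point.

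For the augmentation, set $\varepsilon_V:\mathcal{F}(V)\to\check{\mathcal{C}}^0(\mathfrak{U},\mathcal{F})(V)=\prod_{i\in I}\mathcal{F}(U_i\cap V)$ by $\varepsilon_V(s)\coloneqq(s|_{U_i\cap V})_{i\in I}$. Compatibility with restrictions is immediate, giving a morphism $\varepsilon$ in $\mathsf{Sh}(X;\mathsf{Ab})$. A direct check of the alternating sum formula \eqref{Cechdifferential} applied to $\varepsilon_V(s)$ yields $(\check{d}^0_V\varepsilon_V(s))_{ij}=(s|_{U_j\cap V}-s|_{U_i\cap V})|_{U_i\cap U_j\cap V}=0$, so $\check{d}^0\circ\varepsilon=0$. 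Injectivity of $\varepsilon_V$ follows from the sheaf property of $\mathcal{F}$: if each $s|_{U_i\cap V}=0$, the cover $\{U_i\cap V\}_{i\in I}$ of $V$ forces $s=0$.

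For exactness in positive degrees, I would invoke Lemma \ref{aboutmonepisheaves} and check exactness at each stalk. Fix $x\in X$ and pick $j\in I$ with $x\in U_j$; then, restricted to open neighbourhoods $V\subset U_j$ of $x$, define
\[
k^n_V:\check{\mathcal{C}}^n(\mathfrak{U},\mathcal{F})(V)\to\check{\mathcal{C}}^{n-1}(\mathfrak{U},\mathcal{F})(V),\quad (k^n_V s)_{i_0\ldots i_{n-1}}\coloneqq s_{j\,i_0\ldots i_{n-1}}
\]
for $n\geq 1$ (using the antisymmetric convention), along with $k^0_V:\check{\mathcal{C}}^0(\mathfrak{U},\mathcal{F})(V)\to\mathcal{F}(V)$, $k^0_V(s)\coloneqq s_j$, which is well-defined since $V\subset U_j$ implies $\mathcal{F}(U_j\cap V)=\mathcal{F}(V)$. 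Exploiting the identity $U_j\cap W=W$ for any $W\subset V\subset U_j$, a straightforward bookkeeping of \eqref{Cechdifferential} gives the simplicial identities
\[
\check{d}^{n-1}_V\circ k^n_V+k^{n+1}_V\circ\check{d}^n_V=\textup{id}\quad(n\geq 1),\quad k^1_V\circ\check{d}^0_V+\varepsilon_V\circ k^0_V=\textup{id},\quad k^0_V\circ\varepsilon_V=\textup{id}_{\mathcal{F}(V)}.
\]
Passing to the direct limit over such $V\ni x$, this chain contraction descends to the stalks at $x$, so the augmented stalk complex splits and is in particular acyclic.

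The delicate point is that the $k^n_V$ depend on the chosen index $j$ and, crucially, do \emph{not} assemble into morphisms of sheaves: restricting to an open $W\not\subset U_j$ destroys the definition of $k^0$ and the contraction identities. This is why exactness must be handled locally at each stalk rather than globally; the flexibility of choosing $j=j(x)$ point by point is exactly what makes the stalkwise argument work for any cover $\mathfrak{U}$. Finally, a brief comment on the concluding sentence of the statement: the sheaves $(\iota_{i_0\ldots i_n})_*(\mathcal{F}|_{U_{i_0}\cap\cdots\cap U_{i_n}})$ are in general \emph{not} injective in $\mathsf{Sh}(X;\mathsf{Ab})$, so ``injective resolution'' should be read loosely as ``resolution'' (with $\varepsilon$ injective); this is precisely what will be needed to compare \v Cech and sheaf cohomology in the sequel.
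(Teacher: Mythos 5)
Your proposal is correct and follows essentially the same route as the paper: the augmentation by restrictions, injectivity of $\varepsilon_V$ from the sheaf axiom, and stalkwise acyclicity via the contracting homotopy $(k^n s)_{i_0\ldots i_{n-1}}=s_{j\,i_0\ldots i_{n-1}}$ for a chosen $j$ with $x\in U_j$. You are somewhat more explicit than the paper about the augmented identity $k^1\circ\check d^0+\varepsilon\circ k^0=\mathrm{id}$ (which settles exactness at degree $0$) and about why the contraction only lives on stalks rather than on sheaves, but the argument is the same.
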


\begin{proof}
Since $\check{\mathcal{C}}^0(\mathfrak{U},\mathcal{F})=\prod_{i\in I} (\iota_i)_*(\mathcal{F}|_{U_i})$, let us define $\varepsilon$ to be the direct product over all maps $\mathcal{F}\rightarrow (\iota_i)_*(\mathcal{F}|_{U_i})$ for $i\in I$, which makes it natural. Then each (non-trivial) group homomorphism $\varepsilon_V:\mathcal{F}(V)\rightarrow \check{\mathcal{C}}^0(\mathfrak{U},\mathcal{F})(V)=\prod_{i\in I} \mathcal{F}|_V(U_i\cap V)$ is clearly injective by the sheaf property (which gives a \textit{unique} global section on $V$ to which any two equal direct products of compatible subsections must glue).

For $n\geq 1$, exactness of the \v{C}ech complex of sheaves is best shown stalkwise (which is an equivalent verification). Given $x\in X$, fix some index $j\in I$ such that $x\in U_j$, and define a homotopy $k^\bullet$ on $\check{\mathcal{C}}^\bullet(\mathfrak{U},\mathcal{F})_x$ made of group homomorphisms $k^n:\check{\mathcal{C}}^n(\mathfrak{U},\mathcal{F})_x\rightarrow \check{\mathcal{C}}^{n-1}(\mathfrak{U},\mathcal{F})_x,\, s_x=[(s,V)]\mapsto k^n(s_x)=[(k^ns,V)]$ with $(k^ns)_{i_0...i_{n-1}}\coloneqq s_{ji_0...i_{n-1}}$, sensible because the $x$-neighbourhood $V\subset X$ is chosen such that $V\subset U_j$ (and thus $V\cap U_{i_0}\cap...\cap U_{i_{n-1}}=V\cap U_j\cap U_{i_0}\cap...\cap U_{i_{n-1}}$). Then:
\begingroup
\allowdisplaybreaks
\begin{small}\begin{align*}
		(\check{d}&^{n-1}\circ k^n + k^{n+1}\circ \check{d}^n)(s)_{i_0...i_n} = \check{d}^{n-1}(k^ns)_{i_0...i_n} + k^{n+1}(\check{d}^ns)_{i_0...i_n} \\
		&= \sum_{k=0}^n(-1)^k(k^ns)_{i_0...\hat{i_k}...i_n}|_{U_{i_0}\cap...\cap U_{i_n}} + (\check{d}^ns)_{ji_0...i_n} \\
		&= \sum_{k=0}^n(-1)^k s_{ji_0...\hat{i_k}...i_n}|_{U_{i_0}\cap...\cap U_{i_n}} \!+\! \Big(\sum_{l=0}^{n+1}(-1)^l s_{i_0...\hat{i_l}...i_{n+1}}|_{U_{i_0}\cap...\cap U_{i_{n+1}}}\Big)_{\!j...i_n} \\
		&= \sum_{k=0}^n(-1)^k s_{ji_0...\hat{i_k}...i_n}|_{U_{i_0}\cap...\cap U_{i_n}} \!+\! s_{i_0...i_n} \!-\! \sum_{l=0}^n(-1)^l s_{ji_0...\hat{i_l}...i_n}|_{U_{i_0}\cap...\cap U_{i_n}} \\ 
		&= (\text{id}_{\check{\mathcal{C}}^n(\mathfrak{U},\mathcal{F})_x}-0)(s)_{i_0...i_n}\,, 
\end{align*}\end{small}
\endgroup
\!\!where we ultimately applied a shift of the index $l$ together with antisymmetry. Therefore, $\text{id}_{\check{\mathcal{C}}^n(\mathfrak{U},\mathcal{F})_x}$ is homotopic to the zero map, so that they are equal in cohomology, whence $H^n(\check{\mathcal{C}}^\bullet(\mathfrak{U},\mathcal{F})_x)=H^n(\text{id}_{\check{\mathcal{C}}^\bullet(\mathfrak{U},\mathcal{F})_x})(H^n(\check{\mathcal{C}}^\bullet(\mathfrak{U},\mathcal{F})_x))\cong\{0\}$. Finally, acyclicity implies the claimed exactness of $\check{\mathcal{C}}^\bullet(\mathfrak{U},\mathcal{F})$.  
\end{proof}

We come to the promised relationship with sheaf cohomology:

\begin{Pro}
Let $X$ be a topological space with open cover $\mathfrak{U}=(U_i)_{i\in I}$, $\mathcal{F}\in\textup{obj}(\mathsf{Sh}(X;\mathsf{Ab}))$. Then for each $n\geq 0$ there is a natural map
\begin{equation}\label{Cechsheafcohmap}
	h_n:\check{H}^n(\mathfrak{U},\mathcal{F})\rightarrow H^n(X,\mathcal{F})\,,
\end{equation}
functorial in $\mathcal{F}$.
\end{Pro}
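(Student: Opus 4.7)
The plan is to exploit the resolution built in Lemma \ref{Cechinjres}, which asserts that $\mathcal{F}\xrightarrow{\varepsilon}\check{\mathcal{C}}^\bullet(\mathfrak{U},\mathcal{F})$ is exact, i.e. $\varepsilon$ is a quasi-isomorphism when $\mathcal{F}$ is regarded as the $0$-complex $\mathcal{F}[0]^\bullet$. Passing to the derived category $\mathsf{D^+}(\mathsf{Sh}(X;\mathsf{Ab}))$, this becomes an isomorphism $\mathcal{F}[0]^\bullet\cong\check{\mathcal{C}}^\bullet(\mathfrak{U},\mathcal{F})$, so to compute $H^n(X,\mathcal{F})\coloneqq H^n(\mathsf{R}\Gamma(X,\mathcal{F}))$ I may equally well compute $H^n(\mathsf{R}\Gamma(X,\check{\mathcal{C}}^\bullet(\mathfrak{U},\mathcal{F})))$. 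The target will then be compared to $\check{H}^n(\mathfrak{U},\mathcal{F})=H^n(\Gamma(X,\check{\mathcal{C}}^\bullet(\mathfrak{U},\mathcal{F})))$, the cohomology of the \emph{un-derived} application of global sections, since by construction $\Gamma(X,\check{\mathcal{C}}^n(\mathfrak{U},\mathcal{F}))=\check{C}^n(\mathfrak{U},\mathcal{F})$.

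The cleanest route is via the universal property of the right derived functor (Definition \ref{universalderivedfunc}). Recall that $\mathsf{R}\Gamma(X,\square)$ is equipped with a canonical natural transformation $\varepsilon_\Gamma:\mathsf{L}_{\mathsf{Ab}}\circ\mathsf{H^+}\Gamma(X,\square)\Rightarrow\mathsf{R}\Gamma(X,\square)\circ\mathsf{L}_{\mathsf{Sh}}$. Evaluating $\varepsilon_\Gamma$ at the complex $\check{\mathcal{C}}^\bullet(\mathfrak{U},\mathcal{F})$ and composing with the isomorphism $\mathsf{R}\Gamma(X,\check{\mathcal{C}}^\bullet(\mathfrak{U},\mathcal{F}))\cong\mathsf{R}\Gamma(X,\mathcal{F})$ induced by the quasi-isomorphism $\varepsilon$, I obtain a morphism $\check{C}^\bullet(\mathfrak{U},\mathcal{F})\to\mathsf{R}\Gamma(X,\mathcal{F})$ in $\mathsf{D^+}(\mathsf{Ab})$. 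Taking $H^n$ then gives the desired
\[
h_n:\check{H}^n(\mathfrak{U},\mathcal{F})\longrightarrow H^n(X,\mathcal{F})\,.
\]

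For readers preferring an elementary construction, the equivalent down-to-earth route is: by Proposition \ref{enoughinjectivemodules} pick any injective resolution $\mathcal{F}\to\mathcal{I}^\bullet$; by the standard comparison argument for resolutions (lifting $\textup{id}_\mathcal{F}$ degree by degree, using injectivity of each $\mathcal{I}^k$ against the exactness in Lemma \ref{Cechinjres}), produce a chain map $\varphi^\bullet:\check{\mathcal{C}}^\bullet(\mathfrak{U},\mathcal{F})\to\mathcal{I}^\bullet$ over $\textup{id}_\mathcal{F}$, unique up to chain homotopy; apply $\Gamma(X,\square)$ and pass to $H^n$. Well-definedness of $h_n$ on cohomology follows because homotopic chain maps induce the same map in cohomology, and independence from the choice of injective resolution follows from the uniqueness-up-to-homotopy of comparison maps between any two injective resolutions of $\mathcal{F}$.

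The main obstacle is functoriality in $\mathcal{F}$: a morphism $\chi:\mathcal{F}\to\mathcal{G}$ clearly induces $\check{\mathcal{C}}^\bullet(\mathfrak{U},\chi)$ on \v{C}ech complexes (the construction of $\check{\mathcal{C}}^\bullet(\mathfrak{U},\square)$ is itself a functor from $\mathsf{Sh}(X;\mathsf{Ab})$ to $\mathsf{Kom^+}(\mathsf{Sh}(X;\mathsf{Ab}))$, since pushforward and product are functors), but the chosen lifts $\varphi_\mathcal{F}^\bullet,\varphi_\mathcal{G}^\bullet$ to injective resolutions need not strictly commute with $\chi$ on the nose — only up to homotopy. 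Fortunately, this is exactly the kind of ambiguity that disappears after taking $H^n\circ\Gamma(X,\square)$, so naturality of $h_n$ is recovered. Using instead the derived-category formulation above, this subtlety is packaged automatically into the naturality of $\varepsilon_\Gamma$ and of the isomorphism $\check{\mathcal{C}}^\bullet(\mathfrak{U},\square)\cong (\square)[0]^\bullet$ in $\mathsf{D^+}(\mathsf{Sh}(X;\mathsf{Ab}))$, which is why I would present the proof through the universal property.
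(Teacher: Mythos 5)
Your proposal is correct, and your ``elementary route'' is precisely the paper's proof: the paper takes an injective resolution $\mathcal{F}\to\mathcal{I}^\bullet$, invokes the standard comparison theorem for an acyclic complex mapping into a complex of injectives to produce a cochain map $\check{\mathcal{C}}^\bullet(\mathfrak{U},\mathcal{F})\to\mathcal{I}^\bullet$ over $\mathrm{id}_\mathcal{F}$ (unique up to homotopy), and then applies $\Gamma(X,\square)$ and $H^n$, with well-definedness and functoriality absorbed by the homotopy ambiguity vanishing in cohomology. Your preferred presentation via the universal property of $\mathsf{R}\Gamma(X,\square)$ --- evaluating the canonical natural transformation $\varepsilon_\Gamma:\mathsf{L}\circ\mathsf{H^+}\Gamma(X,\square)\Rightarrow\mathsf{R}\Gamma(X,\square)\circ\mathsf{L}$ at $\check{\mathcal{C}}^\bullet(\mathfrak{U},\mathcal{F})$ and transporting along the quasi-isomorphism of Lemma \ref{Cechinjres} --- is a legitimate repackaging of the same comparison: it buys you naturality for free (it is built into $\varepsilon_\Gamma$ and into the naturality of $\varepsilon:\mathcal{F}[0]^\bullet\to\check{\mathcal{C}}^\bullet(\mathfrak{U},\square)$, which holds because the augmentation is the product of the unit maps $\mathcal{F}\to(\iota_i)_*(\mathcal{F}|_{U_i})$), at the cost of leaning on the formal machinery of Definition \ref{universalderivedfunc} rather than an explicit chain-level map. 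Either way the content is identical, and both arguments are sound.
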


\begin{proof}
A standard result in homological algebra (see for example \cite[Theorem III.1.3]{[GM03]} or \cite[Theorem IV.4.4]{[HS97]}) asserts that any morphism between the zeroth cohomology groups of any pair of an acyclic and an injective complex can be uniquely (up to homotopy) extended to a cochain map between the whole complexes.

In our case, for the given $\mathcal{F}\in\textup{obj}(\mathsf{Sh}(X;\mathsf{Ab}))$, consider some injective resolution $\mathcal{F}\rightarrow\mathcal{I}^\bullet$ in $\mathsf{Sh}(X;\mathsf{Ab})$, and the acyclic injective one $\mathcal{F}\xrightarrow{\varepsilon}\check{\mathcal{C}}^\bullet(\mathfrak{U},\mathcal{F})$ provided by Lemma \ref{Cechinjres}. Since Lemma \ref{CechH0isglobalsect} exhibits a canonical isomorphism $\check{H}^0(\mathfrak{U},\mathcal{F})\cong\Gamma(X,\mathcal{F})\cong H^0(X,\mathcal{F})\cong H^0(\Gamma(X,\mathcal{I}^\bullet))$ (where the last two isomorphisms follow from the definition of sheaf cohomology respectively from Remark \ref{classicalderivedonresolutions}), by the theorem mentioned above there exists a unique (hence natural) cochain map $\check{\mathcal{C}}^\bullet(\mathfrak{U},\mathcal{F})\rightarrow\mathcal{I}^\bullet$, which moreover can be extended on $\mathcal{F}$ with the identity. Then applying to it the functors $\Gamma(X,\square):\mathsf{Kom}^+(\mathsf{Sh}(X;\mathsf{Ab}))\rightarrow \mathsf{Kom^+(Ab)}$ and $H^n:\mathsf{Kom^+(Ab)}\rightarrow\mathsf{Ab}$ successively, we obtain $\check{\mathcal{C}}^\bullet(\mathfrak{U},\mathcal{F})(X)=\check{C}^\bullet(\mathfrak{U},\mathcal{F})\rightarrow\Gamma(X,\mathcal{I}^\bullet)$ and then the desired $h_n: \check{H}^n(\mathfrak{U},\mathcal{F})\rightarrow H^n(\Gamma(X,\mathcal{I}^\bullet))$ $\cong\mathsf{R}^n\Gamma(X,\square)(\mathcal{F})=H^n(X,\mathcal{F})$ (using once again \eqref{classderinjres}). Also, functoriality in $\mathcal{F}$ follows by construction. 
\end{proof}

\begin{Rem}
One can show (resorting to suitable spectral sequences; see \cite[Theorem III.8.3b]{[GM03]}) that in presence of an \textit{$\mathcal{F}$-acyclic} open cover $\mathfrak{U}=(U_i)_{i\in I}$ --- a cover such that $H^n(V,\mathcal{F}|_V)\cong\{0\}$ for every $n>0$ and for all possible finite intersections $V=U_{i_0}\cap...\cap U_{i_m}\subset X$ with $m\geq 0$ --- the canonical maps $h_n$ from the last proposition are actually isomorphisms!
\end{Rem}

Unfortunately, one generally has to deal with standard, non-acyclic open covers, losing the benefit just highlighted. However, things become more interesting when dealing with quasi-coherent sheaves on noetherian separated schemes, as we will shortly see. We first prove a preparatory lemma of general value.

\begin{Lem}\label{Cechonflasqueistrivial}
Let $X$ be a topological space with open cover $\mathfrak{U}=(U_i)_{i\in I}$. Let $\mathcal{F}\in\textup{obj}(\mathsf{Sh}(X;\mathsf{Ab}))$ be a \textbf{flasque sheaf}\index{sheaf!flasque}, that is, a sheaf with restriction maps $\rho_V^U:\mathcal{F}(U)\rightarrow\mathcal{F}(V)$ which are surjective for each pair $V\subset U\subset X$ of open subsets. Then $\check{H}^n(\mathfrak{U},\mathcal{F})\cong\{0\}$ for all $n>0$.
\end{Lem}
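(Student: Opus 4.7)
The plan is to exploit the resolution $\mathcal{F}\xrightarrow{\varepsilon}\check{\mathcal{C}}^\bullet(\mathfrak{U},\mathcal{F})$ from Lemma \ref{Cechinjres} together with the observation that each term of this resolution is itself flasque when $\mathcal{F}$ is. Evaluated on global sections, the resolution becomes $\check{C}^\bullet(\mathfrak{U},\mathcal{F})$ placed after $\Gamma(X,\mathcal{F})$, and I will argue that this complex of abelian groups is exact in strictly positive degree, which by definition gives $\check{H}^n(\mathfrak{U},\mathcal{F})\cong\{0\}$ for all $n>0$.

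First I would record three routine stability properties of flasque sheaves, implicit in the proof of Proposition \ref{sheafcohomologyisabelian}: (a) the restriction $\mathcal{F}|_U$ to any open $U\subset X$ is flasque, because restrictions in $\mathcal{F}|_U$ are restrictions in $\mathcal{F}$; (b) for any continuous $f\colon Y\to X$, the pushforward $f_*\mathcal{G}$ of a flasque $\mathcal{G}$ is flasque, since its restriction $\mathcal{G}(f^{-1}(U))\to\mathcal{G}(f^{-1}(V))$ is surjective; and (c) arbitrary products of flasque sheaves are flasque, as the product of surjective group homomorphisms is surjective. Applying (a), (b), (c) to the formula defining $\check{\mathcal{C}}^n(\mathfrak{U},\mathcal{F})$ shows that every term of the \v{C}ech complex of sheaves is flasque.

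Next, I would decompose the exact sequence $0\to\mathcal{F}\xrightarrow{\varepsilon}\check{\mathcal{C}}^0(\mathfrak{U},\mathcal{F})\xrightarrow{\check{d}^0}\check{\mathcal{C}}^1(\mathfrak{U},\mathcal{F})\to\cdots$ into short exact sequences
\[
0\to\mathcal{Z}^n\to\check{\mathcal{C}}^n(\mathfrak{U},\mathcal{F})\to\mathcal{Z}^{n+1}\to 0,
\]
with $\mathcal{Z}^0=\mathcal{F}$ and $\mathcal{Z}^{n+1}=\mathcal{I}m(\check{d}^n)=\mathcal{K}er(\check{d}^{n+1})$. I would then induct on $n$ using the standard fact, already invoked in the proof of Proposition \ref{sheafcohomologyisabelian}, that in a short exact sequence of sheaves whose first two terms are flasque, the third one is flasque as well. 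Together with the previous paragraph, this yields that every $\mathcal{Z}^n$ is flasque. Simultaneously, the same key input from that proof guarantees that applying the left exact functor $\Gamma(X,\square)$ to a short exact sequence whose first term is flasque produces a short exact sequence of abelian groups, so we obtain
\[
0\to\Gamma(X,\mathcal{Z}^n)\to\check{C}^n(\mathfrak{U},\mathcal{F})\to\Gamma(X,\mathcal{Z}^{n+1})\to 0
\]
exact for every $n\geq 0$.

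Finally, splicing these short exact sequences together shows that the augmented complex $0\to\Gamma(X,\mathcal{F})\to\check{C}^\bullet(\mathfrak{U},\mathcal{F})$ is acyclic, exactly as in the end of the proof of Proposition \ref{sheafcohomologyisabelian}: $\textup{im}(\check{d}^{n-1})=\Gamma(X,\mathcal{Z}^n)=\ker(\check{d}^n)$ for all $n\geq 1$. Hence $\check{H}^n(\mathfrak{U},\mathcal{F})=H^n(\check{C}^\bullet(\mathfrak{U},\mathcal{F}))\cong\{0\}$ for $n>0$, while the $n=0$ piece recovers $\Gamma(X,\mathcal{F})$, consistently with Lemma \ref{CechH0isglobalsect}. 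No step is deep; the only real obstacle is making sure the three stability properties of flasque sheaves are cleanly in place so that the \v{C}ech terms are flasque and the induction can be run, after which the argument reduces to the adaptedness already established in Proposition \ref{sheafcohomologyisabelian}.
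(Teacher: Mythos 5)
Your proof is correct, and it shares its first step with the paper's: both arguments begin by observing that every term $\check{\mathcal{C}}^n(\mathfrak{U},\mathcal{F})$ of the resolution from Lemma \ref{Cechinjres} is flasque, via the same three stability properties (restriction, pushforward, product). Where you diverge is in how you exploit this. The paper then invokes the fact that flasque sheaves are acyclic for $\Gamma(X,\square)$ and that a $\Gamma$-acyclic resolution computes sheaf cohomology (via \eqref{classderinjres}), so that $\check{H}^n(\mathfrak{U},\mathcal{F})\cong H^n(X,\mathcal{F})\cong\{0\}$ because $\mathcal{F}$ itself is flasque; this is short but leans on the derived-functor machinery of Chapter 3 and on the external reference for acyclicity. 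You instead run the d\'evissage by hand: split the exact resolution into short exact sequences $0\to\mathcal{Z}^n\to\check{\mathcal{C}}^n(\mathfrak{U},\mathcal{F})\to\mathcal{Z}^{n+1}\to 0$, propagate flasqueness to the $\mathcal{Z}^n$ by induction, and use exactness of $\Gamma(X,\square)$ on short exact sequences with flasque first term to splice the global-section sequences together. This is exactly the argument the paper itself uses for the adapted class $\mathcal{R}_2$ in Proposition \ref{sheafcohomologyisabelian}, transplanted to the \v{C}ech complex, and it buys you a more self-contained proof: you never need the identification of $\check{H}^n$ with sheaf cohomology, only left exactness of $\Gamma$ and the two standard flasque lemmas. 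The trade-off is a slightly longer write-up and the need to verify the indexing $\mathcal{Z}^n=\mathcal{K}er(\check{d}^n)$, $\mathrm{im}(\check{d}^{n-1})=\Gamma(X,\mathcal{Z}^n)=\ker(\check{d}^n)$ carefully, which you do. Both routes are sound.
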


\begin{proof}(\textit{Sketch})
Take the injective resolution $\mathcal{F}\rightarrow \check{\mathcal{C}}^\bullet(\mathfrak{U},\mathcal{F})$ from Lemma \ref{Cechinjres}, and observe that each $\check{\mathcal{C}}^n(\mathfrak{U},\mathcal{F})=\prod_{1\leq i_0<...<i_n\leq |I|}(\iota_{i_0...i_n})_*(\mathcal{F}|_{U_{i_0}\cap...\cap U_{i_n}})\in\text{obj}(\mathsf{Sh}(X;\mathsf{Ab}))$ is itself flasque: every $\mathcal{F}|_{U_{i_0}\cap...\cap U_{i_n}}$ is flasque by definition, and so is its pushforward under continuous maps (such as $\iota_{i_0...i_n}$), and finally the product of flasque sheaves is flasque. Since by \cite[Remark III.2.5.1]{[Har77]} flasque sheaves are acyclic for $\Gamma(X,\square)$, our flasque resolution $\mathcal{F}\rightarrow \check{\mathcal{C}}^\bullet(\mathfrak{U},\mathcal{F})$ (which by \eqref{classderinjres} we can use to compute sheaf cohomology) gives trivial sheaf cohomology groups $H^n(X,\mathcal{F})$ for all $n>0$. Therefore, $\check{H}^n(\mathfrak{U},\mathcal{F})=H^n\big(\Gamma(X,\check{\mathcal{C}}^\bullet(\mathfrak{U},\mathcal{F}))\big)\cong H^n(X,\mathcal{F})\cong\{0\}$ for all $n>0$. 
\end{proof}

It is worth mentioning that injective $\mathcal{S}$-modules over any ringed space $(X,\mathcal{S})$ are flasque and hence, as exploited in the proof just concluded, induce trivial sheaf cohomology groups for each non-zero index.

\vspace*{0.3cm}
\noindent At last, we return to schemes and quasi-/coherent sheaves. (On this regard, we gently remind the reader of the disclaimer in Remark \ref{disclaimer}.)

\begin{Lem}\label{neq0sheafcohomonaffinesch}
Let $X=\textup{Spec}(R)$ be an affine scheme on a \textup(noetherian\textup) ring $R$ \textup(so that $X$ itself is noetherian\textup), and let $\mathcal{O}_X$ be its structure sheaf. Then $H^n(X,\mathcal{F})\cong \{0\}$ for all $\mathcal{F}\in\textup{obj}(\mathsf{QCoh}(X;{}_{\mathcal{O}_X}\!\mathsf{Mod}))$ and $n>0$. \textup(In particular, this is true for affine varieties.\textup)
\end{Lem}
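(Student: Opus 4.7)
The plan is to exploit the equivalence between quasi-coherent sheaves on $X = \textup{Spec}(R)$ and $R$-modules from Proposition \ref{squaretildefunctocoherent}, and compute sheaf cohomology via a resolution coming from an injective resolution in ${}_R\mathsf{Mod}$. First, I would write $\mathcal{F} \cong \widetilde{M}$ for some $M \in \textup{obj}({}_R\mathsf{Mod})$. Since ${}_R\mathsf{Mod}$ has enough injectives (a standard classical fact, as alluded to in the proof of Proposition \ref{enoughinjectivemodules}), $M$ admits an injective resolution $0 \to M \to I^0 \to I^1 \to \dots$ in ${}_R\mathsf{Mod}$. Applying the exact functor $\widetilde{\square}$ then produces an exact sequence $0 \to \mathcal{F} \to \widetilde{I^0} \to \widetilde{I^1} \to \dots$ of quasi-coherent $\mathcal{O}_X$-modules, that is, a resolution $\mathcal{F} \to \widetilde{I}^\bullet$.

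The main obstacle is the technical result that $\widetilde{I}$ is a flasque sheaf on $X$ whenever $I$ is an injective $R$-module, for $R$ noetherian. This is where the noetherian hypothesis enters essentially: the standard proof invokes the Matlis structure theorem to decompose $I$ as a direct sum of indecomposable injective hulls $E(R/\mathfrak{p})$ for $\mathfrak{p} \in \textup{Spec}(R)$, and then verifies flasqueness directly by showing that the restriction maps $\widetilde{E(R/\mathfrak{p})}(U) \to \widetilde{E(R/\mathfrak{p})}(V)$ on principal affine opens are surjective; this is a somewhat delicate computation with localizations at prime ideals. I would treat this as a black-box input, citing it from the literature (e.g.\ \cite{[Har77]}, Proposition III.3.4).

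Once flasqueness of $\widetilde{I^k}$ is granted, the remark following Lemma \ref{Cechonflasqueistrivial} tells us that each $\widetilde{I^k}$ is acyclic for the global sections functor $\Gamma(X,\square)$. Hence $\widetilde{I}^\bullet$ is a $\Gamma(X,\square)$-acyclic resolution of $\mathcal{F}$, and by Remark \ref{classicalderivedonresolutions} (equation \eqref{classderinjres}, which extends from injective to $\mathsf{F}$-acyclic resolutions) we may compute
\[
H^n(X, \mathcal{F}) \;\cong\; H^n(\Gamma(X, \widetilde{I}^\bullet)).
\]
Finally, since $\Gamma(X, \widetilde{I^k}) = I^k$ by Proposition \ref{squaretildefunc}, the right-hand side equals $H^n(I^\bullet)$, which vanishes for every $n > 0$ because $I^\bullet$ is exact as a resolution of $M$ (its only non-trivial cohomology sits in degree $0$, where it recovers $M$, consistently with $H^0(X, \mathcal{F}) \cong \Gamma(X, \mathcal{F}) = M$). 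This concludes the argument.
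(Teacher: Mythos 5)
Your proof is correct and follows essentially the same route as the paper's: pass to an $R$-module $M$ with $\mathcal{F}\cong\widetilde{M}$ via the equivalence of Proposition \ref{squaretildefunctocoherent}, take an injective resolution in ${}_R\mathsf{Mod}$, invoke \cite[Proposition III.3.4]{[Har77]} for the flasqueness of $\widetilde{I}$ (the paper cites the same result as a black box), and conclude via $\Gamma(X,\square)$-acyclicity of flasque sheaves. Your added remarks on the Matlis decomposition and on why acyclic resolutions suffice are a welcome fleshing-out of steps the paper only sketches, but the argument is the same.
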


\begin{proof}(\textit{Sketch})
Fix some $\mathcal{F}\in\text{obj}(\mathsf{QCoh}(X;{}_{\mathcal{O}_X}\!\mathsf{Mod}))$ and let $M\coloneqq\Gamma(X,\mathcal{F})\in\text{obj}({}_R\mathsf{Mod})$. Take an injective resolution $0\rightarrow M\rightarrow I^\bullet$ in ${}_R\mathsf{Mod}$ (always existing), which under the equivalence \eqref{squaretildeequivalences} yields an exact sequence $0\rightarrow \widetilde{M}\cong\mathcal{F}\rightarrow\widetilde{I}^\bullet$ of quasi-coherent sheaves on $X$. One can show that each $\widetilde{I}^\bullet$ is actually flasque (\cite[Proposition III.3.4]{[Har77]}). Thus, as argued in the last proof, the resulting flasque resolution gives trivial sheaf cohomology groups for each $n>0$, while $H^0(X,\mathcal{F})=\Gamma(X,\mathcal{F})=M$ by definition. 
\end{proof}

\begin{Cor}\label{embedinflasque}
Any $\mathcal{F}\in\textup{obj}(\mathsf{QCoh}(X;{}_{\mathcal{O}_X}\!\mathsf{Mod}))$ on a noetherian scheme $(X,\mathcal{O}_X)$ can be embedded in a flasque $\mathcal{G}\in\textup{obj}(\mathsf{QCoh}(X;{}_{\mathcal{O}_X}\!\mathsf{Mod}))$.
\end{Cor}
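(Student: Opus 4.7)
The strategy is to reduce to the affine case (where the result is essentially classical homological algebra for modules) and then glue via pushforward along open immersions, taking advantage of the noetherian hypothesis to preserve quasi-coherence.

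First, since $X$ is noetherian, it is in particular compact, so we can pick a finite affine open cover $\{U_i = \textup{Spec}(R_i)\}_{i=1}^N$ of $X$, with each $R_i$ a noetherian ring. For each $i$, set $M_i \coloneqq \Gamma(U_i, \mathcal{F}|_{U_i})$; by Proposition~\ref{squaretildefunctocoherent} we have $\mathcal{F}|_{U_i} \cong \widetilde{M}_i$. Since the category ${}_{R_i}\mathsf{Mod}$ has enough injectives (a standard fact from classical algebra), embed each $M_i$ into an injective $R_i$-module $I_i$; applying the exact functor $\widetilde{\square}$ of \eqref{squaretildeequivalences} yields monomorphisms $\mathcal{F}|_{U_i} \hookrightarrow \widetilde{I}_i$ in $\mathsf{QCoh}(U_i;{}_{\mathcal{O}_{U_i}}\!\mathsf{Mod})$. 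The sheaf $\widetilde{I}_i$ is moreover flasque, precisely by the argument invoked in the proof of Lemma~\ref{neq0sheafcohomonaffinesch} (the point being that $\widetilde{\square}$ applied to an injective module produces a flasque sheaf).

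Next, let $\iota_i: U_i \hookrightarrow X$ denote the open inclusion, and define
\[
\mathcal{G} \coloneqq \bigoplus_{i=1}^N (\iota_i)_* \widetilde{I}_i \in \textup{obj}(\mathsf{Sh}(X;{}_{\mathcal{O}_X}\!\mathsf{Mod})).
\]
I claim that $\mathcal{G}$ is quasi-coherent and flasque. Quasi-coherence of each summand follows from Proposition~\ref{pushpullcoherent}(ii): since $X$ is noetherian, $(\iota_i)_*$ sends quasi-coherent sheaves on $U_i$ to quasi-coherent sheaves on $X$; and the finite direct sum of quasi-coherent sheaves is quasi-coherent (as $\mathsf{QCoh}(X;{}_{\mathcal{O}_X}\!\mathsf{Mod})$ is abelian, by Corollary~\ref{QCohisabelian}). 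Flasqueness is likewise preserved under pushforward along a continuous map (if $V \subset W$ are open in $X$, then $\iota_i^{-1}(V) \subset \iota_i^{-1}(W)$ in $U_i$, and surjectivity of the restriction for $\widetilde{I}_i$ transfers to $(\iota_i)_*\widetilde{I}_i$) and under direct sums, so $\mathcal{G}$ is flasque.

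Finally, construct the embedding $\mathcal{F} \hookrightarrow \mathcal{G}$. For each $i$ there is a canonical adjunction unit morphism $\mathcal{F} \to (\iota_i)_*(\mathcal{F}|_{U_i})$ (coming from $\iota_i^{-1} \dashv (\iota_i)_*$), and composing with the monomorphism $\mathcal{F}|_{U_i} \hookrightarrow \widetilde{I}_i$ gives a morphism $\mathcal{F} \to (\iota_i)_* \widetilde{I}_i$ in $\mathsf{Sh}(X;{}_{\mathcal{O}_X}\!\mathsf{Mod})$. Assembling these produces a morphism $\chi: \mathcal{F} \to \mathcal{G}$, which by Lemma~\ref{aboutmonepisheaves} is a monomorphism iff it is injective on every stalk. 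But given $x \in X$, there is at least one $i$ with $x \in U_i$, and on stalks the component of $\chi$ corresponding to this $i$ factors through $\mathcal{F}_x = (\mathcal{F}|_{U_i})_x \hookrightarrow (\widetilde{I}_i)_x \cong (I_i)_{\mathfrak{p}_x}$, which is injective since localization is exact and $M_i \hookrightarrow I_i$ is injective. Hence $\chi$ is a monomorphism, completing the embedding.

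The main obstacle I anticipate is the one already flagged: ensuring that pushforward under the open inclusions $\iota_i$ preserves quasi-coherence. This is precisely where the noetherian hypothesis on $X$ is essential (via Proposition~\ref{pushpullcoherent}(ii)); without it, one would need $\iota_i$ to be separated and compact, which for general open immersions into non-noetherian schemes can fail.
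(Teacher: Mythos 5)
Your proof is correct and follows essentially the same route as the paper's: a finite affine cover, embedding each $M_i$ into an injective $R_i$-module, pushing forward the (flasque, quasi-coherent) $\widetilde{I}_i$ along the open inclusions, and summing. The only difference is that you spell out the final monomorphism via the adjunction unit and a stalkwise check, a step the paper merely asserts.
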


\begin{proof}
Cover $X$ by finitely many affines $U_i=\text{Spec}(R_i)$ and let $\widetilde{M}_i\coloneqq\mathcal{F}|_{U_i}$ $\in\textup{obj}(\mathsf{QCoh}(U_i;{}_{\mathcal{O}_{U_i}}\!\mathsf{Mod}))$, whose associated $M_i\in\text{obj}({}_{R_i}\mathsf{Mod})$ are assumed to be embedded in an injective $I_i\in\text{obj}({}_{R_i}\mathsf{Mod})$ for each $i$. Letting $\iota_i:U_i\hookrightarrow X$ denote the inclusion, set $\mathcal{G}\coloneqq\bigoplus_{i\in I}(\iota_i)_*(\widetilde{I}_i)\in\text{obj}(\mathsf{QCoh}(X;{}_{\mathcal{O}_X}\!\mathsf{Mod}))$, which is flasque because it is the direct sum of pushforwards of flasque sheaves $\widetilde{I}_i$ (again by \cite[Proposition III.3.4]{[Har77]}), and quasi-coherent because it is the direct sum of pushforwards (which preserve quasi-coherence; see Proposition \ref{pushpullcoherent}) of $\widetilde{I}_i\in\text{obj}(\mathsf{QCoh}(U_i;{}_{\mathcal{O}_{U_i}}\!\mathsf{Mod}))$.

But, by construction, $\mathcal{F}|_{U_i}$ injects into $\widetilde{I}_i$ for each $i$, which in turn induces $\mathcal{F}\hookrightarrow(\iota_i)_*(\widetilde{I}_i)$ and finally an injection $\mathcal{F}\hookrightarrow\mathcal{G}$. So $\mathcal{F}$ can be embedded in a flasque, quasi-coherent sheaf.
\end{proof}

\begin{Rem}\label{QCohhasenoughinjectives}
We have the following remarkable consequences:
\begin{itemize}[leftmargin=0.5cm]
	\item Lemma \ref{neq0sheafcohomonaffinesch} can be upgraded to: a noetherian scheme $X$ is affine if and only if $H^n(X,\mathcal{F})\cong \{0\}$ for all $\mathcal{F}\in\text{obj}(\mathsf{QCoh}(X;{}_{\mathcal{O}_X}\!\mathsf{Mod}))$ and $n>0$.
	
	\item The quasi-coherent sheaf $\mathcal{G}$ from the last proof is actually an injective object (in $\mathsf{Sh}(X;{}_{\mathcal{O}_X}\!\mathsf{Mod})$ as well), meaning that $\mathsf{QCoh}(X;{}_{\mathcal{O}_X}\!\mathsf{Mod})$ has enough injectives when $X$ is a noetherian (not necessary) scheme! We mitigate the enthusiasm by pointing out that it does \textit{not} have enough projectives, much like $\mathsf{Sh}(X;{}_{\mathcal{O}_X}\!\mathsf{Mod})$ doesn't.
	
	\item Moreover, on $(X,\mathcal{O}_X)$ noetherian any injective object of $\mathsf{QCoh}(X;{}_{\mathcal{O}_X}\!\mathsf{Mod})$ is flasque, which can be used to prove that sheaf cohomology can be computed also from $\mathsf{R}\Gamma(X,\square):\mathsf{D^+}(\mathsf{QCoh}(X;{}_{\mathcal{O}_X}\!\mathsf{Mod}))\rightarrow \mathsf{D^+}(\mathsf{Vect}_\mathbb{K}^{\leq\infty})$.  
\end{itemize}
\end{Rem}

\begin{Thm}\label{sheafcohomthroughCech}
Let $(X,\mathcal{O}_X)$ be a noetherian separated scheme equipped with an ordered affine open cover $\mathfrak{U}=(U_i)_{i\in I}$, and let $\mathcal{F}\in\textup{obj}(\mathsf{QCoh}(X;{}_{\mathcal{O}_X}\!\mathsf{Mod}))$. Then the natural maps $h_n$ in \eqref{Cechsheafcohmap} are group isomorphisms, that is, \v{C}ech cohomology computes sheaf cohomology:
\begin{equation}
	\check{H}^n(\mathfrak{U},\mathcal{F})\cong H^n(X,\mathcal{F})\,,
\end{equation}
for all $n\geq 0$.
\end{Thm}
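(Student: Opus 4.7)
The plan is to prove the theorem by induction on $n$, reducing the statement for $\mathcal{F}$ to a statement for a related quasi-coherent sheaf in one degree lower. The base case $n=0$ is essentially immediate: we have $\check{H}^0(\mathfrak{U},\mathcal{F})\cong \mathcal{F}(X)$ by Lemma \ref{CechH0isglobalsect}, and $H^0(X,\mathcal{F})=\Gamma(X,\mathcal{F})=\mathcal{F}(X)$ by the discussion following Definition \ref{sheafcohomology}, and one checks that $h_0$ is the identity under these identifications.

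For the inductive step, I would first exploit Corollary \ref{embedinflasque} to embed $\mathcal{F}$ into a flasque quasi-coherent sheaf $\mathcal{G}$, and let $\mathcal{R}\coloneqq\mathcal{G}/\mathcal{F}$, also quasi-coherent (by Proposition \ref{cokernelscoherent}). This yields a short exact sequence $0\rightarrow\mathcal{F}\rightarrow\mathcal{G}\rightarrow\mathcal{R}\rightarrow 0$ in $\mathsf{QCoh}(X;{}_{\mathcal{O}_X}\!\mathsf{Mod})$. The hard part, and the real content of the proof, is to show that applying $\check{C}^\bullet(\mathfrak{U},\square)$ produces an \emph{exact} sequence of complexes of abelian groups
\begin{equation*}
0\rightarrow\check{C}^\bullet(\mathfrak{U},\mathcal{F})\rightarrow\check{C}^\bullet(\mathfrak{U},\mathcal{G})\rightarrow\check{C}^\bullet(\mathfrak{U},\mathcal{R})\rightarrow 0\,.
\end{equation*}
This is where the assumption that $X$ is \emph{separated} becomes essential: the separatedness of the structure morphism implies that every intersection $U_{i_0}\cap\ldots\cap U_{i_n}$ of affine opens is itself affine (this is a standard consequence of the closed-immersion description of the diagonal). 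On any such affine $V$, Lemma \ref{neq0sheafcohomonaffinesch} gives $H^1(V,\mathcal{F}|_V)=0$, so by the long exact sequence of sheaf cohomology the map $\mathcal{G}(V)\twoheadrightarrow\mathcal{R}(V)$ is surjective. Taking the appropriate product over $(n+1)$-tuples of indices then gives exactness at $\check{C}^n(\mathfrak{U},\mathcal{R})$, and exactness at the other positions is automatic from the left-exactness of $\Gamma(V,\square)$.

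With this short exact sequence in hand, I would take the associated long exact sequence in \v{C}ech cohomology, and compare it via the natural maps $h_\bullet$ to the long exact sequence in sheaf cohomology coming from Corollary \ref{longexactforsheafcohomology}, obtaining a commutative ladder
\begin{equation*}
\begin{tikzcd}[column sep=0.5cm]
\check{H}^{n-1}(\mathfrak{U},\mathcal{G})\arrow[r]\arrow[d, "h_{n-1}^\mathcal{G}"'] & \check{H}^{n-1}(\mathfrak{U},\mathcal{R})\arrow[r]\arrow[d, "h_{n-1}^\mathcal{R}"'] & \check{H}^n(\mathfrak{U},\mathcal{F})\arrow[r]\arrow[d, "h_n^\mathcal{F}"'] & \check{H}^n(\mathfrak{U},\mathcal{G})\arrow[d, "h_n^\mathcal{G}"] \\
H^{n-1}(X,\mathcal{G})\arrow[r] & H^{n-1}(X,\mathcal{R})\arrow[r] & H^n(X,\mathcal{F})\arrow[r] & H^n(X,\mathcal{G})
\end{tikzcd}
\end{equation*}
(functoriality of $h_\bullet$ in the sheaf argument was recorded in \eqref{Cechsheafcohmap}). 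By Lemma \ref{Cechonflasqueistrivial} the flasqueness of $\mathcal{G}$ kills $\check{H}^{n-1}(\mathfrak{U},\mathcal{G})$ and $\check{H}^n(\mathfrak{U},\mathcal{G})$ for $n\geq 2$ (and the former is handled separately when $n=1$ by the base case and the surjectivity on global sections established above), and similarly flasque sheaves have trivial higher sheaf cohomology (used in the proof of Lemma \ref{Cechonflasqueistrivial}). Hence the ladder reduces to showing $h_n^\mathcal{F}$ is an isomorphism given that $h_{n-1}^\mathcal{R}$ is; the induction hypothesis applied to $\mathcal{R}\in\textup{obj}(\mathsf{QCoh}(X;{}_{\mathcal{O}_X}\!\mathsf{Mod}))$ then delivers the conclusion via the Five-Lemma. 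The case $n=1$ needs to be handled as a separate base-level check, where the long exact sequence collapses to the statement that $h_1^\mathcal{F}$ is an isomorphism between cokernels of the two horizontal maps $\mathcal{G}(X)\rightarrow\mathcal{R}(X)$, both computing the same thing.
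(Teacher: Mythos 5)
Your proposal is correct and follows essentially the same route as the paper's own proof: embed $\mathcal{F}$ into a flasque quasi-coherent sheaf, use separatedness to make the intersections $U_{i_0}\cap\ldots\cap U_{i_n}$ affine so that the induced sequence of \v{C}ech complexes is short exact, kill the flasque terms in both long exact sequences, and dimension-shift by induction. The only (harmless) cosmetic differences are that you justify surjectivity of $\mathcal{G}(V)\rightarrow\mathcal{R}(V)$ via the vanishing of $H^1$ on affines rather than citing the exactness of $\Gamma(V,\square)$ on quasi-coherent sheaves over affines, and that you package the induction as a Five-Lemma ladder, which if anything is slightly more careful about showing that the \emph{natural maps} $h_n$ themselves are the isomorphisms.
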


\begin{proof}(\textit{Sketch})
For $n=0$ this is immediate from Lemma \ref{CechH0isglobalsect}: $H^0(X,\mathcal{F})\cong\Gamma(X,\square)(\mathcal{F})\cong\check{H}^0(\mathfrak{U},\mathcal{F})$. For $n>0$, begin by embedding $\mathcal{F}$ in a flasque $\mathcal{G}\in\textup{obj}(\mathsf{QCoh}(X;{}_{\mathcal{O}_X}\!\mathsf{Mod}))$ (possible by Corollary \ref{embedinflasque}), and complete to a short exact sequence $0\rightarrow\mathcal{F}\rightarrow\mathcal{G}\rightarrow\mathcal{H}\rightarrow 0$ through the quotient sheaf $\mathcal{H}$ of $\mathcal{O}_X$-modules given by the cokernel of this embedding. In particular, $\mathcal{H}\in\text{obj}(\mathsf{QCoh}(X;{}_{\mathcal{O}_X}\!\mathsf{Mod}))$ by Proposition \ref{cokernelscoherent}.

Now, observe that $U_{i_0}\cap...\cap U_{i_n}\subset X$ is an affine open subset (false if $X$ is not separated!). On it, \cite[Proposition II.5.6]{[Har77]} affirms that the short exact sequence above starting with $\mathcal{F}$ quasi-coherent induces a short exact sequence
\[
0\rightarrow\Gamma(U_{i_0}\cap...\cap U_{i_n},\mathcal{F})\rightarrow\Gamma(U_{i_0}\cap...\cap U_{i_n},\mathcal{G})\rightarrow\Gamma(U_{i_0}\cap...\cap U_{i_n},\mathcal{H})\rightarrow 0
\]
which, after taking products over all $1\leq i_0<...< i_n\leq|I|$ and any $n\geq 0$, in turn gives a short exact sequence $0\rightarrow\check{C}^\bullet(\mathfrak{U},\mathcal{F})\rightarrow\check{C}^\bullet(\mathfrak{U},\mathcal{G})\rightarrow\check{C}^\bullet(\mathfrak{U},\mathcal{H})\rightarrow 0$ in $\mathsf{Kom^+(Ab)}$. Application of the standard cohomology functor $H^n$ results\footnote{Warning: arguing that the short exact sequence $0\rightarrow\mathcal{F}\rightarrow\mathcal{G}\rightarrow\mathcal{H}\rightarrow 0$ of $\mathcal{O}_X$-modules directly induces a long exact one in \v{C}ech cohomology is wrong, because the functors $\check{H}^n(\mathfrak{U},\square)$ are not cohomological in general! (For example, if $\mathfrak{U}=\{X\}$, then $\check{H}^0(\mathfrak{U},\square)=\Gamma(X,\square)$ is just left exact!)} in the long exact sequence of \v{C}ech cohomology groups
\[
...\rightarrow\check{H}^n(\mathfrak{U},\mathcal{G})\cong\{0\}\rightarrow\check{H}^n(\mathfrak{U},\mathcal{H})\rightarrow \check{H}^{n+1}(\mathfrak{U},\mathcal{F})\rightarrow \check{H}^{n+1}(\mathfrak{U},\mathcal{G})\cong\{0\}\rightarrow...\,,
\]
where we used Lemma \ref{Cechonflasqueistrivial} to deduce the triviality of the groups of $\mathcal{G}$. Consequently, $\check{H}^n(\mathfrak{U},\mathcal{H})\cong \check{H}^{n+1}(\mathfrak{U},\mathcal{F})$ for all $n>0$.

On the other hand, Corollary \ref{longexactforsheafcohomology} tells us that $0\rightarrow\mathcal{F}\rightarrow\mathcal{G}\rightarrow\mathcal{H}\rightarrow 0$ readily provides the long exact sequence of sheaf cohomology groups
\[
...\rightarrow H^n(X,\mathcal{G})\cong\{0\}\rightarrow H^n(X,\mathcal{H})\rightarrow H^{n+1}(X,\mathcal{F})\rightarrow H^{n+1}(X,\mathcal{G})\cong\{0\}\rightarrow...\,,
\]
where we exploited that flasque sheaves make also sheaf cohomology groups trivial (by \cite[Proposition III.2.5]{[Har77]}). Thus, $H^n(X,\mathcal{H})\cong H^{n+1}(X,\mathcal{F})$ for all $n>0$.

Summing up, $\check{H}^1(\mathfrak{U},\mathcal{F})\cong \check{H}^0(\mathfrak{U},\mathcal{H})\cong H^0(X,\mathcal{H})\cong H^1(X,\mathcal{F})$ by the $n=0$ case for the quasi-coherent sheaf $\mathcal{H}$, and the statement follows by induction.
\end{proof}

\begin{Cor}\label{sheafcohomthroughCechRem}
Let $f:X\rightarrow Y$ be a morphism of noetherian separated schemes which is affine \textup(that is, there is some open cover of $Y$ by affines whose preimage is an open cover of $X$ by affines\textup), and let $\mathcal{F}\in\textup{obj}(\mathsf{QCoh}(X;{}_{\mathcal{O}_X}\!\mathsf{Mod}))$. Then for all $n\geq 0$ holds $H^n(X,\mathcal{F})\cong H^n(Y,f_*\mathcal{F})$.
\end{Cor}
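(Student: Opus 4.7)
The plan is to reduce both sheaf cohomology groups to \v{C}ech cohomology groups computed on the same underlying complex, using Theorem \ref{sheafcohomthroughCech} on each side. Since $f$ is affine, by definition there exists an affine open cover $\mathfrak{V}=(V_i)_{i\in I}$ of $Y$ such that $\mathfrak{U}\coloneqq(U_i)_{i\in I}\coloneqq(f^{-1}(V_i))_{i\in I}$ is an affine open cover of $X$. Fix a well-ordering on $I$ so that the \v{C}ech machinery becomes applicable to both covers simultaneously. First I would observe that $f_*\mathcal{F}\in\textup{obj}(\mathsf{QCoh}(Y;{}_{\mathcal{O}_Y}\!\mathsf{Mod}))$ by Proposition \ref{pushpullcoherent}ii, since $X$ is noetherian; hence both $(Y,\mathfrak{V},f_*\mathcal{F})$ and $(X,\mathfrak{U},\mathcal{F})$ satisfy the hypotheses of Theorem \ref{sheafcohomthroughCech} (noetherianity and separatedness are given, and the covers are affine).

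Next I would identify the two \v{C}ech complexes $\check{C}^\bullet(\mathfrak{V},f_*\mathcal{F})$ and $\check{C}^\bullet(\mathfrak{U},\mathcal{F})$ term by term. For any ordered tuple $1\leq i_0<\cdots<i_n\leq|I|$, continuity of $f$ gives
\[
f^{-1}(V_{i_0}\cap\cdots\cap V_{i_n})=f^{-1}(V_{i_0})\cap\cdots\cap f^{-1}(V_{i_n})=U_{i_0}\cap\cdots\cap U_{i_n},
\]
so directly from the definition of pushforward in \eqref{pushforwardsheaf},
\[
f_*\mathcal{F}(V_{i_0}\cap\cdots\cap V_{i_n})=\mathcal{F}(U_{i_0}\cap\cdots\cap U_{i_n}).
\]
Taking products over all ordered $(n+1)$-tuples yields $\check{C}^n(\mathfrak{V},f_*\mathcal{F})=\check{C}^n(\mathfrak{U},\mathcal{F})$ as abelian groups. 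Moreover, the \v{C}ech differentials \eqref{Cechdifferential} are built out of alternating sums of restriction maps, and $f_*$ is defined precisely so that its restriction maps are those of $\mathcal{F}$ on the preimages; hence the differentials coincide as well. Therefore $\check{C}^\bullet(\mathfrak{V},f_*\mathcal{F})=\check{C}^\bullet(\mathfrak{U},\mathcal{F})$ as objects of $\mathsf{Kom^+(Ab)}$, so $\check{H}^n(\mathfrak{V},f_*\mathcal{F})\cong\check{H}^n(\mathfrak{U},\mathcal{F})$ for every $n\geq 0$.

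Finally, applying Theorem \ref{sheafcohomthroughCech} on both sides gives
\[
H^n(Y,f_*\mathcal{F})\cong\check{H}^n(\mathfrak{V},f_*\mathcal{F})\cong\check{H}^n(\mathfrak{U},\mathcal{F})\cong H^n(X,\mathcal{F})
\]
for all $n\geq 0$, which is the claim. There is no real obstacle here beyond the bookkeeping of checking that the theorem's hypotheses apply to both $(X,\mathfrak{U},\mathcal{F})$ and $(Y,\mathfrak{V},f_*\mathcal{F})$ --- in particular that $f_*\mathcal{F}$ is quasi-coherent, and that separatedness of $X$ and $Y$ guarantees the finite intersections appearing in the \v{C}ech complexes are themselves affine --- together with the observation that the identification of the \v{C}ech complexes is natural in the sense that it intertwines with the connecting maps of Theorem \ref{sheafcohomthroughCech}, so that the induced isomorphism $H^n(X,\mathcal{F})\cong H^n(Y,f_*\mathcal{F})$ is canonical.
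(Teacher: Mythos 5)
Your proof is correct and is exactly the argument the paper intends: the statement is left as an immediate corollary of Theorem \ref{sheafcohomthroughCech}, obtained by applying that theorem to the affine covers $\mathfrak{V}$ of $Y$ and $\mathfrak{U}=f^{-1}(\mathfrak{V})$ of $X$ and noting that $\check{C}^\bullet(\mathfrak{V},f_*\mathcal{F})=\check{C}^\bullet(\mathfrak{U},\mathcal{F})$ by the very definition of the pushforward. Your bookkeeping (quasi-coherence of $f_*\mathcal{F}$ via Proposition \ref{pushpullcoherent}, separatedness guaranteeing affine intersections) fills in precisely the hypotheses that need checking.
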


Finally, we report the following very important result by Serre which addresses specifically the sheaf cohomology of projective schemes (see \cite[Theorem III.5.2]{[Har77]}; we already used it in the proof of Corollary \ref{globalsectforcoherent}) and generalizes the case of the projective space (discussed in \cite[Theorem III.5.1]{[Har77]}).

\begin{Thm}\label{sheafcohomforprojsch}
Let $(X,\mathcal{O}_X)$ be a projective scheme over a noetherian ring $R$, so that the invertible twisting sheaf $\mathcal{O}_X(1)$ is very ample \textup(the terminology is explained in Proposition \ref{whenproper} and Remark \ref{asssheafofmodforproj}\textup). Consider any $\mathcal{F}\in\textup{obj}(\mathsf{Coh}(X;{}_{\mathcal{O}_X}\!\mathsf{Mod}))$. Then $H^k(X,\mathcal{F})$ is a finitely-generated $R$-module for each $k\geq 0$ \textup(in particular, so is $\Gamma(X,\mathcal{F})=\mathcal{F}(X)$\textup). 

Moreover, there exists some $n_0\equiv n_0(\mathcal{F})\in\mathbb{Z}$ such that $H^k(X,\mathcal{F}(n))\cong\{0\}$ for each $n\geq n_0$ and $k>0$.
\end{Thm}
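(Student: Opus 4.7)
The plan is to reduce to the case $X = P_R^r$ via a closed immersion and then combine an explicit Čech computation for the twisting sheaves with a descending induction on the cohomological degree. First, since $X$ is projective over $\textup{Spec}(R)$, by definition there is a closed immersion $i:X\hookrightarrow P_R^r$ for some $r\geq 1$; this is a finite morphism of noetherian schemes, so Proposition \ref{pushpullcoherent} yields $i_*\mathcal{F}\in\textup{obj}(\mathsf{Coh}(P_R^r;{}_{\mathcal{O}_{P_R^r}}\!\mathsf{Mod}))$. Closed immersions are moreover affine, so Corollary \ref{sheafcohomthroughCechRem} gives $H^k(X,\mathcal{F})\cong H^k(P_R^r,i_*\mathcal{F})$ for all $k\geq 0$, and the projection formula (Remark \ref{locallyfreeRem}) propagates the twists: $i_*(\mathcal{F}(n))\cong(i_*\mathcal{F})(n)$ since $\mathcal{F}(n)=\mathcal{F}\otimes_{\mathcal{O}_X}i^*\mathcal{O}_{P_R^r}(n)$. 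Thus I may replace $(X,\mathcal{F})$ by $(P_R^r,i_*\mathcal{F})$ and assume $X=P_R^r$ throughout.

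The core of the argument is an explicit Čech computation on $P_R^r=\textup{Proj}(R[x_0,\ldots,x_r])$ using the standard ordered affine open cover $\mathfrak{U}=(U_i)_{i=0}^r$ with $U_i=\{x_i\neq 0\}\cong\textup{Spec}(R[x_0/x_i,\ldots,x_r/x_i])$. Since $P_R^r$ is noetherian and separated (being projective over a noetherian base), Theorem \ref{sheafcohomthroughCech} entitles me to compute sheaf cohomology of any quasi-coherent sheaf as $\check H^\bullet(\mathfrak{U},\square)$. As $\mathfrak{U}$ has only $r+1$ members, the Čech complex vanishes above degree $r$, giving the base case $H^k(P_R^r,\mathcal{G})=0$ for all $k>r$ and all quasi-coherent $\mathcal{G}$. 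A direct (notationally heavy) identification of $\check C^\bullet(\mathfrak{U},\mathcal{O}(n))$ with the localisation complex of $R[x_0,\ldots,x_r]$ shows that $H^0(P_R^r,\mathcal{O}(n))$ is the free $R$-module on degree-$n$ monomials, $H^k(P_R^r,\mathcal{O}(n))=0$ for $0<k<r$, while $H^r(P_R^r,\mathcal{O}(n))$ is identified with the $R$-span of Laurent monomials in $x_0^{-1},\ldots,x_r^{-1}$ of degree $n$ having every exponent strictly negative --- a finitely generated $R$-module which vanishes whenever $n\geq -r$.

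Finite generation for arbitrary coherent $\mathcal{F}$ then proceeds by descending induction on $k$, the case $k>r$ being the above vanishing. For the inductive step, I use that on $P_R^r$ every coherent sheaf admits a surjection
\[
\mathcal{E}\coloneqq\bigoplus_{j=1}^N\mathcal{O}(-m_j)\twoheadrightarrow\mathcal{F},
\]
which follows from the very ampleness of $\mathcal{O}(1)$ together with the fact that $\mathcal{F}(m)$ is globally generated for $m\gg0$ (essentially Remark \ref{asssheafofmodforproj} applied to the graded module $\Gamma_*(\mathcal{F})$, which is finitely generated over $R[x_0,\ldots,x_r]$ for $\mathcal{F}$ coherent on a noetherian base). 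Calling $\mathcal{R}$ the kernel, Proposition \ref{cokernelscoherent} guarantees $\mathcal{R}$ is coherent, and Corollary \ref{longexactforsheafcohomology} produces the exact fragment
\[
H^k(P_R^r,\mathcal{E})\longrightarrow H^k(P_R^r,\mathcal{F})\longrightarrow H^{k+1}(P_R^r,\mathcal{R}),
\]
where the left term is finitely generated by the explicit twisting-sheaf computation and the right one by induction applied to $\mathcal{R}$. Since $R$ is noetherian, finitely generated $R$-modules form a Serre subcategory, so $H^k(P_R^r,\mathcal{F})$ is finitely generated as well. For the vanishing assertion, I twist the same short exact sequence $0\to\mathcal{R}\to\mathcal{E}\to\mathcal{F}\to 0$ by $\mathcal{O}(n)$ --- exactness is preserved because $\mathcal{O}(n)$ is locally free, hence flat --- and observe that $H^k(P_R^r,\mathcal{E}(n))=\bigoplus_j H^k(P_R^r,\mathcal{O}(n-m_j))$ vanishes for all $k>0$ once $n\geq\max_j m_j-r$; descending induction on $k$, starting again from the trivial range $k>r$, then forces $H^k(P_R^r,\mathcal{F}(n))=0$ for $n$ sufficiently large, giving the desired $n_0(\mathcal{F})$.

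The main obstacle I anticipate is the explicit computation of $H^r(P_R^r,\mathcal{O}(n))$ from the Čech complex: one must carefully identify the cokernel of the top Čech differential with the $R$-module of Laurent monomials whose every exponent is negative, which is conceptually straightforward but combinatorially fiddly. A secondary technical point, easily missed, is verifying that every coherent $\mathcal{F}$ on $P_R^r$ really is a quotient of a finite direct sum of twisting sheaves; this is essentially a second application of Serre-style vanishing to $\mathcal{F}(m)$ for $m\gg0$ and needs to be bootstrapped carefully from the graded-module description so as not to create circularity with the vanishing half of the theorem being proved.
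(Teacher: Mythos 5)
Your proposal is correct and follows essentially the same route as the proof the paper defers to (Serre's argument as in Hartshorne, Theorems III.5.1--III.5.2): reduction to $P_R^r$ via a closed immersion, the explicit \v{C}ech computation of $H^k(P_R^r,\mathcal{O}(n))$, and simultaneous descending induction on $k$ using a presentation $\bigoplus_j\mathcal{O}(-m_j)\twoheadrightarrow\mathcal{F}$. Your cautionary remarks about the top-degree \v{C}ech identification and about avoiding circularity in the global generation step are exactly the right points to watch, and both are handled as you indicate.
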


\subsection{Derived functors on coherent sheaves}\label{ch6.3}

Henceforth, we concentrate our efforts towards understanding the derived category of quasi-/coherent sheaves. We generally work on noetherian schemes, on which coherence is well behaved. Beware that in light of Theorem \ref{imageunderXi} the following analysis works on varieties as well. The coming material is borrowed from \cite[chapter 3]{[Huy06]}, more briefly reviewed in \cite[chapter 3]{[Hoc19]}. First of all, we shorten our terminology and notation.

\begin{Def}\label{dercatofsch}
Let $(X,\mathcal{O}_X)$ be a noetherian scheme, with associated category $\mathsf{Coh}(X;{}_{\mathcal{O}_X}\!\mathsf{Mod})$ of coherent $\mathcal{O}_X$-modules, abelian by Corollary \ref{QCohisabelian}. Then we call its bounded derived category 
\[
\mathsf{D^b}(X)\coloneqq \mathsf{D^b}(\mathsf{Coh}(X;{}_{\mathcal{O}_X}\!\mathsf{Mod}))
\]
the \textbf{derived category of $X$}\index{derived category!of a noetherian scheme}, for short. Of course,  Proposition \ref{derivedexists} constructs also the derived categories $\mathsf{D^\#\!}(X)\coloneqq\mathsf{D^\#\!}(\mathsf{Coh}(X;{}_{\mathcal{O}_X}\!\mathsf{Mod}))$ for $\#=\emptyset,+,-$.  

We say that two noetherian schemes $X,Y\in\text{obj}(\mathsf{Sch}(\mathbb{K}))$ over any field $\mathbb{K}$ are \textbf{derived equivalent}\index{derived equivalent (noetherian schemes)} if there exists a $\mathbb{K}$-linear\footnote{When working with schemes over a field $\mathbb{K}$, we implicitly assume their derived categories to be $\mathbb{K}$-linear (whose meaning was explained below Definition \ref{addcat}).} equivalence $\mathsf{D^b}(X)\rightarrow\mathsf{D^b}(Y)$ which is exact (recall that on derived categories exactness is always understood in the sense of Definition \ref{exactderivedfunc}).  
\end{Def}

Unfortunately, $\mathsf{Coh}(X;{}_{\mathcal{O}_X}\!\mathsf{Mod})$ doesn't have enough injectives nor projectives, making it apparently unsuitable when constructing derived functors... but not quite, for we can work in the larger abelian category $\mathsf{QCoh}(X;{}_{\mathcal{O}_X}\!\mathsf{Mod})$ of quasi-coherent $\mathcal{O}_X$-modules --- which at least has enough injectives (see Remark \ref{QCohhasenoughinjectives}) and suitable adapted classes --- and subsequently restrict to coherent sheaves! We will exploit this times and again.   

\begin{Rem}
As pointed out in Remark \ref{QCohhasenoughinjectives}, any quasi-coherent sheaf can be embedded into an injective $\mathcal{O}_X$-module. Furthermore, the full abelian subcategory $\mathsf{QCoh}(X;{}_{\mathcal{O}_X}\!\mathsf{Mod})\subset\mathsf{Sh}(X;{}_{\mathcal{O}_X}\!\mathsf{Mod})$ is thick (shown in \cite[Proposition II.5.7]{[Har77]}). Then we can apply Lemma \ref{abeliansubcatderivedequiv}, which allows us to interpret $\mathsf{D^\#\!}(\mathsf{QCoh}(X;{}_{\mathcal{O}_X}\!\mathsf{Mod}))$ as the full subcategory of $\mathsf{D^\#\!}(\mathsf{Sh}(X;{}_{\mathcal{O}_X}\!\mathsf{Mod}))$ which has quasi-coherent (standard) cohomology.  
\end{Rem}

On the same note of last remark, we have (cf. \cite[Proposition 3.5]{[Huy06]}):

\begin{Lem}\label{boundedCohinQCoh}
Let $X$ be a noetherian scheme. Then the natural inclusion functor
\[
\mathsf{I}_X:\mathsf{D^b}(X)\hookrightarrow\mathsf{D^b}(\mathsf{QCoh}(X;{}_{\mathcal{O}_X}\!\mathsf{Mod}))
\]
restricts to an equivalence between $\mathsf{D^b}(X)$ and the full \textup(triangulated\textup) subcategory of $\mathsf{D^b}(\mathsf{QCoh}(X;{}_{\mathcal{O}_X}\!\mathsf{Mod}))$ of bounded complexes with coherent \textup(standard\textup) cohomology.  
\end{Lem}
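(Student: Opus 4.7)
The plan is to prove $\mathsf{I}_X$ is fully faithful and essentially surjective onto $\mathsf{D^b}_{\mathsf{Coh}}(\mathsf{QCoh}(X))$, the full subcategory of $\mathsf{D^b}(\mathsf{QCoh}(X))$ of complexes with coherent cohomology. Both steps hinge on the standard fact that on a noetherian scheme every quasi-coherent sheaf is the directed union of its coherent subsheaves, so that any coherent subsheaf of a quasi-coherent one is contained in a coherent subsheaf; this is a consequence of $\mathsf{Coh}(X)\subset\mathsf{QCoh}(X)$ being thick together with the noetherian property.

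For essential surjectivity, I would proceed by induction on the amplitude $b-a$ of a complex $\mathcal{F}^\bullet\in\mathsf{D^b}_{\mathsf{Coh}}(\mathsf{QCoh}(X))$ with cohomology concentrated in degrees $[a,b]$. When $a=b$, the complex is quasi-isomorphic to $\mathcal{H}^a(\mathcal{F}^\bullet)[-a]$, which lies in $\mathsf{Coh}(X)$ by hypothesis. Otherwise, I would exploit the smart-truncation distinguished triangle
\[
\tau_{\leq b-1}\mathcal{F}^\bullet \longrightarrow \mathcal{F}^\bullet \longrightarrow \mathcal{H}^b(\mathcal{F}^\bullet)[-b] \longrightarrow \tau_{\leq b-1}\mathcal{F}^\bullet[1]
\]
in $\mathsf{D^b}(\mathsf{QCoh}(X))$. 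Since $\tau_{\leq b-1}\mathcal{F}^\bullet$ still has coherent cohomology of smaller amplitude, by induction it is quasi-isomorphic to some bounded coherent complex $\mathcal{G}^\bullet$. The connecting morphism $\mathcal{H}^b(\mathcal{F}^\bullet)[-b]\to\mathcal{G}^\bullet[1]$ yields a class in $\mathrm{Ext}^{b+1}_{\mathsf{QCoh}(X)}(\mathcal{H}^b(\mathcal{F}^\bullet),\mathcal{G}^\bullet)$; after showing this class comes from $\mathrm{Ext}^{b+1}_{\mathsf{Coh}(X)}$, taking its cone in $\mathsf{D^b}(X)$ produces a bounded complex of coherent sheaves quasi-isomorphic to $\mathcal{F}^\bullet$.

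For fully faithfulness, I would invoke Lemma \ref{derivedsubcat} with $\mathsf{B}=\mathsf{H^b}(\mathsf{Coh}(X))\subset\mathsf{C}=\mathsf{H^b}(\mathsf{QCoh}(X))$ and $S$ the class of quasi-isomorphisms. Condition (i) of that lemma demands that given any quasi-isomorphism $s:\mathcal{H}^\bullet\to\mathcal{F}^\bullet$ in $\mathsf{H^b}(\mathsf{QCoh}(X))$ with $\mathcal{F}^\bullet$ a bounded coherent complex, one produces a bounded coherent complex $\mathcal{G}^\bullet$ and a map $\mathcal{G}^\bullet\to\mathcal{H}^\bullet$ whose composition with $s$ is again a quasi-isomorphism. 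But $\mathcal{H}^\bullet$ automatically inherits coherent cohomology from $\mathcal{F}^\bullet$, so essential surjectivity furnishes a quasi-isomorphism $\mathcal{G}^\bullet\to\mathcal{H}^\bullet$, whose composition with $s$ is a quasi-isomorphism. Hence $\mathsf{I}_X$ is fully faithful, and combined with essential surjectivity this gives the claimed equivalence.

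The main obstacle is the lift of the Ext class from $\mathsf{QCoh}(X)$ to $\mathsf{Coh}(X)$ in the inductive step: one needs that the natural map $\mathrm{Ext}^i_{\mathsf{Coh}(X)}(\mathcal{F},\mathcal{G})\to\mathrm{Ext}^i_{\mathsf{QCoh}(X)}(\mathcal{F},\mathcal{G})$ is bijective for coherent $\mathcal{F},\mathcal{G}$. This relies crucially on the noetherian hypothesis, via the fact that any quasi-coherent extension of coherent sheaves is itself coherent (a direct consequence of thickness of $\mathsf{Coh}(X)$ and the structural characterisation of Proposition \ref{cokernelscoherent}), and that Yoneda $n$-extensions can be replaced by coherent representatives by intersecting the middle quasi-coherent terms with sufficiently large coherent subsheaves guaranteed by the directed-union property mentioned above.
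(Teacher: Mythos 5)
The paper itself offers no proof here: it defers to \cite[Proposition 3.5]{[Huy06]}, and the argument there (which also fits the paper's own machinery via Lemma \ref{derivedsubcat}) rests on one technical statement: on a noetherian scheme every bounded complex of quasi-coherent sheaves with coherent cohomology contains a bounded \emph{coherent subcomplex} whose inclusion is a quasi-isomorphism, constructed by descending induction on the degree using that every quasi-coherent sheaf is the filtered union of its coherent subsheaves. You have correctly isolated that filtered-union property as the essential noetherian input, and invoking Lemma \ref{derivedsubcat} for full faithfulness is the right instinct. But as written your argument is circular. Your essential-surjectivity induction requires the connecting morphism $H^b(\mathcal{F}^\bullet)[-b]\to\mathcal{G}^\bullet[1]$ to lift along $\mathrm{Hom}_{\mathsf{D^b}(X)}(-,-)\to\mathrm{Hom}_{\mathsf{D^b}(\mathsf{QCoh}(X))}(-,-)$, which is precisely (the surjectivity half of) the full faithfulness you have not yet established --- and for a complex target $\mathcal{G}^\bullet$, not a single sheaf, so your Yoneda-extension remark would in addition need a d\'evissage on $\mathcal{G}^\bullet$ through its truncation triangles and the five lemma. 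Conversely, your full-faithfulness step needs, for condition (i) of Lemma \ref{derivedsubcat}, an \emph{honest morphism} $f:\mathcal{G}^\bullet\to\mathcal{H}^\bullet$ in $\mathsf{H^b}(\mathsf{QCoh}(X))$ from a bounded coherent complex with $s\circ f$ a quasi-isomorphism; but your essential surjectivity only delivers an isomorphism in the localized category $\mathsf{D^b}(\mathsf{QCoh}(X))$, i.e.\ a roof whose apex is again merely quasi-coherent, and replacing that apex by a coherent complex is exactly the problem you set out to solve. Each half of your proof defers its hard step to the other.

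The repair is to prove the subcomplex statement first and on its own: given a bounded quasi-coherent $\mathcal{H}^\bullet$ with coherent cohomology, build coherent $\mathcal{G}^i\subset\mathcal{H}^i$ from the top degree downwards so that $d(\mathcal{G}^i)\subset\mathcal{G}^{i+1}$, the inclusion induces isomorphisms on all $H^i$, and each choice is possible because the coherent subsheaves of $\mathcal{H}^i$ form a filtered family with union $\mathcal{H}^i$ and the relevant targets (cohomology sheaves, and the coherent sheaf $\mathcal{G}^{i+1}\cap\mathrm{im}(d^i)$) are noetherian objects, so the family stabilizes on them. This one lemma simultaneously verifies condition (i) of Lemma \ref{derivedsubcat} (hence full faithfulness) and essential surjectivity, with no induction on amplitude and no separate Ext-comparison. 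Your alternative route --- proving $\mathrm{Ext}^i_{\mathsf{Coh}}\cong\mathrm{Ext}^i_{\mathsf{QCoh}}$ by taking coherent sub-extensions of Yoneda extensions and then running a joint induction --- can be made to work, but the coherent sub-extension construction is the same subcomplex argument in disguise, so nothing is saved by routing through it.
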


Now, we check the existence of all derived functors of Section \ref{ch6.1} at the level of coherent sheaves, one by one, also discussing what is the image of honest complexes of sheaves under them. Note that the assumptions on $X$ will be refined here and there (always referencing the vocabulary of Section \ref{ch5.5} summarized in diagram \eqref{bigdiagram}), though in the end they are always met by the schemes we ultimately favour, namely Calabi--Yau varieties.

\begin{Pro}\label{globalsectforcoherentbdd}
Let $X\in\textup{obj}(\mathsf{Sch}(\mathbb{K}))$ be a projective \textup(or just proper\textup) noetherian scheme. Then the \textup(exact\textup) right derived functor on bounded complexes of the left exact global sections functor $\Gamma(X,\square):\mathsf{Coh}(X;{}_{\mathcal{O}_X}\!\mathsf{Mod})\rightarrow\mathsf{Vect}_\mathbb{K}$ of \eqref{globsectsfunconcoh} exists\footnote{Observe that this is not granted because, again, $\mathsf{Coh}(X;{}_{\mathcal{O}_X}\!\mathsf{Mod})$ does not have enough injectives!} and can be constructed as the composition of exact functors.
\end{Pro}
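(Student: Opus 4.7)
The plan is to exploit the fact that while $\mathsf{Coh}(X;{}_{\mathcal{O}_X}\!\mathsf{Mod})$ lacks enough injectives, the ambient $\mathsf{QCoh}(X;{}_{\mathcal{O}_X}\!\mathsf{Mod})$ does have them (Remark \ref{QCohhasenoughinjectives}), so the natural move is to detour through quasi-coherent sheaves, derive there, and then restrict back. First I would consider the left exact functor $\Gamma(X,\square):\mathsf{QCoh}(X;{}_{\mathcal{O}_X}\!\mathsf{Mod})\rightarrow\mathsf{Vect}_\mathbb{K}^{\leq\infty}$ of \eqref{globsectsfunconcoh} and, by Lemma \ref{enoughinjprojobj} together with Definition \ref{derivedfunc}, form its right derived functor $\mathsf{R}\Gamma(X,\square):\mathsf{D^+}(\mathsf{QCoh}(X;{}_{\mathcal{O}_X}\!\mathsf{Mod}))\rightarrow\mathsf{D^+}(\mathsf{Vect}_\mathbb{K}^{\leq\infty})$, which is automatically exact by Theorem \ref{derivedfuncisexact}. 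The fact that this coincides with the sheaf cohomology derived in $\mathsf{Sh}(X;\mathsf{Ab})$ is guaranteed by the third bullet of Remark \ref{QCohhasenoughinjectives}: injectives in $\mathsf{QCoh}(X;{}_{\mathcal{O}_X}\!\mathsf{Mod})$ are flasque, hence acyclic for $\Gamma(X,\square)$.

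The key step is to argue that the resulting functor restricts nicely when we plug in coherent input. By Theorem \ref{sheafcohomforprojsch} (taking $R=\mathbb{K}$), for any coherent $\mathcal{F}\in\textup{obj}(\mathsf{Coh}(X;{}_{\mathcal{O}_X}\!\mathsf{Mod}))$ the sheaf cohomology groups $H^k(X,\mathcal{F})$ are finite-dimensional $\mathbb{K}$-vector spaces, and they vanish for all $k>\dim X$ (Grothendieck's vanishing); the latter gives a uniform upper bound on the cohomological amplitude. I would then invoke Proposition \ref{derivedfuncrestrictstobdd}: part (i), applied to the thick subcategory $\mathsf{Vect}_\mathbb{K}\subset\mathsf{Vect}_\mathbb{K}^{\leq\infty}$ along with the finiteness statement, ensures that $\mathsf{R}\Gamma(X,\square)$ evaluated on a bounded complex of coherent sheaves has cohomology in $\mathsf{Vect}_\mathbb{K}$; part (ii), combined with the vanishing above dimension $\dim X$, then allows the restriction to bounded derived categories.

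Putting everything together, I would precompose with the equivalence from Lemma \ref{boundedCohinQCoh}, identifying $\mathsf{D^b}(X)$ with the full subcategory of $\mathsf{D^b}(\mathsf{QCoh}(X;{}_{\mathcal{O}_X}\!\mathsf{Mod}))$ of complexes with coherent cohomology, and land in $\mathsf{D^b}(\mathsf{Vect}_\mathbb{K})$. The resulting $\mathsf{R}\Gamma(X,\square):\mathsf{D^b}(X)\rightarrow\mathsf{D^b}(\mathsf{Vect}_\mathbb{K})$ is then manifestly a composition of exact functors --- the equivalence $\mathsf{I}_X$ and the exact $\mathsf{R}\Gamma$ derived on quasi-coherents --- and hence exact itself.

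The main obstacle is the finiteness/vanishing input required at the second step in the merely \emph{proper} case, since Theorem \ref{sheafcohomforprojsch} is stated for projective schemes. The standard route is to appeal to Chow's lemma (second bullet of Proposition \ref{whenproper}), which produces a projective $X'$ together with a proper surjective $X'\rightarrow X$, and to reduce the finiteness and vanishing of $H^k(X,\mathcal{F})$ to the projective statement via the Leray-type comparison afforded by the higher direct images; alternatively, one can simply quote Grothendieck's general coherence and vanishing theorems for proper morphisms of noetherian schemes, of which Serre's projective result is the prototype. Beyond this, everything else reduces to organising earlier material, particularly the interplay between Proposition \ref{derivedfuncrestrictstobdd} and Lemma \ref{boundedCohinQCoh}.
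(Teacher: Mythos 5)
Your argument is correct and follows essentially the same route as the paper's own proof: derive $\Gamma(X,\square)$ on $\mathsf{QCoh}(X;{}_{\mathcal{O}_X}\!\mathsf{Mod})$ using its enough injectives, cut down to bounded complexes via Grothendieck vanishing and Proposition \ref{derivedfuncrestrictstobdd}, and then use Lemma \ref{boundedCohinQCoh} together with the finite-dimensionality from Theorem \ref{sheafcohomforprojsch} to land in $\mathsf{D^b}(\mathsf{Vect}_\mathbb{K})$. Your closing remark on reducing the merely proper case to the projective one via Chow's lemma is a sensible supplement that the paper leaves implicit.
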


\begin{proof}(\textit{Sketch})
By Corollary \ref{globalsectforcoherent}, and since $\mathsf{QCoh}(X;{}_{\mathcal{O}_X}\!\mathsf{Mod})$ has enough injectives, the left exact $\Gamma(X,\square):$ $\mathsf{QCoh}(X;{}_{\mathcal{O}_X}\!\mathsf{Mod})\rightarrow\mathsf{Vect}_\mathbb{K}^{\leq\infty}$ gives rise to the right derived functor $\mathsf{R}\Gamma(X,\square):\mathsf{D^+}(\mathsf{QCoh}(X;{}_{\mathcal{O}_X}\!\mathsf{Mod}))\rightarrow\mathsf{D^+}(\mathsf{Vect}_\mathbb{K}^{\leq\infty})$. 

Now, a theorem by Grothendieck (see \cite[Theorem III.2.7]{[Har77]}) tells us that the sheaf cohomology groups of any $\mathcal{F}\in\text{obj}(\mathsf{QCoh}(X;{}_{\mathcal{O}_X}\!\mathsf{Mod}))$ are $H^n(X,\mathcal{F})\cong\{0\}$ if $n>\text{dim}(X)$. Then Proposition \ref{derivedfuncrestrictstobdd} can be applied to show that $\mathsf{R}\Gamma(X,\square)$ descends to the derived categories of bounded complexes, $\mathsf{R}\Gamma(X,\square):\mathsf{D^b}(\mathsf{QCoh}(X;{}_{\mathcal{O}_X}\!\mathsf{Mod}))\rightarrow\mathsf{D^b}(\mathsf{Vect}_\mathbb{K}^{\leq\infty})$. We thus have the diagram
\begin{equation}\label{diagramforderivedfuncofbdd}
	\begin{tikzcd}
		\mathsf{D^+}(\mathsf{QCoh}(X;{}_{\mathcal{O}_X}\!\mathsf{Mod})) \arrow[rr, "{\mathsf{R}\Gamma(X,\square)}"] & & \mathsf{D^+}(\mathsf{Vect}_\mathbb{K}^{\leq\infty}) \\
		\mathsf{D^b}(\mathsf{QCoh}(X;{}_{\mathcal{O}_X}\!\mathsf{Mod})) \arrow[rr, "{\mathsf{R}\Gamma(X,\square)}"']\arrow[u, hook] & & \mathsf{D^b}(\mathsf{Vect}_\mathbb{K}^{\leq\infty})\arrow[u, hook] \\
		\mathsf{D^b}(X) \arrow[u, hook, "\mathsf{I}_X"]\arrow[rr, dashed] & & \mathsf{D^b}(\mathsf{Vect}_\mathbb{K})\arrow[u, hook]
	\end{tikzcd}\quad,
\end{equation}
where $\mathsf{I}_X$ denotes again the natural inclusion functor. But since, by Lemma \ref{boundedCohinQCoh}, the latter actually lands into the full subcategory of bounded complexes with coherent cohomology, which in turn maps under the middle $\mathsf{R}\Gamma(X,\square)$ to bounded complexes of finite-dimensional $\mathbb{K}$-vector spaces, we see that we can actually go around the lower square of the diagram, resulting in the composite dashed arrow drawn. As, moreover, the remaining sheaf cohomology groups of any coherent module are finite-dimensional (by Theorem \ref{sheafcohomforprojsch}), and $\mathsf{Vect}_\mathbb{K}\subset \mathsf{Vect}_\mathbb{K}^{\leq\infty}$ is clearly thick, a second use of Proposition \ref{derivedfuncrestrictstobdd} on this dashed arrow turns it into the desired functor $\mathsf{D^b}(X)\rightarrow \mathsf{D^b}(\mathsf{Vect}_\mathbb{K})$.   	     
\end{proof}

So, for any $\mathcal{F}^\bullet\in\text{obj}(\mathsf{D^b}(X))$ we may define $H^n(X,\mathcal{F}^\bullet)\coloneqq\mathsf{R}^n\Gamma(X,\mathcal{F}^\bullet)=$ $H^n(\mathsf{R}\Gamma(X,\mathcal{F}^\bullet))\in\text{obj}(\mathsf{Vect}_\mathbb{K})$, whence by the natural splitting of vector spaces we get:
\[
\mathsf{R}\Gamma(X,\mathcal{F}^\bullet)\cong\bigoplus_{n\in\mathbb{Z}}\mathsf{R}^n\Gamma(X,\mathcal{F}^\bullet)[-n]=\bigoplus_{n\in\mathbb{Z}}H^n(X,\mathcal{F}^\bullet)[-n]\in\text{obj}(\mathsf{D^+}(\mathsf{Vect}_\mathbb{K}))\,.
\]

\vspace*{0.3cm}
\noindent Let us move to the more general situation of pushforward functors.

\begin{Pro}
Let $f:X\rightarrow Y$ be a projective \textup(or just proper\textup) morphism of noetherian schemes. Then the right derived functor on bounded complexes of the left exact pushforward functor $f_*:\mathsf{Coh}(X;{}_{\mathcal{O}_X}\!\mathsf{Mod})\rightarrow\mathsf{Coh}(Y;{}_{\mathcal{O}_Y}\!\mathsf{Mod})$ of Proposition \ref{pushpullcoherent} exists and can be constructed as the composition of exact functors. \textup(Choosing $Y=\textup{Spec}(\mathbb{K})$ and $f$ equal to the structure morphism of $X\in\textup{obj}(\mathsf{Sch}(\mathbb{K}))$ yields Proposition \ref{globalsectforcoherentbdd}.\textup) 
\end{Pro}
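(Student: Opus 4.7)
The plan is to replicate verbatim the three-step strategy used in the proof of Proposition \ref{globalsectforcoherentbdd}, simply replacing the global sections functor with the pushforward and carrying along all the book-keeping. Since the direct-images analogue of the base category $\mathsf{Coh}(X;{}_{\mathcal{O}_X}\!\mathsf{Mod})$ still lacks enough injectives, the first move must be to enlarge the picture to quasi-coherent sheaves, where Remark \ref{QCohhasenoughinjectives} provides enough injectives. By Proposition \ref{pushpullcoherent}.ii, under either of the two hypotheses on $f$, pushforward specializes to a left exact additive functor $f_*:\mathsf{QCoh}(X;{}_{\mathcal{O}_X}\!\mathsf{Mod})\to\mathsf{QCoh}(Y;{}_{\mathcal{O}_Y}\!\mathsf{Mod})$. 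Hence Lemma \ref{enoughinjprojobj}, Definition \ref{derivedfunc} and Theorem \ref{derivedfuncisexact} yield an exact right derived functor
\[
\mathsf{R}f_*:\mathsf{D^+}(\mathsf{QCoh}(X;{}_{\mathcal{O}_X}\!\mathsf{Mod}))\longrightarrow \mathsf{D^+}(\mathsf{QCoh}(Y;{}_{\mathcal{O}_Y}\!\mathsf{Mod})).
\]

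Next I would cut down to bounded complexes exactly as in the previous proposition. The key input is Grothendieck's vanishing for higher direct images: for every $\mathcal{F}\in\mathrm{obj}(\mathsf{QCoh}(X;{}_{\mathcal{O}_X}\!\mathsf{Mod}))$ one has $\mathsf{R}^i f_*\mathcal{F}=0$ once $i>\dim(X)$. This is essentially a local statement on $Y$: for any affine open $V\subset Y$ one has $(\mathsf{R}^i f_*\mathcal{F})|_V\cong H^i(f^{-1}(V),\mathcal{F}|_{f^{-1}(V)})^{\sim}$, so the vanishing follows from the Grothendieck vanishing invoked in Proposition \ref{globalsectforcoherentbdd}. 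With this global bound in hand, Proposition \ref{derivedfuncrestrictstobdd}.ii restricts $\mathsf{R}f_*$ to $\mathsf{D^b}(\mathsf{QCoh}(X;{}_{\mathcal{O}_X}\!\mathsf{Mod}))\rightarrow\mathsf{D^b}(\mathsf{QCoh}(Y;{}_{\mathcal{O}_Y}\!\mathsf{Mod}))$.

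The final step is to pass from the quasi-coherent world to the coherent one, using Lemma \ref{boundedCohinQCoh} to identify $\mathsf{D^b}(X)$ and $\mathsf{D^b}(Y)$ with the full subcategories of $\mathsf{D^b}(\mathsf{QCoh}(X;{}_{\mathcal{O}_X}\!\mathsf{Mod}))$ and $\mathsf{D^b}(\mathsf{QCoh}(Y;{}_{\mathcal{O}_Y}\!\mathsf{Mod}))$ whose (ordinary) cohomology is coherent. Invoking Proposition \ref{derivedfuncrestrictstobdd}.i with $\mathsf{C}=\mathsf{Coh}(Y;{}_{\mathcal{O}_Y}\!\mathsf{Mod})\subset \mathsf{QCoh}(Y;{}_{\mathcal{O}_Y}\!\mathsf{Mod})$ (which is thick, just as in the previous proposition), it then suffices to know that $\mathsf{R}^i f_*\mathcal{F}$ is coherent whenever $\mathcal{F}$ is. Stringing these pieces together exactly as in the big diagram \eqref{diagramforderivedfuncofbdd} of the preceding proof (with the rightmost column replaced by $\mathsf{D^\#}(\mathsf{QCoh}(Y;{}_{\mathcal{O}_Y}\!\mathsf{Mod}))$ and $\mathsf{D^b}(Y)$) produces the asserted dashed functor $\mathsf{R}f_*:\mathsf{D^b}(X)\rightarrow\mathsf{D^b}(Y)$, which is a composition of exact functors and hence exact.

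The hard part is precisely the coherence preservation of the higher direct images $\mathsf{R}^i f_*\mathcal{F}$, for this is the non-formal, genuinely geometric ingredient. In the projective case it is an avatar of Serre's finiteness theorem (Theorem \ref{sheafcohomforprojsch}), reduced to it affine-locally on $Y$ via the isomorphism $(\mathsf{R}^i f_*\mathcal{F})|_V\cong H^i(f^{-1}(V),\mathcal{F}|_{f^{-1}(V)})^{\sim}$; in the properly proper case one invokes Grothendieck's theorem that $\mathsf{R}^i f_*$ sends $\mathsf{Coh}$ to $\mathsf{Coh}$ for any proper morphism of noetherian schemes, extracted either from \cite[section III.8]{[Har77]} or, via Chow's Lemma (Proposition \ref{whenproper}), by dévissage from the projective case. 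Everything else is a formal assembly of the derived-category machinery already set up in Part I, specialized here through Proposition \ref{derivedfuncrestrictstobdd} and Lemma \ref{boundedCohinQCoh}. The claim about Proposition \ref{globalsectforcoherentbdd} being a special case is immediate: take $Y=\mathrm{Spec}(\mathbb{K})$ with $f$ the structure morphism, so that $\mathsf{Coh}(Y;{}_{\mathcal{O}_Y}\!\mathsf{Mod})\cong \mathsf{Vect}_{\mathbb{K}}$ and $f_*=\Gamma(X,\square)$.
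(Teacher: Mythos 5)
Your proposal is correct and follows essentially the same route as the paper: enlarge to quasi-coherent sheaves where enough injectives exist, derive $f_*$ on $\mathsf{D^+}$, cut down to bounded complexes via Grothendieck vanishing of higher direct images and Proposition \ref{derivedfuncrestrictstobdd}, and finally restrict to coherent cohomology using Lemma \ref{boundedCohinQCoh} together with coherence of $\mathsf{R}^n f_*$ (Hartshorne III.8.8). The extra detail you supply on localizing the vanishing over affines of $Y$ and on the proper case via Chow's Lemma is a welcome elaboration of steps the paper leaves to the references, not a different argument.
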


\begin{proof}(\textit{Sketch})
We just mimick the proof for the case of the global sections functor. Hence, by Proposition \ref{pushpullcoherent}, on quasi-coherent sheaves we can construct the right derived functor $\mathsf{R}f_*:\mathsf{D^+}(\mathsf{QCoh}(X;{}_{\mathcal{O}_X}\!\mathsf{Mod}))\rightarrow\mathsf{D^+}(\mathsf{QCoh}(Y;{}_{\mathcal{O}_Y}\!\mathsf{Mod}))$.

Now, Grothendieck's theorem naturally generalizes to: $\mathsf{R}^nf_*(\mathcal{F})\cong\{0\}$ for any $\mathcal{F}\in\text{obj}(\mathsf{QCoh}(X;{}_{\mathcal{O}_X}\!\mathsf{Mod}))$ if $n>\text{dim}(X)$ ($f$ being a generic morphism of noetherian schemes suffices; this result may be deduced for example from \cite[Proposition III.8.1]{[Har77]}). A first call of Proposition \ref{derivedfuncrestrictstobdd} then specializes $\mathsf{R}f_*$ to $\mathsf{R}f_*:\mathsf{D^b}(\mathsf{QCoh}(X;{}_{\mathcal{O}_X}\!\mathsf{Mod}))\rightarrow\mathsf{D^b}(\mathsf{QCoh}(Y;{}_{\mathcal{O}_Y}\!\mathsf{Mod}))$.

But if we work with coherent sheaves $\mathcal{F}\in\text{obj}(\mathsf{Coh}(X;{}_{\mathcal{O}_X}\!\mathsf{Mod}))$ and $f$ as by assumption, then $\mathsf{R}^nf_*(\mathcal{F})$ is coherent as well for all $n\geq 0$ (see \cite[Theorem III.8.8]{[Har77]}). Combined with what said above, indeed we can construct $\mathsf{R}f_*:\mathsf{D^b}(X)\rightarrow\mathsf{D^b}(Y)$ by composition, so to obtain the dashed arrow in the generalized version of diagram \eqref{diagramforderivedfuncofbdd} (one employs again Lemma \ref{boundedCohinQCoh} and Proposition \ref{derivedfuncrestrictstobdd}).
\end{proof}

Then, for any $\mathcal{F}^\bullet\in\text{obj}(\mathsf{D^b}(X))$ we can define higher direct images
\begin{equation}
\mathsf{R}^nf_*(\mathcal{F}^\bullet)\coloneqq H^n(\mathsf{R}f_*(\mathcal{F}^\bullet))\in\text{obj}(\mathsf{Coh}(Y;{}_{\mathcal{O}_Y}\!\mathsf{Mod}))
\end{equation}
\big(which of course equals $H^n(X,\mathcal{F}^\bullet)$ in case $Y=\text{Spec}(\mathbb{K})$ and $f$ is the structure morphism of $X\in\text{obj}(\mathsf{Sch}(\mathbb{K}))$\big).

\begin{Rem}
The pushforward operation behaves well with respect to composition: for $f:X\rightarrow Y$ and $g:Y\rightarrow Z$ compatible morphisms of noetherian schemes, it holds $(g\circ f)_*=g_*\circ f_*:\mathsf{QCoh}(X;{}_{\mathcal{O}_X}\!\mathsf{Mod})\rightarrow\mathsf{QCoh}(Z;{}_{\mathcal{O}_Z}\!\mathsf{Mod})$. Then one can apply Proposition \ref{derivedfunccompo} to show that
\[
\mathsf{R}(g\circ f)_*\cong\mathsf{R}g_*\circ\mathsf{R}f_*:\mathsf{D^+}(\mathsf{QCoh}(X;{}_{\mathcal{O}_X}\!\mathsf{Mod}))\rightarrow\mathsf{D^+}(\mathsf{QCoh}(Z;{}_{\mathcal{O}_Z}\!\mathsf{Mod}))\,.
\]
(Exactly like in the proof of Proposition \ref{sheafcohomologyisabelian}, one chooses as adapted classes those of injective respectively flabby quasi-coherent sheaves.) It is most useful when $Z=\text{Spec}(\mathbb{K})$ and $g$ is the structure morphism of $Y$:
\begin{equation}\label{globalsectcompo}
	\mathsf{R}\Gamma(X,\square)\cong\mathsf{R}\Gamma(Y,\square)\circ\mathsf{R}f_*:\mathsf{D^+}(\mathsf{QCoh}(X;{}_{\mathcal{O}_X}\!\mathsf{Mod}))\rightarrow\mathsf{D^+}(\mathsf{Vect}_\mathbb{K}^{\leq\infty})\,.
\end{equation}
\end{Rem}

\vspace*{0.3cm}
\noindent Local $\mathcal{H}om$ is another source of interesting functors, according to Remark \ref{desirablecoherentprop}. First, observe that we can extend \eqref{locHomonqcoh} to the left exact functor
\begin{equation}\label{locHomoncomplexes}
\begin{aligned}
	&\mathcal{H}om_{\mathcal{O}_X}^\bullet(\mathcal{F}^\bullet,\square):\mathsf{H^+}(\mathsf{QCoh}(X;{}_{\mathcal{O}_X}\!\mathsf{Mod}))\rightarrow\mathsf{H^+}(\mathsf{QCoh}(X;{}_{\mathcal{O}_X}\!\mathsf{Mod}))\,, \\
	&\mathcal{H}om_{\mathcal{O}_X}^n(\mathcal{F}^\bullet,\mathcal{G}^\bullet)\coloneqq \prod_{k\in\mathbb{Z}}\mathcal{H}om_{\mathcal{O}_X}(\mathcal{F}^k,\mathcal{G}^{n+k})\in\text{obj}(\mathsf{QCoh}(X;{}_{\mathcal{O}_X}\!\mathsf{Mod})) \\
	&\text{and}\quad d^n\coloneqq d_\mathcal{G}^n -(-1)^n d_\mathcal{F}^n:\mathcal{H}om_{\mathcal{O}_X}^n(\mathcal{F}^\bullet,\mathcal{G}^\bullet)\rightarrow \mathcal{H}om_{\mathcal{O}_X}^{n+1}(\mathcal{F}^\bullet,\mathcal{G}^\bullet)\,,
\end{aligned}
\end{equation}
for any honest complex of quasi-coherent sheaves $\mathcal{F}^\bullet$ in $\mathsf{H^-}(\mathsf{QCoh}(X;{}_{\mathcal{O}_X}\!\mathsf{Mod}))$. 

\begin{Lem}
Let $X$ be a noetherian scheme which is regular, as per Definition \ref{smoothsch} \textup{\big(}this is the case of any smooth $X\in\textup{obj}(\mathsf{Sch}(\mathbb{K}))$\textup{\big)}. Then the right derived functor $\mathsf{R}\mathcal{H}om_{\mathcal{O}_X}(\mathcal{F},\square):\mathsf{D^b}(X)\rightarrow\mathsf{D^b}(X)$ exists for any $\mathcal{F}\in\textup{obj}(\mathsf{Coh}(X;{}_{\mathcal{O}_X}\!\mathsf{Mod}))$, and generalizes to the exact functor
\[
\mathsf{R}\mathcal{H}om_{\mathcal{O}_X}^\bullet(\square,\square):\mathsf{D^b}(X)^\textup{opp}\times\mathsf{D^b}(X)\rightarrow\mathsf{D^b}(X)\,.
\] 
\end{Lem}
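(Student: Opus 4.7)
The plan is to mimick the strategy already used for $\mathsf{R}\Gamma(X,\square)$ and $\mathsf{R}f_*$: first lift to quasi-coherent sheaves (where we have enough injectives by Remark \ref{QCohhasenoughinjectives}), apply Definition \ref{derivedfunc}, then descend back to $\mathsf{D^b}(X)$ by invoking a finite cohomological dimension result, controlling coherence of the output.

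First, for fixed $\mathcal{F}\in\textup{obj}(\mathsf{Coh}(X;{}_{\mathcal{O}_X}\!\mathsf{Mod}))$, regard it as a quasi-coherent sheaf and consider the left exact functor $\mathcal{H}om_{\mathcal{O}_X}(\mathcal{F},\square):\mathsf{QCoh}(X;{}_{\mathcal{O}_X}\!\mathsf{Mod})\rightarrow\mathsf{QCoh}(X;{}_{\mathcal{O}_X}\!\mathsf{Mod})$ of \eqref{locHomonqcoh}. Since $X$ is noetherian, $\mathsf{QCoh}(X;{}_{\mathcal{O}_X}\!\mathsf{Mod})$ has enough injectives, so Lemma \ref{enoughinjprojobj} combined with Definition \ref{derivedfunc} yields the right derived functor $\mathsf{R}\mathcal{H}om_{\mathcal{O}_X}(\mathcal{F},\square):\mathsf{D^+}(\mathsf{QCoh}(X;{}_{\mathcal{O}_X}\!\mathsf{Mod}))\rightarrow\mathsf{D^+}(\mathsf{QCoh}(X;{}_{\mathcal{O}_X}\!\mathsf{Mod}))$, with classical derived $\mathcal{E}xt^k_{\mathcal{O}_X}(\mathcal{F},\square)$ as per Definition \ref{localExt}.

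Next, regularity of $X$ is exactly what guarantees that every coherent sheaf admits a bounded locally free resolution of length at most $n\coloneqq\textup{dim}(X)$: this is the standard consequence of each local ring $\mathcal{O}_{X,x}$ being regular, hence of finite global dimension $\leq n$, and of the construction sketched in Remark \ref{localExtRem} being implementable globally. Since by the first bullet point of Remark \ref{localExtRem} $\mathcal{E}xt^k_{\mathcal{O}_X}(\mathcal{F},\mathcal{G})$ can be computed from such a resolution $\mathcal{L}^\bullet\to\mathcal{F}$, it vanishes for $k>n$, while each $\mathcal{H}om_{\mathcal{O}_X}(\mathcal{L}^{-i},\mathcal{G})\cong(\mathcal{L}^{-i})^\vee\otimes_{\mathcal{O}_X}\mathcal{G}$ is coherent whenever $\mathcal{G}$ is (by Remark \ref{desirablecoherentprop}, since $\mathcal{L}^{-i}$ is locally free of finite rank, and coherence is preserved by tensor and dualization on coherent sheaves). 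Consequently, $\mathcal{E}xt^k_{\mathcal{O}_X}(\mathcal{F},\mathcal{G})\in\textup{obj}(\mathsf{Coh}(X;{}_{\mathcal{O}_X}\!\mathsf{Mod}))$ for all $k$. Two successive applications of Proposition \ref{derivedfuncrestrictstobdd} --- first to drop to $\mathsf{D^b}$ exploiting the bounded cohomological dimension, then to restrict to coherent cohomology, remembering Lemma \ref{boundedCohinQCoh} --- produce the desired $\mathsf{R}\mathcal{H}om_{\mathcal{O}_X}(\mathcal{F},\square):\mathsf{D^b}(X)\rightarrow\mathsf{D^b}(X)$ via the analogue of diagram \eqref{diagramforderivedfuncofbdd}. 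Exactness is inherited from that of the quasi-coherent derived functor, since it commutes with the fully faithful embedding of $\mathsf{D^b}(X)$.

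For the promoted bi-variable functor, the natural move is to extend $\mathcal{F}$ as well to a bounded complex $\mathcal{F}^\bullet\in\textup{obj}(\mathsf{D^b}(X))$, relying once more on regularity: any such $\mathcal{F}^\bullet$ admits a bounded above locally free resolution $\mathcal{L}^\bullet\to\mathcal{F}^\bullet$, truncatable to a bounded one because the global cohomological dimension is $\leq n$ (one performs the standard ``stupid truncation'' and uses that the $n$-th syzygy sheaf is still locally free, as each stalk has projective dimension $\leq n$ over the regular local ring $\mathcal{O}_{X,x}$). Set then $\mathsf{R}\mathcal{H}om^\bullet_{\mathcal{O}_X}(\mathcal{F}^\bullet,\mathcal{G}^\bullet)\coloneqq\mathcal{H}om^\bullet_{\mathcal{O}_X}(\mathcal{L}^\bullet,\mathcal{I}^\bullet)$, where $\mathcal{G}^\bullet\to\mathcal{I}^\bullet$ is an injective resolution of $\mathcal{G}^\bullet$ in $\mathsf{QCoh}(X;{}_{\mathcal{O}_X}\!\mathsf{Mod})$ and the right-hand side is the total $\mathcal{H}om$-complex defined in \eqref{locHomoncomplexes}. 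Independence from the chosen resolutions, up to quasi-isomorphism, is the standard verification in both variables (any two locally free resolutions of $\mathcal{F}^\bullet$ are quasi-isomorphic through a third one, and similarly for injective resolutions on the target side); coherence of the cohomology sheaves follows locally as in the previous paragraph, and boundedness follows from both $\mathcal{L}^\bullet$ and $\mathcal{I}^\bullet$ being bounded complexes of finite-rank locally free, respectively injective, sheaves up to cohomological dimension $n$. Contravariance in the first argument gives the $\mathsf{D^b}(X)^\textup{opp}$, and exactness in either slot comes from the fact that cones in $\mathsf{D^b}(X)$ map to cones of $\mathcal{H}om^\bullet$-complexes.

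The main obstacle is the \emph{existence of a bounded locally free resolution} for an arbitrary $\mathcal{F}^\bullet\in\textup{obj}(\mathsf{D^b}(X))$; equivalently, checking that the class of locally free coherent sheaves is adapted (in the sense of Definition \ref{adaptedclass}) to the relevant right exact versions of these functors. This is precisely where the regularity hypothesis is indispensable: without finite global cohomological dimension, only unbounded resolutions would be available, and one could no longer guarantee that $\mathsf{R}\mathcal{H}om^\bullet$ preserves $\mathsf{D^b}(X)$. All remaining steps (naturality of the construction, well-definedness up to canonical isomorphism, exactness) reduce to standard manipulations of double complexes and resolutions, once this hurdle is cleared.
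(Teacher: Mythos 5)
Your proposal follows essentially the same route as the paper's (itself only a sketch deferring to Huybrechts): derive $\mathcal{H}om_{\mathcal{O}_X}(\mathcal{F},\square)$ in $\mathsf{QCoh}(X;{}_{\mathcal{O}_X}\!\mathsf{Mod})$ using enough injectives, then use regularity to bound the cohomological amplitude and Proposition \ref{derivedfuncrestrictstobdd} together with Lemma \ref{boundedCohinQCoh} to land in $\mathsf{D^b}(X)$; for the bifunctor you resolve the first variable by locally free complexes and the second by injectives, which is the standard double-complex construction and is compatible with the paper's remark that injective complexes are adapted in the second slot. The one point you should be careful about is your claim that ``regularity of $X$ is exactly what guarantees'' a \emph{global} bounded locally free resolution of every coherent sheaf: for an arbitrary regular noetherian scheme this is not automatic, since one also needs the resolution property (every coherent sheaf a quotient of a locally free one), which the paper itself only asserts for quasi-projective schemes over a noetherian ring in Remark \ref{localExtRem}. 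Regularity by itself gives the \emph{local} statement --- each $\mathcal{O}_{X,x}$ is a regular local ring of finite global dimension, so $\mathcal{E}xt^k_{\mathcal{O}_X}(\mathcal{F},\mathcal{G})_x\cong\textup{Ext}^k_{\mathcal{O}_{X,x}}(\mathcal{F}_x,\mathcal{G}_x)$ vanishes for $k>\dim(X)$ and is coherent, which is all that Proposition \ref{derivedfuncrestrictstobdd} actually needs --- so your argument is repaired either by phrasing the vanishing and coherence of $\mathcal{E}xt^k$ stalkwise, or by restricting (as the paper implicitly does in its intended applications) to smooth quasi-projective schemes where global finite locally free resolutions do exist. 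With that adjustment the remaining steps (independence of resolutions, exactness via preservation of cones) are standard and correct.
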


\begin{proof}(\textit{Sketch})
Indeed, for any $\mathcal{F}\in\text{obj}(\mathsf{QCoh}(X;{}_{\mathcal{O}_X}\!\mathsf{Mod}))$ on $X$ a noetherian scheme we get the right derived functor
\[
\mathsf{R}\mathcal{H}om_{\mathcal{O}_X}(\mathcal{F},\square):\mathsf{D^+}(\mathsf{QCoh}(X;{}_{\mathcal{O}_X}\!\mathsf{Mod}))\rightarrow\mathsf{D^+}(\mathsf{QCoh}(X;{}_{\mathcal{O}_X}\!\mathsf{Mod}))\,.
\]
Supposing furthermore that $\mathcal{F}$ is coherent, the latter succeeds in restricting to the right derived $\mathsf{R}\mathcal{H}om_{\mathcal{O}_X}(\mathcal{F},\square):\mathsf{D^+}(X)\rightarrow\mathsf{D^+}(X)$, but to obtain an exact functor $\mathsf{D^b}(X)\rightarrow\mathsf{D^b}(X)$ we actually need $X$ to be a regular scheme.

Instead, starting with some $\mathcal{F}^\bullet$ in $\mathsf{H^-}(\mathsf{QCoh}(X;{}_{\mathcal{O}_X}\!\mathsf{Mod}))$, one finds that the class of complexes of injective sheaves is adapted to $\mathcal{H}om_{\mathcal{O}_X}^\bullet(\mathcal{F}^\bullet,\square)$, and thus constructs $\mathsf{R}\mathcal{H}om_{\mathcal{O}_X}^\bullet(\mathcal{F}^\bullet,\square):\mathsf{D^+}(\mathsf{QCoh}(X;{}_{\mathcal{O}_X}\!\mathsf{Mod}))\rightarrow\mathsf{D^+}(\mathsf{QCoh}(X;{}_{\mathcal{O}_X}\!\mathsf{Mod}))$, whence a bifunctor with left argument in $\mathsf{D^-}(\mathsf{QCoh}(X;{}_{\mathcal{O}_X}\!\mathsf{Mod}))^\text{opp}$. Finally, if $X$ is regular we obtain the desired right exact functor on bounded complexes of coherent sheaves.
We refer the reader to \cite{[Huy06]} for more details.
\end{proof}

For $\mathcal{F},\mathcal{G}\in\text{obj}(\mathsf{QCoh}(X;{}_{\mathcal{O}_X}\!\mathsf{Mod}))$, it then makes sense to define (in accordance with Definition \ref{localExt}) local $\mathcal{E}xt$-sheaves by:
\begin{equation}
\mathcal{E}xt_{\mathcal{O}_X}^n(\mathcal{F},\mathcal{G})\coloneqq\mathsf{R}^n\mathcal{H}om_{\mathcal{O}_X}(\mathcal{F},\mathcal{G})\in\text{obj}(\mathsf{QCoh}(X;{}_{\mathcal{O}_X}\!\mathsf{Mod}))\,,
\end{equation}
also coherent as soon as $\mathcal{F},\mathcal{G}$ both are. Analogously, supposing that $\mathcal{F}^\bullet$ belongs to $\mathsf{D^-}(\mathsf{QCoh}(X;{}_{\mathcal{O}_X}\!\mathsf{Mod}))$ and $\mathcal{G}^\bullet$ to $\mathsf{D^+}(\mathsf{QCoh}(X;{}_{\mathcal{O}_X}\!\mathsf{Mod}))$, we let
\begin{equation}
\mathcal{E}xt_{\mathcal{O}_X}^n(\mathcal{F}^\bullet,\mathcal{G}^\bullet)\coloneqq\mathsf{R}^n\mathcal{H}om_{\mathcal{O}_X}^\bullet(\mathcal{F}^\bullet,\mathcal{G}^\bullet)\in\text{obj}(\mathsf{QCoh}(X;{}_{\mathcal{O}_X}\!\mathsf{Mod}))\,,
\end{equation}
coherent as soon as $\mathcal{F}^\bullet,\mathcal{G}^\bullet\in\text{obj}(\mathsf{D^b}(X))$. 

\begin{Rem}\label{HomExtRem}
Here are some notable relations between the operators $\mathcal{H}om$, $\mathcal{E}xt$, Hom and Ext.
\begin{itemize}[leftmargin=0.5cm]
	\item Under the additional assumption that $X\in\text{obj}(\mathsf{Sch}(\mathbb{K}))$ is projective, arguing as above, from the inner-Hom left exact functor $\text{Hom}_{\mathcal{O}_X}^\bullet(\mathcal{F}^\bullet,\square)$ on $\mathsf{H^+}(\mathsf{QCoh}(X;{}_{\mathcal{O}_X}\!\mathsf{Mod}))$, which is defined identically to \eqref{locHomoncomplexes}, we can construct the exact bifunctor $\mathsf{R}\text{Hom}_{\mathcal{O}_X}^\bullet\!(\square,\square)\!:\!\mathsf{D^b}(X)^\textup{opp}\times\mathsf{D^b}(X)\rightarrow\mathsf{D^b}(\mathsf{Vect}_\mathbb{K})$ which generalizes the right derived functor $\mathsf{R}\text{Hom}_{\mathcal{O}_X}\!(\mathcal{F},\square)$ we would get from \eqref{locHomoncoh}. Then for any $\mathcal{F}^\bullet,\mathcal{G}^\bullet\in\text{obj}(\mathsf{D^b}(X))$ we may set 
	\begin{small}\begin{equation}\label{ExtRHomforcomplexes}
			\mkern-12mu\text{Ext}_{\mathcal{O}_X}^n(\mathcal{F}^\bullet,\mathcal{G}^\bullet)\!\coloneqq\!\mathsf{R}^n\text{Hom}_{\mathcal{O}_X}^\bullet(\mathcal{F}^\bullet,\mathcal{G}^\bullet)\!\cong\!\text{Hom}_{\mathsf{D^b}(X)}(\mathcal{F}^\bullet,\mathcal{G}^\bullet[n])\!\in\!\text{obj}(\mathsf{Vect}_\mathbb{K})\,,
	\end{equation}\end{small}
	\!\!in full accord with our standard definition \eqref{Extmodules} of Ext-modules. Note that $\text{Ext}_{\mathcal{O}_X}^n\!(\mathcal{F}^\bullet,\mathcal{G}^\bullet)$ is finite-dimensional by assumption on $X$ and Theorem \ref{sheafcohomforprojsch}.\footnote{So this is true of a projective variety $X$ as well; supposing it is smooth, one can show that $\text{Ext}_{\mathcal{O}_X}^n\!(\mathcal{F},\mathcal{G})\cong\{0\}$ for $n>\text{dim}(X)$ and $\mathcal{F},\mathcal{G}$ coherent (whence $\text{dim}_h(\mathsf{Coh}(X;{}_{\mathcal{O}_X}\!\mathsf{Mod}))=\text{dim}(X)$ for the homological dimension \eqref{homologicaldimension}), and that $\mathsf{R}\text{Hom}_{\mathcal{O}_X}^\bullet(\mathcal{F}^\bullet,\mathcal{G}^\bullet)\in\text{obj}(\mathsf{D^b(Ab)})$ for bounded complexes $\mathcal{F}^\bullet,\mathcal{G}^\bullet$ of coherent sheaves.} Moreover, for any $\mathcal{F}^\bullet\in\textup{obj}(\mathsf{D^-}(X))$ holds
	\begin{equation}\label{locHomcompo}
		\mathsf{R}\Gamma(X,\square)\circ\mathsf{R}\mathcal{H}om_{\mathcal{O}_X}\!(\mathcal{F}^\bullet,\square)\cong\mathsf{R}\textup{Hom}_{\mathcal{O}_X}\!(\mathcal{F}^\bullet,\square)\!:\mathsf{D^+}(X)\!\rightarrow~{\!\mathsf{D^+}(X)}\,.
	\end{equation} 
	
	\item Assume that $X$ is a noetherian scheme, $\mathcal{F}\in\text{obj}(\mathsf{Coh}(X;{}_{\mathcal{O}_X}\!\mathsf{Mod}))$ and $\mathcal{G}\in\text{obj}(\mathsf{Sh}(X;{}_{\mathcal{O}_X}\mathsf{Mod}))$. The local $\mathcal{E}xt$-sheaves then relate to the standard $\text{Ext}$-modules at the level of stalks as
	\[
	\mathcal{E}xt_{\mathcal{O}_X}^n(\mathcal{F},\mathcal{G})_x\cong\text{Ext}_{\mathcal{O}_{X,x}}^n(\mathcal{F}_x,\mathcal{G}_x)\in\text{obj}({}_{\mathcal{O}_{X,x}}\!\mathsf{Mod})\,,
	\]
	for all $x\in X$ and $n\geq 0$ (see \cite[Proposition III.6.8]{[Har77]}).
	\newline Suppose further that $X=\text{Proj}(S)$ is projective over some noetherian graded ring $S$ and $\mathcal{F},\mathcal{G}\in$ $\text{obj}(\mathsf{Coh}(X;{}_{\mathcal{O}_X}\!\mathsf{Mod}))$, then one can show that the Ext-modules ``eventually'' become the global sections modules of the $\mathcal{E}xt$-modules (cf. \cite[Proposition III.6.9]{[Har77]}).
	
	\item Given any $\mathcal{F}^\bullet$ in $\mathsf{D^-}(\mathsf{QCoh}(X;{}_{\mathcal{O}_X}\!\mathsf{Mod}))$, we define its dual complex to be 
	\begin{equation}
		(\mathcal{F}^\bullet)^\vee\coloneqq\mathsf{R}\mathcal{H}om(\mathcal{F}^\bullet,\mathcal{O}_X)\in\text{obj}\big(\mathsf{D^+}(\mathsf{QCoh}(X;{}_{\mathcal{O}_X}\!\mathsf{Mod}))\big)
	\end{equation}
	(assuming $\mathcal{O}_X$ to be concentrated in degree 0). If $X$ is a regular scheme, then $(\mathcal{F}^\bullet)^\vee$ actually belongs to $\mathsf{D^b}(X)$ for any $\mathcal{F}^\bullet\in\text{obj}(\mathsf{D^b}(X))$.
\end{itemize}
\end{Rem}

\vspace*{0.7cm}
\noindent We turn our focus to right exact functors on quasi-/coherent sheaves, beginning with the tensor product. As was the case for local $\mathcal{H}om$, we can extend $\mathcal{F}\otimes_{\mathcal{O}_X}\!\square$ of \eqref{tensoroncoh} to the right exact functor
\begin{equation}
\begin{aligned}
	&\mkern-18mu\mathcal{F}^\bullet\otimes_{\mathcal{O}_X}\!\square:\mathsf{H^-}(\mathsf{Coh}(X;{}_{\mathcal{O}_X}\!\mathsf{Mod}))\rightarrow\mathsf{H^-}(\mathsf{Coh}(X;{}_{\mathcal{O}_X}\!\mathsf{Mod}))\,, \\
	&\mkern-18mu(\mathcal{F}^\bullet\!\otimes_{\mathcal{O}_X}\!\mathcal{G}^\bullet)^n\!\coloneqq \mkern-12mu\bigoplus_{k+l=n\in\mathbb{Z}}\mkern-12mu\mathcal{F}^k\!\otimes_{\mathcal{O}_X}\!\mathcal{G}^l \;\;\text{and}\;\; d^n\!\coloneqq\!d_\mathcal{F}^n\otimes\text{id}_\mathcal{G}^n \!+\!(-1)^n \text{id}_\mathcal{F}^n\otimes d_\mathcal{G}^n\,,
\end{aligned}
\end{equation}
for any $\mathcal{F}^\bullet$ in $\mathsf{H^-}(\mathsf{Coh}(X;{}_{\mathcal{O}_X}\!\mathsf{Mod}))$, and similarly for the right exact $\square\!\otimes_{\mathcal{O}_X}\mathcal{G}^\bullet$.

\begin{Pro}\label{tensorbifunconbdd}
Let $X\in\textup{obj}(\mathsf{Sch}(\mathbb{K}))$ be smooth, projective. Given any $\mathcal{F}\in\textup{obj}(\mathsf{Coh}(X;\mathsf{Mod}_{\mathcal{O}_X}))$ and $\mathcal{G}\in\textup{obj}(\mathsf{Coh}(X;{}_{\mathcal{O}_X}\!\mathsf{Mod}))$, the left derived functors $\mathcal{F}\overset{\mathsf{L}}{\otimes}_{\mathcal{O}_X}\!\square\coloneqq\mathsf{L}(\mathcal{F}\otimes_{\mathcal{O}_X}\!\square):\mathsf{D^b}(X)\rightarrow\mathsf{D^b}(X)$ and $\square\overset{\mathsf{L}}{\otimes}_{\mathcal{O}_X}\mathcal{G}:$ $\mathsf{D^b}(X)\rightarrow\mathsf{D^b}(X)$ exist and generalize to the exact functor
\[
\square\overset{\mathsf{L}}{\otimes}_{\mathcal{O}_X}\!\square:\mathsf{D^b}(X)\times\mathsf{D^b}(X)\rightarrow\mathsf{D^b}(X)
\]
\textup(which is both commutative and associative in its arguments, like the standard tensor product\textup). 
\end{Pro}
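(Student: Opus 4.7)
The plan is to mirror the strategy used for $\mathsf{R}f_*$ and $\mathsf{R}\mathcal{H}om$: first build the derived tensor product on (complexes of) quasi-coherent sheaves using an adapted class, then restrict back to $\mathsf{D^b}(X)$ by exploiting smoothness. Throughout, the key technical ingredient is Lemma~\ref{flatsareadapted}, which identifies the class $\mathcal{R}_{\textup{fl}}\subset\textup{obj}(\mathsf{QCoh}(X;{}_{\mathcal{O}_X}\!\mathsf{Mod}))$ of flat quasi-coherent sheaves as adapted to $\mathcal{F}\otimes_{\mathcal{O}_X}\!\square$, and analogously for $\square\otimes_{\mathcal{O}_X}\mathcal{G}$. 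By Definition~\ref{derivedfunc}, this already yields left derived functors $\mathcal{F}\overset{\mathsf{L}}{\otimes}_{\mathcal{O}_X}\!\square$ and $\square\overset{\mathsf{L}}{\otimes}_{\mathcal{O}_X}\mathcal{G}$ on $\mathsf{D^-}(\mathsf{QCoh}(X;{}_{\mathcal{O}_X}\!\mathsf{Mod}))$.

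To upgrade to the bifunctor, I would compute $\mathcal{F}^\bullet\overset{\mathsf{L}}{\otimes}_{\mathcal{O}_X}\!\mathcal{G}^\bullet$ by replacing \emph{one} of the two arguments with a flat quasi-coherent resolution $\mathcal{P}^\bullet\to\mathcal{F}^\bullet$ (respectively $\mathcal{Q}^\bullet\to\mathcal{G}^\bullet$), and then forming the total tensor complex. The standard double-resolution trick then shows that $\mathcal{P}^\bullet\otimes_{\mathcal{O}_X}\!\mathcal{G}^\bullet$ and $\mathcal{F}^\bullet\otimes_{\mathcal{O}_X}\!\mathcal{Q}^\bullet$ are quasi-isomorphic to $\mathcal{P}^\bullet\otimes_{\mathcal{O}_X}\!\mathcal{Q}^\bullet$; this gives both well-definedness of the bifunctor $\overset{\mathsf{L}}{\otimes}_{\mathcal{O}_X}$ on $\mathsf{D^-}(\mathsf{QCoh}(X;{}_{\mathcal{O}_X}\!\mathsf{Mod}))^2$ and its commutativity. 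Associativity is verified analogously by replacing all three factors with flat resolutions and invoking the obvious associativity of the underlying tensor product of complexes, together with the fact that tensor products of flat sheaves are flat. Exactness (in the sense of Definition~\ref{exactderivedfunc}) follows, as in Theorem~\ref{derivedfuncisexact}, because tensoring a distinguished triangle of complexes with a fixed flat complex preserves the cone up to isomorphism.

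The main obstacle, and where smoothness enters, is the descent from $\mathsf{D^-}(\mathsf{QCoh}(X;{}_{\mathcal{O}_X}\!\mathsf{Mod}))$ down to $\mathsf{D^b}(X)$. Here I would invoke two ingredients: (i)~on a regular noetherian scheme every coherent sheaf admits, locally and then globally thanks to projectivity, a \emph{finite} resolution by locally free coherent sheaves (Hilbert's Syzygy Theorem, applicable because smoothness forces each stalk $\mathcal{O}_{X,x}$ to be a regular local ring of dimension $\leq\dim(X)$); and (ii)~locally free coherent sheaves are flat, hence form an adapted subclass inside $\mathcal{R}_{\textup{fl}}$. Consequently, given any $\mathcal{F}^\bullet,\mathcal{G}^\bullet\in\textup{obj}(\mathsf{D^b}(X))$, I may replace $\mathcal{F}^\bullet$ by a bounded complex $\mathcal{P}^\bullet$ of locally free coherent sheaves (built from the finite locally free resolutions of its cohomology sheaves by a standard truncation/spectral-sequence argument, as in Proposition~\ref{derivedfuncrestrictstobdd}). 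Then $\mathcal{P}^\bullet\otimes_{\mathcal{O}_X}\!\mathcal{G}^\bullet$ is manifestly bounded and, term by term, coherent (by Remark~\ref{desirablecoherentprop}, tensor products of coherent sheaves are coherent). Lemma~\ref{boundedCohinQCoh} then identifies this output inside $\mathsf{D^b}(X)$, giving the desired restriction
\[
\overset{\mathsf{L}}{\otimes}_{\mathcal{O}_X}:\mathsf{D^b}(X)\times\mathsf{D^b}(X)\rightarrow\mathsf{D^b}(X)\,.
\]

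The hard part is really point~(i): without smoothness, coherent sheaves need only admit \emph{unbounded} locally free resolutions, and the derived tensor product of two bounded complexes of coherent sheaves would generically escape $\mathsf{D^b}(X)$. Everything else (uniqueness, functoriality, commutativity and associativity on $\mathsf{D^b}(X)$) is then transported for free from the quasi-coherent setting along the fully faithful embedding $\mathsf{I}_X$, since the isomorphisms witnessing those properties are already realised by concrete locally free-resolution models.
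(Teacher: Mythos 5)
Your proof is correct, and it reaches the same destination as the paper's by a slightly different road. The paper never leaves the coherent world: it takes as adapted class the locally free sheaves \emph{inside} $\mathsf{Coh}(X;{}_{\mathcal{O}_X}\!\mathsf{Mod})$ (using that on a projective scheme over a noetherian ring every coherent sheaf is a quotient of a finite-rank locally free one, \cite[Corollary II.5.18]{[Har77]}), constructs $\mathsf{L}(\mathcal{F}\otimes_{\mathcal{O}_X}\!\square)$ directly on $\mathsf{D^-}(X)$, and then uses smoothness to get finite-length locally free resolutions, hence $\textup{Tor}_n^{\mathcal{O}_X}(\mathcal{F},\mathcal{G})\cong\{0\}$ for $n>\dim(X)$ and the descent to $\mathsf{D^b}(X)$ via Proposition~\ref{derivedfuncrestrictstobdd}. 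You instead first build the bifunctor on $\mathsf{D^-}(\mathsf{QCoh}(X;{}_{\mathcal{O}_X}\!\mathsf{Mod}))$ using the flat adapted class of Lemma~\ref{flatsareadapted}, and only afterwards specialize to bounded locally free coherent resolutions and pull back along $\mathsf{I}_X$ and Lemma~\ref{boundedCohinQCoh}. Your detour through $\mathsf{QCoh}$ buys you a cleaner treatment of well-definedness, commutativity and associativity (the double- and triple-resolution comparisons live most naturally in the larger flat class), at the cost of an extra descent step; the paper's route is shorter but leaves those symmetry properties essentially unargued. Both hinge on the same two facts you correctly isolate as the crux: global finite-rank locally free surjections (projectivity) and finiteness of the resulting resolutions (smoothness, i.e.\ regularity of the local rings), without which the derived tensor product of bounded complexes would indeed escape $\mathsf{D^b}(X)$.
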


\begin{proof}(\textit{Sketch})
Let $\mathcal{F}\in\textup{obj}(\mathsf{Coh}(X;\mathsf{Mod}_{\mathcal{O}_X}))$. The first bullet point of Remark \ref{localExtRem} suggests that any $\mathcal{E}\in\textup{obj}(\mathsf{Coh}(X;{}_{\mathcal{O}_X}\!\mathsf{Mod}))$ admits a projective resolution $\mathcal{L}^\bullet\rightarrow\mathcal{E}$ by locally free sheaves: in fact, any coherent sheaf on a projective scheme over a noetherian ring can be written as the quotient of a locally free sheaf of finite rank (shown in \cite[Corollary II.5.18]{[Har77]}). This plays into the class of locally free sheaves in $\mathsf{Coh}(X;{}_{\mathcal{O}_X}\!\mathsf{Mod})$ being adapted to $\mathcal{F}\otimes_{\mathcal{O}_X}\!\square$, proving that we can construct the left derived functor $\mathsf{L}(\mathcal{F}\otimes_{\mathcal{O}_X}\!\square):\mathsf{D^-}(X)\rightarrow\mathsf{D^-}(X)$.

But since $X$ is smooth of some finite dimension $\text{dim}(X)$, it turns out that any resolution of $\mathcal{E}$ as above has finite length as well, and using it to evaluate the classical derived functors (a legitimate move by \eqref{classderprojres}) thus yields $\text{Tor}_n^{\mathcal{O}_X}(\mathcal{F},\mathcal{G})=\mathsf{H}^{-n}(\mathcal{F}\overset{\mathsf{L}}{\otimes}_{\mathcal{O}_X}\mathcal{G})\cong\{0\}$ for all $-n<-\text{dim}(X)$ (remember our notation from Example \ref{derivedtensorfunc}). Then we can apply the usual Proposition \ref{derivedfuncrestrictstobdd} to restrict the tensoring to the exact functor $\mathcal{F}\,\overset{\mathsf{L}}{\otimes}_{\mathcal{O}_X}\!\square:\mathsf{D^b}(X)\rightarrow\mathsf{D^b}(X)$ desired. The argument for $\square\;\overset{\mathsf{L}}{\otimes}_{\mathcal{O}_X}\!\mathcal{G}$ is analogous.

More generally, if $\mathcal{F}^\bullet\in\text{obj}\big(\mathsf{H^-}(\mathsf{Coh}(X;\mathsf{Mod}_{\mathcal{O}_X}))\big)$, then one similarly shows that the class of complexes of locally free sheaves is adapted to $\mathcal{F}^\bullet\otimes_{\mathcal{O}_X}\!\square$, whence its left derived functor $\mathcal{F}^\bullet\overset{\mathsf{L}}{\otimes}_{\mathcal{O}_X}\!\square:\mathsf{D^-}(X)\rightarrow\mathsf{D^-}(X)$, and the same with respect to the right argument. The resulting bifunctor descends to the bounded derived category $\mathsf{D^b}(X)$ again due to $X$ being smooth.          
\end{proof}

In line with Example \ref{derivedtensorfunc}, we set
\begin{equation}
\text{Tor}_n^{\mathcal{O}_X}(\mathcal{F}^\bullet,\mathcal{G}^\bullet)\coloneqq H^{-n}(\mathcal{F}^\bullet\overset{\mathsf{L}}{\otimes}_{\mathcal{O}_X}\mathcal{G}^\bullet)\in\text{obj}(\mathsf{Coh}(X;{}_{\mathcal{O}_X}\!\mathsf{Mod}))\,,
\end{equation}
for any $\mathcal{F}^\bullet,\mathcal{G}^\bullet\in\text{obj}(\mathsf{D^b}(X))$.

\vspace*{0.3cm}
\noindent The remaining member of our cast is the pullback functor, which, by Proposition \ref{pushpullcoherent}, we know to be a little less demanding than the pushforward.

\begin{Cor}
Let $f:X\rightarrow Y$ be a morphism of smooth noetherian schemes. Then the left derived functor $\mathsf{L}f^*:\mathsf{D^b}(Y)\rightarrow\mathsf{D^b}(X)$ on bounded complexes of the right exact pullback functor $f^*:\mathsf{Coh}(Y;{}_{\mathcal{O}_Y}\!\mathsf{Mod})\rightarrow\mathsf{Coh}(X;{}_{\mathcal{O}_X}\!\mathsf{Mod})$ of Proposition \ref{pushpullcoherent} exists and can be constructed as the composition of exact functors.
\end{Cor}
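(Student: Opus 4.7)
The plan is to mirror the strategy used for the derived tensor product in Proposition \ref{tensorbifunconbdd}, exploiting the factorization $f^*\mathcal{G} \cong \mathcal{O}_X \otimes_{f^{-1}\mathcal{O}_Y} f^{-1}\mathcal{G}$. Since $f^{-1}$ is exact by Lemma \ref{inversepushforwardexact}, the entire right-exactness of $f^*$ is concentrated in the tensor product, and one only needs to derive the latter half. First I would enlarge the source to $\mathsf{D^-}(\mathsf{QCoh}(Y;{}_{\mathcal{O}_Y}\!\mathsf{Mod}))$ and identify as adapted class to $f^*$ the family of (complexes of) locally free sheaves of finite rank: this is justified by the fact recalled in the proof of Proposition \ref{tensorbifunconbdd} that every coherent sheaf on a suitable noetherian scheme is a quotient of a locally free sheaf of finite rank, so every bounded-above complex in $\mathsf{Coh}(Y;{}_{\mathcal{O}_Y}\!\mathsf{Mod})$ admits a quasi-isomorphism from a complex of locally free sheaves. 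Since $f^*$ sends a free $\mathcal{O}_Y$-module of finite rank to a free $\mathcal{O}_X$-module of the same rank, and locally free sheaves are in particular flat, $f^*$ preserves acyclicity of such resolutions, making the class adapted in the sense of Definition \ref{adaptedclass}. By Definition \ref{derivedfunc} this produces $\mathsf{L}f^*:\mathsf{D^-}(\mathsf{QCoh}(Y;{}_{\mathcal{O}_Y}\!\mathsf{Mod}))\rightarrow\mathsf{D^-}(\mathsf{QCoh}(X;{}_{\mathcal{O}_X}\!\mathsf{Mod}))$.

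The second step is to descend to bounded complexes of coherent sheaves. Here the smoothness of $Y$ enters in exactly the same role that smoothness of $X$ played for the tensor product: at every point $y\in Y$ the local ring $\mathcal{O}_{Y,y}$ is regular, hence of finite global dimension bounded by $\dim(Y)$, so any coherent $\mathcal{O}_Y$-module admits a finite locally free resolution of length at most $\dim(Y)$. Using this resolution to compute the classical derived functors as in \eqref{classderprojres} gives $\mathsf{L}^{-n}f^*(\mathcal{G})\cong\mathrm{Tor}_n^{f^{-1}\!\mathcal{O}_Y}(\mathcal{O}_X, f^{-1}\mathcal{G}) = 0$ for $n>\dim(Y)$. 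Proposition \ref{derivedfuncrestrictstobdd}ii then restricts $\mathsf{L}f^*$ to a well-defined functor $\mathsf{D^b}(\mathsf{QCoh}(Y;{}_{\mathcal{O}_Y}\!\mathsf{Mod}))\rightarrow\mathsf{D^b}(\mathsf{QCoh}(X;{}_{\mathcal{O}_X}\!\mathsf{Mod}))$, exact by construction as a composition of exact functors (the localization, the replacement by a locally free resolution, and $f^*$ applied termwise).

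Finally, I would check that when the input complex has coherent cohomology, so does the output. Given $\mathcal{F}^\bullet\in\textup{obj}(\mathsf{D^b}(Y))$, using Lemma \ref{boundedCohinQCoh} we view it inside $\mathsf{D^b}(\mathsf{QCoh}(Y;{}_{\mathcal{O}_Y}\!\mathsf{Mod}))$ and resolve each coherent cohomology by the finite locally free resolution above; applying $f^*$ termwise yields a bounded complex of locally free, hence coherent, sheaves on $X$ (since $f^*$ preserves coherence between noetherian schemes by Proposition \ref{pushpullcoherent}), whose cohomology sheaves are coherent by Proposition \ref{cokernelscoherent}. Assembling the analogue of diagram \eqref{diagramforderivedfuncofbdd} and invoking Lemma \ref{boundedCohinQCoh} once more on the target side yields the required $\mathsf{L}f^*:\mathsf{D^b}(Y)\rightarrow\mathsf{D^b}(X)$. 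The main obstacle is the adaptedness check in the first paragraph: verifying that $f^*$ sends acyclic complexes of locally free sheaves to acyclic complexes requires descending to stalks and using that tensoring a flat $f^{-1}\mathcal{O}_Y$-module structure preserves exactness, which in turn depends on the flatness of $\mathcal{O}_{X,x}$ over $\mathcal{O}_{Y,f(x)}$ along the structure morphism $f^\#_x$; once this is in place, the boundedness step proceeds formally, as it did in Proposition \ref{tensorbifunconbdd}.
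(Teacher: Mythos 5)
Your proposal is correct and follows essentially the same route as the paper, which likewise defines $\mathsf{L}f^*\coloneqq \mathcal{O}_X\overset{\mathsf{L}}{\otimes}_{f^{-1}\mathcal{O}_Y}\!f^{-1}(\square)$, pushes the derivation entirely onto the tensor factor via the exactness of $f^{-1}$, and uses smoothness (finite locally free resolutions, hence vanishing of $\mathrm{Tor}$ beyond $\dim(Y)$) to descend from $\mathsf{D^-}$ to $\mathsf{D^b}$ exactly as in Proposition \ref{tensorbifunconbdd}. The only cosmetic wrinkle is your detour through $\mathsf{D^-}(\mathsf{QCoh}(Y;{}_{\mathcal{O}_Y}\!\mathsf{Mod}))$: finite-rank locally free sheaves are not an adapted class there (not every quasi-coherent sheaf is a quotient of one), so the resolution argument should be run directly in $\mathsf{Coh}$, as the paper does.
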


\begin{proof}
This is an immediate consequence of the definition of $f^*$ and Proposition \ref{tensorbifunconbdd}, which can be shown to hold also in this more general setting (the requisite of smooth projective schemes over $\mathbb{K}$ was just for simplicity's sake). Indeed, the exact functor
\[
\mathsf{L}f^*\coloneqq \mathcal{O}_X\overset{\mathsf{L}}{\otimes}_{f^{-1}\mathcal{O}_Y}\!f^{-1}(\square):\mathsf{D^-}(Y)\rightarrow\mathsf{D^-}(X)
\]
descends to the respective bounded derived categories of complexes.
\end{proof} 

For any $\mathcal{G}^\bullet\in\text{obj}(\mathsf{D^b}(Y))$ we set
\begin{equation}
\mathsf{L}^nf^*(\mathcal{F}^\bullet)\coloneqq H^n(\mathsf{L}f^*(\mathcal{F}^\bullet))\in\text{obj}(\mathsf{Coh}(X;{}_{\mathcal{O}_X}\!\mathsf{Mod}))\,.
\end{equation}

Finally, we report a generous list of identities which are useful when manipulating bounded complexes of coherent sheaves. Their proof is left to those who survived this far. Suggestion: it suffices to reduce the analysis to locally free sheaves --- which form an adapted class for functors on coherent sheaves --- and thus recall Remark \ref{locallyfreeRem} and the other equalities concerning $\mathsf{Sh}^\mathsf{lf}_n(X;{}_{\mathcal{O}_X}\!\mathsf{Mod})$.

\begin{Pro}\label{lotofequalities}
Throughout, let $X,Y$ be noetherian schemes, $f:X\rightarrow Y$ a scheme morphism and $\mathcal{F}^\bullet, \mathcal{F}_i^\bullet\in\textup{obj}(\mathsf{D^b}(X))$, $\mathcal{G}^\bullet, \mathcal{G}_i^\bullet\in\textup{obj}(\mathsf{D^b}(Y))$ \textup(further assumptions are specified in brackets from case to case\textup). The following relations hold:\begin{small}
	\begingroup
	\allowdisplaybreaks
	\begin{align}
		&a)\mkern+6mu\mathsf{R}f_*(\mathcal{F}^\bullet)\overset{\mathsf{L}}{\otimes}_{\mathcal{O}_Y}\!\mathcal{G}^\bullet\cong\mathsf{R}f_*(\mathcal{F}^\bullet\overset{\mathsf{L}}{\otimes}_{\mathcal{O}_X}\!\mathsf{L}f^*(\mathcal{G}^\bullet)) &\mkern-50mu\big(f\text{ proper},\, X,Y\!\in\!\textup{obj}(\mathsf{Sch}(\mathbb{K}))\big) \nonumber \\
		&b)\mkern+6mu\mathsf{L}f^*(\mathcal{G}_1^\bullet)\overset{\mathsf{L}}{\otimes}_{\mathcal{O}_Y}\!\mathsf{L}f^*(\mathcal{G}_2^\bullet)\cong \mathsf{L}f^*(\mathcal{G}_1^\bullet\overset{\mathsf{L}}{\otimes}_{\mathcal{O}_Y}\!\mathcal{G}_2^\bullet) &\big(X,Y\text{ projective}\big) \nonumber \\
		&c)\mkern+6mu\textup{Hom}_{\mathsf{D^b}(X)}(\mathsf{L}f^*(\mathcal{G}^\bullet),\mathcal{F}^\bullet)\cong \textup{Hom}_{\mathsf{D^b}(Y)}(\mathcal{G}^\bullet,\mathsf{R}f_*(\mathcal{F}^\bullet)) &\big(f\text{ projective}\big) \nonumber \\
		&d)\mkern+6mu\mathsf{R}\mathcal{H}om(\mathcal{F}_1^\bullet,\mathcal{F}_2^\bullet\overset{\mathsf{L}}{\otimes}_{\mathcal{O}_X}\!\mathcal{F}_3^\bullet)\cong\mathsf{R}\mathcal{H}om(\mathcal{F}_1^\bullet,\mathcal{F}_2^\bullet)\overset{\mathsf{L}}{\otimes}_{\mathcal{O}_X}\!\mathcal{F}_3^\bullet  &\big(X\!\in\!\textup{obj}(\mathsf{Sch}(\mathbb{K}))\text{ smooth} \nonumber \\
		& \mkern+187mu\cong\mathsf{R}\mathcal{H}om(\mathsf{R}\mathcal{H}om(\mathcal{F}_2^\bullet,\mathcal{F}_1^\bullet),\mathcal{F}_3^\bullet) &\text{and projective}\big) \nonumber \\
		&e)\mkern+6mu\mathsf{R}\mathcal{H}om(\mathcal{F}_1^\bullet\overset{\mathsf{L}}{\otimes}_{\mathcal{O}_X}\!\mathcal{F}_2^\bullet,\mathcal{F}_3^\bullet)\cong\mathsf{R}\mathcal{H}om(\mathcal{F}_1^\bullet,\mathsf{R}\mathcal{H}om(\mathcal{F}_2^\bullet,\mathcal{F}_3^\bullet)) & (\text{same}) \nonumber \\
		&f)\mkern+6mu(\mathcal{F}_1^\bullet)^\vee\overset{\mathsf{L}}{\otimes}_{\mathcal{O}_X}\!\mathcal{F}_2^\bullet\cong \mathsf{R}\mathcal{H}om(\mathcal{F}_1^\bullet,\mathcal{F}_2^\bullet) & (\text{same}) \nonumber \\
		&g)\mkern+6mu\mathcal{F}^\bullet\cong((\mathcal{F}^\bullet)^{\vee})^\vee\!\!\cong \mathsf{R}\mathcal{H}om(\mathsf{R}\mathcal{H}om(\mathcal{F}^\bullet,\mathcal{O}_X),\mathcal{O}_X) & (\text{same}) \nonumber \\
		&h)\mkern+6mu\mathsf{L}f^*(\mathsf{R}\mathcal{H}om_{\mathcal{O}_Y}\!(\mathcal{G}_1^\bullet,\mathcal{G}_2^\bullet))\!\cong\! \mathsf{R}\mathcal{H}om_{\mathcal{O}_X}\!(\mathsf{L}f^*(\mathcal{G}_1^\bullet),\mathsf{L}f^*(\mathcal{G}_2^\bullet)) & \big(X,Y\text{ proj.},\,\mathcal{G}_1^\bullet\in\mathsf{D^{\!-}}\!(Y)\big) \nonumber \\
		&i)\mkern+6mup_{X*}(p_Y^*\mathcal{G}^\bullet)\cong\mathsf{R}\Gamma(Y,\mathcal{G}^\bullet)\otimes_{\mathcal{O}_X}\!\mathcal{O}_X & \big(X\!\xleftarrow{p_X}\!X\!\times \!Y\!\xrightarrow{p_Y}\!Y\text{ proj.}\big) \nonumber \\
		&j)\mkern+6mu\mathsf{R}\Gamma(X\!\times\! Y,p_X^*\mathcal{F}\overset{\mathsf{L}}{\otimes}_{\mathcal{O}_X}\!p_Y^*\mathcal{G}^\bullet)\cong\mathsf{R}\Gamma(X,\mathcal{F}^\bullet)\otimes\mathsf{R}\Gamma(Y,\mathcal{G}^\bullet) & (\text{same})  \nonumber
	\end{align}\endgroup\end{small}
\!\!\!To name a few: a\textup) is the \textbf{projection formula}\index{projection formula}, c\textup) is the \textbf{adjunction formula}\index{adjunction formula} \textup(indeed asserting that $\mathsf{L}f_*$ and $\mathsf{R}f^*$ are adjoint functors\textup), i\textup) is a special case of \textbf{flat base change}\index{flat base change} \textup(see \textup{\cite[equation (3.18)]{[Huy06]}}\textup) and j\textup) induces the \textbf{Künneth formula for sheaf cohomology}\index{Künneth formula for sheaf cohomology}.
\end{Pro}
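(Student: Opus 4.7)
The unifying strategy is to reduce every identity to its classical (non-derived) counterpart for locally free sheaves of finite rank, already listed in Remark \ref{locallyfreeRem} and in the closing discussion of Section \ref{ch5.4}. Recall that on a smooth projective $X\in\textup{obj}(\mathsf{Sch}(\mathbb{K}))$ every coherent sheaf admits a finite resolution by locally free sheaves of finite rank (first bullet of Remark \ref{localExtRem}), so that such resolutions form a class adapted simultaneously to $\overset{\mathsf{L}}{\otimes}_{\mathcal{O}_X}$, $\mathsf{L}f^*$ and the left argument of $\mathsf{R}\mathcal{H}om$, while flabby (in particular injective) quasi-coherent resolutions give an adapted class for $\mathsf{R}f_*$ and the right argument of $\mathsf{R}\mathcal{H}om$. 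The key point is that each complex in $\mathsf{D^b}(X)$ (resp.\ $\mathsf{D^b}(Y)$) can be replaced, under the stated hypotheses, by a bounded complex of locally free sheaves, and on the resulting complexes all derived functors reduce to their underived componentwise definitions; the identities of Remark \ref{locallyfreeRem} then propagate degreewise to the total complexes. Proposition \ref{derivedfunccompo} will be invoked repeatedly in order to commute derived functors across compositions.

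Concretely, I would treat the identities in the following order. First, (f) is essentially a restatement of $\mathcal{H}om(\mathcal{L},\mathcal{F})\cong\mathcal{L}^\vee\otimes\mathcal{F}$ for locally free $\mathcal{L}$: replace $\mathcal{F}_1^\bullet$ by a bounded locally free resolution $\mathcal{L}^\bullet$ (licit by smoothness of $X$) and note that the componentwise Remark \ref{locallyfreeRem} identity assembles into an isomorphism of total complexes computing both sides. Then (g) follows by applying (f) twice with $\mathcal{F}_2^\bullet=\mathcal{O}_X$, together with biduality of finite-rank locally free sheaves. Identities (d) and (e) are proved similarly: resolve the first (resp.\ innermost) argument by locally free sheaves and invoke the associativity/commutativity of $\otimes_{\mathcal{O}_X}$ for locally free sheaves, noting that $\mathsf{R}\mathcal{H}om(\mathcal{L},\square)=\mathcal{L}^\vee\otimes\square$ is already exact so no further resolution is needed on the right. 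Identity (b) follows because $f^*$ is defined via $\square\otimes_{f^{-1}\mathcal{O}_Y} f^{-1}$, pullback commutes with $f^{-1}$ (which is exact), and the tensor compatibility passes through because locally free sheaves pull back to locally free sheaves.

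The remaining items involve $\mathsf{R}f_*$ and rely on the projective/proper hypothesis to guarantee coherence of higher direct images. For (a), I would replace $\mathcal{G}^\bullet$ by a bounded locally free resolution $\mathcal{L}^\bullet$ and $\mathcal{F}^\bullet$ by a complex of $f_*$-acyclic (e.g.\ flasque quasi-coherent) sheaves; then $\mathsf{L}f^*(\mathcal{L}^\bullet)=f^*\mathcal{L}^\bullet$ is again locally free, so the underived projection formula of Remark \ref{locallyfreeRem} applies componentwise and yields a quasi-isomorphism of total complexes. For (c), use the underived adjunction $\textup{Hom}_{\mathcal{O}_X}(f^*\mathcal{G},\mathcal{F})\cong \textup{Hom}_{\mathcal{O}_Y}(\mathcal{G},f_*\mathcal{F})$ of \eqref{pullpushareadjoints} and derive both sides: since $f^*$ sends locally free sheaves to $f_*$-acyclic (actually locally free) sheaves, the pair $(\mathsf{L}f^*,\mathsf{R}f_*)$ remains adjoint by a standard abstract-nonsense argument (adjoints of exact functors between triangulated categories lift uniquely from adapted classes). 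For (h), resolve $\mathcal{G}_1^\bullet$ by locally free sheaves, and use that $f^*\mathcal{H}om(\mathcal{L},\mathcal{G}_2)\cong \mathcal{H}om(f^*\mathcal{L},f^*\mathcal{G}_2)$ holds for locally free $\mathcal{L}$ (again Remark \ref{locallyfreeRem}). Finally, (i) is a case of flat base change (the projection $p_X:X\times Y\to X$ is flat and its fibre over a point is $Y$, so $p_{X*}p_Y^*=\mathsf{R}\Gamma(Y,\square)\otimes_{\mathcal{O}_X}\mathcal{O}_X$), and (j) follows by combining the projection formula (a) with (i) and \eqref{globalsectcompo}: $\mathsf{R}\Gamma(X\times Y,p_X^*\mathcal{F}^\bullet\overset{\mathsf{L}}{\otimes}p_Y^*\mathcal{G}^\bullet)\cong \mathsf{R}\Gamma(X,\mathcal{F}^\bullet\overset{\mathsf{L}}{\otimes}p_{X*}p_Y^*\mathcal{G}^\bullet)\cong \mathsf{R}\Gamma(X,\mathcal{F}^\bullet)\otimes\mathsf{R}\Gamma(Y,\mathcal{G}^\bullet)$.

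The main obstacle throughout is the verification of the ``passage from the underived identity on a single locally free sheaf to a quasi-isomorphism of total complexes''. This is always true but technically requires either a spectral sequence argument or a careful double-complex chase to show that the functorial isomorphisms of Remark \ref{locallyfreeRem} are compatible with the differentials of the two resolutions simultaneously (in particular for (d) and (e), where both arguments of $\mathsf{R}\mathcal{H}om$ vary). A secondary subtlety is checking, in each case, that the hypotheses on $X$, $Y$ and $f$ are strong enough that the relevant derived functors exist as maps between \emph{bounded} derived categories of \emph{coherent} sheaves (i.e.\ that Proposition \ref{derivedfuncrestrictstobdd} applies); this is ultimately what forces the smoothness and projectivity (or properness) assumptions stated in each line.
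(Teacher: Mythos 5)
Your proposal is correct and follows exactly the route the paper itself indicates: the paper leaves this proposition as an exercise with the hint ``reduce the analysis to locally free sheaves, which form an adapted class for functors on coherent sheaves, and recall Remark~\ref{locallyfreeRem}'', and your argument is a faithful, more detailed execution of precisely that reduction (bounded locally free resolutions for the tensor/pullback/first-$\mathcal{H}om$ slots, flasque quasi-coherent resolutions for $\mathsf{R}f_*$, then degreewise propagation of the underived identities). The two subtleties you flag at the end --- compatibility of the componentwise isomorphisms with both differentials, and the boundedness/coherence checks via Proposition~\ref{derivedfuncrestrictstobdd} --- are indeed the only real work, and your handling of them is consistent with what the paper intends.
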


\subsection{Serre Duality and Fourier--Mukai transforms}\label{ch6.4}

This section includes a few important applications of sheaf cohomology and derived functors. We begin with the famous duality result established by Serre, referring to \cite[section III.7]{[Har77]}.

\begin{Lem}
Let $(X,\mathcal{O}_X)$ be a projective scheme \textup(or just a proper one\textup) of dimension $n$ over any field $\mathbb{K}$. Then $X$ admits a unique \textbf{dualizing sheaf}\index{sheaf!dualizing} $\mathcal{K}_X\in\textup{obj}(\mathsf{Coh}(X;{}_{\mathcal{O}_X}\!\mathsf{Mod}))$ together with a morphism $t:H^n(X,\mathcal{K}_X)\rightarrow\mathbb{K}$ such that for all $\mathcal{F}\in\textup{obj}(\mathsf{Coh}(X;{}_{\mathcal{O}_X}\!\mathsf{Mod}))$ the natural pairing
\[
\textup{Hom}_{\mathcal{O}_X}(\mathcal{F},\mathcal{K}_X)\times H^n(X,\mathcal{F})\rightarrow H^n(X,\mathcal{K}_X)\xrightarrow{t}\mathbb{K}
\]
gives an isomorphism of vector spaces $\theta^0:\textup{Hom}_{\mathcal{O}_X}(\mathcal{F},\mathcal{K}_X)\xrightarrow{\sim} H^n(X,\mathcal{F})^*$ \textup(the right-hand side denotes the dual vector space of $H^n(X,\mathcal{F})$\textup).
\end{Lem}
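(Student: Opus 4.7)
My plan is to separate the statement into its uniqueness and existence components, and to bootstrap existence from the special case of projective space via a closed immersion.

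First I would handle \emph{uniqueness}, which is the easy categorical half. If $(\mathcal{K}_X, t)$ and $(\mathcal{K}'_X, t')$ are two dualizing pairs, then the defining isomorphisms $\theta^0$ applied with $\mathcal{F}=\mathcal{K}'_X$ and $\mathcal{F}=\mathcal{K}_X$ respectively produce morphisms $\varphi\in\textup{Hom}_{\mathcal{O}_X}(\mathcal{K}'_X,\mathcal{K}_X)$ and $\varphi'\in\textup{Hom}_{\mathcal{O}_X}(\mathcal{K}_X,\mathcal{K}'_X)$ distinguished by the requirement that $t\circ H^n(\varphi)=t'$ and $t'\circ H^n(\varphi')=t$. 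Chasing the two compositions back through the universal pairings shows that both $\varphi\circ\varphi'$ and $\varphi'\circ\varphi$ act as the identity under $\theta^0$, and hence must themselves be identities (this is just a Yoneda-style argument: the functor $\textup{Hom}_{\mathcal{O}_X}(\square,\mathcal{K}_X)$ on $\mathsf{Coh}(X;{}_{\mathcal{O}_X}\!\mathsf{Mod})^\text{opp}$ is represented up to canonical isomorphism by the pair $(\mathcal{K}_X,t)$).

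Second, I would prove \emph{existence on projective space} $\mathbb{P}^n\equiv\mathbb{P}^n_\mathbb{K}$, where the candidate is $\mathcal{K}_{\mathbb{P}^n}\coloneqq\mathcal{O}_{\mathbb{P}^n}(-n-1)$. Here Theorem \ref{sheafcohomforprojsch}, suitably refined for $\mathbb{P}^n$ (the classical Serre computation), gives $H^n(\mathbb{P}^n,\mathcal{O}(-n-1))\cong\mathbb{K}$, and this canonical isomorphism furnishes $t$. To extract the isomorphism $\theta^0$ for arbitrary coherent $\mathcal{F}$, I would first verify it for twisting sheaves $\mathcal{F}=\mathcal{O}_{\mathbb{P}^n}(q)$ and hence for finite direct sums thereof, again via direct computation of $H^n(\mathbb{P}^n,\mathcal{O}(q))$ and $\textup{Hom}(\mathcal{O}(q),\mathcal{O}(-n-1))\cong H^0(\mathbb{P}^n,\mathcal{O}(-n-1-q))$. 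Then for a generic coherent $\mathcal{F}$ I would invoke the fact (recalled in the sketch of Proposition \ref{tensorbifunconbdd}) that $\mathcal{F}$ admits a finite resolution by such sums, and conclude by a five-lemma chase applied to the two long exact sequences obtained from $\textup{Hom}(\square,\mathcal{K}_{\mathbb{P}^n})$ and $H^n(\mathbb{P}^n,\square)^*$ (for the latter one must use right exactness in $\mathcal{F}$ of $H^n(\mathbb{P}^n,\square)^*$, which follows from Corollary \ref{longexactforsheafcohomology} and Grothendieck's vanishing $H^{n+1}(\mathbb{P}^n,\square)=0$).

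Third, for a general projective $X\in\textup{obj}(\mathsf{Sch}(\mathbb{K}))$ of dimension $n$, I would use a closed immersion $i:X\hookrightarrow\mathbb{P}^N$ (guaranteed by projectivity) and define
\[
\mathcal{K}_X\coloneqq \mathcal{E}xt^{\,N-n}_{\mathcal{O}_{\mathbb{P}^N}}\!\big(i_*\mathcal{O}_X,\,\mathcal{K}_{\mathbb{P}^N}\big)\in\textup{obj}(\mathsf{Coh}(\mathbb{P}^N;{}_{\mathcal{O}_{\mathbb{P}^N}}\!\mathsf{Mod}))\,,
\]
which is supported on $X$ and thus identifies with a coherent $\mathcal{O}_X$-module. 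The trace $t$ and the duality isomorphism are then meant to be transferred from $\mathbb{P}^N$ to $X$ through adjunction of $i_*$: concretely, for any $\mathcal{F}\in\textup{obj}(\mathsf{Coh}(X;{}_{\mathcal{O}_X}\!\mathsf{Mod}))$ one has $H^n(X,\mathcal{F})\cong H^n(\mathbb{P}^N,i_*\mathcal{F})$ (by Corollary \ref{sheafcohomthroughCechRem}, since $i$ is affine), while the Ext-dualities of Remark \ref{localExtRem} combined with the projective-space case should exhibit a natural identification $\textup{Hom}_{\mathcal{O}_X}(\mathcal{F},\mathcal{K}_X)\cong\textup{Ext}^{N-n}_{\mathcal{O}_{\mathbb{P}^N}}(i_*\mathcal{F},\mathcal{K}_{\mathbb{P}^N})\cong H^n(\mathbb{P}^N,i_*\mathcal{F})^*$.

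The hard step—and this is really the technical heart of the proof—is the last identification, namely $\textup{Ext}^{N-n}_{\mathcal{O}_{\mathbb{P}^N}}(i_*\mathcal{F},\mathcal{K}_{\mathbb{P}^N})\cong H^n(\mathbb{P}^N,i_*\mathcal{F})^*$, together with the preliminary check that the Ext-sheaves $\mathcal{E}xt^j_{\mathcal{O}_{\mathbb{P}^N}}(i_*\mathcal{O}_X,\mathcal{K}_{\mathbb{P}^N})$ vanish for $j<N-n$ and that only the top local Ext contributes to the global one. Cleanly, I would organize this via the Grothendieck spectral sequence (or equivalently the composition-of-derived-functors isomorphism from Proposition \ref{derivedfunccompo}) relating $\mathsf{R}\textup{Hom}$ to $\mathsf{R}\Gamma\circ\mathsf{R}\mathcal{H}om$, as in \eqref{locHomcompo}, noting that $\mathcal{K}_{\mathbb{P}^N}$ sits in the correct cohomological degree so that the spectral sequence degenerates to yield the desired pairing and simultaneously certifies that $\mathcal{K}_X$ is coherent and independent of the choice of embedding $i$.
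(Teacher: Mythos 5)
The paper offers no proof of this lemma at all --- it is stated verbatim from \cite[section III.7]{[Har77]} and the reader is sent there --- so there is no internal argument to compare against; your outline is precisely the standard Hartshorne architecture (uniqueness by representability, explicit duality on $\mathbb{P}^n$, then transfer to $X\hookrightarrow\mathbb{P}^N$ via $\mathcal{K}_X=\mathcal{E}xt^{N-n}_{\mathcal{O}_{\mathbb{P}^N}}(i_*\mathcal{O}_X,\mathcal{K}_{\mathbb{P}^N})$). The uniqueness argument and the projective-space case are correct as sketched (modulo the slip that $H^n(\mathbb{P}^n,\square)$ is the right-exact functor, its dual being contravariant left exact, which is what the presentation-plus-five-lemma argument actually uses).

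There is, however, a genuine gap in the transfer step, and it sits exactly where you put the weight. The chain $\textup{Hom}_{\mathcal{O}_X}(\mathcal{F},\mathcal{K}_X)\cong\textup{Ext}^{N-n}_{\mathcal{O}_{\mathbb{P}^N}}(i_*\mathcal{F},\mathcal{K}_{\mathbb{P}^N})\cong H^n(\mathbb{P}^N,i_*\mathcal{F})^*$ requires two independent inputs. The local-to-global spectral sequence $H^p(\mathbb{P}^N,\mathcal{E}xt^q(i_*\mathcal{F},\mathcal{K}_{\mathbb{P}^N}))\Rightarrow \textup{Ext}^{p+q}(i_*\mathcal{F},\mathcal{K}_{\mathbb{P}^N})$, together with the vanishing of $\mathcal{E}xt^{j}$ for $j<N-n$, delivers only the \emph{first} isomorphism; the dual of sheaf cohomology appears nowhere in that spectral sequence, so it cannot deliver the second. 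The second isomorphism is Serre duality on $\mathbb{P}^N$ in cohomological degree $N-n$, i.e. $\textup{Ext}^i_{\mathbb{P}^N}(\mathcal{G},\mathcal{K}_{\mathbb{P}^N})\cong H^{N-i}(\mathbb{P}^N,\mathcal{G})^*$ for all $i$, whereas your second step establishes only the case $i=0$. Since $N-n>0$ for any proper closed subscheme, this higher-degree statement is genuinely needed and genuinely stronger; the standard way to obtain it is to observe that both $\mathcal{G}\mapsto\textup{Ext}^i(\mathcal{G},\mathcal{K}_{\mathbb{P}^N})$ and $\mathcal{G}\mapsto H^{N-i}(\mathbb{P}^N,\mathcal{G})^*$ are contravariant $\delta$-functors that vanish in positive degree on sheaves $\mathcal{O}(-q)^{\oplus m}$ with $q\gg0$ (by Theorem \ref{sheafcohomforprojsch} and the explicit cohomology of $\mathbb{P}^N$), hence are coeffaceable, hence universal, so the degree-zero isomorphism you constructed extends uniquely to all degrees. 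With that argument inserted your proof closes up; note also that it covers only the projective case, the parenthetical ``or just proper'' requiring a separate reduction (e.g. via Chow's Lemma, Proposition \ref{whenproper}) that neither you nor the paper's source carries out at this level.
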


\begin{Thm}[\textbf{Serre Duality Theorem}]\index{Serre Duality Theorem}\label{Serreduality}
Let $(X,\mathcal{O}_X)$ be a smooth projective scheme of dimension $n$ over $\mathbb{K}$ \textup(algebraically closed\textup), thus admitting a dualizing sheaf $\mathcal{K}_X\in\textup{obj}(\mathsf{Coh}(X;{}_{\mathcal{O}_X}\!\mathsf{Mod}))$. Then for all $\mathcal{F}\in\textup{obj}(\mathsf{Coh}(X;{}_{\mathcal{O}_X}\!\mathsf{Mod}))$ and $k\geq 0$ there exist natural functorial isomorphisms
\begin{equation}
	\theta^k:\textup{Ext}_{\mathcal{O}_X}^k(\mathcal{F},\mathcal{K}_X)\xrightarrow{\sim} H^{n-k}(X,\mathcal{F})^*
\end{equation}
\textup(where $\theta^0$ coincides with the isomorphism in the last lemma\textup).

On the other hand, $H^k(X,\mathcal{L})\cong H^{n-k}(X,\mathcal{L}^\vee\otimes_{\mathcal{O}_X}\! \mathcal{K}_X)^*$ for any finite rank $\mathcal{L}\in\textup{obj}(\mathsf{Sh}^\mathsf{lf}_r(X;{}_{\mathcal{O}_X}\!\mathsf{Mod}))$ and all $k\geq 0$.
\end{Thm}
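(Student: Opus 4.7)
The case $k=0$ is precisely the content of the preceding lemma. My plan for the general case is to observe that both sides fit into contravariant $\delta$-functors $\mathsf{Coh}(X;{}_{\mathcal{O}_X}\!\mathsf{Mod})^\textup{opp}\rightarrow\mathsf{Vect}_\mathbb{K}$: any short exact sequence of coherent sheaves induces long exact Ext sequences on the left by Theorem \ref{Extprop}.a, and dualized long exact cohomology sequences on the right by Corollary \ref{longexactforsheafcohomology}. By the universal property of $\delta$-functors, $\theta^0$ will extend uniquely to a morphism $\{\theta^k\}_{k\geq 0}$ of $\delta$-functors provided both sides are \emph{coeffaceable} in positive degrees, and each $\theta^k$ will then automatically be an isomorphism via a standard dimension-shifting argument on the induced long exact sequences.

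Coeffaceability reduces to exhibiting, for every coherent $\mathcal{F}$, a surjection $\mathcal{E}\twoheadrightarrow\mathcal{F}$ satisfying $\textup{Ext}_{\mathcal{O}_X}^k(\mathcal{E},\mathcal{K}_X)=0$ and $H^{n-k}(X,\mathcal{E})=0$ for all $k>0$. The natural candidate is $\mathcal{E}=\bigoplus_j\mathcal{O}_X(-q_j)$ for sufficiently large $q_j$, which exists by global generation of $\mathcal{F}(q)$ for $q\gg 0$ (granted since $\mathcal{O}_X(1)$ is very ample by Proposition \ref{whenproper}). The Ext vanishing is then straightforward: using Remarks \ref{locallyfreeRem} and \ref{HomExtRem} together with Theorem \ref{sheafcohomforprojsch}, I would compute
\begin{equation*}
\textup{Ext}_{\mathcal{O}_X}^k(\mathcal{O}_X(-q),\mathcal{K}_X)\cong H^k(X,\mathcal{O}_X(q)\otimes_{\mathcal{O}_X}\!\mathcal{K}_X)=0
\end{equation*}
for $k>0$ and $q\gg 0$.

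The hard part will be establishing the matching cohomological vanishing $H^i(X,\mathcal{O}_X(-q))=0$ for $i<n$ and $q\gg 0$, which is Serre's genuinely deep input and is not a formal consequence of the material developed so far. My strategy would be to first verify Serre duality on $X=\mathbb{P}^N$ directly by explicit \v{C}ech cohomology on the standard affine cover --- there $\mathcal{K}_{\mathbb{P}^N}\cong\mathcal{O}_{\mathbb{P}^N}(-N-1)$ and the groups $H^i(\mathbb{P}^N,\mathcal{O}(q))$ can be written down in closed form --- and then transfer the statement to $X$ via a closed immersion $\iota:X\hookrightarrow\mathbb{P}^N$ guaranteed by projectivity. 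The transfer exploits that $\iota_*$ is exact on coherent sheaves, commutes with sheaf cohomology (Corollary \ref{sheafcohomthroughCechRem}), and is compatible with dualizing sheaves via $\iota_*\mathcal{K}_X\cong\mathcal{E}xt_{\mathcal{O}_{\mathbb{P}^N}}^{N-n}(\iota_*\mathcal{O}_X,\mathcal{K}_{\mathbb{P}^N})$. Smoothness of $X$ enters precisely here, where the resulting Cohen--Macaulay property kills the lower $\mathcal{E}xt$'s and makes the reduction to $\mathbb{P}^N$ clean.

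For the final statement about locally free sheaves $\mathcal{L}\in\textup{obj}(\mathsf{Sh}^\mathsf{lf}_r(X;{}_{\mathcal{O}_X}\!\mathsf{Mod}))$, I would argue that $\mathcal{E}xt_{\mathcal{O}_X}^k(\mathcal{L},\mathcal{K}_X)=0$ for $k>0$ (Remark \ref{localExtRem}) together with $\mathcal{H}om_{\mathcal{O}_X}(\mathcal{L},\mathcal{K}_X)\cong\mathcal{L}^\vee\otimes_{\mathcal{O}_X}\!\mathcal{K}_X$ (Remark \ref{locallyfreeRem}), combined through \eqref{locHomcompo}, collapses the local-to-global comparison to $\textup{Ext}_{\mathcal{O}_X}^k(\mathcal{L},\mathcal{K}_X)\cong H^k(X,\mathcal{L}^\vee\otimes_{\mathcal{O}_X}\!\mathcal{K}_X)$. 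Feeding $\mathcal{F}=\mathcal{L}$ into the first part of the theorem then gives $H^k(X,\mathcal{L}^\vee\otimes_{\mathcal{O}_X}\!\mathcal{K}_X)\cong H^{n-k}(X,\mathcal{L})^*$; dualizing both finite-dimensional $\mathbb{K}$-vector spaces (cf. Theorem \ref{sheafcohomforprojsch}) and re-indexing $k\leftrightarrow n-k$ finally produces the desired $H^k(X,\mathcal{L})\cong H^{n-k}(X,\mathcal{L}^\vee\otimes_{\mathcal{O}_X}\!\mathcal{K}_X)^*$.
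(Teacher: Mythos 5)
The paper itself offers no proof of this theorem: it is stated as imported background, with the reader deferred to \cite[section III.7]{[Har77]}, so there is no internal argument to compare yours against. Your proposal is a correct reconstruction of exactly that reference's proof: identifying both sides as contravariant $\delta$-functors, establishing coeffaceability of the $\textup{Ext}$-side from $\textup{Ext}_{\mathcal{O}_X}^k(\mathcal{O}_X(-q),\mathcal{K}_X)\cong H^k(X,\mathcal{K}_X(q))$ together with Serre vanishing (Theorem \ref{sheafcohomforprojsch}), isolating $H^i(X,\mathcal{O}_X(-q))=0$ for $i<n$ and $q\gg 0$ as the genuinely geometric input to be proved on $\mathbb{P}^N$ by explicit \v{C}ech computation and then transferred along a closed immersion, and deducing the locally free statement by twisting and dualizing finite-dimensional spaces. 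The one imprecision worth flagging is in the transfer step: the cohomological vanishing requires $\mathcal{E}xt_{\mathcal{O}_{\mathbb{P}^N}}^{j}(\iota_*\mathcal{O}_X,\mathcal{K}_{\mathbb{P}^N})=0$ for $j>N-n$ (smoothness $\Rightarrow$ Cohen--Macaulay $\Rightarrow$ projective dimension $N-n$ by Auslander--Buchsbaum), whereas it is the vanishing of the \emph{lower} $\mathcal{E}xt$'s, $j<N-n$, that identifies $\iota_*\mathcal{K}_X$ with $\mathcal{E}xt^{N-n}$; your sketch runs these two roles together. Note also that the $\delta$-functor and coeffaceability formalism is nowhere developed in this paper, so a self-contained write-up would either have to import it or replace it by an explicit dimension-shifting induction on the long exact sequences of Theorem \ref{Extprop}a.\ and Corollary \ref{longexactforsheafcohomology}.
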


For the more concrete example of the projective space $X=\mathbb{P}^n$ over $\mathbb{K}$, the reader is referred to \cite[Theorem III.7.1]{[Har77]}, which relies on the computation in \cite[section III.5]{[Har77]} of the sheaf cohomology of $\mathbb{P}^n$.

\begin{Rem}\label{cansheaf}
Suppose that $X$ is a smooth projective variety of dimension $n$ over the algebraically closed field $\mathbb{K}$. Then its dualizing sheaf is actually isomorphic to the \textbf{canonical line bundle}\index{canonical line bundle}\footnote{By definition, the canonical line bundle is the $n$-th exterior power of the \textit{sheaf of differentials} $\Omega_X$. The latter is constructed in \cite[section II.8]{[Har77]}; over smooth complex varieties, it is the sheaf of differential forms on $X$, as our intuition suggests.} (or \textbf{canonical sheaf}\index{sheaf!canonical})
\begin{equation}
	K_X\coloneqq \Lambda^n\Omega_X\cong\Omega_X^n\in\text{obj}(\mathsf{Sh}^\mathsf{lf}_1(X;{}_{\mathcal{O}_X}\mathsf{Mod}))\,,
\end{equation}
thus an invertible coherent sheaf. 

In fact, denoting by $\Omega_X^p$ the sheaf of differential $p$-forms on $X$, the duality isomorphisms of Theorem \ref{Serreduality} imply that
\begin{equation}\label{Serreisoonvariety}
	H^q(X,\Omega_X^p)\cong H^{n-q}(X,\Omega_X^{n-p})^*\,,
\end{equation}
for $0\leq p,q\leq n$ (just use that $\Omega_X^{n-p}\cong (\Omega^p)^\vee\otimes_{\mathcal{O}_X}\! K_X$). In particular, we obtain $H^n(X,K_X)\cong\mathbb{K}$, and the numbers $h^{p,q}(X)\coloneqq\text{dim}(H^q(X,\Omega_X^p))$ are invariants of the variety $X$.

At this point, the reader may have recognized \eqref{Serreisoonvariety} from \cite[section 7.2]{[Imp21]}: if $X$ is a complex manifold (a smooth projective variety over $\mathbb{C}$), these identities follow from a joint application of the Dolbeault Theorem $H_\text{Db}^{p,q}(X)\cong H^q(X,\Omega_X^{p,0})$ and the isomorphisms $H_\text{Db}^{p,q}(X)\cong H_\text{Db}^{n-p,n-q}(X)$ induced by the Hodge $\star$-operator; the $h^{p,q}(X)$'s are precisely the Hodge numbers of $X$.      
\end{Rem}

Focusing on varieties, Serre duality can also be rephrased in terms of \textit{Serre functors}:

\begin{Def}\label{Serrefunc}
Let $\mathsf{C}$ be a $\mathbb{K}$-linear category. A \textbf{Serre functor}\index{functor!Serre} is a $\mathbb{K}$-linear equivalence $\mathsf{S}:\mathsf{C}\rightarrow\mathsf{C}$ such that for any two $X,Y\in\text{obj}(\mathsf{C})$ there exists a functorial isomorphism of $\mathbb{K}$-vector spaces $\eta_{X,Y}:\text{Hom}_\mathsf{C}(X,Y)\xrightarrow{\sim}\text{Hom}_\mathsf{C}(Y,\mathsf{S}(X))^*$. 

Note that if $\mathsf{C}$ is also triangulated, then $\mathsf{S}$ is an exact functor.
\end{Def}

\begin{Cor}\label{SerredualityviaSerrefunc}	
Suppose $X=(X,\mathcal{O}_X)\in\textup{obj}(\mathsf{Sch}(\mathbb{K}))$ is smooth of dimension $n$ and projective. Then the \textup(exact\textup) functor
\begin{equation}
	\mathsf{S}_X\coloneqq(\square\!\otimes_{\mathcal{O}_X}\!K_X)[n]:\mathsf{D^\#}\!(X)\rightarrow\mathsf{D^\#}\!(X)\,,
\end{equation}
for $\#=\mathsf{b},+,-$ \textup(where $[n]$ stands for the $n$-fold shift in $\mathsf{D^\#}\!(X)$\textup) is a Serre functor on $\mathsf{D^\#}\!(X)$.
\end{Cor}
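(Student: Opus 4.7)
The plan is to split the verification of Definition \ref{Serrefunc} into two independent tasks: showing that $\mathsf{S}_X$ is a $\mathbb{K}$-linear exact equivalence, and producing the functorial duality isomorphism. The first is almost immediate: by Remark \ref{cansheaf}, $K_X$ is an invertible (hence flat) coherent sheaf with quasi-inverse $K_X^\vee$, so $\square\otimes_{\mathcal{O}_X}\!K_X$ is an exact autoequivalence of $\mathsf{Coh}(X;{}_{\mathcal{O}_X}\!\mathsf{Mod})$. Because it is already exact, no deriving is needed before descending it to $\mathsf{D^\#}(X)$, where it remains an exact autoequivalence; composing with the $n$-fold shift $[n]$, another exact autoequivalence, gives that $\mathsf{S}_X$ is an exact $\mathbb{K}$-linear equivalence of $\mathsf{D^\#}(X)$ for each $\#=\mathsf{b},+,-$.

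For the functorial isomorphism, I would first rewrite both sides as sheaf-cohomology groups of a single auxiliary complex. Using \eqref{ExtRHomforcomplexes}, \eqref{locHomcompo} and Proposition \ref{lotofequalities}(d),(f),
\begin{align*}
\text{Hom}_{\mathsf{D^b}(X)}(\mathcal{G}^\bullet,\mathsf{S}_X(\mathcal{F}^\bullet))
&=\text{Ext}^n_{\mathcal{O}_X}(\mathcal{G}^\bullet,\mathcal{F}^\bullet\!\otimes_{\mathcal{O}_X}\! K_X)\\
&\cong H^n\!\big(X,\,\mathsf{R}\mathcal{H}om(\mathcal{G}^\bullet,\mathcal{F}^\bullet)\otimes_{\mathcal{O}_X}\! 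K_X\big),
\end{align*}
while $\text{Hom}_{\mathsf{D^b}(X)}(\mathcal{F}^\bullet,\mathcal{G}^\bullet)\cong H^0\big(X,\mathsf{R}\mathcal{H}om(\mathcal{F}^\bullet,\mathcal{G}^\bullet)\big)$. Setting $\mathcal{H}^\bullet\coloneqq\mathsf{R}\mathcal{H}om(\mathcal{F}^\bullet,\mathcal{G}^\bullet)\in\text{obj}(\mathsf{D^b}(X))$ and using Proposition \ref{lotofequalities}(g) together with $\mathsf{R}\mathcal{H}om(\mathcal{G}^\bullet,\mathcal{F}^\bullet)\cong (\mathcal{H}^\bullet)^\vee$ (again via (f)+(g)), the construction of $\eta_{\mathcal{F}^\bullet,\mathcal{G}^\bullet}$ reduces to producing, functorially in $\mathcal{H}^\bullet\in\text{obj}(\mathsf{D^b}(X))$, a $\mathbb{K}$-linear isomorphism
\[
H^0(X,\mathcal{H}^\bullet)\;\xrightarrow{\sim}\;H^n\!\big(X,\,(\mathcal{H}^\bullet)^\vee\otimes_{\mathcal{O}_X}\!K_X\big)^{\!*}.
\]

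The third step is to upgrade the classical Serre duality of Theorem \ref{Serreduality} (and Remark \ref{cansheaf}) from sheaves to bounded complexes. Since $X$ is smooth projective, every $\mathcal{H}^\bullet\in\text{obj}(\mathsf{D^b}(X))$ is quasi-isomorphic to a bounded complex $\mathcal{L}^\bullet$ of finite-rank locally free sheaves (as invoked in the proof of Proposition \ref{tensorbifunconbdd}). On any single $\mathcal{L}^i$, the second clause of Theorem \ref{Serreduality} gives $H^0(X,\mathcal{L}^i)\cong H^n(X,(\mathcal{L}^i)^\vee\otimes K_X)^*$. I would then induct on the length of $\mathcal{L}^\bullet$ via the truncation distinguished triangle $\tau_{\leq k-1}\mathcal{L}^\bullet\to\tau_{\leq k}\mathcal{L}^\bullet\to\mathcal{L}^k[-k]\to$, which by Corollary \ref{longexactforsheafcohomology} produces two long exact sheaf-cohomology sequences — one for $H^\bullet(X,\tau_{\leq k}\mathcal{L}^\bullet)$ and one for the duals $H^{n-\bullet}(X,(\tau_{\leq k}\mathcal{L}^\bullet)^\vee\otimes K_X)^*$ — linked by the duality isomorphism in each degree. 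The Five Lemma then propagates the sheaf-level duality along the induction.

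The main obstacle is precisely this third step: one must verify that the duality isomorphisms commute with the connecting homomorphisms of the two long exact sequences, i.e.\ that Serre's morphism $t$ of the preparatory lemma is compatible with the coboundary maps induced by distinguished triangles. This amounts to checking naturality of the Yoneda/trace pairing, and is the only nontrivial piece; all remaining assembly (insertion into the identifications of step two, passage to $\mathsf{D^\pm}$ where $\mathsf{S}_X$ acts termwise so boundedness of the complexes in each Hom-group computation is controlled as before, and $\mathbb{K}$-linearity/functoriality of $\eta$ in both arguments) is then immediate from the functoriality of $\mathsf{R}\mathcal{H}om$, $\overset{\mathsf{L}}{\otimes}$, $\mathsf{R}\Gamma$ and the shift, concluding that $\mathsf{S}_X$ satisfies Definition \ref{Serrefunc}.
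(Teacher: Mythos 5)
Your argument is correct and follows essentially the same route as the paper's: both reduce, via \eqref{ExtRHomforcomplexes}, \eqref{locHomcompo} and Proposition \ref{lotofequalities}(d), to Serre duality against $K_X$ for the single auxiliary complex $\mathsf{R}\mathcal{H}om(\mathcal{F}^\bullet,\mathcal{G}^\bullet)$. The one substantive difference is that the paper simply invokes Theorem \ref{Serreduality} on this complex as if it were a single coherent sheaf, whereas you explicitly supply the missing extension from sheaves to bounded complexes by a locally free resolution, (brutal) truncation triangles and the Five Lemma --- a genuine patch of a step the paper glosses over; just note that the cone $\mathcal{L}^k[-k]$ arises from the stupid truncation $\sigma_{\leq k}$ rather than the canonical $\tau_{\leq k}$, and that the compatibility of the trace map with the connecting homomorphisms, which you correctly identify as the crux, still needs to be checked.
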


\begin{proof}
We wish to prove that for any two $\mathcal{F}^\bullet,\mathcal{G}^\bullet\in\text{obj}(\mathsf{D^b}(X))$ there exists an isomorphism of $\mathbb{K}$-vector spaces $\eta_{\mathcal{F}^\bullet\!,\mathcal{G}^\bullet}$ as in Definition \ref{Serrefunc}. This is however an immediate consequence of Theorem \ref{Serreduality}:
\begin{small}
	\begingroup
	\allowdisplaybreaks\begin{align*}
		\text{Hom}_{\mathsf{D^b}(X)}(\mathcal{G}^\bullet,\,&\mathsf{S}_X(\mathcal{F}^\bullet))^*= & \mkern-20mu{}\\ &=\text{Hom}_{\mathsf{D^b}(X)}(\mathcal{G}^\bullet,(\mathcal{F}^\bullet\otimes_{\mathcal{O}_X}\!K_X)[n])^* & (\text{by definition}) \\ 
		&\cong\mathsf{R}^n\text{Hom}_{\mathcal{O}_X}^\bullet(\mathcal{G}^\bullet,\mathcal{F}^\bullet\otimes_{\mathcal{O}_X}\!K_X)^* & (\text{by \eqref{ExtRHomforcomplexes}}) \\
		&\cong\mathsf{R}^n\text{Hom}_{\mathcal{O}_X}^\bullet(\mathsf{R}\mathcal{H}om_{\mathcal{O}_X}(\mathcal{F}^\bullet,\mathcal{G}^\bullet),K_X)^* & (\text{by Proposition \ref{lotofequalities}d)}) \\
		&=\text{Ext}_{\mathcal{O}_X}^n(\mathsf{R}\mathcal{H}om_{\mathcal{O}_X}(\mathcal{F}^\bullet,\mathcal{G}^\bullet),K_X)^* & (\text{by definition}) \\
		&\cong H^{n-n}(X,\mathsf{R}\mathcal{H}om_{\mathcal{O}_X}(\mathcal{F}^\bullet,\mathcal{G}^\bullet))^{**} & (\text{by Theorem \ref{Serreduality}}) \\
		&\cong \Gamma(X,\mathsf{R}\mathcal{H}om_{\mathcal{O}_X}(\mathcal{F}^\bullet,\mathcal{G}^\bullet))^{**} & (\text{by Definition \ref{sheafcohomology}}) \\
		&\cong \mathsf{R}^0\text{Hom}_{\mathcal{O}_X}^\bullet(\mathcal{F}^\bullet,\mathcal{G}^\bullet)^{**} & (\text{by \eqref{locHomcompo}}) \\
		&\cong \text{Hom}_{\mathsf{D^b}(X)}(\mathcal{F}^\bullet,\mathcal{G}^\bullet)^{**} & (\text{by \eqref{ExtRHomforcomplexes}}) \\
		&\cong\text{Hom}_{\mathsf{D^b}(X)}(\mathcal{F}^\bullet,\mathcal{G}^\bullet) & (\text{by finiteness of Hom-spaces})
	\end{align*}\endgroup\end{small}
\!\!\!Here we regarded $K_X$ as a complex concentrated in degree 0, and with our call of Proposition \ref{lotofequalities} we actually employed the Hom-counterpart to the identity $d)$ listed there, also applying $H^n$-cohomology on it. 
\end{proof}

With some slight adjustments, we see that imposing $\mathsf{S}_X$ to be a Serre functor unavoidably implies that $\textup{Ext}_{\mathcal{O}_X}^k(\mathcal{F},K_X)\cong H^{n-k}(X,\mathcal{F})^*$ for all $k\geq 0$ and coherent sheaves $\mathcal{F}$, which is the statement of Serre duality. Consequently, on $X\in\textup{obj}(\mathsf{Sch}(\mathbb{K}))$ smooth and projective, Corollary \ref{SerredualityviaSerrefunc} (restricted to coherent sheaves instead of complexes thereof) is equivalent to Theorem \ref{Serreduality}.

One can apply Serre duality to more easily show the following generation result, which gives an ``orthonormal basis'' to the derived category of smooth projective varieties:

\begin{Pro}\label{spanningclass}
Let $X$ be a smooth projective variety over $\mathbb{K}$ \textup(algebraically closed\textup). Then\footnote{Here $k(x)$ denotes the constant structure sheaf $\underline{\mathbb{K}}\in\text{obj}(\mathsf{D^b}(X))$ of the closed one-point space $\{x\}\subset X$.} $\{k(x)\in\textup{obj}(\mathsf{D^b}(X))\mid x\in X\}$ is an \textup{orthonormal spanning class} for the triangulated category $\mathsf{D^b}(X)$, meaning that the following conditions hold:
\begin{itemize}[leftmargin=0.5cm]
	\item \textup(orthonormality\textup) $\textup{Hom}_{\mathsf{D^b}(X)}(k(x),k(y)[n])\cong\{0\}$ $\forall x\neq y\in X$ $\forall n\in\mathbb{Z}$, and $\textup{Hom}_{\mathsf{D^b}(X)}(k(x),k(x))\cong\mathbb{K}$ $\forall x\in X$; 
	
	\item \textup(spanning\textup) $\textup{Hom}_{\mathsf{D^b}(X)}(k(x),\mathcal{F}^\bullet[n])\cong\{0\}$ $\forall x\in X$ $\forall n\in\mathbb{Z}$ $\implies$ $\mathcal{F}^\bullet\cong 0^\bullet$.
\end{itemize}
\textup(See \textup{\cite[Proposition 3.17]{[Huy06]}}.\textup)
\end{Pro}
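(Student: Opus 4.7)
The plan is to handle orthonormality and spanning separately, with the latter pivoting on the Serre functor of Corollary 6.4.2. For orthonormality I would first reduce global Ext to local computations: since $k(x)$ and $k(y)$ are coherent with disjoint supports when $x\neq y$, the identification $\mathcal{E}xt^n_{\mathcal{O}_X}(k(x),k(y))_z\cong \mathrm{Ext}^n_{\mathcal{O}_{X,z}}(k(x)_z,k(y)_z)$ from Remark \ref{HomExtRem} forces every local Ext-sheaf to vanish (at each point $z$, at least one of the two stalks is zero). The local-to-global spectral sequence $E_2^{p,q}=H^p(X,\mathcal{E}xt^q_{\mathcal{O}_X}(k(x),k(y)))\Rightarrow\mathrm{Ext}^{p+q}_{\mathcal{O}_X}(k(x),k(y))$ then kills $\mathrm{Hom}_{\mathsf{D^b}(X)}(k(x),k(y)[n])$ for all $n$. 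For the diagonal, $\mathrm{Hom}_{\mathsf{D^b}(X)}(k(x),k(x))\cong \mathrm{Hom}_{\mathcal{O}_{X,x}}(\mathbb{K},\mathbb{K})\cong\mathbb{K}$ by a direct stalk computation.

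For spanning, the key observation is that $\mathsf{S}_X(k(x))\cong k(x)[n]$, where $n=\dim X$: indeed $k(x)\otimes_{\mathcal{O}_X}\!K_X\cong k(x)$ because the invertible sheaf $K_X$ trivialises on the one-point space $\{x\}$. Corollary \ref{SerredualityviaSerrefunc} therefore yields a natural isomorphism
\[
\mathrm{Hom}_{\mathsf{D^b}(X)}(k(x),\mathcal{F}^\bullet[r])\cong\mathrm{Hom}_{\mathsf{D^b}(X)}(\mathcal{F}^\bullet,k(x)[n-r])^{\!*}\,,
\]
so the hypothesis that the left-hand side vanishes for all $r$ and $x$ is equivalent to vanishing of $\mathrm{Hom}_{\mathsf{D^b}(X)}(\mathcal{F}^\bullet,k(x)[m])$ for all $m$ and $x$. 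This flip from \emph{maps in} to \emph{maps out} is what makes the argument work, because maps out of a sheaf into a skyscraper detect support (via Nakayama), whereas maps in generally do not.

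Assuming $\mathcal{F}^\bullet\not\cong 0$, boundedness provides a maximal index $q_1\in\mathbb{Z}$ with $H^{q_1}(\mathcal{F}^\bullet)\neq 0$. The canonical truncation gives a distinguished triangle $\tau_{<q_1}\mathcal{F}^\bullet\to\mathcal{F}^\bullet\xrightarrow{\pi}H^{q_1}(\mathcal{F}^\bullet)[-q_1]\to$, where $\pi$ is the identity on $H^{q_1}$. Choosing any $x\in\mathrm{supp}(H^{q_1}(\mathcal{F}^\bullet))$, Nakayama's lemma guarantees a non-zero quotient $H^{q_1}(\mathcal{F}^\bullet)\twoheadrightarrow k(x)$, i.e.\ a non-zero $\phi\in\mathrm{Hom}_{\mathcal{O}_X}(H^{q_1}(\mathcal{F}^\bullet),k(x))$. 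The composite $\phi[-q_1]\circ\pi:\mathcal{F}^\bullet\to k(x)[-q_1]$ still induces $\phi$ on $H^{q_1}$ and is therefore a non-zero element of $\mathrm{Hom}_{\mathsf{D^b}(X)}(\mathcal{F}^\bullet,k(x)[-q_1])$, contradicting the translated vanishing hypothesis; hence $\mathcal{F}^\bullet\cong 0$ in $\mathsf{D^b}(X)$.

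The main obstacle, I expect, is psychological rather than computational: one must recognise that directly using maps \emph{from} $k(x)$ cannot detect arbitrary support (a locally free sheaf has no non-zero map from $k(x)$), and that Serre duality is the appropriate tool to convert the stated hypothesis into one that does detect it. Once the flip is in hand, the truncation-plus-Nakayama step is routine, and the isomorphism $\mathsf{S}_X(k(x))\cong k(x)[n]$ needed to set it up is immediate from $K_X$ being a line bundle.
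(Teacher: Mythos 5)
Your proof is correct and follows exactly the argument of the reference the paper defers to for this statement (\cite[Proposition 3.17]{[Huy06]}): disjoint supports kill the off-diagonal $\mathcal{E}xt$-sheaves for orthonormality, and for spanning one uses $\mathsf{S}_X(k(x))\cong k(x)[\dim X]$ together with Serre duality to convert the hypothesis into vanishing of maps \emph{out of} $\mathcal{F}^\bullet$, then truncation at the top nonzero cohomology plus Nakayama to produce a contradiction. The paper itself gives no proof beyond this citation, so your write-up supplies precisely the intended argument, including the key conceptual point that maps from $k(x)$ alone do not detect support.
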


We warn that this is \textit{not} the same concept as the classical version of \cite[Definition 3.22]{[Imp21]}: here, we don't mean that $\{k(x)\in\textup{obj}(\mathsf{D^b}(X))\mid x\in X\}$ forms\break the smallest full subcategory containing all bounded complexes of sheaves, translations and cones thereof, thus generating the whole $\mathsf{D^b}(X)$!

Finally, back to the realm of schemes, we mention the generalization of Serre duality addressed in \cite[section 3.4]{[Huy06]}.

\begin{Thm}[\textbf{Grothendieck--Verdier Duality}]\index{Grothendieck--Verdier Duality}\label{GVduality}
Let $X,Y\in\textup{obj}(\mathsf{Sch}(\mathbb{K}))$ be smooth and projective \textup(or just proper\textup), and let $f:X\rightarrow Y$ be a scheme morphism, of \textup{relative dimension} $\textup{dim}(f)\coloneqq\textup{dim}(X)-\textup{dim}(Y)$ and \textup{relative dualizing bundle} $K_f\coloneqq K_X\otimes_{\mathcal{O}_X}\!f^*K_Y^\vee\in\textup{obj}(\mathsf{Coh}(X;{}_{\mathcal{O}_X}\!\mathsf{Mod}))$ \textup(concentrated in degree 0\textup). Then for any $\mathcal{F}^\bullet\in\textup{obj}(\mathsf{D^b}(X))$ and $\mathcal{G}^\bullet\in\textup{obj}(\mathsf{D^b}(Y))$ there exists a functorial isomorphism
\begin{equation}
	\mathsf{R}f_*\big(\mathsf{R}\mathcal{H}om_{\mathcal{O}_X}\!\big(\mathcal{F}^\bullet,\mathsf{L}f^*(\mathcal{G}^\bullet)\otimes_{\mathcal{O}_X}\!K_f[\textup{dim}(f)]\big)\big)\cong\mathsf{R}\mathcal{H}om_{\mathcal{O}_Y}\!(\mathsf{R}f_*(\mathcal{F}^\bullet),\mathcal{G}^\bullet)\,.
\end{equation}
Note that applying $\Gamma(Y,\square)$ to both sides, using both \eqref{globalsectcompo} and \eqref{locHomcompo}, taking $H^0$-cohomology and finally applying \eqref{ExtRHomforcomplexes} yields
\[
\textup{Hom}_{\mathsf{D^b}(X)}(\mathcal{F}^\bullet,\mathsf{L}f^*(\mathcal{G}^\bullet)\otimes_{\mathcal{O}_X}\!K_f[\textup{dim}(f)])\cong \textup{Hom}_{\mathsf{D^b}(Y)}(\mathsf{R}f_*(\mathcal{F}^\bullet),\mathcal{G}^\bullet)\,.
\]
\end{Thm}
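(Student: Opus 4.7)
The plan is to first prove the ``Hom-version'' of the duality, namely that for any $\mathcal{A}^\bullet\in\textup{obj}(\mathsf{D^b}(X))$ and $\mathcal{G}^\bullet\in\textup{obj}(\mathsf{D^b}(Y))$ there is a natural isomorphism
\[
\textup{Hom}_{\mathsf{D^b}(X)}\bigl(\mathcal{A}^\bullet,\,\mathsf{L}f^*(\mathcal{G}^\bullet)\otimes_{\mathcal{O}_X}\!K_f[\textup{dim}(f)]\bigr)\cong\textup{Hom}_{\mathsf{D^b}(Y)}\bigl(\mathsf{R}f_*(\mathcal{A}^\bullet),\mathcal{G}^\bullet\bigr)\,,
\]
and then to promote this global statement to the sheaf-theoretic one by a Yoneda argument in $\mathsf{D^b}(Y)$.

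For the Hom-version, I would set $n=\textup{dim}(X)$, $m=\textup{dim}(Y)$, unwind $K_f=K_X\otimes_{\mathcal{O}_X}\!f^*K_Y^\vee$ (with $\mathsf{L}f^*K_Y^\vee=f^*K_Y^\vee$ since the line bundle $K_Y^\vee$ is flat), and apply the Serre functor $\mathsf{S}_X$ of Corollary \ref{SerredualityviaSerrefunc} in the form $\textup{Hom}_X(\mathcal{A},\mathcal{B})\cong\textup{Hom}_X(\mathcal{B},\mathcal{A}\otimes_{\mathcal{O}_X}\!K_X[n])^*$ to the left-hand side. Cancelling the common factor $K_X$ on both entries (by invertibility of $K_X$) and reorganising the shifts turns the LHS into $\textup{Hom}_X\bigl(\mathsf{L}f^*(\mathcal{G}^\bullet\otimes K_Y^\vee),\mathcal{A}^\bullet[m]\bigr)^*$, to which the adjunction formula (Proposition \ref{lotofequalities}c) applies to give $\textup{Hom}_Y\bigl(\mathcal{G}^\bullet\otimes K_Y^\vee,\mathsf{R}f_*(\mathcal{A}^\bullet)[m]\bigr)^*$. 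Tensoring by the invertible $K_Y$ folds this into $\textup{Hom}_Y(\mathcal{G}^\bullet,\mathsf{S}_Y(\mathsf{R}f_*(\mathcal{A}^\bullet)))^*$, and a final application of Serre duality on $Y$ identifies it with $\textup{Hom}_Y(\mathsf{R}f_*(\mathcal{A}^\bullet),\mathcal{G}^\bullet)$.

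For the full $\mathsf{R}\mathcal{H}om$-version, I would apply $\textup{Hom}_{\mathsf{D^b}(Y)}(\mathcal{H}^\bullet,\square)$ to both sides of the claimed isomorphism for arbitrary $\mathcal{H}^\bullet\in\textup{obj}(\mathsf{D^b}(Y))$; by Yoneda, it suffices to show that the two groups so obtained agree naturally in $\mathcal{H}^\bullet$. On the left, the adjunction formula combined with the $\otimes$--$\mathsf{R}\mathcal{H}om$ adjunction $\textup{Hom}(\mathcal{F}_1\overset{\mathsf{L}}{\otimes}\mathcal{F}_2,\mathcal{F}_3)\cong\textup{Hom}(\mathcal{F}_1,\mathsf{R}\mathcal{H}om(\mathcal{F}_2,\mathcal{F}_3))$---which follows from Proposition \ref{lotofequalities}e by applying $H^0\mathsf{R}\Gamma(X,\square)$ together with \eqref{locHomcompo}---reduces the LHS to
\[
\textup{Hom}_X\bigl(\mathsf{L}f^*(\mathcal{H}^\bullet)\overset{\mathsf{L}}{\otimes}_{\mathcal{O}_X}\!\mathcal{A}^\bullet,\,\mathsf{L}f^*(\mathcal{G}^\bullet)\otimes_{\mathcal{O}_X}\!K_f[\textup{dim}(f)]\bigr).
\]
On the right, the same $\otimes$--$\mathsf{R}\mathcal{H}om$ adjunction on $Y$ followed by the projection formula (Proposition \ref{lotofequalities}a) reduces the RHS to
\[
\textup{Hom}_Y\bigl(\mathsf{R}f_*\bigl(\mathsf{L}f^*(\mathcal{H}^\bullet)\overset{\mathsf{L}}{\otimes}_{\mathcal{O}_X}\!\mathcal{A}^\bullet\bigr),\,\mathcal{G}^\bullet\bigr),
\]
and the Hom-version of the theorem applied with $\mathsf{L}f^*(\mathcal{H}^\bullet)\overset{\mathsf{L}}{\otimes}_{\mathcal{O}_X}\!\mathcal{A}^\bullet$ in place of $\mathcal{A}^\bullet$ identifies the two.

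The main technical obstacle I expect is the careful bookkeeping of shifts, line-bundle cancellations and preservation of boundedness: each step implicitly relies on smoothness of $X$ and $Y$ (so that $\mathsf{L}f^*$, $\mathsf{R}\mathcal{H}om$ and $\overset{\mathsf{L}}{\otimes}$ all land in $\mathsf{D^b}$) and on projectivity (so that $\mathsf{R}f_*$ does too), which are precisely the hypotheses of the theorem. A secondary delicate point is to verify that the composite identification in the Yoneda step is truly natural in $\mathcal{H}^\bullet$---which amounts to checking functoriality of each of the invoked identities in Proposition \ref{lotofequalities}---so that one really pins down an isomorphism of objects of $\mathsf{D^b}(Y)$ rather than a mere family of pointwise bijections.
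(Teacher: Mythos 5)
Your proposal is essentially correct, but it should be said up front that the paper itself offers \emph{no} proof of this theorem: it is stated as a citation of \cite[section 3.4]{[Huy06]}, and the only argument supplied there is the derivation of the global Hom-version \emph{from} the sheaf-level isomorphism. You run the logic in the opposite direction, and your route is the standard one for deducing Grothendieck--Verdier duality from Serre duality in the smooth projective setting: the chain Serre duality on $X$ $\rightarrow$ untwist by $K_X$ $\rightarrow$ shift by $[-\textup{dim}(f)]$ $\rightarrow$ adjunction $\mathsf{L}f^*\dashv\mathsf{R}f_*$ $\rightarrow$ twist by $K_Y$ $\rightarrow$ Serre duality on $Y$ checks out, including the shift bookkeeping ($n-(n-m)=m$), and the Yoneda bootstrap via the $\otimes$--$\mathsf{R}\mathcal{H}om$ adjunction and the projection formula correctly reduces the sheafified statement to the Hom-version applied to $\mathsf{L}f^*(\mathcal{H}^\bullet)\overset{\mathsf{L}}{\otimes}_{\mathcal{O}_X}\mathcal{F}^\bullet$. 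What your approach buys is a self-contained proof from ingredients the paper actually establishes (Corollary \ref{SerredualityviaSerrefunc} and Proposition \ref{lotofequalities}), at the cost of two caveats you should make explicit. First, your argument only covers the smooth \emph{projective} case: Corollary \ref{SerredualityviaSerrefunc} is stated under projectivity, so the parenthetical ``or just proper'' in the theorem is not reached by this route (one would need Chow's Lemma or the full $f^!$ machinery). Second, the Yoneda step produces an isomorphism in $\mathsf{D^b}(Y)$ only once naturality in $\mathcal{H}^\bullet$ of the entire composite is verified --- you flag this, and it does hold since adjunction, the projection formula and the tensor--$\mathcal{H}om$ adjunction are all natural --- but to get the ``functorial isomorphism'' claimed in the statement you should also note that the resulting map is natural in $\mathcal{F}^\bullet$ and $\mathcal{G}^\bullet$, which again follows from the naturality of every link in the chain. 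With those two remarks added, your sketch is a genuine improvement on the paper's bare citation.
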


Upon choosing $Y=\text{Spec}(\mathbb{K})$ and $f$ equal the structure morphism of $X$ (hence $K_f=K_X$ and $\text{dim}(f)=\text{dim}(X)$), we get an alternative proof of Corollary \ref{SerredualityviaSerrefunc}. Doing the same at the level of coherent sheaves instead gives Theorem \ref{Serreduality}.

\vspace*{0.5cm}
\noindent We move to another important tool relating bounded derived categories of coherent sheaves. Our primary reference remains \cite{[Huy06]}.

\begin{Def}
Consider the product variety $X\times Y$ of two smooth projective $X,Y\in\text{obj}(\mathsf{Var}(\mathbb{K}))$, with projections $p_X:X\times Y\rightarrow X$ and $p_Y:X\times Y\rightarrow Y$. Let $\mathcal{P}^\bullet\in\text{obj}(\mathsf{D^b}(X\times Y))$ be any. Then its associated \textbf{Fourier--Mukai transform}\index{Fourier--Mukai transform} is the functor\footnote{More formally, we should write $\Phi_{\mathcal{P}^\bullet}(\square)=\mathsf{R}p_{Y*}(\mathsf{L}p_X^*(\square)\overset{\mathsf{L}}{\otimes}\mathcal{P}^\bullet)$, but $p_X$ is flat, so we don't ``need'' to derive it (see \cite[section III.9]{[Har77]} for more about flat morphisms). However, unless $\mathcal{P}^\bullet$ is a complex of locally free sheaves, we must left derive $\otimes\equiv\otimes_{\mathcal{O}_{X\times Y}}$.}
\begin{equation}
	\Phi_{\mathcal{P}^\bullet}\coloneqq\mathsf{R}p_{Y*}(p_X^*(\square)\overset{\mathsf{L}}{\otimes}\mathcal{P}^\bullet):\mathsf{D^b}(X)\rightarrow\mathsf{D^b}(Y)\,,
\end{equation}
exact because the composition of exact functors. We call $\mathcal{P}^\bullet$ the \textbf{Fourier--Mukai kernel}\index{Fourier--Mukai kernel} of $\Phi_{\mathcal{P}^\bullet}$ and, in case the latter is an equivalence, we refer to $X,Y$ as \textbf{Fourier--Mukai partners}\index{Fourier--Mukai partners}.
\end{Def}

Since $X\times Y\cong Y\times X$, the kernel $\mathcal{P}^\bullet$ also defines an exact functor $\mathsf{D^b}(Y)\rightarrow\mathsf{D^b}(X)$. We let the order of the two factors implicitly dictate which direction the arrow takes.

\begin{Ex}
Fourier--Mukai transforms crop up quite often, though silently. For example, the pushforward of a morphism of varieties $f:X\rightarrow Y$ is $\mathsf{R}f_*\cong\Phi_{\mathcal{O}_{\Gamma(f)}}:\mathsf{D^b}(X)\rightarrow\mathsf{D^b}(Y)$, the Fourier--Mukai transform induced by the structure sheaf $\mathcal{O}_{\Gamma(f)}\in\text{obj}(\mathsf{D^b}(X\times Y))$ (in degree 0) of the graph $\Gamma(f)\subset X\times Y$ of $f$. Following our convention, $\mathcal{O}_{\Gamma(f)}\in\text{obj}(\mathsf{D^b}(Y\times X))$ instead induces none other than the pullback $\mathsf{L}f^*\cong \Phi_{\mathcal{O}_{\Gamma(f)}}:\mathsf{D^b}(Y)\rightarrow\mathsf{D^b}(X)$. Many other instances are presented in \cite[Examples 5.4]{[Huy06]} and \cite[section 3.4]{[Hoc19]}.				\hfill $\blacklozenge$
\end{Ex}

\begin{Rem}
Fourier--Mukai transforms commute with Serre functors. Indeed, let $X,Y$ be smooth projective varieties and let $\Phi_{\mathcal{P}^\bullet}:\mathsf{D^b}(X)\rightarrow\mathsf{D^b}(Y)$ be the Fourier--Mukai transform of $\mathcal{P}^\bullet\in\text{obj}(\mathsf{D^b}(X\times Y))$. Define the bounded complexes $\mathcal{P}_\text{L}^\bullet\coloneqq((\mathcal{P}^\bullet)^\vee\otimes p_Y^*K_Y)[\text{dim}(Y)]$ and $\mathcal{P}_\text{R}^\bullet\coloneqq((\mathcal{P}^\bullet)^\vee\otimes p_X^*K_X)[\text{dim}(X)]$ in $\mathsf{D^b}(X\times Y)$ (in fact, the images of $(\mathcal{P}^\bullet)^\vee$ under projections and Serre functors). Then $\Phi_{\mathcal{P}_\text{L}^\bullet}\cong \Phi_{(\mathcal{P}^\bullet)^\vee}\circ\mathsf{S}_Y$ and $\Phi_{\mathcal{P}_\text{R}^\bullet}\cong \mathsf{S}_X\circ\Phi_{(\mathcal{P}^\bullet)^\vee}$ as functors $\mathsf{D^b}(Y)\rightarrow\mathsf{D^b}(X)$, moreover left respectively right adjoint to $\Phi_{\mathcal{P}^\bullet}$ (which is by itself promising, since any exact equivalence admits left and right adjoints).   
\end{Rem}

\begin{Lem}
Let $X,Y,Z$ be smooth projective varieties over $\mathbb{K}$, and let $\Phi_{\mathcal{P}_1^\bullet}:\mathsf{D^b}(X)\rightarrow\mathsf{D^b}(Y)$ and $\Phi_{\mathcal{P}_2^\bullet}:\mathsf{D^b}(Y)\rightarrow\mathsf{D^b}(Z)$ be the Fourier--Mukai transforms of $\mathcal{P}_1^\bullet\in\textup{obj}(\mathsf{D^b}(X\times Y))$ respectively $\mathcal{P}_2^\bullet\in\textup{obj}(\mathsf{D^b}(Y\times Z))$. 
Then $\Phi_{\mathcal{P}_2^\bullet}\circ \Phi_{\mathcal{P}_1^\bullet}$ is isomorphic to $\Phi_{\mathcal{P}_3^\bullet}:\mathsf{D^b}(X)\rightarrow\mathsf{D^b}(Z)$, the Fourier--Mukai transform with kernel
\[
\mathcal{P}_3^\bullet\coloneqq \mathsf{R}p_{X\!Z*}(p_{X\!Y}^*\mathcal{P}_1^\bullet\overset{\mathsf{L}}{\otimes} p_{Y\!Z}^*\mathcal{P}_2^\bullet)\in\textup{obj}(\mathsf{D^b}(X\times Z))\,.
\]
\textup(Note that there are other choices of $\mathcal{P}_3^\bullet$ if $\Phi_{\mathcal{P}_2^\bullet}\circ \Phi_{\mathcal{P}_1^\bullet}$ is not an equivalence.\textup) This gives the set of kernels a well-defined composition law, with two-sided identity $\mathcal{O}_{\Delta_X}\in\textup{obj}(\mathsf{D^b}(X\times X))$, the structure sheaf of the diagonal of $X$.
\end{Lem}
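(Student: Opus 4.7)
The plan is to unfold the composition $\Phi_{\mathcal{P}_2^\bullet}\circ\Phi_{\mathcal{P}_1^\bullet}$ on an arbitrary $\mathcal{F}^\bullet\in\textup{obj}(\mathsf{D^b}(X))$, migrate the computation onto the triple product $X\times Y\times Z$ via flat base change, and then repackage the result as a single Fourier--Mukai transform using the projection formula. Throughout, let $\pi_{XY},\pi_{YZ},\pi_{XZ}$ denote the projections from $X\times Y\times Z$ onto the respective double products, $\pi_X,\pi_Y,\pi_Z$ those onto single factors, and decorate with superscripts the usual projections $p^{XY}_X,p^{XY}_Y,\ldots$ from the double products.

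First I would use the definition to write
\[
\Phi_{\mathcal{P}_2^\bullet}(\Phi_{\mathcal{P}_1^\bullet}(\mathcal{F}^\bullet))=\mathsf{R}p^{YZ}_{Z*}\Big(p^{YZ*}_Y\mathsf{R}p^{XY}_{Y*}\big(p^{XY*}_X\mathcal{F}^\bullet\overset{\mathsf{L}}{\otimes}\mathcal{P}_1^\bullet\big)\overset{\mathsf{L}}{\otimes}\mathcal{P}_2^\bullet\Big).
\]
The square with corners $X\times Y\times Z,\,Y\times Z,\,X\times Y,\,Y$ (projections $\pi_{XY},\pi_{YZ},p^{XY}_Y,p^{YZ}_Y$) is Cartesian with flat arrows, so flat base change (Proposition~\ref{lotofequalities}i, in its general form) gives the canonical isomorphism $p^{YZ*}_Y\mathsf{R}p^{XY}_{Y*}\cong\mathsf{R}\pi_{YZ*}\pi_{XY}^*$. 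Applying this, together with the compatibility of pullback with tensor product (identity~b) and the relations $p^{XY}_X\!\circ\!\pi_{XY}=\pi_X$, $p^{YZ}_Z\!\circ\!\pi_{YZ}=\pi_Z=p^{XZ}_Z\!\circ\!\pi_{XZ}$, transforms the above into
\[
\mathsf{R}p^{XZ}_{Z*}\mathsf{R}\pi_{XZ*}\Big(\pi_X^*\mathcal{F}^\bullet\overset{\mathsf{L}}{\otimes}\pi_{XY}^*\mathcal{P}_1^\bullet\overset{\mathsf{L}}{\otimes}\pi_{YZ}^*\mathcal{P}_2^\bullet\Big),
\]
after first pulling $\mathcal{P}_2^\bullet$ inside $\mathsf{R}\pi_{YZ*}$ through the projection formula (identity~a).

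Second, I would rewrite $\pi_X^*\mathcal{F}^\bullet\cong\pi_{XZ}^*p^{XZ*}_X\mathcal{F}^\bullet$ and once more invoke the projection formula along $\pi_{XZ}$ to pull $p^{XZ*}_X\mathcal{F}^\bullet$ out, obtaining
\[
\Phi_{\mathcal{P}_2^\bullet}(\Phi_{\mathcal{P}_1^\bullet}(\mathcal{F}^\bullet))\cong\mathsf{R}p^{XZ}_{Z*}\Big(p^{XZ*}_X\mathcal{F}^\bullet\overset{\mathsf{L}}{\otimes}\mathsf{R}\pi_{XZ*}(\pi_{XY}^*\mathcal{P}_1^\bullet\overset{\mathsf{L}}{\otimes}\pi_{YZ}^*\mathcal{P}_2^\bullet)\Big)=\Phi_{\mathcal{P}_3^\bullet}(\mathcal{F}^\bullet),
\]
which is the desired identification. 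The naturality of all the isomorphisms involved in fact upgrades this to an isomorphism of functors $\Phi_{\mathcal{P}_2^\bullet}\circ\Phi_{\mathcal{P}_1^\bullet}\cong\Phi_{\mathcal{P}_3^\bullet}$.

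For the identity kernel, denoting by $\delta:X\hookrightarrow X\times X$ the diagonal embedding so that $\mathcal{O}_{\Delta_X}\cong\delta_*\mathcal{O}_X$, the projection formula applied to $\delta$ gives $p_1^*\mathcal{F}^\bullet\overset{\mathsf{L}}{\otimes}\delta_*\mathcal{O}_X\cong\delta_*(\delta^*p_1^*\mathcal{F}^\bullet)=\delta_*\mathcal{F}^\bullet$ (since $p_1\circ\delta=\textup{id}_X$), whence $\Phi_{\mathcal{O}_{\Delta_X}}(\mathcal{F}^\bullet)\cong\mathsf{R}p_{2*}\delta_*\mathcal{F}^\bullet=\mathsf{R}(p_2\circ\delta)_*\mathcal{F}^\bullet=\mathcal{F}^\bullet$, so $\Phi_{\mathcal{O}_{\Delta_X}}\cong\mathsf{Id}_{\mathsf{D^b}(X)}$, and similarly on the other side. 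Associativity of the composition law on kernels follows from the triple application of the same base-change/projection-formula game on the fourfold product $X\times Y\times Z\times W$.

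The main obstacle I anticipate is not conceptual but bookkeeping: one must ensure that every intermediate complex stays inside the bounded derived category (so that the proposition as stated makes sense), which is where smoothness and projectivity of $X,Y,Z$ enter through Proposition~\ref{derivedfuncrestrictstobdd} applied to the various $\mathsf{R}\pi_*$ and $\overset{\mathsf{L}}{\otimes}$; one must also carefully justify flat base change in the derived setting (the projections from products of smooth projective varieties are flat and proper, so the hypotheses of identities~a and~i of Proposition~\ref{lotofequalities} are actually satisfied uniformly, but this needs a brief verification).
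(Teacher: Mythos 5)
The paper states this lemma without proof (it is quoted from [Huy06], where it appears as Proposition 5.10), so there is no in-text argument to compare against. Your proposal is correct and is precisely the standard argument: unfold the composition, use flat base change on the Cartesian square over $Y$ to move everything to $X\times Y\times Z$, and apply the projection formula twice (along $\pi_{YZ}$ and then along $\pi_{XZ}$) together with $\mathsf{R}(g\circ f)_*\cong\mathsf{R}g_*\circ\mathsf{R}f_*$; your treatment of the identity kernel via the projection formula for the diagonal embedding and your closing remarks on boundedness and flatness of the projections are also exactly the points that need checking.
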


Orlov proved in \cite{[Orl03]} the following relation between fully faithful functors and Fourier--Mukai transforms:

\begin{Thm}\label{Orlovthm}
Let $X,Y$ be smooth projective varieties over $\mathbb{K}$, and let $\mathsf{F}:\mathsf{D^b}(X)\rightarrow\mathsf{D^b}(Y)$ be a fully faithful exact functor. If $\mathsf{F}$ has left and right adjoint functors, then there exists some $\mathcal{P}^\bullet\in\textup{obj}(\mathsf{D^b}(X\times Y))$ \textup(unique up to isomorphism\textup) such that $\mathsf{F}\cong\Phi_{\mathcal{P}^\bullet}$ as functors. We say that $\mathsf{F}$ is \textbf{of Fourier--Mukai type}\index{functor!of Fourier--Mukai type}.

This is always verified in case $\mathsf{F}$ is an equivalence, and then the underlying varieties fulfill $\textup{dim}(X)=\textup{dim}(Y)$. In particular, every autoequivalence in $\textup{Auteq}(\mathsf{D^b}(X))$ is induced by some kernel $\mathcal{P}^\bullet\in\textup{obj}(\mathsf{D^b}(X\times X))$, giving $\textup{Auteq}(\mathsf{D^b}(X))$ the structure of a group. 
\end{Thm}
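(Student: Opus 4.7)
The proof of Orlov's theorem splits naturally into uniqueness, existence, and the subsequent specialization to equivalences. I will lay out a plan for each, emphasizing that the existence step is where the substantive work lies.

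\textbf{Uniqueness.} Suppose $\Phi_{\mathcal{P}_1^\bullet}\cong\Phi_{\mathcal{P}_2^\bullet}$. I would test both on the orthonormal spanning class $\{k(x)\mid x\in X\}$ from Proposition \ref{spanningclass}. A direct computation with flat base change (Proposition \ref{lotofequalities}i), applied to the inclusion $\{x\}\hookrightarrow X$) shows
\[
\Phi_{\mathcal{P}_j^\bullet}(k(x))\cong \mathsf{L}\iota_x^*\mathcal{P}_j^\bullet\in\textup{obj}(\mathsf{D^b}(Y))\quad\text{for each }x\in X,
\]
where $\iota_x:\{x\}\times Y\hookrightarrow X\times Y$. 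This forces isomorphic fibres at every closed point, whence $\mathcal{P}_1^\bullet\cong\mathcal{P}_2^\bullet$ globally in $\mathsf{D^b}(X\times Y)$ by a semicontinuity/cohomology-and-base-change argument for complexes over a smooth projective base.

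\textbf{Existence.} Let $\mathsf{F}_\mathsf{L},\mathsf{F}_\mathsf{R}$ denote the adjoints. The intent is to build $\mathcal{P}^\bullet$ so that $\mathsf{L}\iota_x^*\mathcal{P}^\bullet\cong\mathsf{F}(k(x))$ for all closed $x\in X$. First I would fix a very ample line bundle $\mathcal{O}_X(1)$ and invoke a resolution of the diagonal à la Beilinson, expressing $\mathcal{O}_{\Delta_X}\in\textup{obj}(\mathsf{D^b}(X\times X))$ as a bounded complex whose terms are external tensor products $p_1^*\mathcal{A}_i\otimes p_2^*\mathcal{B}_i$ with $\mathcal{A}_i,\mathcal{B}_i\in\textup{obj}(\mathsf{Coh}(X;{}_{\mathcal{O}_X}\!\mathsf{Mod}))$. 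Applying the functor $\textup{id}_X\boxtimes\mathsf{F}$ termwise --- which does make sense on such external tensors, replacing the second factor by $\mathsf{F}(\mathcal{B}_i)\in\textup{obj}(\mathsf{D^b}(Y))$ --- produces a candidate complex $\mathcal{P}^\bullet\in\textup{obj}(\mathsf{D^b}(X\times Y))$. Secondly, I would verify that the natural morphism $\Phi_{\mathcal{P}^\bullet}\rightarrow\mathsf{F}$ (constructed via adjunction from the identifications on generators $\mathcal{A}_i$) is an isomorphism on the spanning class $\{k(x)\}$: this boils down to Fourier--Mukai composition (the preceding lemma in the excerpt) and the identity $\Phi_{\mathcal{O}_{\Delta_X}}\cong\textup{id}_{\mathsf{D^b}(X)}$. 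Finally, I would propagate the isomorphism from the spanning class to all of $\mathsf{D^b}(X)$ by the standard device: both $\mathsf{F}$ and $\Phi_{\mathcal{P}^\bullet}$ are exact (hence commute with distinguished triangles and shifts), both admit left and right adjoints, and they agree up to canonical isomorphism on a spanning class; a cone argument in the triangulated category $\mathsf{D^b}(Y)$ then yields $\mathsf{F}\cong\Phi_{\mathcal{P}^\bullet}$.

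\textbf{Main obstacle.} The hard step is unquestionably the construction of $\mathcal{P}^\bullet$, because $\mathsf{F}$ is defined on $\mathsf{D^b}(X)$ and \emph{a priori} has no action on complexes over $X\times Y$. The Beilinson resolution circumvents this by reducing the diagonal to external tensors, but one must then carefully check that the termwise application of $\textup{id}_X\boxtimes\mathsf{F}$ assembles into a genuine object of $\mathsf{D^b}(X\times Y)$ (rather than merely a diagram), and this requires a spectral-sequence convergence argument exploiting both the full faithfulness of $\mathsf{F}$ (to control Ext-vanishing between the $\mathsf{F}(\mathcal{B}_i)$) and the existence of both adjoints (to ensure that $\mathsf{F}$ commutes with the relevant limits/colimits implicit in the gluing). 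The smoothness of $X$ and $Y$ is used to guarantee boundedness and the finite homological dimension needed for the spectral sequence to degenerate.

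\textbf{Equivalences and $\textup{Auteq}(\mathsf{D^b}(X))$.} If $\mathsf{F}$ is an equivalence, the Serre functor $\mathsf{S}_X=(\square\otimes K_X)[\textup{dim}(X)]$ from Corollary \ref{SerredualityviaSerrefunc} automatically produces both adjoints, namely $\mathsf{F}^{-1}$ and $\mathsf{S}_Y\circ\mathsf{F}^{-1}\circ\mathsf{S}_X^{-1}$, so the main theorem applies. To recover $\textup{dim}(X)=\textup{dim}(Y)$, I would compute $\Phi_{\mathcal{P}^\bullet}(k(x))$ on a generic closed point and use that an equivalence must send the orthonormal spanning class $\{k(x)\}$ to a family of objects of the same Hom-dimensions --- the Serre-functor relation then pins down the dimension via the shift appearing in $\mathsf{S}_X$. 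Finally, the group structure on $\textup{Auteq}(\mathsf{D^b}(X))$ follows from the composition law for kernels stated just before the theorem, with two-sided identity $\mathcal{O}_{\Delta_X}$.
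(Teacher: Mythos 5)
The paper itself offers no proof of this theorem---it is quoted directly from Orlov's work \cite{[Orl03]}---so your proposal can only be measured against the known argument there (and in Huybrechts' book). Your existence plan has the correct skeleton: a Beilinson-type resolution of the diagonal into external tensors, termwise application of $\mathrm{id}_X\boxtimes\mathsf{F}$, a convolution whose existence and uniqueness are controlled by Ext-vanishing coming from full faithfulness, verification on generators, and a bootstrap. One correction of emphasis: in Orlov's proof the verification and the bootstrap are carried out on the ample sequence $\{\mathcal{O}_X(i)\}_{i}$ rather than on the skyscrapers $k(x)$, precisely because the $\mathcal{O}_X(i)$ are the objects that actually appear in the resolution of the diagonal; and the step ``agree on a spanning class $\Rightarrow$ isomorphic functors'' needs a genuine natural transformation that is an isomorphism on that class, not merely objectwise isomorphisms, which you acknowledge but do not construct.

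The genuine gap is in your uniqueness argument. From $\Phi_{\mathcal{P}_1^\bullet}\cong\Phi_{\mathcal{P}_2^\bullet}$ you deduce $\mathsf{L}\iota_x^*\mathcal{P}_1^\bullet\cong\mathsf{L}\iota_x^*\mathcal{P}_2^\bullet$ for all closed $x\in X$ and then claim this forces $\mathcal{P}_1^\bullet\cong\mathcal{P}_2^\bullet$ by a semicontinuity or cohomology-and-base-change argument. That implication is false: for any nontrivial line bundle $L$ on $X$, the kernels $\mathcal{P}^\bullet$ and $\mathcal{P}^\bullet\otimes p_X^*L$ have (noncanonically) isomorphic derived fibres over every closed point of $X$, yet they are not isomorphic as objects of $\mathsf{D^b}(X\times Y)$ --- indeed they induce the distinct functors $\Phi_{\mathcal{P}^\bullet}(\square)$ and $\Phi_{\mathcal{P}^\bullet}(\square\otimes_{\mathcal{O}_X}L)$. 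So the derived fibres alone do not determine the kernel, and no base-change argument can close this; the fibrewise isomorphisms would have to be chosen coherently in $x$, which is exactly the hard content of the statement. Orlov's actual uniqueness proof shows that any kernel representing $\mathsf{F}$ is isomorphic to the canonical convolution built from the values $\mathsf{F}(\mathcal{O}_X(i))$ \emph{together with the morphisms between them}, i.e., it uses strictly more of the functor than its values on a family of objects. The remaining parts (adjoints from the Serre functor in the equivalence case, $\dim(X)=\dim(Y)$ from $\mathsf{F}\circ\mathsf{S}_X\cong\mathsf{S}_Y\circ\mathsf{F}$ applied to skyscrapers, and the group structure from the composition of kernels) are fine.
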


Other remarkable results whose proofs draw from Fourier--Mukai theory are:

\begin{Pro}\label{whenprojvarareequiv}
Let $X,Y$ be smooth projective varieties over $\mathbb{K}$. Then:
\begin{itemize}[leftmargin=0.5cm]
	\item If there exists an equivalence $\mathsf{F}:\mathsf{Coh}(X;{}_{\mathcal{O}_X}\!\mathsf{Mod})\rightarrow \mathsf{Coh}(Y;{}_{\mathcal{O}_Y}\!\mathsf{Mod})$, then $X\cong Y$ as varieties.
	
	\item If there exists an exact equivalence $\mathsf{F}:\mathsf{D^b}(X)\!\rightarrow\! \mathsf{D^b}(Y)$ --- that is, if $X, Y$ are derived equivalent --- and if the \textup(anti-\textup)canonical line bundle of $X$ is ample, then $X\cong Y$ \textup(and the \textup(anti-\textup)canonical line bundle of $Y$ is ample as well\textup).\footnote{Excluding elliptic curves, this result --- due to Bondal and Orlov (in \cite{[BO97]}) --- classifies completely the derived categories of smooth projective varieties in dimension 1! The duo also proved, assuming again ampleness, that $\text{Auteq}(\mathsf{D^b}(X))\cong\mathbb{Z}\times(\text{Aut}(X)\ltimes\text{Pic}(X))$ --- where $\mathbb{Z}$ acts by translation [1], $\text{Aut}(X)$ by pullback and $\text{Pic}(X)$ by tensoring --- which settles de facto the complexity of $\mathsf{D^b}(X)$.}
\end{itemize}
\end{Pro}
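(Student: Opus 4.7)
For the first part (a version of Gabriel's reconstruction theorem), the plan is to recover $X$ from the purely categorical data of $\mathsf{Coh}(X;{}_{\mathcal{O}_X}\!\mathsf{Mod})$ and transport it through $\mathsf{F}$. First I would identify the closed points of $X$ with isomorphism classes of simple objects in $\mathsf{Coh}(X;{}_{\mathcal{O}_X}\!\mathsf{Mod})$ whose endomorphism ring is $\mathbb{K}$: these are exactly the skyscraper sheaves $k(x)$, as any simple coherent sheaf on a projective variety must have zero-dimensional support. Since any equivalence preserves simplicity and endomorphism rings, $\mathsf{F}$ induces a bijection $\varphi:X\to Y$ on closed points. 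Next, the Zariski topology is read off categorically by declaring a subset closed iff it is the support of some coherent sheaf, where the support can itself be described intrinsically through $\mathrm{Hom}$-vanishing against the $k(x)$'s; so $\varphi$ is a homeomorphism. Finally, the structure sheaf is recovered from $\mathcal{O}_U\cong\mathrm{End}_{\mathcal{O}_U}(\mathcal{O}_U)$, with the open affine cover encoded by supports of surjections onto skyscrapers.

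For the second part, the strategy is the classical Bondal--Orlov argument: give intrinsic characterizations of skyscrapers and line bundles inside $\mathsf{D^b}(X)$ purely via Hom-spaces, shifts, and the Serre functor $\mathsf{S}_X$ of Corollary \ref{SerredualityviaSerrefunc}, so that all of these are transported by $\mathsf{F}$. Declare $\mathcal{P}\in\mathrm{obj}(\mathsf{D^b}(X))$ a \emph{point object} if $\mathsf{S}_X(\mathcal{P})\cong\mathcal{P}[\dim X]$, $\mathrm{Hom}(\mathcal{P},\mathcal{P}[i])=0$ for $i<0$, and $\mathrm{End}(\mathcal{P})\cong\mathbb{K}$. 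The key lemma is that, provided $K_X$ or $K_X^{\vee}$ is ample, the point objects are exactly the shifts $k(x)[n]$: any other candidate $\mathcal{P}^\bullet$ with support of positive dimension would satisfy $\mathsf{S}_X(\mathcal{P}^\bullet)\cong\mathcal{P}^\bullet\otimes K_X[\dim X]$, and tensoring by an (anti-)ample line bundle cannot be a mere shift unless the support is zero-dimensional. Analogously, \emph{invertible objects} $\mathcal{L}$ are those for which, for every point object $\mathcal{P}_x$, one has $\mathrm{Hom}(\mathcal{L},\mathcal{P}_x[i])\cong\mathbb{K}$ in a single degree $i=i(x)$ and zero otherwise; these must be shifts of genuine line bundles.

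Since any exact $\mathbb{K}$-linear equivalence commutes with the Serre functor (because Serre functors are unique up to unique isomorphism), $\mathsf{F}$ carries point objects to point objects and invertible objects to invertible objects, forcing $\dim X=\dim Y$ and the ampleness of $K_Y^{\pm 1}$. Up to shift and tensoring with a fixed invertible object, we may normalize $\mathsf{F}(\mathcal{O}_X)\cong\mathcal{O}_Y$; then $\mathsf{F}$ intertwines the autoequivalence $\mathsf{S}_X[-\dim X]=\square\otimes K_X$ with $\square\otimes K_Y$, and one recovers the isomorphism of varieties from
\[
X\;\cong\;\mathrm{Proj}\bigoplus_{n\geq 0}\mathrm{Hom}_{\mathsf{D^b}(X)}(\mathcal{O}_X,\mathcal{O}_X\otimes K_X^{\pm n})\;\cong\;\mathrm{Proj}\bigoplus_{n\geq 0}\mathrm{Hom}_{\mathsf{D^b}(Y)}(\mathcal{O}_Y,\mathcal{O}_Y\otimes K_Y^{\pm n})\;\cong\;Y,
\]
using that ampleness of $K_X^{\pm 1}$ exhibits $X$ as the Proj of its (anti-)canonical ring.

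The main obstacle will be the characterization of point objects: ruling out exotic complexes that happen to satisfy the Serre-equivariance condition $\mathsf{S}_X(\mathcal{P}^\bullet)\cong\mathcal{P}^\bullet[\dim X]$ is precisely where the ampleness hypothesis bites, and the argument requires a careful dévissage on the cohomology sheaves of $\mathcal{P}^\bullet$ together with the observation that an ample line bundle has no non-trivial fixed points under tensoring on $\mathrm{Pic}(X)$. A secondary subtlety is checking that the reconstructed bijection is an isomorphism of schemes (not merely ringed spaces), which reduces to matching the multiplication in the (anti-)canonical ring on both sides via the normalization $\mathsf{F}(\mathcal{O}_X)\cong\mathcal{O}_Y$.
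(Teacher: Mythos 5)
The paper itself gives no proof of this proposition: it is quoted from the literature (Gabriel's reconstruction for the first item, Bondal--Orlov \cite{[BO97]} and Huybrechts for the second), and your outline follows exactly the route taken there. For the first item your plan is sound in spirit, but recovering the \emph{scheme} structure needs more than writing $\mathcal{O}_U\cong\mathrm{End}(\mathcal{O}_U)$: you must first identify $\mathsf{Coh}(U)$ purely categorically inside $\mathsf{Coh}(X;{}_{\mathcal{O}_X}\!\mathsf{Mod})$, which is done via the Serre quotient $\mathsf{Coh}(X)/\mathsf{Coh}_Z(X)\simeq\mathsf{Coh}(U)$ for $Z=X\setminus U$ closed, and then $\Gamma(U,\mathcal{O}_X)$ is read off as the endomorphism ring of the identity functor (the centre) of that quotient. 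This is an elision rather than an error, but as written the structure sheaf is not actually reconstructed.

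The genuine gap is in the second item, at the step where you pass from ``$\mathsf{F}$ carries point objects to point objects'' to ``forcing \dots the ampleness of $K_Y^{\pm 1}$'' and to the identification of point objects of $\mathsf{D^b}(Y)$ with shifted skyscrapers. Your key lemma (point objects are exactly the $k(x)[n]$) is proved \emph{using} ampleness of $K_X^{\pm 1}$, so it is available only on the $X$-side; on the $Y$-side nothing is yet known about $K_Y$, and deducing its ampleness from the preservation of point objects is circular --- in Bondal--Orlov the ampleness of $K_Y^{\pm 1}$ is one of the \emph{conclusions}, obtained only after $X\cong Y$ has been established. The missing argument is an orthogonality-plus-spanning step: every shifted skyscraper $k(y)[m]$ is a point object of $\mathsf{D^b}(Y)$ (this needs no ampleness, since $k(y)\otimes K_Y\cong k(y)$); every point object of $\mathsf{D^b}(Y)$ is of the form $\mathsf{F}(k(x)[n])$ because $\mathsf{F}^{-1}$ also preserves point objects; and two point objects that are not isomorphic up to shift are mutually $\mathrm{Hom}$-orthogonal in all degrees, by transporting the orthogonality of distinct skyscrapers on $X$. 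Hence if some $\mathsf{F}(k(x)[n])$ were not a shifted skyscraper of $Y$, it would be orthogonal to every $k(y)[m]$ and therefore zero by the spanning-class property of Proposition \ref{spanningclass} --- a contradiction. Only after this does the bijection on closed points, the characterization of invertible objects, the normalization $\mathsf{F}(\mathcal{O}_X)\cong\mathcal{O}_Y$, and finally the $\mathrm{Proj}$ reconstruction (and with it the ampleness of $K_Y^{\pm 1}$) go through as you describe.
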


It is worth asking when Fourier--Mukai transforms are fully faithful functors or even equivalences. Bondal, Orlov and Bridgeland (see \cite{[Bri98]}) answer as follows.

\begin{Pro}\label{whenFourierMukaiisequivalence}
Let $X,Y$ be smooth projective varieties over $\mathbb{K}$, with $n=\textup{dim}(X)$, and let $\Phi_{\mathcal{P}^\bullet}:\mathsf{D^b}(X)\rightarrow\mathsf{D^b}(Y)$ be the Fourier--Mukai transform of kernel $\mathcal{P}^\bullet\in\textup{obj}(\mathsf{D^b}(X\times Y))$. Then:
\begin{itemize}[leftmargin=0.5cm]
	\renewcommand{\labelitemi}{\textendash}
	\item $\Phi_{\mathcal{P}^\bullet}$ is fully faithful if and only if for any two closed $x_1,x_2\in X$ holds
	\[\begin{small}
		\textup{Hom}_{\mathsf{D^b}(Y)}\big(\Phi_{\mathcal{P}^\bullet}(k(x_1)),\Phi_{\mathcal{P}^\bullet}(k(x_2))[l]\big)\!\cong\!
		\begin{cases}
			\mathbb{K} & $if $\; x_1\!=\!x_2 \;\wedge\; l\!=\!0 \\ 
			\{0\} & $if $\; x_1\!\neq\! x_2 \;\vee\; l\!<\!0\;\vee\; l\!>\!n
		\end{cases}\,; 
	\end{small}\]
	
	\item if $\Phi_{\mathcal{P}^\bullet}$ is readily fully faithful, then it is also an equivalence if and only if $\textup{dim}(X)=\textup{dim}(Y)$ and $\mathcal{P}^\bullet\overset{\mathsf{L}}{\otimes} p_X^*K_X\cong \mathcal{P}^\bullet\overset{\mathsf{L}}{\otimes} p_Y^*K_Y$, or equivalently, if and only if $\Phi_{\mathcal{P}^\bullet}(k(x))\otimes_{\mathcal{O}_Y}\!K_Y\cong \Phi_{\mathcal{P}^\bullet}(k(x))$ for all closed $x\in X$.
	
	\item if $\textup{dim}(X)=\textup{dim}(Y)$ and the canonical line bundles are trivial, $K_X\cong\mathcal{O}_X$ respectively $K_Y\cong\mathcal{O}_Y$, then any fully faithful exact functor $\mathsf{D^b}(X)\rightarrow\mathsf{D^b}(Y)$ is automatically an equivalence.   
\end{itemize}
\end{Pro}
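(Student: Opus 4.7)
The overall strategy is to leverage the orthonormal spanning class $\{k(x) \in \textup{obj}(\mathsf{D^b}(X)) \mid x \in X\}$ from Proposition \ref{spanningclass} together with Orlov's Theorem \ref{Orlovthm}, which guarantees that $\Phi_{\mathcal{P}^\bullet}$ admits both a left adjoint $\Phi_{\mathcal{P}_\text{L}^\bullet}$ and a right adjoint $\Phi_{\mathcal{P}_\text{R}^\bullet}$ of Fourier--Mukai type. The key heuristic is the well-known principle that a functor between triangulated categories with adjoints is fully faithful iff its unit (or counit) is an isomorphism, and this can be tested on a spanning class; moreover, a fully faithful functor is an equivalence iff its left and right adjoints coincide.

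For the first bullet point, I would first reformulate fully faithfulness as the bijectivity of $\Phi_{\mathcal{P}^\bullet}: \textup{Hom}_{\mathsf{D^b}(X)}(\mathcal{F}^\bullet,\mathcal{G}^\bullet[l]) \to \textup{Hom}_{\mathsf{D^b}(Y)}(\Phi_{\mathcal{P}^\bullet}(\mathcal{F}^\bullet),\Phi_{\mathcal{P}^\bullet}(\mathcal{G}^\bullet[l]))$ for all $\mathcal{F}^\bullet,\mathcal{G}^\bullet$ and $l \in \mathbb{Z}$. Using the adjunction $\Phi_{\mathcal{P}_\text{R}^\bullet} \dashv \Phi_{\mathcal{P}^\bullet}$ (or equivalently the criterion expressed through the unit morphism $\textup{id} \to \Phi_{\mathcal{P}_\text{R}^\bullet}\circ\Phi_{\mathcal{P}^\bullet}$), the verification reduces to checking the stated isomorphism for $\mathcal{F}^\bullet = k(x_1)$ and $\mathcal{G}^\bullet = k(x_2)$; this is the content of a standard lemma for spanning classes in triangulated categories admitting Serre functors, and the values on the right-hand side are precisely the orthonormality values in Proposition \ref{spanningclass}, with the vanishing in degrees $l<0$ and $l>n$ coming from the fact that $\textup{dim}_h(\mathsf{Coh}(X;{}_{\mathcal{O}_X}\!\mathsf{Mod})) = n$ for $X$ smooth projective (cf. the footnote in Remark \ref{HomExtRem}). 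The hypothesis fixes what $\Phi_{\mathcal{P}^\bullet}$ does on this spanning class, which pins down fully faithfulness globally.

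For the second bullet, assuming $\Phi_{\mathcal{P}^\bullet}$ is already fully faithful, being an equivalence amounts to essential surjectivity. I would argue this via Serre functors: a fully faithful exact functor between categories possessing Serre functors is an equivalence iff it commutes with them, i.e.\ iff $\Phi_{\mathcal{P}^\bullet} \circ \mathsf{S}_X \cong \mathsf{S}_Y \circ \Phi_{\mathcal{P}^\bullet}$, which by Corollary \ref{SerredualityviaSerrefunc} translates into the kernel-level identity $\mathcal{P}^\bullet \overset{\mathsf{L}}{\otimes} p_X^*K_X \cong \mathcal{P}^\bullet \overset{\mathsf{L}}{\otimes} p_Y^*K_Y$ (the shift discrepancy forcing $\textup{dim}(X)=\textup{dim}(Y)$). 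To check the equivalent pointwise version $\Phi_{\mathcal{P}^\bullet}(k(x))\otimes_{\mathcal{O}_Y}\!K_Y\cong \Phi_{\mathcal{P}^\bullet}(k(x))$, one uses that for the skyscraper $k(x)$, $\mathsf{S}_X(k(x)) \cong k(x)[n]$ (since tensoring with a line bundle is trivial at a closed point), and then transports this via $\Phi_{\mathcal{P}^\bullet}$, applying the spanning property to deduce the global commutation from pointwise information. The most delicate step is establishing the ``Serre-functor commutation $\Leftrightarrow$ equivalence'' criterion cleanly; this requires showing that the image $\Phi_{\mathcal{P}^\bullet}(\mathsf{D^b}(X))$ is both closed under the Serre functor of $Y$ and left-/right-admissible, whence it must be all of $\mathsf{D^b}(Y)$.

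The third bullet point is then immediate: if $K_X \cong \mathcal{O}_X$ and $K_Y \cong \mathcal{O}_Y$, the condition $\mathcal{P}^\bullet \overset{\mathsf{L}}{\otimes} p_X^*K_X \cong \mathcal{P}^\bullet \overset{\mathsf{L}}{\otimes} p_Y^*K_Y$ is trivially satisfied, and any fully faithful exact functor $\mathsf{D^b}(X) \to \mathsf{D^b}(Y)$ is of Fourier--Mukai type by Theorem \ref{Orlovthm}, so the second bullet upgrades it automatically to an equivalence. The hard part throughout will really be the triangulated-category machinery underlying ``fully faithfulness/equivalence is detected on a spanning class,'' which tacitly rests on the exactness of $\Phi_{\mathcal{P}^\bullet}$ combined with the five-lemma propagating the isomorphism property through cones and translations, and is the backbone that makes the whole argument run.
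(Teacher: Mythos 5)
The paper itself offers no proof of this proposition: it is quoted from Bondal--Orlov and Bridgeland (\cite{[Bri98]}), with the reader pointed to \cite[chapter 7]{[Huy06]}, so your attempt can only be measured against the standard argument there. Your overall architecture --- adjoints $\Phi_{\mathcal{P}_\text{L}^\bullet}\dashv\Phi_{\mathcal{P}^\bullet}\dashv\Phi_{\mathcal{P}_\text{R}^\bullet}$ of Fourier--Mukai type, the spanning class of skyscrapers from Proposition \ref{spanningclass}, and Serre-functor commutation for the equivalence criterion --- is the right one, and your second and third bullets are essentially correct sketches (modulo the small circularity that the paper's Theorem \ref{Orlovthm} only produces a Fourier--Mukai kernel under the hypothesis that the adjoints exist, so the third bullet needs the existence of adjoints for an arbitrary fully faithful exact functor as an additional input).

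The genuine gap is in the ``if'' direction of the first bullet. The spanning-class lemma you invoke says that a functor with both adjoints is fully faithful iff the \emph{natural maps} $\textup{Hom}(k(x_1),k(x_2)[l])\rightarrow\textup{Hom}\big(\Phi_{\mathcal{P}^\bullet}(k(x_1)),\Phi_{\mathcal{P}^\bullet}(k(x_2))[l]\big)$ are bijective for \emph{all} $l$. The displayed criterion is strictly weaker: it only prescribes the target Hom-spaces up to abstract isomorphism, and it imposes no condition at all in the range $x_1=x_2$, $0<l\leq n$, where $\textup{Hom}(k(x),k(x)[l])\cong\Lambda^l T_xX$ is nonzero. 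So the reduction ``hypothesis on skyscrapers $\Rightarrow$ spanning-class lemma applies'' does not go through as stated. The actual proof (Bridgeland's criterion, \cite[Proposition 7.1]{[Huy06]}) must work harder: one forms the composite $\Phi_{\mathcal{P}_\text{L}^\bullet}\circ\Phi_{\mathcal{P}^\bullet}\cong\Phi_{\mathcal{Q}^\bullet}$ for a convolved kernel $\mathcal{Q}^\bullet\in\textup{obj}(\mathsf{D^b}(X\times X))$, uses the orthogonality hypotheses to show that $\mathcal{Q}^\bullet$ is supported on the diagonal, concentrated in a single degree, and flat of rank one over $X$ --- hence the pushforward of a line bundle on $\Delta_X$ --- and finally that the adjunction unit forces $\mathcal{Q}^\bullet\cong\mathcal{O}_{\Delta_X}$. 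That support-and-flatness analysis of the convolution kernel is the heart of the theorem and is absent from your sketch.
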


There are several other criteria describing the properties Fourier--Mukai transforms. The curious reader is referred to \cite[chapter 7]{[Huy06]}.

\subsection{The derived category of Calabi--Yau varieties}\label{ch6.5}

In order to define the bounded derived category of coherent sheaves on a Calabi--Yau manifold, we must first understand how to regard the latter as a variety and, ultimately, a scheme. From \cite[section 7.2]{[Imp21]} we know that:

\begin{Def}\label{Calabiyauman}
A \textbf{Calabi--Yau manifold}\index{Calabi--Yau manifold} $X^n=(X,\varpi)$ is a compact Kähler manifold $(X,J,\kappa^\mathbb{C})$ with Riemannian holonomy group $\text{Hol}(\kappa^\mathbb{C})=\text{SU}(n)$ and equipped with a unique (up to phase) holomorphic volume form $\varpi\in\Omega_X^{n,0}(X)\cong\mathcal{O}_X(X)$, called \textit{Calabi--Yau form}, fulfilling the equality $\kappa^n/n!=(-1)^{n(n-1)/2}(i/2)^n\varpi\wedge\overline{\varpi}$.
\end{Def}

A motivating though cautionary observation is that smooth (abstract) varieties over $\mathbb{C}$ can be interpreted as complex manifolds, but the converse is not generally true. Let us briefly see how this comes to be, and if there is any hope Calabi--Yau manifolds revert this implication (spoiler: they do). We refer to \cite[appendix B]{[Har77]} and \cite{[Vak17]}.

\begin{Def}
Let $\mathbb{D}^n\coloneqq\{z=(z_1,...,z_n)\in\mathbb{C}^n\mid |z_i|<1\;\forall 1\leq i\leq n\}\subset\mathbb{C}^n$ denote the complex unit disk, equipped with the structure sheaf of holomorphic functions. A \textbf{complex analytic space}\index{complex analytic space} is a locally ringed space $(\mathfrak{X},\mathcal{O}_\mathfrak{X})$ which admits an open cover by sets which are each isomorphic (as locally ringed spaces) to some \textbf{analytic variety}\index{variety!analytic} $Y\coloneqq V(\{f_1,...,f_k\})\subset\mathbb{D}^n$ --- the vanishing locus of a finite set of holomorphic functions $f_1,...,f_k:\mathbb{D}^n\rightarrow\mathbb{C}$, thus a closed subset (with respect to the standard topology $\mathcal{T}_{\mathbb{C}^n}$) made into a locally ringed space by endowing it with $\mathcal{O}_Y\coloneqq\mathcal{O}_{\mathbb{D}^n}/\langle\{f_1,...,f_k\}\rangle$.\footnote{Formally, a complex analytic space is then a locally ringed space over $\mathbb{C}$ which is locally isomorphic to some \textit{model space} $(Y,\mathcal{O}_Y)$ as just defined; the isomorphism in question is also called a \textit{local model for $X$}.} Letting morphisms between complex analytic spaces be the underlying morphisms of locally ringed spaces, we obtain the category of complex analytic spaces $\mathsf{Sp}_\mathbb{C}^\text{an}$. 

Suppose now that $(X,\mathcal{O}_X)$ is a scheme of finite type over $\mathbb{C}$. Then its \textbf{associated complex analytic space}\index{complex analytic space!associated} (or \textit{complex analytic variety}) $(X_\text{an},\mathcal{O}_{X_\text{an}})$ is obtained as follows: by definition, $X$ admits an open cover by affine schemes of the form $Y_i=\text{Spec}\big(\mathbb{C}[x_1,...,x_n]/\langle\{f_{i,1},...,f_{i,k}\}\rangle\big)$, whose polynomials, regardable as holomorphic functions $\mathbb{C}^n\rightarrow\mathbb{C}$, thus define complex analytic subspaces $(Y_i)_\text{an}\subset\mathbb{C}^n$; then $X_\text{an}$ is obtained by glueing the $(Y_i)_\text{an}$ consistently to how the $Y_i$ glue to form $X$, and similarly for its structure sheaf $\mathcal{O}_{X_\text{an}}$. This construction being functorial, we obtain what is called the \textbf{analytification functor}\index{functor!analytification}
\[
\square_\text{an}:\{X\in\text{obj}(\mathsf{Sch}(\mathbb{C}))\mid X\text{ of finite type}\}\rightarrow \mathsf{Sp}_\mathbb{C}^\text{an}\,.
\]

Additionally, recall that any coherent $\mathcal{O}_X$-module $\mathcal{F}\in\text{obj}(\mathsf{Coh}(X;{}_{\mathcal{O}_X}\!\mathsf{Mod}))$ is by Definition \ref{coherentonringed} locally of the form $\mathcal{F}|_U\cong\mathcal{C}oker(\psi)$ when restricted to any Zariski open $U\subset X$, where $\psi$ is a morphism of finitely-generated free sheaves fitting into some exact sequence $\mathcal{O}_X|_U^{\oplus n_i}\xrightarrow{\psi}\mathcal{O}_X|_U^{\oplus m_i}\xrightarrow{\chi}\mathcal{F}|_U\rightarrow 0$. Then $\mathcal{F}$ identifies a \textbf{coherent analytic sheaf}\index{sheaf!coherent analytic} $\mathcal{F}_\text{an}$, obtained by glueing the $\mathcal{F}_\text{an}|_{U_\text{an}}\cong\mathcal{C}oker(\psi_\text{an})$ coming from the induced $\psi_\text{an}:(\mathcal{O}_X)_\text{an}|_{U_\text{an}}^{\oplus n_i}\rightarrow(\mathcal{O}_X)_\text{an}|_{U_\text{an}}^{\oplus m_i}$ in the same way every $\mathcal{F}|_U$ glues to $\mathcal{F}$. Coherent analytic $\mathcal{O}_{X_\text{an}}$ modules form the category $\mathsf{Coh}(X_\text{an};{}_{\mathcal{O}_{X_\text{an}}}\!\!\!\mathsf{Mod})$.      
\end{Def}

The following correspondences are known for $X\in\text{obj}(\mathsf{Sch}(\mathbb{C}))$: $X$ is separated iff $X_\text{an}$ is Hausdorff, $X$ is reduced iff $X_\text{an}$ is reduced, $X$ is smooth iff $X_\text{an}$ is a complex manifold, and $X$ is proper iff $X_\text{an}$ is compact.

We point out that the implicit map $\varphi:X_\text{an}\rightarrow X$ of the underlying topological spaces sends $X_\text{an}$ bijectively to the closed points in $X$ and can be upgraded to a morphism of locally ringed spaces, so that $\mathcal{F}_\text{an}\cong\varphi^*\mathcal{F}$ for any $\mathcal{F}\in\text{obj}(\mathsf{Coh}(X;{}_{\mathcal{O}_X}\!\mathsf{Mod}))$, and moreover there are natural maps $H^n(X,\mathcal{F})\rightarrow H^n(X_\text{an},\mathcal{F}_\text{an})$ between sheaf cohomology groups for each $n\in\mathbb{Z}$.

Now, our question about the interchangeability of varieties (in the sense of Definition \ref{abstractvar}) with complex manifolds depends on how nice $\square_\text{an}$ actually is: by above, that smooth abstract varieties can be interpreted as complex manifolds is simply the assignment $X\mapsto X_\text{an}$. For the converse direction, we need $\square_\text{an}$ to be essentially surjective as a functor. Well, this fails to hold... but if we take $X$ to be projective (which implies of finite type over $\mathbb{C}$), analytification then makes our dreams come true, as proved by Serre in \cite{[Ser56]}.

\begin{Thm}
The specialized analytification functor
\begin{equation}\label{analytfunc}
	\mkern-12mu\square_\textup{an}\!:\!\{X\!\in\!\textup{obj}(\mathsf{Sch}(\mathbb{C}))\mid X\text{ projective}\}\rightarrow \{\mathfrak{X}\!\in\!\textup{obj}(\mathsf{Sp}_\mathbb{C}^\textup{an})\mid \mathfrak{X}\text{ compact}\}
\end{equation}
is essentially surjective\footnote{Specifically, if $\mathfrak{X}$ is a compact analytic subspace of the complex manifold $\mathbb{C}P^n$ (always possible), then there is a subscheme $X\subset\mathbb{C}P^n$ such that $X_\text{an}\cong\mathfrak{X}$.} and isomorphically injective on objects \textup(the latter meaning that: $X_\textup{an}\!\cong\! X_\textup{an}'\!\implies\! X\!\cong\! X'$\textup).

Moreover, the induced functor on coherent sheaves
\begin{equation}
	\square_\textup{an}:\mathsf{Coh}(X;{}_{\mathcal{O}_X}\!\mathsf{Mod})\rightarrow \mathsf{Coh}(X_\textup{an};{}_{\mathcal{O}_{X_\textup{an}}}\!\!\!\mathsf{Mod})
\end{equation}
is an equivalence, thus essentially surjective, and is isomorphically injective on objects, making the natural maps
\[
H^n(X,\mathcal{F})\rightarrow H^n(X_\textup{an},\mathcal{F}_\textup{an})
\]
into isomorphisms for all $n\in\mathbb{Z}$. 
\end{Thm}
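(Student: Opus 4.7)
The plan is to attack the three claims in the order (cohomology) $\Rightarrow$ (equivalence on coherent sheaves) $\Rightarrow$ (structural statements about $\square_\textup{an}$ on objects), since the cohomology isomorphism is really the engine that drives everything else --- this is the classical route followed by Serre in his original GAGA paper.

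First I would establish the cohomology isomorphism for the ``universal'' case $X=\mathbb{P}^n_{\mathbb{C}}$, $\mathcal{F}=\mathcal{O}_X(k)$. On the algebraic side, Theorem \ref{sheafcohomforprojsch} together with the explicit computation for projective spaces gives the finitely-generated graded modules $\bigoplus_k H^q(\mathbb{P}^n_\mathbb{C},\mathcal{O}(k))$ in closed form. On the analytic side, one computes $H^q(\mathbb{P}^n_\textup{an},\mathcal{O}_\textup{an}(k))$ via a \v{C}ech cover by the standard affine charts (which are $\mathcal{F}_\textup{an}$-acyclic by Cartan's Theorem B on Stein open subsets of $\mathbb{C}^n$), so the two \v{C}ech complexes are literally the same modulo completion, and the comparison map from Theorem \ref{sheafcohomthroughCech} is an isomorphism. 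Once this base case is secured, I would upgrade to arbitrary $\mathcal{F}\in\textup{obj}(\mathsf{Coh}(\mathbb{P}^n_\mathbb{C};{}_{\mathcal{O}}\mathsf{Mod}))$ by descending induction on $q$ starting from the vanishing range $q>n$: since such $\mathcal{F}$ admits a surjection $\mathcal{O}(-m)^{\oplus r}\twoheadrightarrow\mathcal{F}$ for large $m$ (Remark \ref{localExtRem}, first bullet), the resulting short exact sequence yields two long exact sequences in sheaf cohomology connected by the natural comparison map, and the Five-Lemma completes the induction. For a general projective $X\hookrightarrow\mathbb{P}^n_\mathbb{C}$ with closed immersion $\iota$, I would pushforward along $\iota$, observe that $\iota_*$ and $(\iota_\textup{an})_*$ both preserve quasi-coherence/coherence and do not change sheaf cohomology (Corollary \ref{sheafcohomthroughCechRem}), and reduce to the $\mathbb{P}^n_\mathbb{C}$ case.

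Second, I would derive full faithfulness of $\square_\textup{an}$ on $\mathsf{Coh}(X;{}_{\mathcal{O}_X}\!\mathsf{Mod})$ from the cohomology isomorphism. The key identity is $\text{Hom}_{\mathcal{O}_X}(\mathcal{F},\mathcal{G})\cong\Gamma(X,\mathcal{H}om_{\mathcal{O}_X}(\mathcal{F},\mathcal{G}))\cong H^0(X,\mathcal{H}om_{\mathcal{O}_X}(\mathcal{F},\mathcal{G}))$ (and its analytic counterpart), so once one checks that analytification commutes with local $\mathcal{H}om$ of coherent sheaves --- a local statement easily verified by picking free presentations --- the $n=0$ case of the cohomology comparison becomes exactly full faithfulness. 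Essential surjectivity is the subtler half: given a coherent analytic $\mathcal{G}$ on $X_\textup{an}$, twist by a very ample $\mathcal{O}_X(1)_\textup{an}$ enough times so that $\mathcal{G}(m)$ is generated by finitely many global sections (the analytic analogue of Theorem \ref{sheafcohomforprojsch}, part (b), which also follows once one knows the cohomological comparison and GAGA-style vanishing), producing a surjection $\mathcal{O}_\textup{an}(-m)^{\oplus r}\twoheadrightarrow\mathcal{G}$ whose kernel is again coherent analytic; iterate and truncate after finitely many steps to obtain a resolution by analytifications of algebraic sheaves; then full faithfulness promotes the connecting map to an algebraic morphism, and $\mathcal{G}$ is the analytification of its algebraic cokernel.

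Third, I would use the coherent-sheaf equivalence to conclude the statements on the level of schemes. For essential surjectivity of \eqref{analytfunc}, given compact $\mathfrak{X}\subset\mathbb{C}P^n$, its coherent ideal sheaf $\mathcal{I}_\mathfrak{X}\subset\mathcal{O}_{\mathbb{C}P^n}$ is the analytification of a coherent algebraic ideal sheaf $\mathcal{I}\subset\mathcal{O}_{\mathbb{P}^n_\mathbb{C}}$ by the just-proved equivalence, which cuts out a closed subscheme $X\subset\mathbb{P}^n_\mathbb{C}$ whose analytification is $\mathfrak{X}$ (this is Chow's theorem, recovered as a corollary). For isomorphic injectivity on objects, an isomorphism $X_\textup{an}\cong X'_\textup{an}$ transports the analytic structure sheaves, and full faithfulness on global sections of the $\mathcal{H}om$-sheaves shows the isomorphism is algebraic, hence $X\cong X'$. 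The hardest step in this whole programme --- and the one I would worry about most --- is the finite-generation statement in the analytic setting used for essential surjectivity on coherent sheaves: it requires not only the cohomology comparison but also the vanishing $H^q(X_\textup{an},\mathcal{G}(m))=0$ for $m\gg 0$ and $q>0$, which must be bootstrapped from the algebraic case via the very comparison one is trying to use, so care is needed to avoid circularity by running the dévissage simultaneously for all coherent sheaves rather than one at a time.
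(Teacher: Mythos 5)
The paper does not actually prove this theorem --- it states it and defers entirely to Serre's \cite{[Ser56]} --- and your proposal is a faithful reconstruction of precisely the argument in that reference: base case $\mathcal{O}(k)$ on $\mathbb{P}^n_\mathbb{C}$, descending induction on $q$ via a presentation $\mathcal{O}(-m)^{\oplus r}\twoheadrightarrow\mathcal{F}$ and the four/five lemma (run in two passes, first surjectivity for all coherent sheaves, then injectivity), full faithfulness from the $H^0$ comparison applied to $\mathcal{H}om$, and essential surjectivity from the analytic Theorem A, with Chow's theorem falling out as the corollary. So the proposal is correct in outline and matches the proof the paper implicitly relies on; you have also correctly isolated the genuinely delicate step, namely global generation of $\mathcal{G}(m)$ for coherent analytic $\mathcal{G}$, which needs Cartan--Serre finiteness and an induction on dimension via hyperplane sections rather than a bootstrap from the comparison map. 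One caveat worth recording: your argument (like Serre's) establishes essential surjectivity onto compact analytic \emph{subspaces of} $\mathbb{C}P^n$, which is the correct statement; the theorem as printed claims surjectivity onto all compact complex analytic spaces, and the footnote's assertion that every such space embeds in some $\mathbb{C}P^n$ is false (non-algebraic tori and Hopf surfaces are counterexamples), so the statement itself, not your proof, is what needs amending.
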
   

Now, does essential surjectivity still hold if we enlarge the target space of \eqref{analytfunc} to compact complex manifolds $\mathfrak{X}$? If this is the case, so that there exists some projective scheme $X$ with $X_\text{an}\cong\mathfrak{X}$ (in fact the unique such), then we call $\mathfrak{X}$ \textbf{projective algebraic}\index{complex analytic space!projective algebraic}. From \cite[section B.3]{[Har77]} we learn the following: 

\begin{Thm}
\textup{(Riemann)} Every compact complex manifold of dimension 1, said otherwise, every compact Riemann surface --- including Calabi--Yau 1-folds --- is projective algebraic.

\textup{(Chow--Kodaira)} Every compact complex manifold of dimension 2 with two algebraically independent meromorphic functions --- for example, Calabi--Yau 2-folds --- is projective algebraic.\footnote{In fact, it is then a \textit{Moishezon manifold}, a necessary condition according to \cite[Proposition B.3.3]{[Har77]}.}  
\end{Thm}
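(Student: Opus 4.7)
The plan is to invoke Kodaira's embedding theorem, which identifies projective algebraic compact complex manifolds with those carrying a positive (ample) line bundle. Our task therefore reduces to constructing an ample invertible sheaf on the given compact complex surface $\mathfrak{X}$. The two algebraically independent meromorphic functions $f,g:\mathfrak{X}\dashrightarrow\mathbb{C}\mathbb{P}^1$ assemble into a meromorphic map $\varphi=(f,g):\mathfrak{X}\dashrightarrow \mathbb{C}\mathbb{P}^1\times\mathbb{C}\mathbb{P}^1$ that is generically finite onto its image, because algebraic independence forces the transcendence degree of the induced field extension to equal $\dim_\mathbb{C}\mathfrak{X}=2$. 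Equivalently, the algebraic dimension $a(\mathfrak{X})$ of $\mathfrak{X}$ attains its maximal value $\dim_\mathbb{C}\mathfrak{X}$, making $\mathfrak{X}$ a Moishezon manifold.

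First I would resolve the indeterminacy of $\varphi$: by Hironaka's resolution theorem (or, in dimension $2$, by the elementary theory of blow-ups at points), there exists a finite sequence of point blow-ups $\pi:\widetilde{\mathfrak{X}}\to\mathfrak{X}$ producing a holomorphic surjection $\widetilde{\varphi}:\widetilde{\mathfrak{X}}\to\mathbb{C}\mathbb{P}^1\times\mathbb{C}\mathbb{P}^1$ which is generically finite of some degree $d\geq 1$. Pulling back the product Fubini--Study form under $\widetilde{\varphi}$ yields a closed, semi-positive $(1,1)$-form on $\widetilde{\mathfrak{X}}$; modifying it near the exceptional loci of $\pi$ with positive multiples of Poincar\'e duals of the exceptional curves, one obtains a positive integral $(1,1)$-class. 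By the Kodaira--Nakai--Moishezon criterion this class is the first Chern class of an ample line bundle $\widetilde{L}$ on $\widetilde{\mathfrak{X}}$, so $\widetilde{\mathfrak{X}}$ is projective algebraic.

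The crux is then to descend projectivity from $\widetilde{\mathfrak{X}}$ back to $\mathfrak{X}$ along the finite chain of blow-ups, and this is where working in dimension $2$ is essential. The plan is to invoke Castelnuovo's contraction criterion: a $(-1)$-curve on a smooth projective surface can always be contracted, and the resulting surface is again smooth and projective. Reversing $\pi$ step by step as a sequence of such contractions (each exceptional divisor is a $(-1)$-curve because $\pi$ is a composition of point blow-ups), one preserves projectivity at each stage, so the terminal surface $\mathfrak{X}$ itself is projective algebraic. For the Calabi--Yau $2$-fold application (K3 surfaces and complex $2$-tori), one additionally uses Siu's theorem that every K3 surface is K\"ahler, together with the triviality of the canonical bundle, to ensure that the algebraic independence hypothesis is automatically satisfied in the algebraic case.

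The main obstacle is the descent step: outside dimension $2$ there exist Moishezon manifolds that are not projective (Hironaka's famous example), so any proof must use a feature that is genuinely two-dimensional. Castelnuovo's criterion and the classification of bimeromorphic maps of smooth surfaces as compositions of point blow-ups and blow-downs are precisely this feature, and verifying that ampleness is preserved under each contraction --- which amounts to checking the Nakai--Moishezon positivity conditions on the image surface --- is the technically delicate heart of the argument.
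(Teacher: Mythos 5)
The paper offers no proof of this statement at all --- it is quoted verbatim from \cite[section B.3]{[Har77]} as a classical result --- so there is no in-text argument to compare yours against; I can only assess your proposal on its own terms, and it has several genuine gaps.

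First, you prove only half the theorem: the Riemann case (every compact Riemann surface is projective algebraic) is never addressed, and it does not follow from your surface argument --- it needs its own input, e.g.\ Riemann--Roch producing enough meromorphic functions to embed the curve in projective space. Second, the key positivity step is wrong as written. The pullback $\widetilde{\varphi}^*H$ of an ample class under the generically finite resolution $\widetilde{\varphi}$ is nef and big but vanishes on every curve contracted by $\widetilde{\varphi}$; \emph{adding} positive multiples of the Poincar\'e duals of the exceptional curves (which have negative self-intersection and are not represented by positive forms) cannot repair this --- the standard device is to \emph{subtract} small multiples of exceptional divisors, and even then one must verify $L\cdot C>0$ on precisely the contracted curves, which is the actual content of the Chow--Kodaira theorem. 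The classical proof sidesteps your Castelnuovo descent entirely: one pushes the class down to $\mathfrak{X}$, notes that $(\pi_*L)^2\geq L^2>0$ since $(-1)$-curves contribute negatively, and invokes the criterion (see e.g.\ Barth--Hulek--Peters--Van de Ven, IV.6.2) that a compact complex surface carrying a line bundle of positive self-intersection is projective; the proof of \emph{that} is a Riemann--Roch argument showing $\pm nL$ is eventually effective, and it is this step, not the contraction of $(-1)$-curves, that is the two-dimensional heart of the matter. Third, your closing claim that the algebraic-independence hypothesis is ``automatically satisfied'' for K3 surfaces and complex $2$-tori is false and circular (``in the algebraic case''): the paper itself notes in a footnote that plenty of $K3$ surfaces are non-algebraic, and Siu's K\"ahlerness theorem does not give algebraic dimension $2$. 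The hypothesis of two algebraically independent meromorphic functions is a genuine restriction, and the theorem applies only to those Calabi--Yau $2$-folds that happen to satisfy it.
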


In dimension greater than 2, the situation becomes more complicated for general compact complex manifolds. This being said, according to \cite[Theorem 5.8]{[GHJ03]} we are in luck:

\begin{Thm}\label{3+DCYareprojectivealgebraic}
Any Calabi--Yau manifold of dimension $n\geq 3$ is isomorphic as a complex manifold to some submanifold of the complex projective space $\mathbb{C}P^m$ \textup(for $m>n$\textup) which is the analytification of a projective scheme over $\mathbb{C}$, making it projective algebraic.\footnote{More generally, any Kähler manifold which is Moishezon is projective algebraic.}\footnote{On the other hand, there are plenty of $K3$ surfaces --- instances of Calabi--Yau 2-folds --- which are not algebraic and hence cannot be embedded into the complex projective space. It should also be noted that the same issue may occur for complex tori of dimension $\geq 2$ (which advises further investigation of Theorem \ref{3+DCYareprojectivealgebraic}, here not perused; my thanks to Prof. Biran for bringing up this point).} 
\end{Thm}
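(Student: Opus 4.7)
The plan is to invoke the Kodaira Embedding Theorem, whose only non-trivial hypothesis on a compact Kähler manifold $X$ is the existence of a positive holomorphic line bundle, and then to appeal to Chow's Theorem to upgrade the resulting analytic embedding to an algebraic one. So the real work is to build a positive line bundle on $X$, which forces one to exploit the $\mathrm{SU}(n)$-holonomy in a crucial way.

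First I would establish that the Hodge numbers $h^{p,0}(X) = 0$ for all $0 < p < n$. For this, one uses the Bochner principle, which on a compact Ricci-flat Kähler manifold (such as any Calabi--Yau) identifies holomorphic $(p,0)$-forms with $\nabla$-parallel sections of $\Lambda^{p,0}T^*X$. Since the reduced holonomy is exactly $\mathrm{SU}(n)$, de Rham/Berger tells us that the parallel sections of $\Lambda^{p,0}T^*X$ correspond to $\mathrm{SU}(n)$-invariant vectors in the representation $\Lambda^p\mathbb{C}^n$, and for $0 < p < n$ this representation is irreducible and non-trivial, hence has no invariants. In particular, since $n\geq 3$, we obtain $h^{2,0}(X) = h^{0,2}(X) = 0$. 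Combined with the Hodge decomposition on the Kähler manifold $X$, this forces
\[
H^2(X,\mathbb{C}) \;=\; H^{1,1}(X), \qquad H^2(X,\mathbb{R}) \;=\; H^{1,1}_{\mathbb{R}}(X).
\]

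Next I would produce an integral Kähler class. The Kähler cone $\mathcal{K}(X)\subset H^{1,1}_{\mathbb{R}}(X)$ is a non-empty open convex cone, and by the previous paragraph it sits inside $H^2(X,\mathbb{R})$. Since $H^2(X,\mathbb{Q})\hookrightarrow H^2(X,\mathbb{R})$ is dense, the open set $\mathcal{K}(X)$ meets $H^2(X,\mathbb{Q})$; after clearing denominators, we obtain a Kähler class $[\omega]\in\mathcal{K}(X)\cap H^2(X,\mathbb{Z})$. By the Lefschetz $(1,1)$-theorem (via the exponential sequence $0\to\underline{\mathbb{Z}}\to\mathcal{O}_X\to\mathcal{O}_X^*\to 0$), this integral $(1,1)$-class is the first Chern class of some holomorphic line bundle $L\to X$, and $L$ is positive in the sense of Kodaira because $c_1(L)=[\omega]$ admits a positive representative, namely $\omega$ itself.

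With $L$ in hand, Kodaira's Embedding Theorem supplies some $k\gg 0$ such that the global sections of $L^{\otimes k}$ give a holomorphic embedding
\[
\iota: X \hookrightarrow \mathbb{P}\bigl(H^0(X,L^{\otimes k})^{*}\bigr) \;\cong\; \mathbb{C}P^m,
\]
where $m=\dim H^0(X,L^{\otimes k})-1 > n$ (the inequality follows from $X\not\cong \mathbb{C}P^n$, since $\mathbb{C}P^n$ has non-trivial canonical bundle whereas a Calabi--Yau has trivial canonical bundle). Finally, Chow's Theorem asserts that every closed analytic subspace of $\mathbb{C}P^m$ is cut out by finitely many homogeneous polynomials, so $\iota(X)$ is the analytification of a closed subscheme of $\mathbb{P}^m_{\mathbb{C}}$, which is projective over $\mathbb{C}$; pulling back the scheme structure exhibits $X$ as projective algebraic. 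The main obstacle in this plan is the vanishing $h^{p,0}=0$ of Step 1: everything afterwards is a formal chain of classical embedding/algebraicity theorems, whereas it is precisely the $\mathrm{SU}(n)$-reduction of the holonomy (as opposed to, say, general Ricci-flat Kähler or hyperkähler manifolds, where $H^{2,0}$ need not vanish and projective algebraicity genuinely fails) that makes the argument run.
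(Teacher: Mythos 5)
Your argument is correct, and it is essentially the proof of the result the paper itself does not prove but only cites (\cite[Theorem 5.8]{[GHJ03]}): the Bochner/holonomy argument giving $h^{2,0}=0$ for $n\geq 3$, the density of rational classes in the Kähler cone inside $H^{1,1}_{\mathbb{R}}=H^2(X,\mathbb{R})$, Lefschetz $(1,1)$, Kodaira embedding, and the analytic Chow theorem is exactly the chain used there. You also correctly isolate the one step that genuinely uses $\mathrm{SU}(n)$-holonomy and fails for $K3$ surfaces and higher-dimensional tori, which is precisely the caveat raised in the theorem's second footnote.
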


We therefore see that $\square_\text{an}$ is essentially surjective when taking values in the category of compact complex analytic spaces which are Calabi--Yau manifolds. The projective schemes mapping to them coincide with what are called \textit{Calabi--Yau varieties}, a notion which can also be defined in the broader context of any algebraically closed field $\mathbb{K}$: 

\begin{Def}\label{Calabiyauvar}
An $n$-dimensional \textbf{Calabi--Yau variety}\index{Calabi--Yau variety} over an algebraically closed field $\mathbb{K}$ is a smooth proper variety $X=(X,\mathcal{O}_X)\in\text{obj}(\mathsf{Sch}(\mathbb{K}))$ whose canonical line bundle $K_X$ (see Remark \ref{cansheaf}) is trivial, $K_X\cong\mathcal{O}_X$ (thus coinciding with the sheaf of regular functions on $X$), and whose sheaf cohomology groups  $H^j(X,\mathcal{O}_X)\cong\{0\}$ are trivial for each $0< j<n$.

Over $\mathbb{K}=\mathbb{C}$, we need only replace ``proper'' variety by ``projective'' variety (which implies the former by Proposition \ref{whenproper}) and ``regular'' functions by ``holomorphic'' functions (see Example \ref{sheafholofcts}).
\end{Def}

\begin{Rem}
Similarly to the manifold case, addressed in \cite[Remark 7.7]{[Imp21]}, the mathematical literature often adopts possibly inequivalent definitions of Calabi--Yau variety: for example, the requirement about trivial sheaf cohomology groups is dropped (so that, when achieved, one talks about \textit{true} Calabi--Yau variety), and many authors just impose the first Chern class\footnote{Passing through \v{C}ech cohomology, one can show that $\text{Pic}(X)\cong H^1(X,\mathcal{O}_X^\times)$, where $\text{Pic}(X)$ is as in Definition \ref{Picardgroup} and $\mathcal{O}_X^\times$ is the sheaf of \textit{non-vanishing} regular functions on $X$. Furthermore, the short exact sequence of abelian sheaves $0\rightarrow\mathbb{Z}\rightarrow\mathcal{O}_X\xrightarrow{\text{exp}}\mathcal{O}_X^\times\rightarrow 0$ induces a long exact one with connecting homomorphism which, when $X$ is projective, is precisely $c_1:H^1(X,\mathcal{O}_X^\times)\cong\text{Pic}(X)\rightarrow H^2(X,\mathbb{Z})$. This provides a sheaf theoretic definition of the first Chern class $c_1$. This argument is highlighted in \cite[section 4.5.1]{[ABC+09]}.} $c_1(X)$ to vanish, which in turn implies triviality of the canonical line bundle. 

One can even more generally talk about Calabi--Yau varieties over fields which are not algebraically closed nor of characteristic 0, but we have no interest in pursuing this route here. 
\end{Rem}

Unraveling the many attributes of schemes discussed in Section \ref{ch5.5}, Definition \ref{Calabiyauvar} affirms that a Calabi--Yau variety $X\in\text{obj}(\mathsf{Sch}(\mathbb{K}))$ is a smooth \textit{complete} variety, always in the abstract sense of Definition \ref{abstractvar}; by diagram \eqref{bigdiagram} it therefore states that:   
\vspace*{0.3cm}

\minibox[frame]{A Calabi--Yau variety is a smooth integral noetherian separated scheme\\ of finite type over $\mathbb{K}$, with the additional requirements of triviality of\\ the canonical line bundle and most sheaf cohomology groups.}\vspace*{0.3cm}

Of course, $X$ identifies under the functor \eqref{Xi} some genuine variety $X\in\text{obj}(\mathsf{Var}(\mathbb{K}))$,\footnote{Then, by Definition \ref{smoothsch}, $X$ is also smooth as a variety, because it is integral, hence both reduced and irreducible.} also a ringed space equipped with the sheaf of regular functions.

Again, we stress out that authors are often in disharmony about the algebraical definition of Calabi--Yau varieties. The one we favoured, around which the theory of Part II is constructed, is particularly advantageous because it maximizes the load of information we can ever hope for, as will be reviewed shortly. 

Now that everything has been set, let us describe one final time the structure of $\mathsf{D^b}(X)$ as done back in Definition \ref{dercatdef}, this time assuming $X$ to be a Calabi--Yau manifold and hence, implicitly, the corresponding complex Calabi--Yau variety $X=(X,\mathcal{O}_X)\in\text{obj}(\mathsf{Sch}(\mathbb{C}))$.

\begin{Def}\label{dercatofCY}
Let $X$ be a Calabi--Yau manifold, with associated abelian category $\mathsf{Coh}(X;{}_{\mathcal{O}_X}\!\mathsf{Mod})$ whose homotopy category\footnote{Here we extend the notation adopted in Definition \ref{dercatofsch} to both the (bounded) homotopy category and the category of (bounded) complexes of $\mathsf{Coh}(X;{}_{\mathcal{O}_X}\!\mathsf{Mod})$.} $\mathsf{H^b}(X)\!\coloneqq\!\mathsf{H^b}(\mathsf{Coh}(X;{}_{\mathcal{O}_X}\!\mathsf{Mod}))$ has localizing class $S\coloneqq\{\text{quasi-isomorphisms in }\mathsf{H^b}(X)\}$. Then the bounded derived category $\mathsf{D^b}(X)=\mathsf{D^b}(\mathsf{Coh}(X;{}_{\mathcal{O}_X}\!\mathsf{Mod}))=\mathsf{H^b}(X)[S^{-1}]$ of $X$ has the following structure:
\begin{itemize}[leftmargin=0.5cm]
	\renewcommand{\labelitemi}{\textendash}
	\item objects in $\text{obj}(\mathsf{D^b}(X))$ are bounded cochain complexes $(\mathcal{F}^\bullet,d_\mathcal{F}^\bullet)$ where each $\mathcal{F}^n\in\text{obj}(\mathsf{Coh}(X;{}_{\mathcal{O}_X}\!\mathsf{Mod}))$ is a coherent $\mathcal{O}_X$-module;
	
	\item morphisms $\varphi\in\text{Hom}_{\mathsf{D^b}(X)}(\mathcal{F}^\bullet,\mathcal{G}^\bullet)$ are equivalence classes $[(\psi^\bullet,\chi^\bullet)]$ of roofs
	\begin{equation}
		\begin{tikzcd}
			& \mathcal{F}'^\bullet\arrow[dl, "\psi^\bullet"']\arrow[dr, "\chi^\bullet"] & \\
			\mathcal{F}^\bullet\arrow[rr, squiggly, "\varphi"'] & & \mathcal{G}^\bullet
		\end{tikzcd}\quad,
	\end{equation}
	where $\chi^\bullet\in\text{Hom}_{\mathsf{H^b}(X)}(\mathcal{F}'^\bullet,\mathcal{G}^\bullet)$ is a morphism of complexes of coherent sheaves and $\psi^\bullet\in S$. Two roofs are equivalent if they can be brought under a common roof as in diagram \eqref{equivroofs}, and the composition of morphisms occurs according to \eqref{roofcompo}.
\end{itemize}
The derived category is equipped with a localization functor $\mathsf{L}:\mathsf{H^b}(X)\rightarrow\mathsf{D^b}(X)$ which is the identity on objects and maps any $\chi^\bullet\in\text{Hom}_{\mathsf{H^b}(X)}(\mathcal{F}^\bullet,\mathcal{G}^\bullet)$ to the class $[(\text{id}_\mathcal{F}^\bullet,\chi^\bullet)]\in\text{Hom}_{\mathsf{D^b}(X)}(\mathcal{F}^\bullet,\mathcal{G}^\bullet)$. Furthermore, it fulfills that $\mathsf{L}(\psi^\bullet)$ is an isomorphism for any $\psi^\bullet\in S$, and if any other functor $\mathsf{F}:\mathsf{H^b}(X)\rightarrow\mathsf{D}$ acts this way, then it factors like $\mathsf{F}=\mathsf{G}\circ\mathsf{L}$ for a unique $\mathsf{G}:\mathsf{D^b}(X)\rightarrow\mathsf{D}$.
\end{Def}   

As a good excuse to gather the most important results encountered in this survey, here is a summary of the fundamental features we can infer about $\mathsf{D^b}(X)$:

\begin{itemize}[leftmargin=0.5cm]
\item By Corollary \ref{D(A)istriangulated}, $\mathsf{D^b}(X)$ is a triangulated category (see Definition \ref{triangcat}) when equipped with the translation functor $T=T[1]:\mathsf{D^b}(X)\rightarrow \mathsf{D^b}(X)$ of Definition \ref{translfunc} and with the distinguished triangles of complexes from Definition \ref{exacttriangles}.
\newline Moreover, the functors $\text{Hom}_{\mathsf{D^b}(X)}(\mathcal{F}^\bullet,\square)$, $\text{Hom}_{\mathsf{D^b}(X)}(\square,\mathcal{F}^\bullet):\mathsf{D^b}(X)\rightarrow\mathsf{Ab}$ for any $\mathcal{F}^\bullet\in\text{obj}(\mathsf{D^b}(X))$ are cohomological/triangulated (by Proposition \ref{Homareexact}), and so are the cohomology functors $H^n:\mathsf{D^b}(X)\rightarrow\mathsf{Coh}(X;{}_{\mathcal{O}_X}\!\mathsf{Mod})$ (by Theorem \ref{longexactcohomtriangles}), so that distinguished triangles in $\mathsf{D^b}(X)$ induce long exact Hom-/cohomology-sequences.

\item The fully faithful functor $\mathsf{J}':\mathsf{Coh}(X;{}_{\mathcal{O}_X}\!\mathsf{Mod})\hookrightarrow\mathsf{D^b}(X)$ from Proposition \ref{AinD(A)} allows us to interpret $\mathsf{Coh}(X;{}_{\mathcal{O}_X}\!\mathsf{Mod})$ as the full subcategory of $\mathsf{D^b}(X)$ of all $H^0$-complexes: a coherent $\mathcal{O}_X$-module $\mathcal{F}$ is identified with its 0-complex $\mathcal{F}[0]^\bullet$, and $\text{Hom}_{\mathcal{O}_X}(\mathcal{F},\mathcal{G})\cong\text{Hom}_{\mathsf{D^b}(X)}(\mathcal{F}[0]^\bullet,\mathcal{G}[0]^\bullet)$ for each pair of $\mathcal{O}_X$-modules $\mathcal{F}$, $\mathcal{G}$.
\newline Observe that $\mathsf{D^b}(X)$ is \textit{not} itself an abelian category, though by Lemma \ref{D(A)additive} it is additive (so that it can be studied, for example, through Theorem \ref{YonedaEmbThm}).

\item By Theorem \ref{Extprop} and Remark \ref{HomExtRem}, the coherent $\text{Ext}$-modules defined in \eqref{Extmodules} map any short exact sequence of coherent $\mathcal{O}_X$-modules to a bounded exact sequence starting with the $\text{Hom}_{\mathcal{O}_X}$-groups and trivial in degree $n>\text{dim}(X)$, since the homological dimension \eqref{homologicaldimension} of $\mathsf{Coh}(X;{}_{\mathcal{O}_X}\!\mathsf{Mod})$ is $\text{dim}(X)$ whenever $X$ is projective. 

\item For $X$ Calabi--Yau, the abelian category $\mathsf{QCoh}(X;{}_{\mathcal{O}_X}\!\mathsf{Mod})$ has enough injectives (see Remark \ref{QCohhasenoughinjectives}), thus allowing by Proposition \ref{globalsectforcoherentbdd} the existence of the right derived functor of $\Gamma(X,\square):\mathsf{QCoh}(X;{}_{\mathcal{O}_X}\!\mathsf{Mod})\rightarrow \mathsf{Vect}_\mathbb{K}$.
\newline In particular, sheaf cohomology is well defined on $X$, and can be computed through \v{C}ech cohomology (by Theorem \ref{sheafcohomthroughCech}, which applies since $X$ is noetherian and separated). Given an affine morphism $f:X\rightarrow Y$ of Calabi--Yau varieties, for all coherent $\mathcal{O}_X$-modules $\mathcal{F}$ and $n\geq 0$ holds $H^n(X,\mathcal{F})\cong H^n(Y,f_*\mathcal{F})$ (this is Corollary \ref{sheafcohomthroughCechRem}).

\item Always thanks to $\mathsf{QCoh}(X;{}_{\mathcal{O}_X}\!\mathsf{Mod})$ and Lemma \ref{boundedCohinQCoh}, we can carry out all the proofs of Section \ref{ch6.3} about existence of exact derived functors on $\mathsf{D^b}(X)$ (for $X$ meets all the assumptions we made there). Supposing $f:X\rightarrow Y$ is a morphism of Calabi--Yau varieties, we have $\mathsf{R}f_*:\mathsf{D^b}(X)\rightarrow\mathsf{D^b}(Y)$ (for $f$ projective or proper) and $\mathsf{L}f^*:\mathsf{D^b}(Y)\rightarrow\mathsf{D^b}(X)$. We can also consider\break $\mathsf{R}\mathcal{H}om_{\mathcal{O}_X}^\bullet(\square,\square):\mathsf{D^b}(X)^\text{opp}\times\mathsf{D^b}(X)\rightarrow\mathsf{D^b}(X)$ along with $\mathsf{R}\text{Hom}_{\mathcal{O}_X}^\bullet(\square,\square):$\break $\mathsf{D^b}(X)^\text{opp}\times\mathsf{D^b}(X)\rightarrow\mathsf{D^b}(\mathsf{Vect}_\mathbb{K})$ (whence local $\mathcal{E}xt$-sheaves, dual bounded complexes and $\text{Ext}$-modules), and $\square\overset{\mathsf{L}}{\otimes}_{\mathcal{O}_X}\!\square:\mathsf{D^b}(X)\times\mathsf{D^b}(X)\rightarrow\mathsf{D^b}(X)$ too. All these satisfy the identities described by Proposition \ref{lotofequalities}.  

\item By Theorem \ref{Serreduality}, Serre duality holds also on $X$ Calabi--Yau, so that we obtain isomorphisms $\text{Ext}_{\mathcal{O}_X}^k(\mathcal{F},\mathcal{O}_X)\cong H^{n-k}(X,\mathcal{F})^*$ for all $k\geq 0$, and the Grothendieck--Verdier duality of Theorem \ref{GVduality} generalizing it holds as well for any morphism $f:X\rightarrow Y$ of Calabi--Yau varieties. Moreover, Proposition \ref{spanningclass} provides a spanning class for $\mathsf{D^b}(X)$.
\newline Fourier--Mukai theory is applicable to Calabi--Yau varieties: Theorem \ref{Orlovthm} explains that any equivalence between the derived categories of two given Calabi--Yau varieties $X,Y$ is of Fourier--Mukai type (whence $\text{dim}(X)=\text{dim}(Y)$), while Proposition \ref{whenprojvarareequiv} asserts that if their categories of coherent sheaves are equivalent, then $X\cong Y$ (though note that the second bullet point there does not apply, for $K_X\cong\mathcal{O}_X$ is not ample!); finally, $\text{dim}(X)=\text{dim}(Y)$ implies that any fully faithful exact functor $\mathsf{D^b}(X)\rightarrow\mathsf{D^b}(Y)$ is an equivalence, by Proposition \ref{whenFourierMukaiisequivalence}. 

\item There are also a few facts which are specific to Calabi--Yau varieties, to be found in \cite{[Huy06]}: there, Conjecture 6.24 predicts that two \textit{birational} Calabi--Yau varieties are always derived equivalent (certified in Corollary 11.35 for 3-folds); $\mathsf{D^b}(X)$ admits no non-trivial \textit{semi-orthogonal decomposition} (cf. Exercise 8.8); any line bundle on $X$ is a \textit{spherical object} (see section 8.1), a notion which comes up quite often in mirror symmetry; and much more...     
\end{itemize}

At last, we are ready to look back at Kontsevich's conjecture with an even deeper understanding. We state it in the form of \cite[Conjecture 7.26]{[Imp21]}, with some redesign. The reader interested in the A-side of things is advised to consult \cite{[Imp21]}.

\begin{Con}[\textbf{Homological Mirror Symmetry Conjecture}]\index{Homological Mirror Symmetry Conjecture}\label{HMSconj}		
Given a Calabi--Yau manifold $X^n=(X,J,\kappa^\mathbb{C},\varpi)$, there exist a ``mirror'' Calabi--Yau manifold $\overline{X}=(\overline{X}, \overline{J},\overline{\kappa}^{\mathbb{C}},\overline{\varpi})$ of same dimension $n$ and equivalences of triangulated categories
\begin{equation}\label{HMS}
	\mathsf{D}^\pi(\widetilde{\mathscr{F}}(X))\cong \mathsf{D^b}(\overline{X}) \qquad\,\text{and}\qquad\, \mathsf{D}^\pi(\widetilde{\mathscr{F}}(\overline{X}))\cong \mathsf{D^b}(X)\,.
\end{equation}
Therefore, the following correspondences hold\textup:
\begin{itemize}[leftmargin=0.5cm]
	\renewcommand{\labelitemi}{\textendash}
	\item Split-closed twisted complexes built out of Lagrangian submanifolds $\mathcal{L}^\bullet\in\textup{obj}\big(\mathsf{D}^\pi(\widetilde{\mathscr{F}}(X))\big)$ correspond to bounded cochain complexes of coherent sheaves $\overline{\mathcal{E}}^\bullet\in\textup{obj}(\mathsf{D^b}(\overline{X}))$, and specularly $\overline{\mathcal{L}}^\bullet\in\textup{obj}\big(\mathsf{D}^\pi(\widetilde{\mathscr{F}}(\overline{X}))\big)$ to $\mathcal{E}^\bullet\in\textup{obj}\big(\mathsf{D^b}(X)\big)$.
	\item Floer cohomology \textup{Hom}-groups are isomorphic to coherent \textup{Ext}-groups \textup(as $\mathbb{C}$-vector spaces\textup) for each $n\geq 0$,  \begin{small}\begin{align*}
			& \textup{Hom}_{\mathsf{D}^\pi(\widetilde{\mathscr{F}}(X))}^n(\mathcal{L}_0^\bullet,\mathcal{L}_1^\bullet)\equiv HF^n(\mathcal{L}_0^\bullet,\mathcal{L}_1^\bullet)\cong\textup{Ext}_{\mathcal{O}_{\overline{X}}}^n(\overline{\mathcal{E}}_0^\bullet,\overline{\mathcal{E}}_1^\bullet)\equiv\textup{Hom}_{\mathsf{D^b}(\overline{X})}(\overline{\mathcal{E}}_0^\bullet,\overline{\mathcal{E}}_1^\bullet[n])\,, \\
			&\textup{Hom}_{\mathsf{D}^\pi(\widetilde{\mathscr{F}}(\overline{X}))}^n(\overline{\mathcal{L}}_0^\bullet,\overline{\mathcal{L}}_1^\bullet)\equiv HF^n(\overline{\mathcal{L}}_0^\bullet,\overline{\mathcal{L}}_1^\bullet)\cong\textup{Ext}_{\mathcal{O}_X}^n(\mathcal{E}_0^\bullet,\mathcal{E}_1^\bullet)\equiv\textup{Hom}_{\mathsf{D^b}(X)}(\mathcal{E}_0^\bullet,\mathcal{E}_1^\bullet[n])\,.
	\end{align*}\end{small}
	\!\!\!Remarkably, $H^n(\mathcal{L}^\bullet;\Lambda_\mathbb{C})\!\cong\!\textup{Ext}_{\mathcal{O}_{\overline{X}}}^n(\overline{\mathcal{E}}^\bullet,\overline{\mathcal{E}}^\bullet)$ and $H^n(\overline{\mathcal{L}}^\bullet;\Lambda_\mathbb{C})\!\cong\!\textup{Ext}_{\mathcal{O}_{X}}^n(\mathcal{E}^\bullet,\mathcal{E}^\bullet)$.
	\item Under these isomorphisms, the Floer products 
	\begin{align*}
		& \langle \mu^2\rangle: HF(\mathcal{L}_1^\bullet,\mathcal{L}_2^\bullet)\otimes HF(\mathcal{L}_0^\bullet,\mathcal{L}_1^\bullet)\rightarrow HF(\mathcal{L}_0^\bullet,\mathcal{L}_2^\bullet)\quad\text{and} \\
		& \langle \mu^2\rangle: HF(\overline{\mathcal{L}}_1^\bullet,\overline{\mathcal{L}}_2^\bullet) \otimes HF(\overline{\mathcal{L}}_0^\bullet,\overline{\mathcal{L}}_1^\bullet)\rightarrow HF(\overline{\mathcal{L}}_0^\bullet,\overline{\mathcal{L}}_2^\bullet)
	\end{align*}
	turn respectively into the Yoneda products 
	\begin{align*}
		& \diamond:\textup{Ext}(\overline{\mathcal{E}}_1^\bullet,\overline{\mathcal{E}}_2^\bullet)\otimes\textup{Ext}(\overline{\mathcal{E}}_0^\bullet,\overline{\mathcal{E}}_1^\bullet) \rightarrow\textup{Ext}(\overline{\mathcal{E}}_0^\bullet,\overline{\mathcal{E}}_2^\bullet)\qquad\text{and} \\
		& \diamond:\textup{Ext}(\mathcal{E}_1^\bullet,\mathcal{E}_2^\bullet)\otimes\textup{Ext}(\mathcal{E}_0^\bullet,\mathcal{E}_1^\bullet) \rightarrow\textup{Ext}(\mathcal{E}_0^\bullet,\mathcal{E}_2^\bullet)\,.
	\end{align*}  
\end{itemize}
From the physics viewpoint, we obtain a bijective correspondence between the moduli space of stable \textup{A}-branes $\mathcal{L}^\bullet$ on $X$ of the topologically twisted \textup{A}-model compactified by $(X,\kappa^\mathbb{C})$ and the moduli space of stable \textup{B}-branes $\overline{\mathcal{E}}^\bullet$ on $\overline{X}$ of the topologically twisted \textup{B}-model compactified by $(\overline{X},\overline{J})$, and specularly.
\end{Con}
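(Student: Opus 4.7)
The plan is to attack only the lowest-dimensional case $n=1$, which the excerpt announces as the scope of Chapter 7. So I would fix a complex structure on the torus and realise one side as an elliptic curve $\overline{X}=\mathbb{C}/(\mathbb{Z}+\tau\mathbb{Z})$ with $\tau\in\mathbb{H}$, and the other side as a symplectic $2$-torus $X=\mathbb{R}^2/\mathbb{Z}^2$ with symplectic form of unit area and B-field determined by $\tau$, so that $X$ and $\overline{X}$ are genuine Calabi--Yau $1$-folds in the sense of Definitions \ref{Calabiyauman} and \ref{Calabiyauvar}. By symmetry of \eqref{HMS}, it suffices to construct and check one of the two equivalences, say $\mathsf{D}^\pi(\widetilde{\mathscr{F}}(X))\cong\mathsf{D^b}(\overline{X})$.

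On the B-side I would first collapse the derived category to its heart. Since $\overline{X}$ is a smooth projective curve, the last sentence of Remark \ref{HomExtRem} gives $\textrm{dim}_h(\mathsf{Coh}(\overline{X};{}_{\mathcal{O}_{\overline{X}}}\mathsf{Mod}))=\textrm{dim}(\overline{X})=1$, so Lemma \ref{whendimhleq1} applies and every $\overline{\mathcal{E}}^\bullet\in\textrm{obj}(\mathsf{D^b}(\overline{X}))$ splits canonically as $\bigoplus_{n\in\mathbb{Z}}H^n(\overline{\mathcal{E}}^\bullet)[-n]$. Thus the whole B-side reduces to understanding the abelian category $\mathsf{Coh}(\overline{X};{}_{\mathcal{O}_{\overline{X}}}\mathsf{Mod})$, its $\textrm{Ext}^0$ and $\textrm{Ext}^1$ groups (higher ones vanish by the dimension bound), and the Yoneda pairings between them. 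Here I would invoke Atiyah's classification of indecomposable coherent sheaves on an elliptic curve: each is either a torsion sheaf of length $\ell$ at a point $p\in\overline{X}$, or a semistable vector bundle of rank $r$ and degree $d$, parametrised by $\textrm{Pic}^0(\overline{X})\cong\overline{X}$ via the Fourier--Mukai transform of Section \ref{ch6.4}. The $\textrm{Ext}$-groups between such building blocks are classically computable and, by Serre duality (Theorem \ref{Serreduality} with $K_{\overline{X}}\cong\mathcal{O}_{\overline{X}}$), they satisfy $\textrm{Ext}^1(\overline{\mathcal{E}}_0,\overline{\mathcal{E}}_1)\cong\textrm{Hom}(\overline{\mathcal{E}}_1,\overline{\mathcal{E}}_0)^\ast$.

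On the A-side I would use the fact that for tori the split-closed derived Fukaya category admits a very explicit combinatorial model (the Fukaya--Kontsevich/Polishchuk--Zaslow model): indecomposable A-branes are either straight-line Lagrangians of rational slope $q/p$ decorated with a flat $U(1)$-connection, or point-like (skyscraper-type) branes. I would then define the proposed mirror functor on objects by the classical dictionary: a Lagrangian of slope $q/p$ with a given holonomy corresponds to the semistable bundle of rank $p$ and degree $q$, while point-like branes correspond to torsion sheaves; on morphism spaces, Floer generators (transverse intersection points) are sent bijectively to a basis of $\textrm{Ext}$-groups indexed by theta-function data. The key structural checks are then: (i) the bijection on objects is essentially surjective on indecomposables up to shift, which together with the splitting coming from Lemma \ref{whendimhleq1} and Proposition \ref{AinD(A)} extends the assignment to all of $\mathsf{D^b}(\overline{X})$; (ii) Floer cohomology ranks match $\textrm{Ext}$-ranks, with the shift coming from Maslov grading matching the cohomological degree.

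The main obstacle is the third clause of the Conjecture: verifying that Floer triangle products agree with Yoneda products. This is where the proof becomes genuinely analytic. The Floer product counts holomorphic triangles in $X$ with boundary on three Lagrangians, and the generating series of such counts are classical theta functions thanks to the torus geometry; on the B-side, the Yoneda product of $\textrm{Ext}$-classes between semistable bundles can be computed via \v{C}ech cocycles and also expressed through theta functions, because the universal cover of $\overline{X}$ is $\mathbb{C}$ and sections of line bundles pull back to entire functions with controlled quasi-periodicity. The hard step will be matching these two theta-function identities cocycle by cocycle -- essentially reducing associativity of $\mu^2$ on both sides to the same triple-product identity for Jacobi theta functions. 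Once this is done for the generators, bilinearity, the additivity arguments afforded by Lemma \ref{whendimhleq1}, and Proposition \ref{whenFourierMukaiisequivalence} (which upgrades a fully faithful exact functor between equidimensional Calabi--Yau categories to an equivalence) would close the argument. Finally, a brief cameo of $K3$ surfaces would indicate how the $n=2$ case runs along similar but substantially harder lines, requiring genuine Fourier--Mukai machinery on both sides rather than the Lemma \ref{whendimhleq1} shortcut that trivialised the elliptic case.
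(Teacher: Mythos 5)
Your proposal targets exactly what the paper actually establishes: the statement itself is a \emph{conjecture}, and the paper only proves its $n=1$ instance (Theorem \ref{HMSindim1}, after Polishchuk--Zaslow and Kreussler) plus a sketch of Seidel's quartic $K3$ result. For the elliptic case your route is essentially the paper's: reduce $\mathsf{D^b}(E_\tau)$ to shifts of indecomposables via $\textup{dim}_h=1$ and Lemma \ref{whendimhleq1}, invoke Atiyah's classification (Theorem \ref{AtiyahThm} and Corollary \ref{indecompocoherentonell}), use Serre duality in the form of Lemma \ref{ellSerreduality}, match Floer generators to theta-function bases of the $\textup{Ext}$-groups, and reduce the compatibility of $\mu^2$ with the Yoneda product to theta-function identities. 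The only genuine differences are cosmetic: the paper's functor $\mathsf{E}_\tau$ runs from $\mathsf{D^b}(E_\tau)$ to the Fukaya--Kontsevich category rather than the other way, and the mirror map identifies the full complexified Kähler parameter $\rho=B+iA$ with $\tau$ rather than fixing unit area.

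There is, however, one step in your closing argument that does not work as written: you cannot invoke Proposition \ref{whenFourierMukaiisequivalence} to upgrade full faithfulness to an equivalence. That proposition (and the whole Fourier--Mukai apparatus behind it, via Theorem \ref{Orlovthm}) concerns exact functors \emph{between bounded derived categories of smooth projective varieties}; the target of the mirror functor is $\mathscr{F\!K}^0(E^\tau)$, which is not known to be of that form until after the equivalence is proved, so the hypothesis is circular. The paper instead verifies essential surjectivity by hand at the end of the proof of Theorem \ref{HMSindim1}, showing that every indecomposable object $(\Lambda,\alpha,M)$ of $\mathscr{F\!K}^0(E^\tau)$ is, up to shift, the image of an explicit indecomposable bundle or torsion sheaf; you would need to do the same. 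Relatedly, you should make explicit the point that forced Kreussler's correction of Polishchuk--Zaslow: the category $\mathscr{F}^0(E^\tau)$ is only preadditive, so one must pass to its additive envelope $\mathscr{F\!K}^0(E^\tau)=\underline{\mathscr{F}^0(E^\tau)}$ (Lemma \ref{frompreaddtoaddcat}) before any equivalence with the additive category $\mathsf{D^b}(E_\tau)$ can even be stated --- and this closure is genuinely used, since $\pi_r^*$ of an indecomposable sheaf and $p_r^*$ of a connected Lagrangian may both decompose.
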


\begin{Rem}
	The warnings of \cite[Remark 7.27]{[Imp21]} still apply. One delicate aspect, there taken into account, is that Fukaya categories are more generally (and correctly!) defined over the Novikov field $\Lambda_\mathbb{K}$ (see \cite[Definition 5.9]{[Imp21]} or Definition \ref{Seidelstuff} below), which prevents infinite sums from arising in the Floer composition maps. Accordingly, one should require the conjectured mirror Calabi--Yau manifolds to be algebraic varieties over $\Lambda_\mathbb{K}$ rather than the ground field $\mathbb{K}$! Luckily, in our favoured complex scenario $\mathbb{K}=\mathbb{C}$, $\Lambda_\mathbb{C}$ is algebraically closed, so that algebraic geometry over it is still quite ``standard''. In Chapter 7, when dealing with elliptic curves, we will (safely) ignore this issue, instead fully tackled by \cite{[AS09]}.\footnote{My thanks to Prof. Biran for prompting me to highlight this non-trivial inconsistency.}  
\end{Rem}

Note that Conjecture \ref{HMSconj} acts on the level of classical categories, by virtue of the split-closure construction ultimately taking cohomology on the A-side (whereas the bounded derived category is readily a standard category, as we very well know). Now that we have a firmer grasp of the B-side, we provide an additional, hidden interpretation of \eqref{HMS} as a duality between $A_\infty$-categories (we refer to \cite[subsection 8.2.1]{[ABC+09]}, adopting the notation of \cite[section 2.2]{[Imp21]}).

\begin{Rem}
Let $X$ be a smooth algebraic variety over an algebraically closed field $\mathbb{K}$. Then we can give its bounded derived category the structure of an $A_\infty$-category $\mathsf{D^b_\infty}(X)$ as follows. Let $\text{obj}(\mathsf{D^b_\infty}(X))\coloneqq\text{obj}(\mathsf{D^b}(X))$, and for any given $\mathcal{F}^\bullet,\mathcal{G}^\bullet\in\text{obj}(\mathsf{D^b_\infty}(X))$ define the $\mathbb{Z}$-graded $\mathbb{K}$-vector space 
\begin{align*}
	\text{hom}_{\mathsf{D^b_\infty}\!(X)}(\mathcal{F}^\bullet,\mathcal{G}^\bullet)\coloneqq\; &\text{Hom}_{\mathcal{O}_X}^\bullet(\mathcal{F}^\bullet,\mathcal{G}^\bullet)\qquad\text{for} \\ &\text{Hom}_{\mathcal{O}_X}^n(\mathcal{F}^\bullet,\mathcal{G}^\bullet)=\bigoplus_{k+l=n}\check{C}^k(\mathfrak{U},\mathcal{H}om_{\mathcal{O}_X}^l(\mathcal{F}^\bullet,\mathcal{G}^\bullet))\,,
\end{align*}
where the \v{C}ech complex was defined in \eqref{Cechcomplex} and $\mathcal{H}om$ in \eqref{locHomoncomplexes}, with differentials $\check{d}^\bullet$ (as in \eqref{Cechdifferential}) respectively $d^\bullet:\mathcal{H}om_{\mathcal{O}_X}^\bullet(\mathcal{F}^\bullet,\mathcal{G}^\bullet)\rightarrow \mathcal{H}om_{\mathcal{O}_X}^{\bullet+1}(\mathcal{F}^\bullet,\mathcal{G}^\bullet)$. 
\newline\big(The latter is more explicitly given by
\[
(d^n(\chi))_m=d_\mathcal{G}^{n+m}\circ \chi_m-(-1)^n\chi_{m+1}\circ d_\mathcal{F}^m\in\mathcal{H}om_{\mathcal{O}_X}(\mathcal{F}^m,\mathcal{G}^{n+m+1})
\]
for any $\chi=(\chi_m)_{m\in\mathbb{Z}}\in\mathcal{H}om_{\mathcal{O}_X}^n(\mathcal{F}^\bullet,\mathcal{G}^\bullet)$ with $\chi_m\in\mathcal{H}om_{\mathcal{O}_X}(\mathcal{F}^m,\mathcal{G}^{n+m})$.\big)

Therefore, any (homogeneous) $f\in\text{hom}_{\mathsf{D^b_\infty}\!(X)}^n(\mathcal{F}^\bullet,\mathcal{G}^\bullet)=\text{Hom}_{\mathcal{O}_X}^n(\mathcal{F}^\bullet,\mathcal{G}^\bullet)$ of degree $n$ is specified by a collection $(f_{n,i_0...i_k})$ (with $1\leq i_0<...<i_k\leq |I|$, always according to \eqref{Cechcomplex}), and it follows that
\begin{align*}
	&\mu_{\mathsf{D^b_\infty}\!(X)}^1:\text{hom}_{\mathsf{D^b_\infty}\!(X)}(\mathcal{F}^\bullet,\mathcal{G}^\bullet)\rightarrow\text{hom}_{\mathsf{D^b_\infty}\!(X)}(\mathcal{F}^\bullet,\mathcal{G}^\bullet)[1]\,, \\
	&\big(\mu_{\mathsf{D^b_\infty}\!(X)}^1(f)\big)_{n\!+\!1,i_0...i_{k+1}}\!\coloneqq\!(-1)^n(\check{d}^kf)_{n,i_0...i_{k+1}}\!+\!(-1)^{n+k} d^n(f)_{n\!+\!1,i_0...i_k}
\end{align*}
is a well-defined differential for $\text{hom}_{\mathsf{D^b_\infty}\!(X)}(\mathcal{F}^\bullet,\mathcal{G}^\bullet)$.

Moreover, supposing $g=(g_{m,j_0...j_{k'}})\in\text{hom}_{\mathsf{D^b_\infty}\!(X)}^{m}(\mathcal{G}^\bullet,\mathcal{E}^\bullet)$, we can define the second order composition map
\begin{align*}
	&\mu_{\mathsf{D^b_\infty}\!(X)}^2:\text{hom}_{\mathsf{D^b_\infty}\!(X)}(\mathcal{G}^\bullet,\mathcal{E}^\bullet)\otimes\text{hom}_{\mathsf{D^b_\infty}\!(X)}(\mathcal{F}^\bullet,\mathcal{G}^\bullet)\rightarrow\text{hom}_{\mathsf{D^b_\infty}\!(X)}(\mathcal{F}^\bullet,\mathcal{E}^\bullet)\,, \\
	&\big(\mu_{\mathsf{D^b_\infty}\!(X)}^2(g,f)\big)_{n,i_0...i_{k+k'}}\!\!=\!\big((-1)^{n\!+\!k(m\!-\!k')}g_{2n-k,i_0...i_{k'}}\!\circ\! f_{n,i_{k'}...i_{k+k'}}\big)|_{U_{i_0}\cap...\cap U_{i_{k+k'}}}.
\end{align*}

Finally, since composition of morphisms of sheaves is already associative, we can set $\mu_{\mathsf{D^b_\infty}\!(X)}^d\coloneqq 0$ for $d>2$. One can show that $\mu_{\mathsf{D^b_\infty}\!(X)}^1$ and $\mu_{\mathsf{D^b_\infty}\!(X)}^2$ do fulfill the $A_\infty$-associativity equations. Therefore, we have successfully constructed an $A_\infty$-structure for $\mathsf{D^b_\infty}(X)$.\footnote{Since higher order composition maps are trivial, $\mathsf{D^b_\infty}(X)$ is more than just an $A_\infty$-category: it is a \textit{DG-category} (differential graded category).}

The $\text{hom}_{\mathsf{D^b_\infty}\!(X)}$-spaces fulfill $H^n(\text{hom}_{\mathsf{D^b_\infty}\!(X)}(\mathcal{F}^\bullet,\mathcal{G}^\bullet))\!=\!\text{Ext}_{\mathcal{O}_X}^n(\mathcal{F}^\bullet,\mathcal{G}^\bullet)$ by \eqref{ExtRHomforcomplexes}. In fact, one can define inclusions $\text{Ext}_{\mathcal{O}_X}^\bullet(\mathcal{F}^\bullet,\mathcal{G}^\bullet)\hookrightarrow \text{hom}_{\mathsf{D^b_\infty}\!(X)}(\mathcal{F}^\bullet,\mathcal{G}^\bullet)$ which are quasi-isomorphisms with inverses $H^n$; in turn, this allows us to give the cohomological category $H(\mathsf{D^b_\infty}(X))$ (see \cite[Definition 2.6]{[Imp21]}) the structure of an $A_\infty$-category. But the crucial fact is that $H(\mathsf{D^b_\infty}(X))\cong\mathsf{D^b}(X)$ as ordinary categories, and so we obtain an $A_\infty$-structure for the standard bounded derived category (with non-trivial higher order compositions, beware!).

The moral is, we can also regard Conjecture \ref{HMSconj} as expressing a duality between (the zeroth cohomological categories of) $A_\infty$-categories!       
\end{Rem}

\newpage
\pagestyle{fancy}

\section{Homological mirror symmetry in low dimensions}
\thispagestyle{plain}

\subsection{The elliptic curve}\label{ch7.1}

Elliptic curves constitute the easiest manifestation of homological mirror symmetry. Before addressing it, in the present section we look at their fundamental features, including their derived category. Our primary references are \cite{[Kre00]} and \cite{[Por15]}.

\begin{Def}\label{ellipticcurve}
An \textbf{elliptic curve}\index{elliptic curve} is a Riemann surface (a 1-dimensional complex manifold) given by some quotient $E_\tau\coloneqq\mathbb{C}/\langle 1,\tau\rangle$, where $\langle 1,\tau\rangle\coloneqq\mathbb{Z}\oplus\tau\mathbb{Z}\subset\mathbb{C}\cong\mathbb{R}^2$ is the $\mathbb{Z}^2$-lattice identified by the \textbf{modular parameter}\index{modular parameter} $\tau\in\mathbb{C}$, assumed to satisfy $\mathfrak{Im}(\tau)>0$. The complex structure is the one inherited from $\mathbb{C}$.

Moreover, the quotient $\mathbb{C}/\langle 1,\tau\rangle$ preserves the natural metric and Kähler structure of $\mathbb{C}$, with induced Kähler form which is automatically closed being a top-dimensional differential form. This makes $E_\tau$ to a Kähler manifold, compact by definition. It can also be equipped with a holomorphic volume form (then a locally constant multiple of $dz=dx+idy$, by compactness) fulfilling the Calabi--Yau condition of Definition \ref{Calabiyauman}, so that $E_\tau$ is a 1-dimensional Calabi--Yau manifold. Observe though that its construction totally ignores its symplectic features (which we later highlight in Definition \ref{2torus}).

In fact, one can argue that any Calabi--Yau 1-fold must be of this form, that is, all Calabi-Yau manifolds in dimension 1 are elliptic curves.     
\end{Def}

From our considerations of Section \ref{ch6.5} regarding the interchangeability of Calabi--Yau manifolds and Calabi--Yau varieties, we are then interested in specific instances of smooth projective varieties in dimension 1, which we may also call \textbf{smooth projective curves}\index{smooth projective curve}.

Recall that one can fully classify smooth projective curves by resorting to Proposition \ref{whenprojvarareequiv}, with the sole exception of elliptic curves: at the end of \cite[chapter 5]{[Huy06]}, it is shown that two elliptic curves $E_{\tau_1}$, $E_{\tau_2}$ are derived equivalent (so there is an exact equivalence $\mathsf{D^b}(E_{\tau_1})\rightarrow\mathsf{D^b}(E_{\tau_2})$) if and only if $E_{\tau_1}\cong E_{\tau_2}$ as varieties. (This is in turn used to prove \cite[Corollary 5.46]{[Huy06]} about the relation between smooth complex projective curves and varieties.)   

However, we won't need to understand directly the bounded derived category of elliptic curves, thanks to what explained in the following remark.

\begin{Rem}\label{indecomposabletorsion}
Let us outline two useful notion:
\begin{itemize}[leftmargin=0.5cm]
	\item Let $(X,\mathcal{O}_X)$ be any ringed space. Then a coherent $\mathcal{F}\in\text{obj}(\mathsf{Coh}(X;{}_{\mathcal{O}_X}\!\mathsf{Mod}))$ is \textbf{indecomposable}\index{ox@$\mathcal{O}_X$-module!indecomposable} when it fulfills
	\[
	\mathcal{F}=\mathcal{F}_1\oplus\mathcal{F}_2\text{ for }\mathcal{F}_1,\mathcal{F}_2\in\text{obj}(\mathsf{Coh}(X;{}_{\mathcal{O}_X}\!\mathsf{Mod}))\implies\mathcal{F}_1\cong 0\;\vee\;\mathcal{F}_2\cong 0\,.
	\]
	Now, as mentioned in the recap about $\mathsf{D^b}(X)$ right after Definition \ref{dercatofCY}, smooth projective curves $X$ have homological dimension $\text{dim}_h(X)=1$. Then, according to Lemma \ref{whendimhleq1}, any bounded complex of coherent sheaves $\mathcal{F}^\bullet\in\text{obj}(\mathsf{D^b}(X))$ can be written as $\mathcal{F}^\bullet\cong\bigoplus_{n\in\mathbb{Z}} H^n(\mathcal{F}^\bullet)[-n]^\bullet\in\text{obj}(\mathsf{D^b}(X))$. 
	\newline This tells us that the full subcategory of $\mathsf{D^b}(X)$ of all bounded complexes which are finite direct sums of objects $\mathcal{F}[-n]^\bullet\in\text{obj}(\mathsf{D^b}(X))$ with every $\mathcal{F}\in\text{obj}(\mathsf{Coh}(X;{}_{\mathcal{O}_X}\!\mathsf{Mod}))$ indecomposable is equivalent to $\mathsf{D^b}(X)$. We can then legitimately restrict our attention to such subcategory.
	
	\item Let $(X,\mathcal{O}_X)$ be a scheme. Then a \textbf{torsion sheaf}\index{sheaf!torsion} $\mathcal{F}\in\text{obj}(\mathsf{Sh}(X;{}_{\mathcal{O}_X}\!\mathsf{Mod}))$ is an $\mathcal{O}_X$-module such that $\mathcal{F}(U)$ is a torsion abelian group for every $U\subset X$ open; equivalently, on $X$ a smooth projective curve, $\mathcal{F}$ is torsion if the stalk $\mathcal{F}_{(0)}$ at the \textit{generic point} $(0)\in\text{Spec}(\mathbb{C}[x])$ (the zero ideal) is zero.
	\newline In fact, it can be shown that any $\mathcal{F}\in\text{obj}(\mathsf{Coh}(X;{}_{\mathcal{O}_X}\!\mathsf{Mod}))$ decomposes as $\mathcal{F}\cong\mathcal{F}/\mathcal{F}_\text{tor}\oplus\mathcal{F}_\text{tor}$, where $\mathcal{F}_\text{tor}$ is its torsion part and $\mathcal{F}/\mathcal{F}_\text{tor}$ the remaining locally free part (also torsion-free). Assuming furthermore that $\mathcal{F}$ is indecomposable, it is then either an indecomposable locally free sheaf (thus a vector bundle, by Proposition \ref{locfreeisvectorbundle}) or an indecomposable torsion sheaf supported at one point (that is, with $\mathcal{F}_x\ncong 0$ at just one $x\in X$; we may call $\mathcal{F}$ a \textit{thickened skyscraper sheaf}).
\end{itemize}
\end{Rem}

Let us introduce some notation in order to specifically understand how indecomposable coherent sheaves look like on elliptic curves.

\begin{Pro}\label{linebundleonell}
Let $E_\tau$ be an elliptic curve. Then any line bundle on $E_\tau$ is of the form
\begin{equation}
	\mathcal{L}_\tau(\varphi)\coloneqq(\mathbb{C}\oplus\mathbb{C})/\langle1,\tau\rangle\in\textup{obj}(\mathsf{Sh}^\mathsf{lf}_1(E_\tau;{}_{\mathcal{O}_{E_\tau}}\!\mathsf{Mod}))\,,
\end{equation}
where $\varphi:\mathbb{C}\rightarrow\mathbb{C}^\times$ is a holomorphic function satisfying $\varphi(z+1)=\varphi(z)$ for all $z\in\mathbb{C}$ and the lattice $\langle 1,\tau\rangle$ acts by $1\cdot(z_1,z_2)=(z_1,z_2)$ respectively $\tau\cdot(z_1,z_2)=(z_1+\tau,\varphi(z_1)z_2)$.\footnote{It follows that any global section $f\in\mathcal{L}_\tau(\varphi)(E_\tau)$ is a holomorphic function $f:\mathbb{C}\rightarrow\mathbb{C}$ such that $f(z+\tau)=\varphi(z)f(z)$ and $f(z+1)=f(z)$ for all $z\in\mathbb{C}$.} In turn, any such $\varphi$ can be written as
\[
\varphi=t_x^*\varphi_0\cdot\varphi_0^{n-1}:\mathbb{C}\rightarrow\mathbb{C}^\times,\,z\mapsto\varphi_0(z+x)\cdot\varphi_0^{n-1}(z)
\]
for some $x=a\tau+b\in E_\tau$ and $n\in\mathbb{Z}$, where $\varphi_0:\mathbb{C}\rightarrow\mathbb{C}^\times,\,z\mapsto e^{-\pi i(\tau+2z)}$.

Furthermore, the space of global sections $H^0(\mathcal{L}_\tau(t_x^*\varphi_0^n))$ is $n$-dimensional with a possible basis given by $\big(t_x^*\theta[\frac{k}{n},0](n\tau,nz)\big)_{k=0}^{n-1}$, where $\theta$ denotes the \textbf{Jacobi theta-function}\index{Jacobi theta-function} given by
\begin{equation}
	\theta[a,b](\tau,z)\coloneqq\sum_{n\in\mathbb{Z}}\textup{exp}(\pi i(n+a)^2\tau + 2\pi i(n+a)(z+b))\,.
\end{equation}
\textup(These are related to the classical theta functions $\theta_\tau(z)\coloneqq\sum_{n\in\mathbb{Z}}e^{\pi i(n^2\tau+2nz)}$ as explained in \textup{\cite[section 4.1]{[Por15]}}, where some useful identities such as the product of theta functions are also provided.\textup)  
\end{Pro}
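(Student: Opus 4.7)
The plan is to attack the three assertions in sequence, exploiting that $\mathbb{C}$ is the universal cover of $E_\tau$ with deck group $\langle 1,\tau\rangle\cong\mathbb{Z}^2$.

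For the first assertion, the key input is $\textup{Pic}(\mathbb{C}) = 0$: $\mathbb{C}$ is contractible and Stein, so the exponential sheaf sequence $0\to\underline{\mathbb{Z}}\to\mathcal{O}_\mathbb{C}\to\mathcal{O}_\mathbb{C}^\times\to 0$ combined with $H^1(\mathbb{C},\mathcal{O}_\mathbb{C}) = 0 = H^2(\mathbb{C},\underline{\mathbb{Z}})$ yields $H^1(\mathbb{C},\mathcal{O}_\mathbb{C}^\times) = 0$. Any holomorphic line bundle $\mathcal{L}$ on $E_\tau$ thus pulls back to a trivial bundle on $\mathbb{C}$, and $\mathcal{L}$ is recovered as the quotient of $\mathbb{C}\times\mathbb{C}$ by the action of $\langle 1,\tau\rangle$ covering the deck transformations, governed by a $\mathbb{Z}^2$-cocycle $(\varphi_\lambda)_{\lambda\in\langle 1,\tau\rangle}$ in $\mathcal{O}(\mathbb{C})^\times$. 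Because the lattice is free abelian of rank $2$, this cocycle is determined by the pair $(\varphi_1,\varphi_\tau)$ subject to the single compatibility $\varphi_1(z+\tau)\,\varphi_\tau(z) = \varphi_\tau(z+1)\,\varphi_1(z)$. Two cocycles yield isomorphic bundles if and only if they differ by a coboundary $h(z+\lambda)/h(z)$; using that every element of $\mathcal{O}(\mathbb{C})^\times$ admits a holomorphic logarithm (consequence of the vanishing above), one can solve the difference equation $h(z+1)/h(z) = \varphi_1(z)$ and normalize $\varphi_1\equiv 1$, whereupon the compatibility reduces to $\varphi_\tau(z+1) = \varphi_\tau(z)$, and we set $\varphi \coloneqq \varphi_\tau$.

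For the canonical form, I would first extract the degree from $\varphi$: since $\varphi$ is nowhere zero and $1$-periodic, any holomorphic branch $\psi$ of $\log\varphi$ on $\mathbb{C}$ satisfies $\psi(z+1) - \psi(z) \in 2\pi i\mathbb{Z}$, a constant which I write as $-2\pi i n$; a first Chern class computation identifies $n = \textup{deg}\,\mathcal{L}_\tau(\varphi)$. Writing $\psi(z) = -2\pi i n z + h(z)$ with $h$ entire and $1$-periodic, I expand $h(z) = \sum_{m\in\mathbb{Z}} a_m e^{2\pi i m z}$. The residual coboundary freedom (those $h\in\mathcal{O}(\mathbb{C})^\times$ preserving $\varphi_1\equiv 1$) acts on $\psi$ by adding $g(z+\tau) - g(z)$ for entire $g$ with $g(z+1) - g(z) \in 2\pi i\mathbb{Z}$; in Fourier modes this adds $(e^{2\pi i m\tau} - 1)\,b_m$ to $a_m$ for $m\ne 0$, and since $\mathfrak{Im}(\tau) > 0$ forces $e^{2\pi i m\tau}\ne 1$, every nontrivial mode can be killed. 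The remaining freedom on the constant term and on the linear-in-$z$ coefficient matches exactly the ambiguity $x\mapsto x + \mathbb{Z} + \tau\mathbb{Z}$, reducing $\varphi$ to $e^{-\pi i(n\tau + 2nz + 2x)}$ for some $x = a\tau + b$ well-defined in $E_\tau$; a direct calculation shows this equals $(t_x^*\varphi_0)(z)\cdot\varphi_0(z)^{n-1}$.

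The third assertion is a Fourier series computation on the canonical model. A global section of $\mathcal{L}_\tau(t_x^*\varphi_0^n)$ is a holomorphic $f:\mathbb{C}\to\mathbb{C}$ with $f(z+1) = f(z)$ and $f(z+\tau) = e^{-\pi i(n\tau + 2nz + 2x)}f(z)$. The first condition gives $f(z) = \sum_m c_m e^{2\pi i m z}$, and matching Fourier coefficients in the second yields the shift recursion $c_{m+n} = e^{\pi i(n\tau + 2x + 2m\tau)}\,c_m$; for $n\geq 1$ the coefficients are uniquely determined by $c_0,\ldots,c_{n-1}$, and the Gaussian decay factor guarantees absolute convergence of the series on all of $\mathbb{C}$, giving a space of dimension exactly $n$. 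Setting $c_j = \delta_{j,k}$ for $0\leq j\leq n-1$ and unfolding the recursion, the resulting series matches the definition of $t_x^*\theta[k/n, 0](n\tau, nz)$ term by term. The main obstacle is the Fourier normalization in the second step: one must precisely leverage $\mathfrak{Im}(\tau) > 0$ to ensure the restricted coboundary freedom suffices to kill every nontrivial Fourier mode of $h$, and track carefully how the two remaining constant ambiguities combine into the lattice parameter $x \in E_\tau$; the other pieces are classical cohomological bookkeeping or direct calculations collapsing to the definition of $\theta[k/n,0]$.
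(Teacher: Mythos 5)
Your argument is correct in substance, but note that the paper does not actually prove Proposition \ref{linebundleonell}: it is quoted as a classical fact (the Appell--Humbert-type description of $\textup{Pic}(E_\tau)$ via factors of automorphy, following [PZ00] and [Por15]), so there is no in-text proof to compare against. What you have written is precisely the standard argument that those references rely on --- pull back to the universal cover, use $\textup{Pic}(\mathbb{C})=0$ to reduce to a $\mathbb{Z}^2$-cocycle in $\mathcal{O}(\mathbb{C})^\times$, normalize $\varphi_1\equiv 1$, kill the nonzero Fourier modes of $\log\varphi_\tau$ using $e^{2\pi i m\tau}\neq 1$, and then compute sections by the Fourier recursion with Gaussian decay. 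Two points deserve tightening. First, the normalization $\varphi_1\equiv 1$ rests on solving the additive difference equation $G(z+1)-G(z)=g(z)$ for entire $G$ given entire $g$; this does not follow from the existence of a logarithm alone, but from the vanishing $H^1(\mathbb{C}/\mathbb{Z},\mathcal{O})\cong H^1(\mathbb{C}^\times,\mathcal{O})=0$ for the Stein cylinder (equivalently, Guichard's theorem), and should be cited as such. Second, your phrase about residual freedom ``on the constant term and on the linear-in-$z$ coefficient'' is slightly off: the linear coefficient $-2\pi i n$ is rigid (it is the degree and is untouched by coboundaries), and the only surviving ambiguity is in the constant term, shifted by $2\pi i\mathbb{Z}$ (branch of $\log$) and by $2\pi i\tau\mathbb{Z}$ (coboundaries $e^{2\pi i k z}$); these two together are exactly what makes $x$ well defined in $E_\tau=\mathbb{C}/\langle 1,\tau\rangle$. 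With those repairs, and the implicit restriction to $n\geq 1$ in the section count (consistent with the statement), your proof is complete and is the one the cited sources intend.
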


\begin{Def}\label{unipotentvectorbundle}
A vector bundle $F\rightarrow E_\tau$ is \textbf{unipotent}\index{vector bundle!unipotent} if there exists a filtration $0=F_0\subset F_1\subset...\subset F_r=F$ in $\mathsf{Sh}^\mathsf{lf}_1(E_\tau;{}_{\mathcal{O}_{E_\tau}}\!\mathsf{Mod})$ such that $F_{i+1}/F_i\cong\mathcal{O}_{E_\tau}$ for all $0\leq i< r$. 

Given any $V\in\text{obj}(\mathsf{Vect}_\mathbb{C})$ and $A\in\text{GL}(V)$, let us specifically write $F_\tau(V,A)\!\coloneqq(\mathbb{C}\oplus V)/\langle 1,\tau\rangle$ for those vector bundles determined by the actions $1\cdot(z_1,z_2)=(z_1,z_2)$ and $\tau\cdot(z_1,z_2)=(z_1+\tau,Az_2)$. If $A=e^N$ for some nilpotent $N\in\text{End}(V)$, then $F_\tau(V,e^N)$ is unipotent; if moreover $\text{dim}(\ker(N))=1$ (that is, $N$ is \textit{cyclic}), then $F_\tau(V,e^N)$ is also indecomposable.   
\end{Def}

The constructions we just gave appear to be somewhat dispersive, but surprisingly enough they are actually all we need to classify indecomposable vector bundles over elliptic curves, as shown by Atiyah in \cite{[Ati57]}.

\begin{Thm}\label{AtiyahThm}
Let $E_\tau$ be an elliptic curve. Any indecomposable vector bundle on $E_\tau$ is of the form $\pi_{r*}(\mathcal{L}_{r\tau}(\varphi)\otimes F_{r\tau}(V,e^N))$ for $N$ a cyclic nilpotent endomorphism. Here $\pi_r:E_{r\tau}\rightarrow E_\tau$ is the \textbf{isogeny}\index{isogeny} --- that is, the surjective homomorphism of algebraic groups with finite kernel --- induced by the inclusion of lattices $\langle 1,r\tau\rangle\subset\langle 1,\tau\rangle$ for some $r\in\mathbb{R}^+$.
\end{Thm}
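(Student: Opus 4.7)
The plan is to follow Atiyah's classical strategy from his 1957 paper, organizing the proof around the twin invariants rank and degree and using an induction scheme that reduces everything to the case of line bundles on appropriate covers. First I would establish the building block: any indecomposable unipotent vector bundle on $E_\tau$ is isomorphic to some $F_\tau(V,e^N)$ with $N$ cyclic nilpotent. This is handled by induction on the rank $r$, exploiting the filtration in Definition \ref{unipotentvectorbundle}. The base case $r=1$ gives $\mathcal{O}_{E_\tau}$. For the step, given an indecomposable unipotent $F$ of rank $r$, one uses $H^0(E_\tau,\mathcal{O}_{E_\tau})\cong\mathbb{C}$ and $H^1(E_\tau,\mathcal{O}_{E_\tau})\cong\mathbb{C}$ (by Serre duality, since $K_{E_\tau}\cong\mathcal{O}_{E_\tau}$) to produce non-split extensions $0\to\mathcal{O}_{E_\tau}\to F\to F'\to 0$ with $F'$ indecomposable unipotent of rank $r-1$, and then identifies the corresponding monodromy matrix as a cyclic nilpotent exponential.

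Next I would treat the degree-zero case: I claim that any indecomposable vector bundle $F$ on $E_\tau$ of degree $0$ decomposes as $F\cong\mathcal{L}_\tau(\varphi)\otimes F_\tau(V,e^N)$ for a degree-zero line bundle $\mathcal{L}_\tau(\varphi)$ and a unipotent piece. The idea is to find a degree-zero line bundle $\mathcal{L}$ such that $F\otimes\mathcal{L}^\vee$ has a non-vanishing global section, then argue (using that rank and degree both being zero for a summand would force triviality) that $F\otimes\mathcal{L}^\vee$ is unipotent, after which step one applies. Proposition \ref{linebundleonell} gives explicit handles on the degree-zero line bundles and the theta-function machinery makes the section-finding step effective.

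The decisive step is the higher-degree reduction via isogeny, which I expect to be the main obstacle. Let $F$ be indecomposable of rank $n$ and degree $d$, set $h=\gcd(n,d)$ and $r=n/h$. I would construct a bundle $F'$ of rank $h$ on $E_{r\tau}$ with $F\cong\pi_{r*}F'$, where $\pi_r:E_{r\tau}\to E_\tau$ is the isogeny of Theorem \ref{AtiyahThm}. The mechanism is to exhibit $F$ as the direct image of a sub-line-bundle-like object on the cover chosen precisely so that pulling back via $\pi_r$ (which multiplies degree by $r$) lands rank and degree in a ratio where $\gcd$ becomes $h$, and iterate. The hard bits are: controlling indecomposability under pushforward/pullback (so that $F'$ is indecomposable on $E_{r\tau}$ of rank $h<n$, enabling induction on rank), and matching projection formulas so that the pushforward of the degree-zero template $\mathcal{L}_{r\tau}(\varphi)\otimes F_{r\tau}(V,e^N)$ on $E_{r\tau}$ indeed recovers the full list of indecomposables on $E_\tau$. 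Projection-formula style identities from Proposition \ref{lotofequalities}, together with flat base change for $\pi_r$ and the fact that $\pi_r$ is étale (so $\pi_r^*\mathcal{O}_{E_\tau}\cong\mathcal{O}_{E_{r\tau}}$ and $\mathsf{R}^i\pi_{r*}$ vanishes for $i>0$), are the natural tools. Once this reduction is in place, the induction on rank closes because rank $h<n$ admits the decomposition into the line-bundle $\otimes$ unipotent form already proved, and tensoring with $\mathcal{L}_{r\tau}(\varphi)$ absorbs the remaining degree information.
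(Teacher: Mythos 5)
You should know at the outset that the paper does not prove Theorem \ref{AtiyahThm} at all: it is stated as a citation to Atiyah \cite{[Ati57]} (in the isogeny packaging of \cite{[PZ00]}), so there is no in-paper argument to compare against. Judged on its own terms, your outline is the standard and correct strategy. Steps one and two (classifying indecomposable unipotent bundles by induction on rank using $h^0(\mathcal{O}_{E_\tau})=h^1(\mathcal{O}_{E_\tau})=1$, then showing every indecomposable of degree zero is a degree-zero line bundle tensored with a unipotent bundle) are exactly Atiyah's Theorem 5, and your sketch of them is sound, modulo the usual care that a nonzero section of an indecomposable degree-zero bundle is nowhere vanishing and that the quotient stays in the unipotent class.

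The gap is in your third step, which is where the entire content of the theorem sits: the assertion that an indecomposable $F$ of rank $n$ and degree $d$, with $h=\gcd(n,d)$ and $r=n/h$, descends as $F\cong\pi_{r*}F'$ for some rank-$h$ bundle $F'$ on $E_{r\tau}$ is stated but not argued (``exhibit $F$ as the direct image of a sub-line-bundle-like object on the cover'' is a restatement of the goal, not a mechanism). What is actually needed is a descent criterion: $F$ lies in the image of $\pi_{r*}$ if and only if $F\otimes L\cong F$ for $L$ a generator of the order-$r$ cyclic subgroup of $\textup{Pic}^0(E_\tau)$ dual to $\ker(\pi_r)$, and proving that every indecomposable $F$ with these invariants satisfies this self-tensoring identity is the hard part; one must also check that $F'$ can be taken indecomposable of the right rank and that $\pi_{r*}$ preserves indecomposability on such inputs. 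Note also that Atiyah's original proof does not run through isogenies: he performs a Euclidean algorithm on the pair $(n,d)$ via the two bijections $E(n,d)\cong E(n,d+n)$ (tensoring by a degree-one line bundle) and $E(n,d)\cong E(n-d,d)$ for $0<d<n$ (a canonical extension construction), reducing to $E(h,0)$; the formulation via $\pi_{r*}(\mathcal{L}_{r\tau}(\varphi)\otimes F_{r\tau}(V,e^N))$ is a later repackaging (used by Polishchuk--Zaslow) that your proof would still need to reconcile with whichever reduction you carry out. The tools you name (projection formula, \'etaleness of $\pi_r$, vanishing of higher direct images) are relevant but do not by themselves produce the descent.
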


By Remark \ref{indecomposabletorsion}, an indecomposable coherent sheaf $\mathcal{E}\in\text{obj}(\mathsf{Coh}(E_\tau;{}_{\mathcal{O}_{E_\tau}}\!\mathsf{Mod}))$ which is not an indecomposable vector bundle must necessarily be an indecomposable torsion sheaf supported at one point $x\in E_\tau$, fully determined by the space of global sections $V\coloneqq\mathcal{E}(E_\tau)\in\text{obj}(\mathsf{Vect}_\mathbb{C})$, which is also a $\mathcal{O}_{E_\tau,x}$-module whose maximal ideal is generated by $f(z)=z-x$, and by some choice of cyclic nilpotent $N\in\text{End}(V)$; we then write $\mathcal{E}$ as
\begin{equation}\label{elliptictorsionsheaf}
\mathcal{S}(x,V,N)\coloneqq(\mathcal{O}_{E_\tau,x}\otimes V)/\langle f(z)-N/2\pi i\rangle\in\text{obj}(\mathsf{Coh}(E_\tau;{}_{\mathcal{O}_{E_\tau}}\!\mathsf{Mod}))\,.
\end{equation}

\begin{Cor}\label{indecompocoherentonell}
Let $E_\tau$ be an elliptic curve. Any indecomposable coherent sheaf $\mathcal{E}\in\textup{obj}(\mathsf{Coh}(E_\tau;{}_{\mathcal{O}_{E_\tau}}\!\mathsf{Mod}))$ is either an indecomposable vector bundle of the form $\pi_{r*}(\mathcal{L}_{r\tau}(\varphi)\otimes F_{r\tau}(V,e^N))$ like in Theorem \ref{AtiyahThm} or an indecomposable torsion sheaf of the form $\mathcal{S}(x,V,N)$ as in \eqref{elliptictorsionsheaf}, where in both cases $N\in\textup{End}(V)$ is cyclic and nilpotent.
\end{Cor}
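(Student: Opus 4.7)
The plan is to assemble the corollary directly from the two bullet points of Remark \ref{indecomposabletorsion} together with Atiyah's Theorem \ref{AtiyahThm}. The key observation is that $E_\tau$ is a smooth projective curve, so both ingredients apply, and indecomposability cleanly separates the general case into two mutually exclusive subcases.

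First I would invoke the second bullet of Remark \ref{indecomposabletorsion}: any coherent sheaf $\mathcal{E}\in\textup{obj}(\mathsf{Coh}(E_\tau;{}_{\mathcal{O}_{E_\tau}}\!\mathsf{Mod}))$ canonically splits as $\mathcal{E}\cong\mathcal{E}/\mathcal{E}_\textup{tor}\oplus\mathcal{E}_\textup{tor}$, with $\mathcal{E}/\mathcal{E}_\textup{tor}$ locally free (hence a vector bundle by Proposition \ref{locfreeisvectorbundle}) and $\mathcal{E}_\textup{tor}$ torsion. Indecomposability of $\mathcal{E}$ forces one of these summands to be zero, so $\mathcal{E}$ is either an indecomposable vector bundle or an indecomposable torsion sheaf.

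In the vector bundle case, the statement is Theorem \ref{AtiyahThm} applied verbatim: every such bundle has the form $\pi_{r*}(\mathcal{L}_{r\tau}(\varphi)\otimes F_{r\tau}(V,e^N))$. What still has to be observed is that indecomposability of the resulting bundle translates (since $\pi_{r*}$ is an equivalence onto its image and $\mathcal{L}_{r\tau}(\varphi)$ is invertible) into indecomposability of the unipotent factor $F_{r\tau}(V,e^N)$, which by Definition \ref{unipotentvectorbundle} is equivalent to $N\in\textup{End}(V)$ being cyclic, i.e.\ $\dim\ker(N)=1$.

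In the torsion case, an indecomposable torsion sheaf cannot be supported on two distinct points (otherwise we could split it as the direct sum of its restrictions to disjoint open neighbourhoods of those points), so it is concentrated at a single $x\in E_\tau$. Then $\mathcal{E}$ is entirely determined by its stalk at $x$, which is a finitely-generated module over the discrete valuation ring $\mathcal{O}_{E_\tau,x}$ with uniformiser $f(z)=z-x$. The structure theorem for finitely-generated modules over a DVR writes it as a direct sum of cyclic modules $\mathcal{O}_{E_\tau,x}/(f(z)^{n_i})$; indecomposability collapses this to a single summand. Setting $V$ equal to the underlying $\mathbb{C}$-vector space and $N\coloneqq 2\pi i\cdot (f(z)\,\cdot)$ acting on it produces a cyclic nilpotent endomorphism, and the identification $\mathcal{E}\cong\mathcal{S}(x,V,N)$ with the shape \eqref{elliptictorsionsheaf} is built into the very definition of that sheaf.

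The only step requiring genuine work is Atiyah's classification, which we are allowed to use. What remains — the torsion/torsion-free dichotomy and the DVR structure theorem — is essentially bookkeeping, and the mild subtlety is checking that the ``cyclic nilpotent $N$'' condition really corresponds to indecomposability on both sides of the dichotomy; this is however immediate from Definition \ref{unipotentvectorbundle} in the bundle case and from the uniqueness in the PID structure theorem in the torsion case.
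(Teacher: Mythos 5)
Your proposal is correct and follows essentially the same route as the paper, which assembles the corollary from the torsion/locally-free splitting in Remark \ref{indecomposabletorsion}, Atiyah's Theorem \ref{AtiyahThm} for the bundle case, and the identification of a one-point indecomposable torsion sheaf with $\mathcal{S}(x,V,N)$. The only added value is that you make explicit, via the structure theorem for finitely-generated torsion modules over the DVR $\mathcal{O}_{E_\tau,x}$, why indecomposability forces a single cyclic summand and hence a cyclic nilpotent $N$ — a detail the paper leaves implicit in the paragraph preceding \eqref{elliptictorsionsheaf}.
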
  

Before moving to the A-side, we provide a convenient lemma which is used in the proof of the homological mirror symmetry for elliptic curves.

\begin{Lem}\label{ellSerreduality}
Let $E_\tau$ be an elliptic curve and $\mathcal{E}_1,\mathcal{E}_2\in\textup{obj}(\mathsf{Coh}(E_\tau;{}_{\mathcal{O}_{E_\tau}}\!\mathsf{Mod}))$. Then there exist functorial isomorphisms
\begin{equation}
	\begin{aligned}
		&\textup{Hom}_{\mathsf{D^b}(E_\tau)}(\mathcal{E}_1[0]^\bullet,\mathcal{E}_2[0]^\bullet)=\textup{Ext}_{\mathcal{O}_{E_\tau}}^0(\mathcal{E}_1,\mathcal{E}_2)\cong\textup{Hom}_{\mathcal{O}_{E_\tau}}(\mathcal{E}_1,\mathcal{E}_2)\,, \\
		&\textup{Hom}_{\mathsf{D^b}(E_\tau)}(\mathcal{E}_1[0]^\bullet,\mathcal{E}_2[1]^\bullet)=\textup{Ext}_{\mathcal{O}_{E_\tau}}^1(\mathcal{E}_1,\mathcal{E}_2)\cong\textup{Hom}_{\mathcal{O}_{E_\tau}}(\mathcal{E}_2,\mathcal{E}_1)^*\,,
	\end{aligned} 
\end{equation}
while $\textup{Hom}_{\mathsf{D^b}(E_\tau)}(\mathcal{E}_1[0]^\bullet,\mathcal{E}_2[k]^\bullet)=\textup{Ext}_{\mathcal{O}_{E_\tau}}^k(\mathcal{E}_1,\mathcal{E}_2)\cong\{0\}$ for any other $k\in\mathbb{Z}$.

It follows that $\diamond:\textup{Ext}_{\mathcal{O}_{E_\tau}}^1(\mathcal{E}_2,\mathcal{E}_3)\times\textup{Ext}_{\mathcal{O}_{E_\tau}}^0(\mathcal{E}_1,\mathcal{E}_2)\rightarrow \textup{Ext}_{\mathcal{O}_{E_\tau}}^1(\mathcal{E}_1,\mathcal{E}_3)$ and $\diamond:\textup{Ext}_{\mathcal{O}_{E_\tau}}^0(\mathcal{E}_2,\mathcal{E}_3)\times\textup{Ext}_{\mathcal{O}_{E_\tau}}^1(\mathcal{E}_1,\mathcal{E}_2)\rightarrow \textup{Ext}_{\mathcal{O}_{E_\tau}}^1(\mathcal{E}_1,\mathcal{E}_3)\cong\textup{Hom}_{\mathcal{O}_{E_\tau}}(\mathcal{E}_3,\mathcal{E}_1)^*$ can be interpreted as the maps $(F,f)\mapsto F(f\circ\square)$ respectively $(f,F)\mapsto F(\square\circ f)$, where $f$ are suitable \textup{Hom}-morphisms while $F$ are functionals in the dual \textup{Hom}-spaces.  
\end{Lem}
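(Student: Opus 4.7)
The plan is to dispatch each Ext-group case-by-case by degree $k$, invoking general results already established, and then to derive the Yoneda-product description from the naturality properties of the Serre trace-pairing.

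First, I would handle the easy degrees. By Theorem \ref{Extprop}b, $\textup{Ext}_{\mathcal{O}_{E_\tau}}^k(\mathcal{E}_1,\mathcal{E}_2)=\{0\}$ for all $k<0$, and by Theorem \ref{Extprop}c (equivalently, Proposition \ref{AinD(A)} applied to the fully faithful embedding $\mathsf{J}'$), we get $\textup{Ext}_{\mathcal{O}_{E_\tau}}^0(\mathcal{E}_1,\mathcal{E}_2)\cong\textup{Hom}_{\mathcal{O}_{E_\tau}}(\mathcal{E}_1,\mathcal{E}_2)$. For $k\geq 2$, I would invoke the bullet point following Definition \ref{dercatofCY} (reiterated at the start of Remark \ref{indecomposabletorsion}): a smooth projective curve $X$ has $\textup{dim}_h(\mathsf{Coh}(X;{}_{\mathcal{O}_X}\!\mathsf{Mod}))=\textup{dim}(X)=1$, which forces $\textup{Ext}_{\mathcal{O}_{E_\tau}}^k(\mathcal{E}_1,\mathcal{E}_2)\cong\{0\}$ for all $k\geq 2$.

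The substantial content of the lemma therefore reduces to the $k=1$ Serre duality statement, which I would obtain as an instance of Corollary \ref{SerredualityviaSerrefunc}. Since $E_\tau$ is a Calabi--Yau 1-fold, its canonical line bundle satisfies $K_{E_\tau}\cong\mathcal{O}_{E_\tau}$ and $n=\textup{dim}(E_\tau)=1$; hence the Serre functor collapses to $\mathsf{S}_{E_\tau}=(\square\otimes_{\mathcal{O}_{E_\tau}}\!\mathcal{O}_{E_\tau})[1]\cong[1]$. Applying the defining natural isomorphism of the Serre functor to the pair $(\mathcal{E}_1,\mathcal{E}_2[1])$ (as objects in $\mathsf{D^b}(E_\tau)$ via $\mathsf{J}'$) gives
\[
\textup{Ext}_{\mathcal{O}_{E_\tau}}^1(\mathcal{E}_1,\mathcal{E}_2)=\textup{Hom}_{\mathsf{D^b}(E_\tau)}(\mathcal{E}_1,\mathcal{E}_2[1])\cong\textup{Hom}_{\mathsf{D^b}(E_\tau)}(\mathcal{E}_2[1],\mathcal{E}_1[1])^*\cong\textup{Hom}_{\mathcal{O}_{E_\tau}}(\mathcal{E}_2,\mathcal{E}_1)^*,
\]
where the last isomorphism exploits that $[1]$ is an autoequivalence of $\mathsf{D^b}(E_\tau)$ and that $\mathcal{E}_1,\mathcal{E}_2$ live as $H^0$-complexes.

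The description of the Yoneda products follows by unwinding the Serre pairing. Write $\eta_{X,Y}(\phi)(h)\coloneqq t(h\circ\phi)$ for the trace-pairing realizing the isomorphism above, where $t$ is the Serre trace on $\textup{Hom}_{\mathsf{D^b}(E_\tau)}(Z,Z[1])$. For the second product $\diamond:\textup{Ext}^0(\mathcal{E}_2,\mathcal{E}_3)\times\textup{Ext}^1(\mathcal{E}_1,\mathcal{E}_2)\rightarrow\textup{Ext}^1(\mathcal{E}_1,\mathcal{E}_3)$, writing $F=\eta(\psi)$ with $\psi:\mathcal{E}_1\rightarrow\mathcal{E}_2[1]$, we have $f\diamond\psi=f[1]\circ\psi$, and a direct computation yields
\[
\eta(f[1]\circ\psi)(g)=t(g[1]\circ f[1]\circ\psi)=t((g\circ f)[1]\circ\psi)=F(g\circ f)
\]
for all $g\in\textup{Hom}(\mathcal{E}_3,\mathcal{E}_1)$, proving the second claim. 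For the first product, $F=\eta(\phi)$ with $\phi:\mathcal{E}_2\rightarrow\mathcal{E}_3[1]$ and $F\diamond f=\phi\circ f$, so one needs to check that
\[
\eta(\phi\circ f)(g)=t(g[1]\circ\phi\circ f)\overset{?}{=}t(f[1]\circ g[1]\circ\phi)=F(f\circ g).
\]
This equality is exactly the cyclic property of the Serre trace applied to the morphisms $u=g[1]\circ\phi:\mathcal{E}_2\rightarrow\mathcal{E}_1[1]$ and $v=f:\mathcal{E}_1\rightarrow\mathcal{E}_2$, namely $t(u\circ v)=t(v[1]\circ u)$. I expect this final identification to be the \emph{only} non-formal step: rigorously, cyclicity is equivalent to the Serre-functor compatibility between the two natural pairings $\text{Hom}(X,Y)\otimes\text{Hom}(Y,S(X))\rightarrow\mathbb{C}$ and $\text{Hom}(Y,S(X))\otimes\text{Hom}(S(X),S(Y))\rightarrow\mathbb{C}$, and justifying it either requires tracing through an explicit construction of $t$ (for example via \v{C}ech cocycles representing $\text{Ext}^1$, using the isomorphisms of Theorem \ref{sheafcohomthroughCech} together with $H^1(E_\tau,\mathcal{O}_{E_\tau})\cong\mathbb{C}$) or invoking the abstract theory of Serre functors on triangulated categories.
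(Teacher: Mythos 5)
Your proof is correct and follows essentially the same route as the paper's own (very short) sketch: the degrees $k\neq 1$ are dispatched by Theorem \ref{Extprop} together with the bound $\textup{dim}_h(\mathsf{Coh}(E_\tau;{}_{\mathcal{O}_{E_\tau}}\!\mathsf{Mod}))=\textup{dim}(E_\tau)=1$, and the case $k=1$ by Serre duality, which you correctly specialize via Corollary \ref{SerredualityviaSerrefunc} using $K_{E_\tau}\cong\mathcal{O}_{E_\tau}$ so that $\mathsf{S}_{E_\tau}\cong[1]$.

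Where you go beyond the paper is the Yoneda-product description: the paper's proof does not address it at all, deferring entirely to \cite[Lemma 2.7]{[Kre00]}, whereas you reduce it to the single identity $t(u\circ v)=t(v[1]\circ u)$ for the Serre trace. You are right to single that out as the only non-formal step, and it does not in fact require an explicit \v{C}ech construction of $t$: cyclicity is a standard formal consequence of the bifunctoriality of the isomorphisms $\eta_{X,Y}$ demanded in Definition \ref{Serrefunc} (setting $t_X\coloneqq\eta_{X,X}(\textup{id}_X)$ and chasing naturality in each argument separately yields $\eta_{X,Y}(v)(u)=t_X(u\circ v)=t_Y(\mathsf{S}(v)\circ u)$). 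So your argument is complete, and arguably more self-contained than the paper's, which outsources precisely this point.
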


\begin{proof}(\textit{Sketch})
The cases $k\neq 1$ follow as usual from the definition of Ext-modules and from $\text{dim}(E_\tau)=1$ (so that higher Ext's vanish). For $k=1$, one refines the proof of Serre duality (Theorem \ref{Serreduality} or Corollary \ref{SerredualityviaSerrefunc}) as done in \cite[Lemma 2.7]{[Kre00]}. 
\end{proof}

\subsection{The Fukaya category of elliptic curves}\label{ch7.2}

Now, we construct the A-side of elliptic curves and see that they verify a simplified version of homological mirror symmetry, first discussed by Polishchuk and Zaslow in \cite{[PZ00]}, and more rigorously proven by Kreussler in \cite{[Kre00]}. First, let us be more explicit about the symplectic structure of elliptic curves, by describing what will be their mirror manifolds predicted by Conjecture \ref{HMSconj}.

\begin{Def}\label{2torus}
Let $\mathbb{T}^2=\mathbb{R}^2/\mathbb{Z}^2$ be the \textbf{2-torus}\index{2-torus}, a Kähler manifold equipped with Kähler form $\kappa\coloneqq Adx\wedge dy\in\Omega^2(\mathbb{T}^2)$, where $A>0$ denotes its volume (or Euclidean area). A choice of ``B-field'' in $H^2_\text{dR}(\mathbb{T}^2;\mathbb{R})/H^2_\text{dR}(\mathbb{T}^2;\mathbb{Z})$, represented by $Bdx\wedge dy\in\Omega^2(\mathbb{T}^2)$ for some $B\in\mathbb{R}$, then defines the \textbf{complexified Kähler parameter}\index{complexified Kähler parameter} $\rho\coloneqq B+iA\in\mathbb{C}$ with $\mathfrak{Im}(\rho)>0$, which provides a complexified Kähler form $\rho dx\wedge dy\in\Omega^{1,1}(\mathbb{T}^2)$ for $\mathbb{T}^2$ and allows us to intepret the 2-torus as an elliptic curve of modular parameter $\rho$, thus in particular as a 1-dimensional Calabi--Yau manifold.

We denote $\mathbb{T}^2$ equipped with such $\rho$ by $E^\rho$. Observe that its construction totally ignores its complex structure as an elliptic curve.
\end{Def}

From \cite[Definition 6.13]{[Imp21]} we know that the general construction of Fukaya categories characterizing the A-side is highly non-trivial: objects are (compact, oriented, spin, graded) Lagrangian submanifolds of vanishing Maslov class, fulfilling the ``no-bubbling'' condition and equipped with a choice of graded lift and spin structure; morphisms belong to graded free modules constituting the Floer cochain complex, depending on a choice of perturbation and Floer data for each tuple of objects; finally, there are multilinear higher order composition maps, obtained by counting pseudo-holomorphic disks bounded by the Lagrangian submanifolds, which fulfill the $A_\infty$-associativity equations. This yields an $A_\infty$-category, of which one then needs to consider an ``enriched'' version (see \cite[Definition 7.25]{[Imp21]}) for the purposes of Conjecture \ref{HMSconj}, which speaks of the split-closed derived category $\mathscr{F}^0(E^\rho)\coloneqq\mathsf{D}^\pi(\widetilde{\mathscr{F}}(E^\rho))=H^0\big(\Pi(Tw\widetilde{\mathscr{F}}(E^\rho))\big)$. Quite a daunting task!

Luckily, in the 1-dimensional case of elliptic curves several simplifications can be made, making our analysis a lot easier. For example, in \cite[Proposition 6.31]{[Imp21]} it is sketched how $\mathscr{F}(\mathbb{T}^2)$ can be split-generated by a pair of longitude and meridian loops. However, the most convenient strategy is to ignore the $A_\infty$-structure of the enriched Fukaya category of $E^\rho$ altogether, and rather define the latter \textit{directly}. In doing so we closely follow the ``rectified'' version of \cite[section 3]{[Kre00]}.

\begin{Def}\label{F0Erho}
Let $E^\rho$ be the 2-torus of Definition \ref{2torus}. Then the objects of $\mathscr{F}^0(E^\rho)$ are triples $L=(\Lambda,\alpha, M)\in\text{obj}(\mathscr{F}^0(E^\rho))$ where:
\begin{itemize}[leftmargin=0.5cm]
	\renewcommand{\labelitemi}{\textendash}
	\item $\Lambda\subset E^\rho$ is a closed submanifold given by the quotienting of an affine line $(x(t),y(t))\subset\mathbb{R}^2$ (for $t\in\mathbb{R}$) of rational slope, so that we obtain a \textbf{simple loop}\index{simple loop} in $E^\rho$ (that is, a non-self-intersecting closed curve; see Figure \ref{2toruscompo} below or \cite[Example 6.30]{[Imp21]} for a reference). Equipped with a choice of orientation, such $\Lambda$ is specifically a special Lagrangian submanifold of $E^\rho$ (which are all the geodesics of $E^\rho$, and conversely).\footnote{A brief overview of special Lagrangian submanifolds and the related concepts of minimal and calibrated submanifolds can be found in \cite[appendix B]{[Por15]}.}
	
	\item $\alpha\in\mathbb{R}$ is such that there exists some $t_0\in\mathbb{R}$ satisfying $x(0)+iy(0)+e^{i\pi\alpha}=x(t_0)+iy(t_0)$; this determines the \textbf{grading}\index{grading (of a loop)} of $\Lambda=(x(t),y(t))$ (which is used to define the Maslov index).
	
	\item $M$ is an abbreviation for the \textit{local system}\footnote{Equivalently, a local system is a locally constant sheaf of complex vector spaces, or a complex vector bundle with a flat connection (the variant adopted by \cite[Definition 7.23]{[Imp21]}).} on $\Lambda$ determined by the \textbf{monodromy operator}\index{monodromy operator} $M\in\text{GL}(V)$ with eigenvalues of unit modulus: $V$ is the representation space of $r:\pi_1(\Lambda)=\pi_1(\mathbb{S}^1)\cong\mathbb{Z}\rightarrow\text{GL}(V)$ and $M$ is the image under $r$ of the $\mathbb{Z}$-generator $z_M$ determined by the orientation on $\Lambda$ given by $e^{i\pi\beta}$, where $\beta\in(-\frac{1}{2},\frac{1}{2})$ is the unique real number such that $\beta-\alpha\in\mathbb{Z}$.  
\end{itemize}
Furthermore, for any $L=(\Lambda,\alpha, M)\in\text{obj}(\mathscr{F}^0(E^\rho))$ and $x\in \Lambda$ we choose a fixed interval $\{\tilde{x}+t\lambda\mid 0\leq t\leq 1\}\subset\tilde{\Lambda}$, where $\tilde{\Lambda}$ is the universal cover of $\Lambda$ (thus the corresponding unquotiented affine line in $\mathbb{R}^2$), $\lambda\in\mathbb{R}^2$ is the unique vector parallel to $\tilde{\Lambda}$ representing a lift of $z_M$ (upon imposing a positive orientation) and $\tilde{x}\in\tilde{\Lambda}$ is the suitable point in this interval which maps to $x$, in turn used to replace $V\cong M_x$.  
\end{Def}

\begin{Def}
The category $\mathscr{F}^0(E^\rho)$ is equipped with the translation functor $T^1:\mathscr{F}^0(E^\rho)\rightarrow \mathscr{F}^0(E^\rho),\,(\Lambda,\alpha, M)\mapsto (\Lambda,\alpha, M)[1]\coloneqq (\Lambda,\alpha+1, M)$.

Consider now two $L_k=(\Lambda_k,\alpha_k, M_k)\in\text{obj}(\mathscr{F}^0(E^\rho))$ for $k=0,1$, and let $\alpha\coloneqq\alpha_1-\alpha_0\in\mathbb{R}$. For their Hom-spaces we must distinguish two cases:
\begin{itemize}[leftmargin=0.5cm]
	\renewcommand{\labelitemi}{\textendash}
	\item if $\Lambda_0\neq\Lambda_1$, then 
	\begin{equation}\label{ellmorphfordifferentlambdas}
		\begin{aligned}
			\mkern-12mu\text{Hom}_{\mathscr{F}^0(E^\rho)}(L_0,L_1)\coloneqq
			\begin{cases}
				\{0\} & $if $ \alpha\!\notin\! [0,1)\,, \\ 
				\mkern-33mu\displaystyle{\bigoplus_{\mkern+40mu x\in\Lambda_0\cap\Lambda_1}}\mkern-24mu\text{Hom}((M_0)_x,(M_1)_x) & $else$\,.
			\end{cases} 
		\end{aligned}
	\end{equation}
	
	\item if $\Lambda_0=\Lambda_1$ (hence $\alpha\in\mathbb{Z}$), then
	\begin{small}\begin{align}\label{ellmorphforsamelambdas}
			\mkern-6mu\text{Hom}_{\mathscr{F}^0(E^\rho)}(L_0,L_1)&\coloneqq
			{\begin{cases}
					\{0\} & $if $ \alpha\neq 0,1\,, \\
					H^0(\Lambda_0,\mathcal{H}om(M_0,M_1)) & $if $ \alpha=0\,, \\
					H^1(\Lambda_0,\mathcal{H}om(M_0,M_1)) & $if $ \alpha=1
			\end{cases}} \\
			&\,\cong\begin{cases}
				\{0\} & $if $ \alpha\neq 0,1\,, \\
				\{f\!\in\!\text{Hom}(V_0,V_1)\mid M_1\!\circ\! f\!=\!f\!\circ\! M_0\} & $if $ \alpha=0\,, \\
				\text{Hom}(V_0,V_1)/M_1\!\circ\!\text{Hom}(V_0,V_1)\!\circ\! M_0^{-1} & $if $ \alpha=1\,.
			\end{cases}\nonumber
	\end{align}\end{small}
\end{itemize}
In particular, for any $L_0,L_1\in\text{obj}(\mathscr{F}^0(E^\rho))$ the \textbf{``symplectic'' Serre duality}\index{symplectic@``symplectic'' Serre duality} holds:
\begin{equation}\label{sympSerreduality}
	\text{Hom}_{\mathscr{F}^0(E^\rho)}(L_0,L_1[1])\cong\text{Hom}_{\mathscr{F}^0(E^\rho)}(L_1,L_0)^*\,,
\end{equation}
canonically and functorially. 
\end{Def}

Now, it only remains to define the composition law for morphisms. Since in \cite{[PZ00]} only the case of three distinct Lagrangian submanifolds is addressed, we keep following the thorough analysis of \cite{[Kre00]}.

\begin{Def}
Let $L_k=(\Lambda_k,\alpha_k,M_k)\in\text{obj}(\mathscr{F}^0(E^\rho))$ for $k=0,1,2$ and $u\in\text{Hom}_{\mathscr{F}^0(E^\rho)}(L_0,L_1)$, $v\in\text{Hom}_{\mathscr{F}^0(E^\rho)}(L_1,L_2)$ (both non-zero), so that by definition we must have $\alpha_0\leq\alpha_1\leq \alpha_0+1$ and $\alpha_1\leq\alpha_2\leq \alpha_1+1$. To define $v\circ u\in\text{Hom}_{\mathscr{F}^0(E^\rho)}(L_0,L_2)$, and hence the composition law
\[
\circ:\text{Hom}_{\mathscr{F}^0(E^\rho)}(L_1,L_2)\otimes\text{Hom}_{\mathscr{F}^0(E^\rho)}(L_0,L_1)\rightarrow \text{Hom}_{\mathscr{F}^0(E^\rho)}(L_0,L_2)\,,
\]
we distinguish between the following cases.
\begin{itemize}[leftmargin=0.5cm]
	\renewcommand{\labelitemi}{\textendash}
	\item If $\alpha_0<\alpha_1<\alpha_2 < \alpha_0+1$, so that the $L_k$ are all distinct, then given $u_{x_0}\in\text{Hom}_{\mathscr{F}^0(E^\rho)}((M_0)_{x_0},(M_1)_{x_0})\subset\text{Hom}_{\mathscr{F}^0(E^\rho)}(L_0,L_1)$ for $x_0\in\Lambda_0\cap\Lambda_1$ and $v_{x_1}\in\text{Hom}_{\mathscr{F}^0(E^\rho)}((M_1)_{x_1},(M_2)_{x_1})\subset\text{Hom}_{\mathscr{F}^0(E^\rho)}(L_1,L_2)$ for $x_1\in\Lambda_1\cap\Lambda_2$, we define $v\circ u$ fibrewise by
	\begin{equation}
		(v\circ u)_{x_2}\coloneqq\mkern-18mu\sum_{[(\phi;z_0,z_1,z_2)]}\mkern-18mu e^{2\pi i\!\int\!\phi^*(\rho dx\wedge dy)}\mathbb{P}(M_2)\circ v_{x_1}\circ\mathbb{P}(M_1)\circ u_{x_0}\circ\mathbb{P}(M_0)
	\end{equation}
	in $\text{Hom}_{\mathscr{F}^0(E^\rho)}((M_0)_{x_2},(M_2)_{x_2})$, for any $x_2\in \Lambda_0\cap\Lambda_2$. Here $\mathbb{P}(M_k):(M_k)_{x_{k-1}}$ $\rightarrow (M_k)_{x_k}$ denotes the parallel transport, and we sum\footnote{This procedure is analogous to the counting of pseudoholomorphic triangles defining the Floer product, as illustrated in \cite[section 6.1]{[Imp21]}.} over all equivalence classes of holomorphic maps $\phi:\mathbb{D}^2\rightarrow E^\rho$ from the unit disk with marked distinct boundary points $z_0,z_1,z_2\in\mathbb{D}^2$ such that $\phi(z_k)=x_k$ and that the boundary arcs $z_k\!\curvearrowright\! z_{k+1}$ are mapped to $\Lambda_k$ (thus $z_2\!\curvearrowright\! z_0$ to $\Lambda_2$), considering two configurations as equivalent when related by an automorphism of $\mathbb{D}^2$. The situation is depicted in Figure \ref{2toruscompo} below.
	
	\item If $\alpha_2>\alpha_0+1$, then $v\circ u\coloneqq 0$.
	
	\item If $\alpha_0=\alpha_1<\alpha_2$ (thus $\Lambda_0=\Lambda_1\neq\Lambda_2$), then $v\circ u$ is induced by $v_x\otimes u\mapsto v_x\circ u_x$ upon taking the direct sum over all $x\in\Lambda_0\cap\Lambda_2$, as suggested by \eqref{ellmorphfordifferentlambdas}. The situation where $\alpha_0+1=\alpha_1=\alpha_2$ and $\Lambda_0\neq\Lambda_1=\Lambda_2$ works similarly.
	
	\item If $\alpha_0=\alpha_1=\alpha_2$, so $\Lambda_0=\Lambda_1=\Lambda_2$, we can just compose the linear homomorphisms $f_k$ coming from the second case of \eqref{ellmorphforsamelambdas}.
	
	\item Finally, if $\alpha_0+1=\alpha_2$, one can use the symplectic Serre duality to return to one of the above cases.    
\end{itemize}
Note that the first scenario is actually the only one depending on $\rho$.	
\end{Def}

\begin{figure}[htp]
\centering
\includegraphics[width=0.6\textwidth]{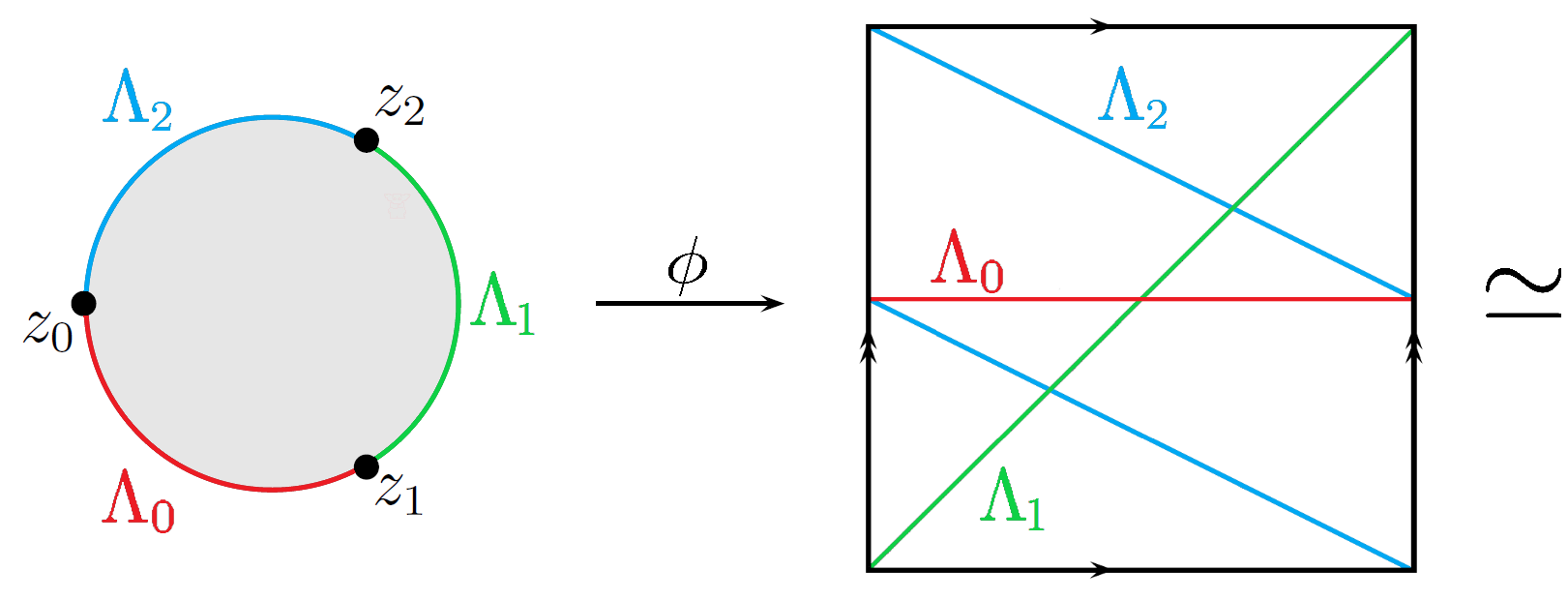}\quad
\includegraphics[width=0.35\textwidth]{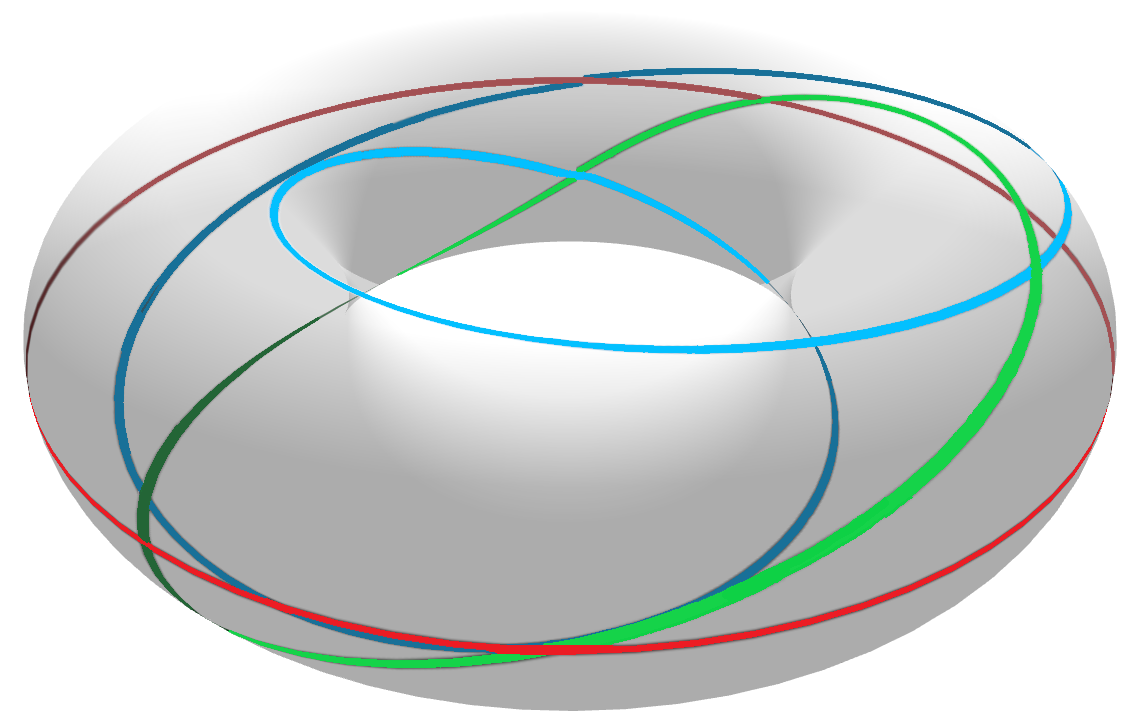}	
\caption{A possible map $\phi:\mathbb{D}^2\rightarrow E^\rho$ entering the definition of composition law for the case of distinct Lagrangian submanifolds. In particular, depicted are the affine lines $(t,1/2)$, $(t,t)$ and $(t,-t/2+1/2)$ in $\mathbb{R}^2$ modulo $\mathbb{Z}^2$, yielding $\Lambda_0$, $\Lambda_1$ respectively $\Lambda_2$.}
\label{2toruscompo}	
\end{figure}
\vspace*{0.3cm}

\noindent We now encounter the main issue originally overlooked in \cite{[PZ00]}: the category $\mathscr{F}^0(E^\rho)$ is a preadditive category (only fulfilling axiom (A1) of Definition \ref{addcat}), but it is not additive like all derived categories (cf. Lemma \ref{D(A)additive}) since it does not contain direct sums, making it impossible to construct any sort of equivalence as conjectured by homological mirror symmetry. However, we observe the following general fact.

\begin{Lem}\label{frompreaddtoaddcat}
Let $\mathsf{C}$ be a preadditive category. Let $\underline{\mathsf{C}}$ be the category having the following structure:
\begin{itemize}[leftmargin=0.5cm]
	\renewcommand{\labelitemi}{\textendash}
	
	\item $\textup{obj}(\underline{\mathsf{C}})=\coprod_{k\geq 0}\prod_{l=1}^k\textup{obj}(\mathsf{C})$, so that objects of $\underline{\mathsf{C}}$ are ordered tuples of objects of $\mathsf{C}$ of any length. 
	
	\item Given $\underline{X}=(X_1,...,X_k)$ and $\underline{Y}=(Y_1,...,Y_l)$ in $\textup{obj}(\underline{\mathsf{C}})$ for $k,l>0$, we let $\textup{Hom}_{\underline{\mathsf{C}}}(\underline{X},\underline{Y})\coloneqq\prod_{\begin{tiny}1\leq i\leq k, 1\leq j\leq l\end{tiny}}\textup{Hom}_\mathsf{C}(X_i,Y_j)$. We then define the zero object $\underline{0}\in\textup{obj}(\underline{\mathsf{C}})$ by requiring it to be both initial and terminal, that is, $\textup{Hom}_{\underline{\mathsf{C}}}(\underline{0},\underline{Y})\coloneqq\{0\}$ and $\textup{Hom}_{\underline{\mathsf{C}}}(\underline{X},\underline{0})\coloneqq\{0\}$ \textup(if $\mathsf{C}$ admits a zero object $0_\mathsf{C}$, then any tuple $(0_\mathsf{C},...,0_\mathsf{C})$ is isomorphic to $\underline{0}$\textup).
	
	\item The composition law for morphisms is given by the standard multiplication of matrices. 
\end{itemize}
Then $\underline{\mathsf{C}}$ is an additive category and the additive functor $\mathsf{C}\rightarrow\underline{\mathsf{C}}$ acting on objects as $X\mapsto \underline{X}\coloneqq(X)$ \textup(the 1-tuple\textup) is a universal functor \textup(see \textup{\cite[section III.1]{[Har77]}} for the definition of \textup{universal}\textup). 
\end{Lem}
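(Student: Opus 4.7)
The plan is to verify Definition~\ref{addcat} axiom-by-axiom for $\underline{\mathsf{C}}$ and then establish the universal property. First I would address preadditivity (A1): the Hom-set $\textup{Hom}_{\underline{\mathsf{C}}}(\underline{X},\underline{Y}) = \prod_{i,j}\textup{Hom}_\mathsf{C}(X_i, Y_j)$ is a direct product of abelian groups and hence itself an abelian group under entrywise addition of matrices; biadditivity of composition follows immediately from the definition as matrix multiplication upon unpacking $(\underline{g}\circ\underline{f})_{ij} = \sum_{p} g_{pj}\circ f_{ip}$ and invoking biadditivity of $\circ$ in the preadditive $\mathsf{C}$. Axiom (A2) is built into the construction: the empty tuple $\underline{0}$ is by fiat both initial and terminal, so $\textup{Hom}_{\underline{\mathsf{C}}}(\underline{0},\underline{Y})$ and $\textup{Hom}_{\underline{\mathsf{C}}}(\underline{X},\underline{0})$ are trivial one-element groups.

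For (A3), given $\underline{X}=(X_1,\dots,X_k)$ and $\underline{Y}=(Y_1,\dots,Y_l)$, I would take their direct sum to be the concatenated tuple $\underline{X}\oplus\underline{Y}\coloneqq(X_1,\dots,X_k,Y_1,\dots,Y_l)$, equipped with block-matrix inclusions $i_{\underline{X}},i_{\underline{Y}}$ and projections $p_{\underline{X}},p_{\underline{Y}}$ having identity entries $\textup{id}_{X_m}$ or $\textup{id}_{Y_n}$ at the appropriate slots and zero elsewhere. Direct matrix multiplication then yields $p_{\underline{X}}\circ i_{\underline{X}} = \textup{id}_{\underline{X}}$, $p_{\underline{Y}}\circ i_{\underline{Y}} = \textup{id}_{\underline{Y}}$, $p_{\underline{X}}\circ i_{\underline{Y}} = 0 = p_{\underline{Y}}\circ i_{\underline{X}}$, together with $i_{\underline{X}}\circ p_{\underline{X}} + i_{\underline{Y}}\circ p_{\underline{Y}} = \textup{id}_{\underline{X}\oplus\underline{Y}}$, which are precisely the relations of~\eqref{directprodsum}. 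Hence $\underline{\mathsf{C}}$ is additive.

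The embedding $\mathsf{F}:\mathsf{C}\to\underline{\mathsf{C}},\,X\mapsto(X)$, is tautologically additive, since it is the identity on each Hom-group (viewed as a $1\times 1$ matrix). The universal property then asks: for any additive category $\mathsf{D}$ and additive functor $\mathsf{G}:\mathsf{C}\to\mathsf{D}$, there exists a unique (up to natural isomorphism) additive functor $\underline{\mathsf{G}}:\underline{\mathsf{C}}\to\mathsf{D}$ with $\underline{\mathsf{G}}\circ\mathsf{F} = \mathsf{G}$. I would set $\underline{\mathsf{G}}(\underline{0})\coloneqq 0_\mathsf{D}$ and $\underline{\mathsf{G}}(X_1,\dots,X_k)\coloneqq \mathsf{G}(X_1)\oplus\cdots\oplus \mathsf{G}(X_k)$ by choosing a specific direct-sum representative in $\mathsf{D}$, with inclusions $\iota_j$ and projections $\pi_i$; a matrix $\underline{f}=(f_{ij})$ is then sent to $\underline{\mathsf{G}}(\underline{f})\coloneqq\sum_{i,j}\iota_j\circ \mathsf{G}(f_{ij})\circ\pi_i$. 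Functoriality and additivity follow from matrix multiplication matching the relations of (A3) inside $\mathsf{D}$, and $\underline{\mathsf{G}}\circ\mathsf{F}=\mathsf{G}$ is clear on singletons.

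The main obstacle will be the uniqueness clause, because the definition of $\underline{\mathsf{G}}$ depended on a choice of direct-sum representative for each tuple in $\mathsf{D}$. Two such choices, however, are canonically isomorphic by the direct-sum universal property implicit in (A3), which in turn induces a unique natural isomorphism between any two candidate lifts. Any would-be competitor $\underline{\mathsf{G}}'$ satisfying $\underline{\mathsf{G}}'\circ\mathsf{F}=\mathsf{G}$ must, by its additivity, send each tuple $(X_1,\dots,X_k)$ to some direct sum of the $\mathsf{G}(X_i)$ in $\mathsf{D}$; this constraint pins down $\underline{\mathsf{G}}'$ up to the canonical natural isomorphism just described, which settles universality in the intended sense.
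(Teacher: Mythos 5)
Your proof is correct; this is the standard additive (matrix) envelope construction, and the paper itself states Lemma \ref{frompreaddtoaddcat} without proof, so there is no argument of the author's to compare against — you have supplied what the paper leaves to the reader. All three axioms (A1)--(A3) are verified exactly as one should: entrywise group structure on matrix Hom-sets, the empty tuple as zero object, and concatenation with block inclusions/projections satisfying the relations of \eqref{directprodsum}. The one step worth making explicit in the uniqueness argument is \emph{why} a competitor $\underline{\mathsf{G}}'$ must send $(X_1,\dots,X_k)$ to a direct sum of the $\mathsf{G}(X_i)$: in $\underline{\mathsf{C}}$ the tuple is the biproduct of the singletons $(X_1),\dots,(X_k)$, and since a biproduct is characterized purely by the equations $p_a\circ i_a=\textup{id}$, $p_a\circ i_b=0$ for $a\neq b$, $\sum_a i_a\circ p_a=\textup{id}$, any additive functor preserves it; the canonical comparison isomorphism $\underline{\mathsf{G}}(\underline{X})\to\underline{\mathsf{G}}'(\underline{X})$ is then $\sum_a \iota_a'\circ\pi_a$, and its naturality is a routine matrix computation. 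With that sentence added the argument is complete.
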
 

This motivates us to study the additive category induced by $\mathscr{F}^0(E^\rho)$ in this way, rather than $\mathscr{F}^0(E^\rho)$ itself. Also, we start to denote the modular parameter $\rho$ by $\tau$, to enforce the coming statement of homological mirror symmetry.

\begin{Def}\label{FukayaKontsevichcat}
Let $E^\tau$ be a 2-torus, with split-closed derived category $\mathscr{F}^0(E^\tau)$ $=\mathsf{D}^\pi(\widetilde{\mathscr{F}}(E^\tau))=H^0\big(\Pi(Tw\widetilde{\mathscr{F}}(E^\tau))\big)$. Then we call 
\[
\mathscr{F\!K}^0(E^\tau)\coloneqq\underline{\mathscr{F}^0(E^\tau)}
\]
the \textbf{Fukaya--Kontsevich category}\index{Fukaya--Kontsevich category}. By Lemma \ref{frompreaddtoaddcat} it is an additive category, and the additive functor $\mathscr{F}^0(E^\tau)\rightarrow\mathscr{F\!K}^0(E^\tau),\,L\mapsto \underline{L}\coloneqq(L)$ allows us to keep denoting its objects and morphisms without the underscore.

For any $r\in\mathbb{N}$, let $p_r:E^{r\tau}\rightarrow E^\tau,\,(x,y)\mapsto(rx,y)$. Then we define the additive functors:
\begin{itemize}[leftmargin=0.5cm]
	\item $p_{r*}:\mathscr{F\!K}^0(E^{r\tau})\rightarrow \mathscr{F\!K}^0(E^\tau)$, given by $p_{r*}(\Lambda,\alpha,M)\coloneqq (p_r(\Lambda),\alpha',p_{r*}M)$, where: $\alpha'\in(k-\frac{1}{2},k+\frac{1}{2}]$ is the unique admissible value fulfilling the grading condition of Definition \ref{F0Erho} and $k\in\mathbb{Z}$ is determined by $\alpha$, which must belong to this same interval; $p_{r*}M\in\text{GL}(V^{\oplus d})$ is given by $(p_{r*}M)(v_1,...,v_d)\coloneqq(v_2,...,v_d,M(v_1))$ (supposing $p_r$ on $\Lambda$ is of degree $d$).
	\newline On morphisms in any $\text{Hom}_{\mathscr{F\!K}^0(E^{r\tau})}(L_0,L_1)$, we must again discern between the cases $p_r(\Lambda_0)\neq p_r(\Lambda_1)$, $\Lambda_0=\Lambda_1$ and $\Lambda_0\neq\Lambda_1\,\wedge\,p_r(\Lambda_0)= p_r(\Lambda_1)$, and check the compatibility.        
	
	\item $p_r^*:\mathscr{F\!K}^0(E^\tau)\rightarrow\mathscr{F\!K}^0(E^{r\tau})$ with $p_r^*(\Lambda,\alpha,M)\!\coloneqq\!\bigoplus_{k=1}^n (\Lambda^{(k)},\alpha',(p_r^{(k)})^*M)$, where: we assume $p_r^{-1}(\Lambda)$ has $n$-many connected components $\Lambda^{(k)}$ and every restriction $p_r^{(k)}:\Lambda^{(k)}\rightarrow\Lambda$ has degree $d\coloneqq r/n$; $\alpha'\in(k-\frac{1}{2},k+\frac{1}{2}]$ shares again the same interval of $\alpha$; $(p_r^{(k)})^*M=M^d\in\text{GL}(V)$.\footnote{In the interpretation of local systems as locally constant sheaves of complex vector spaces, $p_{r*}M$ stands for the monodromy of the pushforward of the local system yielding $M$, while $(p_r^{(k)})^*M$ for the pullback.} This is also where the necessity of closure under direct sums becomes apparent, since $p_r^*$ doesn't map to $\mathscr{F}^0(E^\tau)$ if $n>1$!
	\newline For morphisms, we need only investigate when the Lagrangian submanifolds are equal and when not.  
\end{itemize}
\end{Def}

\subsection{Homological mirror symmetry in dimension 1}\label{ch7.3}

We finally come to the statement of homological mirror symmetry for elliptic curves, as properly formulated in \cite[Theorem 4.1]{[Kre00]}. It is worth observing that the mirror pair consisting of two elliptic curves lines up with what classical mirror symmetry predicts: an analysis of their Hodge diamonds suggests that elliptic curves are self-dual (see \cite[Example 7.10]{[Imp21]}).  

\begin{Thm}\label{HMSindim1}
Let $\tau\in\mathbb{C}$ be such that $\mathfrak{Im}(\tau)>0$. Then there exists an equivalence
\begin{equation}
	\mathsf{E}_\tau:\mathsf{D^b}(E_\tau)\rightarrow\mathscr{F\!K}^0(E^\tau)
\end{equation}
which is compatible with the respective shift functors.
\end{Thm}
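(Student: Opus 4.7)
My plan is to construct the functor $\mathsf{E}_\tau$ explicitly on indecomposable objects and then extend by additivity, exploiting the fact (Remark \ref{indecomposabletorsion}) that every object of $\mathsf{D^b}(E_\tau)$ decomposes as $\bigoplus_n H^n(\mathcal{F}^\bullet)[-n]^\bullet$ with each cohomology a finite direct sum of indecomposables. On the B-side, Corollary \ref{indecompocoherentonell} splits indecomposable coherent sheaves into two families: indecomposable vector bundles $\pi_{r*}(\mathcal{L}_{r\tau}(t_x^*\varphi_0^n)\otimes F_{r\tau}(V,e^N))$ and indecomposable torsion sheaves $\mathcal{S}(x,V,N)$. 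I will first assign values to the ``atomic'' pieces: send the line bundle $\mathcal{L}_\tau(t_{x}^*\varphi_0^n)$ (with $x=a\tau+b$) to the Lagrangian triple $(\Lambda,\alpha,\textup{id})\in\textup{obj}(\mathscr{F}^0(E^\tau))$ where $\Lambda$ is the simple loop lifting to the line $y=n(x-b)+a$, the grading $\alpha$ is fixed by Definition \ref{F0Erho} so that $e^{i\pi\alpha}$ is parallel to $\Lambda$ with the orientation pointing in the direction of increasing $x$, and the trivial monodromy reflects $V=\mathbb{C}$; tensoring with $F_{\tau}(V,e^N)$ will then correspond to enlarging the monodromy from $\textup{id}_\mathbb{C}$ to $e^N\in\textup{GL}(V)$. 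Torsion sheaves $\mathcal{S}(x,V,N)$ will go to a vertical loop through $x$ carrying the monodromy $e^N$, which matches the $n=0$ case (slope $\infty$). The isogeny $\pi_r:E_{r\tau}\to E_\tau$ on the B-side corresponds by construction to the pushforward functor $p_{r*}:\mathscr{F\!K}^0(E^{r\tau})\to\mathscr{F\!K}^0(E^\tau)$ of Definition \ref{FukayaKontsevichcat}, so I can set $\mathsf{E}_\tau\circ\pi_{r*}\coloneqq p_{r*}\circ\mathsf{E}_{r\tau}$ and thereby define $\mathsf{E}_\tau$ on every indecomposable. Extending by additivity to $\mathsf{Coh}(E_\tau;{}_{\mathcal{O}_{E_\tau}}\!\mathsf{Mod})$ lands in the additive $\mathscr{F\!K}^0(E^\tau)$; extending further by sending $\mathcal{F}[-n]^\bullet$ to $\mathsf{E}_\tau(\mathcal{F})[-n]$ lifts the construction to $\mathsf{D^b}(E_\tau)$ and visibly commutes with the shift.

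Next I would verify that $\mathsf{E}_\tau$ is an equivalence. Essential surjectivity is easy: every triple $(\Lambda,\alpha,M)$ with $\Lambda$ a simple loop of rational slope $n$ corresponds to a line bundle (if $n\neq\infty$) or a torsion sheaf (if $n=\infty$), and after decomposing the monodromy $M$ into generalized Jordan blocks $e^N$ one recovers the Atiyah form $\pi_{r*}(\mathcal{L}_{r\tau}(\varphi)\otimes F_{r\tau}(V,e^N))$ or $\mathcal{S}(x,V,N)$. Fully faithfulness reduces, on indecomposables, to matching $\textup{Ext}^k_{\mathcal{O}_{E_\tau}}$-modules with $\textup{Hom}_{\mathscr{F\!K}^0(E^\tau)}$-spaces in degrees $k\in\{0,1\}$ (all higher ones vanish by Lemma \ref{ellSerreduality}, and on the A-side by \eqref{ellmorphfordifferentlambdas}--\eqref{ellmorphforsamelambdas}). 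For line bundles this is the classical Appell--Humbert computation: Proposition \ref{linebundleonell} provides an explicit basis of $H^0(\mathcal{L}_\tau(t_x^*\varphi_0^n))$ by theta functions $\theta[k/n,0](n\tau,n(z+x))$ in bijection with the $n$ intersection points between lines of slope $0$ and $n$ on the torus. The duality statements --- Lemma \ref{ellSerreduality} on the B-side and \eqref{sympSerreduality} on the A-side --- then handle the $\textup{Ext}^1$ groups functorially. Intertwining with $\pi_{r*}\leftrightarrow p_{r*}$ reduces the general vector bundle case to the line bundle case, and torsion sheaves $\mathcal{S}(x,V,N)$ are handled separately using \eqref{ellmorphfordifferentlambdas}.

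The real heart of the proof --- and the principal obstacle --- is checking compatibility with composition. On the A-side (Definition of $\circ$ in $\mathscr{F}^0(E^\tau)$) the composite of two generators is a sum over holomorphic triangles weighted by $\exp\bigl(2\pi i\int\phi^*(\rho\,dx\wedge dy)\bigr)$; on the B-side one has the Yoneda product $\diamond$, and for $\textup{Hom}\otimes\textup{Hom}\to\textup{Hom}$ between line bundles it reduces to the multiplication of theta-function sections. The required coincidence is therefore an instance of the \textbf{classical addition formula for theta functions}: schematically
\begin{equation*}
\theta[a_1,0](n_1\tau,n_1 z)\cdot\theta[a_2,0](n_2\tau,n_2 z)=\sum_{k}c_k(\tau)\,\theta[a_3^{(k)},b_3^{(k)}](n_3\tau,n_3 z),
\end{equation*}
with coefficients $c_k(\tau)$ that are themselves theta-constants, and whose Gaussian structure matches exactly the symplectic area $\int\phi^*(\rho\,dx\wedge dy)$ of the unique holomorphic triangle with prescribed vertices on a flat torus. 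Working this out is a direct but delicate calculation carried out essentially in \cite{[PZ00]}, and its extension to triangles involving torsion sheaves (where one factor becomes a parallel transport instead of a theta sum) and to non-generic gradings follows by taking limits as Lagrangians coalesce or by using the adjunction provided by \eqref{sympSerreduality}. Having checked compatibility on triples of generators, the full functoriality of $\circ$ propagates to arbitrary objects by the additive and pushforward constructions above, completing the proof that $\mathsf{E}_\tau$ is an exact equivalence of additive categories compatible with the shifts.
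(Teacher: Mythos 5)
Your overall strategy coincides with the paper's (which itself follows Polishchuk--Zaslow and Kreussler): define $\mathsf{E}_\tau$ on indecomposables via Atiyah's classification, intertwine $\pi_{r*}$ with $p_{r*}$ to reduce general vector bundles to line bundles, extend by additivity and by $\mathcal{F}[-n]^\bullet\mapsto\mathsf{E}_\tau(\mathcal{F})[-n]$ using $\dim_h=1$, match $\mathrm{Ext}^{0,1}$ with the Fukaya Hom-spaces via the two Serre dualities, and reduce compatibility of compositions to the theta-function addition formula. All of that is the right skeleton.

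There is, however, one concrete error in your object-level assignment. You send $\mathcal{L}_\tau(t_x^*\varphi_0^n)$ with $x=a\tau+b$ to the loop lifting $y=n(x-b)+a$ with \emph{trivial} monodromy, and likewise you place the torsion sheaf's support point wholesale into the position of the vertical loop. The correct mirror map splits the two real coordinates of $x$ asymmetrically: only the $\tau$-component $a$ enters the \emph{position} of the Lagrangian (the paper's loop is $(a+t,(n-1)a+nt)/\mathbb{Z}^2$, i.e.\ $y=nx-a$, with no $b$ in it), while the real component $b$ enters the \emph{monodromy} as the phase $e^{-2\pi i b}$ (so $M=e^{-2\pi i b\,\mathrm{id}_V+N}$, and $e^{+2\pi i b\,\mathrm{id}_V+N}$ in the torsion case). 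This is not a cosmetic choice: translating the Lagrangian transversally is not a Hamiltonian isotopy, so your assignment and the paper's give non-isomorphic objects of $\mathscr{F\!K}^0(E^\tau)$. Concretely, the composition weights $e^{2\pi i\int\phi^*(\rho\,dx\wedge dy)}$ only see the positions of the Lagrangians, whereas the $b$-dependence in the theta-function product identity appears through the parallel-transport factors $\mathbb{P}(M_k)$; with your convention the $b$'s would be multiplied by $\rho$ inside the area integral and the structure constants would fail to match the Yoneda product for $b\neq 0$. Once this is corrected the rest of your plan goes through exactly as in the paper.
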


\begin{proof}(\textit{Sketch})
We first define $\mathsf{E}_\tau$ on objects. By Remark \ref{indecomposabletorsion}, it suffices to focus on indecomposable coherent sheaves $\mathcal{E}\in\textup{obj}(\mathsf{Coh}(E_\tau;{}_{\mathcal{O}_{E_\tau}}\!\mathsf{Mod}))$, which by Corollary \ref{indecompocoherentonell} are either indecomposable vector bundles or indecomposable torsion sheaves supported at one point. We proceed in order.
\begin{itemize}[leftmargin=0.5cm]
	\item Let $\mathcal{E}$ be first an indecomposable vector bundle of the form $\mathcal{E}=\mathcal{L}_\tau(\varphi)\otimes F_\tau(V,e^N)\in\text{obj}(\mathsf{D^b}(E_\tau))$, for $\varphi=t_x^*\varphi_0\cdot\varphi_0^{n-1}$ as by Proposition \ref{linebundleonell} and $x=a\tau+b\in E_\tau$, $n\in\mathbb{Z}$ and the indecomposable $F_\tau(V,e^N)$ as by Definition \ref{unipotentvectorbundle}. Then we set 
	\[
	\mathsf{E}_\tau(\mathcal{L}_\tau(\varphi)\otimes F_\tau(V,e^N))\coloneqq(\Lambda,\alpha,M)\in\text{obj}(\mathscr{F\!K}^0(E^\tau))\,,
	\]
	where $\Lambda\coloneqq(a+t,(n-1)a+nt)/\mathbb{Z}^2$, $\alpha\in(-\frac{1}{2},\frac{1}{2}]$ unique with $e^{i\pi\alpha}=\frac{1+in}{\sqrt{1+n^2}}$, and $M\coloneqq e^{-2\pi i b\,\text{id}_V + N}$.
	
	\item Suppose now $\pi_r:E_{r\tau}\rightarrow E_\tau$ is an isogeny and $\mathcal{E}=\pi_{r*}(\mathcal{L}_{r\tau}(\varphi)\otimes F_{r\tau}(V,e^N))\in\text{obj}(\mathsf{D^b}(E_{\tau}))$, which by Theorem \ref{AtiyahThm} is the general form taken by any indecomposable vector bundle on $E_\tau$. Then, for $p_r:E^{r\tau}\rightarrow E^\tau$ as in Definition \ref{FukayaKontsevichcat}, we set
	\[
	\mathsf{E}_\tau(\mathcal{E})\coloneqq p_{r*}\mathsf{E}_{r\tau}(\mathcal{E})=(\Lambda,\alpha,M)\in\text{obj}(\mathscr{F\!K}^0(E^\tau))\,,
	\]
	where $\Lambda\coloneqq\big(a+t,(\frac{n}{r}-1)a+\frac{n}{r}t\big)/\mathbb{Z}^2$, $\alpha\in(-\frac{1}{2},\frac{1}{2}]$ with $e^{i\pi\alpha}=\frac{1+in/r}{\sqrt{1+n^2/r^2}}$, and again $M\coloneqq e^{-2\pi i b\,\text{id}_V + N}$.
	
	\item If $\mathcal{E}=\mathcal{S}(x,V,N)\in\text{obj}(\mathsf{D^b}(E_{\tau}))$ is a torsion sheaf supported at $x=a\tau+b\in E_\tau$, then we simply set
	\[
	\mathsf{E}_\tau(\mathcal{S}(a\tau+b,V,N))\coloneqq((-a,t)/\mathbb{Z}^2,1/2,e^{2\pi i b\,\text{id}_V+N})
	\]
	(so that $\Lambda$ is a meridian loop).   
\end{itemize}
To define $\mathsf{E}_\tau$ on morphisms, it suffices to establish the maps
\[
(\mathsf{E}_\tau)_{\mathcal{E}_1,\mathcal{E}_2}:\text{Hom}_{\mathsf{D^b}(E_\tau)}(\mathcal{E}_1,\mathcal{E}_2[n])\rightarrow\text{Hom}_{\mathscr{F\!K}^0(E^\tau)}(\mathsf{E}_\tau(\mathcal{E}_1),\mathsf{E}_\tau(\mathcal{E}_2)[n])
\]
for $n=0,1$, with $\mathcal{E}_1,\mathcal{E}_2\in\text{obj}(\mathsf{D^b}(E_\tau))$ any. But by Lemma \ref{ellSerreduality} and the symplectic Serre duality \eqref{sympSerreduality}, the case $n=0$ is actually enough. Again, we go by steps.
\begin{itemize}[leftmargin=0.5cm]
	\item We deal first with indecomposable vector bundles of the form $\mathcal{E}_i=\mathcal{L}_\tau(\varphi_i)\otimes F_\tau(V_i,e^{N_i})\in\text{obj}(\mathsf{D^b}(E_\tau))$, for $\varphi_i=t_{x_i}^*\varphi_0\cdot\varphi_0^{n_i-1}$ and $x_i=a_i\tau+b_i\in E_\tau$ ($i=0,1$), such that $\mathsf{E}_\tau(\mathcal{E}_i)=(\Lambda_i,\alpha_i,M_i)$ with $\Lambda_0\neq\Lambda_1$. By \cite[Proposition 2]{[PZ00]}, there are isomorphisms $H^0(\mathcal{L}_\tau(\varphi))\otimes V\cong H^0(\mathcal{L}_\tau(\varphi)\otimes F_\tau(V,e^{N}))$ for any such coherent sheaf; applied to $\varphi=\varphi_1\varphi_0^{-1}$ and $V=\text{Hom}(V_0,V_1)$, we get $\text{Hom}_{\mathsf{D^b}(E_\tau)}(\mathcal{E}_1,\mathcal{E}_2)\cong H^0(\mathcal{L}_\tau(\varphi_1\varphi_0^{-1}))\otimes\text{Hom}(V_0,V_1)$ (assuming without loss of generality that $n_0<n_1$). 
	\newline By linearity of $\mathsf{E}_\tau$, it is then enough to specify it on elements of the form $\theta\otimes f\in H^0(\mathcal{L}_\tau(\varphi_1\varphi_0^{-1}))\otimes\text{Hom}(V_0,V_1)$, since theta functions build a basis for $H^0(\mathcal{L}_\tau(\varphi_1\varphi_0^{-1}))$; according to Proposition \ref{linebundleonell}, we specifically need an assignment for $t_{x_{01}}^*\theta[k/(n_1\!-\!n_0),0]((n_1\!-\!n_0)\tau,(n_1\!-\!n_0)z)$, where $x_{01}=(x_1\!-\!x_0)/(n_1\!-\!n_0)$ and $0\leq k<n_1\!-\!n_0$. This is
	\begin{small}\[
		t_{x_{01}}^*\theta\Big[\frac{k}{n_1\!-\!n_0},0\Big]\big((n_1\!-\!n_0)\tau,(n_1\!-\!n_0)z\big)\mapsto e_k\!\coloneqq\!\Big(\frac{a_1\!-\!a_0\!+\!k}{n_1\!-\!n_0},\frac{n_0a_1\!-\!n_1a_0\!+\!n_0k}{n_1\!-\!n_0}\Big)\,,
		\]\end{small}
	\!\!so that these $e_k$'s are precisely the intersection points of $\Lambda_0$ with $\Lambda_1$. The case of coinciding Lagrangian submanifolds, $\Lambda_0=\Lambda_1$, is addressed in \cite{[Kre00]}.
	
	\item Now we extend the definition on morphisms of $\mathsf{E}_\tau$ to arbitrary vector bundles $\pi_{r_i*}(\mathcal{E}_i)=\pi_{r_i*}(\mathcal{L}_{r_i\tau}(\varphi_i)\otimes F_{r_i\tau}(V_i,e^{N_i}))\in\text{obj}(\mathsf{D^b}(E_{\tau}))$ for $r_0,r_1>0$ possibly equal. Actually, we don't; those interested can read about this passage in the proof of \cite[Theorem 4.1]{[Kre00]}.
	
	\item If $\mathcal{E}_0=\mathcal{L}_\tau(\varphi_0)\otimes F_\tau(V_0,e^{N_0})$ is an indecomposable vector bundle and $\mathcal{E}_1=\mathcal{S}(x_1,V_1,N_1)\in\text{obj}(\mathsf{D^b}(E_\tau))$ an indecomposable torsion sheaf supported at $x_1=a_1\tau+b_1\in E_\tau$, then the corresponding Lagrangian submanifolds intersect at just one point, leading to canonical isomorphisms $\text{Hom}_{\mathsf{D^b}(E_\tau)}(\mathcal{E}_0,\mathcal{E}_1)\cong\text{Hom}(V_0,V_1)\cong V_0^*\otimes V_1$ and $\text{Hom}_{\mathscr{F\!K}^0(E^\tau)}\cong\text{Hom}(V_0,V_1)$. Then 
	\begin{small}
		\begin{align*}
			&(\mathsf{E}_\tau)_{\mathcal{E}_0,\mathcal{E}_1}:V_0^*\otimes V_1\rightarrow V_0^*\otimes V_1\,, \\
			&(f_0,v_1)\mapsto e^{-\pi i(\tau(na_1^2+2a_0a_1)+2(a_1b_0+a_0b_1+na_1b_1))}e^{-(a_0+na_1)f_0\otimes N_1(v_1)+a_1N_0^*(f_0)\otimes v_1}.
		\end{align*}
	\end{small}
	\!\!Moreover, on a more general $\pi_{r_0*}(\mathcal{E}_0)=\pi_{r_0*}(\mathcal{L}_{r_0\tau}(\varphi_0)\otimes F_{r_0\tau}(V_0,e^{N_0}))$, to define the dashed arrow $(\mathsf{E}_\tau)_{\pi_{r_0*}(\mathcal{E}_0),\mathcal{E}_1}$ in the following diagram one takes the detour:
	\begin{small}
		\[
		\begin{tikzcd}
			\text{Hom}_{\mathsf{D^b}(E_\tau)}(\pi_{r_0*}(\mathcal{E}_0),\mathcal{E}_1)\arrow[d, "\sim"']\arrow[r, dashed] &  \text{Hom}_{\mathscr{F\!K}^0(E^\tau)}\big(\mathsf{E}_\tau(\pi_{r_0*}(\mathcal{E}_0)),\mathsf{E}_\tau(\mathcal{E}_1)\big) \\ \text{Hom}_{\mathsf{D^b}(E_\tau)}(\mathcal{E}_0,\pi_{r_0}^*(\mathcal{E}_1))\arrow[d, "\mathsf{E}_{r\tau}"'] & \text{Hom}_{\mathscr{F\!K}^0(E^\tau)}\big(p_{r_0*}(\mathsf{E}_\tau(\mathcal{E}_0)),\mathsf{E}_\tau(\mathcal{E}_1)\big)\arrow[u, "\sim"'] \\ \text{Hom}_{\mathscr{F\!K}^0(E^\tau)}\big(\mathsf{E}_\tau(\mathcal{E}_0),\mathsf{E}_\tau(\pi_{r_0}^*(\mathcal{E}_1))\big)\arrow[r, "\sim"'] & \text{Hom}_{\mathscr{F\!K}^0(E^\tau)}\big(\mathsf{E}_\tau(\mathcal{E}_0),p_{r_0}^*(\mathsf{E}_\tau(\mathcal{E}_1))\big)\,,\arrow[u, "\sim"']		
		\end{tikzcd}
		\]
	\end{small}
	\!\!\!where we used \cite[Lemma 3.4]{[Kre00]} to turn pushforwards of source objects into pullbacks of target objects with respect to both $\pi_{r_0}$ and $p_{r_0}$, also exploiting that these morphisms turn into each other when commuted with $\mathsf{E}_\tau$; the map $\mathsf{E}_{r\tau}$ is known from the previous paragraph. We point out that the construction of $\mathscr{F\!K}^0(E^\tau)$ is here essential since $\pi_{r_0}^*(\mathcal{E}_1)$ is not indecomposable in general!
	
	\item The last case occurs when both coherent sheaves are indecomposable torsion sheaves, say $\mathcal{E}_i=\mathcal{S}(x_i,V_i,N_i)\in\text{obj}(\mathsf{D^b}(E_\tau))$, supported at $x_i=a_i\tau+b_i\in E_\tau$, so that by above $\mathsf{E}_\tau(\mathcal{E}_i)=((-a_i,t)/\mathbb{Z}^2,1/2,e^{2\pi i b_i\,\text{id}_{V_i}+N_i})$. So: if $x_0\neq x_1$, then $\text{Hom}_{\mathsf{D^b}(E_\tau)}(\mathcal{E}_0,\mathcal{E}_1)$, thus $(\mathsf{E}_\tau)_{\mathcal{E}_0,\mathcal{E}_1}$, is trivial; $a_0\neq a_1$ implies $\Lambda_0\cap\Lambda_1=\emptyset$ and $\text{Hom}_{\mathscr{F\!K}^0(E^\tau)}(\mathsf{E}_\tau(\mathcal{E}_0),\mathsf{E}_\tau(\mathcal{E}_1))\cong\{0\}$; $a_0=a_1$ and $b_0\neq b_1$ imply $\Lambda_0=\Lambda_1$ and $H^n(\Lambda_0,\mathcal{H}om(M_0,M_1))\cong\{0\}$, since $M_0$ and $M_1$ have no common eigenvalues; if $x_0=x_1$, then $\text{Hom}_{\mathsf{D^b}(E_\tau)}(\mathcal{E}_0,\mathcal{E}_1)=\text{Hom}_{\mathcal{O}_{E_\tau,x_0}}((V_0,N_0),(V_1,N_1))=\{f\in\text{Hom}(V_0,V_1)\mid f\circ N_0=N_1\circ F,\text{ iff }f\circ M_0=M_1\circ f\}=H^0(\Lambda_0,\mathcal{H}om(M_0,M_1))$.     
\end{itemize}
It only remains to check that $\mathsf{E}_\tau$ is well behaved with respect to the composition of morphisms, which will confirm it to be a functor. Hence, given any $\mathcal{E}_i\in\text{obj}(\mathsf{D^b}(E_\tau))$ for $i=0,1,2$, we must prove that the diagram
\[
\begin{small}
	\begin{tikzcd}
		\text{Hom}(\mathcal{E}_1[k],\mathcal{E}_2[l])\otimes\text{Hom}(\mathcal{E}_0,\mathcal{E}_1[k])\arrow[r, "\circ"]\arrow[d, "\mathsf{E}_\tau\otimes\mathsf{E}_\tau"'] & \text{Hom}(\mathcal{E}_0,\mathcal{E}_2[l])\arrow[d, "\mathsf{E}_\tau"] \\
		\text{Hom}(\mathsf{E}_\tau(\mathcal{E}_1)[k],\mathsf{E}_\tau(\mathcal{E}_2)[l])\otimes\text{Hom}(\mathsf{E}_\tau(\mathcal{E}_0),\mathsf{E}_\tau(\mathcal{E}_1)[k])\arrow[r, "\circ"'] & \text{Hom}(\mathsf{E}_\tau(\mathcal{E}_0),\mathsf{E}_\tau(\mathcal{E}_2)[l])
	\end{tikzcd}
\end{small}
\]
commutes, where $0\leq k\leq l\leq 1$ and $\circ$ denotes the appropriate composition law. But again by Serre duality, we can just restrict to the case $k=l=0$. We remain (even more) vague here:
\begin{itemize}[leftmargin=0.5cm]
	\item In case we are dealing with a triplet of indecomposable vector bundles in $\mathsf{D^b}(E_\tau)$ whose image are three distinct Lagrangian submanifolds, compatibility with compositions revolves around the manipulation of theta functions, as shown in \cite[section 5.2]{[PZ00]}. If only two among them are equal, one reduces their Hom-space to that of linear maps of vector spaces commuting with the monodromy operators, and the other Hom-space is treated with \cite[Proposition 2]{[PZ00]}. If instead all Lagrangian submanifolds are equal, one can take the defining $\varphi$ maps to fulfill $\varphi_0=\varphi_1\varphi_2$ and proceed.
	
	\item For general indecomposable vector bundles given by pushforward by isogenies, the analysis is carried out in \cite[section 5.4]{[PZ00]}.
	
	\item Finally, if at least one of the indecomposable coherent sheaves is torsion, the reader is referred to \cite[section 5.5]{[PZ00]}. 
\end{itemize}
This whole construction makes the functor $\mathsf{E}_\tau$ additive and fully faithful already. What is missing is just essential surjectivity, which will then upgrade $\mathsf{E}_\tau$ to the claimed equivalence. So let $(\Lambda,\alpha,M)\in\text{obj}(\mathscr{F\!K}^0(E^\tau))$ be any, without loss of generality indecomposable, in the sense that $M$ is indecomposable (not the direct sum of non-trivial operators), say then of the form $M=e^{-2\pi i b+N}\in\text{GL}(V)$ for $N\in\text{End}(V)$ cyclic and $b\in\mathbb{R}$. We can further safely assume that $\alpha\in(-1/2,1/2]$: if $\alpha=1/2$, then $(\Lambda,\alpha,M)\cong\mathsf{E}_\tau(\mathcal{S}(-a\tau-b,V,N))$ for a suitable $-1<a\leq 0$; otherwise, letting $(r,n)\in\mathbb{N}^2$ be relatively prime such that $r+in\in\mathbb{R}e^{i\pi\alpha}$ and assuming $a$ is such that $0\leq\frac{ra}{n}<\frac{1}{n}$ is the smallest $x$-intercept of a line representing $\Lambda$, it is $(\Lambda,\alpha,M)\cong\mathsf{E}_\tau\big(\pi_{r*}(\mathcal{L}_{r\tau}(t_{ar\tau+b}^*\varphi_0\cdot\varphi_0^{n-1})\otimes F_{r\tau}(V, e^N))\big)$.

Therefore, essential surjectivity is proven, and so is homological mirror symmetry in dimension 1.  
\end{proof}

We did it! (Although we skimmed over quite a lot of details.) The demanding reader is once again referred to the original work of Polishchuk and Zaslow --- where a proof for the simplest non-trivial version of Theorem \ref{HMSindim1} is discussed (see \cite[section 4]{[PZ00]}) --- and the complementing paper by Kreussler. In particular, the composition law in terms of the $A_\infty$-structure of both sides is addressed in \cite{[Pol01]}. We point again to \cite{[Por15]} for a complete overview of the subject, and to \cite{[Rei07]} for an in-depth analysis of the functoriality of $\mathsf{E}_\tau$. 

We also remind us again of \cite[Theorem 6.4]{[AS09]}, where the ground field is taken to be the Novikov field $\Lambda_\mathbb{R}$ and the mirror partner of the 2-torus is specifically shown to be the \textit{Tate elliptic curve}.

\subsection{Homological mirror symmetry in dimension 2}\label{ch7.4}

If the homological mirror symmetry of elliptic curves already looks highly non-trivial, in dimension 2 the situation becomes almost impenetrable, given our current background. Therefore, we will be sketchy to say the least.

First of all, let us identify what are the 2-dimensional Calabi--Yau manifolds. In terms of varieties, we expect to deal with \textbf{smooth projective surfaces}\index{smooth projective surface}. We refer to \cite[sections 5.3, 21.1]{[GHJ03]} and \cite[section 10.1]{[Huy06]} for partial accounts.

\begin{Def}\label{CY2folds}
Calabi--Yau manifolds of dimension 2 over $\mathbb{C}$ are either:
\begin{itemize}[leftmargin=0.5cm]
	\renewcommand{\labelitemi}{\textendash}
	\item \textbf{$K3$ surfaces}\index{k3@$K3$ surface}, the only ones which are compact and simply connected as Kähler manifolds (so, strictly speaking, the only admissible ones according to Definition \ref{Calabiyauman}); by Definition \ref{Calabiyauvar} they are those compact complex surfaces $X$ with trivial canonical line bundle and $H^1(X,\mathcal{O}_X)\cong\{0\}$. Some are algebraic, in that they are smooth quartic hypersurfaces in $\mathbb{P}^3$ over $\mathbb{C}$, such as the complex algebraic variety given by the vanishing locus $V(x_0^4+x_1^4+x_2^4+x_3^4)\subset \mathbb{P}^3$ (\textit{Fermat quartic}); some other are suitable quotients of abelian surfaces (like \textit{Kummer surfaces}), or \textit{elliptic fibrations} or \textit{complete intersections}.\footnote{Although not every $K3$ surface is algebraic, those which are still form a subset which is dense in the moduli space of all $K3$ surfaces.}
	
	\item \textbf{Abelian surfaces}\index{abelian surface} --- that is, 2-dimensional \textit{abelian varieties}, which are projective algebraic varieties with a structure of algebraic group --- which are not simply-connected, such as \textit{complex} 2-dimensional tori, or equivalently real 4-tori $\mathbb{T}^4$, among which we find products of pairs of elliptic curves. 
	
	\item Literature sometimes counts \textit{Enriques surfaces} and \textit{hyperelliptic surfaces} as Calabi--Yau 2-folds.  
\end{itemize}
\end{Def}

Since Conjecture \ref{HMSconj} is specifically tailored for Calabi--Yau manifolds in the ``strict sense'' of Definition \ref{Calabiyauman}, we mainly focus on the objects of the first bullet point.

\begin{Rem}
Here is a couple of relevant facts about $K3$ surfaces and friends:
\begin{itemize}[leftmargin=0.5cm]
	\item On the B-side, in contrast to the case of elliptic curves, the bounded derived category cannot discern non-isomorphic $K3$ or abelian surfaces; in other words, distinct pairs can be derived equivalent. Luckily, by \cite[Proposition 12.1]{[Huy06]}, these are the sole exceptions in dimension 2: any smooth projective variety $Y$ whose bounded derived category is equivalent to $\mathsf{D^b}(X)$, with $X$ a smooth projective surface which is neither $K3$, nor abelian, nor elliptic, is $Y\cong X$ (as varieties). To prove this, one needs knowledge about the Enriques classification for algebraic surfaces.
	
	\item Any smooth projective variety derived equivalent to a $K3$ surface is itself a $K3$ surface (by \cite[Corollary 10.2]{[Huy06]}). In fact, all $K3$ surfaces are diffeomorphic. The global Torelli theorem (\cite[Theorem 10.4]{[Huy06]}) affirms moreover that any two $K3$ surfaces are isomorphic if and only if there exists a Hodge isometry between their second integer cohomology groups.
\end{itemize}
\end{Rem}

Seidel proved in \cite{[Sei13]} a version of homological mirror symmetry for quartic $K3$ surfaces. We try to explain his result, following the already succint summary of \cite[section 8.5]{[ABC+09]}. 

\begin{Def}\label{Seidelstuff}
Let $\mathbb{C}[[q]]$ denote the ring of formal power series over $\mathbb{C}$ in the formal variable $q$ (whose elements are identified by tuples indexed by $\mathbb{N}_0$ which eventually vanish). Let
\begin{small}\[
	\mathcal{X}\coloneqq\big\{\big([x_0:x_1:x_2:x_3],q\big)\in\mathbb{P}^3\times\text{Spec}(\mathbb{C}[[q]])\bigm| q\cdot(x_0^4+x_1^4+x_2^4+x_3^4)+x_0x_1x_2x_3=0\big\}
	\]\end{small}
\!\!be a family of Fermat $K3$ surfaces, onto which the group
\begin{small}\[
	\Gamma_{16}\!\coloneqq\!\bigg\{(a_0,a_1,a_2,a_3)\!\in\!\mathbb{C}^4\biggm| a_0^4\!=\!a_1^4\!=\!a_2^4\!=\!a_3^4\!=\!\prod_{i=0}^3a_i\!=\!1\bigg\}\Big/\Big\{(a,a,a,a)\!\in\!\mathbb{C}^4\mid a^4\!=\!1\Big\}
	\]\end{small}
\!\!acts diagonally as $(a_0,a_1,a_2,a_3)\cdot(x_0,x_1,x_2,x_3)\coloneqq(a_0x_0,a_1x_1,a_2x_2,a_3x_3)$, thus yielding the quotient $\widehat{\mathcal{X}}\coloneqq\mathcal{X}/\Gamma_{16}$.

The generic fibre $\widehat{\mathcal{X}}_q$ of the canonical map $\widehat{\mathcal{X}}\rightarrow\text{Spec}(\mathbb{C}[[q]])$ is then a $K3$ surface over the field $\mathbb{C}((q))\coloneqq\text{Frac}(\mathbb{C}[[q]])$ (the fraction field of $\mathbb{C}[[q]]$, or field of formal Laurent series), so that we can consider its bounded derived category $\mathsf{D^b}(\widehat{\mathcal{X}}_q)$ and, more specifically, $\mathsf{D^b}(\widehat{\mathcal{X}}_q)\otimes\Lambda_\mathbb{C}$, obtained by tensoring all the $\text{Hom}_{\mathsf{D^b}(\widehat{\mathcal{X}}_q)}$-spaces with the Novikov field
\begin{small}\[
	\Lambda_\mathbb{C}\coloneqq\bigg\{\sum_{i\in\mathbb{Z}}a_iq^{\lambda_i}\biggm| a_i\in\mathbb{C}\text{ s.t. }a_i=0\text{ for }i\ll 0,\,\lambda_i\in\mathbb{R} \text{ s.t. }\lambda_i\xrightarrow{i\to\infty}\infty \bigg\}\,.
	\]\end{small}
\end{Def}

\begin{Thm}\textup{(\cite[Theorem 1.3]{[Sei13]})}\label{HMSforK3conj}
Let $X\subset\mathbb{P}^3$ be a smooth quartic surface over $\mathbb{C}$, regarded as a Kähler manifold by equipping it with the Kähler form obtained by restriction of the Fubini--Study metric of $\mathbb{P}^3$. Then there exist an automorphism of formal power series $\psi\in\textup{Aut}(\mathbb{C}[[q]])$ and an equivalence
\begin{equation}\label{HMSforK3}
	\mathsf{D^\pi}(\mathscr{F}(X))\rightarrow\psi^*(\mathsf{D^b}(\widehat{\mathcal{X}}_q)\otimes\Lambda_\mathbb{C})
\end{equation}
as triangulated categories. \textup(Actually, both derived categories arise as non-trivial deformations of the same category.\textup)
\end{Thm}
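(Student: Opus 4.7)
The plan is to follow Seidel's strategy, which reinterprets both sides of \eqref{HMSforK3} as one-parameter deformations of a single, computable $A_\infty$-category, and then matches those deformations via Hochschild cohomology. First I would reduce the problem to finding a finite collection of split-generators on either side whose morphism $A_\infty$-algebras are quasi-isomorphic over $\mathbb{C}((q))$ after a suitable reparametrization $\psi$. On the symplectic side, the natural candidates come from a Lefschetz pencil of hyperplane sections on the smooth quartic $X\subset\mathbb{P}^3$: the vanishing cycles $V_1,\ldots,V_k$ associated with the critical values of an anticanonical Lefschetz fibration $X\to \mathbb{P}^1$ split-generate $\mathsf{D}^\pi(\mathscr{F}(X))$ (by Seidel's long-exact-sequence techniques together with split-generation for Fukaya categories of Lefschetz fibrations). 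One then forms the directed Fukaya category $\mathscr{F}^{\to}$ of the pencil and shows its derived endomorphism algebra $A_\infty$-controls $\mathsf{D}^\pi(\mathscr{F}(X))$.

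Next, on the algebraic side, I would degenerate $\widehat{\mathcal{X}}_q$ at $q=0$ to the singular fiber, which is the quotient by $\Gamma_{16}$ of the union of coordinate hyperplanes cut out by $x_0x_1x_2x_3=0$, yielding an orbifold union of projective planes whose derived category is explicitly computable via quiver/tilting methods. The key input is to exhibit a tilting object (or semi-orthogonal decomposition) on this central fiber whose endomorphism algebra matches the Fukaya $A_\infty$-algebra of the vanishing thimbles at the nodal limit; this produces an exceptional collection on the B-side whose deformation over $\textup{Spec}(\mathbb{C}[[q]])$ reconstructs $\mathsf{D^b}(\widehat{\mathcal{X}}_q)\otimes\Lambda_\mathbb{C}$. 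Both categories are now formal deformations of the same underlying $\mathbb{C}$-linear $A_\infty$-algebra, so by standard obstruction theory the deformation classes lie in appropriate summands of $HH^2$.

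The decisive step is to show that these two deformation classes agree up to a change of parameter $\psi\in\textup{Aut}(\mathbb{C}[[q]])$. I would compute the relevant Hochschild cohomology of the central $A_\infty$-algebra and show that the space of first-order deformations is one-dimensional (modulo gauge and reparametrization), so that the matching reduces to showing that both deformations are non-trivial at first order. The A-side first-order term is determined by counting holomorphic disks with one interior marked point mapping to the anticanonical divisor (a symplectic analogue of the Gromov--Witten invariant of $X$), while the B-side first-order term is read off from the Kodaira--Spencer class of the family $\widehat{\mathcal{X}}\to\textup{Spec}(\mathbb{C}[[q]])$; both are shown to be non-zero.

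The hard part is, unsurprisingly, the actual $A_\infty$-computation underpinning the central fiber match: extracting sufficiently many disk counts in $X$ (or in its relative version over the pencil base) to pin down structure constants, and paralleling this with honest sheaf-theoretic computations on the orbifold $\{x_0x_1x_2x_3=0\}/\Gamma_{16}$. One also has to handle bulk deformations and the fact that $\mathsf{D}^\pi(\mathscr{F}(X))$ is defined only over $\Lambda_\mathbb{C}$, whereas the algebraic side naturally lives over $\mathbb{C}((q))$; the reparametrization $\psi$ is precisely the ``mirror map'' absorbing this discrepancy, and verifying its convergence (or at worst its formal existence, which is all \eqref{HMSforK3} requires) is the last serious obstacle before invoking a standard $A_\infty$-formal-deformation uniqueness result to conclude the equivalence.
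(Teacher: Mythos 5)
Your overall strategy is the right one --- it is in fact Seidel's actual strategy, of which the paper gives only a citation-level sketch: split-generate both sides by $64$ objects, exhibit both endomorphism $A_\infty$-algebras as formal one-parameter deformations of a common $\mathbb{C}$-linear algebra, compute enough Hochschild cohomology to see that the deformation is essentially unique up to gauge and reparametrization, and let $\psi\in\textup{Aut}(\mathbb{C}[[q]])$ absorb the mismatch of parameters. The paper compresses all of the deformation-theoretic content into the parenthetical remark at the end of the statement and only records the generator count; your elaboration of the $HH^2$ argument, the need to check first-order non-triviality on both sides, and the role of $\psi$ as the (possibly non-geometric) mirror map is faithful to \cite{[Sei13]} and adds genuine content the paper omits. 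Where you diverge is in how the generators are produced. On the B-side the paper simply restricts Beilinson's exceptional collection $\mathcal{O}_{\mathbb{P}^3}(-3),\dots,\mathcal{O}_{\mathbb{P}^3}$ to the quartic and uses $\Gamma_{16}$-equivariance to get $4\cdot 16=64$ split-generators of $\mathsf{D^b}(\widehat{\mathcal{X}}_q)$; your route through a tilting object on the singular central fibre $\{x_0x_1x_2x_3=0\}/\Gamma_{16}$ is workable in spirit but imports avoidable difficulties with derived categories of non-reduced quotient schemes.

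The one point I would push back on concretely is your A-side input. You propose vanishing cycles of a Lefschetz pencil of \emph{hyperplane sections} of $X$, i.e.\ a fibration of $X$ itself by genus-$3$ curves (and calling it ``anticanonical'' is off, since $K_X$ is trivial for a K3). That pencil produces Lagrangian circles in the curve fibres, and only matching cycles give Lagrangian spheres in $X$; it is not the fibration Seidel uses, and there is no reason its output should be $64$ objects matching the B-side collection. The relevant Lefschetz structure is the pencil of \emph{quartic surfaces} $q\cdot(x_0^4+\cdots+x_3^4)+x_0x_1x_2x_3=0$ in $\mathbb{P}^3$, in which $X$ is a smooth \emph{fibre}: its singular members ($4$ of them, with $16$ nodes each, as the paper records) contribute $64$ vanishing Lagrangian two-spheres in $X$, and it is these that are shown to split-generate $\mathsf{D^\pi}(\mathscr{F}(X))$ and to correspond to the $64$ equivariant Beilinson generators. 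With that substitution your outline aligns with the intended proof; the remaining work is, as you say, the explicit computation of the central $A_\infty$-algebra and of the deformation classes on both sides.
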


\begin{proof}(\textit{Very vague sketch})
The main strategy is to work with two sets which split-generate (in the sense of \cite[Definition 4.15]{[Imp21]}) the categories in \eqref{HMSforK3} respectively.

On the B-side, Beilinson's theorem \cite[Theorem 4.72]{[ABC+09]} states that $\mathsf{D^b}(\mathbb{P}^3)$ is generated by the set $\{\mathcal{O}_{\mathbb{P}^3}(-3),\mathcal{O}_{\mathbb{P}^3}(-2),\mathcal{O}_{\mathbb{P}^3}(-1),\mathcal{O}_{\mathbb{P}^3}\}$ of Serre-twisted locally free sheaves, so that $\Gamma_{16}$ has the effect of producing $4\cdot 16=64$ split-generators for $\mathsf{D^b}(\widehat{\mathcal{X}}_q)$.

On the A-side, $\mathcal{X}$ has 4 singular fibres over $\text{Spec}(\mathbb{C}[[q]])$ with 16 singular points each, inducing a total of 64 Lagrangian spheres, which can be shown to split-generate $\mathsf{D^\pi}(\mathscr{F}(X))$.

We mention that it is so far unknown (or at least, back in \cite{[Sei13]}) whether the map $\psi$ in the statement coincides with the standard mirror map... 
\end{proof}

\begin{Rem}
Theorem \ref{HMSforK3conj} could be confronted with the prediction of classical mirror symmetry asserting that the mirror dual of a $K3$ surface is again a $K3$ surface (see \cite[Example 7.10]{[Imp21]}).
\end{Rem}

Homological mirror symmetry has been studied also for other ``Calabi--Yau 2-folds'' listed in Definition \ref{CY2folds}. For example, Abouzaid and Smith deal with the 4-torus in \cite{[AS09]}, Fukaya discusses abelian surfaces in \cite{[Fuk00]}, and Kontsevich and Soibelman concern themselves with torus fibrations in \cite{[KS01]}. 

Finally, homological mirror symmetry in dimension 3 remains (even more) obscure, much like the classification of Calabi--Yau 3-folds, still an open problem. This is of course the most fascinating prospect for string theorists, for Calabi--Yau 3-folds encode the microscopic, compactified extra dimensions predicted by superstring theory. The best intuition is due to Strominger, Yau and Zaslow in \cite{[SYZ96]}, where mirror symmetry is studied through the physics lens of T-duality. (Consult for example the end of \cite[section 7.3]{[Imp21]} for a qualitative description of the SYZ Conjecture, and a nice picture). This said, higher dimensions do not always pose an obstacle, as the work \cite{[She14]} of Sheridan on homological mirror symmetry of Calabi--Yau hypersurfaces in the complex projective space of \textit{any} dimension shows!   

Research goes on, but we better stop here.

\newpage
\thispagestyle{plain}

\clearpage
\section*{Outlook}
\addcontentsline{toc}{section}{Outlook}

In the concluding paragraphs of \cite{[Imp21]}, I praised the beauty of Homological Mirror Symmetry, capable of bringing under a common roof seemingly detached areas of pure mathematics and theoretical physics. It is but one of many examples that illustrate how our advancing knowledge of the mathematical landscape is revealing a more subtly interwoven reality than once believed.

The present work reinforces this view by showing how more ``classical'' theories like homological algebra and algebraic geometry, not strictly finalized to Kontsevich's conjecture like perhaps a fair amount of the A-side, are capable of interacting in unexpected ways. Yet the historical record teaches us that many ideas are developed on a out-of-necessity basis, and hence are intrinsically entangled with the disciplines they are birthed from. It should then be welcomed with surprise that a solid and free-standing architecture such as the one discussed here fits into an exotic beast like Homological Mirror Symmetry. This is both an encouraging sign that we are on the right path, and a further motivation to keep going down it, which might reveal to us the power to solve insanely hard problems by simply moving to the ``mirror universe''.

On a more romantic and personal note, a student of mathematics or physics cannot help but be fascinated by the unifying vision promised by theories such as Homological Mirror Symmetry, tickling that primordial sense of wonder which is essential fuel for the research effort. In this sense, I hope that whoever reads these pages will let himself be inspired by the ``symphony'' they present.

But before I get too poetic or philosophical, I stop here and salute those who made it through my complete program on homological mirror symmetry, rewarding them with the following cartoon about its predictive beauty.
\vspace*{0.5cm}

\begin{figure}[htp]
\centering
\includegraphics[width=1\textwidth]{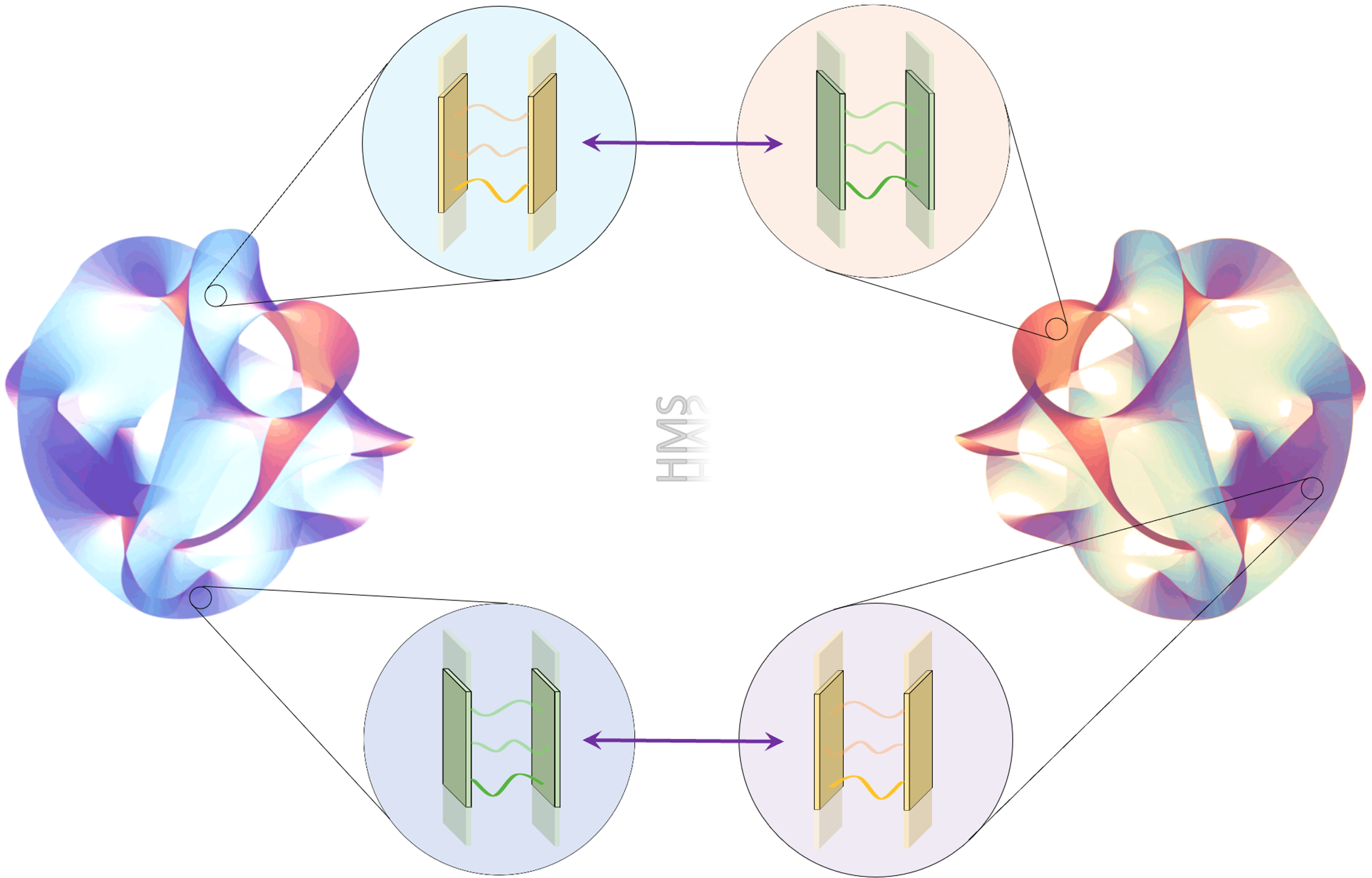}
\caption{A self-explanatory cartoon of the Homological Mirror Symmetry Conjecture. [Source: \begin{footnotesize}\texttt{https://plus.maths.org/content/hidden-dimensions}\end{footnotesize}]}
\end{figure}

\newpage
\thispagestyle{plain}

\clearpage
\section*{}
\addcontentsline{toc}{section}{References}

\newpage
\thispagestyle{plain}

\printindex
	
\end{document}